%
%
%
%
%


\documentclass[11pt, twoside]{Thesis}

\graphicspath{{Pictures/}} 

\hypersetup{urlcolor=blue, colorlinks=true} 
\title{\ttitle} 

\begin{document}
\selectlanguage{french} 
\frontmatter 

\setstretch{1.3} 


\pagestyle{fancy} 

\newcommand{\HRule}{\rule{\linewidth}{0.5mm}} 

\hypersetup{pdftitle={\ttitle}}
\hypersetup{pdfsubject=\subjectname}
\hypersetup{pdfauthor=\authornames}
\hypersetup{pdfkeywords=\keywordnames}


\begin{titlepage}
\begin{figure}
      \includegraphics[width=0.3\textwidth]{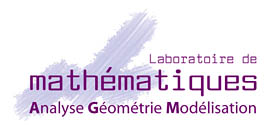}\q\q\q\q\q\q\q\q\q\q\q\q\q\q\q
      \includegraphics[width=0.3\textwidth]{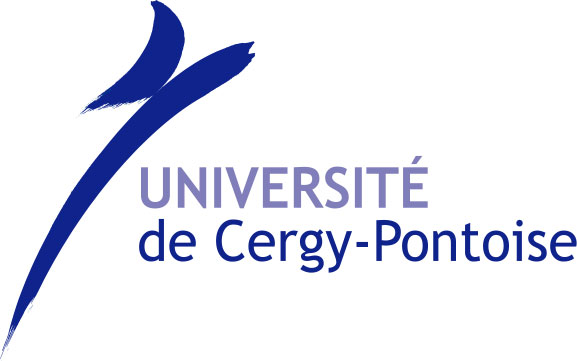}
\end{figure}
\begin{center}

\textsc{\LARGE Universit\'e de Cergy-Pontoise}\\[1.5cm] 
\textsc{\Large Th\`ese de Doctorat en Math\'ematiques}\\[0.5cm] 

\HRule \\[0.4cm] 
{\huge \bfseries M\'elange et grandes d\'eviations pour \\[0.1cm] l'\'equation des ondes non lin\'eaire \\[0.4cm] avec bruit blanc}\\[0.4cm] 
\HRule \\[1.5cm] 
 
\begin{minipage}{0.4\textwidth}
\begin{flushleft} \large
\emph{Pr\'esent\'ee par}\\ \textsc{\LARGE Davit Martirosyan}
\end{flushleft}
\end{minipage}
\bigskip
\bigskip

\bigskip

\begin{center}
\noindent \large
\begin{tabular}{llcl}
{Rapporteurs :}    & {\bf Sandra Cerrai}, Universit\'e de Maryland     \\
          & {\bf Lorenzo Zambotti}, Universit\'e Pierre et Marie Curie    \\
           {Examinateurs :}    & {\bf Francis Comets}, Universit\'e Paris Diderot     \\
                & {\bf Annie Millet}, Universit\'e Paris 1 Panth\'eon-Sorbonne   \\
                 & {\bf Etienne Pardoux}, Universit\'e d'Aix-Marseille   \\
                   & {\bf Nikolay Tzvetkov}, Universit\'e de Cergy-Pontoise   \\
      {Directeur de th\`ese:}  & {\bf Armen Shirikyan}, Universit\'e de Cergy-Pontoise    \\

\end{tabular}
\end{center}
 
 
 \bigskip
{\large 12 Novembre, 2015}\\[4cm] 

\vfill
\end{center}

\end{titlepage}


\pagestyle{empty} 
\mbox{}
\thispagestyle{empty}
\newpage

\dedicatory{\`a ma m\`ere Nune} 

\clearpage 

\begin{center}
{\bf R\'esum\'e}
\end{center}
Dans cette th\`ese on \'etudie l'ergodicit\'e et les grandes d\'eviations pour l'\'equation des ondes non lin\'eaire avec bruit blanc lisse en 3D. Sous certaines hypoth\`eses standards sur la nonlin\'earit\'e, on prouve que le processus de Markov associ\'e au flot de cette \'equation poss\`ede une unique mesure stationnaire qui attire toute autre solution avec une vitesse exponentielle. Ce r\'esultat implique, en particulier,  la loi forte des grands nombres ainsi que le th\'eor\`eme de la limite centrale pour les trajectoires. On s'int\'eresse ensuite au comportement asymptotique de la famille de mesures stationnaires correspondantes au bruit dont l'amplitude tend vers z\'ero et on prouve que cette famille ob\'eit au principe de grandes d\'eviations. Lorsque l'\'equation limite (i.e., sans bruit) poss\`ede un nombre fini de solutions stationnaires, parmi lesquelles une seule solution $\hat\uu$ asymptotiquement stable, ce r\'esultat implique que la famille de ces mesures converge faiblement vers la masse de Dirac concentr\'ee en $\hat\uu$. Finalement, on \'etudie le probl\`eme du comportement asymptotique en temps grands pour la famille de mesures d'occupation associ\'ees \`a l'\'equation des ondes non lin\'eaire et on \'etablit le r\'esultat de grandes d\'eviations locales pour cette famille. On prouve aussi que la bonne fonction de taux correspondante n'est pas triviale, ce qui implique qu'une forte concentration vers la mesure stationnaire est impossible.

\bigskip
\bigskip
\bigskip

\begin{center}
{\bf Abstract}
\end{center} This thesis is devoted to the study of ergodicity and large deviations for the stochastic  nonlinear wave (NLW) equation with smooth white noise in 3D. Under some standard growth and dissipativity assumptions on the nonlinearity, we show that the Markov process associated with the flow of NLW equation has a unique stationary measure that attracts the law of any solution with exponential rate. This result implies, in particular, the strong law of large numbers as well as the central limit theorem for the trajectories. We next consider the problem of small noise asymptotics for the family of stationary measures and prove that this family obeys the large deviations principle. When the limiting equation (i.e., without noise) possesses finitely many stationary solutions, among which only one asymptotically stable solution $\hat\uu$, this result implies that the family of these measures weakly converges to the Dirac measure concentrated at $\hat\uu$. Finally, we study the problem of large time asymptotics for the family of occupation measures corresponding to the NLW equation and show that it satisfies the local large deviations principle. We also show that a high concentration towards the stationary measure is impossible by proving that the corresponding rate function does not have the trivial form.

\clearpage 
\newpage







\setstretch{1.3} 

\acknowledgements{\addtocontents{toc}{\vspace{1em}} 

Je tiens \`a remercier, en premier lieu, mon directeur de th\`ese, Armen Shirikyan.  Ses connaissances profondes et diverses en math\'ematiques, son exigence, l'intuition forte, la bonne humeur et les qualit\'es personnelles ont laiss\'e leur trace sur ma th\`ese.

\bigskip
Je remercie tous les membres du jury pour avoir accept\'e d'\'evaluer cette th\`ese. Merci \`a Sandra Cerrai et Lorenzo Zambotti pour l'avoir rapport\'e.  

\bigskip
Je remercie chaleureusement tous les membres du laboratoire AGM o\`u j'ai pass\'e ma th\`ese. Merci au directeur du labo Emmanuel, au directeur du d\'epartement Eva, aux secr\'etaires Caroline et Linda, et \`a la responsable informatique Am\'elie.
\bigskip

Merci \`a tous les th\'esards du labo: Julien, Mouhamadou, Pierre, Thibault, ... Merci \`a Bruno (d\'ej\`a docteur) et \`a Pierre-Damien avec qui j'ai pass\'e plus de temps car on \' etait dans le m\^eme bureau;) Je remercie \'egalement les anciens th\'esards et donc les docteurs Anne-Sophie, Hong Cam, Julien S., Lysianne, Nicolas.  Merci \`a Andrey pour nos discussions sur des sujets tr\`es divers, y compris la litt\'erature, la politique, le sport, des choses administratives et un peu la math\'ematique;) Merci \`a mon compatriote et un grand fr\`ere de th\`ese Hayk Nersisyan pour ses encouragements.

\bigskip
Je veux  remercier \'egalement le ma\^itre de conf\'erence Thierry Daud\'e. Gr\^ace \`a lui, j'ai d\'ecouvert les meilleurs vins de Bordeaux (et la ville elle-m\^eme), le Th\'e\^atre de la Temp\^ete, le groupe \'ethio-jazz Akal\'e Wub\'e, certaines subtilit\'es grammaticales et plein d'autres choses.

\bigskip
Merci \`a l'ex-directeur du labo, Vladimir Georgescu, une personnalit\'e dont la pr\'esence seule suffit pour assurer une ambiance positive dans la salle. La proposition suivante n'est pas loin de la v\'erit\'e. Vladimir, il connait tout. Je n'ai jamais rencontr\'e quelqu'un avec de si grandes connaissances en math\'ematiques, on dirait un wikip\'edia de math;)

\bigskip
Merci \`a Vahagn Nersesyan, avec qui, entre autres, j'ai eu la chance d'\'ecrire un article. Je le remercie \'egalement pour son soutien.

\medskip
Je remercie mes amis arm\'eniens Alexandr Mkrtchyan, Davit Avetisyan, Gor Poghosyan qui ont \'et\'e avec moi malgr\'e la distance.

\medskip
Je remercie finalement ma famille. Merci \`a mon fr\`ere Vahagn et \`a ma m\`ere Nune pour tout ce qu'ils ont fait pour moi.
}
\clearpage 


\pagestyle{empty} 







\pagestyle{fancy} 

\tableofcontents 




\clearpage 

\setstretch{1.5} 

\setstretch{1} 

\addtocontents{toc}{\vspace{2em}} 


\mainmatter 

\pagestyle{fancy} 



\chapter{Introduction} 

\label{Chapter1} 



Cette th\`ese est consacr\'ee \`a l'\'etude des propri\'et\'es ergodiques et des grandes d\'eviations pour l'\'equation des ondes non lin\'eaire avec bruit blanc. On commence ce chapitre par une description dans un cadre abstrait des probl\`emes qu'on a \'etudi\'es. On pr\'esentera ensuite les r\'esultats ant\'erieurs sur ces probl\`emes et on d\'ecrira le mod\`ele pr\'ecis consid\'er\'e.

\section{Les probl\`emes dans le cadre abstrait}
{\it Ergodicit\'e.} Consid\'erons une \'equation d'\'evolution abstraite 
\be\label{c0.1}
\dt \uu(t)+\bB(\uu(t))=\vartheta(t)
\ee
dont le flot $\uu(t)$ appartient \`a un espace $X$. Ici $\mathfrak{b}$ est un op\'erateur non lin\'eaire
et $\vartheta=\vartheta^\omega$ est une force al\'eatoire, c'est-\`a-dire une fonction qui d\'epend d'une variable al\'eatoire $\om$. Soit $\vv$ une condition initiale du flot $\uu(t)$ qui peut \^etre elle-m\^eme al\'eatoire et soit $\mu(t)$ la loi de solution $\uu(t)$. Le probl\`eme d'ergodicit\'e consiste en les trois questions suivantes. \mpp{Existence de mesure stationnaire}: Est-ce qu'il existe une solution $\uu(t)$ dont la loi $\mu=\mu(t)$ est ind\'ependante de $t$? \mpp{Unicit\'e}: Si une telle solution existe, est-elle unique, i.e., si $\uu'(t)$ est une autre solution dont la loi $\mu'=\mu'(t)$ est stationnaire, alors a-t-on $\mu=\mu'$?
\mpp{M\'elange}: Si la mesure stationnaire $\mu$ est unique, est-elle m\'elangeante, i.e., attirant la loi de toute solution de l'\'equation \ef{c0.1}?

\medskip
Ainsi, lorsque $\mu$ est une mesure m\'elangeante pour \ef{c0.1}, on a
$$
\e\psi(\uu(t))\to \int_X \psi(\vv)\mu(\Dd\vv)\q\text{ lorsque }t\to\iin
$$
pour une large classe de fonctions $\psi:X\to \rr$ h\"old\'eriennes,  o\`u $\e\ph$ d\'esigne l'esp\'erance de $\ph$. Une autre question importante est la vitesse de cette convergence. On peut montrer en particulier, que si cette convergence est ``suffisamment rapide" alors on a \mpp{la loi forte des grands nombres}: pour tout $\vv\in X$ et $\De>0$ on a la convergence
$$
\lim_{t\to\iin}t^{-\f{1}{2}+\De}\left(t^{-1}\int_0^t\psi(\uu(s))\dd s-\int_X \psi(\vv)\mu(\Dd\vv)\right)=0
$$
avec $\pp_\vv$-probabilit\'e 1. On a \'egalement \mpp{le th\'eor\`eme de la limite centrale}:  si $\psi$ est une fonction h\"old\'erienne non identiquement nulle telle que $(\psi, \mu)=0$, alors pour tout $\vv\in X$, on a 
$$
\ddd_\vv\left(\f {1} {\sqrt{t}}\int_0^t\psi(\uu(s))\dd s\right)\to \nnn(0, \sigma_\psi)\q\text{lorsque } t\to\iin,
$$
o\`u $\ddd_\vv$ d\'esigne la distribution d'une variable sous la loi $\pp_\vv$ et $\sigma_\psi>0$ est une constante qui ne d\'epend que de $\psi$.

\bigskip
{\it Grandes d\'eviations pour les mesures stationnaires.} Soit $\mu^\es$ une unique mesure stationnaire de l'\'equation
\be\label{c0.4}
\dt \uu(t)+\mathfrak{b}(\uu(t))=\sqrt{\es}\,\vartheta(t),
\ee
o\`u $\vartheta$ est un bruit blanc.
La question est de savoir si on peut d\'ecrire le comportement asymptotique de la famille $(\mu^\es)$ lorsque $\es$ tend vers z\'ero. Plus pr\'ecis\'ement, on s'int\'eresse \`a l'existence d'une fonction $\vvv:X\to [0, \iin]$ qui d\'ecrit cette asymptotique dans le sens suivant. Pour tout ensemble bor\'elien $\Gamma$ de $X$, on a 
\be\label{c0.2}
-\inf_{\uu\in \dot\Gamma}\vvv(\uu)\le\liminf_{\es\to 0}\es\ln \mu^\es(\Gamma)\le\limsup_{\es\to 0}\es\ln \mu^\es(\Gamma)\le -\inf_{\uu\in \bar\Gamma}\vvv(\uu),
\ee
o\`u on d\'esigne par $\dot\Gamma$ et $\bar\Gamma$ son int\'erieur et sa fermeture, respectivement. 

\medskip
Lorsque la fonction $\vvv(\uu)$ est \`a niveau compact, c'est-\`a-dire l'ensemble $\{\vvv\le M\}$ est compact dans $X$ pour tout $M\ge 0$, on dit que la famille $(\mu^\es)$ satisfait le principe de grandes d\'eviations avec bonne fonction de taux $\vvv$. Remarquons que si $(\mu^\es)$ satisfait le principe de grandes d\'eviations avec la fonction $\vvv$ dont le noyau est fini, $\text{ker}(\vvv)=\{\hat\uu_1, \ldots, \hat\uu_\ell\}$, alors chaque limite faible $\mu_*$ de la famille $(\mu^\es)$ est de forme
$$
\mu_*=\sum_{i=1}^\ell c_i\De_{\hat\uu_i},\q 0\le c_i\le 1.
$$
On verra dans la suite que c'est le cas lorsque l'\'equation $\bB(\uu)=0$ a un nombre fini de solutions. Plus pr\'ecis\'ement, on a $\text{ker}(\vvv)\subset \ker(\bB)$ et $\vvv(\hat\uu_i)=0$ implique que $\hat\uu_i$ est asymptotiquement stable pour l'\'equation limite
\be\label{c0.7}
\dt\uu(t)+\bB(\uu(t))=0,
\ee
\`a savoir, toute solution $\uu(t)$ de \ef{c0.7} issue d'un petit voisinage de $\hat\uu_i$ tend vers $\hat\uu_i$ lorsque $t\to\iin$. Il est important de noter que l'inverse n'est pas vrai et la situation suivante est possible: la famille $\mu^\es$ converge faiblement vers la masse de Dirac concentr\'ee en $\hat\uu\in\{\hat\uu_1, \ldots, \hat\uu_\ell\}$, alors que $\hat\uu$ n'est pas le seul \'equilibre qui est stable asymptotiquement. Voici un exemple: on consid\`ere \ef{c0.4} dans $\rr$ avec $\mathfrak{b}(\uu)=\uu(\uu-1)(\uu-3)$. Alors $\mu^\es$ converge faiblement vers la masse de Dirac concentr\'ee en $\uu=3$ alors que $\uu=0$ est un \'equilibre asymptotiquement stable pour l'\'equation limite \ef{c0.7}. La formule explicite de la bonne fonction de taux correspondante donn\'ee plus bas (voir \ef{c0.8}) permet de v\'erifier cet exemple.

\bigskip
{\it Grandes d\'eviations pour les mesures d'occupation.} Soit $\uu(t)$ une solution de l'\'equation \ef{c0.1} et soit $\zeta(t)$ la mesure d'occupation associ\'ee, c'est \`a dire, 
\be\label{c0.3}
\zeta(t)=\f{1}{t}\int_0^t \De_{\uu(\tau)}\dd \tau,\q t>0,
\ee
o\`u $\De_\vv$ d\'esigne la masse de Dirac en $\vv$. Le probl\`eme consiste \`a d\'ecrire le comportement asymptotique de la famille $(\zeta(t))$ lorsque $t\to\iin$. On veut savoir s'il existe une fonction $I$ d\'efinie sur l'ensemble $\ppp(X)$ des mesures de probabilit\'e sur $X$ et \`a valeurs dans $[0, \iin]$ telle qu'on a 
\be\label{c0.6}
-\inf_{\lm\in \dot\Gamma}I(\lm)\le\liminf_{t\to\iin}\f{1}{t}\ln \pp(\zeta(t)\in\Gamma)\le\limsup_{t\to\iin}\f{1}{t}\ln \pp(\zeta(t)\in\Gamma)\le -\inf_{\lm\in \bar\Gamma}I(\lm)
\ee
pour tout ensemble bor\'elien $\Gamma$ de $\ppp(X)$.

\section{R\'esultats ant\'erieurs}
{\it R\'esultats ant\'erieurs sur l'ergodicit\'e.}
Dans le cadre des EDP stochastiques, le probl\`eme d'existence d'une mesure stationnaire a \'et\'e \'etudi\'e par Vishik--Fursikov--Komech \cite{VKF-1979} pour le syst\`eme de Navier-Stokes, et a \'et\'e ensuite d\'evelopp\'e pour d'autres probl\`emes (voir les r\'ef\'erences dans  \cite{DZ1992}). Dans le cas d'EDP stochastique dissipatives, l'argument de Bogolyubov-Krylov permet de construire une telle mesure si on arrive \`a montrer qu'il existe un point initial $\vv\in X$ tel que la moyenne de solution $\uu(t)$ issue de $\vv$ est born\'ee uniform\'ement en temps dans un espace plus r\'egulier que l'espace de phase $X$. L'unicit\'e et le m\'elange sont des questions beaucoup plus d\'elicates car une description plus compl\`ete est n\'ecessaire sur le comportement asymptotique des solutions en temps grands. Les premiers r\'esultats dans cette direction ont \'et\'e \'etablis dans les papiers \cite{FM-1995, KS-cmp2000, EMS-2001, BKL-2002} consacr\'es \`a l'\'etude du syst\`eme de  Navier--Stokes et d'autres EDP provenant de la physique math\'ematique (voir aussi \cite{MR1245306, MR1641664} pour les \'equations paraboliques en 1D). Ces r\'esultats ont \'et\'e ensuite \'etendus aux \'equations avec un bruit multiplicatif et tr\`es d\'eg\'en\'er\'e \cite{odasso-2008,HM-2008}. Voir le livre \cite{KS-book} et l'article \cite{debussche2013ergodicity} pour un compte rendu d\'etaill\'e des r\'esultats obtenus dans cette direction. 

\medskip
Dans le cadre des \'equations dispersives il y a moins de r\'esultats connus.
Un des premiers r\'esultats sur l'ergodicit\'e des EDP dispersives a \'et\'e \'etabli dans l'article de E, Khanin, Mazel et Sinai \cite{EWKMS2000}, o\`u les auteurs prouvent l'existence et l'unicit\'e de mesure stationnaire pour l'\'equation de Burgers non visqueuse perturb\'ee par un bruit blanc p\'eriodique dans l'espace.
L'ergodicit\'e de l'\'equation des ondes a \'et\'e \'etudi\'e par Barbu et Da Prato \cite{BD-2002}. Ici, les auteurs prouvent l'existence d'une mesure stationnaire pour une nonlin\'earit\'e qui est une fonction non d\'ecroissante et satisfaisant aux conditions de dissipativit\'e standards. Ils \'etablissent aussi un r\'esultat d'unicit\'e mais sous une hypoth\`ese assez restrictive sur la nonlin\'earit\'e.  Dans l'article par Debussche et Odasso \cite{DO-2005}, les auteurs \'etablissent la convergence vers l'\'equilibre pour l'\'equation de Schr\"odinger non lin\'eaire 1D avec une dissipation d'ordre z\'ero. Dans \cite{DirSoug2005}, Dirr et Souganidis \'etudient les \'equations de  Hamilton-Jacobi perturb\'ees par un bruit additif. Ils prouvent, en particulier, que sous les hypoth\`eses appropri\'ees sur l'hamiltonien, l'\'equation stochastique poss\`ede  \`a une constante pr\`es une unique solution qui est p\'eriodique en espace et attirant toute autre solution, si l'\'equation sans bruit elle-m\^eme poss\`ede une telle solution.
Dans l'article relativement r\'ecent par Debussche et Vovelle \cite{debussche2013invariant}, l'existence et l'unicit\'e d'une mesure stationnaire est \'etudi\'ee pour les lois de conservations scalaires du premier ordre avec un bruit additif. C'est une g\'en\'eralisation des r\'esultats obtenus dans \cite{EWKMS2000} au cas d'une dimension quelconque.

\bigskip
{\it R\'esultats ant\'erieurs sur grandes d\'eviations pour les mesures stationnaires.} \`A notre connaissance, dans le cadre des EDP,  il n'y a que deux r\'esultats ant\'erieurs sur ce probl\`eme. Ce sont les articles de Sowers \cite{sowers-1992b} et Cerrai-R\"ockner \cite{CeRo2005} o\`u le principe de grandes d\'eviations est \'etabli pour les mesures stationnaires de l'\'equation de r\'eaction-diffusion.  Dans le premier, la force est une perturbation non gaussienne, alors que le deuxi\`eme traite le cas d'un bruit multiplicatif. Dans les deux articles, l'origine est le seul \'equilibre de l'\'equation limite, et le bruit est suffisamment irr\'egulier en espace.  On d\'ecrira avec plus de d\'etails plus loin les r\'esultats de grandes d\'eviations du type Freidlin-Wentzell dans le cadre des EDO.

\medskip
Tous les autres r\'esultats sur les grandes d\'eviations pour des EDP avec un bruit dont l'amplitude tend vers z\'ero concernent principalement les solutions du probl\`eme de Cauchy. C'est la direction la plus d\'evelopp\'ee. Les EDP stochastiques \'etudi\'ees dans ce contexte incluent l'\'equation de r\'eaction-diffusion \cite{sowers-1992a, CerRoc2004}, le syst\`eme de Navier-Stokes en 2D, \cite{Chang1996, SrSu2006}, l'\'equation de Schr\"odinger non lin\'eaire \cite{Gautier2005-2}, l'\'equation de Allen-Cahn  \cite{HairWeb2014},  les \'equations quasi-g\'eostrophiques \cite{LiuRocZhuChan2013}, les \'equations avec un drift monotone g\'en\'eral \cite{Liu2010}, et lois de conservations \cite{Mariani2010}. Voir aussi les articles \cite{KalXi1996, CheMil1997, CarWeb1999, CM-2010} pour des r\'esultats dans un cadre plus abstrait qui couvre une large classe d'EDP stochastiques, y compris les mod\`eles hydrodynamiques en 2D. Une autre direction bien d\'evelopp\'ee est l'\'etude du probl\`eme de sortie pour les trajectoires. Voir les articles \cite{Peszat1994, CheZhiFre2005, Gautier2008, FreKor2010, CerSal2014, BreCerFre2014}.

\bigskip
{\it R\'esultats ant\'erieurs sur grandes d\'eviations pour les mesures d'occupation.}
Dans ce cas aussi, il y a tr\`es peu de travaux. Les deux premiers r\'esultats dans cette direction ont \'et\'e obtenus par Gourcy \cite{gourcy-2007a,gourcy-2007b}, o\`u l'auteur prouve le principe de grandes d\'eviations pour les mesures d'occupation des \'equations de Burgers et de Navier-Stokes sous l'hypoth\`ese que la force est un bruit blanc qui est suffisamment irr\'egulier en espace. Il y a eu un progr\`es r\'ecent gr\^ace aux travaux de Jak$\check{\rm s}$i\'c, Nersesyan, Pillet et Shirikyan \cite{JNPS-2012, JNPS-2014} o\`u les auteurs ont \'etabli les grandes d\'eviations pour les \'equations avec forte dissipation du type Navier-Stokes et Ginzburg-Landau, avec une force discr\`ete born\'ee ou non born\'ee sans imposer une hypoth\`ese de faible r\'egularit\'e.

\bigskip
{\it Grandes d\'eviations pour les mesures stationnaires dans le cadre des EDO.} Consid\'erons l'\'equation \ef{c0.4} dans le cas de la dimension finie, c'est \`a dire lorsque $\uu(t)\in \rr^n$. Si la fonction $\bB(\uu)$ a un potentiel, i.e., si elle est de la forme $\bB(\uu)=\g \mathfrak{a} (\uu)$, alors la mesure stationnaire $\mu^\es$ existe si et seulement si 
$$
\int_{\rr^n}e^{-2\aA(\uu)}\dd\uu<\iin
$$
et sa densit\'e est donn\'ee par
$$
m^\es(\uu)=(\int_{\rr^n}e^{-2\aA(\vv)/\es}\dd\vv)^{-1}e^{-2\aA(\uu)/\es}.
$$
Gr\^ace au principe de Laplace, on voit que la famille $(\mu^\es)$ satisfait le principe de grandes d\'eviations avec la bonne fonction de taux $\vvv:\rr^n\to [0, \iin)$ donn\'ee par
\be\label{c0.8}
\vvv(\uu)=2(\aA(\uu)-\inf_{\rr^n}\aA).
\ee
Lorsque la fonction $\bB$ n'a pas de potentiel, la description du comportement asymptotique de la famille $(\mu^\es)$ n'est plus triviale. Dans le cas o\`u $\bB$ admet un seul point stationnaire, on peut utiliser les techniques d\'evelopp\'ees par Sowers \cite{sowers-1992b} et Cerrai-R\"ockner \cite{CeRo2005} pour d\'ecrire ces asymptotiques. Supposons que $\hat\uu\in\rr^n$ est l'unique solution de $\bB(\uu)=0$. Alors la famille $(\mu^\es)$ satisfait le principe de grandes d\'eviations avec une bonne fonction de taux $\vvv:\rr^n\to [0, \iin]$, o\`u $\vvv(\uu)$ repr\'esente l'\'energie minimale n\'ecessaire pour atteindre le point $\uu$ \`a partir de $\hat\uu$ en un temps fini. Remarquons que dans ce cas $\mu^\es$ converge faiblement vers la masse de Dirac concentr\'ee en $\hat\uu$. Si $\bB$ a plusieurs (mais en nombre fini) points stationnaires, on utilise la th\'eorie d\'evelopp\'ee par Freidlin et Wentzell pour r\'esoudre ce probl\`eme. Sans entrer dans les d\'etails, on va d\'ecrire ici la bonne fonction de taux $\vvv$ correspondante. Soient $\hat\uu_1, \ldots, \hat\uu_\ell$ les solutions de $\bB(\uu)=0$.
\'Etant donn\'es $\ell\in\nn$ et $i\leq\ell$, 
on notera $G_{\ell}(i)$ l'ensemble de graphes compos\'es de fl\`eches 
$$
(m_1\to m_2 \to\cdots \to m_{\ell-1}\to m_\ell)
$$
tel que
$$
\{m_1,\ldots, m_\ell\}=\{1,\ldots, \ell\}\q\text{ et } m_\ell=i.
$$
On introduit
\be\label{c0.5}
W_\ell(\hat\uu_i)=\min_{\mathfrak{g}\in G_{\ell}(i)}\sum_{(m\to n)\in \mathfrak{g}} V(\hat\uu_m, \hat\uu_n),
\ee
o\`u $V(\uu_1, \uu_2)$ est l'\'energie minimale n\'ecessaire pour atteindre $\uu_2$ \`a partir de $\uu_1$ en un temps fini. La fonction $\vvv:\rr^n\to [0, \iin]$ est d\'efinie par
\be\label{c0.6}
\vvv(\uu)=\min_{i\leq \ell}[W_\ell(\hat \uu_i)+V(\hat\uu_i, \uu)]-\min_{i\leq\ell}W_\ell(\hat\uu_i).
\ee
En d\'eveloppant l'approche introduite par Freidlin et Wentzell, on montrera que ce r\'esultat reste vrai dans le cadre des EDP.

\section{Le mod\`ele d\'eterministe en consid\'eration} \label{c1.13}
On commence cette section par la description du mod\`ele  d\'eterministe (i.e., sans bruit) qu'on \'etudie et on pr\'esente quelques propri\'et\'es importantes des solutions.
Le mod\`ele sans perturbation est donn\'e par l'\'equation des ondes amortie
\be\label{c1.1}
\p_t^2u+\gamma \p_tu-\de u+f(u)=h(x)
\ee
dans un domaine born\'e $D\subset \rr^3$, o\`u $\gamma>0$  est le param\`etre d'amortissement, $h(x)$ est une fonction dans $H^1_0(D)$, et $f(u)$ est la nonlin\'earit\'e. Ici et dans le chapitre suivant, on ne va pas imposer d'hypoth\`eses g\'en\'erales sur $f$. Nous nous limitons \`a consid\'erer le cas de fonctions 
\be\label{c1.2}
f(u)=|u|^\rho u-\lm u,
\ee
o\`u $\rho\in (0, 2)$ et $\lm\in \rr$, qui mod\'elise l'\'equation de Klein-Gordon et pour laquelle ces hypoth\`eses sont satisfaites. On va noter $F$ la primitive de $f$. On \'ecrira $\uu(t)$ pour le flot de l'\'equation \ef{c1.1} \`a l'instant $t$, \`a savoir, $\uu(t)=[u(t), \dt u(t)]$. Le point initial $\uu(0)$ appartient \`a l'espace de phase $\h=H^1_0(D)\times L^2(D)$ qui est muni de la norme
$$
|\uu|_{\h}^2=\|\g u_1\|^2+\|u_2+\al u_1\|^2\q \text{ pour } \uu=[u_1,u_2]\in\h,
$$
o\`u $\al=\al(\gamma)>0$ est un petit param\`etre et $\|\cdot\|$ d\'esigne la norme dans $L^2(D)$. 

\medskip
{\it Estimations \`a priori.} On introduit la fonctionnelle d'\'energie de l'\'equation \ef{c1.1} donn\'ee par
\be\label{c1.3}
\ees(\uu)=|\uu|_\h^2+2\int_D F(u_1)\dd x\q\text{ pour }\uu=[u_1, u_2]\in\h.
\ee
La proposition suivante permet de montrer, en particulier, que le probl\`eme de Cauchy pour l'\'equation \ef{c1.1} est bien pos\'e dans $\h$. La preuve est standard.
\begin{proposition-fr}\label{c1.7}
Sous les hypoth\`eses ci-dessus, pour tout $\uu_0\in \h$, le flot $\uu(t)$ de l'\'equation \ef{c1.1} issu de $\uu_0$ v\'erifie
\be\label{c1.4}
\ees(\uu(t))\le \ees(\uu_0)e^{-\al t}+C(\gamma, \|h\|)\q\text{ pour tout } t\ge 0. 
\ee
\end{proposition-fr}
Un autre r\'esultat important concerne la r\'egularit\'e des solutions. Il est bien connu que l'op\'erateur r\'esolvant de l'\'equation \ef{c1.1} n'a pas d'effet r\'egularisant. N\'eanmoins, il y a un effet r\'egularisant asymptotique qui est central dans l'\'etude. Pour \'enoncer le r\'esultat, on introduit un espace $\h^s$ plus r\'egulier que l'espace des phases $\h$, $\h^s=[H^{s+1}(D)\cap H^1_0(D)]\times H^s(D)$, o\`u $H^s$ est l'espace de Sobolev d'ordre $s$, muni de la norme naturelle $\|\cdot\|_s$. Ici $s$ est une constante fix\'ee dans l'intervalle $(0, 1-\rho/2)$.
\begin{proposition-fr}\label{c1.5}
Sous les hypoth\`eses ci-dessus, pour tout $\uu_0\in \h$, le flot $\uu(t)$ de \ef{c1.1} avec le point initial $\uu_0$ peut \^etre d\'ecompos\'e en la somme $\uu(t)=\vv(t)+\Zz(t)$, tel qu'on ait 
\be\label{c1.6}
|\vv(t)|_{\h}^2\le |\uu_0|_\h^2 e^{-\al t},\q \q |\Zz(t)|_{\h^s}^2\le C(|\uu_0|_\h, \|h\|).
\ee
\end{proposition-fr}

\medskip
{\it Esquisse de la preuve}. Soit $\uu_0\in\h$. On consid\`ere le flot $\vv(t)$ de l'\'equation lin\'eaire
\be\label{c1.8}
\p_t^2 v+\gamma \p_t v-\de v=0
\ee
issu du point $\uu_0$. En multipliant cette \'equation par $\dt v+\al v$ et en int\'egrant sur $D$, on obtient
$$
\p_t |\vv(t)|_\h^2\le -\al |\vv(t)|_\h^2\q\text{ pour } t\ge 0,
$$
d'o\`u la premi\`ere in\'egalit\'e dans \ef{c1.6}. On pose $\Zz(t)=\uu(t)-\vv(t)$. D'apr\`es \ef{c1.1} et \ef{c1.6}, cette fonction correspond au flot de l'\'equation
\be\label{c1.9}
\p_t^2 z+\gamma \p_t z-\de z+f(u)=h(x)
\ee
issu de l'origine. En d\'erivant une fois cette \'equation par rapport \`a $t$, et en utilisant quelques transformations standards ainsi que les injections de Sobolev et l'in\'egalit\'e \ef{c1.4}, on peut voir que la fonction $a(t)=\dt z(t)$ satisfait l'in\'egalit\'e $|[a(t), \dt a(t)]|_{\h^{s-1}}\le C(|\uu_0|_\h, \|h\|)$. Pour finir la preuve, il suffit de remarquer que gr\^ace \`a \ef{c1.9}, on a $\de z=\dt a+\gamma a+f(u)-h$ et donc $\|z(t)\|_{s+1}\le C'(|\uu_0|_\h, \|h\|)$.

\medskip
{\it Estimations du type Foia\c{s}-Prodi.} Le r\'esultat suivant montre que le comportement des solutions en temps grand est d\'etermin\'e par les basses fr\'equences. On consid\`ere les flots $\uu(t)$ et $\vv(t)$ des \'equations
\begin{align}
\p_t^2 u+\gamma \p_t u-\de u+f(u)&=h(x)+\ph(t,x),\label{FP"IN"1}\\
\p_t^2 v+\gamma \p_t v-\de v+f(v)+P_N[f(u)-f(v)]&=h(x)+\ph(t,x)\label{FP"IN"2},
\end{align}
sur un intervalle $[0, T]$ quelconque, o\`u $\varphi$ est soit une fonction dans l'espace $L^2(0, T; L^2(D))$, soit une fonction dont la primitive est dans $C(0, T; H^1_0(D))$. Ici $P_N$ d\'esigne la projection orthogonale de $L^2(D)$ sur l'espace engendr\'e par les $N$ premi\`eres fonctions propres $e_1(x), \ldots, e_N(x)$ de l'op\'erateur de Laplace. Supposons qu'il existe des constantes positives $l$ et $K$ telles que l'in\'egalit\'e
\be\label{c1.10}
\int_0^t|\Zz(s)|_\h^2\dd s\leq l+Kt \q \text{ pour tout } t\in [0, T]
\ee
soit vraie pour $\Zz=\uu$ et $\Zz=\vv$.
\begin{proposition-fr}\label{4.13}Sous les hypoth\`eses ci-dessus, pour tout $\es>0$ il existe un entier $N_*$ qui ne d\'epend que de $\es$ et $K$ tel que pour tout $N\ge N_*$, on a 
\be \label{4.16}
|\vv(t)-\uu(t) |^2_\h\leq e^{-\al t+\es l}|\vv(0)-\uu(0)|^2_\h \q \text{ pour tout } t\in [0, T].
\ee
\end{proposition-fr}
Pour la preuve voir Chapitre \ref{Chapter3}.
 \section{Mesures et couplage}
Soit $X$ un espace polonais, i.e. un espace m\'etrique s\'eparable complet, et soit $\bbb(X)$ la tribu des sous-ensembles bor\'eliens de $X$. 
On notera $\ppp(X)$ l'ensemble des mesures de probabilit\'e sur $X$. \'Etant donn\'ees deux mesures $\mu, \mu'\in \ppp(X)$, on d\'efinit la distance variationnelle entre $\mu$ et $\mu'$ donn\'ee par la formule
$$
|\mu-\mu'|_{var}=\sup_{\Gamma\in \bbb(X)}|\mu(\Gamma)-\mu'(\Gamma)|.
$$
Il est facile de voir que $(\ppp(X), |\cdot|_{var})$ est un espace m\'etrique complet.
Soit $C_b(X)$ l'espace des fonctions continues born\'ees sur $X$ et soit $\psi\in C_b(X)$. On introduit
$$
(\psi, \mu)=\int_X \psi(\uu)\mu(\Dd\uu).
$$
On peut montrer qu'on a l'\'egalit\'e
$$
|\mu-\mu'|_{var}=\f{1}{2}\sup_{|\psi|_{\iin}\le 1}|(\psi,\mu)-(\psi, \mu')|,
$$
o\`u $|\cdot|_\iin$ est la norme naturelle de $C_b(X)$. 
\begin{definition-fr} Soient $\mu_1, \mu_2\in\ppp(X)$. Une paire $(\zeta_1, \zeta_2)$ de variables al\'eatoires d\'efinies sur le m\^eme espace probabilis\'e est appel\'ee \mpp{un couplage} pour $(\mu_1, \mu_2)$ si 
$$
\ddd\zeta_i=\mu_i \q\text{ pour } i=1, 2,
$$
o\`u $\ddd\zeta$ d\'esigne la loi d'une variable $\zeta$.
\end{definition-fr}
Remarquons que gr\^ace \`a l'in\'egalit\'e
$$
\mu_1(\Gamma)-\mu_2(\Gamma)=\pp(\zeta_1\in\Gamma)-\pp(\zeta_2\in\Gamma)=\e(\ch_{\zeta_1\neq \zeta_2}[\ch_{\Gamma}(\zeta_1)-\ch_{\Gamma}(\zeta_2)])\le \pp(\zeta_1\neq\zeta_2)
$$
on a toujours
$$
|\mu_1-\mu_2|_{var}\le  \pp(\zeta_1\neq\zeta_2).
$$
\begin{definition-fr} Un couplage $(\zeta_1, \zeta_2)$ de $(\mu_1, \mu_2)$ est dit \mpp{maximal}, si
$$
|\mu_1-\mu_2|_{var}=  \pp(\zeta_1\neq\zeta_2)
$$
et si les variables al\'eatoires $\zeta_1$ et $\zeta_2$ conditionn\'ees sur l'\'ev\'enement $\nnn=\{\zeta_1\neq \zeta_2\}$ sont ind\'ependantes, c'est-\`a-dire
$$
\pp(\zeta_1\in\Gamma_1, \zeta_2\in\Gamma_2\,|\,\nnn)=\pp(\zeta_1\in\Gamma_1\,|\,\nnn)\,\pp(\zeta_2\in\Gamma_2\,|\,\nnn)
$$ 
pour tout $\Gamma_1, \Gamma_2\in \bbb(X)$.
\end{definition-fr}
Le th\'eor\`eme suivant est fondamental. Voir \cite{KS-book} pour la preuve.
\begin{theorem-fr}
Tout couple $(\mu_1, \mu_2)$ de mesures de probabilit\'e sur un espace polonais admet un couplage maximal $(\zeta_1, \zeta_2)$.
\end{theorem-fr}
\section{Convergence faible des mesures}
\begin{definition-fr}
On dit qu'une suite de mesures $(\mu_n)\subset\ppp(X)$ converge faiblement vers $\mu\in\ppp(X)$ si 
$$
(\psi, \mu_n)\to (\psi, \mu)\q\text{ lorsque }n\to\iin
$$
pour tout $\psi\in C_b(X)$. 
\end{definition-fr}
La topologie de la convergence faible est m\'etrisable. Elle est engendr\'ee par la m\'etrique
$$
|\mu-\mu'|_L^*=\sup_{|\psi|_L\le 1}|(\psi,\mu)-(\psi, \mu')|
$$
o\`u on pose
$$
|\psi|_L=|\psi|_\iin+\sup_{\uu\neq \vv}\f{|\psi(\uu)-\psi(\vv)|}{d_X(\uu, \vv)}.
$$
De toute \'evidence, cette topologie est plus faible que celle engendr\'ee par la convergence en variation. On peut montrer que $(\ppp(X), |\cdot|_L^*)$ est un espace polonais (voir \cite{KS-book}).

\section{Processus de Markov}
Soit $X$ un espace polonais. \mpp{Un espace probabilis\'e filtr\'e} est un espace mesurable $(\Omega, \fff)$ avec une famille $(\fff_t)_{t\in\ttt_+}\subset\fff$ croissante de $\sigma$-alg\`ebres. 
\begin{definition-fr}\label{c1.14}\mpp{Un processus de Markov dans} $X$ est une collection d'objets suivants. 
\bi
\item un espace mesurable $(\Omega, \fff)$ avec une filtration $(\fff_t)_{t\ge 0}$;
\item une famille de mesures de probabilit\'e $(\pp_\vv)_{\vv\in X}$ sur $(\Omega, \fff)$ telle que l'application $\vv\mapsto \pp_\vv(A)$ est universellement mesurable  \footnote{Une fonction $g:\Omega\to \rr$ est dite universellement mesurable, si pour tout $a\in\rr$, l'ensemble $\{g\le a\}$ appartient \`a la compl\'etion de $\fff$ par rapport \`a toute mesure de probabilit\'e sur $(\Omega, \fff)$.}  pour tout $A\in\fff$;
\item un processus $(\uu(t))_{t\in\ttt_+}$ \`a valeur dans $X$ adapt\'e \`a la filtration $\fff_t$ (i.e., $\uu(t)$ est $\fff_t$-mesurable pour tout $t\ge 0$) et qui v\'erifie
\begin{align*}
\pp_\vv\{\uu(0)=\vv\}&=1,\\
\pp_\vv\{\uu(t+s)\in \Gamma\,|\,\fff_s\}&=P_t(\uu(s), \Gamma) \q\q\pp_v\text{-presque partout}
\end{align*}
pour tout $\vv\in X$, $\Gamma\in\bbb(X)$ et $s, t\ge 0$. Ici $P_t$ d\'esigne la probabilit\'e de transition donn\'ee par $P_t(\vv, \Gamma)=\pp_\vv\{\uu(t)\in\Gamma\}$.
\ei
\end{definition-fr}
\'Etant donn\'e un processus de Markov, on peut lui associer deux familles $(\PPPP_t)$ et $(\PPPP^*_t)$ d'op\'erateurs (appel\'es \mpp{semigroupes de Markov}) agissant, respectivement, sur l'espace $b(X)$ des fonctions mesurables born\'ees et l'espace $\ppp(X)$ des mesures de probabilit\'e. On les d\'efinit par
\begin{align*}
\PPPP_t\psi(\uu)&=\int_\h\psi(\Zz)P_t(\uu,\Dd\Zz) \q \,\text{pour tout } \psi\in b(X),\\
\PPPP^*_t\lm(\Gamma)&=\int_\h P_t(\uu,\Gamma)\lm(\Dd\uu)\q \text{pour tout } \lm\in\ppp(X).
\end{align*}
On a $\PPPP_0=Id$, $\PPPP_{t+s}=\PPPP_t\circ\PPPP_s$, ainsi que
$$
(\PPPP_t\psi, \lm)=(\psi, \PPPP_t^*\lm)\q\text{ pour tout }\psi\in b(X) \text{ et } \lm\in\ppp(X).
$$
\section{Extension d'un processus de Markov}\label{EdpdM}
Soit $(\uu(t), \pp_\vv)$ un processus de Markov dans $X$. On consid\`ere le produit direct $\bar X=X\times X$ et les op\'erateurs $\Pi$ et $\Pi'$ de projections naturelles de $\bar X$ sur $X$, i.e., 
$$
\Pi(\bar\uu)=\uu, \q \Pi'\bar\uu=\uu'\q\text{ pour }\bar\uu=(\uu, \uu')\in X.
$$ 
Soit $(\bar\uu(t), \pp_{\bar\vv})$ un processus de Markov dans $\bar X$. 
\begin{definition-fr}On dira que $(\bar\uu(t), \pp_{\bar\vv})$ est \mpp{une extension de} $(\uu(t), \pp_\vv)$ si
$$
\Pi_*\bar P_t(\bar\uu, \cdot)=P_t(\uu, \cdot), \q \Pi'_* \bar P_t(\bar\uu, \cdot)=P_t(\uu', \cdot),
$$
o\`u $\bar P_t(\bar\uu, \cdot)$ est la probabilit\'e de transition du processus $(\bar\uu(t), \pp_{\bar\vv})$ et on d\'esigne par $\psi_*\lm$ l'image de $\lm$ sous $\psi$, i.e., $\psi_*\lm(\cdot)=\lm(\psi^{-1}(\cdot))$.
\end{definition-fr}
\section{Mesures stationnaires et m\'elange}
\begin{definition-fr}
Soit $(\uu(t), \pp_\vv)$ un processus de Markov dans un espace $X$. Une mesure $\mu\in\ppp(X)$ est dite \mpp{stationnaire} pour $(\uu(t), \pp_\vv)$ si $\PPPP^*_t\mu=\mu$ pour tout $t\ge 0$. Une mesure stationnaire $\mu$ est dite \mpp{m\'elangeante} si $\PPPP^*_t\lm$ converge faiblement vers $\mu$ pour tout $\lm\in \ppp(X)$, i.e.,
$$
(\PPPP_t\psi,\lm)\to (\psi, \mu)\q\text{ lorsque } t\to\iin 
$$
pour tout $\psi\in C_b(X)$.
\end{definition-fr}
\section{Principe de grandes d\'eviations}
Soit $\zzz$ un espace polonais. Une fonction $\mathfrak{I}$ d\'efinie sur $\zzz$ et \`a valeurs dans $[0,\iin]$ dont les ensembles de niveaux sont compacts, c'est-\`a-dire,  $\{\mathfrak{I}\leq M\}$ est compact dans $\zzz$ pour tout $M\geq 0$, est appel\'ee \mpp{une bonne fonction de taux}. Soit $(\mathfrak{m}^\es)_{\es>0}$ une famille de mesures de probabilit\'e sur $\zzz$. 
\begin{definition-fr}
On dit que la famille $(\mathfrak{m}^\es)_{\es>0}$ satisfait \mpp{le principe de grandes d\'eviations dans} $\zzz$ \mpp{avec une bonne fonction de taux} $\mathfrak{I}:\zzz\to [0,\iin]$ si on a
$$
-\inf_{z\in \dot\Gamma}\mathfrak{I}(z)\le\liminf_{\es\to 0}\es\ln \mathfrak{m}^{\es}(\dot\Gamma)\le \limsup_{\es\to 0}\es\ln \mathfrak{m}^{\es}(\bar\Gamma)\le-\inf_{z\in \bar\Gamma} \mathfrak{I}(z)
$$
pour tout ensemble mesurable $\Gamma\subset \zzz$, o\`u $\dot\Gamma$ et $\bar\Gamma$ d\'esignent son int\'erieur et sa fermeture, respectivement. 
\end{definition-fr}
\section{Le mod\`ele complet}\label{c1.15}
On consid\`ere l'\'equation des ondes non lin\'eaire amortie
\be\label{c1.11}
\p_t^2u+\gamma \p_tu-\de u+f(u)=h(x)+\vartheta(t,x)
\ee
dans un domaine born\'e $D\subset \rr^3$. Les hypoth\`eses sur $h$ et $f$ sont les m\^emes que dans la Section \ref{c1.13}. En ce qui concerne la force $\vartheta$, on suppose que c'est un bruit blanc de la forme
\be\label{c2.3}
\vartheta(t,x)=\sum_{j=1}^\infty b_j \dt\beta_j(t)e_j(x).
\ee
Ici, $\{\beta_j(t)\}$ est une suite de mouvements browniens standards ind\'ependants, $\{e_j\}$ est une base orthonorm\'ee de $L^2(D)$ form\'ee de fonctions de l'op\'erateur de Laplace. Les coefficients $b_j>0$ sont des nombres v\'erifiant
\be\label{c2.4}
\BBB_1=\sum_{j=1}^\iin \lm_j b_j^2<\iin,
\ee
o\`u $\lm_j$ est la valeur propre associ\'ee \`a la fonction $e_j(x)$. On commence par la d\'efinition de solution de l'\'equation \ef{c1.11}. On introduit
$$
\hat\zeta(t)=\sum_{j=1}^\iin\beta_j(t)[0, b_j e_j]\in C(\rr_+;\h).
$$
\begin{definition-fr}\label{definition 2.1}
Soit $\uu_0$ une variable al\'eatoire \`a valeurs dans $\h$ d\'efinie sur un espace probabilis\'e $(\omm,\fff,\pp)$ et qui est ind\'ependante de $\hat\zeta(t)$. Un processus al\'eatoire $\uu(t)=[u(t),\dt u(t)]$ d\'efini sur  $(\omm,\fff,\pp)$ est appel\'e la \emph{solution} (ou \emph{le flot}) de l'\'equation \eqref{c1.11} si les conditions suivantes sont satisfaites.
\bi 
\item Presque toute trajectoire $\uu(t)$ appartient \`a l'espace $C(\rr_+;\h)$, et le processus $\uu(t)$ est adapt\'e \`a la filtration $\fff_t$ engendr\'ee par $\uu_0$ et $\hat\zeta(t)$.

\item L'\'equation \eqref{c1.11} est satisfaite dans le sens o\`u, avec probabilit\'e 1, on a
\be\label{c1.12}
\uu(t)=\uu_0+\int_0^t g(s)\,ds+\hat\zeta(t),\q t\geq 0,
\ee
o\`u on pose
$$
g(t)=[\dt u,-\gamma \dt u+\de u-f(u)+h(x)],
$$
et la relation \ef{c1.12} est valable dans $L^2(D)\times H^{-1}(D)$.
\ei
\end{definition-fr}
\begin{theorem-fr}\label{2.12}
Sous les hypoth\`eses ci-dessus, soit $\uu_0$ une variable al\'eatoire \`a valeurs dans $\h$ qui est ind\'ependante de $\hat\zeta$ et qui v\'erifie $\e\ees(\uu_0)<\iin$. Alors l'\'equation \eqref{c1.11} poss\`ede une solution dans le sens de la D\'efinition \ref{definition 2.1}. De plus, cette solution est unique, \`a savoir, si $\tilde\uu(t)$ est une autre solution avec la condition initiale $\uu_0$, alors, avec $\pp$-probabilit\'e 1, on a  $\uu(t)=\tilde \uu(t)$ pour tout $t\geq 0$. Par ailleurs, on a l'estimation \`a priori suivante
\be\label{2.2}
\e\ees (\uu(t))\leq\e\ees(\uu_0)e^{-\al t}+C(\gamma,\BBB_1,\|h\|).
\ee
\end{theorem-fr}
Pour la preuve voir Chapitre \ref{Chapter3}.

\medskip
Soit $S_t(\vv,\cdot)$ le flot \`a l'instant $t$ de l'\'equation \eqref{c1.11} issu de $\vv\in\h$. On peut montrer que $S_t(\vv,\cdot)$ d\'efinit un processus de Markov (on le notera $(\uu(t), \pp_\vv)$) dans $\h$ (voir, par exemple \cite{DZ1992, KS-book}). 

 \medskip
Dans le chapitre suivant, on pr\'esente les trois principaux r\'esultats qu'on a obtenus sur ce mod\`ele. Le premier r\'esultat concerne l'existence, l'unicit\'e et le m\'elange de mesure stationnaire pour le processus  $(\uu(t), \pp_\vv)$. Le deuxi\`eme porte sur le principe de grandes d\'eviations pour les mesures stationnaires $(\mu^\es)$ associ\'ees \`a l'\'equation \ef{c1.11} o\`u $\vartheta$ doit \^etre remplac\'e par $\sqrt{\es}\,\vartheta$. Le dernier r\'esultat concerne le principe de grandes d\'eviations locales pour les mesures d'occupation associ\'ees \`a \ef{c1.11}.


\chapter{Pr\'esentation des travaux de th\`ese} 

\label{Chapter2} 


\section{M\'elange exponentiel pour l'\'equation des ondes}
Dans cette partie nous pr\'esentons le r\'esultat principal de l'article \cite{DM2014} consacr\'e \`a l'\'etude de l'ergodicit\'e de  l'\'equation des ondes non lin\'eaire stochastique. Pour toute terminologie et notation utilis\'ees ici, on renvoit au chapitre pr\'ec\'edent.
\subsection{L'\'enonc\'e du probl\`eme et le r\'esultat principal}
 On consid\`ere l'\'equation des ondes 
 \be\label{c2.24}
 \p_t^2u+\gamma \p_tu-\de u+f(u)=h(x)+\vartheta(t,x)
 \ee
dans un domaine born\'e $D\subset \rr^3$. Ici les hypoth\`eses sur la fonction $h(x)$, la nonlin\'earit\'e $f(u)$ et la force $\vartheta(t,x)$ sont les m\^emes que dans les Section \ref{c1.13} et \ref{c1.15}. On s'int\'eresse \`a l'existence, l'unicit\'e et le m\'elange de mesures stationnaires du processus de Markov $(\uu(t), \pp_\vv)$ associ\'e.

\begin{theorem-fr}\label{theorem c2}
Sous les hypoth\`eses ci-dessus, il existe un entier $N\ge 1$, tel que si les $N$ premiers coefficients $b_j$ dans \ef{c2.3} ne sont pas z\'ero, alors le processus de Markov associ\'e au flot $\uu(t)=[u(t), \dt u(t)]$ de l'\'equation \ef{c1.11} poss\`ede une unique mesure stationnaire $\mu\in \ppp(\h)$. De plus, cette mesure est m\'elangeante et il existe des constantes positives $C$ et $\kp$ telles que pour tout $\lm\in \ppp(\h)$, on a
\be\label{c2.5}
|\PPPP^*_t\lm-\mu|_L^*\leq C e^{-\kp t}\int_\h \exp(\kp|\uu|_\h^4)\,\lm(\Dd\uu).
\ee 
\end{theorem-fr}

\subsection{L'id\'ee de la preuve}
La preuve est bas\'ee sur la construction du couplage qui satisfait les hypoth\`eses du Th\'eor\`eme 3.1.7 dans \cite{KS-book}, fournissant un crit\`ere g\'en\'eral pour le m\'elange exponentiel. Nous nous limitons ici \`a d\'ecrire cette construction, car c'est l'id\'ee principale derri\`ere la preuve. La proc\'edure est la suivante. Soit $\bar\uu=(\uu, \uu')$ un point de l'espace $\h\times\h$ et soient $\uu(t)$ et $\uu'(t)$ les flots de l'\'equation \ef{c2.24} issus de $\uu$ et $\uu'$, respectivement. On consid\`ere un processus interm\'ediaire $\vv(t)=[v(t), \dt v(t)]$, qui est la solution de l'\'equation
$$
\p_t^2v+\gamma \p_t v-\de v+f(v)+P_N[f(u)-f(v)]=h(x)+\vartheta(t,x)
$$
issue de $\uu'$. Ici $N\geq 1$ est un entier qui est fix\'e dans la suite et $P_N$ est la projection orthogonale de $L^2(D)$ sur l'espace $N$-dimensionnel engendr\'e par les fonctions $e_1,e_2,\ldots,e_N$.
On note $\lm(\uu, \uu')$ et $\lm'(\uu, \uu')$ les lois des processus $\{\vv\}_T$ et $\{\uu'\}_T$, respectivement, o\`u $\{\Zz\}_T$ est la restriction de $\{\Zz(t); t\geq 0\}$ sur $[0,T]$. Ainsi, $\lm$ et $\lm'$ sont des mesures de probabilit\'e sur $C(0,T;\h)$. Soit $(\vvv(\uu,\uu'),\vvv'(\uu,\uu'))$ un couplage maximal pour $(\lm(\uu,\uu'),\lm'(\uu,\uu'))$. Pour tout $s\in [0,T]$, on \'ecrit $\vvv_s$ et $\vvv'_s$ pour les restrictions de $\vvv$ et $\vvv'$ \`a l'instant $s$. On consid\`ere les flots $\tilde\vv(t)=[\tilde v,\p_t \tilde v]$ et $\tilde\uu'(t)=[\tilde u',\p_t \tilde u']$ correspondants. Alors, on a
$$
\p_t^2 \tilde v+\gamma \p_t \tilde v-\de\tilde v+f(\tilde v)-P_N f(\tilde v)=h(x)+\psi(t),\q [\tilde v(0),\p_t\tilde v(0)]=\uu',
$$
o\`u $\psi$ satisfait
\be\label{c2.6}
\ddd\{\int_0^t \psi(s)\,ds\}_T=\ddd\{\zeta(t)-\int_0^t P_N f(u)\,ds\}_T.
\ee
On introduit un autre processus $\tilde \uu(t)$, qui r\'esout
$$
\p_t^2 \tilde u+\gamma \p_t \tilde u-\de\tilde u+f(\tilde u)-P_N f(\tilde u)=h(x)+\psi(t),\q \tilde\uu(0)=\uu.
$$
Remarquons maintenant que $\uu(t)$ satisfait la m\^eme \'equation, avec $\psi$ remplac\'e par $\vartheta(t)-P_N f(u)$. Gr\^ace \`a \eqref{c2.6}, on a
\begin{equation}\label{6.40}
\ddd\{\tilde \uu\}_T=\ddd\{\uu\}_T.
\end{equation}
On introduit
\be
\rrr_t(\uu, \uu')=\tilde\uu(t), \q \rrr_t'(\uu, \uu')=\tilde\uu'(t) \q \text{ pour } t\in[0,T].\label{6. Coupling relation 1}
\ee
Il est clair que $\RR_t=(\rrr_t,\rrr'_t)$ est une extension de $S_t(\uu)$ sur l'intervalle $[0,T]$. Soit $\SSS_t=(S_t(\uu), S'_t(\uu))$ l'extension de $S_t(\uu)$ construit par it\'eration de $\RR_t$ sur la demi-droite $t\ge 0$. Rappelons que $S_t(\uu)$ d\'esigne le flot \`a l'instant $t$ de l'\'equation \ef{c2.24} issu de $\uu$. Avec un l\'eger abus de notation, on continuera d'\'ecrire $\tilde\uu(t)$ et $\tilde\uu'(t)$ pour les extensions de ces deux processus, et on \'ecrira $\tilde\vv(t)=\vvv_s(\SSS_{kT}(\uu))$ pour $t=s+kT,\q 0\leq s<T$. 

\nt Pour tout processus continu $\uu(t)$ \`a valeurs dans $\h$, on introduit une fonctionnelle 
\be\label{2.13}
\fff^{\uu}(t)=|\ees(\uu(t))|+\al\int_0^t|\ees (\uu(s))|\dd s,
\ee
et le temps d'arr\^et
\be\label{6 In-6,1,1}
\tau^\uu=\inf\{t\geq 0:\fff^\uu(t)\geq \fff^\uu(0)+(L+M)t+r\},
\ee
o\`u $L,M$ et $r$ sont des constantes positives choisies apr\`es. 
On introduit aussi 
\begin{align*}
 \vo&=\inf\{t=s+kT: \vvv_s(\SSS_{kT}(\bar\uu))\neq \vvv'_s(\SSS_{kT}(\bar\uu))\}\equiv\inf\{t\geq 0: \tilde\vv(t)\neq \tilde\uu'(t)\},\\
 \tau&=\tau^{\tilde\uu}\wedge\tau^{\tilde \uu'},\q \sigma=\vo\wedge\tau.
 \end{align*}
Supposons qu'on peut \'etablir le r\'esultat suivant. 
\begin{theorem-fr}\label{2.14}
Sous les hypoth\`eses du Th\'eor\`eme \ref{theorem c2}, il existe des constantes positives $\al,\De,\kp,d$ et $C$ telles que les propri\'et\'es suivantes sont satisfaites.\\
\text{\textnormal{(R\'ecurrence):}} Pour tout $\bar\uu=(\uu,\uu')\in\hh$, on a
\begin{align*}
\e_\uu\exp(\kp\ees(\uu(t))&\leq\e_\uu\exp(\kp\ees (\uu(0))e^{-\al t}+C(\gamma,\BBB,\|h\|),\\
\e_{\uu}\exp(\kp\tau_d)&\leq C(1+|\uu|_{\hh}^4), 
\end{align*}
o\`u $\tau_d$ d\'esigne le premier instant d'entr\'ee dans la boule $B_{\hh}(d)$.\\
\text{\textnormal{(Serrage exponentiel):}} Pour tout $\bar\uu=(\uu, \uu')\in B_{\hh}(d)$, on a 
\begin{align*}
|S_t(\bar\uu)-S_t'(\bar\uu)|^2_\h&\leq Ce^{-\al t}|\uu-\uu'|_\h^2 \q\text{ pour } 0\leq t\leq\sigma,\\
\pp_{\bar\uu}\{\sigma=\infty\}&\geq\De,\q \e_{\bar\uu}[\ch_{\{\sigma<\infty\}}\exp(\De\sigma)]\leq C,\q\e_{\bar\uu}[\ch_{\{\sigma<\infty\}} |\bar\uu(\sigma)|_{\hh}^{8}]\leq C.
\end{align*}
\end{theorem-fr}
D'apr\`es le Th\'eor\`eme 3.1.7 de \cite{KS-book}, ce r\'esultat implique le Th\'eor\`eme \ref{theorem c2}. La preuve de \mpp{r\'ecurrence} repose sur la technique de fonction de Lyapunov, alors que la d\'emonstration du \mpp{serrage exponentiel} est bas\'ee sur l'estimation du type Foia\c{s}-Prodi pour l'\'equation \eqref{c2.24}, le th\'eor\`eme de Girsanov et un argument de temps d'arr\^et. Mentionnons que l'id\'ee de consid\'erer  un processus interm\'ediaire lorsqu'on applique le th\'eor\`eme de Girsanov a \'et\'e introduite par Odasso \cite{odasso-2008}.

\section{Grandes d\'eviations pour les mesures stationnaires}
Ici on pr\'esente le r\'esultat principal de l'article \cite{DM2015} consacr\'e \`a l'\'etude de grandes d\'eviations pour les mesures stationnaires de l'\'equation des ondes non lin\'eaire avec bruit blanc. 
\subsection{Le probl\`eme consid\'er\'e et les hypoth\`eses}
 On \'etudie l'\'equation des ondes 
\be\label{c2.7}
\p_t^2u+\gamma \p_tu-\de u+f(u)=h(x)+\sqrt{\es}\,\vartheta(t,x)
\ee
dans un domaine $D$ born\'e de $\rr^3$. On s'int\'eresse au comportement asymptotique de la famille $(\mu^\es)$ de mesures stationnaires du processus de Markov associ\'e, lorsque $\es$ tend vers z\'ero.

Les hypoth\`eses sur la nonlin\'earit\'e $f$ et la force $\vartheta$ sont les m\^emes que dans la section pr\'ec\'edente. Il y a une hypoth\`ese suppl\'ementaire sur la fonction $h(x)$: on suppose que l'\'equation \ef{c2.7} sans perturbation
\be\label{c2.9}
\p_t^2u+\gamma \p_tu-\de u+f(u)=h(x)
\ee
poss\`ede un nombre fini de solutions stationnaires. Cette condition est g\'en\'erique par rapport \`a $h$ dans la classe $H^1_0(D)$.
\subsection{Le r\'esultat principal}
Soit $\E=\{\hat\uu_1, \ldots, \hat\uu_\ell\}\subset\h$ l'ensemble des flots stationnaires $\uu=[u, 0]$ de l'\'equation \ef{c2.9}. Rappelons que le point d'\'equilibre $\hat \uu$ est dit \mpp{stable au sens de Lyapunov} si pour tout $\eta>0$ il existe $\De>0$ tel que chaque flot de \ef{c2.9} issue du $\De$-voisinage de $\hat\uu$ reste dans le $\eta$-voisinage de $\hat\uu$ pour tout temps. On \'ecrira $\E_s\subset \{\hat\uu_1, \ldots, \hat\uu_\ell\}$ pour l'ensemble des \'equilibres stables au sens de Lyapunov. 
\begin{theorem-fr}\label{c2.10}
Sous les hypoth\`eses ci-dessus, il existe une fonction $\vvv:\h\to [0,\iin]$ dont les ensembles de niveau sont compacts et telle que 
\be\label{c2.8}
-\inf_{\uu\in \dot\Gamma}\vvv(\uu)\le\liminf_{\es\to 0}\es\ln \mu^\es(\Gamma)\le\limsup_{\es\to 0}\es\ln \mu^\es(\Gamma)\le -\inf_{\uu\in \bar\Gamma}\vvv(\uu),
\ee
o\`u $\Gamma$ est un ensemble bor\'elien de $\h$ quelconque, et on note $\dot\Gamma$ et $\bar\Gamma$ son int\'erieur et sa fermeture, respectivement. Autrement dit, la famille $(\mu^\es)$ satisfait le principe de grandes d\'eviations avec une bonne fonction de taux $\vvv$. De plus, la fonction $\vvv$ ne peut s'annuler que sur l'ensemble $\E_s\subset \{\hat\uu_1, \ldots, \hat\uu_\ell\}$. En particulier, la famille $(\mu^\es)$ est exponentiellement tendue et chaque limite faible de cette famille est concentr\'ee sur $\E_s$.
\end{theorem-fr}
\subsection{Esquisse de la preuve}
{\it Construction de la fonction $\vvv$.} La fonction $\vvv$ est donn\'ee par \ef{c0.5}-\ef{c0.6}, o\`u $V(\uu_1, \uu_2)$ est l'\'energie minimale n\'ecessaire pour atteindre le voisinage arbitrairement petit de $\uu_2$ \`a partir de $\uu_1$ en temps fini. 

\bigskip
{\it Cha\^ine de Markov sur le bord.} Ce qui suit est une modification d'une construction introduite dans \cite{FW2012} (voir Chapitre 6) qui est elle-m\^eme une variation d'un argument utilis\'e dans \cite{Khas2011}. On se donne un point $\uu\in\h\backslash\{\hat\uu_1,\ldots,\hat\uu_{\ell}\}$ et on \'ecrit $\hat\uu_{\ell+1}=\uu$. Soit  $\rho_*$ un nombre positif quelconque.  Pour tous $0<\rho'_1<\rho_0'<\rho_1<\rho_0<\rho_*$ on introduit les objets suivants. Pour $i\leq \ell$, on note $g_i$ et $\tilde g_i$ le $\rho_1$- et $\rho_0$-voisinages ouverts de $\hat\uu_i$, respectivement. De m\^eme, on note  $g_{\ell+1}$ et $\tilde g_{\ell+1}$, respectivement, le $\rho_1'$- et $\rho_0'$-voisinages de $\hat\uu_{\ell+1}$. Ensuite, on note $g$ et $\tilde g$ l'union sur $i\leq \ell+1$ de $g_i$ et $\tilde g_i$, respectivement. Pour tout $\es>0$ et $\vv\in \h$ soit $S^\es(t;\vv)$ le flot \`a l'instant $t$ de \ef{c2.7} issu de $\vv$. Soit $\sigma_0^\es$ l'instant de la premi\`ere sortie de $\tilde g$ du processus $S^\es(t;\cdot)$, et soit $\tau_1^\es$ le premier instant apr\`es $\sigma_0^\es$ quand $S^\es(t;\cdot)$ touche le bord de $g$. De m\^eme, pour $n\geq 1$ on note $\sigma_n^\es$ le premier instant apr\`es $\tau_n^\es$ de sortie de $\tilde g$ et $\tau_{n+1}^\es$ le premier instant apr\`es $\sigma_n$ lorsque $S^\es(t;\cdot)$ touche $\p g$. On consid\`ere une cha\^ine de Markov sur le bord $\p g$ d\'efinie par $Z_n^\es (\cdot)= S^\es(\tau^\es_n,\cdot)$. On notera $\tilde P^\es(\vv,\Gamma)$ la probabilit\'e de transition de cha\^ine $(Z_n^\es)$, c'est-\`a-dire 
$$
\tilde P^\es(\vv,\Gamma)=\pp(S^\es(\tau_1^\es;\vv)\in \Gamma)\q\text{ pour tout } \vv\in\p g\,\text{ et } \Gamma\subset\p g.
$$
Soit $b(\p g)$ l'ensemble des fonctions bor\'eliennes born\'ees $\psi:\p g\to \rr$ muni de la topologie de convergence uniforme et soit $\lm$ une fonctionnelle continue sur $b(\p g)$ telle que $\lm(\psi)\ge 0$ pour $\psi\ge 0$ et $\lm (1)=1$. On dira que $\lm=\lm^\es$ est \mpp{une mesure finiment additive stationnaire} de $\tilde P^\es(\vv,\Gamma)$ si 
\be\label{c2.11}
\lm(\pP\psi)=\lm(\psi)\q\text{ pour tout }\psi\in b(\p g)
\ee
o\`u $\pP=\pP^\es:b(\p g)\to b (\p g)$ est d\'efini par
\be\label{c2.12}
\pP\psi(\vv)=\int_{\p g}\psi(\Zz)\tilde P_1^\es(\vv, \Dd\Zz)\equiv \e\psi(S^\es(\tau_1^\es;\vv)).
\ee
On montrera qu'une telle mesure existe. La preuve du th\'eor\`eme s'op\`ere en quatre \'etapes dont la premi\`ere est le r\'esultat suivant.
\begin{proposition-fr}\label{c2.13}
Pour tout $\beta>0$ et $\rho_*>0$ il existe $0<\rho'_1<\rho_0'<\rho_1<\rho_0<\rho_*$ tels que pour tout $\es>0$ suffisamment petit, on a
\be\label{c2.14}
\exp(-(\vvv(\hat\uu_j)+\beta)/\es)\leq \lm^\es(\ch_{g_j})\leq \exp(-(\vvv(\hat\uu_j)-\beta)/\es),
\ee
o\`u les in\'egalit\'es sont vraies pour tout $\vv\in \p g_i$ et tout $i, j\leq \ell+1, i\neq j$.
\end{proposition-fr}
Pour tout $\es>0$, on introduit une fonctionnelle continue $\tilde\mu=\tilde\mu^\es$ sur $b(\h)$ par 
\be\label{c2.15}
\tilde\mu(\psi)=\lm(\elll\psi),
\ee
o\`u $\lm=\lm^\es$ est donn\'e par \ef{c2.11}-\ef{c2.12}, et $\elll=\elll^\es:b(\h)\to b(\h)$ est d\'efini par
\be\label{c2.16}
\elll\psi(\vv)=\e\int_0^{\tau_1^\es}\psi(S^\es(t;\vv))\dd t.
\ee
On notera $\hat\mu=\hat\mu^\es$ la normalisation de $\tilde\mu$, \`a savoir $\hat\mu(\psi)=\tilde\mu(\psi)/\tilde \mu(1)$. Finalement, on introduit $\hat\mu(\Gamma)=\hat\mu(\ch_\Gamma)$ pour tout ensemble bor\'elien de $\h$. La deuxi\`eme \'etape consiste \`a montrer qu'on a 
\be\label{c2.18}
\mu(\dt\Gamma)\le\hat\mu(\dt\Gamma)\le\hat\mu(\bar\Gamma)\le\mu(\bar\Gamma)
\ee
pour tout $\Gamma$, o\`u $\mu=\mu^\es$. En trosi\`eme \'etape, on utilise \ef{c2.18} avec la proposition \ref{c2.13} pour d\'emontrer le r\'esultat suivant.
\begin{proposition-fr}\label{c2.17}
Pour tout $\beta>0$ et $\rho_*>0$ il existe $0<\rho'_1<\rho'_0<\rho_1<\rho_0<\rho_*$ tels que pour tout $j\leq \ell+1$ et $\es<<1$, on a 
\begin{align}
\mu^\es(g_j)&\leq \exp(-(\vvv(\hat\uu_j)-\beta)/\es)\label{9.79}\\
\mu^\es(\bar g_j)&\ge \exp(-(\vvv(\hat\uu_j)+\beta)/\es)\label{9.80}.
\end{align}
\end{proposition-fr}
La derni\`ere \'etape consiste \`a montrer que la famille $(\mu^\es)$ est exponentiellement tendue. Cette propri\'et\'e conjugu\'ee avec la proposition \ref{c2.17} implique le th\'eor\`eme \ref{c2.10}. 

\medskip
Sans entrer dans les aspects techniques, d\'emontrons ici l'existence d'une mesure finiment additive stationnaire (i.e., g\'en\'eralis\'ee) $\lm$ et prouvons l'in\'egalit\'e \ef{c2.18}. Soit $b^*(\p g)$ l'espace dual de $b(\p g)$. Consid\'erons l'espace $$
\mathfrak{F}=\{\lm\in b^*(\p g):  \lm(1)= 1 \text{ et } \lm(\psi)\ge 0 \text{ pour }\psi\ge 0\}
$$
muni de topologie faible*. Notons que si $\lm\in \mathfrak{F}$, alors $|\lm|_{b^*(\p g)}=1$. Gr\^ace au th\'eor\`eme de Banach-Alaoglu, $\mathfrak{F}$ est relativement compact. Par ailleurs, il est clair que $\mathfrak{F}$ est aussi ferm\'e et convexe. On consid\`ere le dual de $\pP$, i.e., l'application $\pP^*:b^*(\p g)\to b^*(\p g)$ d\'efinie par 
$$
\pP^*\lm(\psi)=\lm(\pP\psi).
$$
Comme $\pP1\equiv1$ et $\pP\psi\ge 0$ pour $\psi\ge 0$, $\pP^*$ envoie $\mathfrak{F}$ sur lui m\^eme. Il est facile de voir que $\pP^*$ est continue. Par le th\'eor\`eme de Leray-Schauder, $\pP^*$ admet un point fixe $\lm\in \mathfrak{F}$, ce qui signifie que $\lm$ est une mesure finiment additive stationnaire de $\pP$.

\medskip
Prouvons maintenant l'in\'egalit\'e \ef{c2.18}. En utilisant l'argument de Khasminskii, il n'est pas difficile de voir qu'on a 
\be\label{c2.26}
\hat\mu(P_t\psi)=\hat\mu(\psi)\q\text{ pour tout }\psi\in b(\h) \text{ et } t\ge 0,
\ee
o\`u $P_t=P_t^\es$ est la fonction de transition de $S^\es(t,\cdot)$. Soit $b_0(\h)$ l'espace des fonctions  mesurables born\'ees, muni de la topologie de convergence uniforme sur les ensembles born\'es de $\h$ pour les suites uniform\'ement born\'ees dans $\h$. On peut montrer que $\hat\mu$ est continue de $b_0(\h)$ sur $\rr$. D'apr\`es l'in\'egalit\'e \ef{c2.5}, $P_t\psi$ converge vers $(\psi, \mu)$ dans $b_0(\h)$ pour tout $\psi$ dans l'espace $L_b(\h)$ de fonctions Lipshitziennes born\'ees sur $\h$. 
Avec \ef{c2.26}, cette propri\'et\'e implique
\be\label{c2.27}
\hat\mu(\psi)=\hat\mu(P_t\psi)\to \hat\mu((\psi, \mu))=(\psi,\mu)\q\text{ pour tout }\psi\in L_b(\h).
\ee
Remarquons que l'in\'egalit\'e \ef{c2.18} sera \'etablie, si on arrive \`a montrer  
$$
\hat\mu(F)\le\mu(F)
$$
pour tout $F\subset \h$ ferm\'e.
Supposons que cette in\'egalit\'e n'est pas vraie, et soient $F\subset \h$ ferm\'e et $\eta>0$ tels que
\be\label{c2.28}
\hat\mu(F)\ge \mu(F)+\eta.
\ee
Soit $\ch_F\le\psi_n\le 1$ une suite de fonctions dans $L_b(\h)$ qui converge ponctuellement vers $\ch_F$ lorsque $n\to \iin$. On peut prendre, par exemple, 
$$
\psi_n(\uu)=\f{d_\h(\uu, F^c_{1/n})}{d_\h(\uu, F^c_{1/n})+d_\h(\uu,F)},
$$
o\`u $F_r$ d\'esigne le $r$-voisinage ouvert de $F$. Gr\^ace \`a \ef{c2.27}, l'in\'egalit\'e \ef{c2.28} et la monotonie de $\hat\mu$, on a
$$
(\psi_n,\mu)=\hat\mu(\psi_n)\ge\hat\mu(\ch_F)=\hat\mu(F)\ge \mu(F)+\eta.
$$
Pourtant, ce n'est pas possible, car $(\psi_n, \mu)$ converge vers $\mu(F)$ par le th\'eor\`eme de convergence domin\'ee. L'in\'egalit\'e \ef{c2.18} est \'etablie.

  
\section{Grandes d\'eviations pour les mesures d'occupation}
Ici on pr\'esente le r\'esultat principal de l'article \cite{DM-VN2015} consacr\'e \`a l'\'etude de grandes d\'eviations pour les mesures d'occupation de l'\'equation des ondes non lin\'eaire avec bruit blanc. 
\subsection{Le probl\`eme \'etudi\'e et le  r\'esultat principal}
 On consid\`ere l'\'equation des ondes \ef{c1.11}
\be\label{c2.19}
\p_t^2u+\gamma \p_tu-\de u+f(u)=h(x)+\vartheta(t,x)
\ee
dans un domaine born\'e de $\rr^3$. Les hypoth\`eses sur la fonction $h(x)$, la nonlin\'earit\'e $f(u)$ et la force $\vartheta(t,x)$ sont les m\^emes que dans les Section \ref{c1.13} et \ref{c1.15}. On s'int\'eresse au comportement asymptotique dans la limite $t\to\iin$ de la famille $(\zeta(t))$ de mesures d'occupation d\'efinie par
$$
\zeta(t)=\frac1t\int_{0}^{t}\De_{\uu(\tau;\vv)}\dd \tau, \quad t>0
$$
sur l'espace probabilis\'e $(\Omega,\fff,\pp)$, o\`u $\uu(\tau;\vv)$ est la solution \`a l'instant $\tau$ de l'\'equation \ef{c2.19} issue de point $\vv\in\h$. Rappelons qu'on note $\h^s$ l'espace $[H^{s+1}(D)\cap H^1_0(D)]\times H^s(D)$ avec $s\in (0, 1-\rho/2)$ fix\'e. 
  \begin{theorem-fr}  \label{c2.20}
Sous les hypoth\`eses ci-dessus, pour toute fonction $\psi\in C_b(\h)$ non constante, il existe $\es=\es(\psi)>0$ et une fonction convexe $I^\psi:\R\to \R_+$
tels que pour tout $\vv\in \h^s$ et tout ouvert $O$ de l'intervalle $((\psi, \mu)-\es, (\psi, \mu)+\es))$, on a 
\be\label{Llimit}
\lim_{t\to\infty} \frac1t\ln   \pp\left\{\f{1}{t}\int_0^t \psi(\uu(\tau;\vv))\dd\tau\in O\right\}= -\inf_{\alpha\in O} I^\psi(\alpha),
\ee
o\`u $\mu$ est la mesure stationnaire de $(\uu(t),\pp_\vv)$. De plus, cette limite est uniforme par rapport \`a $\vv$ sur les ensembles born\'es de $\h^s$. 
\end{theorem-fr} 
On montrera, en fait, un r\'esultat plus g\'en\'eral du type niveau-2 et en d\'eduira ce th\'eor\`eme.

 \subsection{Certains ingr\'edients de la preuve}
On commence par introduire quelques notations. Pour toute fonction $V\in C_b(\h)$ on introduit le semigroupe $(\PPPP_t^V)_{t\ge 0}$ de Fynman-Kac agissant sur $C_b(\h)$ par la formule
$$
\PPPP_t^V\psi(\vv)=\e_\vv \left[\psi(\uu(t))\exp(\int_0^t V(\uu(\tau))\dd\tau)\right].
$$
Soit $\mmm_+(\h)$ l'ensemble des mesures bor\'eliennes positives sur $\h$ muni de la topologie de convergence faible. On consid\`ere le dual du semigroupe $(\PPPP_t^V)_{t\ge 0}$, \`a savoir, le semigroupe  $(\PPPP_t^{V*})_{t\ge 0}$ agissant sur $\mmm_+(\h)$ par la formule
$$
(\psi, \PPPP_t^{V*}\mu)=(\PPPP_t^V\psi, \mu)\q\text{ pour }\mu\in \mmm_+(\h) \text{ et }\psi\in C_b(\h).
$$
On dira que $\mu_V\in\ppp(\h)$ est \mpp{un vecteur propre du} semigroupe $(\PPPP_t^{V*})$ s'il existe $\lm\in\rr$ tel que $\PPPP_t^{V*}\mu_V=\lm^t\mu_V$ pour tout $t\ge 0$. De m\^eme, on dira que $h_V\in C(\h^s)$ est un vecteur propre de $(\PPPP_t^V)$ s'il existe $\lm\in\rr$ tel que $\PPPP_t^V h_V=\lm^t h_V$ dans $\h^{s}$ pour tout $t\ge 0$. On introduit la fonction de poids $\we:\h^s\to [1, \iin)$ donn\'ee par
$$
\we(\vv)=1+|\vv|_{\h^s}^2+\ees^4(\vv)
$$
et on d\'esigne $C_\we(\h^s)$ l'espace des fonctions continues $\psi:\h^s\to\rr$\ telles que 
$$
|\psi|_{C_\we}=\sup_{\vv\in\h^s}\f{|\psi(\vv)|}{\we(\vv)}<\iin.
$$
Finalement, on note $\ppp_\we(\h)$ l'espace de mesures de probabilit\'e $\lm$ sur $\h$ telles que $(\we, \lm)<\iin$, muni de topologie de convergence faible. On peut montrer que le th\'eor\`eme \ref{c2.20} est une cons\'equence du r\'esultat suivant.
\begin{proposition-fr}
Sous les hypoth\`eses ci-dessus, il existe une constante $\De>0$ telle que les propri\'et\'es suivantes sont satisfaites pour toute fonction $V$ lipschitzienne born\'ee sur $\h$ dont l'oscillation $\Osc(V)=(\sup V-\inf V)$ ne d\'epasse pas $\De$.  Le semigrope $(\PPPP_t^{V*})$ poss\`ede un unique vecteur propre $\mu_V\in\ppp_\we(\h)$ correspondant \`a une valeur propre $\lm_V$ positive. De plus, $\mu_V$ satisfait
\be\label{c2.22}
\int_\h [|\vv|_{\h^s}^m+\exp(\kp\ees(\vv))]\mu_V(\Dd \vv)<\iin
\ee
pour tout entier $m\ge 1$, o\`u $\kp=(2\al)^{-1}\sum b_j^2$. En outre, le semigroupe $\PPPP_t^V$ admet un unique vecteur propre $h_V\in C_\we(\h^s)$, $h_V\ge 0$, correspondant \`a la valeur propre $\lm_V$ normalis\'e par la condition $(h_V, \mu_V)=1$. Finalement, on a les convergences  
\begin{align*}
\lm_V^{-t}\PPPP_t^V\psi&\to (\psi, \mu_V)h_V \q\text{ dans } C_b(\h^s_R)\cap L^1(\h, \mu_V),\\
\lm_V^{-t}\PPPP_t^{V*}\nu&\to (h_V, \nu)\mu_V \q\text{ dans } \mmm_+(\h)
\end{align*}
lorsque $t\to\iin$, pour tout $\psi\in C_\we(\h^s)$, $\nu\in \ppp_\we(\h)$ et $R>0$, o\`u $\h^s_R$ est la boule de rayon $R$ dans $\h^s$.
\end{proposition-fr}
Sans entrer dans les d\'etails, on fait l'esquisse de la preuve de la premi\`ere partie de cette proposition. Montrons en particulier, le r\'esultat suivant.
 \begin{proposition-fr}\label{e23}
 Pour tout $t>0$, $V\in C_b(\h)$ et $m\geq 1$, l'op\'erateur $\PPPP_t^{V*}$ admet un vecteur propre $\mu_V$ avec une valeur propre positive. De plus, on a  \ef{c2.22}.
 \end{proposition-fr}
On introduit 
$$
\tilde\we_m(\vv)=1+|\vv|_{\h^\sS}^{2m}+\ees^{4m}(\vv)+\exp(\kp\ees(\vv))
$$
et on admet l'in\'egalit\'e suivante
\be\label{c2.21}
\e_\vv\tilde\we_m(\uu(t) )\leq 2e^{-\al m t}\tilde\we_m(\vv)+C_m,
\ee
qui est vraie pour tout $\vv\in\h^\sS$, $m\ge1$, et $t\ge0$.
Soient $t>0$ et $V\in C_b(\h)$ fix\'es. Pour tout $A>0$ et $m\geq 1$, on introduit l'ensemble convexe
$$
D_{A, m}=\{\sigma\in \ppp(\h): (\tilde \we_m, \sigma)\leq A\},
$$
et on consid\`ere l'application continue de $D_{A, m}$ sur $\ppp(\h)$ donn\'ee par
$$
G(\sigma)=\PPPP_t^{V*}\sigma/\PPPP_t^{V*}\sigma(\h).
$$
Gr\^ace \`a l'in\'egalit\'e \ef{c2.21},  on a 
\begin{align}
(\tilde\we_{m}, G(\sigma))&\leq \exp\left(t \Osc(V)\right)(\tilde\we_{m}, \PPPP_t^*\sigma)\notag\\
&\leq 2\exp\left(t (\Osc(V)-\al m)\right)(\tilde\we_{m}, \sigma)+C_m \exp\left(t  \Osc(V)\right)\label{c2.23}.
\end{align}
Soit $m$ assez grand tel que l'on ait
$$
\Osc(V)\leq \al m/2\q\text{ et }\q \exp(-\al m t/2)\leq 1/4,
$$
et soit $A=2 C_m e^{\al m t}$. 
Alors, compte tenu de l'in\'egalit\'e \ef{c2.23}, on a $(\tilde\we_{m}, G(\sigma))\leq A$ pour tout $\sigma\in D_{A, m}$, i.e.,       $G(D_{A, m})\subset D_{A, m}$. En plus, il n'est pas difficile de voir que l'ensemble $D_{A, m}$ est compact dans $\ppp(\h)$ (on peut utiliser le crit\`ere de compacit\'e de Prokhorov 
pour montrer qu'il est relativement compact et le lemme de Fatou pour montrer qu'il est ferm\'e). Par le th\'eor\`eme de Leray-Schauder, $G$ a un point fixe $\mu_V \in D_{A, m}$. Pour finir la preuve, il suffit de remarquer que d'apr\`es les constructions de $D_{A, m}$ et $G$, la mesure $\mu_V$ est un vecteur propre de l'op\'erateur $\PPPP_t^{V*}$ avec valeur propre positive $\PPPP_t^{V*}\mu (\h)$ et elle v\'erifie \ef{c2.22}.  On peut montrer que, en fait, une telle mesure $\mu_V$ est unique, et elle ne d\'epend pas de $t$ et $m$.


\chapter{M\'elange} 

\label{Chapter3} 



\selectlanguage{english}
\section*{Exponential mixing for the white\,-\,forced damped nonlinear wave equation} 

{\bf Abstract}.
The paper is devoted to studying the stochastic nonlinear wave (NLW) equation 
$$
\p_t^2 u+\gamma \p_t u-\de u+f(u)=h(x)+\eta(t,x)
$$
in a bounded domain $D\subset\rr^3$. The equation is supplemented with the Dirichlet boundary condition. Here $f$ is a nonlinear term, $h(x)$ is a function in $H^1_0(D)$ and $\eta(t,x)$ is a non-degenerate white noise. We show that the Markov process associated with the flow $\xi_u(t)=[u(t),\dt u(t)]$ has a unique stationary measure $\mu$, and the law of any solution converges to $\mu$ with exponential rate in the dual-Lipschitz norm.

\medskip

\nt 
\bigskip
\section{Introduction}
We consider the stochastic NLW equation
\be\label{1.1.3}
\p_t^2u+\gamma \p_tu-\de u+f(u)=h(x)+\eta(t,x),\q [u(0),\dt u(0)]=[u_0, u_1]
\ee
in a bounded domain $D\subset\rr^3$ with a smooth boundary. The equation is supplemented with the Dirichlet boundary condition. The nonlinear term $f$ satisfies the dissipativity and growth conditions that are given in the next section (see \eqref{1.8.3}-\eqref{1.6.3}). Here we only mention that they hold for functions  $f(u)=\sin u$ and $f(u)=|u|^{\rho}u-\lm u$, where $\lm$ and $\rho\in(0,2)$ are some constants. These functions correspond to the damped sine-Gordon and Klein-Gordon equations, respectively.
The force $\eta(t)$ is a white noise of the form
\be\label{1.9.3}
\eta(t,x)=\sum_{j=1}^\infty b_j\dt\beta_j(t)e_j(x).
\ee
Here $\{\beta_j(t)\}$ is a sequence of independent standard Brownian motions, $\{e_j\}$ is an orthonormal basis in $L^2(D)$ composed of the eigenfunctions of the Dirichlet Laplacian, and $\{b_j\}$ is a sequence of positive numbers that goes to zero sufficiently fast (see \eqref{2.24.3}). 
The initial point $[u_0,u_1]$ belongs to the phase space $\h=H^1_0(D)\times L^2(D)$. Finally, $h(x)$ is a function in $H^1_0(D)$. 
The following theorem is the main result of this paper.
\begin{mt}\label{1.4.3}
Under the above hypotheses, the Markov process associated with the flow $y(t)=[u(t),\dt u(t)]$ of equation \eqref{1.1.3} possesses a unique stationary measure $\mu\in\ppp(\h)$. Moreover, there are positive constants $C$ and $\kp$ such that
\be\label{6.53.3}
|\e \psi(y(t))-\int_{\h}\psi(z)\mu(dz)|\leq Ce^{-\kp t}\exp(\kp|y|_\h^4),\q t\geq 0, 
\ee
for any 1-Lipschitz function $\psi:\h\to\rr$, and any initial point $y\in\h$. 
\end{mt}

\nt
Thus, the limit of the average of $\psi(y(t))$ is a quantity that does not depend on the initial point.

Before outlining the main ideas of the proof of this result, let us discuss some of the earlier works concerning the ergodicity of the stochastic nonlinear PDE's and the main difficulties that occur in our case. In the context of stochastic PDE's, the initial value problem and existence of a stationary measure was studied by Vishik--Fursikov--Komech \cite{VKF-1979} for the stochastic Navier--Stokes system and later developed for many other problems (see the references in \cite{DZ1992}). The uniqueness of stationary measure and its ergodicity are much more delicate questions. First results in this direction were obtained in the papers \cite{FM-1995, KS-cmp2000, EMS-2001, BKL-2002} devoted to the Navier--Stokes system and other PDE's arising in mathematical physics (see also \cite{MR1245306, MR1641664} and Part III in \cite{DZ1996} for some 1D parabolic equations). They were later extended to equations with multiplicative and very degenerate noises \cite{odasso-2008,HM-2008}. We refer the reader to the recent book \cite{KS-book} and the review paper \cite{debussche2013ergodicity} for a detailed account of the main results obtained so far. 

We now discuss in more details the case of dispersive equations, for which fewer results are known.
One of the first results on the ergodicity of dispersive PDE's was stablished in the paper of E, Khanin, Mazel and Sinai \cite{EWKMS2000}, where the authors prove the existence and uniqueness of  stationary measure for the one dimensional inviscid Burgers equation perturbed by a space-periodic white noise. The qualitative study of stationary solutions is also carried out, and the analysis relies on the Lax-Oleinik variational principle.
The ergodicity of a white-forced NLW equation was studied by Barbu and Da Prato \cite{BD-2002}, where the authors prove the existence of stationary distribution for a nonlinearity which is a non-decreasing function satisfying the growth restriction $|f''(u)|\leq C(|u|+1)$, and some standard dissipativity conditions. Uniqueness is established under the additional hypotheses, that $f$ satisfies \eqref{1.8.3} with $\rho<2$, and $\sup\{|f'(u)|\cdot |u|^{-\rho}, u\in\rr\}$ is sufficiently small. In the paper by Debussche and Odasso \cite{DO-2005}, the authors establish the convergence to the equilibrium with polynomial speed at any order (\mpp{polynomial mixing}) for weakly damped nonlinear Schr\"odinger equation. The proof of this result relies on the coupling argument. The main difficulty in establishing the exponential rate of convergence is due to the complicated Lyapunov structure and the fact that the Foa\c{s}-Prodi estimates hold in average and not path-wise. 
In \cite{DirSoug2005}, Dirr and Souganidis study the Hamilton-Jacobi equations perturbed by additive noise. They show, in particular, that under suitable assumptions on the Hamiltonian, the stochastic equation has a unique up to constants space-periodic global attracting solution, provided the unperturbed equation possesses such solution.  
In the recent paper by Debussche and Vovelle \cite{debussche2013invariant} the existence and uniqueness of stationary measure is studied for scalar periodic first-order conservation laws with additive noise in any space dimension. It generalizes to higher dimensions the results established in \cite{EWKMS2000} (see also \cite{IturK2003}). In another recent paper \cite{BCK2014} by Bakhtin, Cator and Khanin, the authors study the ergodicity of the Burgers equation perturbed by a space-time stationary random force. It is proved, in particular, that the equation possesses space-time stationary global solutions, and that they attract all other solutions. The proof uses the Aubry-Mather theory for action-minimizing trajectories, and weak KAM theory for the Hamilton-Jacobi equations. 

In the present paper we extend the results established in \cite{BD-2002}, proving that the hypotheses  $f'\geq 0$ and $\sup\{|f'(u)|\cdot |u|^{-\rho}, u\in\rr\}$ is small are not needed, and that the convergence to the equilibrium has exponential rate. We also show that the conclusion of the Main Theorem remains true for a force that is non-degenerate only in the low Fourier modes (see Theorem \ref{6.10.3}). The proof mainly relies on the coupling argument.
 
Of course, one of the main difficulties when dealing with dispersive PDE's comes from the lack of the regularizing property, and with it, of some well-known compactness arguments. As a consequence, this changes the approach when showing the stability of solutions. In particular, this is the case, when establishing the Foia\c{s}-Prodi estimate for NLW (Proposition \ref{4.13.3}). Moreover, this estimate (which shows that the large time behavior of solutions is determined by finitely many modes and enables one to use the Girsanov theorem) differs from the classical one, since the growth of the intermediate process should be controlled (see inequality \eqref{4.16.3}). Due to the last fact, the  coupling constructed through the projections of solutions (cf. \cite{shirikyan-bf2008, odasso-2008}) does not ensure exponential rate of convergence. We therefore introduce a new type of coupling constructed via the intermediate process (see \eqref{2.25.3}-\eqref{6. Coupling relation 1}).
The same difficulty occurs when showing the recurrence of solutions, i.e. that the trajectory of the solution enters arbitrarily small ball with positive probability in a finite time (Proposition \ref{6.12.3}). The standard argument to show this property is the use of the portmanteau theorem. However, due to the lack of the smoothing effect, the portmanteau technique is not applicable, and another approach is proposed. 

Without going into details, we give an informal description of our approach. The proof of the existence of stationary measure is rather standard and relies on the Bogolyubov-Krylov argument, which ensures the existence, provided the process $y(t)=[u(t),\dt u(t)]$ has a uniformly bounded moment in some $\h$-compact space. To obtain such a bound, we follow a well-known argument coming from the theory of attractors (e.g., see \cite{BV1992, Har85}). Namely, we split the function $u$ to the sum $u=v+z$, where, roughly speaking, $v$ takes the Brownian of equation, and $z$-nonlinearity. We then show that the corresponding flows have uniformly bounded moments in $\h^s=H^{1+s}(D)\times H^s(D)$ for $s>0$ sufficiently small (Proposition \ref{2.11.3}). The bound for $|[v(t),\dt v(t)]|_{\h^s}$ follows from the It\^o formula, while that of $|[z(t),\dt z(t)]|_{\h^s}$ is based on the argument similar to the one used in \cite{zelik2004}.
The proof of exponential mixing relies on Theorem 3.1.7 in \cite{KS-book}, which gives a general criterion that ensures the convergence to the equilibrium with exponential rate. Construction of a coupling that satisfies the hypotheses of the mentioned theorem is based on four key ingredients: the Foia\c{s}-Prodi estimate for NLW, the Girsanov theorem, the recurrence property of solutions, and the stopping time technique. 

Finally, we make some comments on the hypotheses imposed on the nonlinear term $f$ and the coefficients $b_j$ entering the definition of the force $\eta$. Inequalities \eqref{1.5.3}-\eqref{1.6.3} are standard in the study of NLW equation, they ensure that the Cauchy problem is well-posed (e.g., see \cite{CV2002} and \cite{Lions1969} for deterministic cases). The hypothesis $\rho<2$ is needed to prove the stability of solutions. The fact that the coefficients $b_j$ are not zero ensures that $\eta$ is non-degenerate in all Fourier modes, which is used to establish the recurrence of solutions and exponential squeezing. As was mentioned above, we show that this condition could be relaxed.

The paper is organized as follows. In Section \ref{2.26.3} we announce the main result and outline the scheme of its proof. Next, the large time behavior and stability of solutions are studied in Sections \ref{3.0.3} and \ref{4.0.3}, respectively. Finally, the complete proof of the main result is presented in Section \ref{6.31.3}.

\subsection*{Notation}
For an open set $D$ of a Euclidean space and separable Banach spaces $X$ and $Y$, we introduce the following function spaces:

\nt
$L^p=L^p(D)$ is the Lebesgue space of measurable functions whose $p^{\text{th}}$ power is integrable. In the case $p=2$ the corresponding norm is denoted by $\|\cdot\|$.

\nt
$H^s=H^s(D)$ is the Sobolev space of order $s$ with the usual norm $\|\cdot\|_s$.

\nt
$H^s_0=H^s_0(D)$ is the closure in $H^s$ of infinitely smooth functions with compact support.

\nt
$H^{1,p}=H^{1,p}(D)$ is the Sobolev space of order $1$ with exponent $p$, that is, the space of $L^p$ functions whose first order derivatives remain in $L^p$.

\nt
$L(X,Y)$ stands for the space of linear continuous operators from $X$ to $Y$ endowed with the natural norm.

\nt
$C_b(X)$ is the space of continuous bounded functions $\psi:X\to\rr$ endowed with the norm of uniform convergence:
$$
|\psi|_\infty=\sup_{x\in X}|\psi(x)|.
$$
$L_b(X)$ is the space of bounded Lipschitz functions, i.e. of functions $\psi\in C_b(X)$ such that
$$
|\psi|_L:=|\psi|_\infty+\sup_{x\neq y}\f{|\psi(x)-\psi(y)|}{|x-y|_X}<\infty.
$$

\nt
$B_X(R)$ stands for the ball in $X$ of radius $R$ and centered at the origin.

\nt
$\bbb(X)$ is the Borel $\sigma$-algebra of subsets of $X$.

\nt
$\ppp(X)$ denotes the space of probability Borel measures on $X$. Two metrics are defined on the space $\ppp(X)$: the metric of total variation
$$
|\mu_1-\mu_2|_{var}=\sup_{\Gamma\in \bbb(X)}|\mu_1(\Gamma)-\mu_2(\Gamma)|,
$$
and the dual Lipschitz metric
$$
|\mu_1-\mu_2|_L^*=\sup_{|\psi|_L\leq 1}|(f,\mu_1)-(f,\mu_2)|,
$$
where $(\psi,\mu)$ denotes the integral of $\psi$ over $X$ with respect to $\mu$. 

\nt
Finally, by $C_1,C_2,\ldots$, we shall denote unessential positive constants.

\section{Exponential mixing}\label{2.26.3}
We start this section by a short discussion of the well-posedness of the Cauchy problem for equation \eqref{1.1.3}. We then state the main result and outline the scheme of its proof.
\subsection{Existence and uniqueness of solutions}
Before giving the definition of a solution of equation \eqref{1.1.3}, let us make the precise hypotheses on the nonlinearity and the coefficients entering the definition of $\eta(t)$. We suppose that the function $f$ satisfies the growth restriction
\be\label{1.8.3}
|f''(u)|\leq C(|u|^{\rho-1}+1),\q u\in\rr,
\ee  
where $C$ and $\rho<2$ are positive constants, and the dissipativity conditions
\begin{align}
F(u)&\geq -\nu u^2-C,\q u\in\rr\label{1.5.3},\\
f(u)u- F(u)&\geq-\nu u^2-C, \q u\in\rr\label{1.6.3},
\end{align}
where $F$ is the primitive of $f$, $\nu\leq (\lm_1\wedge\gamma)/8$ is a positive constant, and $\lm_j$ stands for the eigenvalue corresponding to $e_j$. The coefficients $b_j$ are supposed to be positive numbers satisfying
\be \label{2.24.3}
\BBB=\sum_{j=1}^\iin b_j^2<\iin,\q\BBB_1=\sum_{j=1}^\iin\lm_j b_j^2<\infty.
\ee
Let us introduce the functions
$$
g_j=[0,b_j e_j],\q \hat\zeta(t)=\sum_{j=1}^\iin\beta_j(t)g_j.
$$
\begin{definition}\label{definition 2.1}
Let $y_0=[u_0,u_1]$ be a $\h$-valued random variable defined on a complete probability space $(\omm,\fff,\pp)$ that is independent of $\hat\zeta(t)$. A random process $y(t)=[u(t),\dt u(t)]$ defined on  $(\omm,\fff,\pp)$ is called \emph{a solution} (or \emph{a flow}) of equation \eqref{1.1.3} if the following two  conditions hold:
\bi 
\item Almost every trajectory of $y(t)$ belongs to the space $C(\rr_+;\h)$, and the process $y(t)$ is adapted to the filtration $\fff_t$ generated by $y_0$ and $\hat\zeta(t)$.

\item Equation \eqref{1.1.3} is satisfied in the sense that, with probability 1,
\be\label{2.1.3}
y(t)=y_0+\int_0^t g(s)\,ds+\hat\zeta(t),\q t\geq 0,
\ee
where we set
$$
g(t)=[\dt u,-\gamma \dt u+\de u-f(u)+h(x)],
$$
and relation \eqref{2.1.3} holds in $L^2\times H^{-1}$.
\ei
\end{definition}

\nt
Let us endow the space $\h$ with the norm
$$
|y|_{\h}^2=\|\g y_1\|^2+\|y_2+\al y_1\|^2\q \text{ for } y=[y_1,y_2]\in\h,
$$
where $\al>0$ is a small parameter.
Introduce the energy functional
\be 
\ees(y)=|y|_\h^2+2\int_D F(y_1)\,dx, \q y=[y_1,y_2]\in\h,
\ee 
and let $\ees_u(t)=\ees(y(t))$. We have the following theorem.
\begin{theorem}\label{2.12.3}
Under the above hypotheses, let $y_0$ be an $\h-$valued random variable that is independent of $\hat\zeta$ and satisfies $\e\ees(y_0)<\iin$. Then equation \eqref{1.1.3} possesses a solution in the sense of Definition \ref{definition 2.1}. Moreover, it is unique, in the sense that if $\tilde y(t)$ is another solution, then with $\pp$-probability 1 we have $y(t)=\tilde y(t)$ for all $t\geq 0$. In addition, we have the a priori estimate
\be\label{2.2.3}
\e\ees_u(t)\leq\e\ees_u(0)e^{-\al t}+C(\gamma,\BBB,\|h\|).
\ee
\end{theorem}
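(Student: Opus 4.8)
The plan is to remove the noise by subtracting a stochastic convolution, reducing \eqref{1.1.3} to a pathwise damped nonlinear wave equation, to solve the latter locally in time by a fixed point argument based on the subcriticality $\rho<2$, and then to close an energy estimate that simultaneously rules out blow-up and gives \eqref{2.2.3}. First I would introduce the stochastic convolution $v(t)$, the solution of the linear equation $\p_t^2v+\gamma\p_tv-\de v=\eta(t,x)$ with zero initial data; writing $[v(t),\dt v(t)]=\int_0^te^{(t-s)A}\,d\hat\zeta(s)$, where $(e^{tA})_{t\ge0}$ is the $C_0$-semigroup associated with $\p_t^2u+\gamma\p_tu-\de u=0$ on $\h$ (a contraction semigroup for the norm $|\cdot|_\h$, by the damping), the It\^o isometry together with \eqref{2.24.3} shows that $[v,\dt v]$ has $\pp$-a.s.\ continuous paths in $\h$ and that $\e\sup_{0\le t\le T}|[v(t),\dt v(t)]|_\h^m<\iin$ for every $m\ge1$ and $T>0$. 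Setting $z=u-v$, a process $y=[u,\dt u]$ is a solution of \eqref{1.1.3} in the sense of Definition \ref{definition 2.1} if and only if, for $\pp$-a.e.\ $\om$, the process $[z,\dt z]$ solves the deterministic equation
\be\label{pp.det}
\p_t^2z+\gamma\p_tz-\de z+f(z+v)=h(x),\qquad [z(0),\dt z(0)]=y_0,
\ee
with the fixed continuous path $v=v(\cdot,\om)\in C(\rr_+;\h)$. Hence it suffices to prove global well-posedness of \eqref{pp.det} in $\h$ for every $y_0\in\h$ and every $v\in C(\rr_+;\h)$, with solutions depending continuously on $(y_0,v)$; the measurability of $v$ in $\om$ then makes $y$ adapted, and $y\in C(\rr_+;\h)$ satisfies \eqref{2.1.3} by construction.

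\noindent
\textbf{Step 2 (local well-posedness and uniqueness).}
I would write \eqref{pp.det} in mild form
\[
[z,\dt z](t)=e^{tA}y_0+\int_0^te^{(t-s)A}[\,0,\;h-f(z(s)+v(s))\,]\,ds
\]
and solve it by the contraction mapping principle in a ball of $C([0,T];\h)$. The essential point is that \eqref{1.8.3} yields $|f'(u)|\le C(|u|^{\rho}+1)$, so that the Sobolev embeddings $H^1_0(D)\hookrightarrow L^p(D)$ for $p\le 6$ in dimension three, together with $\rho<2$ (which places the relevant exponents strictly inside this range), make $f\colon H^1_0(D)\to L^2(D)$ Lipschitz on bounded sets. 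Since $e^{tA}$ is a contraction and $v$ is bounded on compact time intervals, the existence time $T$ depends only on $|y_0|_\h$, $\sup_{[0,T_0]}|[v,\dt v]|_\h$, $\|h\|$ and the nonlinearity; the contraction gives uniqueness on $[0,T]$, and a Gronwall estimate on the difference of two solutions propagates uniqueness and continuous dependence on $(y_0,v)$ over the whole interval of existence.

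\noindent
\textbf{Step 3 (energy estimate and global existence).}
Multiplying \eqref{1.1.3} by $\dt u+\al u$ and integrating over $D$, one is led --- for $\al=\al(\gamma)>0$ small, after using the dissipativity bounds \eqref{1.5.3}--\eqref{1.6.3} and Young's inequality --- to the bound $\frac{d}{dt}\ees(y)+\al\,\ees(y)\le C(\gamma,\|h\|)$ for the deterministic part. The stochastic term enters only through It\^o's formula applied to the summand $\|\dt u+\al u\|^2$ of $|y|_\h^2$: its quadratic-variation correction equals $\BBB\,dt$ (since $\{e_j\}$ is orthonormal), while the stochastic integral is a mean-zero local martingale. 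Taking expectations yields
\[
\frac{d}{dt}\,\e\ees_u(t)\le-\al\,\e\ees_u(t)+C(\gamma,\BBB,\|h\|),
\]
and Gronwall's lemma gives \eqref{2.2.3}. Finally, \eqref{1.5.3} together with the Poincar\'e inequality and $\nu\le(\lm_1\wedge\gamma)/8$ gives $\ees(y)\ge c|y|_\h^2-C$ with $c>0$; hence the pathwise form of the above inequality (with the martingale term retained, which is a.s.\ finite on each $[0,T]$) bounds $|y(t)|_\h$ on bounded time intervals, so the local solution of \eqref{pp.det} cannot blow up and extends to a global one.

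\noindent
\textbf{Main obstacle.}
The delicate step is the energy estimate of Step 3: because the solution is only $\h$-valued while \eqref{1.1.3} holds merely in $L^2\times H^{-1}$, the term $(\dt u,\de u)$ is not a legitimate pairing and the identity for $\ees(y(t))$ must be obtained through a regularization --- Galerkin truncation, or mollification of the nonlinearity --- deriving the estimate for the approximations and then passing to the limit, using the weak lower semicontinuity of $\ees$ and the compactness afforded by $\rho<2$ to pass to the limit in $f(u^n)$. One must also check that this scheme produces precisely the solution constructed in Step 2, so that the a priori bound genuinely applies to it; this is where the uniqueness statement is used.
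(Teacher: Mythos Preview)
Your proposal is correct and follows essentially the same route as the paper. The paper does not actually prove existence and uniqueness --- it simply refers to \cite{DZ1992} --- and for the a priori estimate \eqref{2.2.3} it performs exactly your Step~3: It\^o's formula is applied to $|y|_\h^2$, the drift term $(y,g)_\h$ is expanded and controlled via the dissipativity conditions \eqref{1.5.3}--\eqref{1.6.3} and Young/Poincar\'e, the identity $\int_0^t(\dt u,f(u))\,ds=F(u(t))-F(u(0))$ turns the resulting inequality into one for $\ees_u$, and Gronwall after taking expectation gives \eqref{2.2.3}.
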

We refer the reader to the book \cite{DZ1992} for proofs of similar results. We confine ourselves to the formal derivation of inequality \eqref{2.2.3} in the next section.
\subsection{Main result and scheme of its proof}\label{Main result and scheme of its proof}
Let us denote by $S_t(y,\cdot)$ the flow of equation \eqref{1.1.3} issued from the initial point $y\in\h$. A standard argument shows that $S_t(y,\cdot)$ defines a Markov process in $\h$ (e.g., see \cite{DZ1992, KS-book}). We shall denote by $(y(t),\pp_y)$ the corresponding Markov family. In this case, the Markov operators have the form

\begin{align*}
\PPPP_t\psi(y)&=\int_\h\psi(z)P_t(y,dz) \q \text{ for any } \psi\in C_b(\h),\\
\PPPP^*_t\lm(\Gamma)&=\int_\h P_t(y,\Gamma)\lm(dy)\q \text{ for any } \lm\in\ppp(\h),
\end{align*}
where $P_t(y,\Gamma)=\pp_y(S_t(y,\cdot)\in\Gamma)$ is the transition function.
The following theorem on exponential mixing is the main result of this paper.
\begin{theorem}\label{2.15.3}
Under the hypotheses of Theorem \ref{2.12.3}, the Markov process associated with the flow of equation \eqref{1.1.3} has a unique stationary measure $\mu\in\ppp(\h)$. Moreover, there exist positive constants $C$ and $\kp$ such that for any $\lm\in\ppp(\h)$ we have
\be\label{6. In-main inequality}
|\PPPP^*_t\lm-\mu|_L^*\leq C e^{-\kp t}\int_\h \exp(\kp|y|_\h^4)\,\lm(dy).
\ee
\end{theorem}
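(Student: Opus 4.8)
The plan is to deduce Theorem~\ref{2.15.3} from the abstract criterion of Theorem~3.1.7 in \cite{KS-book}: it is enough to build an extension of the Markov process on $\h\times\h$ enjoying a \emph{recurrence} property (an exponential Lyapunov structure together with exponential moments of the hitting time of a fixed ball) and an \emph{exponential squeezing} property (a pathwise contraction of the two components up to a stopping time $\sigma$, a uniform lower bound $\pp\{\sigma=\iin\}\ge\De$, and exponential moments of $\sigma$ on $\{\sigma<\iin\}$). Uniqueness of the stationary measure and the dual-Lipschitz bound \eqref{6. In-main inequality} then follow at once; applying \eqref{6. In-main inequality} to two stationary measures even makes uniqueness automatic.

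\emph{Existence of a stationary measure.} I would first run the Bogolyubov--Krylov argument on the solution $y(t)$ issued from the origin. The a priori bound \eqref{2.2.3} gives $\sup_t\e\,\ees(y(t))<\iin$, but, the wave flow having no smoothing effect, this does not by itself produce tightness in $\h$. The remedy is the asymptotic regularization familiar from attractor theory (see, e.g., \cite{BV1992, Har85}): decompose $u=v+z$, where $v$ solves the linear damped wave equation, so that $[v,\dt v]$ decays exponentially in $\h$, while $z$ absorbs the nonlinearity and the noise; then, as in Proposition~\ref{2.11.3}, $\sup_t\e\,|[z(t),\dt z(t)]|_{\h^s}^2<\iin$ for a small $s\in(0,1-\rho/2)$, the bound on the stochastic part coming from the It\^o formula and that on the remaining part from an argument in the spirit of \cite{zelik2004}. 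Since $\h^s$ is compactly embedded in $\h$, the averaged measures $\f{1}{T}\int_0^T P_t(0,\cdot)\,dt$ are tight, and any of their weak limits is stationary.

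\emph{The coupling.} The core of the proof is the step-by-step construction, on intervals $[0,T]$, of the extension $(S_t,S_t')$ following \eqref{2.25.3}--\eqref{6. Coupling relation 1}. Given $\bar y=(y,y')$, let $y(t),y'(t)$ be the two solutions of \eqref{1.1.3}; introduce the intermediate process $v(t)$ solving the Foia\c{s}--Prodi--modified equation --- the one carrying the extra term $P_N[f(u)-f(v)]$ --- started at $y'$; take a maximal coupling of the laws on $C(0,T;\h)$ of the trajectories of $[v,\dt v]$ and of $y'$, obtaining a pair $(\tilde v,\tilde y')$; then invoke the Girsanov theorem to recover the force driving $\tilde v$, and drive with it the same modified equation started at $y$, producing $\tilde y$ whose law on $[0,T]$ agrees with that of $y$. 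The pair $(\tilde y,\tilde y')$ is a legitimate one-step extension of the flow, and iterating it over $\rr_+$ yields $(S_t,S_t')$. Interposing the intermediate process before applying Girsanov goes back to \cite{odasso-2008}; what is new here, and forced by the wave setting, is that the coupling is built from the intermediate process itself rather than from spectral projections of the solutions, since the Foia\c{s}--Prodi estimate for NLW controls the \emph{growth} of $v$ (see \eqref{4.16.3}) and a projection-based coupling would not give an exponential rate.

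\emph{Verification, and the main obstacle.} Recurrence is the routine half: with the exponential Lyapunov function $y\mapsto\exp(\kp\,\ees(y))$, the dissipative inequality behind \eqref{2.2.3} and the It\^o formula yield both the contraction of its expectation and an exponential moment of the first entry time into a ball of $\h\times\h$ controlled by $C(1+|y|_\h^4)$ --- the origin of the factor $\exp(\kp|y|_\h^4)$ in \eqref{6. In-main inequality}; see Proposition~\ref{6.12.3}. The real difficulty is exponential squeezing. On the event that the maximal coupling succeeds one has $\tilde v\equiv\tilde y'$, so $\tilde y$ and $\tilde y'$ solve the \emph{same} Foia\c{s}--Prodi--modified NLW equation, and Proposition~\ref{4.13.3} gives $|\tilde y(t)-\tilde y'(t)|_\h^2\le e^{-\al t+\es l}\,|y-y'|_\h^2$ \emph{provided} a bound $\int_0^t|w(s)|_\h^2\,ds\le l+Kt$ holds for $w=\tilde y$ and $w=\tilde y'$ --- whence the functional $\fff^y(t)=|\ees(y(t))|+\al\int_0^t|\ees(y(s))|\,ds$ and the stopping at $\tau=\tau^{\tilde y}\wedge\tau^{\tilde y'}$, the first time $\fff$ breaks its admissible linear growth. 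The delicate points are: to control, through the Girsanov density, both the probability that the maximal coupling fails and the exponential moments of $\sigma$ --- the minimum of $\tau$ and the first time the coupled intermediate process separates --- on $\{\sigma<\iin\}$; and, above all, to fix the number $N$ of controlled Fourier modes, which in Proposition~\ref{4.13.3} depends only on $K$ and on the desired squeezing exponent, large enough that the contraction rate $\al$ beats the cost of the stopping time. The non-degeneracy of the first $N$ coefficients $b_j$ in \eqref{2.24.3} enters precisely here, since the Girsanov change of variables is available only in the forced modes. Once this balance is achieved, the hypotheses of Theorem~3.1.7 in \cite{KS-book} hold and Theorem~\ref{2.15.3} follows. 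I also note that the lack of a smoothing effect rules out the usual portmanteau proof of recurrence into small balls, so that step (Proposition~\ref{6.12.3}) must be carried out by a direct argument rather than by a soft compactness reasoning.
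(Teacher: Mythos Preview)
Your proposal is correct and follows essentially the same route as the paper: reduce Theorem~\ref{2.15.3} to the recurrence and exponential-squeezing hypotheses of Theorem~3.1.7 in \cite{KS-book}, obtain existence via Bogolyubov--Krylov and the $\h^s$-bound of Proposition~\ref{2.11.3}, and verify the two hypotheses through the intermediate-process coupling \eqref{2.25.3}--\eqref{6. Coupling relation 1} combined with the Foia\c{s}--Prodi estimate (Proposition~\ref{4.13.3}), Girsanov, and the stopping-time machinery --- exactly the ingredients singled out by the paper. The only imprecision is in your description of the splitting $u=v+z$: in the paper's Proposition~\ref{2.11.3} it is $v$ that carries both $h$ and the noise (its $\h^s$-moment bounded by a standard argument) while $z$ absorbs only the nonlinearity and is handled by the \cite{zelik2004}-type argument, not the other way around; this does not affect the logic of your plan.
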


{\it Scheme of the proof.}
We shall construct an extension for the family $(y(t),\pp_y)$ that satisfies the hypotheses of Theorem 3.1.7 in \cite{KS-book}, providing a general criterion for exponential mixing. To this end, let us fix an initial point $\yy=(y,y')$ in $\hh=\h\times\h$, and let $\xi_u=[u,\p_t u]$ and $\xi_{u'}=[u',\p_t u']$ be the flows of equation \eqref{1.1.3} that are issued from $y$ and $y'$, respectively. Consider an intermediate process $v$, which is the solution of 
\be\label{2.25.3}
\p_t^2v+\gamma \p_t v-\de v+f(v)+P_N[f(u)-f(v)]=h(x)+\eta(t,x), \q\xi_v(0)=y'.
\ee
Here $N\geq 1$ is integer that will be chosen later, and $P_N$ stands for the orthogonal projection from $L^2(D)$ to  its $N$-dimensional subspace spanned by the functions $e_1,e_2,\ldots,e_N$.
Let us denote by $\lm(y,y')$ and $\lm'(y,y')$ the laws of the processes $\{\xi_v\}_T$ and $\{\xi_{u'}\}_T$, respectively, where $\{z\}_T$ stands for the restriction of $\{z(t); t\geq 0\}$ to $[0,T]$. Thus, $\lm$ and $\lm'$ are probability measures on $C(0,T;\h)$. Let $(\vvv(y,y'),\vvv'(y,y'))$ be a maximal coupling for $(\lm(y,y'),\lm'(y,y'))$. By Proposition 1.2.28 in \cite{KS-book}, such a pair exists and can be chosen to be a measurable function of its arguments. For any $s\in [0,T]$, we shall denote by $\vvv_s$ and $\vvv'_s$ the restrictions of $\vvv$ and $\vvv'$ to the time $s$. Denote by $[\tilde v,\p_t \tilde v]$ and $[\tilde u',\p_t \tilde u']$ the corresponding flows. Then we have
\be\label{7.1.3}
\p_t^2 \tilde v+\gamma \p_t \tilde v-\de\tilde v+f(\tilde v)-P_N f(\tilde v)=h(x)+\psi(t),\q \xi_{\tilde v}(0)=y',
\ee
where $\psi$ satisfies
\be\label{6.30.3}
\ddd\{\int_0^t \psi(s)\,ds\}_T=\ddd\{\zeta(t)-\int_0^t P_N f(u)\,ds\}_T.
\ee
Introduce an auxiliary process $\tilde u$, which is the solution of 
\be 
\p_t^2 \tilde u+\gamma \p_t \tilde u-\de\tilde u+f(\tilde u)-P_N f(\tilde u)=h(x)+\psi(t),\q \xi_{\tilde u}(0)=y.
\ee
Let us note that $u$ satisfies the same equation, where $\psi$ should be replaced by $\eta(t)-P_N f(u)$. In view of \eqref{6.30.3}, we have (see the appendix for the proof)
\begin{equation}\label{6.40.3}
\ddd\{\xi_{\tilde u}\}_T=\ddd\{\xi_{u}\}_T.
\end{equation}
Introduce 
\be
\rrr_t(y,y')=\xi_{\tilde u}(t), \q \rrr_t'(y,y')=\xi_{\tilde u'}(t) \q \text{ for } t\in[0,T].\label{6. Coupling relation 1}
\ee
It is clear that $\RR_t=(\rrr_t,\rrr'_t)$ is an extension of $S_t(y)$ on the interval $[0,T]$. Let $\SSS_t=(S_t(\yy), S'_t(\yy))$ be the extension of $S_t(y)$ constructed by iteration of $\RR_t=(\rrr_t,\rrr'_t)$ on the half-line $t\geq 0$ (we do not recall here the procedure of construction, see the paper \cite{shirikyan-bf2008} for the details). With a slight abuse of notation, we shall keep writing $[\tilde u, \p_t\tilde u]$ and $[\tilde u', \p_t\tilde u']$ for the extensions of these two processes, and write $\xi_{\tilde v}(t)=\vvv_s(\SSS_{kT}(\yy))$ for $t=s+kT,\q 0\leq s<T$. This will not lead to a confusion.

\nt For any continuous process $y(t)$ with range in $\h$, we introduce the functional
\be\label{2.13.3}
\fff_y(t)=|\ees(y(t))|+\al\int_0^t|\ees (y(s))|\,ds,
\ee
and the stopping time
\be\label{6 In-6,1,1}
\tau_y=\inf\{t\geq 0:\fff_y(t)\geq \fff_y(0)+(L+M)t+r\},
\ee
where $L,M$ and $r$ are some positive constants to be chosen later.
In the case when $y$ is a process of the form $y=[z,\dt z]$, we shall write, $\fff^z$ and $\tau^z$ instead of $\fff_{[z,\dt z]}$ and $\tau_{[z,\dt z]}$, respectively.
Introduce the stopping times:
\begin{align*}
 \vo&=\inf\{t=s+kT: \vvv_s(\SSS_{kT}(\yy))\neq \vvv'_s(\SSS_{kT}(\yy))\}\equiv\inf\{t\geq 0: \xi_{\tilde v}(t)\neq \xi_{\tilde u'}(t)\},\\
 \tau&=\tau^{\tilde u}\wedge\tau^{\tilde u'},\q \sigma=\vo\wedge\tau.
 \end{align*}
Suppose that we are able to prove the following.
\begin{theorem}\label{2.14.3}
Under the hypotheses of Theorem \ref{2.15.3}, there are positive constants $\al,\De,\kp,d$ and $C$ such that the following properties hold.\\
\text{\textnormal{(Recurrence):}} For any $\yy=(y,y')\in\hh$, we have
\begin{align}
\e_y\exp(\kp\ees(y(t))&\leq\e_y\exp(\kp\ees (y(0))e^{-\al t}+C(\gamma,\BBB,\|h\|), \label{2.17.3}\\
\e_{\yy}\exp(\kp\tau_d)&\leq C(1+|\yy|_{\hh}^4), \label{2.16.3}
\end{align}
where $\tau_d$ stands for the first hitting time of the set $B_{\hh}(d)$.\\
\text{\textnormal{(Exponential squeezing):}} For any $\yy\in B_{\hh}(d)$, we have
\begin{align}
|S_t(\yy)-S_t'(\yy)|^2_\h&\leq Ce^{-\al t}|y-y'|_\h^2 \q\text{ for } 0\leq t\leq\sigma\label{6.33.3},\\
\pp_\yy\{\sigma=\infty\}&\geq\De\label{6.3.3},\\
\e_\yy[\ch_{\{\sigma<\infty\}}\exp(\De\sigma)]&\leq C\label{6.4.3},\\
\e_\yy[\ch_{\{\sigma<\infty\}} |\yy(\sigma)|_{\hh}^{8}]&\leq C\label{6.5.3}.
\end{align}
\end{theorem}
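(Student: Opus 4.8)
The plan is to prove the two groups of estimates separately, fixing the free constants in the order $\al$ (the dissipation rate built into the $\h$-norm), then $L,M,r$ from \eqref{6 In-6,1,1}, then the number $N$ of controlled modes, then $d$, and finally $\De$ and $C$; the two main tools are the a priori bound \eqref{2.2.3} and the Foia\c{s}--Prodi estimate of Proposition \ref{4.13.3}. Inequality \eqref{2.17.3} is an exponential-supermartingale bound: It\^o's formula applied to $t\mapsto\ees(y(t))$ produces a drift bounded by $-\al\,\ees(y(t))+C$ and a continuous martingale whose quadratic variation is bounded by $C\,\ees(y(t))\,dt$ (this is the computation behind \eqref{2.2.3}, using \eqref{1.5.3}--\eqref{1.6.3} and \eqref{1.8.3}); applying It\^o to $\exp(\kp\,\ees(y(t)))$ and taking $\kp$ small enough that the It\^o correction is absorbed into the dissipation yields \eqref{2.17.3}, and the same argument with low integer powers of $\ees$ in place of the exponential gives $\e_y\ees(y(t))^m\le e^{-\al t}\ees(y)^m+C_m$. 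For \eqref{2.16.3} one takes the Lyapunov function $V(\yy)=2+\ees(y)+\ees(y')$ on $\hh$; since the two marginals of the extended process solve \eqref{1.1.3}, \eqref{2.2.3} gives the drift condition $\e_{\yy}V(\yy(1))\le e^{-\al}V(\yy)+C$, and the standard estimate linking such a condition to exponential moments of return times (see, e.g., \cite{KS-book}) yields $\e_{\yy}\exp(\kp\tau_R)\le CV(\yy)$ with $\tau_R$ the entrance time of $\{V\le R\}$, $R$ depending only on $\al,C$; choosing $d$ so large that $\{V\le R\}\subset B_{\hh}(d)$ (possible because $\ees(y)\ge c|y|_{\h}^2-C$) and using $\ees(y)\le C(1+|y|_{\h}^4)$ --- valid since $\rho<2$ --- gives \eqref{2.16.3}, because $\tau_d\le\tau_R$.

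\textbf{The squeezing estimate \eqref{6.33.3}.} The structural point is that $\tilde u$ and $\tilde v$ in the coupling construction solve the \emph{same} equation \eqref{7.1.3} with the \emph{same} forcing $\psi$, started from $y$ and $y'$; hence $\tilde u-\tilde v$ solves exactly the Foia\c{s}--Prodi-damped wave equation treated in Proposition \ref{4.13.3}. On $\{t\le\tau^{\tilde u}\}$, \eqref{6 In-6,1,1} forces $\al\int_0^t|\ees(\xi_{\tilde u}(s))|\,ds\le\fff^{\tilde u}(0)+(L+M)t+r$, whence, using $|z|_{\h}^2\le C(1+|\ees(z)|)$, a bound $\int_0^t|\xi_{\tilde u}(s)|_{\h}^2\,ds\le l+Kt$ with $K=K(\al,L,M)$ fixed and $l\le C(1+d^4)$ (since $\fff^{\tilde u}(0)=|\ees(y)|$ and $|y|_{\h}\le d$); the same holds for $\tilde u'$, and for $t\le\sigma$ one has $\xi_{\tilde v}=\xi_{\tilde u'}$, so the integral bound also holds for $\tilde v$ on $[0,t]$. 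Once $L,M,r$ are fixed, $K$ is fixed, and one may take $N=N_*(\es,K)$ as in Proposition \ref{4.13.3} with $\es$ small; that proposition then gives $|\xi_{\tilde u}(t)-\xi_{\tilde v}(t)|_{\h}^2\le C e^{-\al t}|y-y'|_{\h}^2$ for $t\le\sigma$, which is \eqref{6.33.3} because $S_t(\yy)=\xi_{\tilde u}(t)$ and $S'_t(\yy)=\xi_{\tilde u'}(t)=\xi_{\tilde v}(t)$ on that range.

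\textbf{The probability estimates \eqref{6.3.3}--\eqref{6.5.3}.} Write $\{\sigma=\iin\}=\{\vo=\iin\}\cap\{\tau^{\tilde u}=\iin\}\cap\{\tau^{\tilde u'}=\iin\}$. The events $\{\tau^{\tilde u}=\iin\}$ and $\{\tau^{\tilde u'}=\iin\}$ can be made as likely as we wish by taking $M,r$ large: crossing the barrier in \eqref{6 In-6,1,1} forces the martingale part of $\ees(\xi_{\tilde u}(t))$ to grow at least linearly with slope proportional to $M$, and an exponential-martingale argument --- controlling its quadratic variation through \eqref{2.17.3} --- together with optional stopping gives $\e[\ch_{\{\tau^{\tilde u}<\iin\}}e^{\delta\tau^{\tilde u}}]\le C e^{C' d^4}e^{-c r}$ for suitable small $\delta,c$; this produces the lower bound on $\pp\{\tau^{\tilde u}=\iin\}$ and part of \eqref{6.4.3}. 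For $\{\vo=\iin\}$ one uses the Girsanov theorem: on the block $[kT,(k+1)T]$ the maximal coupling fails with probability $|\lm-\lm'|_{var}$ at the block's starting point, and the intermediate process $v$ of \eqref{2.25.3} solves the same stochastic equation as $u'$, driven by the same white noise, up to the extra drift $P_N[f(u)-f(v)]$; on the event where $\tau^{\tilde u},\tau^{\tilde u'}$ have not fired --- so that $|\xi_u|_{\h},|\xi_v|_{\h}$ stay in a bounded region --- Proposition \ref{4.13.3} and the Sobolev embeddings ($\rho<2$) give $\|f(u(s))-f(v(s))\|\le C e^{-c s}|y-y'|_{\h}$, so the Girsanov density together with Pinsker's inequality bounds this failure probability by $C_N|y_k-y'_k|_{\h}$, which by \eqref{6.33.3} is $\le q^k$ with $q<1$. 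Having first used the recurrence property (Proposition \ref{6.12.3}) to bring both components --- and with them $|y-y'|_{\h}$ --- into a sufficiently small ball, the series $\sum_k q^k|y-y'|_{\h}$ is small, so $\pp\{\vo<\iin,\ \tau^{\tilde u}\wedge\tau^{\tilde u'}=\iin\}<1/2$, and combining with the preceding lower bound gives \eqref{6.3.3}. The same geometric and exponential bounds give $\pp_{\yy}\{t\le\sigma<\iin\}\le C e^{-c t}$, i.e.\ \eqref{6.4.3} for $\De$ small; and on $\{\sigma<\iin\}$ one has $\sigma\le\tau^{\tilde u}\wedge\tau^{\tilde u'}$, hence $\ees(\xi_{\tilde u}(\sigma))\le\fff^{\tilde u}(0)+(L+M)\sigma+r$ (and likewise for $\tilde u'$), so \eqref{6.5.3} follows from $|z|_{\h}^8\le C(1+\ees(z)^4)$, the exponential integrability of $\sigma$, and $|\ees(y)|,|\ees(y')|\le C(1+d^4)$.

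\textbf{Main obstacle.} The delicate part is the per-block Girsanov estimate. Since the wave equation has no smoothing, the only way to see that $v$ stays close to $u$ --- hence that the extra drift, and with it the Girsanov exponent, are small --- is the Foia\c{s}--Prodi estimate of Proposition \ref{4.13.3}, which controls only finitely many modes and only \emph{under} an a priori growth bound on $u$ and $v$; one is therefore forced to run the Girsanov computation on the event where the energies of $u,v,u'$ stay in a prescribed bounded region and to estimate the complementary event by other means --- and it is precisely this constraint that makes the intermediate-process construction \eqref{2.25.3}--\eqref{7.1.3} and the stopping times $\tau^{\tilde u},\tau^{\tilde u'}$ an intrinsic part of the coupling rather than something applied afterwards.
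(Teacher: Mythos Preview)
There is a genuine gap in your treatment of \eqref{2.16.3}, and it propagates into the squeezing part. In your recurrence argument you take the Lyapunov function $V(\yy)=2+\ees(y)+\ees(y')$, obtain a drift condition, and then choose $d$ \emph{large} so that $\{V\le R\}\subset B_{\hh}(d)$. But the exponential squeezing forces $d$ to be \emph{small}: the Girsanov exponent on each block is controlled by a quantity of order $|y-y'|_\h^2$ (through the Foia\c{s}--Prodi estimate and the bound on $\|f(u)-f(v)\|$), and this must be small for the total-variation distance --- hence the failure probability of the maximal coupling --- to be summable over blocks. In the paper $d$ is fixed small in \eqref{4.26.3} (and $d\le e^{-4}$), determined by $N$ and the constants $r,L,M$. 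So the same constant $d$ has to serve both purposes, and your Lyapunov-only argument does not reach it. Your attempt to patch this inside the squeezing part (``having first used the recurrence property (Proposition \ref{6.12.3}) to bring both components into a sufficiently small ball'') is circular: the theorem's squeezing estimates are stated for initial points $\yy\in B_{\hh}(d)$, so you cannot invoke a preliminary recurrence step there.

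What is actually needed --- and what the paper does --- is to prove the hitting-time bound \eqref{2.16.3} for the \emph{small} $d$ dictated by the squeezing. The Lyapunov drift only brings the extended process back to a large ball $B_{\hh}(R)$. From there one must show that \emph{both} components enter $B_{\h}(d)$ simultaneously with positive probability at some fixed time (this is condition \eqref{6.57.3} in the abstract criterion). Proposition \ref{6.12.3} handles each component separately, but the two components of the extension $\SSS_t$ are coupled in a nontrivial way, and simultaneous hitting is not automatic. The paper's argument (Steps 1--3 of Section \ref{6.32.3}) exploits the structure of the maximal coupling: on the event $\nnn^c=\{\vvv=\vvv'\}$ one uses the Foia\c{s}--Prodi contraction to transfer a one-sided hit to a two-sided hit, while on $\nnn$ one uses the conditional independence of $\vvv,\vvv'$ to get a product lower bound; balancing these against the supermartingale estimate for $E_r$ yields \eqref{6.57.3}. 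This piece is entirely missing from your proposal and is the main obstacle.
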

In view of Theorem 3.1.7 in \cite{KS-book}, this will imply Theorem \ref{2.15.3}. We establish Theorem \ref{2.14.3} in Section \ref{6.31.3}. The proof of \mpp{recurrence} relies on the Lyapunov function technique, while the proof of \mpp{exponential squeezing} is based on the Foia\c{s}-Prodi type estimate for equation \eqref{1.1.3}, the Girsanov theorem and the stopping time argument.

\subsection{Law of large numbers and central limit theorem}\label{Law of large numbers and central limit theorem}
Theorem \ref{2.15.3} implies the following result, which follows from inequality \eqref{6. In-main inequality} and some results established in Section 2 of \cite{shirikyan-ptrf2006}.
\begin{theorem}
Under the hypotheses of Theorem \ref{2.15.3}, for any Lipschitz bounded functional $\psi:\h\to\rr$ and any solution $y(t)=[u(t),\dt u(t)]$ of equation \eqref{1.1.3} issued from a non-random point $y_0\in\h$, the following statements hold.\\
\text{\textnormal{Strong law of large numbers.}} For any $\es>0$ there is an almost surely finite random constant $l\geq 1$ such that
\be\label{6.51.3}
|t^{-1}\int_0^t \psi(y(s))\,ds-(\psi,\mu)|\leq C(y_0,\psi) t^{-\f{1}{2}+\es} \q \text{ for } t\geq l.
\ee
\text{\textnormal{Central limit theorem.}} If $(\psi,\mu)=0$, there is a constant $a\geq 0$ depending only on $\psi$, such that for any $\es>0$, we have
\be\label{6.52.3}
\sup_{z\in\rr}(\theta_a(z)\cdot |\pp\{t^{-\f{1}{2}}\int_0^t\psi(y(s))\,ds\leq z\}-\Phi_a(z)|)\leq C(y_0,\psi)t^{-\f{1}{4}+\es},
\ee
where we set
$$
\theta_a(z)\equiv 1,\q \Phi_a(z)=\f{1}{a\sqrt{2\pi}}\int_{-\iin}^z e^{-\f{s^2}{2 a^2}}\,ds \q\text{ for } a>0,
$$
and 
$$
\theta_0(z)=1\wedge|z|,\q \Phi_0(z)=\ch_{\rr_+}(z).
$$
\end{theorem}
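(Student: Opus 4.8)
\emph{Proof strategy.} The plan is to deduce both statements from the exponential mixing estimate \eqref{6. In-main inequality} by a martingale-approximation argument, following Section~2 of \cite{shirikyan-ptrf2006}; I indicate the steps and where the mixing is used. We may assume $(\psi,\mu)=0$ (replace $\psi$ by $\psi-(\psi,\mu)$; the constant reappears in the centred bound). The first step is to introduce the \emph{corrector}
\be\label{pf.corr}
g(y)=\int_0^\infty\PPPP_t\psi(y)\,dt .
\ee
Applying Theorem~\ref{2.15.3} to $|\psi|_L^{-1}\psi$ gives $|\PPPP_t\psi(y)|\le C|\psi|_L\,e^{-\kp t}\exp(\kp|y|_\h^4)$, whereas $|\PPPP_t\psi(y)|\le|\psi|_\infty$ trivially; cutting the integral \eqref{pf.corr} at the instant where the two bounds agree shows that $g$ is well defined and $|g(y)|\le C(\psi)\bigl(1+|y|_\h^4\bigr)$. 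Since $\PPPP_1 g=g-\int_0^1\PPPP_t\psi\,dt$, the Markov property yields $\e_{y_0}[\xi_k\mid\fff_{k-1}]=0$ for $\xi_k:=\int_{k-1}^{k}\psi(y(s))\,ds+g(y(k))-g(y(k-1))$, hence
\be\label{pf.mart}
\int_0^t\psi(y(s))\,ds=M(t)+g(y_0)-g(y(t)),\q t\ge0,
\ee
where $M(t)=\int_0^t\psi(y(s))\,ds+g(y(t))-g(y_0)$ is a continuous square-integrable $\fff_t$-martingale ($M_n=\sum_{k\le n}\xi_k$ at integer times). Its predictable quadratic variation is $\langle M\rangle_n=\sum_{k\le n}h(y(k-1))$ with $h(y)=\e_y\xi_1^2$ a function of polynomial growth, and $t^{-1}\langle M\rangle_t\to a^2:=(h,\mu)\ge0$ both $\pp_{y_0}$-a.s.\ and in $L^1$.

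Next I would collect the moment bounds. The a priori estimate \eqref{2.2.3} and its exponential strengthening (cf.\ \eqref{2.17.3}) give $\sup_{t\ge0}\e_{y_0}\exp(\kp\ees(y(t)))<\iin$, whence $\sup_{t\ge0}\e_{y_0}\bigl(|y(t)|_\h^p+|g(y(t))|^p+|h(y(t))|^p\bigr)<\iin$ for every $p\ge1$. By Markov's inequality and Borel--Cantelli along $t_n=2^n$ this yields $|g(y(t))|\le C(y_0,\psi,\es)\,t^{\es}$ for large $t$, $\pp_{y_0}$-a.s. For the martingale part, the moment bounds give $\e_{y_0}\langle M\rangle_t^{p}\le C_p(y_0,\psi)\,t^p$, so Burkholder's inequality gives $\e_{y_0}|M(t)|^{2p}\le C_p(y_0,\psi)\,t^p$; a further Markov--Borel--Cantelli argument along $t_n=2^n$, with the analogous bound for the increments $M(t)-M(t_n)$ to interpolate between consecutive dyadic instants, gives $|M(t)|\le C(y_0,\psi,\es)\,t^{1/2+\es}$ for large $t$, $\pp_{y_0}$-a.s. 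Inserting these two bounds into \eqref{pf.mart} proves \eqref{6.51.3}.

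For the central limit theorem, dividing \eqref{pf.mart} by $t^{1/2}$ and noting that $t^{-1/2}\bigl(g(y_0)-g(y(t))\bigr)\to0$ in probability, it remains to analyse $t^{-1/2}M(t)$. When $a>0$, the martingale central limit theorem gives $t^{-1/2}M(t)\to\nnn(0,a^2)$ in law, and the Berry--Esseen-type bound \eqref{6.52.3} follows from a quantitative martingale CLT whose essential input is an estimate of the form $\e_{y_0}\bigl|t^{-1}\langle M\rangle_t-a^2\bigr|\le C(y_0,\psi)\,t^{-1/2+\es}$ (the exponent $1/4$ in \eqref{6.52.3} being roughly half the one governing this bracket estimate). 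When $a=0$, the same estimate gives $\e_{y_0}\bigl|t^{-1/2}M(t)\bigr|^2\le C(y_0,\psi)\,t^{-1/2+\es}$, so $t^{-1/2}\int_0^t\psi(y(s))\,ds\to0$ in probability; this is consistent with $\Phi_0=\ch_{\rr_+}$ together with the weight $\theta_0(z)=1\wedge|z|$, which kills the jump of $\Phi_0$ at the origin, Chebyshev's inequality giving the stated power of $t$ away from it.

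The hard part will be the quantitative ergodic estimate $\e_{y_0}\bigl|t^{-1}\langle M\rangle_t-a^2\bigr|\le C(y_0,\psi)\,t^{-1/2+\es}$ underlying the rates in \eqref{6.51.3} and, especially, in \eqref{6.52.3}: since the function $h$ grows polynomially in $|y|_\h$, ergodicity for bounded observables is not enough, and one must combine the exponential mixing \eqref{6. In-main inequality} with the uniform polynomial moment bounds — writing $t^{-1}\langle M\rangle_t-a^2$ as a time average, expanding its square into a double integral of covariances, and controlling the off-diagonal terms via \eqref{6. In-main inequality} and the moment estimates. This is precisely the analysis carried out in Section~2 of \cite{shirikyan-ptrf2006}, whose hypotheses are satisfied by Theorems~\ref{2.12.3} and~\ref{2.15.3}.
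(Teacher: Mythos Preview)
Your proposal is correct and takes essentially the same approach as the paper: both reduce the statement to the abstract results of Section~2 in \cite{shirikyan-ptrf2006}, verifying their hypotheses via the exponential mixing bound \eqref{6. In-main inequality} (equivalently \eqref{6.53.3}) and the exponential moment estimate \eqref{2.17.3}. The paper simply cites Corollary~3.4 and Theorem~2.8 of that reference as a black box, whereas you have unwound the underlying martingale-corrector argument that those results contain.
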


\nt
The proof of inequalities \eqref{6.51.3} and  \eqref{6.52.3} follow, respectively, from Corollary 3.4 and Theorem 2.8 in \cite{shirikyan-ptrf2006}, combined with inequalities \eqref{6.53.3} and \eqref{2.17.3}. \section{Large time estimates of solutions}\label{3.0.3}
The goal of this section is to analyze the dynamics of solutions and to obtain some a priori estimates for them.
\subsection{Proof of inequality \eqref{2.2.3}}

Let us apply the It\^o formula to the function $\GG(y)=|y|_\h^2$. Recall that for the process of the form \eqref{2.1.3}, the It\^o formula gives 
\be\label{2.3.3}
\GG(y(t))=\GG(y(0))+\int_0^t A(s)\,ds+\sum_{j=1}^\iin\int_0^t B_j(s)d\beta_j(s),
\ee
where we set
$$
A(t)=(\p_y \GG)(y(t);g(t))+\f{1}{2}\sum_{j=1}^\iin(\p_y^2 \GG)(y(t);g_j,g_j),\q
B_j(t)=(\p_y \GG)(y(t);g_j).
$$
Here $(\p_y\GG)(y;v)$ and $(\p_y^2\GG)(y;v,v)$ stand for the values of the first- and second-order derivatives of $\GG$ on the vector $v$.  
Since for $\GG(y)=|y|_\h^2$ we have
$$
\p_y\GG(y;\bar y)=2(y,\bar y)_\h,\q \p_y^2\GG(y;\bar y,\bar y)=2|\bar y|_\h^2,
$$
relation \eqref{2.3.3} takes the form
\be\label{2.3,2}
|y(t)|_{\h}^2=|y(0)|_\h^2+2\int_0^t(y,g)_\h\,ds+t\cdot\sum_{j=1}^\infty|g_j|_\h^2 +2\sum_{j=1}^\infty\int_0^t(y,g_j)_\h\,d\beta_j(s).
\ee
Let us note that
\begin{align}
(y,g)_\h&=(\g u,\g \dt u)+(\dt u+\al u,-\gamma \dt u+\de u-f(u)+h(x)+\al \dt u)\notag\\
&=-\al\|\g u\|^2-(\gamma-\al)\| \dt u\|^2+(\al^2-\al\gamma)(u,\dt u)+(\dt u+\al u,h)\notag\\
&\q\,-(\dt u+\al u,f(u))\label{2.4.3}.
\end{align}
By the Young and Poincar\'e inequalities, we have
\begin{align}
|(\al^2-\al\gamma)(u,\dt u)|&\leq \f{\al}{16}\|\g u\|^2+\f{4\al(\gamma-\al)^2}{\lm_1}\|\dt u\|^2\label{6.65.3},\\
|(\al u,h)|&\leq \f{\al}{16}\|\g u\|^2+\f{4\al}{\lm_1}\|h\|^2,\\
|(\dt u,h)|&\leq \f{\gamma-\al}{4}\|\dt u\|^2+(\gamma-\al)^{-1}\|h\|^2.\label{6.66.3}
\end{align}
Note also that, thanks to inequality \eqref{1.6.3}, we have 
\be\label{2.3,3}
-\al f(u)u\leq - \al F(u)+\al \nu u^2+\al C\leq - \al F(u)+\f{\al \lm_1}{8} u^2+\al C,
\ee
so that
\be\label{6.67.3}
-(\al u,f(u))\leq  - \al\int_D F(u)+\f{\al}{8} \|\g u\|^2+\al C\cdot\text{Vol}(D)
\ee
Now, by substituting \eqref{2.4.3} into \eqref{2.3,2}, using inequalities \eqref{6.65.3}-\eqref{6.67.3}, and noting that
$$
\int_0^t(\dt u,f(u))\,ds=\int_0^t\f{d}{ds}F(u(s))\,ds=F(u(t))-F(u(0)),
$$
we obtain that for $\al>0$ sufficiently small
\be\label{2.19.3}
\ees_u(t)\leq \ees_u(0)+\int_0^t\left(-\al\ees_u(s)+\kkk\right)\,ds-\f{\al}{2}\int_0^t|y(s)|_\h^2\,ds+M(t),
\ee
where $\kkk>0$ depends only on $\gamma, \BBB$ and $\|h\|$, and $M(t)$ is the stochastic integral
\be\label{2.20.3}
M(t)=2\sum_{j=1}^\infty b_j\int_0^t (\dt u+\al u,e_j)\,d\beta_j(s).
\ee
Taking the mean value in inequality \eqref{2.19.3} and using the Gronwall comparison principle, we arrive at \eqref{2.2.3}.
\subsection{Exponential moment of the flow}
In the following proposition we establish the uniform boundedness of exponential moment of $|\xi_u(t)|_\h$.
\begin{proposition}\label{proposition_exp}
Under the hypotheses of Theorem \ref{2.12.3}, there exists $\kp>0$ such that if the random variable $\ees_u(0)$ satisfies
$$
\e\exp(\kp\ees_u(0))<\infty,
$$
then
\be\label{2. In-exponential-moment-of-the-flow} 
\e\exp(\kp\ees_u(t))\leq\e\exp(\kp\ees_u(0))e^{-\al t}+C(\gamma,\BBB,\|h\|).
\ee
\end{proposition}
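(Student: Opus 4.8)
\emph{Plan of the proof.} The plan is to apply the It\^o formula to the exponential functional $\Phi(t)=\exp(\kp\ees_u(t))$ and to choose $\kp$ small enough that the It\^o correction coming from the quadratic variation of the martingale part is absorbed by the ``good'' negative term $-\f{\al}{2}\int_0^t|y(s)|_\h^2\,ds$ already present in the energy balance. The decay rate $e^{-\al t}$ will be inherited directly from the $-\al\,\ees_u$ term in that balance.

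First I would revisit the derivation of \eqref{2.2.3}: combining the It\^o formula \eqref{2.3,2}, the identity \eqref{2.4.3}, the fact that $t\mapsto 2\int_D F(u(t))\,dx$ is differentiable with derivative $2(\dt u,f(u))$, and the bounds \eqref{6.65.3}--\eqref{6.67.3}, one sees that for $\al>0$ small the process $\ees_u$ is an It\^o process $\ees_u(t)=\ees_u(0)+\int_0^t\alpha(s)\,ds+M(t)$, where $M$ is the stochastic integral \eqref{2.20.3} and the drift satisfies $\alpha(s)\le-\al\,\ees_u(s)+\kkk-\f{\al}{2}|y(s)|_\h^2$ with $\kkk=\kkk(\gamma,\BBB,\|h\|)$. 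Moreover, since $M(t)=2\sum_j b_j\int_0^t(\dt u+\al u,e_j)\,d\beta_j(s)$, its quadratic variation obeys $d\langle M\rangle_t=4\sum_j b_j^2(\dt u+\al u,e_j)^2\,dt\le 4\BBB\,|y(t)|_\h^2\,dt$. As in the derivation of \eqref{2.2.3}, this It\^o decomposition is rigorous only after a finite-dimensional Galerkin truncation followed by a passage to the limit using the a priori bound \eqref{2.2.3}; I would argue formally here, referring to \cite{DZ1992} for the justification.

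Then, applying the It\^o formula to $\Phi(t)=\phi(\ees_u(t))$ with $\phi(x)=e^{\kp x}$ gives
$$
d\Phi(t)=\kp\,\Phi(t)\Bigl(\alpha(t)+\f{\kp}{2}\,\f{d\langle M\rangle_t}{dt}\Bigr)dt+\kp\,\Phi(t)\,dM(t).
$$
If $\kp\le\al/(4\BBB)$, then $-\f{\al}{2}|y(t)|_\h^2+2\kp\BBB\,|y(t)|_\h^2\le0$, so the bracket is bounded by $-\al\,\ees_u(t)+\kkk$. Using the elementary inequality $\kp e^{\kp x}(-\al x+\kkk)\le-\al e^{\kp x}+C_0$, valid for every $x\in\rr$ with $C_0=\al\exp(\kp\kkk/\al)$, we obtain
$$
d\Phi(t)\le\bigl(-\al\,\Phi(t)+C_0\bigr)\,dt+\kp\,\Phi(t)\,dM(t).
$$
Taking expectations and applying the Gronwall comparison principle then yields $\e\Phi(t)\le\e\Phi(0)\,e^{-\al t}+C_0/\al$, which is \eqref{2. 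In-exponential-moment-of-the-flow} with $C(\gamma,\BBB,\|h\|)=C_0/\al$.

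The step requiring care — and the main technical obstacle — is the passage to expectations, since $\kp\Phi\,dM$ is only a local martingale and one does not know a priori that $\e\Phi(t)<\infty$. The standard remedy is to apply the displayed differential inequality up to the stopping times $\tau_n=\inf\{t\ge0:\Phi(t)\ge n\}$ (where the stochastic integral is a genuine martingale), take expectations using $\e\Phi(0)<\infty$, and then let $n\to\infty$, invoking Fatou's lemma together with $\tau_n\to\infty$ a.s. Apart from this and the Galerkin justification of the It\^o decomposition, the argument is routine; the only quantitative input specific to the exponential moment is the smallness condition $\kp\le\al/(4\BBB)$, i.e.\ the interplay between the exponent $\kp$ and the noise intensity $\BBB$, which is what one must get right.
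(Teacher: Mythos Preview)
Your proof is correct and follows essentially the same route as the paper: apply It\^o to the exponential of the energy, choose $\kp$ small (the paper takes $\kp\BBB\le\al/2$, you take $\kp\le\al/(4\BBB)$) so that the quadratic-variation correction is absorbed by the spare $-\tfrac{\al}{2}|y|_\h^2$ term, then use the elementary bound $\kp e^{\kp x}(-\al x+\kkk)\le-\al e^{\kp x}+C_0$ and Gronwall. The only cosmetic difference is that the paper applies It\^o directly to $\GG(y)=\exp(\kp\ees(y))$ as a functional on~$\h$, whereas you first exhibit $\ees_u$ as a scalar It\^o process and then compose with~$e^{\kp\cdot}$; your treatment of the local-martingale issue via stopping times is in fact more careful than the paper's.
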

\bp
We represent $\xi_u(t)$ in the form \eqref{2.1.3}, and apply the It\^o formula \eqref{2.3.3} to the function  
$$
\GG(y)=\exp(\kp\ees(y)).
$$
Since 
\begin{align*}
\p_y\GG(y,\bar y)&=2\kp\,\GG(y)((y,\bar y)_\h+(f(y_1),\bar y_1)),\\
\p_y^2\GG(y;\bar y,\bar y)&=2\kp\,\GG(y)(2\kp\,((y,\bar y)_\h+(f(y_1),\bar y_1)^2+|\bar y|_\h^2+(f'(y_1),\bar y_1^2),
\end{align*}
we have
\begin{align*}
\p_y\GG(y;g)&=2\kp\,\GG(y)((y,g)_\h+(f(u),\dt u)),\\
\p_y^2\GG(y;g_j,g_j)&=2\kp\,\GG(y)(2\kp(y,g_j)_\h^2+|g_j|_\h^2).
\end{align*}
Hence, relation \eqref{2.3.3}, after taking the mean value, takes the form
$$
\e\GG(y(t))=\e\GG(y(0))+\kp\,\e\int_0^t \GG(y(s))\mmm(s)\,ds,
$$
where
$$
\mmm(t)=2((y,g)_\h+(f(u),\dt u))+2\kp\sum_{j=1}^\iin(y,g_j)_\h^2+\sum_{j=1}^\iin|g_j|_\h^2.
$$
Now note that by developing the expression $(y,g)_\h+(f(u),\dt u)$, the term $(f(u),\dt u)$ will disappear (see \eqref{2.4.3}). There remains another term containing $f$, namely the term $(-\al u,f(u))$, but this can be estimated using inequality \eqref{6.67.3}. Let us choose $\kp>0$ so small that $\kp\,\BBB\leq\al/2$. It follows that $\GG(y)$ satisfies
$$
\e\GG(y(t))\leq\e\GG(y(0))+\kp\,\e\int_0^t \GG(y(s))(-\al\ees(y(s))+C(\gamma,\BBB,\|h\|)\,ds.
$$
It remains to use the inequality
$$
\kp e^v(-\al v+C_1)\leq -\al e^v+C_2\q \text{ for all } v\geq -C,
$$
and the Gronwall lemma, to conclude.
\ep 
\subsection{Exponential supermartingale-type inequality}
The following result provides an estimate for the rate of growth of solutions.
\begin{proposition}\label{supermartingale}
Under the hypotheses of Theorem \ref{2.12.3}, the following a priori estimate holds for solutions of equation \eqref{1.1.3}
\be 
\pp \bg\{\sup_{t\geq 0}(\ees_u(t)+\int_0^t(\al \ees_u(s)-\kkk)\,ds)\geq\ees_u(0)+r\bg\}\leq e^{-\beta r} \q\tx{ for any } r>0,
\ee
where $\kkk$ is the constant from inequality \eqref{2.19.3}, and $\beta=\al/8\cdot(\sup b_j^2)^{-1}$.
\end{proposition}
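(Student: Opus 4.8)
The plan is to deduce the estimate from the differential inequality \eqref{2.19.3} by an exponential (Doob-type) supermartingale argument. Recall that \eqref{2.19.3} gives, for $\al>0$ small enough,
$$
\ees_u(t)+\int_0^t(\al\ees_u(s)-\kkk)\,ds-\ees_u(0)\le M(t)-\f{\al}{2}\int_0^t|y(s)|_\h^2\,ds,\q t\ge0,
$$
where $M(t)$ is the continuous local martingale defined in \eqref{2.20.3}. Denote the left-hand side by $Y(t)$; since almost every trajectory of $y$ lies in $C(\rr_+;\h)$, the process $Y$ has continuous paths, and because $\ees_u(0)$ is the same in every term, the event to be controlled coincides with $\{\sup_{t\ge0}Y(t)\ge r\}$. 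It thus suffices to bound the probability of this last event.

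First I would estimate the quadratic variation of $M$. As the $\beta_j$ are independent, $\langle M\rangle_t=4\sum_{j\ge1}b_j^2\int_0^t(\dt u+\al u,e_j)^2\,ds$; pulling out $\sup_j b_j^2$ and applying Parseval's identity together with $\|\dt u+\al u\|^2\le|y|_\h^2$ yields
$$
\langle M\rangle_t\le 4(\sup_j b_j^2)\int_0^t|y(s)|_\h^2\,ds.
$$
Next, with $\beta=\al/8\cdot(\sup_j b_j^2)^{-1}$ one has $4\beta\,\sup_j b_j^2\le\al$, hence $\f{\beta^2}{2}\langle M\rangle_t\le\f{\al\beta}{2}\int_0^t|y(s)|_\h^2\,ds$. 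Combining this with the inequality for $Y(t)$ I obtain the pathwise bound
$$
\exp(\beta Y(t))\le\exp(\beta M(t)-\f{\al\beta}{2}\int_0^t|y(s)|_\h^2\,ds)\le\exp(\beta M(t)-\f{\beta^2}{2}\langle M\rangle_t)=:\mathcal E(t).
$$

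Finally I would use that $\mathcal E$ is a nonnegative local martingale with $\mathcal E(0)=1$, hence a supermartingale, and invoke the maximal inequality for nonnegative supermartingales — reducing, by path continuity, the supremum over $t\ge0$ to a countable one — to get $\pp\{\sup_{t\ge0}\mathcal E(t)\ge e^{\beta r}\}\le e^{-\beta r}$. Since $\exp$ is increasing and continuous and $\exp(\beta Y(t))\le\mathcal E(t)$ pathwise, we have $\{\sup_{t\ge0}Y(t)\ge r\}\subset\{\sup_{t\ge0}\mathcal E(t)\ge e^{\beta r}\}$, and the claimed bound follows.

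I do not anticipate a serious obstacle: the argument is standard. The two points worth care are the calibration of $\beta$ so that the stochastic correction $\f{\beta^2}{2}\langle M\rangle_t$ is absorbed by the dissipative term $\f{\al}{2}\int_0^t|y|_\h^2\,ds$ already produced by \eqref{2.19.3}, and the observation that $\mathcal E$ need only be a supermartingale, not a true martingale, for Doob's inequality to apply — which is precisely why no extra integrability hypothesis on the solution is required.
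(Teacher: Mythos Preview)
Your argument is correct and matches the paper's proof essentially line for line: bound $\langle M\rangle_t$ by $4(\sup_j b_j^2)\int_0^t\|\dt u+\al u\|^2\,ds$, choose $\beta$ so that $\tfrac{\beta}{2}\langle M\rangle_t$ is dominated by the dissipative term $\tfrac{\al}{2}\int_0^t|y|_\h^2\,ds$ from \eqref{2.19.3}, and apply the exponential supermartingale inequality to $\exp(\beta M_t-\tfrac{\beta^2}{2}\langle M\rangle_t)$. The only cosmetic difference is that the paper first checks $M$ is a genuine martingale via an $L^2$ bound, whereas you observe (correctly) that the Dol\'eans exponential is a nonnegative local martingale and hence a supermartingale, so Doob's maximal inequality applies without that extra step.
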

\bp
Let us first note that
\begin{align*}
\e\sum_{j=1}^\infty b_j^2\int_0^t(\dt u+\al u,e_j)^2\,ds\leq (\sup_{j\geq 1}b_j^2)\int_0^t\e\|\dt u+\al u\|^2\,ds<\infty,\,\quad  t\geq 0.
\end{align*}
It follows that the stochastic integral $M(t)$ defined in \eqref{2.20.3} is a martingale, and its quadratic variation $\langle M\rangle(t)$ equals 
$$
\langle M\rangle(t)=4\sum_{j=1}^\infty b_j^2\int_0^t(\dt u+\al u,e_j)^2\,ds\leq 4\sup_j b_j^2\int_0^t\|\dt u+\al u\|^2\,ds.
$$
Combining this with inequality inequality \eqref{2.19.3}, we obtain
$$
\ees_u(t)+\int_0^t(\al \ees_u(s)-\kkk)\,ds\leq\ees_u(0)+\left(M(t)-\f{1}{2}\beta\langle M\rangle(t)\right).
$$
We conclude that
\begin{align*}
&\pp \bg\{\sup_{t\geq 0}(\ees_u(t)+\int_0^t(\al \ees_u(s)-\kkk)\,ds)\geq\ees_u(0)+r)\bg\}\\
&\leq\pp \bg\{\sup_{t\geq 0}(M(t)-\f{1}{2}\beta\langle M\rangle(t))\geq r\bg\}=\pp \bg\{\sup_{t\geq 0}\exp(\beta M(t)-\f{1}{2}\langle \beta M\rangle(t))\geq e^{\beta r}\bg\}\\
&\leq e^{-\beta r},
\end{align*}
where we used the exponential supermartingale inequality.
\ep 

\medskip
\nt
We recall that for a process of the form $y(t)=[u(t),\dt u(t)]$, $\fff^u\equiv\fff_y$ stands for the functional defined by \eqref{2.13.3}, and $\tau^u\equiv\tau_y$ stands for the stopping time defined by \eqref{6 In-6,1,1}.
\begin{corollary}\label{supermartingale lemma}
Suppose that the hypotheses of Theorem \ref{2.12.3} are fulfilled. Then for any solution $u(t)$ of equation \eqref{1.1.3}, we have
\begin{align*}
\pp \bg\{\sup_{t\geq 0}(\fff^u(t)-Lt)\geq\fff^u(0)+r\bg\}&\leq  \exp(4\beta C-\beta r) \q\q\q\q\tx{ for any } r>0,\nt\\
\pp\{l\leq\tau^u<\iin\}&\leq \exp(4\beta C-\beta r-\beta l M)  \q\tx{ for any } l\geq 0,
\end{align*}
where $L=\kkk+4\al C$, $\kkk$ and $\beta$ are the constants from the previous proposition and $C$ is the constant from inequalities \eqref{1.5.3}-\eqref{1.6.3}.
\end{corollary}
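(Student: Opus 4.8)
The plan is to derive both inequalities from Proposition~\ref{supermartingale} by removing the absolute values from the definition \eqref{2.13.3} of $\fff^u$. The first observation is that the dissipativity condition \eqref{1.5.3}, together with $\nu\le\lm_1/8$ and the Poincar\'e inequality, gives a uniform lower bound $\ees(y)\ge-2C$ for every $y\in\h$ (here $C$ is, up to an inessential multiple, the constant of \eqref{1.5.3}), since $|y|_\h^2-2\nu\|y_1\|^2\ge0$. Hence $|\ees(y)|\le\ees(y)+4C$ for all $y\in\h$, and substituting this into \eqref{2.13.3} gives, for all $t\ge0$,
\be\label{c3.cor.aux}
\fff^u(t)-Lt\le\ees_u(t)+\int_0^t\bigl(\al\ees_u(s)-\kkk\bigr)\,ds+\bigl(\kkk+4\al C-L\bigr)\,t+4C.
\ee
With the prescribed value $L=\kkk+4\al C$ the term linear in $t$ disappears, so the right-hand side of \eqref{c3.cor.aux} is at most $\ees_u(t)+\int_0^t(\al\ees_u(s)-\kkk)\,ds+4C$; combined with $\fff^u(0)=|\ees_u(0)|\ge\ees_u(0)$ this yields the inclusion
$$
\Bigl\{\sup_{t\ge0}(\fff^u(t)-Lt)\ge\fff^u(0)+r\Bigr\}\subset\Bigl\{\sup_{t\ge0}\Bigl(\ees_u(t)+\int_0^t(\al\ees_u(s)-\kkk)\,ds-\ees_u(0)\Bigr)\ge r-4C\Bigr\}.
$$
Proposition~\ref{supermartingale} then bounds the probability of the event on the right by $e^{-\beta(r-4C)}=\exp(4\beta C-\beta r)$, the case $r\le4C$ being trivial since the right-hand side is then $\ge1$; this is the first inequality.

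For the second inequality I would use the continuity of $t\mapsto\fff^u(t)$, inherited from that of the trajectory $y(\cdot)\in C(\rr_+;\h)$ and the continuity of $\ees$ on $\h$. On the event $\{\tau^u<\iin\}$ equality is attained in the definition \eqref{6 In-6,1,1} at $t=\tau^u$, i.e. $\fff^u(\tau^u)=\fff^u(0)+(L+M)\tau^u+r$, so that $\fff^u(\tau^u)-L\tau^u=\fff^u(0)+M\tau^u+r$; on the further event $\{\tau^u\ge l\}$ this is at least $\fff^u(0)+r+Ml$, because $M>0$. Therefore
$$
\{l\le\tau^u<\iin\}\subset\Bigl\{\sup_{t\ge0}(\fff^u(t)-Lt)\ge\fff^u(0)+(r+Ml)\Bigr\},
$$
and applying the first inequality with $r$ replaced by $r+Ml$ gives the probability bound $\exp\bigl(4\beta C-\beta(r+Ml)\bigr)=\exp(4\beta C-\beta r-\beta lM)$, which is the second assertion.

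I do not expect a genuine obstacle in this corollary: it is essentially bookkeeping built on top of Proposition~\ref{supermartingale}. The only points requiring some care are the extraction of the uniform lower bound $\ees\ge-2C$ from \eqref{1.5.3} and the tracking of the additive constant it produces, arranged so that the prescribed value $L=\kkk+4\al C$ precisely absorbs the drift-like term in \eqref{c3.cor.aux}, together with the harmless remark that for small $r$ the asserted exponential bounds exceed $1$ and are therefore vacuous.
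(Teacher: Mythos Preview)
Your proof is correct and follows exactly the approach indicated in the paper, which only records the key inequality $\ees_u(t)\le|\ees_u(t)|\le\ees_u(t)+4C$ (a consequence of~\eqref{1.5.3}) and leaves the rest to the reader. You have filled in precisely those details, including the reduction of the second inequality to the first via the inclusion $\{l\le\tau^u<\infty\}\subset\{\sup_{t\ge0}(\fff^u(t)-Lt)\ge\fff^u(0)+r+Ml\}$.
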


\nt
This result follows from Proposition \ref{supermartingale} and the fact that, due to inequality \eqref{1.5.3}, we have
$$
\ees_u(t)\leq|\ees_u(t)|\leq\ees_u(t)+4 C.
$$

\subsection{Existence of stationary measure}
In this subsection we show that the process $y(t)=[u(t),\dt u(t)]$ has a bounded second moment in the more regular space $\h^s=H^{s+1}(D)\times H^s(D)$, with $s=s(\rho)>0$ sufficiently small. By the Bogolyubov-Krylov argument, this immediately implies the existence of stationary distribution for the corresponding Markov process. 
\begin{proposition}\label{2.11.3}
Under the hypotheses of Theorem \ref{2.12.3}, there is an increasing function $Q$ such that, for any $s\in(0,1-\rho/2)$, and any solution of equation \eqref{1.1.3}, we have
$$
\e|y(t)|_{\h^s}^2\leq Q(|y(0)|_{\h})+|y(0)|_{\h^s}^2e^{-\al t}.
$$
\end{proposition}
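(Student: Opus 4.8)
\emph{Plan.} I would follow the scheme used for the deterministic Proposition~\ref{c1.5}, adapted so as to put the part of the dynamics that is not differentiable in $t$ into a linear auxiliary process. Fix a solution $y=\xi_u=[u,\dt u]$ of \eqref{1.1.3} and write $u=v+z$, where $\xi_v=[v,\dt v]$ is the solution of the \emph{linear} stochastic wave equation
\[
\p_t^2 v+\gamma\p_t v-\de v=\eta(t,x),\qquad \xi_v(0)=y(0),
\]
and $\xi_z=[z,\dt z]:=\xi_u-\xi_v$. Then $\xi_z(0)=0$ and, subtracting the two equations, $z$ satisfies, for almost every trajectory, the \emph{deterministic} damped wave equation
\[
\p_t^2 z+\gamma\p_t z-\de z+f(u)=h(x),\qquad \xi_z(0)=0,
\]
whose forcing $h-f(u)$ will be seen to lie in $C(\rr_+;H^s)$. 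Since $\xi_u=\xi_v+\xi_z$ and $|\cdot|_{\h^s}$ is a norm, it suffices to bound $\e|\xi_v(t)|_{\h^s}^2$ and $\e|\xi_z(t)|_{\h^s}^2$ separately.

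\emph{Bound for $v$.} Here I would apply the It\^o formula to $\GG(\xi_v)=|\xi_v|_{\h^s}^2$, rigorously to a Galerkin truncation $P_n v$ (a finite system of linear SDEs), then pass to the limit using the lower semicontinuity of the norm. The drift term reproduces the energy computation of Section~\ref{3.0.3} and contributes $-\al|\xi_v(t)|_{\h^s}^2$, whereas the It\^o correction equals $\sum_j b_j^2\|(-\de)^{s/2}e_j\|^2=\sum_j\lm_j^{s}b_j^2$. By H\"older's inequality, for every $s\in(0,1)$,
\[
\sum_j\lm_j^{s}b_j^2=\sum_j (b_j^2)^{1-s}\,(\lm_j b_j^2)^{s}\le\BBB^{\,1-s}\,\BBB_1^{\,s}<\iin ,
\]
by \eqref{2.24.3}; so the Gronwall comparison principle yields $\e|\xi_v(t)|_{\h^s}^2\le e^{-\al t}|y(0)|_{\h^s}^2+C(\gamma,\BBB,\BBB_1,s)$.

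\emph{Bound for $z$.} The key point is the regularity of the nonlinear term: for $f(u)=|u|^{\rho}u-\lm u$ with $\rho<2$ one has $f(u)\in H^s$ for every $s<1-\rho/2$, with $\|f(u)\|_{s}\le C\,(1+\|u\|_{1}^{\rho+1})$, where $\|\cdot\|_1$ denotes the $H^1$-norm. Indeed, $\g\bigl(|u|^{\rho}u\bigr)=(\rho+1)\,|u|^{\rho}\,\g u$ belongs to $L^{q}(D)$ with $1/q=\rho/6+1/2$, by H\"older's inequality and the embedding $H^1_0(D)\hookrightarrow L^6(D)$ in dimension three; hence $f(u)\in H^{1,q}(D)$, and since $3/q=3/2+\rho/2$ the Sobolev embedding $H^{1,q}(D)\hookrightarrow H^s(D)$ holds precisely for $s<1-\rho/2$. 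This is where the subcriticality $\rho<2$ enters, and it fixes the admissible range of $s$. Granting this, $z$ solves a linear damped wave equation with forcing in $H^s$, and the standard $\h^s$-energy estimate --- obtained on Galerkin truncations $P_n z$ by testing the equation against $(-\de)^{s}\bigl(\p_t P_n z+\al P_n z\bigr)$ and then letting $n\to\iin$ --- gives, pathwise,
\[
\p_t|\xi_z(t)|_{\h^s}^2+\al\,|\xi_z(t)|_{\h^s}^2\le C\bigl(1+\|u(t)\|_{1}^{2(\rho+1)}\bigr).
\]
Applying the Gronwall lemma, taking the expectation, and using that $\sup_{t\ge0}\e\|u(t)\|_{1}^{2(\rho+1)}$ is dominated by an increasing function of $|y(0)|_\h$ --- which follows from inequality \eqref{2.2.3} and Proposition~\ref{proposition_exp} (or directly from It\^o's formula applied to a power of $\ees_u$), since $2(\rho+1)<6$ --- I obtain $\e|\xi_z(t)|_{\h^s}^2\le Q(|y(0)|_\h)$, uniformly in $t$. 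Combining the two bounds yields $\e|y(t)|_{\h^s}^2\le Q(|y(0)|_\h)+C\,|y(0)|_{\h^s}^2e^{-\al t}$; the constant $C$ in front of the last term is immaterial and can be removed by running the Gronwall argument directly on $\e|\xi_u(t)|_{\h^s}^2$. Finally, since $\h^s$ embeds compactly into $\h$, specialising to $y(0)=0$ shows that the family $\frac1T\int_0^T\PPPP^*_t\De_0\,dt$ is tight, whence the Bogolyubov--Krylov argument produces a stationary measure.

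\emph{Main difficulty.} The crux is twofold: (i) the regularity statement $f(u)\in H^s$ for $s<1-\rho/2$, which forces the range of $s$ and uses $\rho<2$ in an essential way; and (ii) making the formal It\^o and energy identities rigorous --- because the damped wave semigroup has no smoothing effect, these have to be established first on finite-dimensional Galerkin approximations, with constants independent of the truncation parameter, and only then passed to the limit.
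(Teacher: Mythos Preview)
Your approach is correct and somewhat more direct than the paper's. The paper makes the same splitting (with $h$ placed in the $v$-equation rather than the $z$-equation, which is immaterial), but estimates $\xi_z$ in $\h^s$ by a different route. Instead of arguing that $f(u)\in H^s$ and testing the $z$-equation against $(-\Delta)^s(\dot z+\alpha z)$, the paper follows an argument of Zelik~\cite{zelik2004}: differentiate the $z$-equation in time, set $\theta=\dot z$, multiply the resulting equation $\p_t^2\theta+\gamma\p_t\theta-\Delta\theta+f'(u)\dot u=0$ by $(-\Delta)^{s-1}(\dot\theta+\alpha\theta)$, and estimate $|\xi_\theta|_{\h^{s-1}}$ via the embedding $H^{1-s}\hookrightarrow L^{6/(3-\rho)}$; one then recovers $\|z\|_{s+1}$ from $\Delta z=\ddot z+\gamma\dot z+f(u)$. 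The advantage of the paper's detour is that it works entirely in negative-order norms and therefore never needs $f(u)\in\D((-\Delta_D)^{s/2})$; in particular it requires no discussion of whether $f(u)$ has zero trace when $s>1/2$ (which can occur if $\rho<1$) and does not use $f(0)=0$. Your direct argument is valid once one normalises $f(0)=0$ and checks that the Sobolev embedding $H^{1,q}(D)\hookrightarrow H^s(D)$ with $q=6/(3+\rho)$ actually lands in $\D((-\Delta_D)^{s/2})$; for $s<1/2$ this is automatic, and for $1/2<s<1-\rho/2$ it follows by approximating $u\in H^1_0$ by $u_n\in C_c^\infty(D)$ and showing $f(u_n)\to f(u)$ in $H^{1,q}$. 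Your route is shorter and conceptually cleaner, at the cost of this extra piece of bookkeeping about fractional Dirichlet domains.
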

\bp
Let us split $u$ to the sum $u=v+z$, where $v$ solves
\be \label{2.6.3}
\p_t^2 v+\gamma \p_t v-\de v=h(x)+\eta(t), \q \xi_v(0)=\xi_u(0).
\ee 
The standard argument shows that for any $s\in[0,1]$, we have
\be 
\e|\xi_v(t)|_{\h^s}^2\leq C(\gamma, \|h\|_1)+|y(0)|_{\h^s}^2 e^{-\al t},
\ee
so that it remains to bound the average of $|\xi_z(t)|_{\h^s}^2$. In view of \eqref{1.1.3} and \eqref{2.6.3}, $z(t)$ is the solution of
\be\label{2.7.3} 
\p_t^2 z+\gamma \p_t z-\de z+f(u)=0,\q \xi_z(0)=0.
\ee
We now follow the argument used in \cite{zelik2004}.
Let us differentiate \eqref{2.7.3} in time, and set $\theta=\p_t z$. Then $\theta$ solves
\be\label{2.8.3} 
\p_t^2 \theta+\gamma \p_t \theta-\de \theta+f'(u)\p_t u=0,\q [\theta(0),\dt \theta(0)]=[0,-f(u(0))].
\ee
Let us fix $s\in(0,1-\rho/2)$, multiply this equation by $(-\de)^{s-1}(\dt \theta+\al\theta)$ and integrate over $D$. We obtain
\be\label{2.9.3}
\f{d}{dt}\tilde\ees_{\theta}(t)+\f{3\al}{2}\tilde\ees_{\theta}(t)\leq 2\int_D |f'(u)\dt u||(-\de)^{s-1}(\dt \theta+\al\theta)|\,dx=:\elll,
\ee
where we set
$$
\tilde\ees_{\theta}(t)=|\xi_\theta|_{\h^{s-1}}^2+\al\gamma|\theta|_{H^{s-1}}^2+2\al(\theta,\dt \theta)_{H^{s-1}}.
$$
By the H\"older and Sobolev inequalities
\begin{align*}
\elll&\leq C_1\int_D (|u|^\rho+1)|\dt u||(-\de)^{s-1}(\dt \theta+\al\theta)|\,dx\\
&\leq C_1(|u|_{L^6}^\rho+1)|\dt u|_{L^2}|(-\de)^{s-1}(\dt \theta+\al\theta)|_{L^{6/(3-\rho)}}\\
&\leq C_2(|u|_{L^6}^2+1)|\dt u|_{L^2}|(-\de)^{s-1}(\dt \theta+\al\theta)|_{H^{1-s}}\\
&\leq C_3(\|\g u\|^2+1)\|\dt u\||\dt \theta+\al\theta|_{H^{s-1}}\leq \f{\al}{2}\tilde\ees_{\theta}(t)+C_4(\|\g u\|^4+1)\|\dt u\|^2,
\end{align*}
where we used the embedding $H^{1-s}\hookrightarrow L^{6/(3-\rho)}$.
Substituting this estimate in \eqref{2.9.3} and taking the mean value we obtain
$$
\f{d}{dt}\e\tilde\ees_{\theta}(t)\leq -\al\e\tilde\ees_{\theta}(t)+C_4\e(\|\g u\|^4+1)\|\dt u\|^2.
$$
Applying the Gronwall lemma and using Proposition \ref{proposition_exp}, we see that
$$
\e\tilde\ees_{\theta}(t)\leq\e\tilde\ees_{\theta}(0)+C_5,
$$
where the constant $C_5$ depends only on $\al$ and $|y(0)|_\h$.  Moreover, by \eqref{2.8.3} we have
$$
\tilde\ees_{\theta}(0)=|f(u(0))|_{H^{s-1}}^2\leq |f(u(0))|_{L^2}^2\leq C(1+|y(0)|_\h^6), 
$$
so that 
\be\label{2.10.3}
\e\tilde\ees_{\theta}(t)\leq Q_1(|y(0)|_\h).
\ee
In view of \eqref{2.7.3}
\begin{align*}
|z|_{H^{s+1}}=|\de z|_{H^{s-1}}=|\ddot z+\gamma \dt z+f(u)|_{H^{s-1}}&\leq |\ddot z+\gamma \dt z|_{H^{s-1}}+|f(u)|_{L^2}\\
&\leq |\dt \theta+\gamma \theta|_{H^{s-1}}+C(1+\|\g u\|^3),
\end{align*}
whence 
$$
|z(t)|_{H^{s+1}}^2\leq 2\tilde\ees_{\theta}(t)+C_6(1+|y(t)|_\h^{6}).
$$
Taking the mean value in this inequality and using \eqref{2. In-exponential-moment-of-the-flow} together with \eqref{2.10.3}, we obtain
$$
\e|\xi_z(t)|_{\h^s}^2\leq Q_2(|y(0)|_\h).
$$
This completes the proof of Proposition \ref{2.11.3}.
\ep 

\section{Stability of solutions}\label{4.0.3}
In this section we establish the stability and the recurrence property of solutions of equation \eqref{1.1.3}.
\subsection{The Foia\c{s}-Prodi estimate}
Here we establish an estimate which will allow us to use the Girsanov theorem.
Let us consider the following two equations:
\begin{align}
\p_t^2 u+\gamma \p_t u-\de u+f(u)&=h(x)+\p_t g(t,x),\label{FP"IN"1}\\
\p_t^2 v+\gamma \p_t v-\de v+f(v)+P_N[f(u)-f(v)]&=h(x)+\p_t g(t,x)\label{FP"IN"2},
\end{align}
where $g(t)$ is a function in $C(\rr_+;H^1_0(D))$. We recall that $P_N$ stands for the orthogonal projection from $L^2(D)$ to its subspace spanned by the functions $e_1,e_2,\ldots,e_N$.

\begin{proposition}\label{4.13.3}
Suppose that for some non-negative constants $K,l,s$ and $T$ the inequality
\be\label{4.17.3}
\int_s^t\|\g z\|^2\,d\tau\leq l+K(t-s) \q \text{ for } s\leq t\leq s+T,
\ee
holds for $z=u$ and $z=v$, where $u$ and $v$ are solutions of \eqref{FP"IN"1} and \eqref{FP"IN"2}, respectively. Then, for any $\es>0$ there is an integer $N_*\geq 1$ depending only on $\es$ and $K$ such that for all $N\geq N_*$ we have
\be \label{4.16.3}
|\xi_v(t)-\xi_u(t)|^2_\h\leq e^{-\al(t-s)+\es l}|\xi_v(s)-\xi_u(s)|^2_\h \q \text{ for } s\leq t\leq s+T.
\ee
\end{proposition}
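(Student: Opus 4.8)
The plan is to derive a linear differential inequality for the squared energy norm $|\xi_w|_\h^2$ of the difference $w=v-u$ and then close it with the Gronwall lemma; the whole point is that hypothesis \eqref{4.17.3} is exactly what is needed to control the time integral of the coefficient. Subtracting \eqref{FP"IN"2} from \eqref{FP"IN"1}, one sees that $w$ solves the linear damped wave equation
$$
\p_t^2 w+\gamma\p_t w-\de w=-Q_N[f(v)-f(u)],
$$
where $Q_N=I-P_N$ is the orthogonal projection onto the modes $e_j$ with $j>N$, so the source lives only in the high frequencies. Multiplying this equation by $\dt w+\al w$ in $L^2(D)$ and integrating over $D$, the damped-wave part produces — after controlling the cross term $(w,\dt w)$ by the Young inequality and choosing $\al$ sufficiently small so that the linear energy $|\cdot|_\h^2$ decays at rate at least $2\al$ — the inequality
$$
\f{d}{dt}|\xi_w|_\h^2\leq -2\al|\xi_w|_\h^2+2\,|(\dt w+\al w,\,Q_N[f(v)-f(u)])|.
$$
This step is routine; all the genuine dissipation of the damped wave equation is used here.

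The key step is to estimate the last term with a gain in $N$. Since $Q_N$ annihilates the first $N$ modes, $\|Q_N\phi\|_{L^2}\leq C\lm_{N+1}^{-\sigma/2}\|\phi\|_{H^\sigma}$ for every $\phi\in H^\sigma(D)$ and $0<\sigma<1/2$. Taking $\phi=f(v)-f(u)$ and fixing $\sigma\in(0,\min(1/2,\,1-\rho/2))$, I would prove the nonlinear bound
$$
\|f(v)-f(u)\|_{H^\sigma}\leq C\,(1+\|\g u\|^\rho+\|\g v\|^\rho)\,\|\g w\|.
$$
This relies on the growth hypothesis \eqref{1.8.3} (which gives $|f'(u)|\leq C(|u|^\rho+1)$), on the identity $\g[f(v)-f(u)]=f'(v)\g w+(f'(v)-f'(u))\g u$, on the three-dimensional Sobolev embeddings $H^1_0(D)\hookrightarrow L^6(D)$ and $W^{1,q}(D)\hookrightarrow H^\sigma(D)$ with $q=6/(\rho+3)$, and — when $\rho<1$, where $f'$ is merely Hölder continuous — on a direct estimate of the Gagliardo seminorm of $f(v)-f(u)$ (equivalently, a fractional Leibniz and chain-rule argument). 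The assumption $\rho<2$ is precisely what keeps the exponent of the $\h$-norms strictly below $2$. Combining this with $\|\g w\|\,\|\dt w+\al w\|\leq|\xi_w|_\h^2$ turns the previous inequality into
$$
\f{d}{dt}|\xi_w|_\h^2\leq\Big(-2\al+C\lm_{N+1}^{-\sigma/2}(1+\|\g u\|^\rho+\|\g v\|^\rho)\Big)|\xi_w|_\h^2.
$$

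To conclude, the Gronwall lemma gives
$$
|\xi_w(t)|_\h^2\leq|\xi_w(s)|_\h^2\exp\Big(-2\al(t-s)+C\lm_{N+1}^{-\sigma/2}\int_s^t(1+\|\g u\|^\rho+\|\g v\|^\rho)\,d\tau\Big).
$$
Applying hypothesis \eqref{4.17.3} with $z=u$ and $z=v$, then the H\"older inequality (using $\rho\leq2$) and the Young inequality, one bounds $\int_s^t(1+\|\g u\|^\rho+\|\g v\|^\rho)\,d\tau\leq 2l+C(K)(t-s)$ with $C(K)=3+2K^{\rho/2}$. Choosing $N_*$ so large that $2C\lm_{N_*+1}^{-\sigma/2}\leq\es$ and $C(K)\,C\lm_{N_*+1}^{-\sigma/2}\leq\al$ — possible since $\lm_n\to\infty$, and this makes $N_*$ depend only on $\es$ and $K$ — the exponent is at most $-2\al(t-s)+\es l+\al(t-s)=-\al(t-s)+\es l$, which is \eqref{4.16.3}.

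The main obstacle is the nonlinear $H^\sigma$ estimate: because the wave semigroup has no smoothing, the projection $Q_N$ can be exploited only against Sobolev regularity, so one must extract a fractional derivative of $f(v)-f(u)$ while keeping the dependence on $u,v$ linear in $\|\g w\|$ and of degree at most $\rho<2$ in the $\h$-norms — the latter being indispensable for the time-integrated bound from \eqref{4.17.3} to do its job — and the limited smoothness of $f$ when $\rho<1$ makes this genuinely delicate.
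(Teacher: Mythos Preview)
Your argument is correct and follows the same overall strategy as the paper: derive a differential inequality for $|\xi_w|_\h^2$, exploit the high-mode projection for smallness, apply Gronwall, and close with hypothesis~\eqref{4.17.3}. The paper organises the key step slightly differently. Rather than passing through the fractional space $H^\sigma$ and the explicit spectral gap $\lambda_{N+1}^{-\sigma/2}$, it estimates $\|Q_N[f(v)-f(u)]\|_{L^2}$ directly by $|Q_N|_{L(H^{1,p}\to L^2)}\,|f(v)-f(u)|_{H^{1,p}}$ with $p=6/(\rho+3)$ (exactly your $q$), and then invokes only the compact embedding $H^{1,p}\hookrightarrow L^2$ to conclude that $|Q_N|_{L(H^{1,p}\to L^2)}\to 0$; your embedding $W^{1,q}\hookrightarrow H^\sigma$ is precisely one way to quantify this decay. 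The paper also rounds the nonlinear coefficient up to the quadratic form $1+\|u\|_1^2+\|v\|_1^2$, which matches the hypothesis~\eqref{4.17.3} verbatim after Gronwall and avoids the extra H\"older/Young step you perform to convert $\int\|\g z\|^\rho$ into $\int\|\g z\|^2$. What your route buys is an explicit rate in $N$; what the paper's buys is a shorter path that stays in integer-order Sobolev spaces and sidesteps the fractional chain-rule issues you flag for $\rho<1$ (the $W^{1,p}$ estimate needs only $|f''(u)|\le C(|u|^{\rho-1}+1)$ and H\"older with exponents $6,2,6/(\rho-1)$ pointwise, no Gagliardo seminorm).
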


\bp 
 Let us set $w=v-u$. Then $w(t)$ solves
\be\label{6.50.3}
\p_t^2 w+\gamma \p_t w-\de w+(I-P_N)[f(v)-f(u)]=0,
\ee
and we need to show that the flow $y(t)=\xi_w(t)$ satisfies
\be\label{4.15.3}
|y(t)|^2_\h\leq e^{-\al (t-s)+\es l}|y(s)|^2_\h \q \text{ for } s\leq t\leq s+T.
\ee
The function $y(t)$ satisfies
\begin{align}
\p_t|y|_{\h}^2&=2[(\g w,\g \dt w)+(-\gamma \dt w+\de w-(I-P_N)[f(v)-f(u)]+\al \dt w,\dt w+\al w)]\notag\\
&\leq 2[(\g w,\g \dt w)
+(-\gamma \dt w+\de w+\al \dt w,\dt w+\al w)]\notag\\
&\q+2\|(I-P_N)[f(v)-f(u)]\|(\|\dt w\|+\al\|w\|)\notag\\
&\leq -\f{3\al}{2} |y|_\h^2+4\|(I-P_N)[f(v)-f(u)]\||y|_\h\label{4.8.3}.
\end{align}
We first note that
\be\label{4.7.3}
\|(I-P_N)[f(v)-f(u)]\|\leq|I-P_N|_{L(H^{1,p}\to L^2)}|f(v)-f(u)|_{H^{1,p}},
\ee
and that
\be\label{FP"IN"3}
|f(v)-f(u)|_{H^{1,p}}\leq C\sum_{j=1}^3|\p_j[f(v)-f(u)]|_{L^p},
\ee
where $p\in(6/5,2)$ will be chosen later.
Further,
\begin{align}
|\p_j[f(v)-f(u)]|_{L^p}&=|f'(v)\p_j v-f'(u)\p_j u|_{L^p}\notag\\
&\leq |(f'(v)-f'(u))\p_j v|_{L^p}+|f'(u)(\p_j v-\p_j u)|_{L^p}=J_1+J_2\label{FP"IN"4}.
\end{align}
For $J_1$ we have 
\begin{align}
J_1&=\left(\int_D |(f'(v)-f'(u))\p_j v|^p\,dx\right)^{\f{1}{p}}\notag\\
&\leq C_4\left(\int_D |w|^p|\p_j v|^p(|v|^{p(\rho-1)}+|u|^{p(\rho-1)}+1)\,dx\right)^{\f{1}{p}}\notag\\
&\leq C_5|w|_{L^6}|\g v|_{L^2}(|v|_{L^6}^{\rho-1}+|u|_{L^6}^{\rho-1}+1)\leq C_6\|w\|_1(\|v\|_1^2+\|u\|_1^2+1)\label{FP"IN"5},
\end{align}
where we used the H\"older and Sobolev inequalities and chose $p=6(3+\rho)^{-1}$.
And finally, for $J_2$ we have
\begin{align}
J_2=\left(\int|f'(u)|^p|\p_j v-\p_j u|^p\,dx\right)^{\f{1}{p}}&\leq C_7\left(\int|\p_j w|^p (|u|^{p\rho}+1)\,dx\right)^{\f{1}{p}}\notag\\
&\leq C_8\|w\|_1(\|v\|_1^2+\|u\|_1^2+1)\label{FP"IN"6},
\end{align}
where we once again used the H\"older inequality.
Combining inequalities \eqref{4.7.3}-\eqref{FP"IN"6} together, we obtain
\be \label{4.10.3}
\|(I-P_N)[f(v)-f(u)]\|\leq C'_1|I-P_N|_{L(H^{1,p}\to L^2)}(\|v\|_1^2+\|u\|_1^2+1)|y|_\h.
\ee
Substituting this inequality in \eqref{4.8.3}, we see that
\be\label{4.14.3} 
\p_t|y|_{\h}^2\leq (-3\al/2+C|I-P_N|_{L(H^{1,p}\to L^2)}(\|v\|_1^2+\|u\|_1^2+1))|y|_\h^2.
\ee
By the Sobolev embedding theorem, the space $H^{1,p}(D)$ is compactly embedded in $L^2(D)$ for $p>6/5$. This implies that the sequence $|I-P_N|_{L(H^{1,p}\to L^2)}$ goes to zero as $N$ goes to infinity. Combining this fact with the Gronwall lemma applied to \eqref{4.14.3} and using \eqref{4.17.3}, we arrive at \eqref{4.15.3}.
\ep
\subsection{Controlling the growth of intermediate process}
The goal of this subsection is to show that inequality \eqref{4.17.3} (and therefore \eqref{4.16.3}) holds with high probability, for $g(t)=\zeta(t)$.

\nt
For any $\h$-valued continuous process $y(t)$, let $\tau_y$ be the stopping time defined in \eqref{6 In-6,1,1},
where $L$ is the constant constructed in Corollary \ref{supermartingale lemma}, and $M,r$ are some positive constants. We recall that for the process of the form $y=[z,\dt z]$ we shall write $\tau^z$ instead of $\tau_{[z,\dt z]}$.
\begin{proposition}\label{4.23.3}
Let $u$ and $v$ be solutions of \eqref{FP"IN"1} and \eqref{FP"IN"2} where $g(t)=\zeta(t)$, that are issued from initial points $y,y'\in B_1$, respectively. Then
\be \label{4.18.3}
\pp\{\tau^v<\iin\}\leq 3 \exp(4\beta C-\beta r)+C_{M,r}|y-y'|_\h,
\ee
where $\beta$ is the constant from Proposition \ref{supermartingale}.
\end{proposition}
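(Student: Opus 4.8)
The plan is to bound $\tau^v$ by comparing the intermediate process $v$ with the genuine solution $u$ of \eqref{FP"IN"1} and by deriving a supermartingale-type estimate for the energy of $v$. The two facts to be exploited are that $v$ solves \eqref{FP"IN"2}, i.e. the white-forced equation with the extra low-mode drift $-P_N[f(u)-f(v)]$, and that $\xi_v(0)=y'$ is $\h$-close to $\xi_u(0)=y$.

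\emph{Reduction and closeness.} First I would introduce the stopping time $\sigma_u=\inf\{t\ge0:\fff^u(t)-Lt\ge\fff^u(0)+r\}$. By Corollary \ref{supermartingale lemma}, $\pp\{\sigma_u<\iin\}\le\exp(4\beta C-\beta r)$, and since $\{\tau^v<\iin,\ \tau^v>\sigma_u\}\subset\{\sigma_u<\iin\}$, it suffices to estimate $\pp\{\tau^v<\iin,\ \tau^v\le\sigma_u\}$. Put $T_*=\tau^v\wedge\sigma_u$; on $[0,T_*]$, by the definitions of $\sigma_u$ and $\tau^v$, both $\fff^u(t)\le\fff^u(0)+Lt+r$ and $\fff^v(t)\le\fff^v(0)+(L+M)t+r$ hold. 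Since $\ees$ is bounded on $B_1$ and, by \eqref{1.5.3}, $\|\g z\|^2\le|\xi_z|_\h^2\le C(|\ees(\xi_z)|+1)$, this gives both the integral bound $\int_0^t\|\g z\|^2\,d\tau\le l_*+K_*t$ on $[0,T_*]$ (with $l_*$ of order $1+r$ and $K_*$ depending only on the fixed constants) and an at most linear growth of $|\xi_z|_\h^2$ in $t$ on $[0,T_*]$, for $z\in\{u,v\}$. Running the Gronwall argument of the proof of Proposition \ref{4.13.3} up to the stopping time $T_*$ — legitimate because \eqref{4.17.3} holds on $[0,T_*]$ for $z=u$ and $z=v$ with these constants — and choosing the integer $N$ large enough in terms of $K_*$ alone, I obtain
\be\label{prop423-cl}
|\xi_v(t)-\xi_u(t)|_\h^2\le C(r)\,e^{-\al t}\,|y-y'|_\h^2 \q\text{ for }0\le t\le T_*.
\ee

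\emph{Energy estimate for $v$ and conclusion.} Applying the It\^o formula to $\ees(\xi_v(t))$ exactly as in the computation leading to \eqref{2.19.3}, the only new term is the drift contribution $D(t)=-2\int_0^t(\dt v+\al v,P_N[f(u)-f(v)])\,ds$ produced by the coupling term in \eqref{FP"IN"2}. Bounding $\|P_N[f(u)-f(v)]\|\le\|f(u)-f(v)\|\le C\|u-v\|_1(|\xi_u|_\h^\rho+|\xi_v|_\h^\rho+1)$ (H\"older and Sobolev, the place where $\rho<2$ is used), using the linear-in-$s$ control of $|\xi_u|_\h^2,|\xi_v|_\h^2$ on $[0,T_*]$ from the previous step and inserting \eqref{prop423-cl}, the integrand of $D$ is dominated by $e^{-\al s/2}$ times a polynomial in $s$ times $|y-y'|_\h$; hence $|D(t)|\le\tilde C_{M,r}|y-y'|_\h$ for all $t\le T_*$, a deterministic bound on that interval. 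Absorbing $\f{1}{2}\beta\langle M_v\rangle$ into $-\f{\al}{2}\int_0^t|\xi_v(s)|_\h^2\,ds$ as in Proposition \ref{supermartingale} (with $M_v$ the stochastic integral attached to $v$), this gives, for $0\le t\le T_*$,
\be\label{prop423-en}
\ees_v(t)+\int_0^t(\al\ees_v(s)-\kkk)\,ds\le\ees_v(0)+\tilde C_{M,r}|y-y'|_\h+\bigl(M_v(t)-\f{1}{2}\beta\langle M_v\rangle(t)\bigr),
\ee
and the exponential supermartingale inequality applied to $M_v$ stopped at $T_*$ shows that the probability that the left-hand side of \eqref{prop423-en} ever exceeds $\ees_v(0)+\tilde r$ is at most $\exp(\beta\tilde C_{M,r}|y-y'|_\h-\beta\tilde r)$. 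On $\{\tau^v<\iin,\ \tau^v\le\sigma_u\}$ we have $T_*=\tau^v$ and, by continuity, $\fff^v(\tau^v)=\fff^v(0)+(L+M)\tau^v+r$; combining $\ees_v\le|\ees_v|\le\ees_v+4C$ (from \eqref{1.5.3}) with $L=\kkk+4\al C$ then forces the left-hand side of \eqref{prop423-en} at $t=\tau^v$ to be $\ge\ees_v(0)+r-4C$. Hence, taking $\tilde r=r-4C$ (the statement being trivial when $r\le4C$), $\pp\{\tau^v<\iin,\ \tau^v\le\sigma_u\}\le\exp(\beta\tilde C_{M,r}|y-y'|_\h)\exp(4\beta C-\beta r)$. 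If $|y-y'|_\h\le\delta:=(\ln2)/(\beta\tilde C_{M,r})$ the first factor is $\le2$, so with the reduction step $\pp\{\tau^v<\iin\}\le3\exp(4\beta C-\beta r)$; if $|y-y'|_\h>\delta$ then $\pp\{\tau^v<\iin\}\le1\le\delta^{-1}|y-y'|_\h$. Taking $C_{M,r}=\delta^{-1}$ yields \eqref{4.18.3}.

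\emph{Main difficulty.} The crux is the energy estimate for the auxiliary process $v$, whose equation carries the non-sign-definite low-mode term $P_N[f(u)-f(v)]$: one must run the Foia\c{s}--Prodi comparison and the It\^o energy identity simultaneously, both stopped at $T_*$, in order to dominate that term by $|y-y'|_\h$ without circularity. The $r$-dependence of the constants $C(r)$ and $\tilde C_{M,r}$ is harmless, since in \eqref{4.18.3} the constant $C_{M,r}$ is allowed to depend on $r$.
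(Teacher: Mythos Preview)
Your argument is correct and gives a genuinely different proof from the paper's. The paper does \emph{not} run an It\^o energy estimate on the intermediate process $v$. Instead, it introduces a third process $u'$ solving \eqref{FP"IN"1} with initial datum $y'$, works on the canonical Wiener space, and builds the drift transformation $\Phi(\omega)(t)=\omega(t)-\int_0^t\ch_{s\le\tilde\tau}\,\PP_N[0,f(u)-f(v)]\,ds$ with $\tilde\tau=\tau^u\wedge\tau^{u'}\wedge\tau^v$. Girsanov's theorem together with the Novikov condition (checked via the Foia\c{s}--Prodi estimate on $[0,\tilde\tau]$, exactly as in your closeness step) yields Lemma~\ref{4.21.3}: $|\pp-\Phi_*\pp|_{var}\le C_{M,r}|y-y'|_\h$. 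Since $y^v(\cdot,\omega)=y^{u'}(\cdot,\Phi(\omega))$ for suitable extensions beyond $\tilde\tau$, this transports the supermartingale bound for $\tau^{u'}$ (Corollary~\ref{supermartingale lemma}) to $\tau^v$, giving \eqref{4.28.3} and hence \eqref{4.18.3}.

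Your route is more elementary: no change of measure, just It\^o for $\ees(\xi_v)$ plus the exponential supermartingale inequality for $M_v$ stopped at $T_*$, with the coupling drift $D(t)$ absorbed into the constant via the pathwise Foia\c{s}--Prodi bound. The trade-off is that the paper's approach proves more than is needed here: the total-variation estimate of Lemma~\ref{4.21.3} is reused verbatim in the exponential-squeezing step (see \eqref{6.41.3}--\eqref{6.47.3}), where one must control $|\pp_{\bar\yy}\{\xi_v\}_T-\pp_{\bar\yy}\{\xi_{u'}\}_T|_{var}$ conditionally on $\FFF_{kT}$. Your direct energy argument does not furnish that tool, so if you adopt it for Proposition~\ref{4.23.3} you will still need the Girsanov computation later.
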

\bp 
To prove this result, we follow the arguments presented in Section 3.3 of \cite{KS-book} and Section 4 of \cite{KN-2013}. First, note that since inequality \eqref{4.18.3} concerns only the law of $v$ and not the solution itself, we are free to choose the underlying probability space $(\omm,\fff,\pp)$. We assume that it coincides with the canonical space of the Wiener process $\{\hat\zeta (t)\}_{t\geq 0}$. More precisely, $\omm$ is the space of continuous functions $\om:\rr_+\to\h$ endowed with the metric of uniform convergence on bounded intervals, $\pp$ is the law of $\hat\zeta$ and $\fff$ is the completion of the Borel $\sigma$-algebra with respect to $\pp$.

\nt
Let us define vectors $\hat e_j=[0,e_j]$ and their vector span
$$
 \h_N=\text{span}\{\hat e_1,\hat e_2,\ldots,\hat e_N\},
 $$
 which is an $N$-dimensional subspace of $\h$. The space $\Omega=C(\rr_+,\h)$ can be represented in the form
 $$
 \Omega=\Omega_N\dt+\Omega_N^{\perp},
 $$
 where $\Omega_N=C(\rr_+, \h_N)$ and $\Omega_N^\perp=C(\rr_+, {\h^\perp_N})$.
We shall write $\om=(\om^{(1)},\om^{(2)})$ for $\om=\om^{(1)}\dt+\om^{(2)}$.

Let $u'$ be a solution of equation \eqref{FP"IN"1} that has the same initial data as $v$. Introduce the stopping time
\be \label{4.24.3}
\tilde\tau=\tau^u\wedge\tau^{u'}\wedge\tau^v,
\ee
and a transformation $\Phi:\Omega\to\Omega$ given by
\be\label{4.22.3}
 \Phi(\om)(t)=\om(t)-\int_0^t a(s)\,ds,\q a(t)=\ch_{t\leq\tilde\tau}\PP_N(0,[f(u)-f(v)]),
\ee
 where $\PP_N$ is the orthogonal projection from $\h$ to $\h_N$.
\begin{lemma}\label{4.21.3}
For any initial points
 $y$ and $y'$ in $B_1$, we have
\be \label{4.19.3}
|\pp-\Phi_*\pp|_{var}\leq C_{M,r} |y-y'|_\h,
\ee
where $\Phi_*\pp$ stands for the image of $\pp$ under $\Phi$.
\end{lemma}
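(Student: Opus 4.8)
\emph{Sketch of proof.} The plan is to read \eqref{4.22.3} as the shift of the canonical Wiener path $\hat\zeta$ by the adapted drift
$$
a(t)=\ch_{t\le\tilde\tau}\,\PP_N(0,[f(u)-f(v)])=\ch_{t\le\tilde\tau}\,[0,P_N(f(u(t))-f(v(t)))],
$$
which is supported on $[0,\tilde\tau]$ and, since $b_j>0$ for $j\le N$, takes values in the Cameron--Martin space of $\hat\zeta$. The key reduction is the following: if the drift energy is bounded \emph{deterministically}, in the sense that
$$
\int_0^\infty\sum_{j=1}^N b_j^{-2}\,(f(u(s))-f(v(s)),e_j)^2\,ds\le C_{M,r}\,|y-y'|_\h^2\qquad\text{for every }\om,
$$
then by the Girsanov theorem (in the form of Section~3.3 of \cite{KS-book} and Section~4 of \cite{KN-2013}) we have $\Phi_*\pp\ll\pp$, and the density $\Upsilon=d(\Phi_*\pp)/d\pp$ satisfies $\e\Upsilon=1$ and $\e\Upsilon^2\le\exp(C_{M,r}|y-y'|_\h^2)$, whence
$$
|\pp-\Phi_*\pp|_{var}=\f{1}{2}\,\e|1-\Upsilon|\le\f{1}{2}(\e\Upsilon^2-1)^{1/2}\le\f{1}{2}(e^{C_{M,r}|y-y'|_\h^2}-1)^{1/2}\le C_{M,r}\,|y-y'|_\h,
$$
the last inequality because $|y-y'|_\h\le2$ for $y,y'\in B_1$. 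Since moreover $\sum_{j\le N}b_j^{-2}(g,e_j)^2\le(\max_{j\le N}b_j^{-2})\|g\|^2$ and $a$ vanishes after $\tilde\tau$, everything reduces to estimating $\int_0^{\tilde\tau}\|f(u(s))-f(v(s))\|^2\,ds$.

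On $[0,\tilde\tau]=[0,\tau^u\wedge\tau^{u'}\wedge\tau^v]$, the definition $\tau^z=\inf\{t\ge0:\fff^z(t)\ge\fff^z(0)+(L+M)t+r\}$ forces $\fff^z(t)\le\fff^z(0)+(L+M)t+r$ for $z\in\{u,v\}$ and $0\le t\le\tilde\tau$; as $y,y'\in B_1$ this gives $\fff^z(0)\le C$, so the energies of $u$ and $v$ — and with them $\int_0^t\|\g u\|^2\,ds$, $\int_0^t\|\g v\|^2\,ds$ — grow at most linearly in $t$. Hence hypothesis \eqref{4.17.3} of the Foia\c{s}--Prodi estimate holds on $[0,\tilde\tau]$ with $l\le C(1+r)$ and $K$ depending only on $M$ (and the absolute constant $L$), and Proposition~\ref{4.13.3} applies directly: for a fixed small $\es>0$ and $N$ large enough (depending on $\es$ and $K$), $|\xi_u(t)-\xi_v(t)|_\h^2\le e^{-\al t+\es l}|y-y'|_\h^2$ on $[0,\tilde\tau]$. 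Combining this with the growth of $f$ — the pointwise bound $|f(u)-f(v)|\le C|u-v|(|u|^\rho+|v|^\rho+1)$, the H\"older and Sobolev inequalities, and the linear-in-time control of $\|\g u\|,\|\g v\|$ just obtained — yields, on $[0,\tilde\tau]$,
$$
\|f(u(t))-f(v(t))\|^2\le C(1+t+r)^q\,|\xi_u(t)-\xi_v(t)|_\h^2\le C(1+t+r)^q\,e^{-\al t+\es l}\,|y-y'|_\h^2
$$
for some exponent $q<2$ depending on $\rho$. Integrating over $t\in[0,\infty)$ and using $l\le C(1+r)$ and that $\es$ is fixed, so that $e^{\es l}$ and $\int_0^\infty(1+t+r)^q e^{-\al t}\,dt$ are bounded by constants depending only on $r$, we obtain $\int_0^{\tilde\tau}\|f(u)-f(v)\|^2\,ds\le C_{M,r}|y-y'|_\h^2$, the remaining dependence on $N$ (hence on $\es$ and $M$) being absorbed into $C_{M,r}$.

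It then remains only to carry out the Girsanov computation, which is routine once the drift has a deterministically bounded Cameron--Martin energy: Novikov's condition holds trivially, $\Upsilon$ is a genuine exponential martingale density with $\e\Upsilon=1$, and writing $\Upsilon^2$ as an exponential martingale times $\exp(\int_0^\infty|a(s)|_*^2\,ds)$ (with $|\cdot|_*$ the Cameron--Martin norm) gives the bound on $\e\Upsilon^2$ used above. The main obstacle is the estimate of the previous paragraph: on the stochastic interval $[0,\tilde\tau]$ the energies of the solutions are controlled only \emph{linearly} in time, so $\|f(u)\|$ and $\|f(v)\|$ may grow polynomially, and one has to make sure that the exponential contraction $e^{-\al t}$ supplied by Proposition~\ref{4.13.3} dominates this growth while the prefactor $e^{\es l}$, with $l\sim1+r$, stays bounded — which is precisely what fixes how small $\es$, and correspondingly how large $N$, must be taken. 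It is equally essential that this drift-energy bound be uniform in $\om$, which is exactly what the truncation by $\tilde\tau$ provides and what makes the quadratic-moment estimate for $\Upsilon$ available.
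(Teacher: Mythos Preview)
Your proof is correct and follows essentially the same approach as the paper: bound the Cameron--Martin energy of the drift $a$ deterministically on $[0,\tilde\tau]$ via the Foia\c{s}--Prodi estimate (Proposition~\ref{4.13.3}) and the linear-in-time energy control furnished by the definition of $\tilde\tau$, then apply Girsanov. The paper organises the Girsanov step slightly differently---splitting $\Phi(\om)=(\Psi(\om),\om^{(2)})$ and invoking Theorem~A.10.1 of \cite{KS-book} after conditioning on $\om^{(2)}$---and uses the cruder factor $(1+\|u\|_1^4+\|v\|_1^4)$ (so effectively $q=2$) in place of your sharper $\|u\|_1^{2\rho}$, but the substance is identical.
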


\nt
{\it Proof of lemma \ref{4.21.3}}.

\nt
{\it Step 1.} 
Let us note that by the definition of $\tilde\tau$ we have
\be\label{6.54.3}
\fff^u(t)\leq \fff^u(0)+(L+M)t+r,\q
\fff^v(t)\leq\fff^{u'}(0)+(L+M)t+r,
\ee
for all $t\leq\tilde\tau$.
We claim that there is an integer $N=N(\al,L,M)$ such that for all $t\leq\tilde\tau$ we have
\be\label{7.8.3}
|\xi_{v}(t)-\xi_{u}(t)|_\h^2\leq e^{-\al t+\theta}|\xi_u(0)-\xi_{u'}(0)|_\h^2,
\ee
where $\theta=|\ees_u(0)|\vee|\ees_{u'}(0)|+r$. Indeed, in view of inequality \eqref{1.5.3}, for any $y=[y_1,y_2]$ in $\h$, we have
\begin{align*}
|y(t)|_{\h}^2&\leq |y(t)|_{\h}^2+2\int_D F(y_1)\,dx|-2\int_D F(y_1)\,dx
\leq |\ees(y)|+2\nu \|y_1\|^2+2C\\
&\leq |\ees(y)|+\f{\lm_1}{2}\|y_1\|^2+2C\leq |\ees(y)|+\f{1}{2}|y|_\h^2+2C,
\end{align*}
so that
\be\label{7.9.3}
|y|_{\h}^2\leq 2 |\ees(y)|+4C.
\ee
Combining this inequality with \eqref{6.54.3}, we see that for all $t\leq\tilde\tau$
$$
\al\int_0^t\|\g z(s)\|^2\,ds \leq 2(|\ees_u(0)|\vee |\ees_u'(0)|+r)+2(L+M+2C)t,
$$
for $z=u$ and $z=v$. Using this inequality and applying Proposition \ref{4.13.3} with $\es=\al/2$ we arrive at \eqref{7.8.3}.

\nt
{\it Step 2.} 
Let us note that the transformation  $\Phi$ can be represented in the form
$$
\Phi(\om)=(\Psi(\om),\om^{(2)}),
$$
where $\Psi:\omm\to\omm_N$ is given by
$$
\Psi(\om)(t)=\om^{(1)}(t)+\int_0^t a(s;\om)\,ds.
$$
It is straightforward to see that 
 \be \label{4.20.3}
 |\pp-\Phi_*\pp|_{var}\leq\int_{\omm^\perp_N}|\Psi_*(\pp_N,\om^{(2)})-\pp_N|_{var}\pp^\perp_N(d\om^{(2)}),
 \ee
 where $\pp_N$ and $\pp^\perp_N$ are the images of $\pp$ under the natural projections $\PP_N:\omm\to\omm_N$ and $\QQ_N:\omm\to\omm^\perp_N$, respectively.
Define the processes 
$$
 z(t)=\om^{(1)}(t),\q
 \tilde z(t)=\om^{(1)}(t)+\int_0^t a(s;\om)\,ds.
$$
 It follows that $\pp_N=\ddd z$ and $\Psi_*(\pp,\om^{(2)})=\ddd\tilde z$.
 By Theorem A.10.1 in \cite{KS-book}, we have
 \be\label{Dz-D tilde z}
 |\ddd z-\ddd \tilde z|_{var}\leq\f{1}{2}\left(\left(\e\exp\left[6\max_{1\leq j\leq N} b_j^{-1}\int_0^\infty |a(t)|^2\,dt\right]\right)^{\f{1}{2}}-1\right)^{\f{1}{2}},
 \ee
 provided the Novikov condition
 $$
 \e\exp\left(C\int_0^\infty |a(t)|^2\,dt\right)<\infty,\q \text{ for any } C>0,
 $$
 holds. In view of inequalities \eqref{6.54.3} and \eqref{7.8.3} we have
\begin{align*}
&\e\exp(C\int_0^\infty |a(t)|^2\,dt)=\e\exp(C\int_0^{\tilde\tau} |a(t)|^2\,dt)\\
&\leq\e\exp(C\int_0^{\tilde\tau} \|f(v)-f(u)\|^2\,dt)\leq\e\exp(C_1\int_0^{\tilde\tau} \|v-u\|_1^2(1+\|u\|_1^4+\|v\|_1^4)\,dt)\\
&\leq\e\exp(C_2 |\xi_u(0)-\xi_{u'}(0)|_\h^2\int_0^\infty e^{-\al t+\theta}K(t)\,dt),
 \end{align*}
 where
 $$
 K(t)=(1+|\ees_u(0)|\vee|\ees_{u'}(0)|+(L+M)t+r)^2.
 $$
 So not only the Novikov condition holds, but also there is a positive constant $C_{M,r}=C(\al,L,M,r)$ such that the term on the right-hand side of inequality \eqref{Dz-D tilde z} does not exceed $C_{M,r} |y-y'|_\h$. Combining this with inequality \eqref{4.20.3}, we arrive at \eqref{4.19.3}.
 
 \nt
Now we are ready to establish \eqref{4.18.3}. Introduce auxiliary $\h$-continuous processes $y^u, y^{u'}$ and $y^v$ defined as follows: for $t\leq \tilde\tau$ they coincide with processes $\xi_u, \xi_{u'}$ and $\xi_v$, respectively, while for $t\geq\tilde\tau$ they solve 
$$ 
\p_t y=-\lm y,
$$
where $\lm>0$ is a large parameter.
By construction, with probability 1, we have
\be\label{4.29.3}
y^v(t,\om)=y^{u'}(t,\Phi(\om)) \q\text{ for all } t\geq 0.
\ee
Let us note that
\begin{align}\label{4.25.3}
\pp(\tau^v<\iin)&=\pp(\tau^v<\iin, \tau^u\wedge\tau^{u'}<\iin)+\pp(\tau^v<\iin, \tau^u\wedge \tau^{u'}=\iin)\notag\\
&\leq \pp(\tau^u<\iin)+\pp(\tau^{u'}<\iin)+\pp(\tau^v<\iin, \tau^u\wedge \tau^{u'}=\iin).
\end{align}
Moreover, in view of \eqref{4.29.3}
\begin{align}
&\pp(\tau^v<\iin, \tau^u\wedge\tau^{u'}=\iin)\leq\pp(\tau_{y^v}<\iin)=\Phi_*\pp(\tau_{y^{u'}}<\iin)\notag\\
&\q\leq \pp(\tau_{y^{u'}}<\iin)+|\pp-\Phi_*\pp|_{var}\leq \pp(\tau^{u'}<\iin)+|\pp-\Phi_*\pp|_{var}\label{6.43.3},
\end{align}
where we used the fact that for $t\geq \tilde\tau$ the norms of auxiliary processes decay exponentially. Combining these two inequalities we obtain
\begin{align}\label{4.28.3}
\pp(\tau^v<\iin)\leq \pp(\tau^u<\iin)+2\pp(\tau^{u'}<\iin)+|\pp-\Phi_*(\pp)|_{var}.
\end{align}
It remains to use Corollary \ref{supermartingale lemma} and Lemma \ref{4.21.3}  to conclude.
\ep

\subsection{Hitting a non-degenerate ball}
Here we show that the trajectory of the process $y(t)=[u(t),\dt u(t)]$ issued from arbitrarily large ball hits any non-degenerate ball centered at the origin, with positive probability, at a finite non-random time. We denote by $B_d$ the ball of radius $d$ in $\h$, centered at the origin. 
\begin{proposition}\label{6.12.3}
For any $R>0$ and $d>0$ there is $T_*=T_*(R,d)>0$ such that for all $T\geq T_*$, we have
\be\label{6.13.3}
\inf_{y\in B_R}P_T(y, B_d)>0,
\ee
where $P_t(y,\Gamma)=\pp_y(S_t(y,\cdot)\in\Gamma)$ is the transition function of the Markov process corresponding to \eqref{1.1.3}.
\end{proposition}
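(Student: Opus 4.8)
\emph{Scheme of the proof.} The plan is, first, to reduce by the energy estimate to an initial ball $B_{R_0}$ whose radius depends only on the data; then, for each starting point in $B_{R_0}$, to build an explicit \emph{deterministic} control steering the solution into $B_{d/2}$; and finally to transfer this control to the stochastic equation, using that the noise is non-degenerate in every Fourier mode. Since the wave group has no smoothing property, the compactness/portmanteau argument used for parabolic equations is not available, and the substitute is a control built around the Foia\c{s}--Prodi estimate of Proposition~\ref{4.13.3}. For the reduction: by the a priori bound \eqref{2.2.3}, the inequalities $|y|_\h^2\le 2|\ees(y)|+4C$ and $|\ees(y)|\le\ees(y)+4C$ (both consequences of \eqref{1.5.3}), and the Chebyshev inequality, there are $R_0$, depending only on $\gamma,\BBB,\|h\|$, and $T_1=T_1(R)$ with $\inf_{y\in B_R}P_t(y,B_{R_0})\ge\tfrac12$ for all $t\ge T_1$. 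By the Chapman--Kolmogorov relation it then suffices to find a time $T_2$ and $c>0$ with $\inf_{y\in B_{R_0}}P_{T_2}(y,B_d)\ge c$; the proposition will hold with $T_*=T_1+T_2$, since for $T\ge T_*$ one first reaches $B_{R_0}$ at time $T-T_2\ (\ge T_1)$ and then $B_d$ in the remaining time.

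\emph{The control.} Fix $y\in B_{R_0}$ and, for an integer $N$ to be chosen, let $\tilde u$ solve $\p_t^2\tilde u+\gamma\p_t\tilde u-\de\tilde u+(I-P_N)f(\tilde u)=(I-P_N)h$ with $\xi_{\tilde u}(0)=y$. Its $P_N$-component solves a linear damped wave equation, hence decays exponentially with rate and amplitude bounded uniformly in $N$; feeding this into the energy inequality for the $(I-P_N)$-component gives $\sup_{[0,T_2]}|\xi_{\tilde u}(t)|_\h\le M$ with $M=M(R_0,\|h\|,T_2)$ independent of $N$ and of $y\in B_{R_0}$. Put $\phi_y(t)=\int_0^t P_N[f(\tilde u(s))-h]\,ds$; then $\phi_y\in C(\R_+;H^1_0)$ takes values in $\mathrm{span}\{e_1,\dots,e_N\}$, $\phi_y(0)=0$, and $\tilde u$ is exactly the solution of \eqref{FP"IN"1} with $g=\phi_y$. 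Let $\tilde v$ solve \eqref{FP"IN"2} with the same $g=\phi_y$ and $\xi_{\tilde v}(0)=0$: a direct computation gives $P_N\tilde v\equiv0$, so $\tilde v$ solves $\p_t^2\tilde v+\gamma\p_t\tilde v-\de\tilde v+(I-P_N)f(\tilde v)=(I-P_N)h$ from the origin, and writing $f(\tilde v)=f(0)+f'(0)\tilde v+[f(\tilde v)-f(0)-f'(0)\tilde v]$ and exploiting the coercivity of $-\de+f'(0)$ on $\{e_j\}_{j>N}$ for large $N$ together with $(I-P_N)\to0$ strongly, one gets $\sup_t|\xi_{\tilde v}(t)|_\h\le\delta_N$ with $\delta_N\to0$. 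Now choose $N$ so large that $\delta_N\le d/4$ and that Proposition~\ref{4.13.3} applies with $\es=1$ and $K=M$, and then $T_2$ with $e^{-\al T_2}R_0^2\le(d/4)^2$. Since \eqref{4.17.3} holds for $z=\tilde u$ and $z=\tilde v$ on $[0,T_2]$ with $l=0$ and $K=M$, Proposition~\ref{4.13.3} gives $|\xi_{\tilde u}(T_2)-\xi_{\tilde v}(T_2)|_\h^2\le e^{-\al T_2}|y|_\h^2\le(d/4)^2$, hence $|\xi_{\tilde u}(T_2)|_\h\le d/2$. Finally $\|\p_t\phi_y\|_{L^\infty(0,T_2;L^2)}\le C(R_0,\|h\|,T_2)$ uniformly in $y$, so $\{\phi_y:y\in B_{R_0}\}$ is equi-Lipschitz in $t$ with values in a fixed finite-dimensional space, hence precompact in $C(0,T_2;\h)$ by Arzel\`a--Ascoli.

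\emph{Transfer to the stochastic equation.} By continuous dependence of solutions of \eqref{1.1.3} on the forcing, uniform on bounded sets, there is $\delta>0$ such that for every $y\in B_{R_0}$ the event $\{\|\hat\zeta-[0,\phi_y]\|_{C(0,T_2;\h)}<\delta\}$ forces the solution $S_t(y)$ of \eqref{1.1.3} to satisfy $|S_{T_2}(y)-\xi_{\tilde u}(T_2)|_\h<d/2$, and hence $S_{T_2}(y)\in B_d$. This event contains the intersection of $\{(\beta_1,\dots,\beta_N)\text{ stays in a tube of width }\delta\text{ around }\phi_y\}$ and $\{\|\sum_{j>N}b_j\beta_j(\cdot)e_j\|_{C(0,T_2;L^2)}<\delta/2\}$, which are independent: the probability of the first is bounded below uniformly over the precompact family $\{\phi_y\}$ (support of an $N$-dimensional Brownian motion, all $b_j>0$), and the probability of the second is a fixed positive number, since $0$ lies in the support of the centered Gaussian process $\sum_{j>N}b_j\beta_j(\cdot)e_j$ in $C(0,T_2;L^2)$ (using $\sum b_j^2<\infty$). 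Hence $\inf_{y\in B_{R_0}}P_{T_2}(y,B_d)\ge c>0$, which with the first step yields \eqref{6.13.3}.

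\emph{Main obstacle.} The crux is the second step: lacking a smoothing effect, one cannot produce the control by a compactness argument, and the key device is to let a single \emph{low-mode} (hence $H^1_0$-smooth, finite-dimensional) control freeze the low modes of the Foia\c{s}--Prodi companion $\tilde v$ at zero, so that $\tilde v$ reduces to a purely high-mode equation that is small by dissipativity, while Proposition~\ref{4.13.3} forces $\tilde u$ to shadow $\tilde v$. Two technical points need care: isolating the linear part $f'(0)\tilde v$ so that the high-mode remainder is genuinely small as $N\to\infty$, and keeping $M$, $K$ and the low-mode decay rate independent of $N$ so that the choice of $N$ in Proposition~\ref{4.13.3} does not feed back into the growth bound \eqref{4.17.3}. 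The uniformity over the non-compact ball $B_{R_0}$ in the transfer step is then recovered from the Arzel\`a--Ascoli precompactness of the control family $\{\phi_y\}$.
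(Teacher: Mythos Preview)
Your overall architecture is the paper's: build, for each $y$, a deterministic low-mode control driving the flow into $B_{d/2}$ via the Foia\c{s}--Prodi mechanism, then transfer to the stochastic equation using the full support of the noise. However, the control step has a genuine gap. You assert that ``feeding the decay of $P_N\tilde u$ into the energy inequality for the $(I-P_N)$-component'' yields a bound $M$ on $\xi_{\tilde u}$ that is \emph{independent of $N$}. Writing $p=P_N\tilde u$, $q=(I-P_N)\tilde u$ and testing the $q$-equation with $\dt q+\alpha q$, the cross term $(f(p+q),\dt p+\alpha p)$ produces a contribution of order $e^{-\alpha t/2}(1+|\xi_q|_\h^3)$, which is superlinear in $|\xi_q|_\h$; the resulting differential inequality does not close on a time interval of the length you need. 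Relatedly, your order of quantifiers is circular as written ($N$ is chosen from $K=M$, but $M=M(T_2)$ and you choose $T_2$ afterwards). The paper avoids this altogether: it takes the control $\zeta^y=-th+\int_0^t P_Nf(u^y)$ so that $u^y$ satisfies $\p_t^2u^y+\gamma\p_tu^y-\Delta u^y+(I-P_N)[f(u^y)-f(0)]=0$, which is \emph{exactly} the Foia\c{s}--Prodi difference equation \eqref{6.50.3} with reference $0$. An exit-time bootstrap (let $l^y$ be the first exit from $B_{R+1}$, apply Proposition~\ref{4.13.3} on $[0,l^y]$ with $K=(R+1)^2$) then gives $|\xi_{u^y}(t)|_\h^2\le R^2e^{-\alpha t}$ for \emph{all} $t\ge0$, with $N_*$ depending only on $R$ --- no energy inequality, no $T_2$-dependence, and no intermediate $\tilde v$ or $\delta_N\to0$ argument. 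Your route can be repaired by running the same bootstrap with $\tilde v$ as the reference instead of $0$, but then the separate smallness of $\tilde v$ is irrelevant and the detour through $\tilde v$ buys nothing.

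On the other hand, your transfer step is a genuine simplification. Because your control $\phi_y$ takes values in the fixed finite-dimensional space $H_N$ and is equi-Lipschitz in $t$ (uniform bound on $\|P_N[f(\tilde u)-h]\|$), Arzel\`a--Ascoli gives precompactness of $\{\phi_y:y\in B_{R_0}\}$ in $C(0,T_2;H^1_0)$ immediately, and the uniform positive lower bound for the tube probabilities follows from the support theorem for finite-dimensional Brownian motion. The paper instead argues by contradiction: it extracts a minimising sequence $y_j$, shows via Lemma~\ref{6.20.3} and a Foia\c{s}--Prodi estimate in $H^{1-s}\times H^{-s}$ that $\zeta^{y_j}$ is Cauchy in $C(0,T;H^1_0)$, and reaches a contradiction with the support of $\zeta$. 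Your argument is more direct and avoids introducing the auxiliary H\"older estimate of Lemma~\ref{6.20.3}; note, though, that the paper's control $\zeta^y=-th+\int_0^tP_Nf(u^y)$ is also finite-dimensional modulo the fixed function $-th$, so the same Arzel\`a--Ascoli shortcut would work there too.
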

\bp
{\it Step~1.} For any $y\in B_R$ let $[u^y(t),\dt u^y(t)]$ be the flow of equation
\be\label{8.3.3}
\p_t^2 u^y+\gamma\p_t u^y-\de u^y+f(u^y)=h(x)+\dt \zeta^y(t),\q [u^y(0),\dt u^y(0)]=y,
\ee
where $\zeta^y(t)$ is given by
\be\label{8.2.3}
\zeta^y(t)=-th+\int_0^t P_N f(u^y(s))\,ds.
\ee
We claim that there is an integer $N_*$ depending only on $R$ such that for all $N\geq N_*$ we have
\be\label{8.4.3}
|[u^y(t),\dt u^y(t)]|_\h^2\leq R^2 e^{-\al t},\q \text{ for all } t\geq 0 \text{ and } y\in B_R.
\ee
Indeed, let us first show that there is $N_*(R)$ such that for all $N\geq N_*$ we have
\be\label{8.5.3}
|[u^y(t),\dt u^y(t)]|_\h\leq R+1, \q \text{ for all } t\geq 0 \text{ and } y\in B_R.
\ee
To this end, for any $y\in B_R$, let $l^y$ be the time of the first exit of the flow $[u^y(t),\dt u^y(t)]$ from the ball of radius $R+1$. If there is no such time, we set $l^y=+\infty$. Inequality \eqref{8.5.3} will be proved, if we show that for $N\geq N_*(R)$ we have $l^y=+\iin$. Let us note that $l^y>0$ and for $0\leq t\leq l^y$, we have $|[u^y(t),\dt u^y(t)]|_\h\leq R+1.$ Moreover, the equation \eqref{8.3.3} can be written in the form
$$
\p_t^2 u^y+\gamma\p_t u^y-\de u^y+(I-P_N)[f(u^y)-f(0)]=0,\q [u^y(0),\dt u^y(0)]=y,
$$
where we supposed for the simplicity that $f(0)=0$.
Applying the Foia\c{s}-Prodi estimate established in Proposition \ref{4.13.3} to the interval $[0, l^y]$, we see that there is $N_*(R)$ such that for all $N\geq N_*(R)$ we have
$$
|[u^y(t),\dt u^y(t)]|_\h^2\leq e^{-\al t} |y|_\h^2\leq e^{-\al t} R^2,
$$
where inequality holds for all $t\in[0, l^y]$. In particular $[u^y(l^y),\dt u^y(l^y)]\in B_R$, which is impossible in the case $l^y<\iin$. Inequality \eqref{8.5.3} is thus established. Once again using the Foia\c{s}-Prodi estimate, but this time on the half-line $t\geq 0$ and using \eqref{8.5.3}, we derive \eqref{8.4.3}.

\nt
{\it Step~2.} In view of the previous step, we can find $T_*(d,R)>0$ such that for all $y\in B_R$, we have 
\be\label{8.1.3}
[u^y(T),\dt u^y(T)]\in B_{d/2},\q\text{ for all } T\geq T_*.
\ee
We claim that \eqref{6.13.3} holds with this time $T_*$.
Indeed, if this is not true, then there is $T\geq T_*$ such that 
\be\label{6.58.3}
\inf_{y\in B_R}P_T(y, B_d)=0.
\ee
Let us find $\es(d, T, R)>0$ so small that we have 
\be
|S_T(y,\om)-[u^y(T), \dt u^y(T)]|_\h\leq d/2, \q\text{ for }y\in B_R,
\ee
provided $|\zeta(\om)-\zeta^y|_{C(0,T; H^1_0)}\leq\es$. Combining this with \eqref{8.1.3}, we see that
\be \label{6.18.3}
P_T(y,B_d)\geq \pp(|\zeta-\zeta^y|_{C(0,T;H^1_0)}\leq\es).
\ee

We need the following lemma. It is established in the appendix.
\begin{lemma}\label{6.20.3}
For any $\rho<2$ there exists $s=s(\rho)>0$ such that if 
\be \label{5.17.3}
|f'(u)|\leq C(|u|^{\rho}+1),
\ee
then
\be\label{6.39.3}
|f(u)-f(v)|_{L^2}\leq C_1(|u|_{H^{1-s}}^\rho+|v|_{H^{1-s}}^\rho+1)|u-v|_{H^{1-s}},
\ee
where $C_1>0$ depends only on $C>0$.
\end{lemma}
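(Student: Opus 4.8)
The plan is to reduce the claim to a pointwise bound on $f(u)-f(v)$ and then invoke Hölder's inequality together with the three-dimensional Sobolev embedding; the hypothesis $\rho<2$ is precisely what leaves room to lose a small number $s$ of derivatives.

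First I would write, by the fundamental theorem of calculus,
$$
f(u)-f(v)=(u-v)\int_0^1 f'\bigl(v+t(u-v)\bigr)\,dt ,
$$
and then, using the growth assumption \eqref{5.17.3} together with $|v+t(u-v)|^\rho\le C(|u|^\rho+|v|^\rho)$ for $t\in[0,1]$, deduce the pointwise inequality
$$
|f(u)-f(v)|\le C_2\bigl(|u|^\rho+|v|^\rho+1\bigr)\,|u-v| \qquad\text{on }D .
$$
Taking the $L^2(D)$ norm reduces the problem to estimating $\bigl\||w|^\rho\,|u-v|\bigr\|_{L^2}$ for $w\in\{u,v\}$ and $\|u-v\|_{L^2}$; the latter is controlled by $|u-v|_{H^{1-s}}$ since $D$ is bounded and $1-s>0$.

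For the main term I would apply Hölder's inequality with exponents $p,q$ such that $\tfrac1p+\tfrac1q=\tfrac12$, giving
$$
\bigl\||w|^\rho\,|u-v|\bigr\|_{L^2}\le \|w\|_{L^{p\rho}}^\rho\,\|u-v\|_{L^{q}} .
$$
Since $\rho<2$ we have $6/\rho>3$, so the interval $(3,6/\rho)$ is non-empty; I would choose $p$ in it (close to $6/\rho$, which also makes $p\rho\ge2$, harmlessly). Then the conjugate exponent $q$ defined by $1/q=1/2-1/p$ lies in $\bigl(6/(3-\rho),6\bigr)$, so that $p\rho<6$ and $q<6$ \emph{strictly}. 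Recalling that in dimension three $H^{1-s}(D)\hookrightarrow L^r(D)$ continuously for $2\le r\le 6/(1+2s)$, and that $6/(1+2s)\uparrow 6$ as $s\downarrow0$, I can fix $s=s(\rho)>0$ small enough that $H^{1-s}(D)$ embeds into both $L^{p\rho}(D)$ and $L^{q}(D)$. Plugging these embeddings into the displays above yields \eqref{6.39.3}.

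The only genuinely delicate point — and the reason the statement is restricted to $\rho<2$ — is the simultaneous choice of $p$ and $q$: one needs $q<6$ and $p\rho<6$ under the constraint $1/p+1/q=1/2$, and this system admits a solution with \emph{strict} inequalities exactly when $\rho<2$; once such $p,q$ are fixed, the value of $s$ is determined by continuity of the Sobolev embedding in its smoothness parameter. Everything else is a routine Hölder/Sobolev computation.
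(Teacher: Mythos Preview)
Your proof is correct and follows essentially the same route as the paper: pointwise mean-value bound, then H\"older to separate $|w|^\rho$ from $|u-v|$, then the 3D Sobolev embedding $H^{1-s}\hookrightarrow L^r$ for $r<6$; the paper simply makes a definite choice, taking $s=(2-\rho)/(2(\rho+1))$ and the endpoint exponent $q=6/(1+2s)$, whereas you argue abstractly that a strict choice exists. The two arguments are interchangeable.
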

Let us suppose that we have \eqref{6.58.3}, and let $y_j$ be a minimizing sequence. This sequence is bounded in $H^1\times L^2$, so up to extracting a subsequence, we can suppose that it is a Cauchy sequence in $H^{1-s}\times H^{-s}$ ($s$ is the constant from the previous lemma). Let us show that so is the sequence $\zeta_j=\zeta^{y_j}$ in $C(0, T;H^1_0)$. Indeed, in view of \eqref{8.2.3} and Lemma \ref{6.20.3} , we have
\begin{align}
|\zeta_j-\zeta_k|_{C(0,T;H^1_0)}&=|\int_0^t P_N(f(u^{y_j})-f(u^{y_k}))\,dr|_{C(0,T;H^1_0)}\notag\\
&\leq \int_0^T |P_N(f(u^{y_j})-f(u^{y_k}))|_{H^1_0}\,dr\notag\\
&\leq C_1 C_N \int_0^T (|u^{y_j}|_{H^{1-s}}^\rho+|u^{y_k}|_{H^{1-s}}^\rho+1)|u^{y_j}-u^{y_k}|_{H^{1-s}}\,dr.\label{8.6.3}
\end{align}
Now let us notice that in view of \eqref{8.3.3}, the difference $w=u^{y_j}-u^{y_k}$ solves
$$
\p_t^2 w+\gamma\p_t w-\de w+(I-P_N)[f(u^{y_k}+w)-f(u^{y_k})]=0, \q [w(0), \dt w(0)]=y_j-y_k.
$$
Applying the operator $(-\de)^{-s/2}$ to this equation and repeating the argument used in derivation of Proposition \ref{4.13.3}, together with inequality \eqref{8.4.3}, we see that for all $N\geq N_*(R)$, we have
$$
|[w(t),\dt w(t)]|_{H^{1-s}\times H^{-s}}^2\leq e^{-\al t}|y_j-y_k|^2_{H^{1-s}\times H^{-s}}.
$$
Finally, combining this inequality with \eqref{8.6.3}, we see that $|\zeta_j-\zeta_k|_{C(0,T;H^1_0)}\to 0$ as $j, k\to \iin$. We shall denote by $\tilde \zeta$ the corresponding limit. 

\nt
{\it Step~3.}
It follows from inequality \eqref{6.18.3} implies that
\be \label{6.22.3}
\pp(|\zeta-\zeta_j|_{C(0,T;H^1_0)}\leq\es)\to 0 \q \text{ as } j\to\iin.
\ee
Let us fix $j_0\geq 1$ so large that for all $j\geq j_0$
$$
|\zeta_j-\tilde\zeta|_{C(0,T;H^1_0)}\leq \es/2.
$$
Then by the triangle inequality, for all $j\geq j_0$
\begin{align*}
\pp(|\zeta-\tilde\zeta|_{C(0,T;H^1_0)}\leq \es/2)&=\pp(|\zeta-\zeta_j+\zeta_j-\tilde\zeta|_{C(0,T;H^1_0)}\leq\es/2)\\
&\leq \pp(|\zeta-\zeta_j|_{C(0,T;H^1_0)}\leq |\zeta_j-\tilde\zeta|_{C(0,T;H^1_0)}+\es/2)\\
&\leq \pp(|\zeta-\zeta_j|_{C(0,T;H^1_0)}\leq \es).
\end{align*}
Letting $j$ go to $\iin$ and using inequality \eqref{6.22.3}, we obtain
$$
\pp(|\zeta-\tilde\zeta|_{C(0,T;H^1_0)}\leq \es/2)=0,
$$
which is impossible, since the support of $\zeta$ restricted to $[0,T]$ coincides with ${C(0,T;H^1_0)}$. The proof of Proposition \ref{6.12.3} is complete.
\ep

\section{Proof of Theorem \ref{2.14.3}}\label{6.31.3}
In this section we establish Theorem \ref{2.14.3}. As it was already mentioned, this will imply Theorem \ref{2.15.3}. We then show that the non-degeneracy condition imposed
on the force can be relaxed to allow forces that are non-degenerate only in the
low Fourier modes (see Theorem \ref{6.10.3}).

\medskip
\subsection{Recurrence: verification of \eqref{2.17.3}-\eqref{2.16.3}}\label{6.32.3}
In view of Proposition \ref{proposition_exp}, it is sufficient to establish inequality \eqref{2.16.3}. To this end, we shall use the existence of a Lyapunov function, combined with an auxiliary result established in \cite{shirikyan-bf2008}.

Let $S_t(y,\om)$ be a Markov process in a separable Banach space $\h$ and let $\RR_t(\yy,\om)$ be its extension on an interval $[0, T]$. Consider a continuous functional $\gi(y)\geq 1$ on $\h$ such that
$$
\lim_{|y|_\h\to\infty}\gi(y)=\infty.
$$ 
Suppose that there are positive constants $d, R, t_*, C_*$ and $a<1$, such that
\begin{align}
\e_y\gi(S_{t_*})&\leq a\, \gi(y)\,\,\q\tx{ for } |y|_\h\geq R,\label{6.55.3}\\
\e_y\gi(S_{t})&\leq C_*\q\q\,\,\,\,\,\tx{ for } |y|_\h\leq R,\, t\geq 0,\label{6.56.3}
\end{align}
\be
\inf_{y,y'\in B_R}\pp_\yy\{|\rrr_T(y,y',\cdot)|_{\h}\vee |\rrr'_T(y,y',\cdot)|_{\h}\leq d\}>0.\label{6.57.3}
\ee

\medskip\nt
We shall denote by $\tau_d$ the first hitting time of the set $B_{\hh}(d)$. The following proposition is a weaker version of the result proved in \cite{shirikyan-bf2008} (see Proposition 3.3).
\begin{proposition}\label{6. Proposition-Lyapunov-dissipation}
Under the above hypotheses there are positive constants $C$ and $\kp$ such that the inequality
\be 
\e_\yy\exp(\kp\tau_d)\leq C(\gi(y)+\gi(y')), \q\tx{ for any } \yy=(y,y')\in\hh,
\ee
holds for the extension $\SSS_t$ constructed by iteration of $\RR_t$ on the half-line $t\geq 0$.
\end{proposition}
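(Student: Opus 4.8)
\emph{Sketch of proof.} The plan is to pass to the discrete\,-\,time skeleton $\{\SSS_{kt_*}\}_{k\geq0}$, to promote $\gi$ to a Lyapunov function on the product space $\hh$, to extract an exponential moment for the time needed by the skeleton to revisit a large ball, and then to glue this recurrence to the controllability estimate \eqref{6.57.3} by a renewal argument.

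First I would note that \eqref{6.55.3} and \eqref{6.56.3} combine into a single geometric drift inequality on all of $\h$: for $|y|_\h\leq R$ one has $\e_y\gi(S_{t_*})\leq C_*\leq a\gi(y)+C_*$, and for $|y|_\h\geq R$ one has $\e_y\gi(S_{t_*})\leq a\gi(y)\leq a\gi(y)+C_*$, whence $\e_y\gi(S_{t_*})\leq a\gi(y)+C_*$ for every $y\in\h$. Since $\SSS_t$ is an extension of $S_t$, the laws of its two components are $P_t(y,\cdot)$ and $P_t(y',\cdot)$, so the functional $\mathfrak G(\yy):=\gi(y)+\gi(y')$ satisfies
\be
\e_\yy\bigl[\mathfrak G(\SSS_{(k+1)t_*})\mid\fff_{kt_*}\bigr]\leq a\,\mathfrak G(\SSS_{kt_*})+2C_*,\q k\geq0 .
\ee
The sublevel sets $\{\mathfrak G\leq\varrho\}$ are bounded in $\hh$ because $\gi\to\infty$ at infinity. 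Taking $R_0=4C_*/(1-a)$ and $a'=(1+a)/2<1$, the preceding inequality improves on $\{\mathfrak G>R_0\}$ to $\e_\yy[\mathfrak G(\SSS_{(k+1)t_*})\mid\fff_{kt_*}]\leq a'\mathfrak G(\SSS_{kt_*})$. Let $k_0=\inf\{k\geq0:\mathfrak G(\SSS_{kt_*})\leq R_0\}$ and $\sigma_0=t_*k_0$. Fixing $\delta>0$ with $e^\delta a'\leq1$, the process $k\mapsto e^{\delta(k\wedge k_0)}\mathfrak G(\SSS_{(k\wedge k_0)t_*})$ is a nonnegative supermartingale, so optional stopping together with Fatou's lemma gives $k_0<\infty$ a.s.\ and $\e_\yy\exp(\delta k_0)\leq\frac12\mathfrak G(\yy)$ (here one uses $\mathfrak G\geq2$).

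It remains to reach $B_{\hh}(d)$, which I would do by a geometric\,-\,trials scheme in the spirit of \cite{shirikyan-bf2008}. Run the pair process from $\yy$ until the first visit of the skeleton to the bounded set $\{\mathfrak G\leq R_0\}$ — this costs the time $\sigma_0$, with exponential moment controlled by $\mathfrak G(\yy)$ as above — and then evolve for the additional time $T$ from \eqref{6.57.3}: by that estimate there is $p>0$, uniform over starting points in $\{\mathfrak G\leq R_0\}$, with which $\RR_T$ places both components in $B_d$, i.e.\ $\tau_d$ has occurred by time $\sigma_0+T$. On the complementary event, of probability at most $1-p$, one further step of the drift inequality (or \eqref{6.56.3}) bounds $\e\mathfrak G(\SSS_{\sigma_0+T})$ by a universal constant, and by the strong Markov property one restarts the procedure from there. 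Hence $\tau_d$ is dominated by a random sum $\sum_{i=1}^{\mathfrak n}\xi_i$ in which $\mathfrak n$ is geometric with parameter $p$, the $\xi_i$ have exponential moments that are uniform over their (random) starting points, and the first summand additionally carries the factor $\mathfrak G(\yy)$; choosing $\kp>0$ so small that $(1-p)\sup_i\e\exp(\kp\xi_i)<1$ then yields $\e_\yy\exp(\kp\tau_d)\leq C\bigl(\gi(y)+\gi(y')\bigr)$. I expect the main obstacle to be precisely this last gluing step: one must combine the discrete exponential recurrence with the continuous\,-\,time controllability in a way that keeps the bound \emph{linear} in $\gi(y)+\gi(y')$ and not merely finite, which forces the blocks following the first one to have exponential moments that are uniform over their starting points; the supermartingale estimate and the drift bookkeeping are otherwise routine.
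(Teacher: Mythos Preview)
The paper does not actually give a proof of this proposition; it simply states that it is ``a weaker version of the result proved in \cite{shirikyan-bf2008} (see Proposition~3.3)'' and cites that reference. Your sketch follows precisely the standard Lyapunov--renewal argument from that reference (as you yourself indicate), so there is no meaningful divergence between your approach and the paper's.

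One technical point worth flagging: the extension $\SSS_t$ is built by iterating $\RR_t$ on intervals of length~$T$, so the pair process is Markov at the grid times $kT$, not at the times $kt_*$ you use for the skeleton. Your conditional drift inequality $\e_\yy[\mathfrak G(\SSS_{(k+1)t_*})\mid\FFF_{kt_*}]\le a\,\mathfrak G(\SSS_{kt_*})+2C_*$ and the strong Markov property at $\sigma_0=t_*k_0$ therefore need the pair Markov property at those times. This is harmless in the application (where the drift~\eqref{2.2.3} holds at every time, so one can take $t_*$ to be a multiple of~$T$), and in the abstract setting one simply runs the argument on the $T$-skeleton after first iterating~\eqref{6.55.3}--\eqref{6.56.3} to obtain a drift at time~$T$. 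With this adjustment your sketch is correct.
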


\nt
It follows from estimate \eqref{2.2.3} that inequalities \eqref{6.55.3} and \eqref{6.56.3} are satisfied for the functional
$$
\gi(y)=1+|\ees(y)|.
$$
We now show that for any $d>0$ we can find an integer $k\geq 1$ and $T_*\geq 1$ sufficiently large, such that we have \eqref{6.57.3} for any $T\in\{kT_*,(k+1)T_*,\ldots\}$. In what follows, we shall drop the subscript and write $|y|$ instead of $|y|_\h$. So let us fix any $d>0$, and consider the events
\begin{align*}
G_d&=\{|\rrr_T(y,y')|\leq d\},\q G'_d=\{|\rrr'_T(y,y')|\leq d\},\\
E_r&=\{\fff_\rrr(t)\leq \fff_\rrr(0)+Lt+r\}\cap \{\fff_\rrr'(t)\leq \fff_\rrr'(0)+Lt+r\},
\end{align*}
where $\fff_y(t)$ is defined in \eqref{2.13.3}, and $L$ is the constant from Corollary \ref{supermartingale lemma}.

\nt
{\it Step 1.} 
First, let us note that by Proposition \ref{6.12.3}, there is $T_*=T_*(R,d)\geq 1$, such that
\be\label{7.5.3}
\pp_y\{|S_{T_*}(y,\cdot)|\leq d/2\}\geq c_d \q\text{ for any } y\in B_R,
\ee
where $c_d$ is a positive constant depending on $d,R$ and $T_*$.
We claim that this implies 
\be\label{7.3.3}
\pp_y\{|S_{kT_*}(y,\cdot)|\leq d/2\}\geq c_d \q\text{ for all } k\geq 1 \text{ and } y\in B_R.
\ee
Indeed, let us fix any integer $k\geq 1$ and introduce the stopping times
$$
\bar\tau(y)=\min\{nT_*, n\geq 1 : |S_{nT_*}(y,\cdot)|>d/2\},\q \bar\sigma=\bar\tau\wedge kT_*.
$$
Let us note that if $\bar\tau$ is finite, then we have
\be\label{7.6.3}
|S_{\bar\tau-T_*}(y,\cdot)|\leq R \q\text{ and }\q |S_{\bar\tau}(y,\cdot)|>d/2,
\ee
where inequalities hold for any $y$ in $B_R$.
Moreover
\be\label{7.4.3}
\pp_y\{|S_{kT_*}(y,\cdot)|> d/2\}\leq \pp_y\{\bar\tau=\bar\sigma\},
\ee
where we used that for $\bar\tau>kT_*$, we have $|S_{kT_*}(y,\cdot)|{_\h}\leq d/2$. In view of \eqref{7.6.3}
\be\label{7.7.3}
\pp_y\{\bar\tau=\bar\sigma\}\leq \pp_y\{|S_{\bar\sigma-T_*}(y,\cdot)|\leq R,\,|S_{\bar\sigma}(y,\cdot)|>d/2\}:=p.
\ee
Since $\bar\sigma$ is a.s. finite, we can use the strong Markov property, and obtain
\begin{align*}
p&=\e_y[\e_y(\ch_{|S_{\bar\sigma-T_*}(y,\cdot)|\leq R}\cdot\ch_{|S_{\bar\sigma}(y,\cdot)|>d/2}|\fff_{\bar\sigma-T_*})]\\
&=\e_y[\ch_{|v|\leq R}\cdot\e_v(\ch_{|S_{T_*}(v,\cdot)|>d/2})]\\
&=\e_y[\ch_{|v|\leq R}\cdot\pp_v(|S_{T_*}(v,\cdot)|>d/2)]\leq \sup_{\bar v\in B_R}\pp_{\bar v}(|S_{T_*}(\bar v,\cdot)|>d/2),
\end{align*}
where $v=S_{\bar\sigma-T_*}(y,\cdot)$, and $\fff_t$ is the filtration generated by $S_t$. In view of \eqref{7.5.3}, the last term in this inequality does not exceed $1-c_d$. Combining this with inequalities \eqref{7.4.3} and \eqref{7.7.3}, we arrive at \eqref{7.3.3}.

\nt
{\it Step 2.} 
It follows from the previous step that for any $T\in\{T_*, 2T_*, \ldots\}$
\be\label{6.59.3}
\pp_\yy(G_{d/2})\wedge \pp_\yy(G'_{d/2})\geq c_d,\q \text{ for any } y,y'\in B_R,
\ee
where we used that $\RR_t$ is an extension of $S_t$.
Further, by Corollary \ref{supermartingale lemma} we have
\be\label{6.62.3}
\pp_\yy(E_r)\geq 1-2\exp(4\beta C-\beta r):=1-o(r).
\ee
Let us fix $r=r(d,R,T_*)>0$ so large that
\be\label{6.60.3}
o(r)\leq c_d^2/8.
\ee
By the symmetry, we can assume that
\be\label{6.63.3}
\pp_\yy(G'_{d/2}\nnn^c)\leq\pp_\yy(G_{d/2}\nnn^c),
\ee
where we set $\nnn=\{\vvv(y,y')\neq \vvv'(y,y')\}$. We claim that
\be\label{inclusion G-Delta}
G_{d/2}E_r\,\nnn^c\subset G_d G'_d,
\ee
for any $T\in\{kT_*,(k+1)T_*,\ldots\}$ with $k\geq 1$ sufficiently large.
To prove this, let us fix any $\om$ in $G_{d/2}E_r\,\nnn^c$, and note that it is sufficient to establish
\be\label{6.49.3}
|\rrr_T(y,y',\om)-\rrr'_T(y,y',\om)|\leq d/2,\q\text{ for any } y,y' \text{ in } B_R.
\ee
Since $\om\in\nnn^c$, we have that $\vvv=\vvv'$, and therefore, in view of \eqref{7.1.3}-\eqref{6. Coupling relation 1}, $\R_t(y,y')$ and $\R'_t(y,y')$ are, respectively, the flows of equations
\be 
\p_t^2 \tilde u+\gamma \p_t \tilde u-\de\tilde u+f(\tilde u)-P_N f(\tilde u)=h(x)+\psi(t),\q \xi_{\tilde u}(0)=y,
\ee
and 
\be 
\p_t^2 \tilde v+\gamma \p_t \tilde v-\de\tilde v+f(\tilde v)-P_N f(\tilde v)=h(x)+\psi(t),\q \xi_{\tilde u}(0)=y'.
\ee
It follows that their difference $w=\tilde v-\tilde u$ solves 
$$
\p_t^2 w+\gamma \p_t w-\de w+(I-P_N)[f(\tilde v)-f(\tilde u)]=0,\q [w(0),\dt w(0)]=y'-y.
$$
Using the Foia\c{s}-Prodi estimate established in Proposition \ref{4.13.3} (see \eqref{6.50.3}-\eqref{4.15.3}) together with the fact that $\om\in E_r$, we can find an integer $N\geq 1$ depending only on $L$ such that 
$$
 |\rrr_T(y,y',\om)-\rrr'_T(y,y',\om)|^2\leq C(r,R)e^{-\al T}|y-y'|^2\leq 4R^2 C(r,R)e^{-\al T}.
 $$
 Since $r$ is fixed, we can find $k\geq 1$ sufficiently large, such that the right-hand side of this inequality is less than $d^2/4$ for any $T\in\{kT_*,(k+1)T_*,\ldots\}$, so that we have \eqref{inclusion G-Delta}.
 
 \nt
 {\it Step 3.} 
We now follow the argument presented in \cite{shirikyan-bf2008}. In view of \eqref{inclusion G-Delta}
\begin{align*}
\pp_\yy(G_d G'_d)&=\pp_\yy(G_d G'_d \nnn^c)+\pp_\yy(G_d G'_d \nnn)\\
&\geq \pp_\yy(G_d G'_d E_r\nnn^c)+\pp_\yy(G_d|\nnn) \pp_\yy(G'_d|\nnn)\pp_\yy(\nnn)\\
&\geq \pp_\yy(G_{d/2} E_r \nnn^c)+\pp_\yy(G_d \nnn)\pp_\yy(G'_d\nnn),
\end{align*}
where we used the independence of $\vvv$ and $\vvv'$ conditioned on the event $\nnn$. Combining this inequality with \eqref{6.62.3}, we obtain
$$
\pp_\yy(G_d G'_d)\geq\pp_\yy(G_{d/2}\nnn^c)+\pp_\yy(G_{d} \nnn)\pp_\yy(G'_{d} \nnn)-o(r).
$$
We claim that the right-hand side of this inequality is no less than $c_d^2/8$.
Indeed, if $\pp_\yy(G_{d/2}\nnn^c)\geq c_d^2/4$, then the required result follows from inequality \eqref{6.60.3}. If not, then by inequalities \eqref{6.59.3} and \eqref{6.63.3}, we have
$$
c_d^2\leq \pp_\yy(G_{d/2})\pp_\yy(G'_{d/2})\leq \pp_\yy(G_{d/2}\nnn)\pp_\yy(G'_{d/2}\nnn)+3c_d^2/4,
$$
so that
$$
\pp_\yy(G_{d}\nnn)\pp_\yy(G'_{d}\nnn)\geq\pp_\yy(G_{d/2}\nnn)\pp_\yy(G'_{d/2}\nnn)\geq c_d^2/4.
$$
We have thus shown that for any $y,y'$ in $B_R$
$$
\pp_\yy\{|\rrr_T(y,y',\cdot)|\vee |\rrr'_T(y,y',\cdot)|\leq d\}\equiv \pp_\yy(G_d G'_d)\geq c_d^2/8,
$$
and therefore we have \eqref{6.57.3}. The hypotheses of Proposition \ref{6. Proposition-Lyapunov-dissipation} are thus satisfied, so that inequality \eqref{2.16.3} holds.\subsection{Exponential squeezing: verification of \eqref{6.33.3}-\eqref{6.5.3}}
Let $u,u',v,\tilde u, \tilde u', \tilde v$ and $\vo,\tau,\sigma$ be the processes and stopping times constructed in Subsection \ref{Main result and scheme of its proof}. Consider the following events:
$$
\qqq'_k=\{kT\leq\tau\leq (k+1)T,\tau\leq\vo\},\q\qqq''_k=\{kT\leq\vo\leq (k+1)T,\vo<\tau\}.
$$
\begin{lemma}\label{6.27.3}
There exist positive constants $d,r, L$ and $M$ such that for any initial point $\yy\in B_{\hh}(d)$ and any $T\geq1$ sufficiently large 
$$
\pp_{\yy}(\qqq'_k)\vee\pp_\yy(\qqq_k'')\leq e^{-2(k+1)}\q\text{ for all }k\geq 0.
$$
\end{lemma}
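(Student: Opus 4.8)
The plan is to bound $\pp_\yy(\qqq'_k)$ and $\pp_\yy(\qqq''_k)$ by two different mechanisms and then fix the constants $L,M,r,T,d$ (and the truncation level $N$) in a compatible order. The estimate for $\qqq'_k$ uses only the exponential supermartingale bound: on $\qqq'_k$ the time $\tau=\tau^{\tilde u}\wedge\tau^{\tilde u'}$ is finite and at least $kT$, so $\qqq'_k\subset\{kT\leq\tau^{\tilde u}<\iin\}\cup\{kT\leq\tau^{\tilde u'}<\iin\}$. Since the extended processes $\tilde u,\tilde u'$ have the laws of flows of \eqref{1.1.3} issued from $y,y'$ (by \eqref{6.40.3} and the iteration of $\RR_t$), Corollary~\ref{supermartingale lemma} applies to each and gives
\be\label{plan.q1}
\pp_\yy(\qqq'_k)\leq 2\exp\!\bigl(4\beta C-\beta r-\beta M kT\bigr)\qquad(k\geq0,\ T\geq1).
\ee
Taking $L$ to be the constant of Corollary~\ref{supermartingale lemma}, then $M$ with $\beta M\geq2$, then $r$ with $\beta r\geq\ln2+4\beta C+2$ makes the right-hand side of \eqref{plan.q1} at most $e^{-2(k+1)}$ for all $k$ and all $T\geq1$; this part imposes no restriction on $\yy$.

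For $\qqq''_k$, the relation $kT\leq\vo<\tau$ forces $\tau>kT$, and both $\{\vo\geq kT\}$ and $\{\tau>kT\}$ lie in $\fff_{kT}$. Conditioning on $\fff_{kT}$ and using that the coupling on $[kT,(k+1)T]$ is a maximal coupling of the laws $\lm(\SSS_{kT}(\yy))$ and $\lm'(\SSS_{kT}(\yy))$, one gets
\be\label{plan.q2}
\pp_\yy(\qqq''_k)\leq\e_\yy\Bigl[\ch_{\{\vo\geq kT\}\cap\{\tau>kT\}}\,\bigl|\lm(\SSS_{kT}(\yy))-\lm'(\SSS_{kT}(\yy))\bigr|_{var}\Bigr].
\ee
Two ingredients feed this. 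First, on $\{\vo\geq kT\}$ the coupling has succeeded on the first $k$ intervals, so $\xi_{\tilde v}\equiv\xi_{\tilde u'}$ on $[0,kT]$; then $w=\tilde v-\tilde u$ solves \eqref{6.50.3}, and $\{\tau>kT\}$ supplies (via \eqref{7.9.3}) the control $\int_0^t\|\g z\|^2\,d\tau\leq l_0+Kt$ on $[0,kT]$ for $z=\tilde u,\tilde u'$ with $l_0\leq C(d)+r$ \emph{fixed}, so Proposition~\ref{4.13.3} (exactly as in the derivation of \eqref{7.8.3}) yields
$$
|S_{kT}(\yy)-S_{kT}'(\yy)|_\h^2=|\xi_{\tilde u}(kT)-\xi_{\tilde v}(kT)|_\h^2\leq e^{-\al kT+\theta}|y-y'|_\h^2\leq 4d^2e^{-\al kT+\theta},\qquad\theta\leq C(d)+r .
$$
Second, $\bigl|\lm(\SSS_{kT}(\yy))-\lm'(\SSS_{kT}(\yy))\bigr|_{var}$ is estimated by the Girsanov argument of Lemma~\ref{4.21.3}: these are the laws of the modified flow and of the genuine flow of \eqref{1.1.3} issued from the same point $S_{kT}'(\yy)$, differing by the drift $\ch_{t\leq\tilde\tau}\PP_N(0,[f(u)-f(v)])$, whose square integral is bounded, using $\|f(u)-f(v)\|\leq C\|\xi_u-\xi_v\|(1+\|u\|_1^2+\|v\|_1^2)$ together with Proposition~\ref{4.13.3} (now on the $k$-th interval, where $u-v$ again solves \eqref{6.50.3}), by $C\,e^{\es l_k}E_k^2\,|S_{kT}(\yy)-S_{kT}'(\yy)|_\h^2$, where $E_k\leq C(d)+(L+M)kT+r$ bounds the energy levels at time $kT$ on $\{\tau>kT\}$ and $l_k\leq CE_k/\al$.

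The point is now to take the Foia\c{s}--Prodi parameter $\es$ small in terms of $\al,L,M$, so that $\es l_k-\al kT\leq-ckT$ for some $c>0$; then $e^{\es l_k}$ is beaten by the $e^{-\al kT}$ contraction of $|S_{kT}(\yy)-S_{kT}'(\yy)|_\h^2$, so combining the two ingredients with \eqref{plan.q2} gives $\pp_\yy(\qqq''_k)\leq C(d,r,M)\,E_k\,e^{-ckT/2}$. As $E_k$ is linear in $k$, choosing $T$ large produces the $e^{-2(k+1)}$-type decay, and then choosing $d$ small (the $d$-factor in the prefactor tending to $0$) yields $\pp_\yy(\qqq''_k)\leq e^{-2(k+1)}$ for $\yy\in B_{\hh}(d)$; finally $N$ is fixed large in terms of $\es$ and $K$. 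The main obstacle is precisely this second ingredient: the Novikov/Girsanov constant degrades exponentially with the energy level of the current state, which along $\{\tau>kT\}$ is only controlled linearly in $k$, and one must see that this growth is dominated by the $e^{-\al kT}$ contraction of the distance between the two components — which works because Proposition~\ref{4.13.3} keeps the contraction rate $\al$ fixed while the loss $\es l_k$ can be made arbitrarily small by enlarging $N$. This dictates the order of the choices ($L$, then $M$, then $\es$ and $N$, then $r$, then $T$, then $d$); a minor issue is the measurability bookkeeping in \eqref{plan.q2} within the enlarged filtration carrying the coupling randomness.
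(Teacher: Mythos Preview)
Your plan is the paper's plan: the supermartingale bound for $\qqq'_k$, and for $\qqq''_k$ the Markov property at time $kT$, the maximal-coupling identity, the Foia\c{s}--Prodi contraction on $\{\sigma\geq kT\}$, and the Girsanov estimate of Lemma~\ref{4.21.3}.

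There is, however, one genuine missing term in your treatment of $\qqq''_k$. You write that $\lm(\SSS_{kT}(\yy))$ and $\lm'(\SSS_{kT}(\yy))$ ``differ by the drift $\ch_{t\leq\tilde\tau}\PP_N(0,[f(u)-f(v)])$'' and then bound $|\lm-\lm'|_{var}$ directly by the Girsanov quantity. But the laws $\lm=\ddd\{\xi_v\}_T$ and $\lm'=\ddd\{\xi_{u'}\}_T$ differ by the drift $P_N[f(u)-f(v)]$ \emph{without} a cutoff; the indicator $\ch_{t\leq\tilde\tau}$ is inserted only to secure the Novikov condition, and the Girsanov-transformed process coincides with $\xi_v$ only on $\{\tilde\tau=\iin\}$. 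As in \eqref{6.41.3}--\eqref{6.44.3}, what one actually gets is
\[
|\lm(\bar\yy)-\lm'(\bar\yy)|_{var}\;\le\;2\bigl(\pp_{\bar\yy}(\tau<\iin)+|\pp_{\bar\yy}-\Phi_*\pp_{\bar\yy}|_{var}\bigr),
\]
so in addition to the Girsanov piece you must control $\e_\yy\bigl[\ch_{\sigma\geq kT}\,\pp_{\bar\yy}(\tau<\iin)\bigr]$. The paper closes this by the Markov identity $\e_\yy[\pp_{\yy(kT)}(\tau<\iin)]=\pp_\yy\{kT\leq\tau<\iin\}$, which is already $\leq e^{-2(k+2)}$ by your Step~1 bound \eqref{plan.q1}. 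With this correction your argument goes through; note also that the paper fixes $L,M,r$ together right after Step~1 and only then chooses $\es,N,d,T$, whereas you defer $r$ --- both orders are workable.
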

\bp

\nt\\
{\it~Step1.} (Probability of $\qqq_k'$).
Let $L$ be the constant from Corollary \ref{supermartingale lemma}. Then using second inequality of this corollary, we obtain
\be\label{6.48.3}
\pp_\yy(\qqq'_k)\leq\pp_\yy(kT\leq\tau<\iin)\leq 2\exp(4\beta C-\beta r-\beta k T M)\leq e^{-2(k+2)},
\ee
for $M\geq 2\beta^{-1}, r\geq 5\beta^{-1}+4C$. From now on, the constants $L,M$ and $r$ will be fixed.

\smallskip\nt
{\it~Step2.} (Probability of $\qqq_k''$). Let us first note that by the Markov property we have
\begin{align} 
\pp_\yy(\qqq''_k)&=\pp_\yy(\qqq''_k,\sigma\geq kT)=\e_\yy(\ch_{\qqq''_k}\cdot\ch_{\sigma\geq kT})=\e_\yy[\e_\yy(\ch_{\qqq''_k}\cdot\ch_{\sigma\geq kT}|\FFF_{kT})]\notag\\
&=\e_\yy[\ch_{\sigma\geq kT}\cdot\e_\yy(\ch_{\qqq''_k}|\FFF_{kT})]\leq \e_\yy[\ch_{\sigma\geq kT}\cdot\e_{\bar\yy}\ch_{0\leq \vo\leq T}]\notag\\
&=\e_\yy[\ch_{\sigma\geq kT}\cdot\pp_{\bar \yy}(0\leq \vo\leq T)],\label{6.28.3}
\end{align}
where $\bar\yy(\cdot)=\yy(kT,\cdot)$, and $\FFF_t$ stands for the filtration corresponding to the process $\SSS_t$.
Moreover, it follows from the definition of maximal coupling, that for any $\yy$ in $\hh$, we have
$$
\pp_{\yy}(0\leq \vo\leq T)= |\pp_{\yy}\{\xi_v\}_T-\pp_{\yy}\{\xi_{u'}\}_T|_{var}.
$$
Combining this with inequality \eqref{6.28.3}, we obtain 
\be\label{6.45.3}
\pp_\yy(\qqq''_k)\leq \e_\yy(\ch_{\sigma\geq kT}\cdot|\pp_{\bar\yy}\{\xi_v\}_T-\pp_{\bar\yy}\{\xi_{u'}\}_T|_{var})
\ee
Further, let us note that
\begin{align}
&|\pp_{\bar\yy}\{\xi_v\}_T-\pp_{\bar\yy}\{\xi_{u'}\}_T|_{var}=\sup_{\Gamma}|\pp_{\bar\yy}(\{\xi_v\}_T\in\Gamma)-\pp_{\bar\yy}(\{\xi_{u'}\}_T\in\Gamma)\notag|\\
&\q\leq \pp_{\bar\yy}(\tilde\tau<\iin)+ \sup_{\Gamma}|\pp_{\bar\yy}(\{\xi_v\}_T\in\Gamma, \tilde\tau=\iin)
-\pp_{\bar\yy}(\{\xi_{u'}\}_T\in\Gamma, \tilde\tau=\iin)|\notag\\
&\q:=\elll_1+\elll_2\label{6.41.3},
\end{align}
where $\tilde\tau=\tau^u\wedge\tau^{u'}\wedge\tau^v$, and the supremum is taken over all $\Gamma\in\bbb(C(0,T;\h))$. In view of \eqref{4.29.3} we have
\be\label{6.42.3}
\elll_2\leq|\pp_{\bar\yy}-\Phi_*\pp_{\bar\yy}|_{var},
\ee
where $\Phi$ is the transformation constructed in Subsection 4.2, and we used the fact that for $\tilde \tau=\iin$ we have $y^v\equiv \xi_v$ and $y^{u'}\equiv \xi_{u'}$
Further, in view \eqref{6.43.3} we have
\be\label{6.44.3}
\elll_1\leq \pp_{\bar\yy}(\tau^u\wedge\tau^{u'}<\iin)+\pp_{\bar\yy}(\tau^{u'}<\iin)+|\pp_{\bar\yy}-\Phi_*\pp_{\bar\yy}|_{var}.
\ee
Combining inequalities \eqref{6.45.3}-\eqref{6.44.3}, we get
\be\label{6.46.3}
\pp_\yy(\qqq''_k)\leq 2\,\e_\yy[\ch_{\sigma\geq kT}\cdot (\pp_{\bar\yy}(\tau<\iin)+|\pp_{\bar\yy}-\Phi_*\pp_{\bar\yy}|_{var})].
\ee
Let us note that for any $\om\in\{\sigma\geq kT\}$ we have 
\be\label{6. In-6.4,1}
|\ees_{\tilde u}(kT)|\vee|\ees_{\tilde u'}(kT)|\leq |\ees_{\tilde u}(0)|\vee|\ees_{\tilde u'}(0)|+(L+M)kT+r.
\ee
Moreover, it follows from Proposition \ref{4.13.3} (see the derivation of \eqref{7.8.3}) that for any $\es>0$ there is $N$ depending only on $\es, \al, L$ and $M$, such that for all $kT\leq t\leq\tau\wedge\tau^{\tilde v}$, on the set $\sigma\geq kT$, we have 
\begin{align} 
|\xi_{\tilde v}(t)-\xi_{\tilde u}(t)|_\h^2&\leq\exp(-\al(t-kT)+\theta)|\xi_{\tilde u}(kT)-\xi_{\tilde u'}(kT)|_\h^2\notag\\
&\leq\exp(-\al(t-kT)/2+\theta)|\xi_{\tilde u}(kT)-\xi_{\tilde u'}(kT)|_\h^2\label{6. In-6.5},
\end{align}
where we set
$$
\theta=\es\cdot(|\ees_{\tilde u}(kT)|\vee|\ees_{\tilde u'}(kT)|+r).
$$
By the same argument as in the derivation of \eqref{4.19.3}, we have 
\begin{align}\label{6. In-6.6}
\e_\yy(\ch_{\sigma\geq kT}\cdot|\pp_{\bar\yy}-\Phi_*\pp_{\bar\yy}|_{var})&\equiv \e_\yy(\ch_{\sigma\geq kT}\cdot|\pp_{\yy(kT)}-\Phi_*\pp_{\yy(kT)}|_{var})\notag\\
&\leq\f{1}{2}\left(\left(\e_\yy\exp\left[6\max_{1\leq j\leq N}b_j^{-1}\kkk\right]\ch_{\sigma\geq kT}\right)^{\f{1}{2}}-1\right)^{\f{1}{2}},
\end{align}
where 
\begin{align}
\kkk&=C_1\int_0^\infty \{\exp({-\al(t-kT)/2+\theta})|\xi_{\tilde u}(kT)-\xi_{\tilde u'}(kT)|_\h^2\notag\\
&\q\cdot(1+|\ees_{\tilde u}(kT)|\vee|\ees_{\tilde u'}(kT)|+(L+M)t+r)^2\}\,dt\notag\\
&\leq C_2\int_0^\infty \{\exp({-\al(t-kT)/2+\theta})e^{-\al kT}|y-y'|_\h^2\notag\\
&\q\cdot(1+|\ees_{\tilde u}(0)|\vee|\ees_{\tilde u'}(0)|+(L+M)kT+(L+M)t+r)^2\}\,dt\label{6. In-6.7},
\end{align}
and we used inequalities \eqref{6. In-6.4,1}-\eqref{6. In-6.5} combined with the fact that the mean value is taken along the characteristic of the set $\{\sigma\geq kT\}$.
Now let us fix $\es=\es(\al,L,M)>0$ such that $\al/4\geq\es\cdot(L+M)$, and let $C(\al,L, M)>0$ be so large that for any $k\geq 0$ and any $T\geq 1$
$$
\exp(-\al kT/4)(1+(L+M)kT)^2\leq C(\al,L,M).
$$
Combining this inequality with \eqref{6. In-6.7}, we obtain 
\begin{align}
\kkk&\leq C_3\cdot C(\al,L,M)e^{-\f{\al}{4} kT}|y-y'|_\h^2\int_0^\infty e^{-\al t+\es r}(1+(L+M)t+r)^2\,dt\notag\\
&=C(\al,r,L,M)e^{-\f{\al}{4} kT}|y-y'|_\h^2\label{4.27.3}.
\end{align}
Now recall that $N$ depends only on $\es, L$ and $M$, and $\es$ depends on $\al, L$ and $M$. It follows that $N$ depends only on $\al,L$ and $M$. Let us choose $d=d(\al,r,L,M)>0$ so small that 
\be\label{4.26.3}
6\max_{1\leq j\leq N}b_j^{-1}C(\al,r,L,M)d\leq 1.
\ee
Then, by inequalities \eqref{6. In-6.6} and \eqref{4.27.3}, we have
\be\label{6.47.3}
\e_\yy(\ch_{\sigma\geq kT}\cdot|\pp_{\bar\yy}-\Phi_*\pp_{\bar\yy}|_{var})\leq e^{-\f{\al}{8}kT}|y-y'|_\h\leq e^{-2(k+2)},
\ee
for $T\geq 16\al^{-1}$ and $d\leq e^{-4}$.
Further, by the Markov property and inequality \eqref{6.48.3} we have
\begin{align*}
e^{-2(k+2)}&\geq \pp_\yy\{kT\leq \tau<\infty\}=\e_\yy[\e_\yy(\ch_{kT\leq\tau<\iin}|\FFF_{kT})]
=\e_\yy[\pp_{\yy(kT)}(\tau<\iin)]\notag\\
&\geq \e_\yy[\ch_{\sigma\geq kT}\cdot\pp_{\yy(kT)}(\tau<\iin)]\equiv \e_\yy[\ch_{\sigma\geq kT}\cdot\pp_{\bar\yy}(\tau<\iin)].
\end{align*}
Combining this inequality with \eqref{6.46.3} and \eqref{6.47.3} we obtain
$$
\pp_\yy(\qqq''_k)\leq 4 e^{-2(k+2)}\leq e^{-2(k+1)}.
$$
\ep

\medskip
\nt
Now we are ready to establish \eqref{6.3.3}-\eqref{6.5.3}. We have
$$
\pp_{\yy}\{\sigma=\infty\}\geq1-\sum_{k=0}^\infty\pp_\yy\{kT\leq\sigma\leq(k+1)T\}\geq\f{1}{2},
$$
where used Lemma \ref{6.27.3} to show that
\begin{align*}
&\pp_\yy\{kT\leq\sigma\leq(k+1)T\}=\pp_\yy\{kT\leq\tau\leq (k+1)T,\tau\leq\vo\}\\
&\q+\pp_\yy\{kT\leq\vo\leq (k+1)T,\vo<\tau\}=\pp_\yy(\qqq_k')+\pp_\yy(\qqq_k'')\leq e^{-2(k+1)}.
\end{align*}
By the same argument,
\begin{align*}
&\e_\yy[\ch_{\{\sigma<\infty\}} e^{\De\sigma}]=\e_\yy[\ch_{\{\sigma<\infty,\tau\leq\vo\}} e^{\De\sigma}]+\e_\yy[\ch_{\{\sigma<\infty,\vo<\tau\}} e^{\De\sigma}]\\
&\leq \e_\yy[\ch_{\{\tau<\infty,\tau\leq\vo\}} e^{\De\tau}]+\e_\yy[\ch_{\{\vo<\infty,\vo<\tau\}} e^{\De\vo}]\leq 2\sum_{k=0}^\infty e^{-2(k+1)}e^{\De k(T+1)}\leq 2,
\end{align*}
for $\De<(1+T)^{-1}$.
So, inequalities \eqref{6.3.3} and \eqref{6.4.3} are established. To prove \eqref{6.5.3}, note that in view of \eqref{6 In-6,1,1}, for $\sigma<\infty$ we have 
\be\label{6.64.3}
|\ees_{\tilde u}(\sigma)|\leq |\ees_{\tilde u}(0)|+(L+M)\sigma+r.
\ee
Combining this inequality with \eqref{7.9.3} we obtain
$$
|S_{\sigma}|_{\h}^8\leq   (2|\ees_{\tilde u}(\sigma)|+4C)^4\leq(2(|\ees_{\tilde u}(0)|+(L+M)\sigma+r+2C))^4 \leq C(r,L,M)(1+\sigma^4).
$$
It is clear that the above inequality is satisfied also for $S$ replaced by $S'$, so that
$$
|\SSS_{\sigma}|_{\hh}^{8}\leq 2C(r,L,M)(1+\sigma^4).
$$
Multiplying this inequality by $\ch_{\{\sigma<\infty\}}$, taking the $\e_\yy$-mean value, and using inequality \eqref{6.4.3}, we arrive at \eqref{6.5.3}. The proof of Theorem \ref{2.14.3} (and with it of Theorem \ref{2.15.3})  is complete.

\subsection{Relaxed non-degeneracy condition}
We finish this section with the following result that allows to relax the non-degeneracy condition imposed on the force. 
\begin{theorem}\label{6.10.3}
There exists $N$ depending only on $\gamma, f, \|h\|_1$ and $\BBB_1$ such that the conclusion of Theorem \ref{2.15.3} remains true for any random force of the form \eqref{1.9.3}, whose first $N$ coefficients $b_j$ are not zero.
\end{theorem}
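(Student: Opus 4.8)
\bp
The assumption that \emph{all} the coefficients $b_j$ are non-zero enters the proof of Theorem~\ref{2.15.3} at exactly one place --- the proof of Proposition~\ref{6.12.3}, via the fact that the support of $\{\zeta\}_T$ in $C(0,T;H^1_0)$ is the whole space. Everywhere else it suffices that $b_1,\ldots,b_N$ be non-zero, where $N$ is the Foia\c{s}--Prodi integer of Proposition~\ref{4.13.3}: in particular, in the Girsanov estimate of Lemma~\ref{4.21.3} the Radon--Nikodym density is controlled through $\max_{1\le j\le N}b_j^{-1}$ only, because the shift \eqref{4.22.3} takes values in the $N$-dimensional space $\h_N$. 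The plan is to fix such an $N$, to replace Proposition~\ref{6.12.3} by a hitting statement for a small ball centered at the attractor of the ``high-frequency'' dynamics instead of at the origin, and to check that the verification of Theorem~\ref{2.14.3} is insensitive to this change of center.

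First I fix $N$. By the a priori bound \eqref{2.2.3} every solution of \eqref{1.1.3} obeys $\ees_u(t)\le\ees_u(0)e^{-\al t}+C_0$ with $C_0=C_0(\gamma,\BBB,\|h\|)$, so all the energy levels that occur in the recurrence and squeezing estimates are bounded in terms of $\gamma,f,\|h\|_1$ and $\BBB_1$; I take $N$ large enough that Proposition~\ref{4.13.3} gives exponential contraction for the flux constants attached to those levels. Assuming, as in the original proof, that $f(0)=0$, consider the autonomous equation
\be\label{hf}
\p_t^2 u+\gamma\p_t u-\de u+(I-P_N)f(u)=(I-P_N)h ,
\ee
which is what \eqref{1.1.3} becomes under the finite-dimensional forcing $\dt\zeta^y(t)$ with $\zeta^y(t)=-tP_N h+\int_0^t P_N f(u^y(s))\,ds$. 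Its low modes $P_N u$ solve a linear damped wave equation and decay exponentially, and a standard energy estimate handles the high modes, so \eqref{hf} is dissipative and possesses a global attractor; since the difference of any two of its solutions solves exactly equation \eqref{6.50.3}, arguing as for \eqref{8.4.3} the flow of \eqref{hf} is, for the above $N$, a uniform contraction on bounded sets, so the attractor reduces to a single stationary point $\hat y=[q,0]$, and every solution issued from $B_R$ converges to $\hat y$ exponentially fast, uniformly over $B_R$. Testing the stationary equation $-\de q+(I-P_N)f(q)=(I-P_N)h$ against $q$ (note that $P_N q=0$) and using \eqref{1.5.3}--\eqref{1.6.3} with $\nu\le(\lm_1\wedge\gamma)/8$ yields $|\hat y|_\h\le C$, with $C$ depending only on $\gamma,f$ and $\|h\|$.

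The substitute for Proposition~\ref{6.12.3} then reads: for every $R>0$ and $d>0$ there is $T_*$ such that $\inf_{y\in B_R}P_T(y,\hat y+B_d)>0$ for all $T\ge T_*$, where $\hat y+B_d$ denotes the ball of radius $d$ centered at $\hat y$. The proof copies Steps~1--3 of the proof of Proposition~\ref{6.12.3}: the control $\zeta^y$ takes values in the span of $e_1,\ldots,e_N$, hence lies in the support of $\{\zeta\}_T$ as soon as $b_1,\ldots,b_N\neq 0$; the solution of \eqref{1.1.3} driven by $\dt\zeta^y$ is a solution of \eqref{hf} and enters $\hat y+B_{d/2}$ by time $T_*$, uniformly over $y\in B_R$; continuity of the solution map in the forcing gives $P_T(y,\hat y+B_d)\ge\pp\{|\zeta-\zeta^y|_{C(0,T;H^1_0)}\le\es\}$; and the uniformity of the lower bound over $B_R$ follows, as in the original Steps~2--3, by extracting from a minimizing sequence $y_j$ a subsequence convergent in $H^{1-s}\times H^{-s}$, using Lemma~\ref{6.20.3} to deduce convergence of the controls $\zeta^{y_j}$ in $C(0,T;H^1_0)$, and contradicting the full-support property of $\zeta$ restricted to the span of $e_1,\ldots,e_N$.

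It remains to run the proof of Theorem~\ref{2.14.3} with $B_{\hh}(d)$ replaced by the ball of radius $d$ in $\hh$ centered at $(\hat y,\hat y)$. All the moment and stopping-time estimates of Sections~\ref{3.0.3}--\ref{6.31.3}, the Lyapunov function $\gi(y)=1+|\ees(y)|$ and hypotheses \eqref{6.55.3}--\eqref{6.56.3} are unchanged up to replacing $|y|_\h$ by $|y-\hat y|_\h+|\hat y|_\h$, which is harmless because $|\hat y|_\h$ is a fixed constant; the exponential squeezing \eqref{6.33.3} involves only the difference $|S_t(\yy)-S_t'(\yy)|_\h$; and the smallness requirement \eqref{4.26.3} on $d$ is untouched, since it is phrased through $\max_{1\le j\le N}b_j^{-1}$. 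Hypothesis \eqref{6.57.3} of Proposition~\ref{6. Proposition-Lyapunov-dissipation} follows from the modified Proposition~\ref{6.12.3}, and Theorem~3.1.7 in \cite{KS-book} then gives Theorem~\ref{2.15.3} for the relaxed force, hence Theorem~\ref{6.10.3}. The main obstacle is the one just encountered: a force degenerate in the high modes cannot steer the solution to the origin, and the entire recurrence/squeezing mechanism has to be relocated to the point $\hat y$ to which the unforced high-frequency dynamics contracts --- a point whose existence, uniqueness and uniform bound must be established and then shown not to interfere with any of the downstream estimates.
\ep
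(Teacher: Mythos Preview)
Your approach is workable but takes a longer route than the paper's. The paper notices, as you do, that only Proposition~\ref{6.12.3} uses non-degeneracy in all modes, but then makes a single observation that renders any relocation of the target unnecessary: the control $\zeta^y(t)=-th+\int_0^t P_{N_*}f(u^y)\,ds$ from that proposition has high-frequency part $(I-P_N)\zeta^y(t)=-t(I-P_N)h$ for $N\ge N_*$, and since $h\in H^1_0$ this is small in $C(0,T;H^1_0)$ once $N$ is large. The projected control $P_N\zeta^y$, which lies in the support of the degenerate noise, is then $\es/4$-close to $\zeta^y$; the contradiction argument of Steps~2--3 of Proposition~\ref{6.12.3} goes through with $\tilde\zeta$ replaced by $P_N\tilde\zeta$, and the ball $B_d$ stays centered at the origin. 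Nothing downstream needs to be touched.

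Your route replaces the target by the equilibrium $\hat y$ of the equation $\p_t^2 u+\gamma\p_t u-\de u+(I-P_N)f(u)=(I-P_N)h$, which is natural but forces you to pay for several things the paper avoids. Two of these are under-justified in your writeup. First, the existence of the stationary point $\hat y=[q,0]$ is assumed rather than argued --- you test the equation against $q$ to bound it, but not to produce it (a minimisation of $\tfrac12\|\nabla q\|^2+\int F(q)-\int hq$ over $(I-P_N)H^1_0$ does the job). Second, and more seriously, the claim that ``a standard energy estimate handles the high modes'' is not standard: multiplying the equation by $\dt u+\al u$ produces the cross term $(P_N f(u),\dt u+\al u)$, which unlike $(f(u),\dt u)$ is not a total time derivative, so the usual inequality for $\ees_u$ breaks. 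The correct route to boundedness is the bootstrap you sketch afterwards --- comparison with the bounded stationary solution $\hat y$ via Foia\c{s}--Prodi, exactly as in the derivation of~\eqref{8.4.3}--\eqref{8.5.3} --- but that presupposes both the existence of $\hat y$ and a choice of $N$ large enough for the level $R+2|\hat y|_\h$. So the order should be: construct $q$ variationally, bound $|\hat y|_\h$ uniformly in $N$ (your test does this), then choose $N$ and run the bootstrap. With those repairs your proof goes through; the paper's argument simply sidesteps the whole detour.
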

Let us fix an integer $N_1$ such that inequality \eqref{4.27.3} holds for any $N\geq N_1$ and let $d=d(N_1)$ be so small that we have \eqref{4.26.3}, where $N$ should be replaced by $N_1$. 
Theorem \ref{6.10.3} will be established, if we show that there is an $N=N(d)\geq N_1$ such
that inequality \eqref{6.13.3} holds, provided $b_j\neq 0$ for $j=1,\ldots, N$. 
Let the constants $T,\es$ and $N_*(R)$, together with the function $\zeta^y$ be the same as in the proof of Proposition \ref{6.12.3}. Let us find $N_1\geq N_*(R)$ such that for all $N\geq N_1$, we have
\begin{align*}\label{6.37.3}
\sup_{y\in B_R}|(I-P_N)\zeta^y|_{C(0,T;H^1_0)}&=|(I-P_N)th|_{C(0,T;H^1_0)}=T|(I-P_N)h|_{H^1_0}\leq\es/4.
\end{align*}
We claim that Theorem \ref{6.10.3} holds with this $N$. Indeed, let us suppose that inequality \eqref{6.13.3} does not hold, and we have \eqref{6.58.3}. Let $\zeta_j$ and $\tilde\zeta$ be the functions constructed in Proposition \ref{6.12.3}. Denote $\ccc=C(0,T;H^1_0)$. Then
\begin{align*}
\pp(|\zeta-P_N\tilde\zeta|_{\ccc}\leq \es/4)&=\pp(|\zeta-P_N\zeta_j+P_N\zeta_j-P_N\tilde\zeta|_{\ccc}\leq\es/4)\\
&\leq \pp(|\zeta-P_N\zeta_j|_{\ccc}\leq |P_N\zeta_j-P_N\tilde\zeta|_{\ccc}+\es/4)\\
&\leq \pp(|\zeta-\zeta_j+(I-P_N)\zeta_j|_{\ccc}\leq |\zeta_j-\tilde\zeta|_{\ccc}+\es/4)\\
&\leq \pp(|\zeta-\zeta_j|_{\ccc}\leq |(I-P_N)\zeta_j|_{\ccc}+3\es/4)\leq \pp(|\zeta-\zeta_j|_{\ccc}\leq \es).
\end{align*}
Letting $j$ go to infinity, and using inequality \eqref{6.22.3} we obtain
$$
\pp(|\zeta-P_N\tilde\zeta|_{\ccc}\leq \es/4)=0,
$$
which is impossible, since the support of $\zeta$ restricted to $[0,T]$ contains $C(0,T;P_N H^1_0)$. The proof Theorem \ref{6.10.3} is complete.

\bigskip
  \section{Appendix}
  
\medskip
\subsection{Proof of \eqref{6.40.3}} 
Let us consider the continuous map $\gi$ from $C(0,T;H^1_0(D))$ to $C(0,T;\h)$ defined by $\gi(\ph)=\tilde y$, where $\tilde y$ is the flow of equation
$$
\p_t^2 z+\gamma \p_t z-\de z+f(z)-P_N f(z)=h(x)+\p_t \ph,\q [z(0),\dt z(0)]=y.
$$
Then for any $\Gamma\in\bbb(C(0,T;\h))$, we have
\begin{align*}
\pp\{\xi_{\tilde u}(t)\in\Gamma\}&=\pp\{\gi({\int_0^t \psi(s)\,ds})\in\Gamma\}=\pp\{\int_0^t \psi(s)\,ds\in\gi^{-1}(\Gamma)\}\\
&=\pp\{\zeta(t)-\int_0^t P_N f(u)\,ds\in\gi^{-1}(\Gamma)\}\\
&=\pp\{\gi({\zeta(t)-\int_0^t P_N f(u)\,ds})\in\Gamma\}=\pp\{\xi_{u}(t)\in\Gamma\}.
\end{align*}

\medskip
\subsection{Proof of lemma \ref{6.20.3}} 
Let $f$ be a function that satisfies the growth restriction \eqref{5.17.3} with $\rho<2$. We claim that inequality \eqref{6.39.3} holds with $s=(2-\rho)/(2(\rho+1))$. Indeed, by the H\"older and Sobolev inequalities, we have 
\begin{align*}
|f(u)-f(v)|^2_{L^2}&=\int |f(u)-f(v)|^2\leq C\int (|u|^{2\rho}+|v|^{2\rho}+1)|u-v|^2\\
&\leq C||u|^{2\rho}+|v|^{2\rho}+1|_{L^{3\rho/(1-s)}}|u-v|^2_{L^{6/(1+2s)}}\\
&\leq C'(|u|_{H^{1-s}}^{2\rho}+|v|_{H^{1-s}}^{2\rho}+1)|u-v|^2_{H^{1-s}}.
\end{align*}
\selectlanguage{french}

\chapter{Grandes d\'eviations} 

\label{Chapter4} 



\selectlanguage{english}
\section*{Large deviations for stationary measures of stochastic nonlinear wave equation with smooth white noise} 

{\bf Abstract}.
The paper is devoted to the derivation of large deviations principle for the family $(\mu^\es)_{\es>0}$ of stationary measures of the Markov process generated by the flow of equation
$$
\p_t^2u+\gamma \p_tu-\de u+f(u)=h(x)+\sqrt{\es}\,\vartheta(t,x).
$$
The equation is considered in a bounded domain $D\subset\rr^3$ with a smooth boundary and is supplemented with the Dirichlet boundary condition. Here $f$ is a nonlinear term satisfying some standard dissipativity and growth conditions, the force $\vartheta$ is a non-degenerate white noise, and $h$ is a function in $H^1_0(D)$. The main novelty here is that we do not assume that the limiting equation (i.e., when $\es=0)$ possesses a unique equilibrium and that we do not impose roughness on the noise. Our proof is based on a development of the approach introduced by Freidlin and Wentzell for the study of large deviations for stationary measures of stochastic ODEs on a compact manifold, and some ideas introduced by Sowers. Some ingredients of the proof rely on rather nonstandard techniques.

\setcounter{section}{-1}

\bigskip
\section{Introduction}
We study the large deviations for the family of probability measures $(\mu^\es)_{\es>0}$, where $\mu^\es$ stands for the invariant measure of the Markov process generated by the flow of equation
\be\label{0.1.4}
\p_t^2u+\gamma \p_tu-\de u+f(u)=h(x)+\sqrt{\es}\, \vartheta(t,x),\q [u(0),\dt u(0)]=[u_0, u_1].
\ee
The space variable $x$ belongs to a bounded domain $D\subset\rr^3$ with a smooth boundary, and the equation is supplemented with the Dirichlet boundary condition. The nonlinear term $f$ satisfies the dissipativity and growth conditions that are given in the next section. The force $\vartheta(t,x)$ is a colored white noise of the form
\be\label{1.63}
\vartheta(t,x)=\sum_{j=1}^\infty b_j \dt\beta_j(t)e_j(x).
\ee
Here $\{\beta_j(t)\}$ is a sequence of independent standard Brownian motions, $\{e_j\}$ is an orthonormal basis in $L^2(D)$ composed of the eigenfunctions of the Dirichlet Laplacian, and $\{b_j\}$ is a sequence of positive numbers that goes to zero sufficiently fast (see \eqref{1.57}). 
The initial point $[u_0,u_1]$ belongs to the phase space $\h=H^1_0(D)\times L^2(D)$. Finally, $h(x)$ is a function in $H^1_0(D)$ and satisfies a genericity assumption given in next section.  As it was shown in \cite{DM2014}, under the above hypotheses, the Markov process corresponding to \eqref{0.1.4} has a unique stationary measure $\mu^\es$ which exponentially attracts the law of any solution.

\medskip
Here we are interested in the asymptotic behavior of the family $(\mu^\es)$ as $\es$ goes to zero. We show that this family satisfies the large deviations principle (LDP), which means that there is a function that  describes precisely the logarithmic asymptotics  of $(\mu^\es)$ as the amplitude of the noise tends to zero. More formally, we have the following theorem which is part of the main result of this paper.
\begin{mt}
Let the above conditions be satisfied. Then there is a function $\vvv:\h\to [0,\iin]$ with compact level sets such that we have
\be\label{9.43}
-\inf_{\uu\in \dot\Gamma}\vvv(\uu)\le\liminf_{\es\to 0}\es\ln \mu^\es(\Gamma)\le\limsup_{\es\to 0}\es\ln \mu^\es(\Gamma)\le -\inf_{\uu\in \bar\Gamma}\vvv(\uu),
\ee
where $\Gamma$ is any Borel subset of $\h$, and we denote by $\dot\Gamma$ and $\bar\Gamma$ its interior and closure, respectively.
\end{mt}

\medskip
Before outlining the main ideas behind the proof of this result, we discuss some of the earlier works concerning the large deviations of stochastic PDEs. There is now a vast literature on this subject and the theory is developed in several directions. The most studied among them are the large deviations for the laws of trajectories of stochastic PDEs with vanishing noise. The SPDEs considered in this context include the reaction-diffusion equation \cite{sowers-1992a, CerRoc2004}, the 2D Navier-Stokes equations \cite{Chang1996, SrSu2006}, the nonlinear Schr\"odinger equation \cite{Gautier2005-2}, the Allen-Cahn equation \cite{HairWeb2014}, the quasi-geostrophic equations \cite{LiuRocZhuChan2013}, equations with general monotone drift \cite{Liu2010}, and scalar conservation laws \cite{Mariani2010}. See also the papers \cite{KalXi1996, CheMil1997, CarWeb1999, CM-2010} for results in a more abstract setting that cover a wide class of SPDEs including 2D hydrodynamical type models. Another direction is the study of exit problems for trajectories of stochastic PDEs. The results include \cite{Peszat1994, CheZhiFre2005, Gautier2008, FreKor2010, CerSal2014, BreCerFre2014}.

\medskip
The situation is completely different if we restrict our attention to the results devoted to the small-noise large deviations for stationary measures of stochastic PDEs. To the best of our knowledge, the only papers where the LDP is derived in this context are those by Sowers \cite{sowers-1992b} and Cerrai-R\"ockner \cite{CeRo2005}. These two important works are devoted to the LDP for stationary measures of the reaction-diffusion equation. In the first of them, the force is a non-Gaussian perturbation, while the second one deals with a multiplicative noise. In both papers, the origin is a unique equilibrium of the unperturbed equation and the noise is assumed to be sufficiently irregular with respect to the space variable. To the best of our knowledge, the present paper provides the first result of large deviations  for stationary measures of stochastic PDEs in the case of nontrivial limiting dynamics. Moreover, the random force $\vartheta(t,x)$ is spatially regular in our case. Both these facts create substantial additional problems which are discussed below.

\medskip
We now turn to outlining some ideas of the proof of our main result and describing the main novelty of this paper. 
Our proof relies on a development of Freidlin-Wentzell's approach.  In order to explain it, we briefly recall the original method, which relies on three main steps. The first one consists of establishing some large deviations estimates for the family of discrete-time Markov chains $(Z_n^\es)$ on the boundary. Next, one considers the family $(\lm^\es)$ of stationary measures of these chains and shows that similar estimates hold for $(\lm^\es)$. The final step is to use the Khasminskii formula to reconstruct the measure $\mu^\es$ through $\lm^\es$ and use the estimates derived for the latter in the second step, to get the LDP for $(\mu^\es)$. It turns out that in the PDE setting, this method breaks down already in the second step. Indeed, the existence of stationary measure $\lm^\es$ for the chain on the boundary is a highly nontrivial fact in this case, since on the one hand the Doob theorem cannot be applied, on the other hand this chain does not  possess the Feller property in case of a smooth random force. Moreover, even if we assume that the stationary measure exists, the classical argument does not allow to derive the LDP in this case, since the compactness of the phase space is needed.

\medskip
 To overcome these problems, we introduce a notion of \mpp{generalized stationary measure}, which is, informally speaking, a measure that is stationary but is not supposed to be $\sigma$-additive. We show that any discrete-time Markov chain possesses such a state, thus ensuring existence of stationary measure $\lm^\es$ for the chain on the boundary in this weaker sense. It turns out that at this point (this corresponds to the second step mentioned above) the argument developed by Freidlin and Wentzell does not use the $\sigma$-additivity of $\lm^\es$, and once the necessary estimates for $(Z_n^\es)$ are obtained, they imply similar bounds for $(\lm^\es)$. Here our use of the classical technique ends, and the proof goes in a completely different direction. The reason for this is that the initial measure $\mu^\es$ cannot be reconstructed through $\lm^\es$, since, unlike the previous step, here we do need the $\sigma$-additivity of the measure $\lm^\es$. To handle this new problem,  we use the estimates obtained for $(\lm^\es)$ together with the mixing property of $\mu^\es$ established in \cite{DM2014}, to construct an auxiliary finitely additive measure $\hat\mu^\es$ defined on Borel subsets of $\h$ that satisfies
\be\label{9.65''}
\mu^\es(\dt\Gamma)\le\hat\mu^\es(\dt\Gamma)\le\hat\mu^\es(\bar\Gamma)\le\mu^\es(\bar\Gamma)\q\text{ for any }\Gamma\subset \h
\ee
and such that the family $(\hat\mu^\es)$ obeys some large deviations estimates on the balls.  The proof of the upper bound in these estimates is not a problem. The lower bound relies on an additional new ingredient, namely the notion of \mpp{stochastic stability} of a set.

\medskip
We say that a set $E\subset\h$ is \mpp{stochastically stable} if we have \,\footnote{Let us note that in the case when it is known a priori that a family $(\mu^\es)$ satisfies the LDP with a rate function $\vvv$, then a set $E$ is stochastically stable if and only if its closure has a nonempty intersection with the kernel of $\vvv$.}
$$
\lim_{\es\to 0}\es\ln\mu^{\es}(E_\eta)=0 \q\q \text{ for any }\eta>0,
$$
where $E_\eta$ stands for the open $\eta$-neighborhood of $E$ in $\h$. 

\medskip
We use it in the following context. Let us denote by $\E\subset\h$ the set of stationary flows $\uu=[u, 0]$ of the unperturbed equation
\be\label{1.51}
\p_t^2u+\gamma \p_tu-\de u+f(u)=h(x).
\ee
\begin{lemma}\label{9.47}
The set $\E$ of equilibria of \ef{1.51} is stochastically stable.
\end{lemma}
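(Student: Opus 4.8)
On doit montrer que l'ensemble $\E$ des équilibres de l'équation limite est stochastiquement stable, c'est-à-dire que pour tout $\eta>0$ on a $\es\ln\mu^\es(\E_\eta)\to 0$ quand $\es\to 0$. Comme $\mu^\es$ est une mesure de probabilité, on a toujours $\mu^\es(\E_\eta)\le 1$, donc $\limsup_{\es\to 0}\es\ln\mu^\es(\E_\eta)\le 0$ trivialement. Le cœur de la preuve est donc la borne inférieure : $\liminf_{\es\to 0}\es\ln\mu^\es(\E_\eta)\ge 0$, autrement dit $\mu^\es(\E_\eta)$ ne décroît pas exponentiellement vite.

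**Stratégie.** L'idée est d'exploiter la stationnarité de $\mu^\es$ combinée avec le fait que le flot déterministe (i.e. $\es=0$) est dissipatif et est attiré par l'ensemble $\E$. Concrètement, je procéderais ainsi. D'abord, grâce à l'estimation à priori \eqref{2.2.3} (avec $\vartheta$ remplacé par $\sqrt\es\,\vartheta$, ce qui ne change que la constante, uniformément bornée pour $\es\le 1$), la mesure stationnaire $\mu^\es$ vérifie $\int_\h \ees(\vv)\,\mu^\es(\Dd\vv)\le C$ uniformément en $\es\in(0,1]$ ; on en déduit qu'il existe $R>0$, indépendant de $\es$, tel que $\mu^\es(B_\h(R))\ge 1/2$ pour tout $\es\le 1$. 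Ensuite, le point-clé : pour tout $\vv\in B_\h(R)$, le flot déterministe $S^0(t;\vv)$ de l'équation limite \eqref{1.51} entre dans $\E_{\eta/2}$ en un temps $T=T(R,\eta)$ fini — c'est une conséquence du caractère gradient/dissipatif du système (l'énergie $\ees$ est une fonction de Lyapunov le long du flot déterministe, donc l'$\omega$-limite de toute trajectoire bornée est contenue dans $\E$) ; on utilise ici que $\E$ est fini, ce qui exclut tout comportement pathologique et rend le temps d'atteinte uniforme sur $B_\h(R)$. Par continuité du flot stochastique par rapport au bruit (principe de grandes déviations du type Freidlin–Wentzell pour le problème de Cauchy, ou simplement un argument de support), il existe $\es_0>0$ et $c>0$ tels que pour $\vv\in B_\h(R)$,
$$
\pp_\vv\{S^\es(T;\vv)\in \E_\eta\}\ge \exp(-c/\es)\q\text{ pour tout }\es\le\es_0.
$$
Plus précisément, $\pp_\vv\{|S^\es(T;\vv)-S^0(T;\vv)|_\h\le\eta/2\}\ge\exp(-c/\es)$ par l'estimation basse du PGD pour les trajectoires appliquée au voisinage de la trajectoire déterministe.

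**Conclusion via la stationnarité.** On utilise alors $\mu^\es=\PPPP_T^{*\es}\mu^\es$ :
$$
\mu^\es(\E_\eta)=\int_\h P_T^\es(\vv,\E_\eta)\,\mu^\es(\Dd\vv)\ge \int_{B_\h(R)}P_T^\es(\vv,\E_\eta)\,\mu^\es(\Dd\vv)\ge \exp(-c/\es)\cdot\mu^\es(B_\h(R))\ge \tfrac12\exp(-c/\es).
$$
Il en résulte $\es\ln\mu^\es(\E_\eta)\ge \es\ln(1/2)-c$, ce qui ne donne que $\liminf\ge -c$, pas $\ge 0$. Pour fermer l'argument il faut itérer : on remarque que $\mu^\es(B_\h(R))\ge 1/2$ et qu'on peut répéter l'estimation sur $n$ pas de temps successifs, mais cela n'améliore pas la constante directement. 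La bonne façon est de raffiner la première étape : montrer que $P_T^\es(\vv,\E_\eta)\to 1$ uniformément sur $B_\h(R)$ quand $\es\to 0$, et non seulement $\ge\exp(-c/\es)$. En effet, comme $S^0(T;\vv)\in\E_{\eta/2}$, la probabilité $\pp_\vv\{|S^\es(T;\vv)-S^0(T;\vv)|_\h>\eta/2\}$ tend vers $0$ (convergence en probabilité du flot perturbé vers le flot déterministe sur $[0,T]$, uniformément sur les bornés de $\h$), donc $\inf_{\vv\in B_\h(R)}P_T^\es(\vv,\E_\eta)\to 1$. On obtient alors $\mu^\es(\E_\eta)\ge \tfrac12\inf_{\vv\in B_\h(R)}P_T^\es(\vv,\E_\eta)\to \tfrac12$, d'où $\es\ln\mu^\es(\E_\eta)\ge \es\ln\bigl(\tfrac12 P\bigr)\to 0$.

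**Principal obstacle.** La difficulté technique essentielle est le deuxième point : établir que \emph{uniformément} sur $\vv$ dans une boule $B_\h(R)$, le flot déterministe de \eqref{1.51} entre dans $\E_{\eta/2}$ en temps fini borné. Cela repose sur la structure de Lyapunov (la fonctionnelle $\ees$ décroît strictement le long des trajectoires non stationnaires de l'équation limite, décroissance contrôlée par la norme de $\dt u$, cf. l'estimation \eqref{2.19.3} sans le terme stochastique) et sur la finitude de $\E$. On doit argumenter par compacité : l'attracteur global de \eqref{1.51} est compact dans $\h$ et, puisque le système est gradient avec un nombre fini d'équilibres, il coïncide avec la réunion des variétés instables des $\hat\uu_i$ ; toute trajectoire partant de $B_\h(R)$ entre donc dans $\E_{\eta/2}$, et un argument de contradiction classique (si le temps d'entrée n'était pas uniformément borné, on extrairait une suite de données initiales et, par continuité du flot et compacité, une trajectoire limite ne touchant jamais $\E_{\eta/2}$, contredisant la propriété de convergence vers $\E$) fournit le temps uniforme $T(R,\eta)$. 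Le reste (estimation uniforme du moment de $\mu^\es$, continuité du flot stochastique en le bruit) est standard et déjà essentiellement contenu dans les résultats rappelés.
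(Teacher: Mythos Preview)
There is a genuine gap: you conflate ``the deterministic flow \emph{enters} $\E_{\eta/2}$ by time $T$'' with ``the deterministic flow \emph{is in} $\E_{\eta/2}$ at the fixed time $T$''. The first statement (a uniform bound on the first hitting time over $\vv\in B_\h(R)$) is true, and is established in the paper as inequality~\eqref{8.36}, by essentially the compactness argument you sketch. But the second statement is what you actually use when you write ``comme $S^0(T;\vv)\in\E_{\eta/2}$'', and it is false whenever $\ell\ge 2$. Indeed, as soon as there is a saddle equilibrium $\hat\uu_i$, initial data $\vv$ close to (but not on) its stable manifold produce flows that linger near $\hat\uu_i$ for a time of order $|\log(\text{distance from }\vv\text{ to the stable manifold})|$ before transiting toward a stable equilibrium; by tuning this distance you can arrange that at any prescribed time $T$ the flow $S^0(T;\vv)$ is in the middle of a heteroclinic connection, hence outside $\E_{\eta/2}$. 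So no single time $T$ works uniformly over $B_\h(R)$, and the inference $\inf_{\vv\in B_\h(R)}P_T^\es(\vv,\E_\eta)\to 1$ breaks down.

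The paper's proof takes a different route that sidesteps this obstruction. It argues by contradiction: if $\E$ were not stochastically stable along some sequence $\es_j\to 0$, one uses the a priori upper bound (Proposition~\ref{9.18}) to extract a weak limit $\mu_*$ of $(\mu^{\es_j})$, which is concentrated on the attractor $\aaa$; any point $\uu_*\in\supp\mu_*\subset\aaa$ is then automatically stable with respect to $(\mu^{\es_j})$ by the portmanteau theorem, and Lemma~\ref{1.201} propagates this stability forward along the heteroclinic orbit through $\uu_*$ to its terminal equilibrium, contradicting the hypothesis. Your direct approach could in principle be repaired by time-averaging --- writing $\mu^\es(\E_\eta)=T^{-1}\int_0^T\int_\h P_t^\es(\vv,\E_\eta)\,\mu^\es(\Dd\vv)\,\dd t$ via stationarity and bounding from below the sojourn time of the deterministic flow in $\E_{\eta/2}$ over $[0,T]$ --- but establishing a uniform positive lower bound on that sojourn time is itself not immediate.
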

This result allows to prove the above mentioned lower bound and to complete the proof of large deviations on balls for the family $(\hat\mu^\es)$. Inequality \ef{9.65''} implies that similar result holds for the family $(\mu^\es)$ of stationary measures. The final step is to prove that this family is exponentially tight and to show that this combined with the above large deviations estimates implies the LDP.

\medskip
 We now present another essential component of the proof which allows, in particular, to get exponential tightness and also prove Lemma \ref{9.47}. Let us consider the semigroup $S(t):\h\to \h$ corresponding to \ef{1.51} and denote by $\aaa$ its global attractor.

\begin{proposition}[A priori upper bound]\label{9.18}
Under the above hypotheses, there is a function $V_\aaa:\h\to [0, \iin]$ with compact level sets and vanishing only on the attractor $\aaa$ that provides the large deviations upper bound for the family $(\mu^\es)$, that is we have
\be\label{9.19}
\limsup_{\es\to 0}\es\ln \mu^{\es}(F)\leq-\inf_{\uu\in F} V_\aaa(\uu)\q \text{ for any } F\subset\h \text{ closed}.
\ee
In particular, the family $(\mu^\es)$ is exponentially tight and any of its weak limits is concentrated on the set $\aaa$. 
\end{proposition}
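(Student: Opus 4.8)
\emph{Plan of the proof.} The argument develops the classical Freidlin--Wentzell scheme; the one serious obstruction is, as throughout the paper, the absence of instantaneous smoothing for the NLW equation, which has to be replaced by the asymptotic compactness of the flow (the $u=v+z$ splitting). I would begin with uniform a priori bounds on $(\mu^\es)$. Applying the It\^o formula to $\exp\bigl((\kp/\es)\ees(\uu)\bigr)$ along a stationary solution of \eqref{0.1.4} and using the dissipativity of $f$ exactly as in the exponential-moment estimate proved in the mixing part, one gets $\kp>0$ and $C$, independent of $\es\in(0,1]$, with $\int_\h\exp\bigl((\kp/\es)\ees(\uu)\bigr)\,\mu^\es(\Dd\uu)\le C$; in particular $\mu^\es(\{\ees\ge M\})\le Ce^{-\kp M/\es}$. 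Splitting $u=v+z$ and combining the It\^o estimate for $\xi_v$ in $\h^s$ with the Zelik-type bound for $\xi_z$, one similarly controls an exponential moment of $|\uu|_{\h^s}$ under $\mu^\es$ with an $\es$-calibrated constant; since the embedding $\h^s\hookrightarrow\h$ is compact, this already gives the exponential tightness of $(\mu^\es)$.

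\textbf{The function $V_\aaa$.} I would first establish (or invoke, it being standard for dissipative SPDEs of this type modulo the usual modifications for NLW) the sample-path large deviations principle for the laws of the solutions $\xi_\vv$ of \eqref{0.1.4} on $C(0,T;\h)$, with good rate function $I_{0T}^\vv$ equal to the Freidlin--Wentzell action, the estimates being uniform in $\vv$ on bounded subsets of $\h$; together with the elementary Gronwall bound $\sup_{t\le T}|\varphi(t)-S(t)\vv|_\h\le\omega_{R,T}(I_{0T}^\vv(\varphi))$, $\omega_{R,T}(a)\to0$ as $a\to0$, for $\vv\in B_\h(R)$. Then set
$$
V_\aaa(\uu)=\inf\bigl\{I_{0T}^{\varphi(0)}(\varphi):\ T>0,\ \varphi\in C(0,T;\h),\ \varphi(0)\in\aaa,\ \varphi(T)=\uu\bigr\}.
$$
Lower semicontinuity of $V_\aaa$ follows from that of $I_{0T}$ and compactness of $\aaa$. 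The crux is that $\{V_\aaa\le M\}$ is compact: if $I_{0T}^{\varphi(0)}(\varphi)\le M$ with $\varphi(0)\in\aaa$, then $\varphi$ solves the damped NLW driven by a deterministic force whose energy is controlled by $M$, so the dissipativity together with the asymptotic-smoothing ($v+z$) argument yields a bound for $\varphi(T)$ in a fixed ball of $\h^s$, uniformly in $T$ and in $\varphi(0)\in\aaa$; with the compact embedding and lower semicontinuity this makes $\{V_\aaa\le M\}$ closed and precompact, hence compact. Finally $V_\aaa$ vanishes on $\aaa$ because $t\mapsto S(t)\uu$ is a zero-action path and $S(T)\aaa=\aaa$; conversely $V_\aaa(\uu)=0$ forces, via a minimizing sequence and the compactness just established, a zero-action limiting path issued from the closed invariant set $\aaa$, which therefore stays in $\aaa$, whence $\uu\in\aaa$.

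\textbf{The upper bound and its consequences.} Let $F\subset\h$ be closed and set $m=\inf_F V_\aaa$, which we may assume positive. Fix $0<M<m$ and $\eta>0$. Using stationarity, $\mu^\es(F)=\int_\h P_T^\es(\vv,F)\,\mu^\es(\Dd\vv)$ for every $T>0$; on $\h\setminus B_\h(R)$ we bound $P_T^\es\le1$ and use the a priori estimate to get $\mu^\es(\h\setminus B_\h(R))\le Ce^{-(M+1)/\es}$ for $R$ large. For $\vv\in B_\h(R)$: since $\aaa$ attracts $B_\h(R)$, choose $T_1$ with $\operatorname{dist}_\h(S(t)\vv,\aaa)<\delta$ for $t\ge T_1$ and $\vv\in B_\h(R)$. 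Given any $\varphi$ admissible for $I_{0T}^\vv$ with $\varphi(T)\in F$, either its action is $\ge M$, or by the skeleton estimate $\varphi(T_1)$ lies in the $\delta'$-neighbourhood $\aaa_{\delta'}$ of $\aaa$ with $\delta'=\delta+\omega_{R,T_1}(M)$, and then $\varphi(T_1+\cdot)$ realises a cost $\ge m-\rho(\delta')$ from $\aaa_{\delta'}$ to $F$, where $\rho(\delta')\to0$ as $\delta'\to0$. This last bound is proved by contradiction: cheap paths from arbitrarily small neighbourhoods of $\aaa$ to $F$, concatenated with short cheap paths from $\aaa$ into those neighbourhoods (local controllability near the compact set $\aaa$) and passed to a limit through lower semicontinuity of the action and the compactness of its level sets, would give a path from $\aaa$ to the closed set $F$ of cost $<m$. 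Choosing $\delta,\eta$ so small that $\rho(\delta')<m-M$, one gets $\inf\{I_{0T}^\vv(\varphi):\vv\in B_\h(R),\,\varphi(T)\in F\}\ge M$ for $T$ large, and the uniform path-LDP upper bound then gives $\sup_{\vv\in B_\h(R)}P_T^\es(\vv,F)\le e^{-(M-\eta)/\es}$ for small $\es$. Combining the two contributions, $\limsup_{\es\to0}\es\ln\mu^\es(F)\le-(M-\eta)$; letting $\eta\to0$ and $M\nearrow m$ gives \eqref{9.19}. Exponential tightness was shown above, and for the last assertion: if $\mathcal O\supset\aaa$ is open, then $F=\h\setminus\mathcal O$ is closed with $\inf_F V_\aaa>0$ (compact level sets, lower semicontinuity, and positivity of $V_\aaa$ off $\aaa$), so $\mu^\es(\mathcal O)\to1$, and hence every weak limit of $(\mu^\es)$ is concentrated on $\bigcap_{\mathcal O}\bar{\mathcal O}=\aaa$.

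\textbf{Main difficulty.} The two delicate steps are the compactness of the level sets of $V_\aaa$ and the transfer estimate just used — both rest on the fact that finite-action controlled trajectories of the damped NLW, after some time, regularise into a fixed bounded subset of $\h^s$, i.e. on the asymptotic compactness of the dynamics; this is the structural substitute for the missing parabolic smoothing and is what makes the PDE case genuinely harder than the Freidlin--Wentzell setting on a compact manifold.
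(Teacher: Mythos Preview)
There is a real gap in your argument for the upper bound. You write $\delta'=\delta+\omega_{R,T_1}(M)$ and then require $\rho(\delta')<m-M$; but for the nonlinear NLW the Gronwall comparison between a trajectory of action $\le M$ (which is \emph{not} small) and the free flow over $[0,T_1]$ carries the factor $\exp\bigl(C\int_0^{T_1}(\|u\|_1^2+\|v\|_1^2+1)\,dt\bigr)\sim e^{CT_1}$ coming from the $f(u)-f(v)$ term, so $\omega_{R,T_1}(M)$ is large once $T_1$ is the attraction time of $B_R$. No choice of $\delta$ makes $\delta'$ small, and the concatenation you propose for bounding $\rho(\delta')$ runs into the same exact-controllability obstruction that already affects your definition of $V_\aaa$: you need to steer from $\aaa$ to the \emph{specific} initial point of the second leg, not merely into its vicinity. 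The paper avoids this in two moves. First, following Sowers, it introduces the events $E_n=\{\uu(kT_*)\in B_R\cap\aaa_\eta^c,\ 1\le k\le n\}$: on $E_n$ some subinterval of length $T_*$ carries action $<(M+1)/n$, so inequality~\eqref{2.16} (which \emph{is} proved via Gronwall, but applied with small action) forces $\inf_{E_n}I_{nT_*}>M$. Second, for trajectories that hit $\aaa_\eta$ at some $kT_*$, the crucial inclusion~\eqref{1.69} is established not by concatenation but by the Foia\c{s}--Prodi feedback of Proposition~\ref{1.88}: given a controlled path $\uu(\cdot)$ from $\aaa_\eta$, one runs $\p_t^2 v+\gamma\p_t v-\Delta v+f(v)+P_N[f(u)-f(v)]=h+\varphi$ from a nearby point of $\aaa$ and obtains $|\vv(t)-\uu(t)|_\h^2\le e^{-\alpha t}\eta^2$ for all $t$, with extra action $J_T(P_N[f(v)-f(u)])=O(\eta^2)$. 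This parallel-trajectory construction is the quantitative substitute for ``local controllability near $\aaa$'' that works with a spatially regular force, and it is what your outline is missing.

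A related point: the paper defines $V_\aaa$ through the $\eta$-relaxed quasipotential~\eqref{8.49}--\eqref{9.6} (endpoint in $B(\uu_*,\eta)$, then $\eta\to0$) rather than exact hitting, and remarks explicitly that this is dictated by the lack of exact controllability of NLW by a regular force. With that definition lower semicontinuity is a two-line triangle-inequality argument; with yours, the case $T_n\to\infty$ in a minimising sequence is not handled by ``lower semicontinuity of $I_{0T}$ and compactness of $\aaa$'' alone.
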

Let us mention that function $V_\aaa$ has an explicit interpretation in terms of the quasipotential. Namely, for any $\uu\in \h$, $V_\aaa(\uu)$ represents the minimal energy needed to reach arbitrarily small neighborhood of $\uu$ from the global attractor in a finite time. It should be emphasized that once the main result of the paper is established, this proposition will lose its interest, since, in general, $V_\aaa$ is not the function that governs the LDP of the family $(\mu^\es)$, and that much more is proved concerning weak limits of $(\mu^\es)$. Let us mention also that some ideas of the proof of Proposition \ref{9.18} are inspired by \cite{sowers-1992b}.

\medskip
At the end of this section, let us point out that when equation \ef{1.51} has a unique equilibrium, Proposition \ref{9.18} is sufficient to derive the LDP, and in this particular case there is no need to use the Freidlin-Wentzell theory and the above scheme. Indeed, we first note that in this case the attractor $\aaa$ is a singleton $\{\hat\uu\}$, where $\hat\uu=[\hat u, 0]$ is the equilibrium position. In view of Proposition \ref{9.18}, the family $(\mu^\es)$ is tight and any weak limit of it is concentrated on $\aaa=\{\hat\uu\}$. Therefore, $\mu^\es$ weakly converges to the Dirac measure concentrated at $\hat\uu$. A simple argument (see Section \ref{9.20}) shows that this convergence and the fact that $\aaa=\{\hat\uu\}$ imply that the function $V_\aaa$ provides also the large deviations lower bound for $(\mu^\es)$. Thus, in the case of the trivial dynamics, the function $V_\aaa$ governs the LDP of the family $(\mu^\es)$. We note also that this is the only case when that happens.

\medskip
The paper is organized as follows. In Section \ref{1.76}, we state the main result and present the scheme of its proof. In Section \ref{9.45}, we establish bounds for one-step transition probabilities for the chain on the boundary. The next two sections are devoted to the proof of large deviations estimates on the balls for $(\mu^\es)$. In Section \ref{1.77}, we establish Proposition \ref{9.18}. Finally, the appendix contains some auxiliary results used in the main text.

\bigskip
\section{Main result and scheme of its proof}\label{1.76}
In this section we state the main result of the paper and outline its proof. We start by recalling the notion of large deviations.
\subsection{Large deviations: equivalent formulations}
Let $\zzz$ be a Polish space. A functional $\mathfrak{I}$ defined on $\zzz$ and with range in $[0,\iin]$ is  called a (good) rate function if it has compact level sets, which means that the set $\{\mathfrak{I}\leq M\}$ is compact in $\zzz$ for any $M\geq 0$. Let $(\mathfrak{m}^\es)_{\es>0}$ be a family of probability measures on $\zzz$. The family $(\mathfrak{m}^\es)_{\es>0}$ is said to satisfy the large deviations principle in $\zzz$ with rate function $\mathfrak{I}:\zzz\to [0,\iin]$ if the following two conditions hold.

\bi
\item{\it Upper bound}
\ei
\be\label{7.1}
\limsup_{\es\to 0}\es\ln \mathfrak{m}^{\es}(F)\leq-\inf_{z\in F} \mathfrak{I}(z)\q \text{ for any } F\subset\zzz \text{ closed}.
\ee
This inequality is equivalent to the following (e.g., see Chapter 12 of \cite{DZ1992}). For any positive numbers $\De, \De'$ and $M$ there is $\es_*>0$ such that
\be\label{7.2}
\mathfrak{m}^\es(z\in\zzz: d_{\zzz}(z,\{\mathfrak{I}\leq M\})\geq\De)\leq\exp(-(M-\De')/\es)\q\text{ for } \es\leq\es_*.
\ee

\bi
\item{\it Lower bound}
\ei
\be\label{7.3}
\liminf_{\es\to 0}\es\ln\mathfrak{m}^{\es}(G)\geq-\inf_{z\in G} \mathfrak{I}(z)\q \text{ for any } G\subset\zzz \text{ open}.
\ee
This is equivalent to the following. For any $z_*\in\zzz$ and any positive numbers $\eta$ and $\eta'$ there is $\es_*>0$ such that
\be\label{7.4}
\mathfrak{m}^\es(z\in\zzz: d_\zzz(z, z_*)\le\eta)\geq\exp(-(\mathfrak{I}(z_*)+\eta')/\es)\q\text{ for } \es\leq\es_*.
\ee
The family of random variables $(\mathfrak{X}^{\es})_{\es>0}$ in $\zzz$ is said to satisfy the LDP with rate function $\mathfrak{I}$, if so does the family of their laws.

\subsection{Main result}
Before stating the main result, let us make the precise hypotheses on the nonlinearity and the coefficients entering the definition of $\vartheta(t)$. We suppose that function $f$ satisfies the growth restriction
\be\label{1.54}
|f''(u)|\leq C(|u|^{\rho-1}+1)\q u\in\rr,
\ee  
where $C$ and $\rho<2$ are positive constants, and the dissipativity conditions
\be\label{1.56}
F(u)\geq -\nu u^2-C,\q\q f(u)u- F(u)\geq-\nu u^2-C \q u\in\rr\,,
\ee
where $F$ is the primitive of $f$, $\nu\leq (\lm_1\wedge\gamma)/8$ is a positive constant, and $\lm_j$ stands for the eigenvalue corresponding to $e_j$. The coefficients $b_j$ are positive numbers satisfying
\be \label{1.57}
\BBB_1=\sum_{j=1}^\iin\lm_j b_j^2<\infty. 
\ee
Recall that we denote by $\E\subset\h$ the set of stationary flows $\uu=[u, 0]$ of equation \ef{1.51}.
It is well known that generically with respect to $h(x)$, the set $\E$ is finite (see Section \ref{9.21} for more details). We assume that $h(x)$ belongs to this generic set, so that there are finitely many equilibria, and we write $\E=\{\hat\uu_1, \ldots, \hat\uu_\ell\}$. Recall that the equilibrium $\hat \uu$ is called Lyapunov stable if for any $\eta>0$ there is $\De>0$ such that any flow of \ef{1.51} issued from the $\De$-neighborhood of $\hat\uu$ remains in the $\eta$-neighborhood of $\hat\uu$ for all time. We shall denote by $\E_s\subset \E$ the set of Lyapunov stable equilibria.

\medskip
The following theorem is the main result of this paper.
\begin{theorem}\label{1.58}
Let the above conditions be satisfied. Then the family $(\mu^\es)$ satisfies the large deviations principle in $\h$. Moreover, the corresponding rate function can vanish only on the set $\E_s\subset\{\hat\uu_1, \ldots, \hat\uu_\ell\}$ of Lyapunov stable equilibria of \eqref{1.51}. In particular, $(\mu^\es)$ is exponentially tight and any weak limit of this family is concentrated on $\E_s$. 
\end{theorem}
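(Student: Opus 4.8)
The plan is to develop the Freidlin--Wentzell method for large deviations of stationary measures, adapting it to the two features absent from the classical finite-dimensional setting: the phase space $\h$ is not compact, and the force $\vartheta$ is spatially regular, so the chain on the boundary is not Feller. The candidate rate function $\vvv$ is the quasipotential-type functional built from $V(\uu_1,\uu_2)$ --- the minimal action needed to steer the limiting equation \eqref{1.51} from $\uu_1$ into an arbitrarily small neighborhood of $\uu_2$ in finite time --- through the graph minimization over $\E=\{\hat\uu_1,\dots,\hat\uu_\ell\}$, exactly as in the finite-dimensional formulas recalled in the introduction. First I would fix an arbitrary $\uu\in\h$, set $\hat\uu_{\ell+1}=\uu$, surround each $\hat\uu_i$ by nested balls $g_i\subset\tilde g_i$, and introduce the discrete-time chain $(Z_n^\es)$ obtained by recording the flow of \eqref{0.1.4} each time it hits $\p g$ after a prior excursion out of $\tilde g$. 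Uniform (over bounded sets of initial data) Freidlin--Wentzell upper and lower bounds for the trajectories of \eqref{0.1.4}, which follow from the LDP for the Cauchy problem together with the a priori energy estimates, then yield matching estimates for the one-step transition probabilities of $(Z_n^\es)$.

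Next, because $(Z_n^\es)$ has no invariant probability measure in the usual sense, I would introduce a \emph{generalized stationary measure} $\lm^\es$: a normalized positive linear functional on the bounded Borel functions on $\p g$ invariant under the dual of the transition operator, whose existence follows from a Schauder-type fixed-point argument on the weak-$*$ compact convex set of such functionals. The combinatorial part of the Freidlin--Wentzell argument uses only finite additivity, so the estimates for $(Z_n^\es)$ pass to $\lm^\es$ and give $\exp(-(\vvv(\hat\uu_j)+\beta)/\es)\le\lm^\es(\ch_{g_j})\le\exp(-(\vvv(\hat\uu_j)-\beta)/\es)$ for every $j$ and every $\beta>0$ once $\es$ is small. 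Then, via a Khasminskii-type averaging formula, I would build from $\lm^\es$ a finitely additive set function $\hat\mu^\es$ on $\bbb(\h)$; combining its invariance identity $\hat\mu^\es(P_t\psi)=\hat\mu^\es(\psi)$ with the exponential mixing of $\mu^\es$ established in \cite{DM2014} yields $\hat\mu^\es(\psi)=(\psi,\mu^\es)$ for bounded Lipschitz $\psi$, hence the sandwich \eqref{9.65''}. This transfers the above bounds to $\mu^\es$, namely $\mu^\es(g_j)\le\exp(-(\vvv(\hat\uu_j)-\beta)/\es)$ and $\mu^\es(\bar g_j)\ge\exp(-(\vvv(\hat\uu_j)+\beta)/\es)$.

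Finally I would assemble the LDP. Exponential tightness is Proposition \ref{9.18}: the quasipotential $V_\aaa$ relative to the global attractor has compact level sets and provides the large deviations upper bound on closed sets. Together with the ball estimates above, exponential tightness and a standard reduction to compact sets, this upgrades to the full upper and lower bounds \eqref{9.43} for the rate function $\vvv$, whose compact level sets are inherited from those of $V_\aaa$ (near $\aaa$ one has $\vvv\ge V_\aaa$ up to an additive constant, and $\vvv=\iin$ outside the region controlled by $V_\aaa$). For the kernel: Lemma \ref{9.47} asserts that $\E$ is stochastically stable, which together with the lower ball bound forces $\ker\vvv\subset\E$; and if some $\hat\uu_i$ fails to be Lyapunov stable, one uses the instability to exhibit a nearby point that no zero-action path starting at $\hat\uu_i$ can reach, so $\vvv(\hat\uu_i)>0$. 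Hence $\ker\vvv\subset\E_s$, and exponential tightness confines every weak limit of $(\mu^\es)$ to $\ker\vvv\subset\E_s$.

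The main obstacle is the second step: the classical scheme breaks down precisely where one needs an invariant measure for the boundary chain and where one reconstructs $\mu^\es$ from it. Introducing the generalized stationary measure and, above all, building the finitely additive $\hat\mu^\es$ that dominates $\mu^\es$ from both sides as in \eqref{9.65''} is the nonstandard core of the argument; it relies essentially on the exponential mixing of \cite{DM2014} and on a continuity property of $\hat\mu^\es$ with respect to bounded-pointwise convergence along $\h$-bounded sequences. A secondary difficulty running through everything is the absence of a smoothing effect for the wave semigroup, which forces the uniform Freidlin--Wentzell estimates for trajectories, the return-time bounds for $(Z_n^\es)$, and the exponential tightness all to be obtained by direct energy-type arguments rather than by compactness.
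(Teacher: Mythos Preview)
Your outline follows the paper's scheme closely --- generalized stationary measure on $\p g$, Khasminskii averaging to a finitely additive $\hat\mu^\es$, the sandwich \eqref{9.65''} via exponential mixing, and Proposition~\ref{9.18} for exponential tightness --- and you have correctly located the nonstandard core of the argument. Two points deserve correction, however.

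First, the sentence ``this transfers the above bounds to $\mu^\es$'' hides a step that is \emph{not} routine. Passing from the $\lm^\es$-bounds to the lower bound $\hat\mu^\es(g_j)\ge\exp(-(\vvv(\hat\uu_j)+\beta)/\es)$ requires an \emph{upper} bound on the normalizer $\tilde\mu^\es(1)$. The paper obtains this via the identity $\tilde\mu^\es(1)=\tilde\mu^\es(\ch_{g'})/\hat\mu^\es(g')$ together with $\hat\mu^\es(g')\ge\mu^\es(g')\ge\exp(-\beta/\es)$, and the last inequality is exactly Lemma~\ref{9.47} (stochastic stability of $\E$). In other words, Lemma~\ref{9.47} is not a cosmetic afterthought for the kernel analysis: it is the ingredient that closes the lower ball bound, and it must be proved \emph{before} Proposition~\ref{9.4}, from Proposition~\ref{9.18} and the heteroclinic structure of $\aaa$ (Lemma~\ref{1.201}). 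Your placement of it only at the end makes the argument circular as written.

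Second, your reasoning for $\ker\vvv\subset\E_s$ is backwards. The value $\vvv(\hat\uu_i)=W_\ell(\hat\uu_i)-\min_jW_\ell(\hat\uu_j)$ measures the cost of \emph{reaching} $\hat\uu_i$ through an optimal spanning tree, not of leaving it; exhibiting a point that zero-action paths from $\hat\uu_i$ cannot reach is irrelevant. The correct argument is that if $\hat\uu_i$ is unstable there is a zero-cost heteroclinic orbit $\hat\uu_i\to\hat\uu_k$, so any $\mathfrak g\in G_\ell(i)$ can be rerouted to a graph in $G_\ell(k)$ of no greater weight by inserting the arrow $i\to k$ and deleting the arrow out of $k$; iterating, $\min_jW_\ell(\hat\uu_j)$ is attained at a stable equilibrium, and since reaching an unstable $\hat\uu_i$ from any stable $\hat\uu_k$ costs $V(\hat\uu_k,\hat\uu_i)>0$, one gets $W_\ell(\hat\uu_i)>\min_jW_\ell(\hat\uu_j)$.
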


Let us mention that in the case when there is only one stable equilibrium $\hat\uu$ among $\{\hat \uu_1, \ldots, \hat\uu_\ell\}$ (which is the case, for example, when $\ell\leq 2$) the description of the rate function $\vvv:\h\to[0, \iin]$ that governs the LDP is quite explicit in terms of energy function (quasipotential). Namely, given $\uu$ in $\h$, $\vvv(\uu)$ represents the minimal energy needed to reach arbitrarily small neighborhood of point $\uu$ from $\hat\uu$ in a finite time. In the particular case, when the limiting equation of a stochastic PDE possesses a unique equilibrium that is globally asymptotically stable, this type of description was obtained for stochastic reaction-diffusion equation in papers \cite{sowers-1992b} and \cite{CeRo2005}.

\subsection {Scheme of the proof} 

In what follows we admit Proposition \ref{9.18}, whose proof is given in Section \ref{1.77}.

\bigskip
{\it Construction of the rate function.} We first introduce some notation following \cite{FW2012}; see Section 2 of Chapter 6 of that book. Given $\ell\in\nn$ and $i\leq\ell$, 
denote by $G_{\ell}(i)$ the set of graphs consisting of arrows 
$$
(m_1\to m_2 \to\cdots \to m_{\ell-1}\to m_\ell)
$$
such that
$$
\{m_1,\ldots, m_\ell\}=\{1,\ldots, \ell\}\q\text{ and } m_\ell=i.
$$
Further, let us introduce 
\be\label{8.47}
W_\ell(\hat\uu_i)=\min_{\mathfrak{g}\in G_{\ell}(i)}\sum_{(m\to n)\in \mathfrak{g}} V(\hat\uu_m, \hat\uu_n),
\ee
where $V(\uu_1, \uu_2)$ is the minimal energy needed to reach arbitrarily small neighborhood of $\uu_2$ from $\uu_1$ in a finite time (see \eqref{8.49} for the precise definition). The rate function $\vvv:\h\to [0, \iin]$ that governs the LDP of the family $(\mu^\es)$ is given by
\be\label{8.45}
\vvv(\uu)=\min_{i\leq \ell}[W_\ell(\hat \uu_i)+V(\hat\uu_i, \uu)]-\min_{i\leq\ell}W_\ell(\hat\uu_i).
\ee

Let us mention that when calculating these minima, we can restrict ourselves to considering only those $i$ for which $\hat\uu_i$ is Lyapunov stable.

\bigskip
{\it Markov chain on the boundary.}  What follows is a modification of a construction introduced in \cite{FW2012} (see Chapter 6) which itself is a variation of an argument used in \cite{Khas2011}. Let $\hat\uu_1,\ldots, \hat\uu_\ell$ be the stationary points of $S(t)$. Let us fix any $\uu\in\h\backslash\{\hat\uu_1,\ldots,\hat\uu_{\ell}\}$ and write $\hat\uu_{\ell+1}=\uu$. Given any $\rho_*>0$ and $0<\rho'_1<\rho_0'<\rho_1<\rho_0<\rho_*$, we use the following construction. For $i\leq \ell$, we denote by $g_i$ and $\tilde g_i$ the open $\rho_1$- and $\rho_0$-neighborhoods of $\hat\uu_i$, respectively. Similarly, we denote by $g_{\ell+1}$ and $\tilde g_{\ell+1}$, respectively, the $\rho_1'$- and $\rho_0'$-neighborhoods of $\hat\uu_{\ell+1}$. Further, we denote by $g$ and $\tilde g$ the union over $i\leq \ell+1$ of $g_i$ and $\tilde g_i$, respectively. For any $\es>0$ and $\vv\in \h$ let $S^\es(t;\vv)$ be the flow at time $t$ of \ef{0.1.4} issued from $\vv$. Let $\sigma_0^\es$ be the time of the first exit of the process $S^\es(t;\cdot)$ from $\tilde g$, and let $\tau_1^\es$ be the first instant after $\sigma_0^\es$ when $S^\es(t;\cdot)$ hits the boundary of $g$. Similarly, for $n\geq 1$ we denote by $\sigma_n^\es$ the first instant after $\tau_n^\es$ of  exit from $\tilde g$ and by $\tau_{n+1}^\es$ the first instant after $\sigma_n$ when $S^\es(t;\cdot)$ hits $\p g$. Let us mention that all these Markov times are almost surely finite and, moreover, have finite exponential moments (see \ef{9.22}). We consider the Markov chain on the boundary $\p g$ defined by $Z_n^\es (\cdot)= S^\es(\tau^\es_n,\cdot)$. We shall denote by $\tilde P^\es(\vv,\Gamma)$ the one-step transition probability of the chain $(Z_n^\es)$, that is 
$$
\tilde P^\es(\vv,\Gamma)=\pp(S^\es(\tau_1^\es;\vv)\in \Gamma)\q\text{ for any } \vv\in\p g\,\text{ and } \Gamma\subset\p g.
$$
The first step is a result for generalized stationary measure $\lm^\es$ of $\tilde P_\es(\vv, \Gamma)$. We confine ourselves to announcing the result and refer the reader to Section \ref{9.82} for the definition of this concept.
\begin{proposition}\label{9.5'}
For any $\beta>0$ and $\rho_*>0$ there exist $0<\rho'_1<\rho_0'<\rho_1<\rho_0<\rho_*$ such that for all $\es<<1$ (i.e., sufficiently small), we have
\be\label{8.27'}
\exp(-(\vvv(\hat\uu_j)+\beta)/\es)\leq \lm^\es({g_j})\leq \exp(-(\vvv(\hat\uu_j)-\beta)/\es),
\ee
where inequalities hold for any $i, j\leq \ell+1, i\neq j$.
\end{proposition}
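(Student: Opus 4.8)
\emph{Overview.} Proposition~\ref{9.5'} is the infinite-dimensional counterpart of the Freidlin--Wentzell estimate for the invariant measure of the chain on the boundary (see Chapter~6 of \cite{FW2012}, cf.\ \cite{Khas2011}). The plan is to deduce it from two inputs: the one-step transition estimates for $(Z_n^\es)$ established in Section~\ref{9.45}, and the classical graph-sum representation of the stationary distribution of a finite Markov chain. The crucial observation, stressed in the Introduction, is that this representation uses only finite additivity of $\lm^\es$, so it applies to the generalized stationary measure whose existence comes from the Leray--Schauder fixed point theorem.

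\textbf{Step~1 (one-step bounds).} First I would record the one-step estimates from Section~\ref{9.45}: from the large deviations for the laws of trajectories of \eqref{0.1.4}, combined with a uniformity argument over the spheres $\p g_i$, one gets that, given $\beta>0$ and $\rho_*>0$, there are radii $0<\rho_1'<\rho_0'<\rho_1<\rho_0<\rho_*$ such that, for all $\es\ll1$, $\vv\in\p g_i$ and $i\ne j$,
$$
\exp\Big(-\frac{V(\hat\uu_i,\hat\uu_j)+\beta'}{\es}\Big)\le\tilde P^\es(\vv,g_j)\le\exp\Big(-\frac{V(\hat\uu_i,\hat\uu_j)-\beta'}{\es}\Big),
$$
where we set $V(\hat\uu_i,\hat\uu_i)=0$, the chain is genuinely stochastic, $\sum_j\tilde P^\es(\vv,g_j)=1$, and $\beta'$ is linked to $\beta$ through a combinatorial factor depending only on $\ell$ that will be fixed in Step~3. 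For $j=\ell+1$ one also uses that the deterministic flow $S(t)\uu$ enters an arbitrarily small neighbourhood of the attractor, so that $V(\uu,\hat\uu_k)=V(\hat\uu_{j_0},\hat\uu_k)$ for the equilibrium $\hat\uu_{j_0}$ to which it converges.

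\textbf{Steps~2--3 (reduction and combinatorics).} Set $a_i^\es:=\lm^\es(g_i)$. Since $\p g=\bigsqcup_{i=1}^{\ell+1}\p g_i$ is a \emph{finite} partition, linearity and positivity of $\lm^\es$ give $a_i^\es\ge0$ and $\sum_i a_i^\es=1$. Stationarity of $\lm^\es$ applied to $\ch_{g_j}$ reads $a_j^\es=\lm^\es\big(\vv\mapsto\tilde P^\es(\vv,g_j)\big)$, so averaging $\vv\mapsto\tilde P^\es(\vv,g_j)$ against $\lm^\es$ over each cell $\p g_i$ produces a genuine $(\ell+1)\times(\ell+1)$ stochastic matrix $\hat P^\es$ with stationary probability vector $a^\es$, whose entries $\hat P^\es_{ij}$ inherit the two-sided bounds of Step~1. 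Then the graph-sum representation of Chapter~6 of \cite{FW2012} expresses $a_j^\es$, up to normalization, as a sum over the graphs entering the analogue of \eqref{8.47} on $\{1,\ldots,\ell+1\}$ of products of the $\hat P^\es_{mn}$; taking logarithms, multiplying by $\es$ and letting $\es\to0$, the bounds of Step~1 turn this into the difference between $\min_{\mathfrak g}\sum V$ and its minimum over the root. A purely combinatorial identity (again Chapter~6 of \cite{FW2012}), using $V(\uu,\hat\uu_k)=V(\hat\uu_{j_0},\hat\uu_k)$ and $\min_{i\le\ell}[W_\ell(\hat\uu_i)+V(\hat\uu_i,\hat\uu_j)]=W_\ell(\hat\uu_j)$, eliminates the auxiliary vertex $\ell+1$ and collapses this expression to exactly $\vvv(\hat\uu_j)$ of \eqref{8.45}. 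Fixing $\beta'$ in Step~1 so that the accumulated combinatorial error (a graph has $\ell$ edges and there are at most $\ell+1$ of them) stays below $\beta$ then yields \eqref{8.27'}.

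\textbf{Main obstacle.} The genuinely delicate point is Step~1: the one-step estimates must hold uniformly over the spheres $\p g_i$, which are non-compact and infinite-dimensional, with constants that depend only on $\beta$ and that can be made small by shrinking the radii. This requires the large deviations principle for the laws of trajectories of the stochastic NLW equation, uniform over bounded sets of initial data, together with continuity properties of the quasipotential $V$, both complicated by the absence of a regularizing --- hence compactifying --- effect for the wave semigroup. By contrast, the finite-state combinatorics of Steps~2--3 is the classical one.
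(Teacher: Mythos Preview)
Your overall strategy---prove uniform one-step transition bounds for the boundary chain $(Z_n^\es)$, then feed them into the Freidlin--Wentzell graph-sum representation for the stationary vector of the induced finite chain---is exactly what the paper does. The paper states and proves the one-step estimate (Proposition~\ref{9.5}) and then writes: ``For the proof of this implication, see Chapter~6 of \cite{FW2012}; the only difference here is that $\lm^\es$ is not necessarily $\sigma$-additive, which does not affect the proof.'' So your Steps~2--3 are correct in spirit, and your remark that only finite additivity of $\lm^\es$ enters the combinatorics is precisely the key observation.

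There is, however, a genuine imprecision in your Step~1. The one-step bounds actually proved in the paper are
\[
\exp\bigl(-(\tilde V(\hat\uu_i,\hat\uu_j)+\beta)/\es\bigr)\;\le\; \tilde P^\es(\vv,\p g_j)\;\le\; \exp\bigl(-(\tilde V(\hat\uu_i,\hat\uu_j)-\beta)/\es\bigr),
\]
with $\tilde V(\hat\uu_i,\hat\uu_j)$ the cost of reaching a neighbourhood of $\hat\uu_j$ from one of $\hat\uu_i$ \emph{without entering a neighbourhood of any other $\hat\uu_k$}. You wrote $V$ in place of $\tilde V$. The upper bound survives this substitution (since $V\le\tilde V$ gives a weaker inequality), but your lower bound can fail: if the optimal path from $\hat\uu_i$ to $\hat\uu_j$ passes through some $\hat\uu_k$, following it forces the process to hit $\p g_k$ first, so this path does not contribute to the one-step event $\{Z_1^\es\in\p g_j\}$. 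The correct rate is $\tilde V$, and it is a separate combinatorial lemma in Chapter~6 of \cite{FW2012} that the minima over $j$-graphs of sums of $\tilde V$ reduce to the expression \eqref{8.45} built from $V$. Your Step~3 implicitly invokes this reduction, but you must keep the distinction between $V$ and $\tilde V$ explicit throughout; in particular, the identity you quote, $\min_{i\le\ell}[W_\ell(\hat\uu_i)+V(\hat\uu_i,\hat\uu_j)]=W_\ell(\hat\uu_j)$, is not the right device for eliminating the auxiliary vertex and does not hold as stated for $j=\ell+1$.
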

This allows to show that for $\es<<1$, there is a finitely additive measure on $\h$ satisfying \ef{9.65''} and such that
\be\label{9.81'}
\exp(-(\vvv(\hat\uu_j)+\beta)/\es)\leq \hat\mu^\es(g_j)\leq \exp(-(\vvv(\hat\uu_j)-\beta)/\es).
\ee
 As a direct corollary of these relations, we get the following result. 
\begin{proposition}\label{9.4} For any $\beta>0$ and $\rho_*>0$ there exist $0<\rho'_1<\rho'_0<\rho_1<\rho_0<\rho_*$ such that for any $j\leq \ell+1$ and $\es<<1$, we have
\begin{align}
\mu^\es(g_j)&\leq \exp(-(\vvv(\hat\uu_j)-\beta)/\es)\label{9.79},\\
\mu^\es(\bar g_j)&\ge \exp(-(\vvv(\hat\uu_j)+\beta)/\es)\label{9.80}.
\end{align}
\end{proposition}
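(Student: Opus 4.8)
The plan is to read off Proposition~\ref{9.4} as a routine consequence of two facts recorded in the scheme of proof: the two-sided bound \eqref{9.81'} for the finitely additive measure $\hat\mu^\es$, and the sandwich inequality \eqref{9.65''} comparing $\mu^\es$ with $\hat\mu^\es$ on interiors and on closures. The only point to watch is that each $g_j$ is an open ball (a $\rho_1$- or $\rho_1'$-neighbourhood of $\hat\uu_j$), so that its interior is $g_j$ itself; the radii $0<\rho_1'<\rho_0'<\rho_1<\rho_0<\rho_*$ in the statement will simply be those furnished by the construction behind \eqref{9.81'}, which in turn come from Proposition~\ref{9.5'}.

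For the upper bound \eqref{9.79} I would apply \eqref{9.65''} with $\Gamma=g_j$: since $g_j$ is open its interior equals $g_j$, so the first inequality in \eqref{9.65''} reads $\mu^\es(g_j)\le\hat\mu^\es(g_j)$, and the right-hand inequality in \eqref{9.81'} then gives $\hat\mu^\es(g_j)\le\exp\bigl(-(\vvv(\hat\uu_j)-\beta)/\es\bigr)$. For the lower bound \eqref{9.80} I would again take $\Gamma=g_j$ in \eqref{9.65''}, now using its last inequality to get $\mu^\es(\bar g_j)\ge\hat\mu^\es(\bar g_j)$; since $\hat\mu^\es$ is a positive set function and $g_j\subset\bar g_j$, monotonicity yields $\hat\mu^\es(\bar g_j)\ge\hat\mu^\es(g_j)$, and the left-hand inequality in \eqref{9.81'} gives $\hat\mu^\es(g_j)\ge\exp\bigl(-(\vvv(\hat\uu_j)+\beta)/\es\bigr)$. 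Chaining these inequalities yields \eqref{9.79} and \eqref{9.80}.

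Thus the deduction itself is immediate; the substance — and the main obstacle — lies one level below, in producing \eqref{9.65''} and \eqref{9.81'}. For \eqref{9.65''} the plan would be to build $\hat\mu^\es$ from the generalized stationary measure $\lm^\es$ of Proposition~\ref{9.5'} via the Khasminskii-type functional $\hat\mu^\es(\psi)=\lm^\es(\elll^\es\psi)/\lm^\es(\elll^\es 1)$, with $\elll^\es\psi(\vv)=\e\int_0^{\tau_1^\es}\psi\bigl(S^\es(t;\vv)\bigr)\,dt$, to verify the stationarity identity $\hat\mu^\es(P_t\psi)=\hat\mu^\es(\psi)$, and then to exploit the exponential mixing of $\mu^\es$ from \cite{DM2014} — letting $P_t\psi\to(\psi,\mu^\es)$ in a topology of uniform convergence on bounded subsets of $\h$ — to squeeze $\hat\mu^\es$ between $\mu^\es$ on interiors and on closures. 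The estimate \eqref{9.81'} is then obtained by transporting Proposition~\ref{9.5'} through the same functional, and the hard part will be the two auxiliary estimates this requires: bounding $\elll^\es\ch_{g_j}$ above and below on $\p g$ — the expected sojourn time in $g_j$ before the next return to $\p g$ should be comparable, up to factors $e^{\pm\beta'/\es}$, to the probability of visiting $g_j$ first — and bounding the normalization $\lm^\es(\elll^\es 1)$ between $e^{\pm\beta'/\es}$, the auxiliary exponent $\beta'$ being absorbed by applying Proposition~\ref{9.5'} with $\beta$ replaced by $\beta/2$. I expect these sojourn-time and normalization estimates, rather than the final chaining, to be where the real effort goes.
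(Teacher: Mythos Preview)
Your reduction is correct and matches the paper exactly: once \eqref{9.65''} and \eqref{9.81'} are in hand, Proposition~\ref{9.4} is a two-line chaining using openness of $g_j$ and monotonicity of $\hat\mu^\es$. The paper also says this explicitly (``In view of Lemma~\ref{9.67}, it is sufficient to prove \eqref{9.78}'').

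Where your plan diverges from the paper --- and where there is a real gap --- is in the derivation of \eqref{9.81'} itself. Two points:

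\textbf{(i) Upper bound on the normalisation.} You propose to control $\tilde\mu^\es(1)=\lm^\es(\elll^\es\mathbf1)$ from above by a sojourn-time argument alone. The paper does \emph{not} do this: it writes $\tilde\mu^\es(1)=\tilde\mu^\es(\ch_{g'})/\hat\mu^\es(g')$, bounds the numerator via \eqref{8.12}, and bounds the denominator from below using $\hat\mu^\es(g')\ge\mu^\es(g')\ge e^{-\beta/\es}$. That last inequality is precisely Lemma~\ref{9.47} (stochastic stability of the equilibrium set $\E$), which in turn rests on the a~priori upper bound of Proposition~\ref{9.18}. The paper flags this as ``another key ingredient'' of the passage from Proposition~\ref{9.5'} to Proposition~\ref{9.4}; your outline omits it entirely. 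A pure return-time bound on $\e_\vv\tau_1^\es$ uniform in $\es$ is not available from Lemma~\ref{8.21} (inequality~\eqref{8.5} controls $\sigma_0^\es$, not $\tau_1^\es$), so this is not a cosmetic omission.

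\textbf{(ii) Lower bound for $j=\ell+1$.} The lower sojourn estimate \eqref{8.6} in Lemma~\ref{8.21} is stated only for $j\le\ell$ (equilibria), because its proof uses the feedback-stabilisation trick to keep the trajectory near $\hat\uu_j$ for a unit time at small cost. For the non-equilibrium point $\hat\uu_{\ell+1}$ there is no such mechanism, and the paper proceeds differently: it first proves \eqref{8.14} for $j\le\ell$, then picks the minimiser $m$ in \eqref{8.17}, uses the stationarity of $\hat\mu^\es$ under $P_T$ and a controllability estimate to push mass from a small ball around $\hat\uu_m$ into $g_{\ell+1}$, and closes via the identity $\vvv(\hat\uu_m)+V(\hat\uu_m,\hat\uu_{\ell+1})=\vvv(\hat\uu_{\ell+1})$. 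Your plan treats all $j\le\ell+1$ uniformly and misses this second step.
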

The passage from Proposition \ref{9.5'} to \ref{9.4} is the most involved part of the paper and construction of $\hat\mu^\es$ is the main idea behind its proof. Without going into details, we describe in few words another key ingredient of the proof, namely Lemma \ref{9.47}.
\begin{definition}
We shall say that a set $E\subset\h$ is \mpp{stochastically stable} or \mpp{stable with respect to}  $(\mu^\es)$ if we have 

$$
\lim_{\es\to 0}\es\ln\mu^{\es}(E_\eta)=0 \q\q \text{ for any }\eta>0,
$$
where $E_\eta$ stands for the open $\eta$-neighborhood of $E$ in $\h$. If the above relation holds only along some sequence $\es_j\to 0$ (that is $\es$ replaced by $\es_j$), we shall say that $E$ is \mpp{stable with respect to} $(\mu^{\es_j})$.
\end{definition}
Let us show how to derive Lemma \ref{9.47} using Proposition \ref{9.18}. We then show how the same proposition combined with \eqref{9.79}-\ef{9.80} implies the LDP.
 We admit the following result established in the appendix.
\begin{lemma}\label{1.201}
Let $\ooo$ be a heteroclinic orbit of $S(t)$ and let $\uu_1\in\ooo$. Suppose that $\uu_1$ is stable with respect to $(\mu^{\es_j})$ for some sequence $\es_j\to 0$. Then so is any point $\uu_2$ lying on $\ooo$ after $\uu_1$ (in the direction of the orbit).
\end{lemma}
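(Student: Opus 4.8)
The plan is to reduce the statement to a one-sided (lower) estimate and then to transport it one step along the orbit by a single application of the stationarity identity $\mu^{\es}=\PPPP^{*}_{t_0}\mu^{\es}$ for an appropriate finite time $t_0$.

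\emph{Step 1 (reduction).} Since each $\mu^{\es}$ is a probability measure, $\es\ln\mu^{\es}(B_\eta(\uu_2))\le 0$, where $B_\eta(\uu)$ denotes the open $\eta$-ball around $\uu$ in $\h$; hence it is enough to prove
$$\liminf_{j\to\infty}\es_j\ln\mu^{\es_j}(B_\eta(\uu_2))\ge 0\qquad\text{for every }\eta>0.$$
We may assume $\uu_2\ne\uu_1$. Because $\uu_2$ lies on $\ooo$ after $\uu_1$ in the direction of the flow, there is $t_0>0$ with $S(t_0)\uu_1=\uu_2$, and by continuity of the deterministic semigroup $S(t_0):\h\to\h$ (well-posedness of \eqref{c1.1}, Proposition \ref{c1.7}) we may fix, for the given $\eta$, a radius $\delta>0$ such that $|S(t_0)\vv-\uu_2|_\h<\eta/2$ for all $\vv\in B_\delta(\uu_1)$.

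\emph{Step 2 (transport, the main point).} First I would write, using stationarity of $\mu^{\es}$,
$$\mu^{\es}(B_\eta(\uu_2))=\int_\h\pp\{S^{\es}(t_0;\vv)\in B_\eta(\uu_2)\}\,\mu^{\es}(d\vv)\ \ge\ \int_{B_\delta(\uu_1)}\pp\{S^{\es}(t_0;\vv)\in B_\eta(\uu_2)\}\,\mu^{\es}(d\vv).$$
For $\vv\in B_\delta(\uu_1)$ the triangle inequality yields the inclusion $\{\sup_{0\le t\le t_0}|S^{\es}(t;\vv)-S(t)\vv|_\h<\eta/2\}\subset\{S^{\es}(t_0;\vv)\in B_\eta(\uu_2)\}$, so that $\mu^{\es}(B_\eta(\uu_2))\ge(1-q^{\es})\,\mu^{\es}(B_\delta(\uu_1))$, where
$$q^{\es}:=\sup_{\vv\in B_\delta(\uu_1)}\pp\Bigl\{\sup_{0\le t\le t_0}|S^{\es}(t;\vv)-S(t)\vv|_\h\ge\eta/2\Bigr\}.$$
The remaining input is a small-noise continuity estimate: $q^{\es}\to 0$ as $\es\to 0$. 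To obtain it I would write the equation satisfied by the difference $S^{\es}(t;\vv)-S(t)\vv$, which is forced only by $\sqrt{\es}\,\vartheta$ plus a nonlinear remainder that is Lipschitz in terms of the energy, and run a Gronwall argument on the \emph{finite} interval $[0,t_0]$ using the a priori energy bounds (Proposition \ref{c1.7} and Theorem \ref{2.12.3}), which are uniform for $\vv$ in the bounded set $B_\delta(\uu_1)$; this gives $\e\sup_{0\le t\le t_0}|S^{\es}(t;\vv)-S(t)\vv|_\h^2\le C(t_0,\delta,|\uu_1|_\h)\,\es$, and Chebyshev's inequality finishes it.

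\emph{Step 3 (conclusion).} For all $j$ large, $q^{\es_j}\le \f{1}{2}$, hence $\mu^{\es_j}(B_\eta(\uu_2))\ge\f{1}{2}\,\mu^{\es_j}(B_\delta(\uu_1))$, and therefore
$$\liminf_{j\to\infty}\es_j\ln\mu^{\es_j}(B_\eta(\uu_2))\ \ge\ \liminf_{j\to\infty}\es_j\ln\mu^{\es_j}(B_\delta(\uu_1))\ =\ 0,$$
the last equality being exactly the hypothesis that $\uu_1$ is stable with respect to $(\mu^{\es_j})$ (applied with $\delta$ in place of $\eta$). Since $\eta>0$ was arbitrary, $\uu_2$ is stable with respect to $(\mu^{\es_j})$. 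The main obstacle is Step 2: the rest of the argument is soft, using only stationarity, the continuity of $S(t_0)$ and the assumption on $\uu_1$, whereas the estimate $q^{\es}\to 0$ requires some care, the delicate point being to control the nonlinearity $f$ over a finite time interval via energy bounds rather than attempting a uniform bound over all of $\h$.
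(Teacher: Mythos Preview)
Your argument is correct when $\uu_1$ is an interior point of the orbit, but it breaks down when $\uu_1$ is the \emph{starting} equilibrium of $\ooo$ --- recall that the paper explicitly counts the endpoints as elements of the orbit. In that case $S(t)\uu_1=\uu_1$ for every $t\ge 0$, so there is no $t_0>0$ with $S(t_0)\uu_1=\uu_2$, nor even with $S(t_0)\uu_1$ close to $\uu_2$; your transport step collapses entirely. (A milder issue arises when only $\uu_2$ is an endpoint: then $S(t_0)\uu_1$ approaches $\uu_2$ without ever equalling it, but this is easily repaired by choosing $t_0$ large and adjusting the radii.)

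The paper addresses precisely the hardest case, $\uu_1$ and $\uu_2$ both endpoints, and the mechanism is different from yours: instead of the free flow, one constructs a control $\ph$ of \emph{arbitrarily small} energy $J_T(\ph)\le a$ that drives $\uu_1$ into $B(\uu_2,\eta/4)$ in a finite time $T$. Concretely, one picks points $\tilde\uu_1,\tilde\uu_2$ on the orbit close to $\uu_1,\uu_2$, lets $\uu'(\cdot)$ be the free trajectory along $\ooo$ from $\tilde\uu_1$ to $\tilde\uu_2$, and runs the feedback-stabilised flow $\vv$ solving $\p_t^2 v+\gamma\p_t v-\de v+f(v)=h+P_N[f(v)-f(u')]$ with $\vv(0)=\uu_1$. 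By the Foia\c{s}--Prodi estimate (Proposition~\ref{1.88}), $\vv$ tracks $\uu'$ exponentially, and the corresponding action $\ph=P_N[f(v)-f(u')]$ has energy of order $|\uu_1-\tilde\uu_1|_\h^2$, hence $\le a$. The trajectory-level LDP (Theorem~\ref{6.1}) then yields $\mu^{\es_j}(B(\uu_2,\eta))\ge \exp(-2a/\es_j)\,\mu^{\es_j}(B(\uu_1,\kp))$, and one concludes as you do. The key point is that the zero control cannot move the system off a fixed point, and the feedback construction produces a nearly-free control that does. Your simpler approach suffices for the case actually used in the proof of Lemma~\ref{9.47} (where $\uu_1=\uu_*$ is an interior point), but not for the lemma as stated.
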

Let us mention that we consider the endpoints of an orbit as its elements, and when saying $\uu$ is stable with respect to $(\mu^{\es_j})$ we mean that so is the set $\{\uu\}$. Now let us assume that Lemma \ref{9.47} is not true. Then we can find two positive constants $a$ and $\eta$, and a sequence $\es_j$ going to zero such that
\be\label{8.20}
\mu^{\es_j}(\E_\eta)=\sum_{j=1}^\ell \mu^{\es_j}(B(\hat\uu_j,\eta)) \leq\exp(-a/\es_j)\q\text{ for all }j\geq 1,
\ee
where $B(\uu, r)$ stands for the open ball in $\h$ of radius $r$ and centered at $\uu$. 
By Proposition \ref{9.18}, the sequence $(\mu^{\es_j})$ is tight and any weak limit of it is concentrated on $\aaa$. So, up to extracting a subsequence, we can assume that $\mu^{\es_j}\rightharpoonup \mu_*$, and $\mu_*$ is concentrated on $\aaa$. By Theorem \ref{Th-attractor}, the global attractor $\aaa$ consists of points $(\hat\uu_i)_{i=1}^n$ and joining them heteroclinic orbits. Let $\uu_*$ be a point lying on such an orbit that belongs to the support of $\mu_*$. By the portmanteau theorem, we have
$$
\liminf_{j\to \iin}\mu^{\es_j}(B(\uu_*,r))\ge\mu_*(B(\uu_*,r))>0\q\q\text{ for any }r>0.
$$
Therefore, the point $\uu_*$ is stable with respect to $(\mu^{\es_j})$. On the other hand, it follows from the previous lemma that so are all points of the attractor that lie on that orbit  after $\uu_*$. In particular so is the endpoint of $\ooo$, which is in contradiction with \eqref{8.20}. The proof of Lemma \ref{9.47} is complete.

\bigskip
{\it Derivation of the LDP.} 
We claim that the hypotheses of Lemma \ref{9.3} are satisfied for the family $(\mu^\es)_{\es>0}$ and rate function $\vvv$. Indeed, let $\beta$ and $\rho_*$ be two positive constants and let $\uu$ be any point in $\h$. If $\uu$ is not a stationary point, we denote $\hat\uu_{\ell+1}=\uu$ and use Proposition \ref{9.4} to find $\rho_1'<\rho_1<\rho_*$ such that we have \eqref{9.79}-\ef{9.80} and we set $\tilde\rho(\uu)=\rho_1'$. Otherwise ($\uu$ is stationary), we take any non stationary point $\uu'$ and denote $\hat\uu_{\ell+1}=\uu'$. We once again use Proposition \ref{9.4} to find $\rho_1'<\rho_1<\rho_*$ such that we have \eqref{9.79}-\ef{9.80} and we set $\tilde\rho(\uu)=\rho_1$. Let us note in this case ($\uu$ is stationary) the choice of $\uu'$ is not important due to the fact that we are interested in the asymptotic behavior of $(\mu^\es)$  only in the neighborhood of $\uu$, and we add a new point $\hat\uu_{\ell+1}=\uu'$ only to be consistent with Proposition \ref{9.4}. Thus, the hypotheses of Lemma \ref{9.3} are satisfied and the family $(\mu^\es)$ satisfies the LDP in $\h$ with rate function $\vvv$.

\bigskip
\section{Proof of Theorem \ref{1.58}}\label{9.45}
The present section is devoted to the proof of the main result of this paper. We admit Proposition \ref{9.18}, which is proved in the next section, and following the scheme presented above establish Theorem \ref{1.58}. We shall always assume that the hypotheses of this theorem are satisfied. 
 
\subsection{Construction of the rate function}\label{1.64}
Here we define the function $V:\h\times\h\to [0, \iin]$ entering relation \ef{8.45} and function $V_\aaa:\h\to [0, \iin]$ from Proposition \ref{9.18}. We first introduce some notation. 
For any $t\geq 0$, $\vv\in\h$ and $\ph\in L^2(0,T;L^2(D))$, let us denote by $S^\ph(t;\vv)$ the flow at time $t$ of equation 
\be\label{7.10}
\p_t^2 u+\gamma \p_t u-\de u+f(u)=h(x)+\ph(t,x)
\ee
issued from $\vv$.
Let $H_{\vartheta}$ be the Hilbert space defined by \be\label{1.34}
H_{\vartheta}=\{v\in L^2(D): |v|_{H_\vartheta}^2=\sum_{j=1}^\iin b_j^{-2}\,(v,e_j)^2<\iin\}.
\ee
For a trajectory $\uu_\cdot\in C(0,T;\h)$ we introduce
\be\label{8.50}
I_T(\uu_\cdot)=J_T(\ph):=\f{1}{2}\int_0^T |\ph(s)|_{H_\vartheta}^2\dd s,
\ee
if there is $\ph\in L^2(0,T;H_\vartheta)$ such that $\uu_\cdot=S^\ph(\cdot;\uu(0))$, and $I_T(\uu_\cdot)=\iin$ otherwise.
We now define $V:\h\times\h\to [0, \iin]$ by
\be\label{8.49}
V(\uu_1,\uu_2)=\lim_{\eta\to 0}\inf\{I_T(\uu_\cdot); T>0, \uu_\cdot\in C(0,T;\h): \uu(0)=\uu_1, \uu(T)\in B(\uu_2,\eta) \}.
\ee

\medskip
Let us note that this limit (finite or infinite) exists, since the expression written after the limit sign is monotone in $\eta>0$. As we mentioned in previous section, $V(\uu_1, \uu_2)$ represents the minimal energy needed to reach arbitrarily small neighborhood of $\uu_2$ from $\uu_1$ in a finite time.

\bigskip{\it Remark.} The definition of the quasipotential $V$ using this filtration in $\eta$ rather than  taking directly $\eta=0$ is explained by the lack of the exact controllability of the NLW equation by a regular force, and \ef{8.49} ensures the lower semicontinuity of function $\vvv$ given by \ef{8.45}.

\medskip
The function $V_\aaa:\h\to [0, \iin]$ entering Proposition \ref{9.18} is defined by
\be\label{9.6}
V_\aaa(\uu_*)=\inf_{\uu_1\in \aaa} V(\uu_1, \uu_*).
\ee

\medskip
 Notice that the compactness of level sets of $V_\aaa$ implies that $\vvv$ has relatively compact level sets. Combining this with lower semicontinuity of $\vvv$ (the proof of this fact is identical to that of $V_\aaa$, see Section \ref{1.77}), we see that $\vvv$ is a rate function in $\h$.
In what follows, the space $\h$ will be endowed with the norm
\be\label{9.11}
|\uu|_\h^2=\|\g u_1\|^2+\|u_2+\al u_1\|^2 \q\text{ for }\uu=[u_1, u_2]\in\h, \,\footnote{\, We denote by $\|\cdot\|_s$ the $H^s$-norm of a vector, and $\|\cdot\|$ stands for the $L^2$-norm.}
\ee
where $\al>0$ is a small parameter.

\subsection{Markov chain on the boundary}
In this section we establish a result that implies Proposition \ref{9.5'}. For the proof of this implication, see Chapter 6 of \cite{FW2012}; the only difference here is that $\lm^\es$ is not necessarily $\sigma$-additive, which does not affect the proof. 

\medskip
Recall that we denote by $V(\hat\uu_i, \hat\uu_j)$ the minimal energy needed to reach any neighborhood of $\hat\uu_j$ from $\hat\uu_i$ in a finite time. In what follows, we shall denote by $\tilde V(\hat\uu_i, \hat\uu_j)$ the energy needed to reach any neighborhood $\hat\uu_j$ from $\hat\uu_i$ in a finite time without intersecting any $\hat\uu_k$, for $k\le\ell+1$ different from $i$ and $j$.
\begin{proposition}\label{9.5}
For any positive constants $\beta$ and $\rho_*$ there exist $0<\rho'_1<\rho_0'<\rho_1<\rho_0<\rho_*$ such that for all $\es<<1$, we have
\be\label{8.27}
\exp(-(\tilde V(\hat\uu_i,\hat\uu_j)+\beta)/\es)\leq \tilde P^\es(\vv,\p g_j)\leq \exp(-(\tilde V(\hat\uu_i,\hat\uu_j)-\beta)/\es),
\ee
where inequalities hold for any $\vv\in \p g_i$ and any $i, j\leq \ell+1, i\neq j$.
\end{proposition}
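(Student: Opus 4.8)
The plan is to derive the two-sided bound \eqref{8.27} by combining the classical Freidlin–Wentzell estimates for the probability of transitions between small neighborhoods of the equilibria with the uniform large deviations principle for the flow $S^\es(t;\cdot)$ of \eqref{0.1.4} on bounded time intervals, which (together with the a~priori energy estimates and the exponential-moment bounds recalled in Chapter~\ref{Chapter3}) holds in the phase space $\h$ with rate function $I_T$ defined in \eqref{8.50}. More precisely, I would first record the ingredients that replace compactness of the manifold in the original argument: the Lyapunov/exponential supermartingale estimates (Proposition~\ref{supermartingale} and Corollary~\ref{supermartingale lemma}) imply that, starting from the bounded set $\p g$, the process returns to a fixed large ball with a uniform exponential moment, so that the Markov times $\sigma_n^\es,\tau_n^\es$ are a.s.\ finite with finite exponential moments uniformly in $\es\le 1$ (this is exactly what is asserted in \eqref{9.22}); and the deterministic attractor structure (Theorem~\ref{Th-attractor}), which guarantees that a deterministic trajectory started on $\p g_i$ and not hitting any $\hat\uu_k$, $k\neq i,j$, either stays near $\hat\uu_i$ forever or reaches $\p g_j$ in finite time. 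I would then fix the orders of the radii $\rho'_1<\rho'_0<\rho_1<\rho_0<\rho_*$ successively, smallest first, so that (a)~the definition of $\tilde V(\hat\uu_i,\hat\uu_j)$ via \eqref{8.49} is stable under replacing the target neighborhood by $g_j$, and (b)~the cost of any path that exits $\tilde g_i$, wanders, and comes back is strictly larger than $\tilde V(\hat\uu_i,\hat\uu_j)-\beta$.

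For the \textbf{lower bound} in \eqref{8.27}, given $\vv\in\p g_i$ I would choose, using the definition \eqref{8.49} of $V$ and its reduction to $\tilde V$, a control $\ph\in L^2(0,T;H_\vartheta)$ with $J_T(\ph)\le \tilde V(\hat\uu_i,\hat\uu_j)+\beta/2$ steering a point near $\hat\uu_i$ into $g_j$ while avoiding the other equilibria; after first flowing from $\vv$ into a small neighborhood of $\hat\uu_i$ at essentially zero cost (possible since $\hat\uu_i$ is Lyapunov stable when it matters, or by a short controlled segment), I concatenate, obtain a reference trajectory hitting $\p g_j$ before $\sigma_1^\es$, and apply the lower bound in the uniform LDP for $S^\es$ on $[0,T]$ to get $\pp(S^\es(\tau_1^\es;\vv)\in\p g_j)\ge\exp(-(\tilde V(\hat\uu_i,\hat\uu_j)+\beta)/\es)$ for $\es$ small, uniformly in $\vv\in\p g_i$ by the uniformity of the LDP on bounded sets. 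For the \textbf{upper bound}, I would run the standard cycle argument: decompose the event $\{S^\es(\tau_1^\es;\vv)\in\p g_j\}$ according to the number of excursions between $\p\tilde g$ and $\p g$ before absorption at $\p g_j$; each excursion that does not immediately reach $\p g_j$ costs at least some fixed amount $e_0>0$ of energy by the lower semicontinuity of $I_T$ and compactness of its level sets (every path from $\p\tilde g_i$ back to $\p g$ avoiding $\hat\uu_k$, $k\neq i,j$, has positive cost), and the single successful excursion costs at least $\tilde V(\hat\uu_i,\hat\uu_j)-\beta/2$; summing the geometric series in the number of excursions, using the uniform exponential-moment control on the return times to kill the combinatorial factor, gives the bound $\exp(-(\tilde V(\hat\uu_i,\hat\uu_j)-\beta)/\es)$.

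The \textbf{main obstacle} I anticipate is precisely the absence of a compact phase space and of the Feller property for the chain on the boundary: the uniform large deviations principle for $S^\es$ must be invoked on unbounded sets of initial data (the excursions can, a priori, carry the trajectory far out in $\h$), so the quantitative estimates have to be matched with the a~priori exponential estimates of Chapter~\ref{Chapter3} to confine the relevant part of the dynamics to a large ball, outside of which the contribution is superexponentially small in $1/\es$; controlling this ``tail'' uniformly, and checking that the energy lost there is genuinely negligible compared to $\tilde V(\hat\uu_i,\hat\uu_j)$, is the delicate point. The rest — choosing the radii, the concatenation of controls, and the geometric summation over excursions — is routine once the uniform LDP with the correct rate function and the exponential recurrence bounds are in hand, both of which I take from the results already established in the excerpt.
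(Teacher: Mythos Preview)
Your lower-bound sketch is essentially the paper's, with one gap you wave away: the step ``flow from $\vv\in\p g_i$ into a small neighborhood of $\hat\uu_i$ at essentially zero cost'' cannot rely on Lyapunov stability, since $\hat\uu_i$ may well be unstable. The paper handles this by an explicit feedback control (Lemma~\ref{8.28}): one drives the auxiliary flow with $\ph_\vv=P_N[f(\tilde v)-f(\hat u_i)]$, which by the Foia\c{s}--Prodi mechanism contracts the distance to $\hat\uu_i$ exponentially regardless of stability, keeps the trajectory inside $\bar g_i$, and has energy $J_T(\ph_\vv)=O(\tilde\rho^2)\le\beta$ once $\rho_*$ is small. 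One then concatenates with a near-optimal action realising $\tilde V(\hat\uu_i,\hat\uu_j)$ and applies the uniform LDP of Theorem~\ref{6.1}; for $i=\ell+1$ the preliminary contraction is unnecessary and one simply shrinks $\rho_0'$.

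Your upper-bound plan has a structural mismatch. The quantity $\tilde P^\es(\vv,\p g_j)$ involves a \emph{single} excursion: $\tau_1^\es$ is the first return to $\p g$ after exiting $\tilde g$, and the chain stops there no matter which component of $\p g$ is hit. There is no sequence of excursions ``before absorption at $\p g_j$'' to sum geometrically, and your claim that each failed excursion costs $\ge e_0>0$ is false anyway (when $\hat\uu_i$ is stable, the deterministic flow returns from $\p\tilde g_i$ to $\p g_i$ at cost zero). What the paper does instead is bound the \emph{duration} of the one excursion. After the strong Markov reduction \eqref{8.33}, Lemma~\ref{8.31} shows that for any $M$ there is $T=T(M)$ with $\sup_{\vv\in B_R}\pp(\tau^\es_{g'}(\vv)\ge T)\le e^{-M/\es}$; its proof iterates a one-step bound via the Markov property, using Proposition~\ref{1.96} to confine the process to a large ball $B_{R'}$ up to error $e^{-(M+1)/\es}$ (this is exactly where your non-compactness concern enters and is resolved), and then the attractor property that every deterministic trajectory from $B_{R'+1}$ enters the $\rho_1/2$-neighborhood of $\E$ in a uniformly bounded time, so that any trajectory of that length avoiding $g'$ has action bounded below by some $a>0$. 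With the excursion confined to $[0,T]$, the event $\{S^\es(\tau_g^\es;\vv)\in\p g_j,\ \tau_g^\es<T\}$ forces a trajectory of length $\le T$ reaching $\bar g_j$ without touching the other $\hat\uu_k$, and a single application of the finite-time LDP upper bound yields the cost $\ge\tilde V(\hat\uu_i,\hat\uu_j)-2\beta$. No cycle bookkeeping is needed.
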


\medskip
{\it Comment.} In what follows, when proving this type of inequalities, we shall sometimes derive them with $\beta$ replaced by $C\beta$, where $C\ge 1$ is an absolute constant. Since $\beta>0$ can be taken arbitrarily small, these bounds are equivalent and we shall use this without further stipulation.

\medskip
{\it Derivation of the lower bound.} We assume that $\tilde V(\hat\uu_i,\hat\uu_j)<\iin$, since otherwise there is nothing to prove. We shall first establish the bound for $i\leq \ell$.
We need the following result, whose proof is given at the end of this section.
\begin{lemma}\label{8.28}
There exists $\tilde\rho>0$ such that for any $0<\rho_2<\rho_1<\tilde\rho$ we can find a finite time $T>0$ depending only on $\rho_1$ and $\rho_2$ such that for any point $\vv\in \bar B(\hat\uu_i,\rho_1)$, $i\leq \ell$, there is an action $\ph_{\vv}$ defined on the interval $[0, T]$ with energy not greater than $\beta$ such that we have
\be\label{8.25}
S^{\ph_\vv}(t;\vv)\in \bar B(\hat\uu_i,\rho_1)\q\text{ for } t\in[0,T] \q\text{ and }\q S^{\ph_\vv}(T;\vv)\in \bar B(\hat\uu_i,\rho_2/2).
\ee
\end{lemma}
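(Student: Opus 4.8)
The plan is to exploit the gradient-like structure of the limiting equation~\eqref{1.51} near the Lyapunov stable equilibrium $\hat\uu_i$, using a small additive control only to correct the residual drift and to keep the trajectory inside the prescribed ball; the energy stays below the fixed budget $\beta$ because all radii will be taken below a threshold $\tilde\rho=\tilde\rho(\beta)$.

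First I would record that~\eqref{1.51} is a gradient-like system: testing the equation with $\p_t u$ shows that a suitable energy functional of $S(t)$ is nonincreasing along trajectories and strictly decreasing unless $\p_t u\equiv 0$, i.e. off equilibria. Hence, by Theorem~\ref{Th-attractor}, every trajectory of $S(t)$ converges in $\h$ as $t\to\iin$ to the finite set $\E$, and, $\E$ being discrete, to a single equilibrium. Together with the Lyapunov stability of $\hat\uu_i$ this yields the asymptotic stability of $\hat\uu_i$: there is $\tilde\rho>0$ such that $\bar B(\hat\uu_i,\tilde\rho)$ lies in its basin of attraction, $S(t)\vv\to\hat\uu_i$ for every $\vv\in\bar B(\hat\uu_i,\tilde\rho)$, and — by autonomy of the semiflow together with the compactness coming from the asymptotic smoothing of the damped NLW (the decomposition $\uu=\vv+\Zz$ of Proposition~\ref{c1.5}) — this convergence is uniform on $\bar B(\hat\uu_i,\tilde\rho)$. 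Moreover, from the Lyapunov gauge $\delta(\cdot)$, for which $S(t)\bar B(\hat\uu_i,\delta(\eta))\subseteq B(\hat\uu_i,\eta)$ for all $t\ge 0$, one gets a base of positively invariant neighborhoods $N_\eta$ with $\bar B(\hat\uu_i,\delta(\eta))\subseteq N_\eta\subseteq B(\hat\uu_i,\eta)$.

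The construction of $\ph_\vv$ then proceeds in two stages. Stage~1: steer $\vv\in\bar B(\hat\uu_i,\rho_1)$ into $\bar B(\hat\uu_i,\delta(\rho_1))$, hence into the positively invariant set $N_{\rho_1}$, in a time $T_1=T_1(\rho_1)$ along a controlled trajectory that remains in $\bar B(\hat\uu_i,\rho_1)$; here I would first let the flow relax with $\ph=0$ over a short fixed time so that, by Proposition~\ref{c1.5}, the non-$\h^s$ part of the state becomes tiny, and then apply an approximate-controllability argument for the NLW with a regular distributed force — available since $b_j>0$ for every $j$ — the point being that, as $\rho_1<\tilde\rho(\beta)$, the displacement to be realized is of size $O(\rho_1)$ and the required energy is correspondingly small, $\le\beta$. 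Stage~2: switch the control off; by positive invariance the trajectory stays in $N_{\rho_1}\subseteq B(\hat\uu_i,\rho_1)$, and by the uniform asymptotic stability it enters $\bar B(\hat\uu_i,\rho_2/2)$ after a further time $T_2=T_2(\rho_1,\rho_2)$. Setting $T=T_1+T_2$ and letting $\ph_\vv$ be the Stage~1 control extended by zero finishes the argument.

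I expect the main obstacle to be Stage~1: driving the NLW from an arbitrary, merely $\h$-regular point $\vv$ into a small neighborhood of $\hat\uu_i$, uniformly over $\vv\in\bar B(\hat\uu_i,\rho_1)$, with energy below the fixed bound $\beta$, while respecting the constraint that the whole controlled trajectory stay in $\bar B(\hat\uu_i,\rho_1)$. This is delicate precisely because the NLW is not exactly controllable by a regular force (cf. the remark after~\eqref{8.49}), so one cannot simply prescribe a smooth path from $\vv$; the resolution is to combine the asymptotic smoothing of the damped NLW with a by-now standard approximate-controllability argument applied to the smoothed state, and to use the freedom to take $\tilde\rho(\beta)$ as small as needed so that every intermediate cost is absorbed into $\beta$.
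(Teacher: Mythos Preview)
Your argument has a structural gap: you assume from the outset that $\hat\uu_i$ is Lyapunov stable, and your entire Stage~2 (switch off the control and let the free flow converge) relies on the asymptotic stability this would provide. But the lemma is stated for every $i\le\ell$, i.e.\ for \emph{all} equilibria of~\eqref{1.51}, including the unstable ones (saddles). For an unstable $\hat\uu_i$ the free flow~$S(t)$ issued from a generic $\vv\in\bar B(\hat\uu_i,\rho_1)$ will leave any neighbourhood of $\hat\uu_i$, so neither the positively invariant set $N_{\rho_1}$ nor the convergence in Stage~2 exists. Your Stage~1 is also predicated on first landing in a positively invariant $N_{\rho_1}$, which again does not make sense at an unstable equilibrium.

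The paper's proof bypasses the stability question entirely by using \emph{feedback} rather than the free flow. One lets $\tilde\vv(t;\vv)$ solve
\[
\p_t^2\tilde v+\gamma\p_t\tilde v-\Delta\tilde v+f(\tilde v)=h+P_N[f(\tilde v)-f(\hat u_i)],
\]
so that $\ph_\vv:=P_N[f(\tilde v)-f(\hat u_i)]$. Proposition~\ref{1.88} (the Foia\c{s}--Prodi type feedback stabilisation) then gives the explicit contraction
\[
|S^{\ph_\vv}(t;\vv)-\hat\uu_i|_\h^2\le e^{-\alpha t}|\vv-\hat\uu_i|_\h^2,
\]
valid for \emph{any} equilibrium $\hat\uu_i$ once $N\ge N_*(|\hat\uu_i|_\h)$. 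This single estimate gives~\eqref{8.25} immediately with $T=2(\ln\rho_1-\ln\rho_2)/\alpha$, and the energy bound $J_T(\ph_\vv)\le C(N)\tilde\rho^2\le\beta$ follows by a direct computation from the exponential decay, provided $\tilde\rho$ is small. No asymptotic-stability or approximate-controllability input is needed.
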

By definition of $\tilde V$, for $\rho_*>0$ sufficiently small and $\rho_1'<\rho_*$, we can find a finite time $\tilde T>0$ and an action $\tilde \ph$ defined on $[0, \tilde T]$ with energy smaller than $\tilde V(\hat\uu_i,\hat\uu_j)+\beta$ such that $S^{\tilde\ph}(\tilde T;\hat\uu_i)\in B(\hat\uu_j, \rho_1'/4)$ and the curve $S^{\tilde\ph}(\tilde T;\hat\uu_i)$ does not intersect $\rho_*$-neighborhood of $\hat\uu_k$ for $k\neq i, j$ (note that if a trajectory does not intersect $\hat\uu_k$ then it also does not intersect some small neighborhood of $\hat\uu_k$). Since $\rho_*>0$ can be taken arbitrarily small, we may assume that $\rho_*\le\tilde\rho$, where $\tilde\rho$ is the constant from the above lemma. Let $\rho_2<\rho_*$ be so small that for any $\vv\in \bar B(\hat\uu_i, \rho_2)$ we have $S^{\tilde\ph}(\tilde T;\vv)\in B(\hat\uu_j,\rho'_1/2)$. 
We take any $\rho_1\in(\rho_2, \rho_*)$ and use the following construction. For any $\vv\in \bar g_i$, we denote by $\tilde\ph_{\vv}$ the action defined on $[0, T+\tilde T]$ that coincides with $\ph_\vv$ on $[0, T]$ and with $\tilde\ph$ on $[T, T+\tilde T]$. Let us note that for any $\vv\in\bar g_i$, we have
\be\label{8.29}
I_{T+\tilde T}(S^{\tilde\ph_\vv}(\cdot;\vv))=J_{T+\tilde T}(\tilde\ph_\vv)=J_T(\ph_\vv)+J_{\tilde T}(\tilde\ph)\leq \tilde V(\hat\uu_i,\hat\uu_j)+2\beta.
\ee
Now let us take any $\rho_0\in (\rho_1,\rho_*)$ and denote by $\De$ any positive number that is smaller than $\min\{\rho_0-\rho_1, \rho_2/2, \rho_1'/2\}$. Then we have the following: if the trajectory $S^\es(t;\vv)$ is in the $\De$-neighborhood of $S^{\tilde\ph_\vv}(t;\vv)$ in $C(0, T+\tilde T;\h)$ distance, then $\tau_1^\es(\vv)\leq T+\tilde T$ and $S^\es(\tau_1^\es;\vv)\in \p g_j$. Therefore, we have
$$
\inf_{\vv\in\p g_i}\tilde P^\es(\vv,\p g_j)\geq \pp(A),
$$
where we set 
$$
A=\{\om\in\Omega:\sup_{\vv\in \bar g_i} d_{C(0,T+\tilde T;\h)}(S^\es(\cdot;\vv), S^{\tilde\ph_\vv}(\cdot;\vv))<\De\}.
$$
Combining this with inequality \eqref{8.29} and Theorem \ref{6.1}, we derive the lower bound of \eqref{8.27} in the case $i\leq \ell$.

\medskip
We now show that if $\rho_0'<\rho_1$ is sufficiently small, then the lower bound is also true for $i=\ell+1$. Indeed, let $\tilde V(\hat\uu_{\ell+1},\hat\uu_j)<\iin$ and let $T>0$ and $\ph$ be such that $S^{\ph}(T;\hat\uu_{\ell+1})\in B(\hat\uu_j,\rho_1/4)$ and 
\be\label{8.30}
J_{T}(\ph)\leq \tilde V(\hat\uu_{\ell+1},\hat\uu_j)+\beta.
\ee
We assume that $\rho_0'$ is so small that $S^{\ph}(T;\vv)\in B(\hat\uu_j,\rho_1/2)$ for any $\vv\in \tilde g_{\ell+1}$. Let us take any $\De<\min\{\rho_0'-\rho_1', \rho_1/2\}$. Then for any $\vv\in \bar g_{\ell+1}$ if the trajectory $S^\es(t;\vv)$ is in the $\De$-neighborhood of $S^{\ph}(t;\vv)$ in $C(0, T;\h)$ distance, then we have $\tau_1^\es(\vv)\leq T$ and $S^\es(\tau_1^\es;\vv)\in \p g_j$. Therefore
$$
\inf_{\vv\in\p g_{\ell+1}}\tilde P^\es(\vv,\p g_j)\geq \pp(A'),
$$
where we set 
$$
A'=\{\om\in\Omega:\sup_{\vv\in \bar g_{\ell+1}} d_{C(0,T;\h)}(S^\es(\cdot;\vv), S^{\ph}(\cdot;\vv))<\De\}.
$$
Combining this with inequality \eqref{8.30} and Theorem \ref{6.1}, we derive the lower bound in the case $i=\ell+1$. 

\bigskip
{\it Proof of the upper bound.} We assume that $\rho_*>0$ is so small that the energy needed to move the point from $\rho_*$-neighborhood of $\hat\uu_i$ to $\rho_*$-neighborhood of $\hat\uu_j$ without intersecting any other $\hat\uu_k$ is no less than $\tilde V(\hat\uu_i,\hat\uu_j)-\beta.$ Let us denote by $\tau_g^\es$ the first instant when the process $S^\es(t,\cdot)$ hits the set $\bar g$.  Then, by the strong Markov property, we have 
\be\label{8.33}
\sup_{\vv\in\p g_i}\pp(S^\es(\tau_1^\es;\vv)\in \p g_j)\leq \sup_{\vv\in\p \tilde g_i}\pp(S^\es(\tau_g^\es;\vv)\in \p g_j).
\ee
In what follows we shall denote by $g'$ the set $g\backslash g_{\ell+1}$, i.e. the union over $i\leq\ell$ of $\rho_1$-neighborhoods of $\hat\uu_i$. We need the following result.
\begin{lemma}\label{8.31}
For any positive constants $\rho_1, R$ and $M$ there is $T>0$ such that
\be\label{8.38}
\sup_{\vv\in B_R}\pp(\tau^\es_{g'}(\vv)\geq T)\leq\exp(-M/\es)\q\text{ for }\es<<1,
\ee
where $\tau^\es_{g'}$ stands for the first hitting time of the set $\bar g'$, and $B_R$ is the closed ball in $\h$ of radius $R$ centered at the origin. 
\end{lemma}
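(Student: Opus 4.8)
The plan is to prove Lemma \ref{8.31}, which asserts a uniform-in-$\es$ bound showing the perturbed flow $S^\es(t,\cdot)$ enters the $\rho_1$-neighborhood $g'$ of the stable-equilibrium-carrying part of the attractor within a fixed time $T$, except on an event of probability $\exp(-M/\es)$. The strategy rests on two classical ingredients: first, the deterministic flow $S(t;\vv)$ (the case $\es=0$) enters $g'$ in finite time uniformly over any ball, because $g'$ contains neighborhoods of all Lyapunov stable equilibria and every deterministic trajectory is attracted to the global attractor $\aaa$; second, a large deviations estimate of Freidlin--Wentzell/Wentzell--Freidlin type (Theorem \ref{6.1} in the excerpt, the LDP for trajectories of \eqref{0.1.4} with small noise) lets us transfer this to the stochastic flow with an exponentially small error.

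First I would establish the deterministic entrance statement: there is $T_0=T_0(\rho_1,R)$ such that for every $\vv\in B_R$ there is $t_\vv\le T_0$ with $S(t_\vv;\vv)\in g''$ for some open set $g''$ compactly contained in $g'$ (say, the union of $\rho_1/2$-neighborhoods). This follows from the description of the attractor in Theorem \ref{Th-attractor}: $\aaa$ consists of the equilibria $\hat\uu_i$ and the heteroclinic orbits joining them, and every such orbit terminates at a Lyapunov stable equilibrium; combined with the uniform attraction property of $\aaa$ for the semigroup $S(t)$ (dissipativity estimate \eqref{c1.4}/\eqref{2.2.3} plus compactness of $\aaa$) and the Lyapunov stability of the endpoints, one gets that after a long enough but uniform time every deterministic trajectory from $B_R$ is trapped in a small neighborhood of $\E_s\subset g''$. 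A compactness/continuity argument (continuous dependence on initial data on the compact set $\overline{B_R}$, uniformly on $[0,T_0]$) then yields: there is $\De>0$ such that if $\mathbf w_\cdot\in C(0,T_0;\h)$ stays within $\De$ of the trajectory $S(\cdot;\vv)$, then $\mathbf w_{t_\vv}\in g'$, hence $\tau_{g'}(\mathbf w_\cdot)\le T_0$.

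Next, I would invoke the trajectory-level LDP. Taking $\ph\equiv 0$, the rate functional $I_{T_0}(S(\cdot;\vv))=J_{T_0}(0)=0$, and the uniform large deviations lower bound (Theorem \ref{6.1}, uniformly over initial points in the ball $B_R$) gives
\be
\inf_{\vv\in B_R}\pp\bigl(d_{C(0,T_0;\h)}(S^\es(\cdot;\vv),S(\cdot;\vv))<\De\bigr)\ge \exp(-M/\es)\cdot(\text{something}\to1),
\ee
or more precisely the complement event has probability $\le\exp(-M/\es)$ for all $\es$ sufficiently small, because the closed set $\{\mathbf w_\cdot: d_{C(0,T_0;\h)}(\mathbf w_\cdot, S(\cdot;\vv))\ge\De\}$ has rate function bounded below by a positive constant (any trajectory at distance $\ge\De$ from the $\ph=0$ solution requires strictly positive action — this uses the lower semicontinuity and the fact that the only zero-action trajectory from $\vv$ is $S(\cdot;\vv)$ itself), and this lower bound can be made $\ge M$ by first contracting $\De$ if needed, using compactness of level sets of $I_{T_0}$; the uniformity in $\vv\in B_R$ comes from the uniform LDP statement. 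Setting $T=T_0$ and combining with the deterministic entrance statement: on the event $\{d_{C(0,T;\h)}(S^\es(\cdot;\vv),S(\cdot;\vv))<\De\}$ we have $\tau^\es_{g'}(\vv)\le T$, so $\pp(\tau^\es_{g'}(\vv)\ge T)\le\exp(-M/\es)$, uniformly over $\vv\in B_R$, which is \eqref{8.38}.

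The main obstacle I anticipate is making the deterministic step genuinely uniform and robust: one needs not merely that each trajectory eventually enters $g'$, but that a fixed time $T_0$ and a fixed tube radius $\De$ work for all starting points in $\overline{B_R}$ simultaneously — here the subtlety is that the set $g'$ covers only the Lyapunov stable equilibria, so trajectories approaching an unstable equilibrium $\hat\uu_k\notin\E_s$ might linger near it before being repelled toward a stable one; one must use that such lingering still happens in finite (uniformly bounded) time since the deterministic $\omega$-limit set of any point is a single point or a connected invariant subset of $\aaa$, and, crucially, that no deterministic trajectory from $B_R$ stays forever outside $g'$ (otherwise its $\omega$-limit set would be an invariant compact set disjoint from all stable equilibria, contradicting the structure of $\aaa$ given by the gradient-like structure / Lyapunov function $\ees$ for \eqref{1.51}). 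A careful contradiction/compactness argument using the energy functional $\ees$ as a Lyapunov function for the deterministic flow, together with the LaSalle invariance principle, settles this; the rest is routine.
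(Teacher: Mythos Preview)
There is a genuine gap. Your single-step argument only yields a fixed exponent $a>0$, not the arbitrary $M$ required by \eqref{8.38}. Concretely: once $T_0$ and $\De$ are fixed, the infimum of the rate functional $I_{T_0}$ over the closed set of trajectories at distance $\ge\De$ from $S(\cdot;\vv)$ (or, more directly, over trajectories that stay outside $g'$ on $[0,T_0]$) is some fixed positive number $a'$; Theorem~\ref{6.1} then gives $\pp(\tau^\es_{g'}\ge T_0)\le\exp(-a/\es)$ with $a=a'/2$, but there is no mechanism in your argument to make $a\ge M$. Your suggestion to ``contract $\De$ if needed'' goes in the wrong direction: shrinking $\De$ makes the complementary set larger and the rate-functional infimum smaller, not larger. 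Enlarging $\De$ is not an option either, since then the tube no longer forces entrance into $g'$.

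The paper closes this gap by an iteration using the Markov property. Its proof is in two steps: Step~2 establishes exactly your single-step estimate (this is \eqref{8.34}, with some fixed $a>0$), while Step~1 shows that \eqref{8.34} implies \eqref{8.38}. For the iteration one also needs to control excursions outside a large ball between steps, which is done via Proposition~\ref{1.96}: choose $R'>R$ so large that $\pp(S^\es(t;\vv)\notin B_{R'})\le\exp(-(M+1)/\es)$, then iterate the single-step bound $n$ times with $n>(M+1)/a$ to obtain the rate $M$. Your proposal is missing this entire reduction.

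A secondary point: you misread the definition of $g'$. In the paper $g'=g\setminus g_{\ell+1}$ is the union of $\rho_1$-neighbourhoods of \emph{all} equilibria $\hat\uu_1,\ldots,\hat\uu_\ell$, not just the Lyapunov-stable ones. So the ``lingering near unstable equilibria'' obstacle you worry about is a non-issue: lingering near any $\hat\uu_k$ already puts the trajectory inside $g'$. This makes the deterministic step straightforward (and the paper proves it by a short compactness/contradiction argument using the asymptotic compactness of the semigroup, see \eqref{8.36}--\eqref{8.44}), but it does not rescue the main gap above.
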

Note that for any $\vv\in\p\tilde g_i$, we have
$$
\pp(S^\es(\tau_g^\es;\vv)\in \p g_j)\leq\pp(S^\es(\tau_g^\es;\vv)\in \p g_j, \tau_g^\es(\vv)< T)+\pp(\tau^\es_{g'}(\vv)\geq T),
$$
where we used the fact that $\tau^\es_{g}\leq\tau^\es_{g'}$. Now notice that the event under the probability sign of the first term of this sum means that the trajectory $S^\es(t;\cdot)$ issued from $\p\tilde g_i$ hits the set $\bar g_j$ over time $T$ and does not intersect any $\hat\uu_k$ for $k$ different from $i$ and $j$. It follows from Theorem \ref{6.1} that this event has a probability no greater than $\exp(-(\tilde V(\hat\uu_i,\hat\uu_j)-2\beta)/\es)$. Combining this with Lemma \ref{8.31} and inequality \eqref{8.33}, we infer
$$
\sup_{\vv\in\p g_i}\tilde P^\es(\vv,\p g_j)\leq \exp(-(\tilde V(\hat\uu_i,\hat\uu_j)-2\beta)/\es)+\exp(-M/\es)\q\text{ for }\es<<1.
$$
Since $M>0$ can be chosen arbitrarily large, we derive the upper bound.

\bigskip
{\it Proof of Lemma \ref{8.28}.}
For any $\vv\in \bar B(\hat\uu_i,\rho_1)$, let $\tilde\vv(t;\vv)$ be the flow issued from $\vv$ corresponding to the solution of
$$
\p_t^2 \tilde v+\gamma\p_t \tilde v-\de \tilde v+f(\tilde v)=h(x)+P_N[f(\tilde v)-f(\hat u_i)],
$$
where $\hat\uu_i=[\hat u_i, 0]$ and $P_N$ stands for the orthogonal projection from $L^2$ to its $N$ dimensional subspace spanned by vectors $e_1,\ldots, e_N$. Let us define $\ph_{\vv}=P_N[f(\tilde v)-f(\hat u_i)]$. Then we have $\tilde\vv(t;\vv)=S^{\ph_\vv}(t;\vv)$. Moreover, it follows from Proposition \ref{1.88} that for $N\geq N_*(|\hat\uu_i|_\h)$ we have
\be\label{8.24}
|S^{\ph_\vv}(t;\vv)-\hat\uu_i|_\h^2\leq e^{-\al t}|\vv-\hat\uu_i|_\h^2,
\ee
where $\al>0$ is the constant entering \ef{9.11}. In particular, if we take $T=2(\ln\rho_1-\ln\rho_2)/\al$ then we get \eqref{8.25}. Moreover, we have
\begin{align}
J_{T}(\ph_\vv)&=\f{1}{2}\int_0^{T}|P_N[f(\tilde v)-f(\hat u_i)]|_{H_\vartheta}^2\dd s\leq C(N)\int_0^{T}|f(\tilde v)-f(\hat u_i)|_{L^1}^2\dd s\notag\\
&\leq C_1\, C(N)\int_0^{T}(\|\tilde v\|_1^2+\|\hat u_i\|_1^2+1)\|\tilde v-\hat u_i\|_1^2\dd s\notag\\
&\leq C_2\,C(N) \int_0^{T}|\vv-\hat\uu_i|^2 e^{-\al s}\dd s\leq C_3\, C(N)\, \tilde\rho^2\leq \beta\label{5.14},
\end{align}
provided $\tilde\rho>0$ is sufficiently small.

\bigskip
{\it Proof of Lemma \ref{8.31}. Step~1: Reduction}. Let the positive constants $\rho_1, R$ and $M$ be fixed. We claim that it is sufficient to prove that for any $R'> R$ we can find positive constants $T_*$ and $a$ such that 
\be\label{8.34}
\sup_{\vv\in B_{R'}}\pp(\tau^\es_{g'}(\vv)\geq T_*)\leq\exp(-a/\es)\q\text{ for }\es<<1.
\ee
Indeed, taking this inequality for granted, let us derive \eqref{8.38}.
To this end, let us use Proposition \ref{1.96} to find $R'> R$ so large that
\be\label{8.35}
\sup_{t\geq 0}\sup_{\vv\in B_R}\pp(S^\es(t;\vv)\notin B_{R'})\leq\exp(-(M+1)/\es)\q\text{ for }\es<1.
\ee
Once such $R'$ is fixed we find $T_*>0$ and $a>0$ such that we have \eqref{8.34}.
Let us take $n\geq 1$ so large that $an>(M+1)$.
For any $k\leq n$ we introduce
$$
p_k=\sup_{\vv\in B_R}\pp(\tau^\es_{g'}(\vv)\geq k\,T_*).
$$
Then, we have
\begin{align}\label{8.37}
p_n&\leq\sup_{\vv\in B_R}\pp(\tau^\es_{g'}(\vv)\geq n\,T_*, S^\es((n-1)T_*;\vv)\in B_{R'})\notag\\
&\q+\sup_{\vv\in B_R}\pp(S^\es((n-1)T_*;\vv)\notin B_{R'})\leq q_n+\exp(-(M+1)/\es),
\end{align}
where we denote by $q_n$ the first term of this sum and we used inequality \eqref{8.35} to estimate the second one. Now note that by the Markov property, we have
\begin{align*}
q_n&=\sup_{\vv\in B_R}\e_\vv[\e(\ch_{\tau^\es_{g'}\geq n\,T_*}\ch_{S^\es((n-1)T_*;\vv)\in B_{R'}}|\fff^\es_{(n-1) T_*})]\\
&=\sup_{\vv\in B_R}\e_\vv[\ch_{\tau^\es_{g'}\geq (n-1)\,T_*} \ch_{S^\es((n-1)T_*;\vv)\in B_{R'}}\pp(\tau^\es_{g'}(S^\es((n-1)T_*;\vv))\geq T_*)]\\
&\leq \sup_{\tilde\vv\in B_{R'}}\pp(\tau^\es_{g'}(\tilde\vv)\geq T_*)\,p_{n-1}\leq\exp(-a/\es)\,p_{n-1},
\end{align*}
where $\fff^\es_t$ is the filtration corresponding to $S^\es(t;\cdot)$, and we used inequality \eqref{8.34}. Combining this with \eqref{8.37}, we derive
$$
p_n\leq \exp(-a/\es)\,p_{n-1}+\exp(-(M+1)/\es).
$$
Iterating this inequality, we infer 
$$
p_n\leq \exp(-an/\es)+(1-\exp(-a/\es))^{-1}\exp(-(M+1)/\es)\leq\exp(-M/\es).
$$
It follows that inequality \eqref{8.38} holds with $T=n\, T_*$.

\bigskip
{\it Step~2: Derivation of \eqref{8.34}}. We first show that for any positive constants $\tilde R$ and $\eta$, we have
\be\label{8.36}
\sup_{\vv\in B_{\tilde R}}l(\vv)<\iin,
\ee
where $\l(\vv)$ stands for the first instant when the deterministic flow $S(t)\vv$ hits the set $\bar\ooo(\eta)$ and where $\ooo(\eta)$ is the union over $i\leq\ell$ of $\eta$-neighborhoods of $\hat\uu_i$. Indeed, let us suppose that this is not true, and let us find $\tilde R>0$ and $\eta>0$ for which this inequality fails. 
Then, there exists a sequence $(\vv_j)\subset B_{\tilde R}$ such that 
\be\label{8.39}
l(\vv_j)\geq 2j.
\ee
For each $j\geq 1$, let us split the flow $S(t)\vv_j$ to the sum $\uu^1_j(t)+\uu^2_j(t)$, where $\uu^1_j(t)$ stands for the flow issued from $\vv_j$ of equation \eqref{1.51} with $f=h=0$. Then, for all $t\geq 0$, we have
\be\label{8.40}
|\uu^1_j(t)|_\h^2\leq e^{-\al t}|\vv_j|_\h^2, \q\q |\uu^2_j(t)|_{H^{s+1}\times H^s}\leq C_s(\tilde R),
\ee
where $s<1-\rho/2$ is any constant (e.g., see \cite{BV1992, Har85}). Using second of these inequalities, let us find $(j_k)\subset\nn$ such that the sequence $\uu^2_{j_k}(j_k)$ converges in $\h$ and denote by $\tilde \uu$ its limit. Then, in view of first inequality of \eqref{8.40}, we have 
\be\label{8.41}
S(j_k)\vv_{j_k}\to\tilde \uu \q\text{ in } \h \q\text{ as } k\to\iin.
\ee
Now let us find $t_*>0$ so large that
\be\label{8.42}
S(t_*)\tilde\uu\in \bar\ooo(\eta/2).
\ee
Note that thanks to \eqref{8.41} and continuity of $S(t)$, we have
\be\label{8.43}
S(j_k+t_*)\vv_{j_k}\to S(t_*)\tilde\uu.
\ee
Further, notice that by \eqref{8.39} we have $S(j_k+t_*)\vv_{j_k}\notin \bar\ooo(\eta)$ for $k\geq 1$ sufficiently large. This is clearly in contradiction with \eqref{8.42}-\eqref{8.43}. Inequality \eqref{8.36} is thus established.

\medskip
We are now ready to prove \eqref{8.34}. Indeed, let us use inequality \eqref{8.36} with $\tilde R=R'+1$ and $\eta=\rho_1/2$, and let us set 
\be\label{8.44}
T_*=\sup_{\vv\in B_{R'+1}} l(\vv)+1.
\ee
Let us consider the trajectories $\uu_\cdot\in C(0,T_*;\h)$ issued from $B_{R'+1}\backslash \ooo(\rho_1/2)$ and assuming their values outside $\ooo(\rho_1/2)$. Note that the family $\elll$ of such trajectories is closed in $C(0, T_*;\h)$. Therefore, the infimum 
$$
a'=\inf_{\uu_\cdot\in\elll}I_{T_*}(\uu_\cdot)
$$
is attained and is positive, since in view of \eqref{8.44} there is no deterministic trajectory $S(t)\vv$ under consideration. Now using Theorem \ref{6.1}, we see that \eqref{8.34} holds with $a=a'/2$.
The proof of Lemma \ref{8.31} is complete.

\section{Generalized stationarity and auxiliary measure}\label{9.82}
In this section we introduce a notion of generalized stationary measure and show that any discrete-time Markov chain possesses such a state. This will be used to construct a finitely additive measure $\hat\mu^\es$ satisfying relation \ef{9.65''} and such that the family $(\hat\mu^\es)$ satisfies some large deviations estimates (see \ef{9.78}). This in turn will imply Proposition \ref{9.4}.
\subsection{Generalized stationary measure}
 Let $X$ be a metric space and let $b(X)$ the space of bounded Borel measurable functions on $X$ equipped with the topology of uniform convergence. We shall denote by $b^*(X)$ the dual of $b(X)$ \,\footnote{\, $b^*(X)$ can be identified  with the space $ba(X)$ of finite, finitely additive signed measures on $X$ (e.g., see Theorem IV.5.1 in \cite{Dunford-Schwartz1}).} . A linear continuous map $\pP$ from $b(X)$ into itself is called a Markov operator on $X$, if $\pP\psi\ge 0$ for any $\psi\ge 0$ and $\pP1\equiv 1$. Let $\pP^*:b^*(X)\to b^*(X)$ be the dual of $\pP$, that is  
 $$
\pP^*\lm(\psi)=\lm(\pP\psi)
$$
for any $\lm\in b^*(X)$ and $\psi\in b(X)$. We shall say that $\lm$ is \mpp{a generalized stationary state} (or \mpp {measure)}  for $\pP$ if it satisfies the following properties: $\lm(\psi)\ge 0$ for any $\psi\ge 0$, $\lm(1)=1$, and $\pP^*\lm=\lm$, that is we have
 \be\label{9.57}
\lm(\pP\psi)=\lm(\psi)\q\text{ for any }\psi\in b(X).
\ee
To any such $\lm$ we associate a finitely additive measure defined on Borel subsets of $X$ by $\lm(\Gamma)=\lm(\ch_\Gamma)$ for $\Gamma\subset X$.
\begin{lemma}\label{lem-stationary}
Any Markov operator possesses a generalized stationary measure.
\end{lemma}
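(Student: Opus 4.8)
The statement to prove is Lemma \ref{lem-stationary}: any Markov operator $\pP$ on a metric space $X$ possesses a generalized stationary measure, i.e. a functional $\lm \in b^*(X)$ with $\lm(\psi)\ge 0$ for $\psi\ge 0$, $\lm(1)=1$, and $\pP^*\lm = \lm$.

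This is a Markov–Kakutani / Schauder–Tychonoff style fixed point argument, and indeed the excerpt already sketches exactly this kind of reasoning in its Section 2 for the boundary chain (the existence of a finitely additive stationary measure via Banach–Alaoglu and Leray–Schauder). So the plan is to replay that argument in the abstract setting.

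Plan of proof. First I would introduce the candidate set
$$
\mathfrak{F} = \{\lm \in b^*(X) : \lm(1) = 1 \text{ and } \lm(\psi)\ge 0 \text{ for all }\psi\ge 0\}.
$$
I claim that $\mathfrak{F}$ is nonempty, convex, and compact in the weak* topology of $b^*(X)$. Nonemptiness: pick any $x_0\in X$ and take $\lm = \delta_{x_0}$, i.e. $\lm(\psi)=\psi(x_0)$; this clearly lies in $\mathfrak{F}$. Convexity is immediate from the defining (linear, respectively order-theoretic) conditions. For compactness, observe that for $\lm\in\mathfrak{F}$ one has $|\lm|_{b^*(X)} = \lm(1) = 1$ (since $|\lm(\psi)| \le \lm(|\psi|) \le |\psi|_\infty \lm(1)$ for the upper bound, and equality is attained at $\psi\equiv 1$), so $\mathfrak{F}$ is contained in the closed unit ball of $b^*(X)$, which is weak* compact by the Banach–Alaoglu theorem. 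Finally $\mathfrak{F}$ is weak* closed: both conditions $\lm(1)=1$ and $\lm(\psi)\ge 0$ for a fixed $\psi\ge 0$ are preserved under weak* limits, and intersecting over all such $\psi$ keeps the set closed. Hence $\mathfrak{F}$ is a nonempty, convex, weak* compact subset of the locally convex space $(b^*(X), \text{weak*})$.

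Next I would consider the dual map $\pP^* : b^*(X)\to b^*(X)$, $(\pP^*\lm)(\psi) = \lm(\pP\psi)$. Since $\pP 1 \equiv 1$ we get $(\pP^*\lm)(1) = \lm(1)$, and since $\pP\psi \ge 0$ whenever $\psi\ge 0$ we get $(\pP^*\lm)(\psi)\ge 0$ for $\psi\ge 0$; therefore $\pP^*(\mathfrak{F})\subset\mathfrak{F}$. Moreover $\pP^*$ is continuous for the weak* topology: if $\lm_\alpha \to \lm$ weak*, then for every fixed $\psi\in b(X)$ we have $(\pP^*\lm_\alpha)(\psi) = \lm_\alpha(\pP\psi) \to \lm(\pP\psi) = (\pP^*\lm)(\psi)$, which is exactly weak* convergence $\pP^*\lm_\alpha\to\pP^*\lm$. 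Also $\pP^*$ is affine (indeed linear). Now apply a fixed point theorem for continuous affine self-maps of a nonempty compact convex set in a locally convex topological vector space — the Schauder–Tychonoff theorem (or equivalently the Markov–Kakutani theorem, which only needs commutativity and affineness) — to obtain $\lm\in\mathfrak{F}$ with $\pP^*\lm = \lm$. This $\lm$ satisfies $\lm(\psi)\ge 0$ for $\psi\ge 0$, $\lm(1)=1$, and $\lm(\pP\psi)=\lm(\psi)$ for all $\psi\in b(X)$, i.e. it is a generalized stationary measure; associating to it the finitely additive set function $\lm(\Gamma) := \lm(\ch_\Gamma)$ completes the proof.

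The argument is essentially routine functional analysis and I do not expect a genuine obstacle; the only point that needs a little care is the identification $|\lm|_{b^*(X)} = \lm(1)$ on $\mathfrak{F}$ (so that Banach–Alaoglu applies to a bounded set) and the verification that $\mathfrak{F}$ is weak* closed — in particular that positivity $\lm(\psi)\ge 0$ passes to weak* limits, which it does since it is a closed condition for each fixed $\psi$. Everything else — $\pP^*$ maps $\mathfrak{F}$ into itself, is weak* continuous and affine — follows directly from the two defining properties $\pP 1 \equiv 1$ and $\pP\psi\ge 0$ for $\psi\ge 0$ of a Markov operator, so the hypotheses of Schauder–Tychonoff are met and the conclusion follows.
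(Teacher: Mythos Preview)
Your proof is correct and follows essentially the same route as the paper: define $\mathfrak{F}$, use Banach--Alaoglu and closedness/convexity to get a weak* compact convex set, note that $\pP^*$ maps $\mathfrak{F}$ into itself, and apply a fixed point theorem (the paper invokes Leray--Schauder, you invoke Schauder--Tychonoff/Markov--Kakutani, which amounts to the same thing here). Your write-up is in fact slightly more detailed than the paper's, explicitly checking nonemptiness and weak* continuity of $\pP^*$.
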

\bp
Let $\pP$ be a Markov operator defined on a space $X$. Consider the space
$$
\mathfrak{F}=\{\lm\in b^*(X): \lm(1)= 1 \text{ and } \lm(\psi)\ge 0 \text{ for }\psi\ge 0\}
$$
endowed with weak* topology. Note that if $\lm\in \mathfrak{F}$ then its norm is equal to 1. In view of the Banach-Alaoglu theorem, $\mathfrak{F}$ is relatively compact. Moreover, it is easy to see that $\mathfrak{F}$ is also closed and convex. Since $\pP$ is a Markov operator, its dual $\pP^*$ maps $\mathfrak{F}$ into itself. Thanks to the Leray-Schauder theorem (e.g., see Chapter 14 in \cite{taylor1996}), $\pP^*$ has a fixed point $\lm\in \mathfrak{F}$, which means that $\lm$ is a generalized stationary state for $\pP$.  It should be emphasized that $\lm$ is not stationary in the classical sense, since it is not necessarily $\sigma$-additive. 
\ep

\bigskip
In what follows, given $\es>0$, we shall denote by $\lm=\lm^\es$ any of generalized stationary states of the operator $\pP=\pP^\es:b(\p g)\to b (\p g)$ defined by
\be\label{9.75}
\pP\psi(\vv)=\int_{\p g}\psi(\Zz)\tilde P_1(\vv, \Dd\Zz)\equiv \e\psi(S^\es(\tau_1^\es;\vv)).
\ee
So we have
\be\label{9.74}
\lm(\pP\psi)=\lm(\psi)\q\text{ for any }\psi\in b(\p g)
\ee
and $\lm\in b^*(\p g)$ satisfies $\lm(\psi)\ge 0$ for $\psi\ge 0$, $\lm (1)=1$. We shall always assume that $\es>0$ is so small that we have \ef{8.27}.

\subsection{Khasminskii type relation}
For each $\es>0$, let us define a continuous map $\tilde\mu=\tilde\mu^\es$ from $b(\h)$ to $\rr$ by 
\be\label{9.59}
\tilde\mu(\psi)=\lm(\elll\psi),
\ee
where $\lm=\lm^\es$ is given by \ef{9.75}-\ef{9.74}, and $\elll=\elll^\es:b(\h)\to b(\h)$ is defined by
\be\label{9.60}
\elll\psi(\vv)=\e\int_0^{\tau_1^\es}\psi(S^\es(t;\vv))\dd t.
\ee
\begin{lemma} For any $\psi\in b(\h)$, we have
\end{lemma}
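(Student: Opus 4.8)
The statement being proved is the Khasminskii-type relation for the family $(\mu^\es)$. Based on the structure of the excerpt (and the parallel French section, see \eqref{c2.26}), the lemma should assert that $\hat\mu^\es$ — the normalization of $\tilde\mu^\es$ defined by \eqref{9.59}-\eqref{9.60} — is invariant under the Markov semigroup $P_t^\es$ of the diffusion $S^\es(t;\cdot)$, i.e.
\be\label{khas-target}
\hat\mu^\es(P_t\psi)=\hat\mu^\es(\psi)\q\text{ for any }\psi\in b(\h)\text{ and }t\ge 0,
\ee
where $\hat\mu^\es(\psi)=\tilde\mu^\es(\psi)/\tilde\mu^\es(1)$.

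\medskip
\textit{Plan of the proof.} The strategy is the classical Khasminskii argument, carried out at the level of the \emph{finitely additive} stationary state $\lm=\lm^\es$ on $\p g$, so that no $\sigma$-additivity of $\lm$ is needed. First I would show that $\tilde\mu(1)=\lm(\elll 1)=\lm(\e\tau_1^\es)$ is finite and positive; finiteness follows from the finiteness of the exponential moments of the Markov times $\tau_n^\es$ referred to just before the definition of the chain (see \eqref{9.22}), applied uniformly over the compact boundary $\p g$, and positivity is immediate since $\tau_1^\es>0$ a.s. This makes the normalization $\hat\mu$ well defined. The heart of the matter is the identity, valid for every $\psi\in b(\h)$ and every $t\ge 0$,
\be\label{khas-key}
\elll(P_t\psi)(\vv)=\elll\psi(\vv)+\pP(\elll\psi)(\vv)\cdot 0 \;+\;\Big(\elll\psi\big(S^\es(\tau_1^\es;\vv)\big)\text{-type correction}\Big),
\ee
which I will not write in this sloppy form in the final text; the correct and clean statement is the "cocycle" decomposition: for $\vv\in\p g$,
\be\label{khas-clean}
\e\int_0^t\psi(S^\es(r;\vv))\,\dd r \;=\; \elll_t\psi(\vv)+\pP_t'\big(\elll\psi\big)(\vv),
\ee
where one splits the time integral $\int_0^t$ according to how many renewal times $\tau_n^\es$ have occurred before $t$. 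Concretely, using the strong Markov property at each $\tau_n^\es$ and the fact that $Z_n^\es=S^\es(\tau_n^\es;\cdot)$ is the chain with transition operator $\pP$, one gets
$$
\e\int_0^{\tau_n^\es}\psi(S^\es(r;\vv))\,\dd r=\sum_{k=0}^{n-1}(\pP^k\elll\psi)(\vv),
$$
and then, passing from $\int_0^{\tau_n^\es}$ to $\int_0^t$ with $\tau_{n}^\es\le t<\tau_{n+1}^\es$ and using the stationarity $\lm(\pP\psi)=\lm(\psi)$ repeatedly, the telescoping sum collapses. Applying $\lm$ to \eqref{khas-clean} and using $\lm\pP=\lm$ (which holds by \eqref{9.74} and induction for all powers $\pP^k$), the $\pP$-terms are reabsorbed and one is left with $\lm(\elll(P_t\psi))=\lm(\elll\psi)$, i.e. $\tilde\mu(P_t\psi)=\tilde\mu(\psi)$; dividing by $\tilde\mu(1)$ yields \eqref{khas-target}.

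\medskip
A more streamlined route, which I would actually prefer to write up, avoids the bookkeeping over $n$: fix $\psi$ and $t$, and consider the function $\Psi(\vv)=\int_0^t\psi(S^\es(r;\vv))\,\dd r$. Starting from $\elll(P_t\psi)(\vv)=\e\int_0^{\tau_1^\es}(P_t\psi)(S^\es(s;\vv))\,\dd s=\e\int_0^{\tau_1^\es}\e_{S^\es(s;\vv)}\psi(S^\es(t;\cdot))\,\dd s$, use Fubini and the Markov property to rewrite this as $\e\int_0^{\tau_1^\es}\psi(S^\es(s+t;\vv))\,\dd s$, then change variables and split the integral $\int_t^{\tau_1^\es+t}=\int_0^{\tau_1^\es}-\int_0^t+\int_{\tau_1^\es}^{\tau_1^\es+t}$. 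The first term is $\elll\psi(\vv)$, the second is $-\Psi(\vv)$, and the third, by the strong Markov property at $\tau_1^\es$, equals $\e\,\Psi(S^\es(\tau_1^\es;\vv))=\pP\Psi(\vv)$. Hence $\elll(P_t\psi)=\elll\psi-\Psi+\pP\Psi$ on $\p g$. Applying $\lm$ and using $\lm(\pP\Psi)=\lm(\Psi)$ gives $\lm(\elll(P_t\psi))=\lm(\elll\psi)$, which is exactly $\tilde\mu(P_t\psi)=\tilde\mu(\psi)$; normalizing proves the lemma.

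\medskip
\textit{Expected main obstacle.} The delicate points are not the algebraic identity but the integrability justifications: one must check that $\elll\psi$ and $\Psi$ are genuinely bounded Borel functions on $\p g$ (so that $\lm$, which lives on $b(\p g)$, can be applied to them), which requires the uniform bound $\sup_{\vv\in\p g}\e\,\tau_1^\es<\iin$ and measurability of $\vv\mapsto\e\int_0^{\tau_1^\es}\psi(S^\es(r;\vv))\,\dd r$; and one must be careful that all Fubini/strong-Markov manipulations are legitimate, which they are precisely because of the finite exponential moments of the $\tau_n^\es$ established earlier (cf. \eqref{9.22}) together with $\psi\in b(\h)$. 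The continuity of $\tilde\mu$ from $b(\h)$ to $\rr$ (needed implicitly so that $\hat\mu$ is a sensible object and so that later, in the passage to \eqref{9.65''}, one may pass to limits along $L_b$-approximations) also follows from the uniform bound on $\e\,\tau_1^\es$. Once these routine but essential estimates are in place, the proof is short.
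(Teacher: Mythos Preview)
Your ``streamlined route'' is precisely the paper's proof: rewrite $\elll(P_t\psi)(\vv)$ via the Markov property as $\e\int_t^{t+\tau_1^\es}\psi(S^\es(r;\vv))\,\dd r$, split the integral as $\int_0^{\tau_1^\es}-\int_0^t+\int_{\tau_1^\es}^{t+\tau_1^\es}$, identify the last piece via the strong Markov property at $\tau_1^\es$ as $\pP$ applied to the function $\vv\mapsto\e\int_0^t\psi(S^\es(r;\vv))\,\dd r$, and then apply $\lm$ using $\lm\pP=\lm$ so that the two boundary terms cancel. Two small remarks: (i) the paper states the identity for $\tilde\mu$ rather than its normalization $\hat\mu$ (equation \eqref{9.61}), which of course is the same thing once you divide by $\tilde\mu(1)$; (ii) your $\Psi$ must be the \emph{expected} integral $\vv\mapsto\e\int_0^t\psi(S^\es(r;\vv))\,\dd r$, not the random path integral, so that $\pP\Psi$ is well defined on $b(\p g)$---your write-up should make that explicit.
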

\be\label{9.61}
\tilde\mu(P_s\psi)=\tilde\mu(\psi)\q\text{ for any } s\ge 0,
\ee
where $P_s=P_s^\es: b(\h)\to b(\h)$ stands for the Markov operator of the process $S^\es(\cdot)$.
\bp
We use the classical argument (see Chapter 4 in \cite{Khas2011}). Let us fix $s\ge 0$. By the Markov property, for any $\vv\in\h$, we have
$$
\e\int_0^{\tau_1^\es}\psi(S^\es(t+s;\vv))\,dt=\e\int_0^{\tau_1^\es}P_s\psi(S^\es(t;\vv))\dd t.
$$
It follows that
\begin{align*}
\tilde\mu(P_s\psi)&=\lm(\elll P_s\psi)=\lm(\e\int_0^{\tau_1^\es}P_s\psi(S^\es(t;\cdot))\dd t)=\lm(\e\int_0^{\tau_1^\es}\psi(S^\es(t+s;\cdot))\dd t)\notag\\
&=\lm(\e\int_s^{s+\tau_1^\es}\psi(S^\es(t;\cdot))\dd t)\notag\\
&=\lm(\e\int_{\tau_1^\es}^{s+\tau_1^\es}\psi(S^\es(t;\cdot))\dd t)-\lm(\e\int_0^{s}\psi(S^\es(t;\cdot))\dd t)+\tilde\mu(\psi).
\end{align*}
Conditioning with respect to $\fff_{\tau_1^\es}$ and using the strong Markov property together with \ef{9.75}-\ef{9.74}, we see that the first two terms are equal. Since $s\ge 0$ was arbitrary, we arrive at \ef{9.61}.
\ep

\subsection{Auxiliary finitely additive measure}
Let us denote by $b_0(\h)$ the space $b(\h)$ endowed with the following convergence: given a point $\psi\in b(\h)$ and a sequence $(\psi_n)\subset b(\h)$, we shall say that $\psi_n$ converges to $\psi$ in $b_0(\h)$  $n\to \iin$, if 
$$
\sup_n\sup_{\uu\in \h}|\psi_n(\uu)|<\iin
$$ 
and for any $a>0$ we have
$$
\sup_{\uu\in B_a}|\psi_n(\uu)-\psi(\uu)|\to 0\q\text{ as }n\to \iin.
$$ 
\begin{lemma}\label{9.58}
The map $\tilde\mu$ given by \ef{9.59}-\ef{9.60} is continuous from $b_0(\h)$ to $\rr$.
\end{lemma}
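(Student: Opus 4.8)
The plan is to reduce the continuity of $\tilde\mu$ to a continuity statement for the operator $\elll$ of \ef{9.60}, and then to prove the latter via an a priori bound on the expected time the flow spends at large norm before $\tau_1^\es$. First I would note that $\lm=\lm^\es$, being a positive normalized element of $b^*(\p g)$, has norm one, hence is continuous on $b(\p g)$ for the topology of uniform convergence; so, in view of \ef{9.59}, it suffices to show that $\elll:b(\h)\to b(\p g)$ is continuous from the $b_0(\h)$-convergence to uniform convergence, i.e. that $\psi_n\to\psi$ in $b_0(\h)$ implies $\sup_{\vv\in\p g}|\elll\psi_n(\vv)-\elll\psi(\vv)|\to0$. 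Writing $g_n=\psi_n-\psi$, the definition of $b_0(\h)$ supplies a uniform bound $C_0=\sup_n|g_n|_\iin<\iin$ together with $\delta_n(a):=\sup_{\uu\in B_a}|g_n(\uu)|\to0$ as $n\to\iin$, for every fixed $a>0$.

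Next I would split the integral defining $\elll g_n(\vv)=\e\int_0^{\tau_1^\es}g_n(S^\es(t;\vv))\,dt$ according to whether $|S^\es(t;\vv)|_\h\le a$ or not. On $\{|S^\es(t;\vv)|_\h\le a\}$ the integrand is at most $\delta_n(a)$, contributing at most $\delta_n(a)\,\e\tau_1^\es$; on the complement it is at most $C_0$, contributing at most $C_0\,\e\int_0^{\tau_1^\es}\ch_{\{|S^\es(t;\vv)|_\h>a\}}\,dt$. Everything thus reduces to two a priori bounds, uniform over $\vv\in\p g$: (i) $\sup_{\vv}\e\tau_1^\es<\iin$, and (ii) $N_a:=\sup_{\vv}\e\int_0^{\tau_1^\es}\ch_{\{|S^\es(t;\vv)|_\h>a\}}\,dt\to0$ as $a\to\iin$. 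Granting these, for $\eta>0$ I would first fix $a$ so large that $C_0N_a\le\eta/2$ (uniformly in $\vv$ and $n$), and then take $n$ large so that $\delta_n(a)\sup_\vv\e\tau_1^\es\le\eta/2$; this yields $\sup_{\vv\in\p g}|\elll g_n(\vv)|\le\eta$ for $n$ large, completing the argument.

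Bound (i) is immediate from \ef{9.22}: the Markov times $\tau_1^\es$ have finite exponential moments, uniformly over $\vv$ in the bounded set $\p g$. For (ii) I would apply Chebyshev's inequality to replace $\ch_{\{|S^\es(t;\vv)|_\h>a\}}$ by $|S^\es(t;\vv)|_\h^2/a^2$, reducing it to $\sup_{\vv\in\p g}\e\int_0^{\tau_1^\es}|S^\es(t;\vv)|_\h^2\,dt<\iin$. This I would get from the energy machinery of Chapter \ref{Chapter3}: the exponential supermartingale-type inequality, applied to the flow of \ef{0.1.4} with $\es\le1$, gives $\sup_{t\ge0}(\ees(S^\es(t;\vv))-\kkk t)\le\ees(\vv)+\xi$ with $\xi\ge0$ having an exponential moment bounded uniformly over $\vv\in\p g$; combining this with the structural inequality $|y|_\h^2\le2|\ees(y)|+4C$, with the boundedness of $\ees$ on $\p g$, with the elementary estimate $\int_0^\tau(c+2\kkk t)\,dt\le c\tau+\kkk\tau^2$, and with the Cauchy--Schwarz inequality together with the uniform exponential integrability of $\tau_1^\es$ and $\xi$, yields the desired uniform bound.

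The main obstacle is precisely the a priori estimate (ii): controlling, uniformly over all starting points on $\p g$, the expected Lebesgue measure of the set of times at which the solution of \ef{0.1.4} is far out in $\h$ before it exits $\tilde g$ and returns to $\p g$. This is where the Lyapunov/energy structure of the equation has to be combined with the uniform exponential integrability of the Markov times $\tau_n^\es$; once this is in place, the reduction via $|\lm|=1$, the splitting of the integral, and the final $\eta/2$ argument are routine.
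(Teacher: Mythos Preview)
Your proposal is correct, and it takes a genuinely different route from the paper. The paper first truncates in \emph{time}: it chooses a large deterministic level $R_1$ so that $\e_\vv[\tau_1^\es\,\ch_{\{\tau_1^\es\ge R_1\}}]$ is small uniformly over $\vv$ in the bounded set containing $\p g$ (via Cauchy--Schwarz and \ef{9.22}), and then on the finite interval $[0,R_1]$ it invokes a pathwise bound $\pp\bigl(\sup_{t\in[0,R_1]}|S^\es(t;\vv)|_\h\ge R_2\bigr)\le\eta/R_1$ to carve out the large-norm part. Your approach truncates directly in \emph{space}: you split the integral according to $\{|S^\es(t;\vv)|_\h\le a\}$ versus its complement, and the whole burden falls on controlling the expected occupation time of $\{|S^\es|_\h>a\}$ up to the random time $\tau_1^\es$.

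Both arguments rest on the same two pillars --- the exponential integrability of $\tau_1^\es$ from \ef{9.22} and the supermartingale energy estimate --- but deploy them differently. The paper's temporal cut has the advantage of only needing a standard finite-time sup bound on the flow, which is available off the shelf. Your spatial cut is more streamlined (a single splitting rather than two nested ones), but in exchange you must produce the uniform bound $\sup_{\vv\in\p g}\e\int_0^{\tau_1^\es}|S^\es(t;\vv)|_\h^2\,dt<\iin$, which requires combining the pointwise supermartingale control of $\ees$ with moments of $\tau_1^\es$ via Cauchy--Schwarz. That step works exactly as you describe (noting that the constant in front of $t$ should be $L=\kkk+\al C'$ rather than $\kkk$ once you account for the lower bound $\ees\ge -C'$, a cosmetic point). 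Overall your argument is a valid and slightly more compact alternative.
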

\bp
For the simplicity, we shall write $\tau_1=\tau_1^\es$ and $\uu(t;\vv)=S^\es(t;\vv)$. We need to show that $\tilde\mu(\psi_n)\to 0$ for any $\psi_n$ converging to zero in $b_0(\h)$.
Since $\lm$ is continuous from $b(\p g)$ to $\rr$, it is sufficient to show that $\elll\psi_n$ goes to zero uniformly in $B_R$, where $R>0$ is so large that $g\subset B_R$. Let us fix any $\eta>0$.
Clearly, we may assume that $|\psi_n(\uu)|\le 1$ for any $n\ge 1$ and $\uu\in \h$. 
It follows from the Cauchy-Schwarz inequality and \ef{9.22} that for $R_1>0$ sufficiently large, we have 
\be\label{9.62}
\e_\vv(\tau_1\cdot\ch_{\tau_1\ge R_1})\le \eta\q\text{ for any }\vv\in B_R.
\ee
Once $R_1$ is fixed, let us use Proposition 3.2 from \cite{DM2014} to find $R_2>R_1$ such that 
\be\label{9.63}
\pp\left(\sup_{t\in [0, R_1]}|\uu(t;\vv)|\ge R_2\right)\le\eta/{R_1}\q\text{ for any }\vv\in B_R.
\ee
Now we have
\begin{align}
|\elll\psi_n(\vv)|\le\e\int_0^{\tau_1}|\psi_n(\uu(t;\vv))|\dd t &\leq \e(\ch_{\tau_1\ge R_1}\int_0^{\tau_1}|\psi_n(\uu(t;\vv))|\dd t)\notag\\
&\q +\e\int_0^{R_1}|\psi_n(\uu(t;\vv))|\dd t:=I_1+I_2.\label{9.64}
\end{align}
Let us note that in view of \ef{9.62}, we have $I_1\le\eta$. Further, since $\psi_n$ converges to zero in $b_0(\h)$, we can find $n_*(\eta)\ge 1$ such that for all $n\ge n_*(\eta)$, we have
$$
\sup_{\uu\in B_{R_2}}|\psi_n(\uu)|\le \eta/{R_1}.
$$
Let us denote by $A_\vv$ the event under the probability sign in \ef{9.63}. Then
$$
I_2\le R_1 \pp(A_\vv)+\e(\ch_{A_\vv^c}\int_0^{R_1}|\psi_n(\uu(t;\vv))|\dd t).
$$
Combining \ef{9.63} with last two inequalities, we get $I_2\le 2\eta$, so that we have $I_1+I_2\le 3\eta$. Using this with \ef{9.64}, we see that
$$
\sup_{\vv\in B_R}|\elll\psi_n(\vv)|\le 3\eta\q\text{ for any }n\ge n_*(\eta).
$$
Since $\eta>0$ was arbitrary, the proof is complete.
\ep

\medskip
Let us note that in view of inequalities \ef{8.13} and \ef{9.22}, $\tilde\mu=\tilde\mu^\es$ satisfies $0<\tilde\mu(1)<\iin$.
We shall denote by $\hat\mu$ the normalization of $\tilde \mu$, that is
\be\label{9.68}
\hat\mu(\psi)=\tilde\mu(\psi)/\tilde\mu(1).
\ee 
Thanks to Lemma \ref{9.58}, $\hat\mu$ is continuous from $b_0(\h)$ to $\rr$.
For any Borel subset $\Gamma\subset \h$, we shall write
\be\label{9.66}
\hat\mu(\Gamma)=\hat\mu(\ch_\Gamma).
\ee
This notation will not lead to a confusion. 
\begin{lemma}\label{9.67}For any $\Gamma\subset \h$, we have
\be\label{9.65}
\mu(\dt\Gamma)\le\hat\mu(\dt\Gamma)\le\hat\mu(\bar\Gamma)\le\mu(\bar\Gamma),
\ee
where $\mu=\mu^\es$ is the stationary measure of the process $S^\es(\cdot)$.
\end{lemma}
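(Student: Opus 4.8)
The goal is to establish the sandwich inequality $\mu(\dot\Gamma)\le\hat\mu(\dot\Gamma)\le\hat\mu(\bar\Gamma)\le\mu(\bar\Gamma)$ for the finitely additive measure $\hat\mu=\hat\mu^\es$ and the (genuine) stationary measure $\mu=\mu^\es$ of the process $S^\es(\cdot)$. The plan is to reduce everything to a single one-sided bound, namely $\hat\mu(F)\le\mu(F)$ for every closed set $F\subset\h$. Indeed, once this is known, applying it to $F=\bar\Gamma$ gives the rightmost inequality; applying it to $F=(\dot\Gamma)^c$ (which is closed) and using $\hat\mu(1)=\mu(1)=1$ together with finite additivity of both functionals gives $\mu(\dot\Gamma)=1-\mu((\dot\Gamma)^c)\le 1-\hat\mu((\dot\Gamma)^c)=\hat\mu(\dot\Gamma)$; and the middle inequality $\hat\mu(\dot\Gamma)\le\hat\mu(\bar\Gamma)$ is immediate from $\ch_{\dot\Gamma}\le\ch_{\bar\Gamma}$ and the monotonicity of $\hat\mu$ (which follows from $\lm(\psi)\ge 0$ for $\psi\ge 0$ and the positivity-preserving property of $\elll$).

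The core step is therefore the inequality $\hat\mu(F)\le\mu(F)$ for closed $F$. Here I would argue by contradiction: suppose there is a closed $F\subset\h$ and $\eta>0$ with $\hat\mu(F)\ge\mu(F)+\eta$. The crucial ingredient is the Khasminskii-type invariance relation of the previous lemma, $\hat\mu(P_s\psi)=\hat\mu(\psi)$ for all $s\ge 0$ and $\psi\in b(\h)$, combined with the continuity of $\hat\mu$ from $b_0(\h)$ to $\rr$ (Lemma \ref{9.58} and \eqref{9.68}) and the exponential mixing estimate \eqref{6. In-main inequality} (equivalently \eqref{c2.5}) proved in \cite{DM2014}. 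From the mixing bound, for any bounded Lipschitz $\psi\in L_b(\h)$ the functions $P_s\psi$ converge to the constant $(\psi,\mu)$ uniformly on bounded subsets of $\h$ as $s\to\iin$, while staying uniformly bounded; that is, $P_s\psi\to(\psi,\mu)$ in $b_0(\h)$. Passing to the limit $s\to\iin$ in the invariance relation and using continuity of $\hat\mu$ yields $\hat\mu(\psi)=(\psi,\mu)$ for every $\psi\in L_b(\h)$.

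Now take a sequence $\ch_F\le\psi_n\le 1$ in $L_b(\h)$ decreasing pointwise to $\ch_F$ — for instance $\psi_n(\uu)=d_\h(\uu,(F_{1/n})^c)/\big(d_\h(\uu,(F_{1/n})^c)+d_\h(\uu,F)\big)$, where $F_r$ is the open $r$-neighborhood of $F$. By monotonicity of $\hat\mu$ and the identity just obtained, $(\psi_n,\mu)=\hat\mu(\psi_n)\ge\hat\mu(\ch_F)=\hat\mu(F)\ge\mu(F)+\eta$ for all $n$. On the other hand, since $F$ is closed we have $\psi_n\downarrow\ch_F$ pointwise, so by dominated convergence $(\psi_n,\mu)\to\mu(F)$, which contradicts the previous bound. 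Hence $\hat\mu(F)\le\mu(F)$ for all closed $F$, and the sandwich \eqref{9.65} follows.

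The step I expect to be the main obstacle is the passage $\hat\mu(\psi)=\lim_{s\to\iin}\hat\mu(P_s\psi)=(\psi,\mu)$: it requires simultaneously (i) the $b_0(\h)$-continuity of the finitely additive functional $\hat\mu$, which is exactly the content of Lemma \ref{9.58} and rests on the uniform integrability of the stopping time $\tau_1^\es$ and on a priori bounds of \cite{DM2014}, and (ii) that the exponential mixing \eqref{6. In-main inequality} indeed gives convergence of $P_s\psi$ to $(\psi,\mu)$ in the topology of $b_0(\h)$, i.e.\ uniformly on balls of $\h$ for uniformly bounded sequences — this is where one must be careful that the convergence is locally uniform and not merely pointwise, so that the weaker $b_0$-convergence (rather than uniform convergence on all of $\h$) suffices to apply Lemma \ref{9.58}. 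Everything else is soft: finite additivity of $\hat\mu$ and $\mu$, monotonicity, the explicit cutoff functions $\psi_n$, and dominated convergence against the genuine $\sigma$-additive measure $\mu$.
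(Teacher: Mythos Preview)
Your proposal is correct and follows essentially the same approach as the paper's proof: reduce to $\hat\mu(F)\le\mu(F)$ for closed $F$, establish $\hat\mu(\psi)=(\psi,\mu)$ for bounded Lipschitz $\psi$ via the Khasminskii relation, $b_0(\h)$-continuity of $\hat\mu$, and exponential mixing, then conclude by the same Lipschitz cutoff sequence $\psi_n$ and dominated convergence. Even the explicit formula for $\psi_n$ matches the paper's choice.
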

\bp
We note that it is sufficient to show that for any closed set $F\subset\h$, we have
\be\label{9.71}
\hat\mu(F)\le\mu(F).
\ee

\medskip
{\it Step~1.}
Let us first show that for any bounded Lipschitz continuous function $\psi:\h\to\rr$, we have
\be\label{9.70}
\hat\mu(\psi)=(\psi, \mu).
\ee
Indeed, in view of \ef{9.61}, we have
\be\label{9.78'''}
\hat\mu(P_s\psi)=\hat\mu(\psi)\q\text{ for any }\psi\in b_0(\h).
\ee
In particular, this relation holds for any bounded Lipschitz function $\psi$ in $\h$. Moreover, it follows from inequality (1.3) in \cite{DM2014}, that for any such $\psi$, $P_s\psi$ converges to $(\psi,\mu)$ as $s\to\iin$ in the space $b_0(\h)$. Since $\hat\mu$ is continuous from $b_0(\h)$ to $\rr$, 
this implies 
$$
\hat\mu(\psi)=\hat\mu(P_s\psi)\to \hat\mu((\psi, \mu))=(\psi,\mu).
$$

\medskip
{\it Step~2.}
Now assume that inequality \ef{9.71} is not true, and let $F\subset \h$ closed and $\eta>0$ be such that
\be\label{9.72}
\hat\mu(F)\ge \mu(F)+\eta.
\ee
Let $\ch_F\le\psi_n\le 1$ be a sequence of Lipschitz continuous functions that converges pointwise to $\ch_F$ as $n\to \iin$. For example, one can take
$$
\psi_n(\uu)=\f{d_\h(\uu, F^c_{1/n})}{d_\h(\uu, F^c_{1/n})+d_\h(\uu,F)},
$$
where $F_r$ stands for the open $r$-neighborhood of $F$. Thanks to relation \ef{9.70}, inequality \ef{9.72} and monotonicity of $\hat\mu$, we have
$$
(\psi_n,\mu)=\hat\mu(\psi_n)\ge\hat\mu(\ch_F)=\hat\mu(F)\ge \mu(F)+\eta.
$$
However, this is impossible, since $(\psi_n, \mu)$ tends to $\mu(F)$ in view of the Lebesgue theorem on dominated convergence. The proof is complete.
\ep

\section{Proof of Proposition \ref{9.4}}
In view of Lemma \ref{9.67}, it is sufficient to prove that $\hat\mu=\hat\mu^\es$ satisfies
\be\label{9.78}
\exp(-(\vvv(\hat\uu_j)+\beta)/\es)\leq\hat\mu^\es(g_j)\leq \exp(-(\vvv(\hat\uu_j)-\beta)/\es).
\ee
\subsection{Upper bound} 
First note that by \eqref{9.59}-\ef{9.60}, for all $j\leq \ell+1$, we have
\be\label{8.2}
\tilde\mu^\es(\ch_{g_j})=\lm^\es(\e\int_0^{\tau_1^\es}\ch_{g_j}(S^{\es}(t;\cdot))\dd t)=\lm^\es(\ch_{\p g_j}(\cdot)\e\int_0^{\sigma_0^\es}\ch_{g_j}(S^{\es}(t;\cdot))\dd t).
\ee
In particular
\be\label{8.3}
\tilde\mu^\es(\ch_{g_j})\leq\lm^\es(\ch_{\p g_j}) \sup_{\vv\in\p g_j}\e_{\vv}\sigma_0^\es\leq\lm^\es(\ch_{\p g_j}) \sup_{\vv\in\tilde g}\e_{\vv}\sigma_0^\es.
\ee
On the other hand, we have
\be\label{8.4}
\tilde\mu^\es(1)\geq \tilde\mu^\es(\ch_{g'})\geq (1-\lm^\es(\ch_{\p g_{\ell+1}}))\min_{j\leq \ell} \inf_{\vv\in \bar g_j}\e\int_0^{\sigma_0^\es}\ch_{g_j}(S^{\es}(t;\vv))\dd t.
\ee
We recall that $g'$ stands for the set $g\backslash g_{\ell+1}$. We need the following result proved at the end of this section.
\begin{lemma}\label{8.21}
For any $\rho_*>0$ there exist $0<\rho'_1<\rho'_0<\rho_1<\rho_0<\rho_*$ such that for $\es<<1$ we have
\begin{align}
 \sup_{\vv\in\tilde g}\e_{\vv}\sigma_0^\es&\leq \exp(\beta/\es)\label{8.5},\\
\inf_{\vv\in \bar g_j}\e\int_0^{\sigma_0^\es}\ch_{g_j}(S^{\es}(t;\vv))\dd t&\geq \exp(-\beta/\es)\q\text{ for any } j\leq \ell.\label{8.6}
\end{align}
\end{lemma}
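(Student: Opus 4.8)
The plan is to obtain both estimates from Lemma~\ref{8.28} and Theorem~\ref{6.1}, after fixing the radii suitably. First I would choose $\rho_0>\rho_1>\rho_0'>\rho_1'>0$, in this order, small enough that: $\rho_0<\tilde\rho$, with $\tilde\rho$ from Lemma~\ref{8.28}; $\rho_0<\f{1}{3}\min_{m\neq n}|\hat\uu_m-\hat\uu_n|_\h$, so that the balls $B(\hat\uu_i,2\rho_0)$, $i\le\ell+1$, are pairwise disjoint and each $B(\hat\uu_i,2\rho_0)$ is disjoint from all $\tilde g_k$ with $k\neq i$ (in particular the $\tilde g_i$ are pairwise disjoint); and, for $i\le\ell$, the ``exit cost'' $\bar V_i:=\inf\{V(\hat\uu_i,\Zz):\rho_0<|\Zz-\hat\uu_i|_\h<2\rho_0\}$ is smaller than $\beta$. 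The last requirement can be met because $V(\hat\uu_i,\cdot)$ is finite near $\hat\uu_i$ and continuous there with $V(\hat\uu_i,\hat\uu_i)=0$: this is a local approximate-controllability property of the NLW equation, and it is exactly here that the filtration in $\eta$ built into \ef{8.49} is needed, since exact local controllability by a spatially regular force fails.

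For \ef{8.5}, it suffices to produce a fixed $T_0>0$ with $p_\es:=\inf_{\vv\in\tilde g}\pp_\vv(\sigma_0^\es\le T_0)\ge\exp(-\beta/(2\es))$ for $\es\ll1$: the strong Markov property applied at the times $kT_0$ then gives $\pp_\vv(\sigma_0^\es>kT_0)\le(1-p_\es)^k$ for every $\vv\in\tilde g$, whence $\e_\vv\sigma_0^\es\le T_0/p_\es\le\exp(\beta/\es)$ for $\es$ small. To bound $p_\es$, fix $\vv\in\tilde g_i$. If $i\le\ell$, I would apply Lemma~\ref{8.28} (with $r_1=\rho_0$ and $r_2$ tiny) to get a time $T_1$ and an action of energy $\le\beta/8$ whose trajectory stays in $\bar B(\hat\uu_i,\rho_0)$ and ends in $\bar B(\hat\uu_i,r_2)$; then, using $\bar V_i<\beta$ and continuity of the flow in the initial datum (for $r_2$ small), append an action of energy $\le\beta/8$ steering the endpoint to a point whose distance to $\hat\uu_i$ lies in $(\rho_0+\delta,2\rho_0-\delta)$, for a fixed $\delta>0$. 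The concatenated action $\ph_\vv$ lives on a fixed interval $[0,T_0]$, has energy $\le\beta/4$, its trajectory stays in $\bar B(\hat\uu_i,2\rho_0)$ and ends at distance $\ge\rho_0+\delta$ from $\hat\uu_i$; hence on the event $\{\sup_{[0,T_0]}|S^\es(\cdot;\vv)-S^{\ph_\vv}(\cdot;\vv)|_\h<\delta\}$ the flow leaves $\tilde g_i$, and therefore $\tilde g$ (by the disjointness of the doubled balls), before time $T_0$. By Theorem~\ref{6.1} this event has probability $\ge\exp(-(J_{T_0}(\ph_\vv)+\beta/4)/\es)\ge\exp(-\beta/(2\es))$ for $\es\ll1$, uniformly in $\vv\in\tilde g_i$. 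For $i=\ell+1$ the argument is similar and easier: since $\hat\uu_{\ell+1}$ is not an equilibrium, a small-energy control --- or, up to a time bounded uniformly in $\vv\in\tilde g_{\ell+1}$, the deterministic flow itself --- drives any such $\vv$ out of $B(\hat\uu_{\ell+1},\rho_0')$ while staying in $B(\hat\uu_{\ell+1},2\rho_0')$, and Theorem~\ref{6.1} again gives the bound. Taking $T_0$ to be the largest of the finitely many times obtained in this way, \ef{8.5} follows.

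For \ef{8.6}, fix $j\le\ell$ and $\vv\in\bar g_j$. I would apply Lemma~\ref{8.28} with $r_1=\rho_1$ and $r_2$ so small that the resulting time $T$ satisfies $T\ge t_1+1$, where $t_1=(2/\al)\ln 2$; this produces an action $\ph_\vv$ with $J_T(\ph_\vv)\le\beta/2$ and, by \ef{8.24}, $|S^{\ph_\vv}(t;\vv)-\hat\uu_j|_\h\le e^{-\al t/2}\rho_1$ for $t\in[0,T]$. Choosing $0<\delta<\f{1}{4}\bigl((\rho_0-\rho_1)\wedge\rho_1\bigr)$, on the event $\{\sup_{[0,T]}|S^\es(\cdot;\vv)-S^{\ph_\vv}(\cdot;\vv)|_\h<\delta\}$ one has $S^\es(t;\vv)\in\tilde g_j$ for all $t\in[0,T]$ (hence $\sigma_0^\es>T$) and $S^\es(t;\vv)\in g_j$ for $t\in[t_1,T]$, so that $\int_0^{\sigma_0^\es}\ch_{g_j}(S^\es(t;\vv))\,dt\ge T-t_1\ge1$ on that event. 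By Theorem~\ref{6.1} its probability is at least $\exp(-(J_T(\ph_\vv)+\beta/2)/\es)\ge\exp(-\beta/\es)$ for $\es\ll1$, uniformly in $\vv\in\bar g_j$, which is \ef{8.6}.

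The step I expect to be the main obstacle is the ``pushing-out'' construction in the proof of \ef{8.5}: one has to leave an arbitrarily small neighborhood of an equilibrium --- possibly Lyapunov stable, so that the drift points inward --- by a robust control of arbitrarily small energy. This reduces to the continuity of the quasipotential $V(\hat\uu_i,\cdot)$ at $\hat\uu_i$, a local approximate-controllability fact for the damped NLW equation which, unlike in the finite-dimensional Freidlin--Wentzell theory, cannot be derived from exact controllability; the definition \ef{8.49} of $V$ through a filtration in $\eta$ is made precisely to accommodate this. Everything else is the concatenation of controlled trajectories and the uniform large-deviations lower bound of Theorem~\ref{6.1}.
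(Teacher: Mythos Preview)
Your plan is sound and your identification of the ``pushing-out'' step as the crux of \eqref{8.5} is exactly right. The paper resolves it differently and more directly than what you propose. Rather than invoking continuity of $V(\hat\uu_i,\cdot)$ at $\hat\uu_i$, it observes that $V(\hat\uu_i,\hat\uu_j)<\infty$ for some $j\le\ell$, $j\neq i$ (via Proposition~\ref{1.88}), fixes an action $\ph$ realising this, and simply restricts $\ph$ to a short initial interval $[0,t_*]$ on which its energy is $\le\beta$ but $\tilde\uu_i:=S^\ph(t_*;\hat\uu_i)\neq\hat\uu_i$. The radii are chosen \emph{after} this construction, small enough that $|\hat\uu_i-\tilde\uu_i|_\h\ge 4\tilde\rho$ and $S^\ph(t_*;\uu)\in B(\tilde\uu_i,\tilde\rho)$ for all $\uu\in B(\hat\uu_i,\rho_*)$; then the single fixed action $\ph_*:=\ph|_{[0,t_*]}$ pushes every $\vv\in B(\hat\uu_i,\rho_*)$ out of that ball. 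This sidesteps both your preliminary invocation of Lemma~\ref{8.28} (not used at all for \eqref{8.5} in the paper) and the separate approximate-controllability fact you flag --- it \emph{constructs} the small-energy exit trajectory rather than postulating its existence via $\bar V_i<\beta$. Your two-stage route (centre first, then push out) would also work, but the containment ``stays in $\bar B(\hat\uu_i,2\rho_0)$'' during the push-out phase needs an extra argument that the paper's single fixed action avoids.

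For \eqref{8.6} your argument is essentially the paper's, with a cosmetic variation: you obtain the dwell time in $g_j$ from the exponential decay \eqref{8.24} on $[t_1,T]$, whereas the paper extends the control by zero on $[T,T+1]$ and uses the local stability of the free flow there. Both produce a unit-length window inside $g_j$ on the good event and conclude via Theorem~\ref{6.1}.
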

We first  note that $\vvv(\hat\uu_{\ell+1})$ is positive. Indeed, for any Lyapunov stable $\hat\uu_i$, the quantity $V(\hat\uu_i, \hat\uu_{\ell+1})$ is positive, and in view of \ef{8.45}, we have $\vvv(\hat\uu_{\ell+1})\ge \min V(\hat\uu_i, \hat\uu_{\ell+1})$, where the minimum is taken over $i\le \ell$ such that $\hat\uu_i$ is stable. Therefore, decreasing $\beta>0$ if needed, we may assume that $\vvv(\hat\uu_{\ell+1})\ge 2\beta$. In view of \ef{8.27'}, we have
\be\label{8.9}
\lm^\es(\ch_{\p g_{\ell+1}})\leq \exp (-(\vvv(\hat\uu_{\ell+1})-\beta)/\es)\leq\exp(-\beta/\es)\leq\f{1}{2}.
\ee
Combining this with inequalities \eqref{8.4} and \eqref{8.6}, we infer
\be\label{8.13}
\tilde\mu^\es(1)\geq \f{1}{2}\exp(-\beta/\es).
\ee
Further, using inequalities \ef{8.27'}, \eqref{8.3} and \eqref{8.5}, we get
\be\label{8.12}
\tilde\mu^\es(\ch_{g_j})\leq \exp(\beta/\es)\lm^\es(\ch_{\p g_j})\leq \exp(-(\vvv(\hat \uu_j)-2\beta)/\es).
\ee
Finally, combining this with \eqref{8.13}, we derive
$$
\hat\mu^\es(g_j)=\hat\mu^\es(\ch_{g_j})\leq 2\exp(\beta/\es) \exp(-(\vvv(\hat \uu_j)-2\beta)/\es)\leq \exp(-(\vvv(\hat \uu_j)-4\beta)/\es),
$$
where inequality holds for all $j\leq \ell+1$ and $\es<<1$.

\subsection{Lower bound}
We shall first establish the bound for $g_j$, $j\leq \ell$, and show that this implies the necessary bound for $g_{\ell+1}$. In view of \eqref{8.27'}, \eqref{8.2} and \eqref{8.6}, we have
\be\label{8.11}
\tilde\mu^\es(\ch_{g_j})\geq \exp(-(\vvv(\hat\uu_j)+2\beta)/\es).
\ee
On the other hand, by \eqref{8.12} we have
$$
\tilde\mu^\es(\ch_{g'})\leq \ell\exp(2\beta/\es).
$$
Note also that thanks to Lemmas \ref{9.47} and \ref{9.67}, we have that
$$
\hat\mu^\es(g')\ge\mu^\es(g')\geq \exp(-\beta/\es)\q\text{ for } \es<<1.
$$
Indeed, by definition, $g'$ represents the $\rho_1$-neighborhood of the set $\E=\{\hat\uu_1,\ldots, \hat\uu_\ell\}$.
It follows from the last two inequalities that
$$
\tilde\mu^\es(1)=\tilde\mu^\es(\ch_{g'})/\hat\mu^\es(g')\leq \ell\exp(3\beta/\es)\leq \exp(4\beta/\es)\q\text{ for } \es<<1.
$$
Finally, combing this inequality with \eqref{8.11}, we infer
\be\label{8.14}
\hat\mu^\es(g_j)\geq \exp(-(\vvv(\hat\uu_j)+6\beta)/\es),
\ee
where inequality holds for all $j\leq \ell$ and $\es<<1$.

\medskip
 We now show that inequality \eqref{8.14} implies 
\be\label{8.15}
\hat\mu^\es(g_{\ell+1})\geq \exp(-(\vvv(\hat\uu_{\ell+1})+8\beta)/\es)\q\text{ for }\es<<1.
\ee
We assume that $\vvv(\hat \uu_{\ell+1})<\iin$. First note that in view of \eqref{8.45}, we have 
\be\label{8.16}
\vvv(\hat \uu_{\ell+1})=\min_{i\leq \ell}[W_\ell(\hat \uu_i)+V(\hat\uu_i, \hat\uu_{\ell+1})]-\min_{i\leq \ell}W_\ell(\hat\uu_i).
\ee
Let us find $m\leq \ell$ such that
\be\label{8.17}
W_\ell(\hat \uu_m)+V(\hat\uu_m, \hat\uu_{\ell+1})=\min_{i\leq \ell}[W_\ell(\hat \uu_i)+V(\hat\uu_i, \hat\uu_{\ell+1})].
\ee

By definition of $V$, there is a finite time $T>0$ and an action $\ph\in L^2(0,T;H_\vartheta)$ such that
\be\label{5.3}
J_T(\ph)\leq V(\hat\uu_m, \hat\uu_{\ell+1})+\beta,\q |S^\ph(T;\hat\uu_m)-\hat\uu_{\ell+1}|_\h<\rho'_1/4.
\ee
Since the operator $S^\ph$ continuously depends on the initial point, there is $\kp>0$ such that 
$$
|S^\ph(T;\uu)-\hat\uu_{\ell+1}|_\h<\rho'_1/2,
$$
provided $|\uu-\hat\uu_m|_\h\leq\kp$. It follows from this inequality, relation \ef{9.78'''} and monotonicity of $\hat\mu^\es$ that
\begin{align}
\hat\mu^\es(g_{\ell+1})&=\hat\mu^\es(\ch_{g_{\ell+1}})=\hat\mu^\es(P_T\ch_{g_{\ell+1}})\ge \hat\mu^\es(\ch_{\bar B(\hat\uu_m, \kp)} P_T\ch_{g_{\ell+1}})\notag\\
&=\hat\mu^\es(\ch_{\bar B(\hat\uu_m, \kp)}(\cdot)\pp (|S^{\es}(T;\cdot)-\hat\uu_{\ell+1}|_\h<\rho'_1))\notag\\
&\geq  \hat\mu^\es(\ch_{\bar B(\hat\uu_m, \kp)}(\cdot)\pp(|S^{\es}(T;\cdot)-S^\ph(T;\cdot)|_\h<\rho'_1/2)\notag\\
&\ge\inf_{\uu\in \bar B(\hat\uu_m, \kp)}\pp(|S^{\es}(T;\uu)-S^\ph(T;\uu)|_\h<\rho'_1/2)\hat\mu^\es(B(\hat\uu_m, \kp))\label{5.6}.
\end{align}
By Theorem \ref{6.1} (applied to the set $B= \bar B(\hat\uu_m, \kp))$, we can find $\es_1=\es_1(\hat\uu_{\ell+1},\kp,\rho'_1,T)>0$ such that for all $\uu\in\bar B(\hat\uu_m, \kp)$, we have
$$
\pp(|S^{\es}(T;\uu)-S^\ph(T;\uu)|_\h<\rho'_1/2)\geq\exp(-(J_T(\ph)+\beta)/\es) \q\text {for }\es\leq \es_1.
$$
It follows that
\be\label{9.17}
\hat\mu^\es(g_{\ell+1})\geq\exp(-(J_T(\ph)+\beta)/\es) \hat\mu^\es(B(\hat\uu_m, \kp)) .
\ee
Combining this with first inequality of \eqref{5.3} and \eqref{5.6}, we get
$$
\hat\mu^\es(g_{\ell+1})\geq \hat\mu^\es  (B(\hat\uu_m, \kp))\exp(-(V(\hat\uu_m,\hat\uu_{\ell+1})+2\beta)/\es).
$$
Further, using this inequality and \eqref{8.14} \,\footnote{\, Recall that this inequality is true for any neighborhood of $\hat\uu_j,$ for $j\leq \ell$.} with $j=m$, we infer
$$
\hat\mu^\es(g_{\ell+1})\geq \exp(-(\vvv(\hat\uu_m)+V(\hat\uu_m,\hat\uu_{\ell+1})+8\beta)/\es).
$$
To complete the proof, it remains to note that thanks to \ef{8.17}, we have
\begin{align*}
\vvv(\hat\uu_m)+V(\hat\uu_m,\hat\uu_{\ell+1})&=W_\ell(\hat\uu_m)+V(\hat\uu_m,\hat\uu_{\ell+1})-\min_{i\leq \ell}W_\ell(\hat\uu_i)\\
&=\min_{i\leq \ell}[W_\ell(\hat \uu_i)+V(\hat\uu_i, \hat\uu_{\ell+1})]-\min_{i\leq \ell}W_\ell(\hat\uu_i)=\vvv(\hat\uu_{\ell+1}).\end{align*}

\subsection{Proof of Lemma \ref{8.21}} 
{\it Step~1: Derivation of \eqref{8.5}.} Let us fix $i\leq \ell+1$ and let $j\leq \ell$ be an integer different from $i$. Using Proposition \ref{1.88}, it is not difficult to show that $V(\hat\uu_i,\hat\uu_j)<\iin$ (see the derivation of \ef{9.12}). Therefore, if we denote by $d$ the distance between $\hat\uu_i$ and $\hat\uu_j$, we can find a finite time $T>0$ and an action $\ph$ defined on $[0, T]$ such that $S^\ph(T, \hat\uu_i)\in B(\hat\uu_j, d/2)$ and
$$
\int_0^T|\ph(s)|_{H_\vartheta}^2\dd s\leq V(\hat\uu_i,\hat\uu_j)+1.
$$
Let us find $t_*>0$ so small that 
\be\label{8.22}
\int_0^{t_*}|\ph(s)|_{H_\vartheta}^2\dd s\leq \beta\q\text{ and }\q \tilde \uu_i\neq\hat\uu_i,
\ee
where we set $\tilde \uu_i=S^\ph(t_*,\hat\uu_i)$. Further, let $\tilde\rho>0$ be so small, that $|\hat\uu_i-\tilde\uu_i|_\h\geq 4\tilde\rho$. And finally, let $0<\rho_*<\tilde\rho$ be such that
\be\label{8.23}
S^\ph(t_*;\uu)\in B(\tilde\uu_i,\tilde\rho)\q\text{ for any }\uu\in B(\hat\uu_i, \rho_*).
\ee
Now notice that if the trajectory $S^\es(t;\vv)$ issued from $\vv\in B(\hat\uu_i,\rho_*)$ is in the $\tilde\rho$-neighborhood of $S^\ph(t,\vv)$ in $C(0,t_*;\h)$ distance, then $S^\es(t_*,\vv)\notin B(\hat\uu_i,\rho_*)$. Moreover, if we denote by $\ph_*$ the restriction of $\ph$ on $[0, t_*]$, then for any $\vv$ in $B(\hat\uu_i,\rho_*)$, we have
$$
I_{t_*}(S^{\ph_*}(\cdot;\vv))=J_{t_*}({\ph_*})=\int_0^{t_*}|\ph(s)|_{H_\vartheta}^2\dd s\leq \beta.
$$
Applying Theorem \ref{6.1}, we derive
$$
\sup_{\vv\in B(\hat\uu_i,\rho_*)}\pp_\vv(\tau^\es_{\text{exit}}>t_*)\leq 1-\exp(-2\beta/\es),
$$
where we denote by $\tau^\es_{\text{exit}}(\vv)$ the time of the first exit of the process $S^\es(\cdot;\vv)$ from $B(\hat\uu_i,\rho_*)$. Now using the Markov property, we infer
\be\label{9.27}
\sup_{\vv\in B(\hat\uu_i,\rho_*)}\pp_\vv(\tau^\es_{\text{exit}}>n\,t_*)\leq (1-\exp(-2\beta/\es))^n,
\ee
which implies \eqref{8.5}.

 \bigskip
{\it Step~2: Proof of \eqref{8.6}.} Let us fix any stationary point $\hat\uu_i$ and let $\rho_*>0$. Given any $0<\rho_1<\rho_*$ let us find  $0<\rho_2<\rho_1$ such that for any $\vv\in \bar B(\hat\uu_i, \rho_2)$ we have $S(t;\vv)\in B(\hat\uu_i,\rho_1/2)$ for all $t\in [0,1]$. We assume that $\rho_*>0$ is so small that the conclusion of Lemma \ref{8.28} holds. We use the following construction: given any point $\vv\in \bar B(\hat\uu_i,\rho_1)$ we denote by $\tilde\ph_\vv$ the action defined on the time interval $[0, T+1]$ that coincides with $\ph_\vv$ on $[0,T]$ and vanishes on $[T, T+1]$. Then we have
\be\label{8.26}
I_{T+1}(S^{\tilde\ph_\vv}(\cdot;\vv))=J_{T+1}(\tilde\ph_\vv)=J_T(\ph_\vv)\leq\beta\q\text{ for any }\vv\in \bar B(\hat\uu_i,\rho_1).
\ee
Now let us take any $\rho_0\in (\rho_1, \rho_*)$ and let $\De<\min\{(\rho_0-\rho_1),\rho_2/2)\}$ be any positive number. Then by construction we have the following: if the trajectory $S^\es(t;\vv)$ is in the $\De$-neighborhood of $S^{\tilde\ph_\vv}(t;\vv)$ in the $C(0, T+1;\h)$ distance, then it remains in $\tilde g_i\equiv B(\hat\uu_i,\rho_0)$ for all $t\in [0,T+1]$ and moreover, it belongs to $g_i\equiv B(\hat\uu_i,\rho_1)$ for all $t\in[T,T+1]$. Therefore, we have
$$
\inf_{\vv\in \bar g_i}\e\int_0^{\sigma_0^\es}\ch_{g_j}(S^{\es}(t;\vv))\dd t\geq \pp(A), 
$$
where
$$
A=\{\om\in\Omega:\sup_{\vv\in \bar g_i} d_{C(0,T+1;\h)}(S^\es(\cdot;\vv), S^{\tilde\ph_\vv}(\cdot;\vv))<\De\}.
$$
Combining this with inequality \eqref{8.26} and Theorem \ref{6.1}, we arrive at \eqref{8.6}. The proof of Lemma \ref{8.21} is complete.

\section{A priori upper bound}\label{1.77}
In this section we establish Proposition \ref{9.18}. To simplify presentation, we first outline the main ideas.

\subsection{Scheme of the proof of Proposition \ref{9.18}}
\nt

\bigskip
{\it Compactness of level sets.} Let us suppose that we can prove the following: there is a constant $s\in (0, 1/2)$ such that
\be\label{1.75} 
|\uu_*|_{H^{s+1}(D)\times H^s(D)}\leq C(M)\q \text{ for any } \uu_*\in\{V_\aaa\leq M\},
\ee
where $H^s$ stands for the scale of Hilbert spaces associated with $-\de$.
Then the compactness of the embedding $H^{s+1}(D)\times H^s(D)\hookrightarrow\h$ implies that the level sets of $V_\aaa$ are relatively compact in $\h$. Thus, if inequality \eqref{1.75} is true, we only need to prove that the level sets  of $V_\aaa$ are closed.
Let $\uu_{*}^j$ be a sequence in $\{V_\aaa\leq M\}$ that converges to $\uu_*$ in $\h$. We need to show that $V_\aaa(\uu_*)\leq M$. By definition of $V_\aaa$, we have to show that for any positive constants $\eta$ and $\eta'$ there is an initial point $\uu_0\in\aaa$, a finite time $T=T_\eta>0$, and an action $\ph$ such that
\be\label{1.42}
J_{T}(\ph)\leq M+\eta'\q \text{ and }\q |S^{\ph}(T;\uu_{0})-\uu_*|_\h\leq \eta.
\ee
Let us fix $j$ so large that
\be\label{1.43}
|\uu_{*}^j-\uu_*|_\h\leq \eta/2.
\ee

Since $V_\aaa(\uu_*^j)\leq M$, there is a point $\uu_0\in\aaa$, a time $T=T_\eta>0$ and an action $\ph$ such that
$$
J_{T}(\ph)\leq M+\eta'\q \text{ and }\q |S^{\ph}(T;\uu_{0})-\uu_*^j|_\h\leq \eta/2.
$$
Combining this with inequality \eqref{1.43}, we derive \eqref{1.42}. The proof of inequality \eqref{1.75} relies on some estimates of the limiting equation and is carried out in the appendix.

\bigskip
{\it The bound \ef{9.19}}.
Due to the equivalence of \eqref{7.1} and \eqref{7.2}, we need to show that for any positive numbers $\De, \De'$ and $M$ there is $\es_*>0$ such that 
\be\label{1.99}
\mu^\es(\uu\in\h: d_{\h}(\uu,\{V_\aaa\leq M\})\geq\De)\leq\exp(-(M-\De')/\es)\q\text{ for } \es\leq\es_*.
\ee
From now on, we shall suppose that the constants $\De, \De'$ and $M$ are fixed.

\bigskip
{\it Reduction.} To prove \ef{1.99}, we first show that there is $\eta>0$ such that
\be\label{1.69}
\{\uu(t): \uu(0)\in \aaa_\eta, I_t(\uu_\cdot)\leq M-\De'\}\subset K_{\De/2}(M), \q t> 0,
\ee
where $\aaa_\eta$ stands for the open $\eta$-neighborhood of the set $\aaa$ and $K_\De(M)$ is  the open $\De$-neighborhood of the level set $\{V_\aaa\leq M\}$.
We then show that there is $R>0$ such that 
\be
\pP_1:=\mu^\es(B_R^c)\leq \exp(-M/\es)\q\text{ for }\es\le1.\label{1.30}
\ee
Once the constants $\eta$ and $R$ are fixed, we prove that there is $T_*>0$ such that
\be\label{2.16}
a:=\inf\{I_{T_*}(\uu_\cdot); \, \uu_\cdot\in C(0,T_*;\h), \, \uu(0)\in B_R, \,\uu(T_*)\in \aaa_\eta^c\}>0.
\ee

Taking inclusion \ef{1.69} and inequalities \ef{1.30}-\ef{2.16} for granted, let us show how to derive \ef{1.99}.

\bigskip
{\it Auxiliary construction.}
For any $n\geq 1$ introduce the set 
$$
E_n=\{\uu_\cdot\in C(0, nT_*;\h): \uu(0)\in B_R; \,\,\uu(kT_*)\in B_R\cap \aaa_\eta^c, \, k=1,\ldots, n\}.
$$
Let us mention that the idea of this construction is inspired by \cite{sowers-1992b} and $E_n$ is a modification of a set introduced by Sowers in that paper.
We claim that inequality \ef{1.69} and the structure of the set $E_n$ imply that for $n$ sufficiently large we have
\be\label{1.27}
\pP_2:=\sup_{\vv\in B_R}\pp(S^{\es}(\cdot;\vv)\in E_n)\leq\exp(-M/\es).
\ee

Indeed, in view of Theorem \ref{6.1}, to this end, it is sufficient to show that
\be\label{1.70}
\inf\{I_{nT_*}(\uu_\cdot); \, \uu_\cdot\in E_{n,T_*}\}> M.
\ee

We show that this inequality holds for any $n>(M+1)/a$. To see this, let us fix an integer $n$ satisfying this inequality and suppose that \eqref{1.70} is not true. Then there is an initial point $\vv\in B_R$ and an action $\ph$ defined on the interval $[0, nT_*]$ such that

$$
\f{1}{2}\int_{0}^{nT_*}|\ph(s)|_{H_\vartheta}^2\dd s<M+1
$$
and $S^\ph(jT_*;\vv)\in B_R\cap \aaa_\eta^c$ for all $j\in\{1,\ldots, n\}$. It follows from this inequality that there is $j\in\{1,\ldots, n-1\}$ such that
$$
\f{1}{2}\int_{jT_*}^{(j+1)T_*}|\ph(s)|_{H_\vartheta}^2\dd s<(M+1)/n<a.
$$
Therefore, the restriction of $\ph$ on the interval $[jT_*, (j+1)T_*]$ is an action whose energy is smaller than $a$ and that steers the point $\vv_1=S^\ph(jT_*;\vv)\in B_R$ to $\vv_2=S^\ph((j+1)T_*;\vv)\notin \aaa_\eta$. However, this is in contradiction with \eqref{2.16}. Inequality \ef{1.27} is thus established.

\bigskip
{\it Completion of the proof.} 
We now show that 
\be
\pP_3:=\int_{B_R}\pp(S^{\es}(t_*;\vv)\notin K_{\De}(M),\,S^{\es}(\cdot;\vv)\notin E_n)\mu^\es(d\vv)\leq \exp(-(M-2\De')/\es), \label{7.12}
\ee
where we set $t_*=(n+1)T_*$. Once this is proved, we will get \ef{1.99}. Indeed, by definition of the set $K_\De(M)$ and stationarity of $\mu^\es$, we have
\begin{align*}
\mu^\es(\uu\in\h: d_{\h}(\uu,\{V_\aaa\le M\})\geq\De)&=\mu^\es(\uu\in\h: \uu\notin K_\De(M))\\
&=\int_{\h}\pp(S^{\es}((n+1)T_*;\vv)\notin K_\De(M))\mu^\es(d\vv)\\
&\leq \pP_1+\pP_2+\pP_3.
\end{align*}
Combining inequalities \ef{1.30}, \eqref{1.27} and \eqref{7.12} we arrive at \ef{1.99}, where $\De'$
should be replaced by $3\De'$.

\medskip
To prove inequality \ef{7.12}, we first note that
\begin{align*}
\pP_3&\leq\int_{B_R}\pp\left(\bigcup_{j=1}^n \{S^{\es}(t_*;\vv)\notin K_{\De}(M),\,S^{\es}(jT_*;\vv)\in B_R^c\cup\aaa_\eta\}\right)\mu^\es(d\vv)\notag\\
&\leq \sum_{j=1}^n \int_{B_R}\pp(S^{\es}(t_*;\vv)\notin K_{\De}(M),\,S^{\es}(jT_*;\vv)\in B_R^c )\mu^\es(d\vv)\notag\\
&\q +\sum_{j=1}^n \int_{B_R}\pp(S^{\es}(t_*;\vv)\notin K_{\De}(M),\,S^{\es}(jT_*;\vv)\in\aaa_\eta)\mu^\es(d\vv):=\pP_3'+\pP_3''.
\end{align*}
By the stationarity of $\mu^\es$, the first term in the last inequality satisfies
\be\label{1.84}
\pP_3'\leq\sum_{j=1}^n\int_{\h}\pp(S^{\es}(jT_*;\vv)\in B_R^c )\mu^\es(d\vv)=n\,\pP_1\le n\exp(-M/\es).
\ee
Moreover, using inclusion \ef{1.69} and following \cite{sowers-1992b}, it is not difficult to prove (see Section \ref{9.40}) that
\be\label{9.8}
\pP_3''\leq\exp(-(M-\De')/\es)\q\text{ for }\es<<1
\ee
and thus to derive \ef{7.12}.

\bigskip
{\it Idea of the proof of \ef{1.69}-\ef{2.16}.} 
The proof of inequality \ef{1.30} is rather standard and relies on exponential estimates for solutions and a simple application of the Fatou lemma. 
The derivation of inclusion \eqref{1.69} is the most involved part in the proof. Without going into technicalities, we shall describe here the main ideas. We note that inclusion \eqref{1.69} clearly holds for $\eta=0$. Indeed, in this case $\uu(0)\in\aaa$, and since we have $I_t(\uu_\cdot)\leq M-\De'\leq M$, the point $\uu(t)$ is reached from the set $\aaa$ with action $\ph$ such that $J_t(\ph)\leq M$. It follows from the definition of $V_\aaa$ that $V_\aaa(\uu(t))\leq M$, so $d_\h(\uu(t),\{V_\aaa\le M\})=0$, and therefore we have \eqref{1.69}. So what we need to show is that if the initial point is sufficiently close to the attractor, then the inclusion \eqref{1.69} still holds. To prove this, we show that there is a flow $\hat \uu(t)$ issued from $\hat \uu(0)\in\aaa$ that remains in the $\De/2$-neighborhood of $\uu(t)$, and whose action function is $\De'$-close to that of $\uu(t)$. Once this is proved, the inclusion \eqref{1.69} will follow from the fact that $\hat \uu(t)\in \{V_\aaa\le M\}$, since it is reached from the set $\aaa$ at finite time $t$ with action whose energy is smaller than $M$. The construction of the flow $\hat \uu(t)$ relies on Proposition \ref{1.88}.

\medskip
As for the proof of inequality \eqref{2.16}, we first note that this inequality means the following: if we wait for sufficiently long time, then the energy needed to reach a  point outside $\eta$-neighborhood of the global attractor $\aaa$ is  positive uniformly with respect to the initial point in the ball $B_R$. The intuition behind this is that after sufficiently long time, the image of $B_R$ will be near the attractor $\aaa$, and the energy needed to steer the point close to the set $\aaa$ (say $\eta/2$-close) to a point outside its $\eta$-neighborhood, is positive. Let us finally mention that the fact that $V_\aaa$ vanishes only on the set $\aaa$ follows immediately from the definition of $V_\aaa$ and inequality \eqref{2.16}.

\subsection{Proof of inclusion \eqref{1.69}}\label{9.13}
\nt

{\it Step~1}. Let us suppose that \eqref{1.69} does not hold. Then there exist two sequences of positive numbers numbers $\eta_j\to 0$ and $T_j $, a sequence of initial points $(\uu^j_0)\subset\aaa_{\eta_j}$, and of action functions $(\ph^j)$ with $J_{T_j}(\ph^j)\leq M-\De'/2$, such that for each $j\geq 1$ the flow $\uu^j(t)=S^{\ph^j}(t;\uu_0^j)$ satisfies the inequality
\be\label{1.6.4}
d_\h(\uu^j(T_j),\{V_\aaa\le M\})\geq\De/2.
\ee

Let us also note that in view \eqref{1.38}, there is a positive constant $\mmm$ depending only on $\|h\|$ and $M$ such that for all $j\geq 1$ we have
\be\label{1.67}
\sup_{[0,T_j]} |\uu^j(t)|_\h\leq\mmm.
\ee

\bigskip
{\it Step~2}. For each $j\geq 1$, let us find $\vv^j_0\in\aaa$ such that $|\vv^j_0-\uu^j_0|_\h\leq\eta_j$ and introduce the intermediate flow $\vv^j(t)=[v(t),\dt v(t)]$ defined on the interval $[0,T_j]$ that solves 
\be\label{1.7}
\p_t^2 v+\gamma\p_t v-\de v+f(v)=h(x)+\ph^j+P_N[f(v)-f(u)], \q [v(0),\dt v(0)]=\vv^j_0,
\ee
where $N\geq 1$ is an integer to be chosen later and $u$ is the first component of $\uu^j(t)$. In view of Proposition \ref{1.88}, there is $N$ depending only on $\mmm$ such that for all $j\ge 1$ we have
\be\label{1.12}
|\vv^j(t)-\uu^j(t)|^2_\h\leq e^{-\al t}|\vv^j_0-\uu^j_0|_\h^2 \q\q\text{for all } t\in[0,T_j].
\ee

\bigskip
{\it Step~3}. Now let us fix $N=N(\mmm)$ such that we have \eqref{1.12}, and let us show that for $j>>1$  we have  
\be\label{1.16}
J_{T_j}(\hat\ph^j)\leq M, 
\ee
where we set 
\be\label{1.74}
\hat \ph^j=\ph^j+P_N[f(v)-f(u)].
\ee
By definition of $J$, we have
\be\label{1.14}
J_{T_j}(\hat\ph^j)=\f{1}{2}\int_0^{T_j}|\ph^j(s)+P_N[f(v(s))-f(u(s))]|_{H_\vartheta}^2\dd s.
\ee
We first note that
$$
|a+b|_{H_\vartheta}^2\leq p\,|a|_{H_\vartheta}^2+\f{p}{p-1}\,|b|_{H_\vartheta}^2,
$$
where $p>1$ is a constant to be chosen later. Therefore
\begin{align}
J_{T_j}(\hat\ph^j)&\leq \f{p}{2}\int_0^{T_j}|\ph^j(s)|_{H_\vartheta}^2\dd s+\f{p}{2(p-1)}\int_0^{T_j }|P_N[f(v(s))-f(u(s))]|_{H_\vartheta}^2\dd s\notag\\
&\leq p\,J_{T_j}(\ph^j)+C(N)\,\f{p}{p-1}\int_0^{T_j }|f(v(s))-f(u(s))|_{L^1}^2\dd s\label{1.15}.
\end{align}
By the H\"older and Sobolev inequalities, we have
$$
|f(v)-f(u)|_{L^1}^2\leq C_1\|u-v\|_1^2(\|u\|_1^2+\|v\|_1^2+1).
$$
Combining this with inequalities \eqref{1.67} and \eqref{1.12} we see that
$$
\int_0^{T_j }|f(v(s))-f(u(s))|_{L^1}^2\dd s\leq C_2 (\mmm^2+1)\int_0^{T_j}e^{-\al s}|\vv^j_0-\uu^j_0|_{\h}^2\dd s\leq C_3(\mmm^2+1)\eta_j^2.
$$
It follows from this inequality and \eqref{1.15}, and the fact that $N$ depends only on $\mmm$, that
$$
J_{T_j}(\hat\ph^j)\leq p\,J_{T_j}(\ph^j)+C(\mmm)\,\f{p}{p-1}\,\eta_j^2.
$$
Let us take 
$$
p=\f{M-\De'/4}{M-\De'/2}.
$$
Since $J_{T_j}(\ph^j)\leq M-\De'/2$, for $j$ large enough, we have $J_{T_j}(\hat\ph^j)\leq M$.

\bigskip
{\it Step~4}. We claim that for $j>>1$, we have
\be\label{1.17}
\vv^j(T_j)\in \{V_\aaa\le M\}.
\ee
Indeed, note that in view of \eqref{1.7}, we have $\vv^j(\cdot)=S^{\hat \ph^j}(\cdot; \vv_0^j)$. 
So the point $\vv^j(T_j)$ is reached from $\vv_0^j\in\aaa$ with action function $\hat \ph^j$ at finite time $T_j$. It follows from the definition of $V_\aaa$ and inequality \eqref{1.16} that $V_\aaa(\vv^j(T_j))\leq M$ for $j>>1$.

\bigskip
{\it Step~5}.
In view of inequality \eqref{1.12}, we have
$$
|\uu^j(T_j)-\vv ^j(T_j)|_\h\leq \eta_j \leq\De/4,
$$
provided $j\geq 1$ is large enough. Combining this with \eqref{1.17}, we see that
$$
d_\h(\uu^j(T_j), \{V_\aaa\le M\})\leq\De/4,
$$
which is in contradiction with \eqref{1.6.4}. The proof of  inclusion \eqref{1.69} is complete.

\subsection{Proof of inequality \eqref{2.16}}
Let us assume that \eqref{2.16} is not true, so for any $j\geq 1$ we have
$$
\inf\{I_{j}(\uu_\cdot); \, \uu_\cdot\in C(0,j;\h), \, \uu(0)\in B_R, \,\uu(j)\in \aaa_\eta^c\}=0.
$$
Then for each $j\geq 1$ there is an initial point $\uu^j_0\in B_R$ and an action $\ph^j$ defined on the interval $[0,j]$ with energy smaller than $e^{-j^2}$ such that the flow $\uu^j(t)=S^{\ph_j}(t;\uu^j_0)$ satisfies 
\be\label{2.4.4}
\uu^j(j)\notin\aaa_\eta.
\ee

For each $j\geq 1$, let $\vv^j(t)=S(t)\uu^j_0$. Using a priori bounds of the NLW equation it is not difficult to show (see Section \ref{9.29} for the proof) that
\be\label{9.41}
|\vv^j(t)-\uu^j(t)|_\h^2\leq C\int_0^t\|\ph^j(s)\|^2 \exp(C s)\dd s\q\text{ for } t\in [0, j].
\ee
Taking $t=j$ in this inequality and using $J_j(\ph^j)\le e^{-j^2}$, we get
\begin{align}
|\vv^j(j)-\uu^j(j)|_\h^2
&\leq C\exp(C j)\int_0^j \|\ph^j(s)\|^2ds\notag\\
&\leq C_3\exp(C j)J_j(\ph^j)\leq C_3\exp(-j^2+C j)\leq \eta^2/4\label{9.42},
\end{align}
provided $j$ is sufficiently large.
Combining this with \ef{2.4.4}, we see that for $j>>1$, we have
\be\label{9.30}
S(t)\uu^j_0=\vv^j(j)\notin\aaa_{\eta/2}.
\ee
Since $\aaa$ is the global attractor of the semigroup $S(t)$, we have
$$
\sup_{\uu_0\in B_R}d_\h(S(t)\uu_0, \aaa)\to 0\q\q\text{ as } t\to\iin.
$$
This is clearly in contradiction with \ef{9.30}. Inequality \ef{2.16} is established.
\subsection{Derivation of \ef{9.8}}\label{9.40}
We follow the argument presented in \cite{sowers-1992b}. Let us fix any $\vv\in B_R$ and $j\leq n$, and denote by $A$ the event $\{S^{\es}(t_*;\vv)\notin K_{\De}(M),\,S^{\es}(jT_*;\vv)\in\aaa_\eta\}$. Then, by the Markov property, we have
\begin{align*}
\pp(A)=\e[\e(\ch_A)|\fff^\es_{jT_*}]&=\e[\ch_{\bar \vv\in\aaa_\eta}\cdot \e(\ch_{S^{\es}(t_*-jT_*;\bar \vv)\notin K_{\De}(M)})]\notag\\
&\leq\sup_{\vv_0\in\aaa_\eta}\pp(S^{\es}(t_*-jT_*; \vv_0)\notin K_{\De}(M)),
\end{align*}
where $\fff^\es_t$ is the filtration corresponding to the process $S^{\es}(t;\vv)$ and we set $\bar \vv=S^{\es}(jT_*;\vv)$. It follows that
\be\label{3.2}
\pP_3''\leq \sum_{j=1}^n\sup_{\vv_0\in\aaa_\eta}\pp(S^{\es}(jT_*; \vv_0)\notin K_{\De}(M)).
\ee

\medskip
\nt
For any $l>0, M_1>0$ and $\vv\in\h$, introduce the level set
$$
K_{\vv,l}(M_1)=\{\uu(\cdot)\in C(0,l;\h); \uu(0)=\vv,\, I_l(\uu(\cdot))\leq M_1\}.
$$
Let us show that for any $l>0$ and $\vv_0\in\aaa_\eta$ we have
\be\label{3.3}
\{\om: S^{\es}(l;\vv_0)\notin K_{\De}(M)\}\subset \{\om: d_{C(0,l;\h)}(S^{\es}(\cdot;\vv_0), K_{\vv_0,l}(M-\De'))\geq \De/2\}.
\ee
Indeed, let us fix any $\om$ such that $S^{\es}(l;\vv_0,\om)\notin K_{\De}(M)$, and let $\uu_\cdot$ be any function that belongs to $K_{\vv_0,l}(M-\De')$. Then in view of inclusion \eqref{1.69} we have $\uu(l)\in K_{\De/2}(M)$, so that
$$
d_{C(0,l;\h)}(S^{\es}(\cdot;\vv_0,\om), \uu_\cdot)\geq |S^{\es}(l;\vv_0,\om)-z(l)|_\h\geq \De/2.
$$
Since $\uu_\cdot\in K_{\vv_0,l}(M-\De')$ was arbitrary, we conclude that inclusion \eqref{3.3} holds.
It follows from Theorem \ref{6.1} (applied to the time interval $[0, l]$ and the set $B=\aaa_\eta$) that there is $\es(l)=\es(l,\De,M,\eta)>0$ such that
\be\label{3.4}
\sup_{\vv_0\in\aaa_\eta}\pp(d_{C(0,l;\h)}(S^{\es}(\cdot;\vv_0), K_{\vv_0,l}(M-\De'))\geq \De/2)\leq \exp(-(M-2\De')/\es),\,\, \es\leq \es(l).
\ee
Let $\es_1(\De,M,T_*,n,\eta)=\min\{\es(T_*),\ldots \es(nT_*)\}$. Then in view of inequalities \eqref{3.2} and \eqref{3.4} we have
$$
\pP_3''\leq n\exp(-(M-2\De')/\es)\leq\exp(-(M-3\De')/\es) \q\text{ for }\es\leq\es_1.
$$
Inequality \ef{9.8} is established with $\De'$ replaced by $3\De'$.

\subsection{Proof of inequality \eqref{1.30}}
Let us show that for $R=R(M)$ sufficiently large and $\es_*=\es_*(M)>0$ small, we have \eqref{1.30}.  To this end, let us first show that the stationary solutions $\vv(t)$ of equation \eqref{0.1.4} satisfy 
\be\label{1.23}
\e\exp(\kp\,\ees (\vv(t))\leq Q(\es\,\BBB,\|h\|)\leq Q(\BBB,\|h\|),
\ee
for any $\kp\leq (\es\,\BBB)^{-1}\al/2$, where $Q$ and $\BBB$ are the quantities entering Proposition \ref{1.96}. Replacing $b_j$ by $b_j/\sqrt{\es}$, we see that it is sufficient to prove this inequality for $\es=1$.
Note that we cannot pass directly to the limit $t\to\iin$ in inequality \eqref{1.97}, since we first need to guarantee that $\e\exp(\kp\ees (\vv(0))$ is finite. This can be done by a simple application of the Fatou lemma. Indeed, for any $N\geq 1$, let $\psi_N(\uu)$ be the function that is equal to $\exp(\kp\ees (\uu))$ if $\ees(\uu)\leq N$, and to $\exp(\kp N)$ otherwise. Let us denote by $\mu$ the law of $\vv(t)$, and let $l$ be any positive number. Then using the stationarity of $\mu$ and inequality $\psi_N(\uu)\leq\exp(\kp\ees(\uu))$ we see that
\begin{align}
\int_\h \psi_N(\uu)\mu(d\uu)&=\int_\h\int_\h \psi_N(\uu')P_t(\uu, d\uu')\mu(d\uu)\notag\\
&\leq \int_{\ees(\uu)\leq l}\int_\h \exp(\kp\,\ees (\uu')) P_t(\uu, d\uu')\mu(d\uu)+\exp(\kp N)\mu(\ees(\uu)>l)\notag\\
&=\iI_1+\iI_2\label{1.80},
\end{align}
where $P_t$ stands for the transition function of the Markov process. Note that
$$
\iI_1\leq \sup_{\ees(\uu_0)\leq l}\e\exp(\kp\,\ees(\uu(t;\uu_0))\leq \exp(\kp \,l-\al\,t)+Q(\BBB,\|h\|),
$$
where $\uu(t;\uu_0)$ stands for the trajectory of \eqref{0.1.4} with $\es=1$ issued from $\uu_0$, and we used inequality \eqref{1.97}. Combining this with \eqref{1.80}, we obtain
$$
\int_\h \psi_N(\uu)\mu(d\uu)\leq \exp(\kp \, l-\al\,t)+\exp(\kp N)\mu(\ees(\uu)>l)+Q(\BBB,\|h\|).
$$
Passing to the limits $t\to\iin$ and then $l\to\iin$, and using the equivalence $\ees(\uu)\to\iin\Leftrightarrow |\uu|_\h\to\iin$, we get
$$
\int_\h \psi_N(\uu)\mu(d\uu)\leq Q(\BBB,\|h\|).
$$
Finally, letting $N$ go to infinity, and using Fatou's lemma, we derive \eqref{1.23}.

\medskip
We are now ready to establish \eqref{1.30}. Indeed, it follows from inequalities \eqref{1.56} that $\ees(\uu)\geq \f{1}{2}|\uu|_\h^2-2\,C> R^2/4$, provided $\uu\in B_R^c$ and $R^2\geq 8\,C$. Therefore, by the Chebyshev inequality, we have that
$$
\mu^\es(B_R^c)\leq \mu^\es(\ees(\uu)>R^2/4)\leq\exp(-\kp\, R^2/4)\int_{\h}\exp(\kp\,\ees(\uu))\,\mu^\es(d\uu).
$$
Now taking $\kp=(\es\,\BBB)^{-1}\,\al/2$ in this inequality, using \eqref{1.23} and supposing that $R$ is so large that $R\geq 16\,\BBB M\, \al^{-1}$, we obtain
$$
\mu^\es(B_R^c)\leq Q(\BBB,\|h\|) \exp(-2M/\es)\leq \exp(-M/\es),
$$
provided $\es>0$ is small. Inequality \eqref{1.30} is thus established.

\subsection{Lower bound with function $V_\aaa$ in the case of a unique equilibrium}\label{9.20}
The goal of this section is to show that in the case when equation \ef{1.51} possesses a unique equilibrium, the function $V_\aaa$ given by \ef{8.49}-\ef{9.6} provides also a lower bound for $(\mu^\es)$ and thus governs the LDP. The proof is almost direct and in this case there is no need to use the Freidlin-Wentzell theory.

\bigskip
So let $\hat\uu$ be the unique equilibrium of \ef{1.51}. It follows that the attractor $\aaa$ of the semigroup corresponding to \ef{1.51} is the singleton $\{\hat\uu\}$. Combining this with the fact that $(\mu^\es)$ is tight and any weak limit of this family is concentrated on $\aaa=\{\hat\uu\}$, we obtain 
\be\label{9.10}
\mu^\es\rightharpoonup\De_{\hat \uu}.
\ee 
We now use this convergence to establish the lower bound.
Due to the equivalence of \eqref{7.3} and \eqref{7.4}, we need to show that for any $\uu_*\in\h$ and any positive constants $\eta$ and $\eta'$, there is $\es_*>0$ such that we have \,\footnote{\,We write $V_{\hat\uu}$ instead of $V_\aaa$, since $\aaa=\{\hat\uu\}$.}
\be\label{5.2}
\mu^\es(B(\uu_*,\eta))\geq \exp(-(V_{\hat\uu}(\uu_*)+\eta')/\es) \q\text {for }\es\leq \es_*.
\ee
We assume $V_{\hat\uu}(\uu_*)<\iin$, since the opposite case is trivial. By definition of $V$, there is a finite time $T>0$ and an action $\ph\in L^2(0,T;H_\vartheta)$ such that
$$
J_T(\ph)\leq V_{\hat\uu}(\uu_*)+\eta'\q\text{ and }\q |S^\ph(T;\hat\uu)-\uu_*|_\h<\eta/4.
$$
Since the operator $S^\ph$ continuously depends on the initial point, there is $\kp>0$ such that 
$|S^\ph(T;\uu)-\uu_*|_\h<\eta/2$,
provided $|\uu-\hat\uu|_\h\leq\kp$. 
It follows that (see \ef{5.6}-\ef{9.17})
$$
\mu^\es(B(\uu_*,\eta))\geq \mu^\es(B(\hat\uu,\kp)) \exp(-(V_{\hat\uu}(\uu_*)+2\eta')/\es).
$$
Combining this inequality with convergence \ef{9.10} and using the portmanteau theorem, we infer
$$
\mu^\es(B(\uu_*,\eta))\geq C(\kp) \exp(-(V_{\hat\uu}(\uu_*)+2\eta')/\es)\geq  \exp(-(V_{\hat\uu}(\uu_*)+3\eta')/\es).
$$
Replacing $\eta'$ by $\eta'/3$, we arrive at \ef{5.2}.

\section{Appendix}

\subsection{Global attractor of the limiting equation}
In this section we recall some notions from the theory of attractors and an important result concerning the global attractor of the semigroup $S(t)$ generated by the flow of equation \eqref{1.51}.
\bi
\item {\it Equilibrium points}
\ei
We say that $\hat u\in \h$ is an equilibrium point for $S(t)$ if $S(t)\hat \uu=\hat \uu$ for all $t\geq 0$.
\bi
\item {\it Complete trajectory}
\ei
A curve $\uu(s)$ defined for $s\in\rr$ is called a complete trajectory of the semigroup $(S(t))_{t\geq 0}$ if 
\be\label{4.11}
S(t)\uu(s)=\uu(t+s)\q\text{ for all } s\in\rr \text{ and }t\in\rr_+.
\ee
\bi
\item {\it Heteroclinic orbits}
\ei
A heteroclinic orbit is a complete trajectory that joins two different equilibrium points, i.e., $\uu(t)$ is a heteroclinic orbit if it satisfies \eqref{4.11} and there exist two different equilibria $\hat \uu_1$ and $\hat \uu_2$, such that $\uu(-t)\to \hat \uu_1$ and $\uu(t)\to\hat \uu_2$ as $t\to\iin$.

\bi
\item {\it The global attractor}
\ei
\nt
The set $\aaa\subset \h$ is called the global attractor of the semigroup $(S(t))_{t\geq 0}$ if it has the following three properties:

1) $\aaa$ is compact in $\h$ ($\aaa\Subset \h$).

2) $\aaa$ is an attracting set for $(S(t))_{t\geq 0}$, that is
\be\label{4.9}
d_\h(S(t)B,\aaa)\to 0\q \text{ as } t\to\iin,
\ee
for any bounded set $B\subset\h$, where $d_\h(\cdot,\cdot)$ stands for the Hausdorff distance in $\h$.

3) $\aaa$ is strictly invariant under $(S(t))_{t\geq 0}$, that is
 \be\label{4.10}
S(t)\aaa=\aaa\q\text{ for all } t\geq 0.
\ee
The following result gives the description of the global attractor of the semigroup $S(t)$ corresponding to \eqref{1.51}. We assume that the nonlinear term $f$ satisfies \eqref{1.54}-\eqref{1.56}. We refer the reader to Theorem 2.1, Proposition 2.1 and Theorem 4.2 in Chapter 3 of \cite{BV1992} for the proof.
\begin{theorem}\label{Th-attractor}
The global attractor $\aaa$ of the semigroup $(S(t))_{t\geq 0}$ corresponding to \eqref{1.51} is a connected set that consists of equilibrium points of $(S(t))_{\geq 0}$ and joining them heteroclinic orbits. Moreover, the set $\aaa$ is bounded in the space $[H^2(D)\cap H^1_0(D)]\times H^1_0(D)$.
\end{theorem}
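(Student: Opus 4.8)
\emph{Proof plan.} Since \eqref{1.51} is equation \eqref{0.1.4} with $\es=0$, the Cauchy problem is globally well posed in $\h$ and $S(t)$ is a continuous semigroup there. I would first recall why a global attractor exists (properties 1)--3) of the definition): dissipativity follows from the energy decay of Proposition \ref{c1.7}, which together with $\ees(\uu)\ge\tfrac{1}{2}|\uu|_\h^2-2C$ (a consequence of \eqref{1.56}) produces a bounded absorbing ball $B_0\subset\h$; asymptotic compactness follows from the splitting of Proposition \ref{c1.5}, $\uu(t)=\vv(t)+\Zz(t)$ with $|\vv(t)|_\h\to 0$ and $\{\Zz(t)\}_{t\ge 0}$ bounded in the compactly embedded space $\h^s$. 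Consequently $\aaa=\bigcap_{\tau\ge 0}\overline{\bigcup_{t\ge\tau}S(t)B_0}$ is a compact, strictly invariant, attracting set, bounded in $\h$ by some $R$.

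\emph{Gradient structure.} The heart of the proof is a strict Lyapunov function. For $\uu=[u_1,u_2]\in\h$ set
\[
\Phi(\uu)=\tfrac{1}{2}\|u_2\|^2+\tfrac{1}{2}\|\g u_1\|^2+\int_D F(u_1)\,dx-\int_D h\,u_1\,dx,
\]
which is continuous and, by \eqref{1.56} together with the Young and Poincar\'e inequalities, bounded below and coercive on $\h$. Along a solution $\uu(t)=[u(t),\p_t u(t)]$ of \eqref{1.51}, integration by parts gives $\tfrac{d}{dt}\Phi(\uu(t))=-\gamma\|\p_t u(t)\|^2\le 0$; the derivative vanishes on a time interval only if $\p_t u\equiv 0$ there, and then \eqref{1.51} forces $-\de u+f(u)=h$, i.e. $\uu(t)\in\E$. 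Hence $(S(t),\Phi)$ is a gradient system, and $\E$, being the set of critical points of $\Phi$, is bounded in $\h$ by the energy estimate.

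\emph{Structure and connectedness of $\aaa$.} Given the Lyapunov function, the description of $\aaa$ is the abstract theorem on attractors of gradient systems: $\aaa$ equals the unstable set of $\E$, i.e. the union of all complete bounded trajectories $\uu(\cdot)$ with $d_\h(\uu(t),\E)\to 0$ as $t\to-\infty$. I would argue in three short steps: (i) every point of $\aaa$ lies on a complete bounded trajectory, by strict invariance of $\aaa$; (ii) along such a trajectory $\Phi$ is monotone and bounded, so it has finite limits as $t\to\pm\infty$, and the $\alpha$- and $\omega$-limit sets are nonempty, compact, connected, invariant and contained in a level set of $\Phi$, hence consist of equilibria; (iii) when these limit sets reduce to single points $\hat\uu_1,\hat\uu_2$, the trajectory is constant if $\hat\uu_1=\hat\uu_2$ (then $\Phi$ is constant along it) and a heteroclinic orbit joining $\hat\uu_1\ne\hat\uu_2$ otherwise. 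Connectedness of $\aaa$ needs no gradient structure: with $B_0$ a closed ball, $\aaa=\bigcap_{t\ge 0}\overline{S(t)B_0}$ is a decreasing intersection of compact connected sets, hence connected.

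\emph{Regularity, and the main obstacle.} To upgrade $\aaa\subset\h$ to $\aaa\subset[H^2(D)\cap H^1_0(D)]\times H^1_0(D)$ I would run a finite bootstrap on a complete bounded trajectory $\uu(t)=[u(t),\p_t u(t)]$, $\sup_{t\in\rr}|\uu(t)|_\h\le R$, in the spirit of Proposition \ref{c1.5}. The basic move: differentiate \eqref{1.51} in $t$, set $\theta=\p_t u$, so $\p_t^2\theta+\gamma\p_t\theta-\de\theta+f'(u)\theta=0$; testing with $\p_t\theta+\al\theta$ and using $|f'(u)|\le C(|u|^\rho+1)$ with $\rho<2$ and the Sobolev embeddings in dimension three, one gets $\tfrac{d}{dt}E_\theta+\al E_\theta\le C(R)$ for the energy $E_\theta$ of $[\theta,\p_t\theta]$; since the trajectory runs over all of $\rr$ and is bounded, this yields $\sup_t|[\p_t u(t),\p_t^2 u(t)]|\le C_1(R)$ in the relevant space. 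Feeding the improved regularity of $\p_t u$, hence of $f(u)$, back into $-\de u=h-f(u)-\gamma\p_t u-\p_t^2 u$ via elliptic regularity and iterating finitely many times, one reaches $u$ bounded in $H^2\cap H^1_0$ and $\p_t u$ bounded in $H^1_0$. The soft part is the gradient-system machinery of the previous two paragraphs; the genuine difficulties are the two places where the absence of a smoothing effect for the wave semigroup forces the decomposition $\uu=\vv+\Zz$ --- asymptotic compactness (hence existence of $\aaa$) and the $H^2\times H^1_0$ regularity --- and in the latter one must respect the limited smoothness of $f$, the hypothesis $\rho<2$ being exactly what lets the $\theta$-estimate and the bound $\|f(u)\|_{L^2}\le C(1+\|u\|_{H^1_0}^{\rho+1})$ close so that the bootstrap reaches the top space.
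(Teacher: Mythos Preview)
Your plan is correct and follows exactly the standard route; note that the paper itself does not prove this theorem but refers to Theorem~2.1, Proposition~2.1 and Theorem~4.2 in Chapter~3 of \cite{BV1992}, and your sketch is precisely the skeleton of that textbook argument (Lyapunov function, gradient structure, unstable-set description of $\aaa$, regularity by bootstrap on complete bounded trajectories).

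Two small points deserve a word of caution. First, in your step~(iii) you write ``when these limit sets reduce to single points'': this is not automatic from the gradient structure alone, and the passage from ``$\omega$- and $\alpha$-limit sets consist of equilibria on a level set of $\Phi$'' to ``they are single equilibria'' requires the equilibria to be isolated. In the body of the paper this is indeed assumed (finiteness of $\E$ is a standing hypothesis), but the theorem as stated here does not carry that hypothesis, so either add it or phrase the conclusion as ``$\aaa$ equals the unstable set of $\E$'', which is what \cite{BV1992} actually proves in general. Second, your sentence ``testing with $\p_t\theta+\al\theta$ \dots one gets $\tfrac{d}{dt}E_\theta+\al E_\theta\le C(R)$'' is too optimistic at the $\h$-level: the term $(f'(u)\theta,\p_t\theta+\al\theta)$ produces $C(R)\|\theta\|_{H^1}$ on the right, not $C(R)$, so the inequality does not close directly in $\h$. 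The correct move---which you allude to with ``in the relevant space'' and ``iterating finitely many times''---is to run the $\theta$-estimate first in $\h^{s-1}$ with $0<s<1-\rho/2$ exactly as in Proposition~\ref{c1.5} (or Proposition~\ref{2.11.3}), where the subcritical growth makes it close, then feed this back through $-\de u=h-f(u)-\gamma\p_t u-\p_t^2 u$ and iterate; each pass gains $1-\rho/2>0$ in regularity, so the top space $[H^2\cap H^1_0]\times H^1_0$ is reached in finitely many steps.
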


\subsection{Large deviations for solutions of the Cauchy problem}\label{6.0}
In this section we announce a version of large deviations principle for the family of Markov processes  generated by equation \eqref{0.1.4}. Its proof is rather standard, and relies on the contraction principle and the LDP for the Wiener processes.

\medskip

Let $B$ be any closed bounded subset of $\h$ and let $T$ be a positive number. We consider the Banach space $\mathsf{\yyy}_{B,T}$ of continuous functions $y(\cdot,\cdot):B\times[0,T]\to \h$ endowed with the norm of uniform convergence.

\begin{theorem}\label{6.1}
Let us assume that conditions \eqref{1.54}-\eqref{1.57} are fulfilled. Then $(S^{\es}(\cdot;\cdot), t\in [0, T], \vv\in B)_{\es>0}$ regarded as a family of random variables in $\yyy_{B,T}$ satisfies the LDP with rate function $I_T:\yyy_{B,T}\to [0,\iin]$ given by 
$$
I_T(y(\cdot,\cdot))=\f{1}{2}\int_0^T |\ph(s)|_{H_\vartheta}^2\dd s
$$
if there is $\ph\in L^2(0,T;H_\vartheta)$ such that $y(\cdot,\cdot)=S^{\ph}(\cdot;\cdot)$, and is equal to $\iin$ otherwise.
\end{theorem}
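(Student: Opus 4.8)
The plan is to deduce Theorem \ref{6.1} from the large deviations principle for the driving Wiener process together with the continuity of the solution map, i.e., via the contraction principle. First I would set up the right "lift" of the noise: write the rescaled noise $\sqrt\es\,\hat\zeta$ and observe that $\hat\zeta$ takes values in $C(0,T;\h)$ (more precisely, its increments lie in the Cameron--Martin space associated with the covariance determined by the $b_j$). By Schilder's theorem, the family $(\sqrt\es\,\hat\zeta)_{\es>0}$ satisfies the LDP in $C(0,T;\h)$ with good rate function $\tfrac12\int_0^T|\dot g(s)|_{H_\vartheta}^2\,ds$ for $g$ absolutely continuous with $\dot g\in L^2(0,T;H_\vartheta)$, and $+\infty$ otherwise — where $H_\vartheta$ is exactly the space \eqref{1.34} in which the unit-variance cylindrical noise becomes the natural Cameron--Martin space. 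This is the standard Gaussian input.

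Next I would establish the key deterministic ingredient: the solution map
$$
\Theta:\; (\vv,g)\;\longmapsto\; \big(t\mapsto S^{g}(t;\vv)\big)
$$
is continuous from $B\times C(0,T;\h)$ (or, more carefully, from $B$ times the space of continuous perturbations whose primitive enters the equation the way $\hat\zeta$ does) into $\yyy_{B,T}=C(B\times[0,T];\h)$, uniformly in $\vv\in B$. Concretely, equation \eqref{0.1.4} driven by a smooth-in-time perturbation $\ph$ is \eqref{7.10}, and I would prove a Gronwall-type stability estimate: if $u_i=S^{\ph_i}(\cdot;\vv_i)$ then
$$
|\xi_{u_1}(t)-\xi_{u_2}(t)|_\h^2 \le C\Big(|\vv_1-\vv_2|_\h^2+\int_0^t\|\ph_1(s)-\ph_2(s)\|^2\,ds\Big)\exp(Ct),
$$
which is essentially the computation already used for \eqref{9.41} (see Section \ref{9.29} referenced there). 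Joint continuity in $\vv$ and $g$, plus the uniform a priori bounds of Theorem \ref{2.12.3} on the ball $B$, give continuity of $\Theta$ into $\yyy_{B,T}$. The technical subtlety is that the perturbation enters as $\dt g$ in $H^{-1}$, so one works with the integrated form and uses that $g\in C(0,T;H^1_0)$; the map must be shown continuous with respect to the $C(0,T;\h)$-topology on the noise, which is exactly the topology in which Schilder's LDP is stated.

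Then the contraction principle (the pushforward of a family satisfying an LDP under a continuous map satisfies an LDP with the image rate function) applied to $\Theta(\cdot,\sqrt\es\,\hat\zeta)$ yields the LDP for $(S^\es(\cdot;\cdot))_{\es>0}$ in $\yyy_{B,T}$ with rate function
$$
I_T(y(\cdot,\cdot))=\inf\Big\{\tfrac12\int_0^T|\dot g(s)|_{H_\vartheta}^2\,ds \ :\ y(\vv,\cdot)=S^{\dot g}(\cdot;\vv)\ \text{for all }\vv\in B\Big\},
$$
and since the solution map is injective in the control $\ph=\dot g$ for a fixed trajectory (the equation determines $f(u)$, $\de u$, $\dt^2 u$, $\dt u$ hence $\ph$), the infimum collapses to the single value $\tfrac12\int_0^T|\ph(s)|_{H_\vartheta}^2\,ds$ with $I_T=\infty$ when no such $\ph\in L^2(0,T;H_\vartheta)$ exists. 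Goodness of $I_T$ (compact level sets) follows from goodness of the Schilder rate function together with the continuity and the a priori compactness estimates. I expect the main obstacle to be the continuity of the solution map $\Theta$ in the right topologies — in particular handling the low regularity of the forcing (it is only $\dt g$ with $g\in C(0,T;H^1_0)$, not an $L^2$-in-time function in general) and getting the continuity to be uniform over $\vv\in B$ so that the image genuinely lives in $\yyy_{B,T}$; once that is in place, everything else is the standard contraction-principle machinery.
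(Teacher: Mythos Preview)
Your proposal is correct and matches the paper's approach exactly: the paper also derives this result from Schilder's theorem for the Wiener process in $C(0,T;H^1_0(D))$ combined with the contraction principle applied to the continuous solution map $q\mapsto S^{\dot q}(\cdot;\cdot)$, noting that the extension from a single initial point to a bounded set $B$ follows from the uniform-in-$\vv$ continuity of this map. The paper in fact gives only this sketch and refers to \cite{DZ1992} and \cite{CM-2010} for the details, so your write-up is already more explicit than what appears there.
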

We refer the reader to the book \cite{DZ1992} and the paper \cite{CM-2010} for the proof of similar results.
Let us note that in the announced form, Theorem \ref{6.1} is slightly more general compared to the results from mentioned works, since they concern the case when the set $B$ is a singleton $B\equiv\{\uu_0\}$. However, recall that the LDP is derived by the application of a contraction principle to the continuous map
 $\gi: C(0,T; H^1_0(D))\to\yyy_{\uu_0,T}$ given by  
$$
\gi(q(\cdot))= y(\uu_0,\cdot),\q \text{ where }\, y(\uu_0,\cdot)=(S^{\dt q}(\uu_0;\cdot); t\in [0,T]).
$$
Using the boundedness of $B$, it is not difficult to show that the map $\tilde\gi$ from $ C(0,T; H^1_0(D))$ to $\yyy_{B,T}$ given by
$$
\tilde\gi(q(\cdot))= y(\cdot,\cdot),\q \text{ where }\, y(\cdot,\cdot)=(S^{\dt q}(\uu_0;\cdot); \uu_0\in B,  t\in [0,T])
$$
is also continuous. This allows to conclude.

\subsection{Lemma on large deviations}
\begin{lemma}\label{9.3}
Let $(\mathfrak{m}^\es)_{\es>0}$ be an exponentially tight family of probability measures on a Polish space $\zzz$ that possesses the following property: there is a good rate function $\mathfrak{I}$ on $\zzz$ such that for any $\beta>0$, $\rho_*>0$ and $z\in\zzz$ there are positive numbers $\tilde\rho<\rho_*$ and $\es_*$ such that
\begin{align*}
\mathfrak{m}^\es (B_\zzz(z,\tilde\rho))&\leq\exp(-(\mathfrak{I}(z)-\beta)/\es),\\
\mathfrak{m}^\es (\bar B_\zzz(z,\tilde\rho))&\ge\exp(-(\mathfrak{I}(z)+\beta)/\es) \q\text{ for }\es\leq\es_*.
\end{align*}
Then the family $(\mathfrak{m}^\es)_{\es>0}$ satisfies the LDP in $\zzz$ with rate function $\mathfrak{I}$.
\end{lemma}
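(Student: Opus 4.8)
The plan is to derive the full large deviations principle from the local estimates assumed in Lemma \ref{9.3} by a standard argument, the only extra input being exponential tightness, which upgrades the local lower bound to the genuine LDP lower bound and the local upper bound to the closed-set upper bound. First I would prove the lower bound \eqref{7.3}. Fix an open set $G\subset\zzz$ and a point $z\in G$; since $G$ is open, there is $\rho_*>0$ with $B_\zzz(z,\rho_*)\subset G$. Apply the hypothesis with this $\rho_*$ and an arbitrary $\beta>0$ to get $\tilde\rho<\rho_*$ and $\es_*$ with $\mathfrak m^\es(\bar B_\zzz(z,\tilde\rho))\ge\exp(-(\mathfrak I(z)+\beta)/\es)$ for $\es\le\es_*$. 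Since $\bar B_\zzz(z,\tilde\rho)\subset G$ is not automatic, I would instead choose $\rho_*$ small enough that even the closed ball $\bar B_\zzz(z,\rho_*)\subset G$, which is possible because $G$ is open; then $\mathfrak m^\es(G)\ge\mathfrak m^\es(\bar B_\zzz(z,\tilde\rho))\ge\exp(-(\mathfrak I(z)+\beta)/\es)$, whence $\liminf_{\es\to0}\es\ln\mathfrak m^\es(G)\ge-\mathfrak I(z)-\beta$. Letting $\beta\to0$ and then taking the supremum over $z\in G$ gives $\liminf_{\es\to0}\es\ln\mathfrak m^\es(G)\ge-\inf_{z\in G}\mathfrak I(z)$, which is \eqref{7.3}.

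Next I would prove the upper bound \eqref{7.1} for closed sets, and here exponential tightness is essential. Let $F\subset\zzz$ be closed. Fix $M>0$ and $\De'>0$; by exponential tightness there is a compact set $K_M\subset\zzz$ with $\limsup_{\es\to0}\es\ln\mathfrak m^\es(\zzz\setminus K_M)\le-M$. It then suffices to bound $\mathfrak m^\es(F\cap K_M)$. For each $z\in F\cap K_M$, if $\mathfrak I(z)>M$ pick $\beta>0$ with $\mathfrak I(z)-\beta>M$; if $\mathfrak I(z)\le M$ just take $\beta$ arbitrary. Applying the first hypothesis of Lemma \ref{9.3} at $z$ with this $\beta$ and some $\rho_*$ yields a radius $\tilde\rho(z)>0$ and $\es_*(z)>0$ with $\mathfrak m^\es(B_\zzz(z,\tilde\rho(z)))\le\exp(-(\mathfrak I(z)-\beta)/\es)$ for $\es\le\es_*(z)$. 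The open balls $B_\zzz(z,\tilde\rho(z))$, $z\in F\cap K_M$, cover the compact set $F\cap K_M$, so finitely many of them, centred at $z_1,\dots,z_m$, suffice. Using $\es\ln(\sum_{i=1}^m a_i^\es)\to\max_i\es\ln a_i^\es$ up to an $O(\es\ln m)$ term that vanishes, one obtains
\[
\limsup_{\es\to0}\es\ln\mathfrak m^\es(F\cap K_M)\le-\min_{1\le i\le m}\big(\mathfrak I(z_i)-\beta_i\big)\le-\big(M\wedge\inf_{z\in F}\mathfrak I(z)\big),
\]
since each centre $z_i$ either contributes at least $M$ (when $\mathfrak I(z_i)>M$, by the choice of $\beta_i$) or contributes at least $\mathfrak I(z_i)-\beta_i\ge\inf_F\mathfrak I-\beta_i$ and $\beta_i$ can be taken arbitrarily small. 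Combining with the exponential tightness bound, $\limsup_{\es\to0}\es\ln\mathfrak m^\es(F)\le-\big(M\wedge\inf_{z\in F}\mathfrak I(z)\big)$, and letting $M\to\infty$ gives \eqref{7.1}.

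Finally I would note that $\mathfrak I$ is a good rate function by assumption, so \eqref{7.1} together with \eqref{7.3} is precisely the statement that $(\mathfrak m^\es)_{\es>0}$ satisfies the LDP in $\zzz$ with rate function $\mathfrak I$. The main obstacle, and the only place where real care is needed, is the upper bound: one must make sure the radii in the covering argument are chosen \emph{after} fixing $M$ and $\De'$ so that the compactification by $K_M$ and the finite subcover interact correctly, and one must track that the number $m$ of balls, though it depends on $\es$-independent data, does not spoil the logarithmic asymptotics — this is the routine $\es\ln m\to0$ remark. Everything else is the standard bookkeeping of passing between the ``$\liminf/\limsup$'' formulation and the ``for all $\De,\De',M$'' formulation recorded in \eqref{7.2} and \eqref{7.4}, and can be quoted from Chapter 12 of \cite{DZ1992}.
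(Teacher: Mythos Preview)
Your proof is correct and follows essentially the same covering strategy as the paper. The paper's version is more economical: for the lower bound it simply invokes the equivalence of \eqref{7.3} and \eqref{7.4}, and for the upper bound it cites the standard fact (Lemma~1.2.18 in \cite{DZ2000}) that exponential tightness reduces the closed-set upper bound to compact $F$, then fixes a \emph{single} $\beta>0$, covers the compact $F$ by the balls $B_\zzz(z,\tilde\rho(z))$, extracts a finite subcover, and lets $\beta\to 0$ at the end.

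One point in your upper bound deserves tightening: you choose point-dependent $\beta_i$ and then write ``$\beta_i$ can be taken arbitrarily small'', but $\beta_i$ was already fixed when you built the cover, so the number of balls $m$ depends on the $\beta_i$. The clean fix (and what the paper does) is to use a single $\beta$ for all points from the start and send $\beta\to 0$ only after the finite-cover estimate; your two-case split according to whether $\mathfrak I(z)>M$ is then unnecessary.
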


\bp
We first note that in view of equivalence of \eqref{7.3} and \eqref{7.4}, we only need to establish the upper bound, that is inequality \eqref{7.1}. Moreover, since the family $(\mathfrak{m}^\es)_{\es>0}$ is exponentially tight, we can assume that $F\subset\zzz$ is compact (see Lemma 1.2.18 in \cite{DZ2000}). Now let us fix any $\beta>0$ and denote by $\tilde\rho(z)$ and $\es_*(z)$ the constants entering the hypotheses of the lemma. Clearly, we have
$$
F\subset\bigcup_{z\in F}B_\zzz(z,\tilde\rho(z)).
$$
Since $F\subset\zzz$ is compact, we can extract a finite cover 
$$
F\subset\bigcup_{i=1}^n B_\zzz(z_i,\tilde\rho(z_i)).
$$
It follows that
$$
\mathfrak{m}^\es(F)\leq n\exp(-(\inf_{z\in F}\mathfrak{I}(z)-\beta)/\es)\leq \exp(-(\inf_{z\in F}\mathfrak{I}(z)-2\beta)/\es),
$$
for $\es\leq\es_*(n, z_1,\ldots, z_n)$. We thus infer
$$
\limsup_{\es\to 0}\es\ln \mathfrak{m}^{\es}(F)\leq-\inf_{z\in F} \mathfrak{I}(z)+2\beta.
$$
Letting $\beta$ go to zero, we arrive at \eqref{7.1}.
\ep

\subsection{Proof of some assertions}\label{9.21}

\nt

\bigskip
{\it Genericity of finiteness of the set $\E$.} By genericity with respect to $h(x)$ we mean that $\E$ is finite for any $h(x)\in \cal{C}$, where $\cal{C}$ is a countable intersection of open dense sets (and therefore $\cal{C}$ is dense itself) in $H^1_0(D)$. This property is well known, and the proof relies on a simple application of the Morse-Smale theorem, see e.g., Chapter 9 in \cite{BV1992}. Here we would like to mention that there are also genericity results with respect to other parameters. Namely, it is known that in the case $h(x)\equiv 0$ and $f(0)=0$ the property of finiteness of the set $\E$ is generic with respect to the boundary $\p D$; we refer the reader to Theorem 3.1 in \cite{Saut1983}. Finally, let us mention that in the one-dimensional case, the genericity holds also with respect to the nonlinearity $f$, see \cite{Bru-Chow1984}.

\bigskip
{\it Exponential moments of Markov times $\sigma_k^\es$ and $\tau_k^\es$}. Let $R>0$ be so large that $\tilde g\subset B_R$.
 We claim that there is $\De(\es)>0$ such that we have
\be\label{9.22}
\sup_{\vv\in B_R}\e_\vv\exp(\De\sigma_0^\es)<\iin,\q \sup_{\vv\in B_R}\e_\vv\exp(\De\tau_1^\es)<\iin.
\ee
Indeed, in view of inequality \ef{9.27}, we have
$$
\sup_{\vv\in \tilde g}\e_\vv\exp(\De\sigma_0^\es)=\sup_{\vv\in \tilde g}\e_\vv\left(\sum_{n=0}^\iin \ch_{nt_*\le \sigma_0^\es< (n+1)t_*}e^{\De\sigma_0^\es}\right)\le e^{\De t_*}\sum_{n=0}^\iin (q e^{\De t_*})^n<\iin,
$$
where we set  $q=1-\exp(-\beta/\es)$ and choose $\De>0$ such that $q e^{\De t_*}< 1$. 
Thus, the first inequality in \ef{9.22} is established. By the strong Markov property, to prove the second one, it is sufficient to show that there is $\tilde\De\in (0, \De]$ such that 
$$
\sup_{\vv\in B_R}\e_\vv\exp(\tilde\De\tau^\es_g)<\iin, 
$$
where $\tau^\es_g(\vv)$ is the first instant when $S^\es(t;\vv)$ hits the set $\bar g$. The above relation follows \footnote{\,If the origin is among the equilibria, we use inequality (2.18) in the form announced in \cite{DM2014}. We note, however, that the latter is true for a neighborhood of any point, not only the origin, which allows to conclude in the general case.} from inequality (2.18) of  \cite{DM2014}, and we arrive at \ef{9.22} with $\De$ replaced by $\tilde \De$.

\subsection{Proof of inequality \ef{1.75}}

\nt

{\it Step~1.} Let $\uu_*\in\{V_\aaa\leq M\}$. By definition of $V_\aaa$, for any $j\geq 1$ there is an initial point $\uu_{0}^j\in\aaa$, a finite time $T_j>0$, and an action $\ph^j$ such that
\be\label{1.32}
J_{T_j}(\ph^j)\leq M+1\q \text{ and }\q |S^{\ph^j}(T_j;\uu_{0}^j)-\uu_*|_\h\leq 1/j.
\ee
In view of the second of these inequalities, in order to prove \eqref{1.75}, it is sufficient to show that
\be\label{1.44}
|S^{\ph^j}(T_j;\uu_{0}^j)|_{\h^s}\leq C(M) \q \text{ for all }j\geq 1,
\ee
where we set $\h^s=H^{s+1}(D)\times H^s(D)$.

\bigskip
{\it Step~2.} 
By definition of $S^\ph(t;\vv)$, we have $S^{\ph^j}(T_j;\uu_{0}^j)=\uu(T_j)$,
where $\uu(t)=[u(t),\dt u(t)]$ solves
\be\label{1.33}
\p_t^2 u+\gamma \p_t u-\de u+f(u)=h(x)+\ph^j(t,x),\q \uu(0)=\uu_{0}^j\q t\in[0,T_j].
\ee
We claim that
\be\label{1.45}
\ees(\uu(t))\leq C(\|h\|_1,M)\q\text{ for } t\in [0,T_j],
\ee
where $\ees(\uu)$ is given by \ef{1.95}.
Indeed, let us multiply equation \eqref{1.33} by $\dt u+\al u$ and integrate over $D$. Using some standard transformations and the dissipativity of $f$, we obtain 
$$
\p_t \ees(\uu(t))\leq -\al\ees(\uu(t))+C_1(\|h\|^2+\|\ph^j(t)\|^2) \q\q\q t\in[0,T_j].
$$
Applying the Gronwall lemma to this inequality, we get
\begin{align}
\ees(\uu(t))&\leq \ees(\uu(0))e^{-\al t}+C_1\int_0^t (\|h\|^2+\|\ph^j(s)\|^2)e^{-\al (t-s)}\dd s\notag\\
&\leq \ees(\uu(0))e^{-\al t}+C_1\al^{-1}\|h\|^2+C_1\int_0^{T_j}\|\ph^j(s)\|^2\dd s\notag\\
&\leq \ees(\uu(0))e^{-\al t}+C_1\al^{-1}\|h\|^2+C_2|\ph^j|_{L^2(0,T_j;H_\vartheta)}^2\leq C_3(\|h\|_1,M)\label{1.38},~~~~~~~
\end{align}
where we used first inequality of \eqref{1.32}, and the fact that since the initial point $\uu(0)=\uu_0^j$ belongs to the global attractor, its norm is bounded by constant depending on $\|h\|_1$ (see Theorem \ref{Th-attractor}).
Inequality \eqref{1.45} is thus established.

\bigskip
{\it Step~3.} 
We are now ready to prove \eqref{1.44}.
To this end, we split $u$ to the sum $u=v+z$, where $z$ solves
\be\label{1.37}
\p_t^2 z+\gamma \p_t z-\de z+f(u)=0,\q [z(0),\dt z(0)]=0\q t\in[0,T_j].
\ee
It is well known (e.g., see \cite{BV1992, Har85}) that inequality \eqref{1.45} implies  
\be\label{1.92}
|[z(T_j),\dt z(T_j)]|_{\h^s}\leq|[z(t),\dt z(t)]|_{C(0,T_j;\h^s)}\leq C_s(\|h\|_1,M)
\ee
for any $s<1-\rho/2$. 
Thus, it is sufficient to show that
\be\label{1.81}
|[v(t),\dt v(t)]|_{\h^{\f{1}{2}}}\leq C(\|h\|_1,M)\q\text{ for all } t\in[0,T_j].
\ee
Let us first note that in view of \eqref{1.33} and \eqref{1.37}, $v$ solves
\be\label{1.48}
\p_t^2 v+\gamma \p_t v-\de v=h(x)+\ph^j(t,x),\q [v(0),\dt v(0)]=\uu_0^j\q t\in[0,T_j].
\ee
Multiplying this equation with $-\de(\dt v+\al v)$, we obtain that for all $t\in [0,T_j]$
\begin{align}
\p_t |[v(t),\dt v(t)]|_{\h^{\f{1}{2}}}^2 &\leq -\al |[v(t),\dt v(t)]|_{\h^{\f{1}{2}}}^2+C_4(\|(-\de)^{\f{1}{2}} h\|^2+\|(-\de)^{\f{1}{2}}\ph^j(t)\|^2)\notag\\
&\leq -\al |[v(t),\dt v(t)]|_{\h^{\f{1}{2}}}^2+C_5(\|h\|_1^2+\|\ph^j(t)\|_{H_\vartheta}^2),\label{1.94}
\end{align}
where we used the fact that the space $H_{\vartheta}$ is continuously embedded in $H^1$, since 
$$
|\ph|_{\tilde H^1}^2=\sum_{j=1}^\iin\lm_j(\ph,e_j)^2=\sum_{j=1}^\iin\lm_j b_j^2(b_j^{-2}(\ph,e_j)^2)\leq \sup(\lm_j b_j^2)|\ph|_{H_{\vartheta}}^2
\leq\BBB_1 |\ph|_{H_{\vartheta}}^2.
$$
Applying the Gronwall lemma to inequality \eqref{1.94} and using first relation of \eqref{1.32} together with $\uu^j_0\in\aaa$, we derive \eqref{1.81}. Inequality \ef{1.75} is established.

\subsection{Some a priori estimates}\label{9.29}

\nt

\medskip
{\it Exponential moment of solutions.}
Let $\vv(t)$ be a solution of equation \eqref{0.1.4} with $\es=1$. We shall denote by $\ees:\h\to\rr$ the energy function given by
\be\label{1.95}
\ees(\uu)=|\uu|_\h^2+2\int_D F(u_1)\dd x \q\text { for } \uu=[u_1, u_2]\in\h.
\ee
The next result on the boundedness of exponential moment of $\ees(\vv(t))$ is taken from \cite{DM2014}.
\begin{proposition}\label{1.96}
Let conditions \ef{1.54}-\ef{1.57} be fulfilled. Then, we have
\be\label{1.97} 
\e\exp(\kp\,\ees(\vv(t))\leq\e\exp(\kp\,\ees (\vv(0))e^{-\al t}+Q(\BBB,\|h\|),
\ee
where inequality holds for any $\kp\leq (2\BBB)^{-1}\al$, and the function $Q(\cdot, \cdot)$ is increasing in both of its arguments. Here $\BBB$ stands for the sum $\sum b_j^2$ and $\al>0$ is the constant from \ef{9.11}.
\end{proposition}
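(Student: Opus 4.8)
The plan is to obtain \eqref{1.97} exactly as in the proof of Proposition~\ref{proposition_exp}: apply the It\^o formula to the functional $\GG(\uu)=\exp(\kp\,\ees(\uu))$ along a solution $\vv(t)$ of \eqref{0.1.4} with $\es=1$, written in the integral form \eqref{2.1.3}. Computing the first two Fr\'echet derivatives of $\GG$ (cf.\ the proof of Proposition~\ref{proposition_exp}) and inserting them into the It\^o formula \eqref{2.3.3}, the stochastic integral has zero mean and one is left with the identity
$$
\e\,\GG(\vv(t))=\e\,\GG(\vv(0))+\kp\int_0^t\e\bigl[\GG(\vv(s))\,\mmm(s)\bigr]\dd s,
$$
where $\mmm(s)=2\bigl((\vv(s),g(s))_\h+(f(u(s)),\dt u(s))\bigr)+2\kp\sum_{j\ge1}(\vv(s),g_j)_\h^2+\sum_{j\ge1}|g_j|_\h^2$ and $g$ is the drift appearing in \eqref{2.1.3}.

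Next I would estimate $\mmm(s)$. Expanding $(\vv,g)_\h+(f(u),\dt u)$ as in \eqref{2.4.3}, the term $(f(u),\dt u)$ cancels, and the only surviving nonlinear contribution is $-\al(u,f(u))$, which by the dissipativity condition \eqref{1.56} and the Poincar\'e inequality is bounded as in \eqref{6.67.3}. Using also $\sum_{j\ge1}|g_j|_\h^2=\BBB$ and $\sum_{j\ge1}(\vv,g_j)_\h^2\le\BBB\,|\vv|_\h^2$, choosing $\kp$ so small that $\kp\,\BBB\le\al/2$, and invoking the elementary bound $|\vv|_\h^2\le 2|\ees(\vv)|+4C$ from \eqref{7.9.3}, one arrives at an inequality of the form
$$
\kp\,\mmm(s)\le\kp\bigl(-\al\,\ees(\vv(s))+C_1\bigr),\qquad C_1=C_1(\gamma,\BBB,\|h\|),
$$
where $C_1$ is increasing in $\BBB$ and $\|h\|$. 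The threshold $\kp\le(2\BBB)^{-1}\al$ in the statement is precisely what is needed at this point.

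Finally I would conclude: substituting the last bound into the integral identity and using the elementary inequality $\kp e^{v}(-\al v+C_1)\le -\al e^{v}+C_2$, valid for all $v\ge -C$ (and $\ees(\vv)\ge -C$ thanks to \eqref{1.56}), yields the differential inequality $\f{d}{dt}\e\,\GG(\vv(t))\le -\al\,\e\,\GG(\vv(t))+C_2$, whence Gronwall's lemma gives \eqref{1.97} with $Q(\BBB,\|h\|)=C_2$ of the required monotone form. The step I expect to be the main obstacle is the rigorous justification of the It\^o formula for the non-polynomial functional $\GG$ on the infinite-dimensional phase space $\h$; this is handled, as in Proposition~\ref{proposition_exp}, by first working with finite-dimensional Galerkin truncations and then passing to the limit using the a priori energy bound \eqref{2.2.3} together with Fatou's lemma — the same device needed to make sense of \eqref{1.23} — all the while tracking constants so that the sharp constraint $\kp\le(2\BBB)^{-1}\al$ and the monotonicity of $Q$ are preserved.
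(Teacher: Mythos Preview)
Your proposal is correct and follows essentially the same approach as the paper: Proposition~\ref{1.96} is explicitly stated to be ``taken from~\cite{DM2014}'' (i.e., Chapter~\ref{Chapter3}), and the proof there is exactly the argument of Proposition~\ref{proposition_exp} that you outline---It\^o's formula applied to $\exp(\kp\ees)$, cancellation of $(f(u),\dot u)$ via~\eqref{2.4.3}, the dissipativity bound~\eqref{6.67.3}, the choice $\kp\BBB\le\al/2$, the scalar inequality $\kp e^v(-\al v+C_1)\le-\al e^v+C_2$, and Gronwall. Your extra remark on Galerkin truncations for rigor is sound; the paper leaves that implicit.
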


\bigskip
{\it Feedback stabilization result.}
Let us consider functions $\uu(t)$ and $\vv(t)$ defined on the time interval $[0, T]$ that correspond, respectively, to the flows of equations \eqref{7.10} and
\be\label{1.98}
\p_t^2 v+\gamma \p_t v-\de v+f(v)+P_N [f(u)-f(v)]=h(x)+\ph(t,x).
\ee

We suppose that either $\ph(t,x)$ belongs to $L^2(0,T;L^2(D))$ or its primitive with respect to time belongs to $C(0,T;H^1_0(D))$. Let $\mmm$ be a positive constant such that 
\be\label{9.26}
|\uu(t)|_\h\vee |\vv(0)|_\h\le \mmm  \q\text{ for all }  t\in [0,T].
\ee
The following result is a variation of Proposition 4.1 in \cite{DM2014}.
\begin{proposition}\label{1.88}
Under the conditions \ef{1.54}-\ef{1.56}, there is an integer $N_*$ depending only on $\mmm$ such that for all $N\geq N_*$ we have
\be\label{1.87}
|\vv(t)-\uu(t)|_\h^2\leq e^{-\al t}|\vv(0)-\uu(0)|_\h^2 \q\text{ for all } t\in [0, T].
\ee
\end{proposition}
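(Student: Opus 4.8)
The plan is to deduce Proposition \ref{1.88} from the Foia\c{s}-Prodi estimate of Proposition \ref{4.13.3} (equivalently, Proposition 4.1 in \cite{DM2014}); the only genuinely new point is that here the control on the solutions is prescribed at the single time $t=0$ for $\vv$ together with a uniform-in-time bound for $\uu$, rather than through the integral inequality \eqref{4.17.3}. First I would set $w=v-u$ and subtract \eqref{1.98} from \eqref{7.10}: the right-hand side $h+\ph$ cancels, so $w$ solves
\be
\p_t^2 w+\gamma\p_t w-\de w+(I-P_N)[f(v)-f(u)]=0,\q [w(0),\dt w(0)]=\vv(0)-\uu(0),
\ee
which is exactly the equation analysed in the proof of Proposition \ref{4.13.3}. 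In particular the regularity assumed on $\ph$ is irrelevant for the estimate itself; it is only used to make $\uu,\vv\in C(0,T;\h)$ well defined. From \eqref{9.26} we get $|\xi_w(0)|_\h=|\vv(0)-\uu(0)|_\h\le 2\mmm$ and $\|\g u(t)\|\le|\uu(t)|_\h\le\mmm$ for all $t\in[0,T]$; the corresponding bound for $\|\g v(t)\|$ is not yet available and is produced by a continuation argument.

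Next, set $t_*=\sup\{t\in[0,T]:|\vv(s)|_\h\le 3\mmm\ \text{for all }s\in[0,t]\}$; since $|\vv(0)|_\h\le\mmm$ and $\vv(\cdot)$ is $\h$-continuous, $t_*>0$. On $[0,t_*]$ one has $\|\g u(s)\|\le\mmm$ and $\|\g v(s)\|\le 3\mmm$, so \eqref{4.17.3} holds on $[0,t_*]$ for $z=u$ and $z=v$ with $l=0$ and $K=9\mmm^2$. Applying Proposition \ref{4.13.3} on the interval $[0,t_*]$ (taking $\es=1$, which is immaterial since $l=0$) furnishes an integer $N_*$ depending only on $K$, hence only on $\mmm$, such that for every $N\ge N_*$
\be
|\xi_w(t)|_\h^2\le e^{-\al t}|\xi_w(0)|_\h^2\le (2\mmm)^2 e^{-\al t}\q\text{for all }t\in[0,t_*].
\ee
Hence for $t\in(0,t_*]$, $|\vv(t)|_\h\le|\xi_w(t)|_\h+|\uu(t)|_\h\le 2\mmm e^{-\al t/2}+\mmm<3\mmm$, and the same strict bound holds at $t=0$. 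If $t_*<T$, then by continuity $|\vv(\cdot)|_\h<3\mmm$ on a slightly larger interval, contradicting the maximality of $t_*$; therefore $t_*=T$, the first decay estimate holds on all of $[0,T]$, and since $|\xi_w(0)|_\h^2=|\vv(0)-\uu(0)|_\h^2$ this is precisely \eqref{1.87}.

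The main obstacle I anticipate is the circular dependence built into this scheme: the proof of Proposition \ref{4.13.3} requires $|I-P_N|_{L(H^{1,p}\to L^2)}$ to be small enough to absorb the term $(I-P_N)[f(v)-f(u)]$, and ``small enough'' is quantified through an a priori bound on $\|\g v\|$ which itself comes only a posteriori from the conclusion of that proposition. The continuation argument above is exactly the device that breaks this circle, and the delicate point to verify is that the resulting $N_*$ depends only on $\mmm$ -- which works because Proposition \ref{4.13.3} is applied with $l=0$, so that the $\es$-dependence of its threshold drops out. A routine technical point, already handled in \cite{DM2014}, is the justification of the energy identity that underlies the passage from \eqref{4.17.3} to \eqref{4.16.3} (differentiating $|\xi_w(t)|_\h^2$ in time, via smoothing of $\h$-valued solutions).
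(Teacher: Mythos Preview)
Your proof is correct and follows essentially the same continuation argument as the paper: define an exit time for $\vv$ from a ball of radius $c\mmm$, apply the Foia\c{s}--Prodi estimate (Proposition~4.1 in \cite{DM2014}) on the interval before exit with $l=0$ and $K$ depending only on $\mmm$, then use the resulting decay to show the exit never occurs. The only cosmetic differences are your threshold $3\mmm$ versus the paper's $4\mmm$, and that the paper redundantly reapplies Proposition~4.1 on $[0,T]$ after the bootstrap (the same $N_*$ already suffices, as your single-pass argument shows).
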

\bp
We first show that there is an integer $N_1$ depending only on $\mmm$ such that for all $N\geq N_1$ we have
\be\label{9.25}
|\vv(t)|_\h\le 4\mmm  \q\text{ for all }  t\in [0,T].
\ee
To this end, let us introduce
\be\label{1.9}
\tau=\inf\{t\in[0,T]: | \vv(t)|_{\h}>4\mmm\},
\ee
with convention that the infimum over the empty set is $\iin$. Inequality \eqref{9.25} will be proved if we show that there is $N_1=N_1(\mmm)$ such that $\tau=\iin$ for all $N\geq N_1$. Note that in view of \ef{9.26}, we have $\tau>0$. Moreover, by definition of $\tau$, we have
\be\label{1.11}
|\uu(t)|_\h \vee |\vv(t)|_\h\leq 4\mmm \q\q\q\text{ for all } t\in[0, \tau\wedge T].
\ee
It follows from Proposition 4.1 in \cite{DM2014} applied to the interval $[0,\tau\wedge T]$ that there is an integer $N_1$ depending only on $\mmm$ such that for all $N\geq N_1$ we have
\be 
|\vv(t)-\uu(t)|^2_\h\leq e^{-\al t}|\vv(0)-\uu(0)|_\h^2 \q\q\q\text{ for all } t\in[0,\tau\wedge T].
\ee
Therefore we have
\be
|\vv(\tau\wedge T)|_\h\leq |\uu(\tau\wedge T)|_\h+|\uu(0)|_\h+|\vv(0)|_\h\le 3\mmm.
\ee
Combining this with definition of $\tau$, we see that $\tau=\iin$, and thus inequality \eqref{9.25} is proved.
It follows that for $N\ge N_1$, we have
$$
|\uu(t)|_\h \vee |\vv(t)|_\h\leq 4\mmm \q\q\q\text{ for all } t\in[0, T].
$$ 
Once again using Proposition 4.1, but this time on the interval $[0,T]$, we see that there is $N_*\geq N_1$ such that for all $N\geq N_*$ we have \eqref{1.87}.
\ep

\bigskip
{\it Auxiliary estimates. Proof of \ef{9.41}.}
The standard argument shows (see the derivation of \eqref{1.38}) that there is a constant $\mmm$ depending only on $R$ and $\|h\|$ such that for all $j\geq 1$ we have
\be\label{2.6}
\sup_{[0, j]}|\uu^j(t)|_\h+\sup_{[0, j]}|\vv^j(t)|_\h\leq \mmm.
\ee
We shall write $\uu^j(t)=[u(t), \dt u(t)]$ and $\vv^j(t)=[v(t), \dt v(t)]$. Note that the difference $\uu^j(t)-\vv^j(t)$ corresponds to the flow of equation 
\be\label{2.3}
\p_t^2 z+\gamma\p_t z-\de z+f(v+z)-f(v)=\ph^j, \q [z(0),\dt z(0)]=0 \q\q  t\in [0,j].
\ee
Multiplying this equation by $\dt z+\al z$ and integrating over $D$, we obtain
\begin{align}
\p_t |[z(t),\dt z(t)]|_\h^2&\leq -\al |[z(t),\dt z(t)]|_\h^2+C(\|\ph^j(t)\|^2+\|f(v(t)+z(t))-f(v(t))\|^2)\notag\\
&\leq C(\|\ph^j(t)\|^2+\|f(v(t)+z(t))-f(v(t))\|^2).\label{2.7}
\end{align}
By the H\"older and Sobolev inequalities, we have
$$
\|f(v+z)-f(v)\|^2\leq C_1\|z\|_1^2(\|u\|_1^2+\|v\|_1^2+1)\leq C_2|[z,\dt z]|_\h^2(\|u\|_1^2+\|v\|_1^2+1).
$$
Combining this with inequalities \eqref{2.6} and \eqref{2.7}, we derive
$$
\p_t |[z(t),\dt z(t)]|_\h^2\leq C(\|\ph^j(t)\|^2+|[z(t),\dt z(t)]|_\h^2),
$$
where the constant $C$ depends only on $\mmm$.
Applying the comparison principle to this inequality, we see that for all $t\in[0,j]$, we have
$$
|[z(t),\dt z(t)]|_\h^2\leq C\int_0^t\|\ph^j(s)\|^2 \exp(C s)\dd s.
$$
Recalling the definition of $z$, we arrive at \ef{9.41}.

\subsection{Proof of Lemma \ref{1.201}}
We shall carry out the proof for the most involved case when $\uu_1$ and $\uu_2$ are the endpoints of the orbit $\ooo$. We need to show that for any positive constants $a$ and $\eta$, we have 
\be\label{9.28}
\mu^{\es_j}(B(\uu_2,\eta))\ge \exp(-a/\es)\q\q\text{ for } j>>1.
\ee
So, let $a$ and $\eta$ be fixed and let $\tilde\uu(\cdot)$ be a complete trajectory such that
$$
\tilde \uu(-t)\to\uu_1,\q \tilde \uu(t)\to\uu_2 \q\text{ as } t\to\iin.
$$
Let us find $T_1\geq 0$ and $T_2\geq 0$ so large that for $\tilde \uu_1=\tilde \uu(-T_1)$ and $\tilde \uu_2=\tilde \uu(T_2)$, we have
\be\label{5.8.4}
|\tilde \uu_1-\uu_1|_\h<\eta/8,\q |\tilde \uu_2-\uu_2|_\h<\eta/8.
\ee
Consider the flow $\uu'(t)=\tilde \uu(t-T_1)$. It corresponds to the flow of \eqref{1.51} issued from $\uu_1$.
Introduce the intermediate flow $\vv(t)$ corresponding to the solution of
$$
\p_t^2 v+\gamma \p_t v-\de v+f(v)=h(x)+\ph,\q [v(0),\dt v(0)]=\uu_1.
$$
where $\ph=P_N[f(v)-f(u')]$, and $u'$ is the first component of $\uu'$. In view of Proposition \ref{1.88}, there is an integer $N=N(\hat \uu_1,\|h\|)$ such that
\be\label{5.11.4}
|\vv(t)-\uu'(t)|_\h^2\leq e^{-\al t}\,|\uu_1-\tilde \uu_1|_\h^2\leq e^{-\al t}\,\eta^2/64,
\ee
where we used first inequality of \eqref{5.8.4}. In particular, for $T=T_1+T_2$, we have
\be\label{5.12.4}
|\vv(T)-\uu'(T)|_\h\leq\eta/8.
\ee
Now let us note that by construction we have
$$
\uu'(T)=\tilde \uu(T-T_1)=\tilde \uu(T_2)=\tilde \uu_2, \q \vv(T)=S^\ph(T;\uu_1).
$$
Combining this with second inequality of \eqref{5.8.4} and \eqref{5.12.4}, we obtain
$$
|S^\ph(T;\uu_1)- \uu_2|_\h<\eta/4.
$$
By continuity of $S^\ph$, there is $\kp>0$ such that
\be\label{5.13.4}
|S^\ph(T;\uu)-\uu_2|_\h<\eta/2 \q\text{ for } \,|\uu-\uu_1|_\h\leq\kp.
\ee
Moreover, it follows from inequality \eqref{5.11.4} that the action $\ph$ satisfies
\begin{align}
J_{T}(\ph)&=\f{1}{2}\int_0^{T}|\ph(s)|^2_{H_\vartheta}\dd s\leq C(N)\int_0^{T}|f(v)-f(u')|_{L^1}^2\dd s\notag\\
&\leq C_1\, C(N)\int_0^{T}(\|u'\|_1^2+\|v\|_1^2+1)\|v-u'\|_1^2\dd s\notag\\
&\leq C_2\,C(N) \int_0^{T}|\vv-\uu'|_\h^2 e^{-\al s}\dd s\leq C_3\, C(N)\, \eta^2\leq a\label{9.12},
\end{align}
provided $\eta$ is sufficiently small. Using inequality \eqref{5.13.4} and stationarity of $\mu^\es$ (see the derivation of \eqref{5.6}) we get
\begin{align*}
\mu^{\es_j}(B(\uu_2,\eta))&\geq \int_{|\uu-\uu_1|\leq\kp}\pp(|S^{\es_j}(T;\uu)-S^\ph(T;\uu)|<\eta/2)\,\mu^{\es_j}(d\uu)\\
&\ge \exp(-2a)\mu^{\es_j}(B(\uu_1,\kp))\q\q\text{ for } j>>1,
\end{align*}
where we used Theorem \ref{6.1} with inequality \eqref{9.12}. Moreover, since the point $\uu_1$ is stable with respect to $(\mu^{\es_j})$, we have 
$$
\mu^{\es_j}(B(\uu_1,\kp))\ge\exp(-a/\es_j)\q\q\text{ for } j>> 1.
$$
Combining last two inequalities, we arrive at \ef{9.28}, with $a$ replaced by $3a$. Lemma \ref{1.201} is established.

\selectlanguage{french}
 

\chapter{Grandes d\'eviations locales} 

\label{Chapter5} 
 
\author{D.~Martirosyan\footnote{Department of Mathematics, University of Cergy-Pontoise, CNRS UMR 8088, 2 avenue
Adolphe Chauvin, 95300 Cergy-Pontoise, France; e-mail: \href{mailto:Davit.Martirosyan@u-cergy.fr}{Davit.Martirosyan@u-cergy.fr}} \and V.~Nersesyan\footnote{Laboratoire de Math\'ematiques, UMR CNRS 8100, Universit\'e de Versailles-Saint-Quentin-en-Yvelines, F-78035 Versailles, France;  e-mail: \href{mailto:Vahagn.Nersesyan@math.uvsq.fr}{Vahagn.Nersesyan@math.uvsq.fr}} 
}

 \selectlanguage{english}
\section*{Local large deviations  principle for occupation measures  of the   damped
nonlinear wave equation perturbed by a   white~noise}
{\it joint work with V.~Nersesyan}\footnote{Laboratoire de Math\'ematiques, UMR CNRS 8100, Universit\'e de Versailles-Saint-Quentin-en-Yvelines, F-78035 Versailles, France;  e-mail: \href{mailto:Vahagn.Nersesyan@math.uvsq.fr}{Vahagn.Nersesyan@math.uvsq.fr}}

{\bf Abstract}.

   We consider the damped nonlinear wave (NLW) equation driven by a spatially regular white noise. Assuming that the     noise is non-degenerate in all Fourier modes, we establish a large deviations  principle (LDP) for the occupation measures of the trajectories. The lower bound in the LDP is of a local type, which is related to the weakly dissipative nature of the equation  and seems to be new  in the context of randomly forced PDE's. 
   The proof is based  on an extension   of methods developed in~\cite{JNPS-2012} and~\cite{JNPS-2014}  in the case of     kick forced dissipative PDE's with  parabolic  regularisation property such as, for example,  the Navier--Stokes system and the complex
Ginzburg--Landau equations.   We also show that a high concentration towards the stationary measure is impossible, by proving that the rate function that governs the LDP cannot have the trivial form (i.e., vanish on the stationary measure and be infinite elsewhere).

\setcounter{section}{-1}

\section{Introduction}
\label{S:0}

 This paper is devoted to the   study of the large deviations principle (LDP) for the occupation measures of the stochastic nonlinear wave (NLW) equation in a bounded domain $D\subset\R^3$ with a smooth boundary $\partial D$:
 \begin{align}
 \p_t^2u+\gamma \p_tu-\de u+f (u)&=h(x)+\vartheta(t,x), \q u|_{\partial D}=0,\label{0.1}\\ 
  \,\,\, [u(0),\dt u(0)]&=[u_0, u_1].\label{0.2}\
\end{align} 
   Here $\gamma>0$ is a damping parameter, $h$ is a   function in $  H^1_0(D)$, and 
 $f$ is a nonlinear term satisfying some standard dissipativity and growth conditions   (see~\eqref{1.8}-\eqref{1.6}).   These conditions are satisfied  
  for the classical examples~$f(u)=\sin u$ and $f(u)=|u|^{\rho}u-\lm u$, where~$\lm\in\R$ and~$\rho\in(0,2)$, coming from the damped sine--Gordon and
Klein--Gordon equations.
We assume that~$\vartheta(t,x)$ is a white noise of the form
\be\label{0.3}
\vartheta(t,x)=\p_t \xi(t,x), \q  \xi(t,x)=\sum_{j=1}^\infty b_j  \beta_j(t)e_j(x),
\ee where
$\{\beta_j\}$ is a sequence of independent standard Brownian motions, the set of   functions $\{e_j\}$ is
an   orthonormal basis in  $L^2(D)$ formed by    eigenfunctions of the Dirichlet
Laplacian with eigenvalues $\{\lm_j\}$, and
 $\{b_j\}$ is a sequence of real numbers  
  satisfying  
\be \label{a0.4}
\BBB_1:=\sum_{j=1}^\iin\lm_j b_j^2<\infty.
\ee

 We   denote by~$(\uu_t,\pp_\uu), \uu_t= [u_t,\dt u_t]$  the Markov family  associated with this stochastic NLW equation  and parametrised by the   initial condition  
 $ \uu=[u_0, u_1]$. 
 The exponential ergodicity for  this family     is established in \cite{DM2014}, this   result       is recalled  below in Theorem~\ref{T-Davit}.

     \smallskip 
    The LDP for   the occupation measures of randomly forced PDE's  has been previously established in   \cite{gourcy-2007a,gourcy-2007b} in the case of the Burgers equation and the Navier--Stokes system, based on some abstract results from~\cite{wu-2001}. In these papers,  the force is   assumed to be a rough white noise, i.e., it is  
     of the form~\eqref{0.3}   with the following    condition  on the coefficients:   
           $$
  cj^{-\alpha}\le b_j\le Cj^{-\frac12-\es}, \q  \frac12<\alpha<1 ,\,\, \es\in \left(0, \alpha-\frac12\right].
  $$    In the case of a    perturbation which is a    regular    random 
   kick force, the LDP is proved  in~\cite{JNPS-2012, JNPS-2014} for a family of PDE's with parabolic regularisation (such as the Navier--Stokes system or the complex
Ginzburg--Landau equation).  See also \cite{JNPS-2013} for the proof of the LDP and
     the Gallavotti--Cohen principle in the case of a rough      kick force.

     \smallskip 

   The  aim of the  present 
    paper is to extend the results and the methods of   these works under more 
    general   assumptions  
  on  both  stochastic and deterministic   parts of the equations. The random perturbation in our setting  is a spatially regular white noise,  and the NLW equation is only weakly dissipative and  lacks   a regularising property. In what follows, we shall denote by $\mu$ the stationary measure of the family $(\uu_t,\pp_\uu)$, and for any bounded continuous function  $\psi:H_0^1(D)\times L^2(D) \to \R   $, we shall write $\langle\psi,\mu\rangle$ for the integral of~$\psi$ with respect to~$\mu$. We   prove  the following level-1 LDP for the solutions of problem~\eqref{0.1},~\eqref{0.3}. 
      \begin{mt} Assume that conditions \eqref{a0.4} and \eqref{1.8}-\eqref{1.6} are verified and~$b_j>0$ for all~$j\ge1$. Then   for any non-constant bounded  H\"older-continuous function $\psi:H_0^1(D)\times L^2(D) \to \R   $,  there is $\es=\es(\psi)>0$ 
 and a convex function~$I^\psi:\R\to \R_+$
such that,  
  for any    $\uu\in H^{\sS+1}(D)\times H^\sS(D)$ and any open subset~$O$ of the interval $(\langle\psi, \mu\rangle-\es, \langle \psi, \mu\rangle+\es)$, we have
\be\label{Llimit}
\lim_{t\to\infty} \frac1t\log   \pp_\uu\left\{\f{1}{t}\int_0^t\psi(\uu(\tau))\dd\tau\in O\right\}= -\inf_{\alpha\in O} I^\psi(\alpha),
\ee where $ \sS>0$ is a    small number. Moreover,    limit \eqref{Llimit} is uniform with respect to~$\uu$ in a bounded set of  $H^{\sS+1}(D)\times H^\sS(D)$.
   \end{mt} 
 We also establish  a more general result of   level-2 type in Theorem~\ref{T:MT}. 
 These two theorems are slightly different
   from the standard  
  Donsker--Varadhan form    (e.g., see Theorem~3 in~\cite{DV-1975}), since here the LDP is proved to hold   locally on some part of the phase space.

      \smallskip

      The proof of the Main Theorem is obtained by  extending the techniques   and results introduced in~\cite{JNPS-2012, JNPS-2014}. According to a local version of the G\"artner--Ellis theorem, relation \eqref{Llimit} will be established if we show that,  for some $  \beta_0>0 $, the following limit exists
      $$
Q(\beta  )=\lim_{t\to+\ty} \frac{1}{t}
\log\e_\uu\exp \biggl(\,\int_0^t\beta \psi(\uu_\tau)\dd \tau\biggr), \q |\beta|< \beta_0
$$ and it is differentiable in $\beta$ on $(-\beta_0,\beta_0)$.
 We show that both properties   can be derived from a multiplicative ergodic theorem, which is a
 convergence result for
  the  Feynman--Kac semigroup of  the stochastic NLW equation. A continuous-time version of a criterion established in \cite{JNPS-2014} shows that a multiplicative ergodic theorem   holds provided that   the following four  conditions   are satisfied: \mpp{uniform irreducibility, exponential tightness, growth condition,} and \mpp{uniform Feller property}.  
  The smoothness of the noise and
the lack of a strong dissipation and of a regularising property in the equation   result in substantial differences in the  techniques used to verify these     conditions. While in the case of   kick-forced models the  first two of them are checked directly, 
  they have a rather non-trivial proof in our case, relying  on a feedback stabilisation result  and some subtle estimates for the Sobolev norms of the solutions. Nonetheless, the most involved and highly technical part of the paper remains the verification of the uniform Feller property. Based on the coupling method, its proof is more intricate here mainly  due to a more complicated  Foia\c{s}--Prodi   type estimate for the stochastic  NLW equation. We get a uniform Feller property only for potentials that have a sufficiently small oscillation, and  this is the main reason why the LDP   established in this paper is of a local type.

 \smallskip
 
 The paper is organised as follows. We formulate in Section~1 the second main result of this paper on  the level-2 LDP for the NLW equation and, by using a  local version of Kifer's criterion, we reduce its proof to a  multiplicative ergodic theorem.  Section~2 is devoted to the derivation of the Main Theorem. In Sections~3 and~4, we are checking the conditions of  an abstract result about the convergence of generalised Markov semigroups. In Section~5, we prove the exponential tightness property and provide some estimates for the growth of   Sobolev norms of the solutions.   The multiplicative ergodic theorem is established in Section~6. In the Appendix, we prove the local   version of Kifer's criterion, the abstract convergence result for the semigroups, and   some other technical results which are used throughout the paper.

\subsection*{Notation}
For a Banach space $X$, we denote by $B_X(a,R)$ the closed   ball in~$X$ of radius~$R$ centred at~$a$. In the case when~$a=0$, we write $B_X(R)$.   For any   function $V:X\to\R$, we set $\Osc_X(V):=\sup_X V-\inf_X V$.  
We   use the following~spaces:

\smallskip
\noindent
$L^\infty(X)$ is the space of bounded measurable functions $\psi:X\to\R$ endowed with the norm $\|\psi\|_\infty=\sup_{u\in X}|\psi(u)|$. 

\smallskip
\noindent
$C_b(X)$ is the space of continuous functions $\psi\in L^\infty(X)$, and  
$C_+(X)$ is the space of positive continuous functions $\psi:X\to\R$. 

\smallskip
\noindent
$C^q_b(X),$ $ q \in (0,1]$ is the space of functions $f\in C_b(X)$ for which the following
norm is finite 
$$
\|\psi\|_{C_b^ q }=\|\psi\|_\infty+\sup_{u\neq v} \frac{
{|\psi(u)-\psi(v)|}}{ \|u-v\|^ q }.
$$

\smallskip
\noindent 
$\MM(X)$ is the vector space of signed Borel
measures on $X$ with finite total mass   endowed with the topology of  the  weak convergence. $\MM_+(X)\subset \MM(X)$ is the cone of non-negative   measures. 

\smallskip
\noindent
$\ppp(X)$ is the set of probability Borel measures on~$X$. For $\mu\in\ppp(X)$ and  $\psi\in C_b(X)$,  we denote $\lag \psi,\mu\rag=\int_X\psi(u)\mu(\Dd u).$
If $\mu_1,\mu_2\in\ppp(X)$, we set
$$
|\mu_1-\mu_2|_{var}=\sup\{|\mu_1(\Gamma)-\mu_2(\Gamma)|:\Gamma\in\BBBBB(X)\},
$$where $\BBBBB(X)$ is the Borel $\sigma$-algebra of $X$.

\smallskip
\noindent
For any  measurable function ${\wwww}:X\to[1,+\infty]$, let   $C_\wwww(X)$ (respectively, $L_\wwww^\infty(X)$) be the space of continuous (measurable) functions $\psi:X\to\R$ such that $|\psi(u)|\le C\wwww(u)$ for all $u\in X$. We endow~$C_\wwww(X)$ and  $L_\wwww^\infty(X)$ with the seminorm
$$
\|\psi\|_{L_\wwww^\infty}=\sup_{u\in X}\frac{|\psi(u)|}{\wwww(u)}.
$$  

\smallskip
\noindent
$\ppp_\wwww(X)$ is  the space of measures $\mu\in\ppp(X)$ such that $\langle \wwww,\mu\rangle<\infty$.

\smallskip
\noindent
 For an open set $D$ of $\R^3$, we introduce the following function spaces:

\smallskip
\noindent
$L^p=L^p(D)$ is the Lebesgue space of measurable functions whose $p^{\text{th}}$ power is integrable. In the case $p=2$ the corresponding norm is denoted by $\|\cdot\|$.

\smallskip
\noindent
$H^\sS=H^\sS(D), \sS\ge0$ is  the domain of  definition of the operator $(-\Delta)^{\sS/2}$ endowed with the norm $\|\cdot\|_\sS$:
$$
H^\sS=\D\left((-\Delta)^{\sS/2}\right)=\left\{ u=\sum_{j=1}^\infty  u_j e_j \in L^2: \|u\|_\sS^2:=  \sum_{j=1}^\infty \lambda_j^s u_j^2<\infty\right\}.
$$  In particular, $H^1$ coincides with $H^1_0(D)$, the space of functions in the    Sobolev space of order $1$ that vanish at the boundary. We denote by $H^{-\sS}$   the dual of~$H^\sS$.

\section{Level-2   LDP for the NLW equation}
\label{S:1}

\subsection{Stochastic NLW equation and   its mixing properties}
\label{S:1.1}

In this subsection we give the precise hypotheses on the nonlinearity   and recall a result on the property of exponential mixing     for the Markov family associated with  the flow of \ef{0.1}.
We shall assume that $f $ belongs to~$C^2(\R)$, vanishes at zero,  satisfies the growth condition
\be\label{1.8}
|f ''(u)|\leq C(|u|^{\rho-1}+1),\q u\in\rr,
\ee  
for some  positive constants $C$ and $\rho<2$, and the dissipativity conditions 
\begin{align}
F(u)&\geq C^{-1} |f '(u)|^{\f{\rho+2}{\rho}}-\nu u^2-C, \label{1.5}\\
f (u)u- F(u)&\geq-\nu u^2-C, \label{1.6}
\end{align}
 where $F$ is a primitive of $f $,  $\nu$ is a positive number less than  $  (\lm_1\wedge\gamma)/8$. Let us note that inequality \ef{1.5} is slightly more restrictive than the one used in~\cite{DM2014};   this hypothesis allows us   to establish the exponential tightness property (see Section~\ref{S:4.1}). 
We   consider the NLW equation in the phase space   $\h=H^1\times L^2$   endowed with   the norm
\be\label{e40}
|\uu|_\h^2=\| u_1\|^2_1+\|u_2+\al u_1\|^2,\q  \uu=[u_1, u_2]\in\h,
\ee
where $\al=\al(\gamma)>0$ is a small parameter. 
Under the above conditions, for any   initial data $\uu_0=[u_0,u_1]\in \h$, there is a unique solution (or \emph{a flow})  $\uu_t= \uu(t; \uu_0)=[u_t,\dt u_t]$ of problem \ef{0.1}-\ef{0.3} in $\h$ (see Section~7.2 in~\cite{DZ1992}). 
For any $\sS\in \R$,  let~$\h^\sS$ denote the space $H^{\sS+1}\times H^\sS$   endowed   with the norm
$$|\uu|_{\h^{\sS}}^2=\|  u_1\|_{\sS+1}^2+\| u_2+\al u_1\|_\sS^2,\q  \uu=[u_1, u_2]\in\h^{\sS}
$$
with the same  $\al$ as in  \ef{e40}. If~$\uu_0\in \h^\sS$ and $0<\sS<1-\rho/2$, the solution $\uu(t; \uu_0)$ belongs\,\footnote{Some estimates for the  $\h^\sS$-norm of the solutions are given in Section~\ref{S:moments}.} to $ \h^\sS$ almost surely. Let us define a function  $\wwww:\h\to [0, \iin]$      by
\be\label{1.1}
  \wwww(\uu)=1+|\uu|_{\h^\sS}^2+ \ees^4(\uu),  
\ee  which will play the role of the  {\it weight function}. Here 
$$
\ees(\uu)=|\uu|_\h^2+2\int_D F(u_1)\,\dd x, \q \uu=[u_1,u_2]\in\h,
$$
 is the    energy functional of the NLW equation.     

\smallskip

We consider the  Markov family $(\uu_t,\pp_\uu)$ associated with   \ef{0.1} and define the corresponding Markov operators  
\begin{align*}
&\PPPP_t:C_b(\h)\to C_b(\h),  \quad\,\,\,\quad \quad \PPPP_t \psi(\uu)=\int_\h \psi(\vv) P_t(\uu,\Dd \vv),\\
&\PPPP_t^*:\ppp(\h)\to \ppp(\h), \quad \quad\quad\quad \PPPP_t^* \sigma(\Gamma)=\int_\h  P_t(\vv,\Gamma) \sigma(\Dd \vv),\quad t\ge0,
\end{align*}  
where $P_t(\uu,\Gamma)=\pp_\uu\{\uu_t\in\Gamma\}$ is the transition function. Recall that a measure~$\mu\in\ppp(\h)$ is said to be stationary   if $\PPPP_t^*\mu=\mu$ for any $t\ge0$. The following result is   Theorem~2.3 in~\cite{DM2014}.
\begin{theorem} \label{T-Davit} Let us assume that    conditions \eqref{a0.4} and \eqref{1.8}-\eqref{1.6} are verified and~$b_j>0$ for all~$j\ge1$.
Then   the   family~$(\uu_t,\pp_\uu)$  has a unique stationary measure $\mu\in\ppp(\h)$. Moreover, there are positive constants $C$ and $\kp$ such that, for any $\sigma\in\ppp(\h)$, we have
$$
|\PPPP^*_t\sigma-\mu|_L^*\leq C e^{-\kp t}\int_\h \exp\left(\kp|\uu|_\h^4\right)\,\sigma(\Dd \uu),
$$where we set 
$$
|\mu_1-\mu_2|_L^*=\sup_{\|\psi\|_{C_b^1}\leq 1}|\lag \psi,\mu_1\rag-\lag \psi,\mu_2\rag| 
$$ for any $ \mu_1,\mu_2\in\ppp(\h)$.
\end{theorem}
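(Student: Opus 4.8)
The statement to be proved is Theorem~\ref{T-Davit}, which is exactly Theorem~2.3 in~\cite{DM2014}, i.e. the exponential mixing result for the stochastic NLW equation. Since the excerpt itself announces this theorem as a \emph{recalled} result and refers to the companion paper for the proof, the plan is to reproduce the architecture of that proof, which is laid out in detail in Chapter~\ref{Chapter3} of this very thesis.

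\textbf{Overall strategy.} The plan is to construct an extension of the Markov family $(\uu_t,\pp_\uu)$ on the product space $\hh=\h\times\h$ satisfying the hypotheses of the general criterion for exponential mixing (Theorem~3.1.7 in~\cite{KS-book}), namely the \emph{recurrence} and \emph{exponential squeezing} properties collected in Theorem~\ref{2.14.3}. First I would record the a priori bounds: inequality~\eqref{2.2.3} for $\e\ees_u(t)$, the exponential-moment estimate of Proposition~\ref{proposition_exp}, and the exponential supermartingale-type bound of Proposition~\ref{supermartingale} together with Corollary~\ref{supermartingale lemma}; these give the Lyapunov structure needed for recurrence. The key analytic input on the deterministic side is the Foia\c{s}--Prodi estimate (Proposition~\ref{4.13.3}): the difference of two solutions, one of them driven through the low-mode feedback term $P_N[f(u)-f(v)]$, contracts in $\h$ at an exponential rate as soon as $N$ is large and a bound of the form $\int_s^t\|\nabla z\|^2\dd\tau\le l+K(t-s)$ holds along both trajectories.

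\textbf{Construction of the coupling.} The central step is the coupling of the laws of two solutions $\xi_u,\xi_{u'}$ issued from $y,y'$. Because of the lack of a regularising effect, the naive coupling through projections does not yield an exponential rate; instead I would introduce the intermediate process $v$ solving~\eqref{2.25.3}, take a maximal coupling $(\vvv,\vvv')$ of the laws of $\{\xi_v\}_T$ and $\{\xi_{u'}\}_T$, reconstruct the noise $\psi$ via~\eqref{6.30.3}, and then push both $y$ and $y'$ through~\eqref{7.1.3} and its $\tilde u$-analogue, using the Girsanov identity~\eqref{6.40.3} to ensure $\ddd\{\xi_{\tilde u}\}_T=\ddd\{\xi_u\}_T$. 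Iterating $\RR_t=(\rrr_t,\rrr'_t)$ produces the extension $\SSS_t$, and one introduces the stopping times $\vo$, $\tau=\tau^{\tilde u}\wedge\tau^{\tilde u'}$, $\sigma=\vo\wedge\tau$. Recurrence~\eqref{2.17.3}--\eqref{2.16.3} follows from the Lyapunov function $\gi(y)=1+|\ees(y)|$, the auxiliary result Proposition~\ref{6. Proposition-Lyapunov-dissipation} from~\cite{shirikyan-bf2008}, and the hitting estimate Proposition~\ref{6.12.3} (hitting a non-degenerate ball), which itself is proved via the Foia\c{s}--Prodi estimate and a support argument rather than the portmanteau theorem. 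Exponential squeezing~\eqref{6.33.3}--\eqref{6.5.3} follows from estimating $\pp_\yy(\qqq'_k)$ and $\pp_\yy(\qqq''_k)$ as in Lemma~\ref{6.27.3}: the first via Corollary~\ref{supermartingale lemma}, the second via the Markov property, the maximal-coupling identity $\pp_\yy(0\le\vo\le T)=|\pp_\yy\{\xi_v\}_T-\pp_\yy\{\xi_{u'}\}_T|_{var}$, the Girsanov bound~\eqref{4.19.3} of Lemma~\ref{4.21.3}, and the contraction~\eqref{7.8.3}; then~\eqref{6.3.3}--\eqref{6.5.3} are obtained by summing the geometric series, combined with~\eqref{7.9.3} to control $|\SSS_\sigma|_\hh^8$. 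Feeding Theorem~\ref{2.14.3} into Theorem~3.1.7 of~\cite{KS-book} yields uniqueness of the stationary measure $\mu$ and the estimate~\eqref{6. In-main inequality}, which is precisely the claimed bound with the weight $\exp(\kp|\uu|_\h^4)$; existence of $\mu$ comes separately from the Bogolyubov--Krylov argument using the uniform $\h^s$-moment bound of Proposition~\ref{2.11.3}.

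\textbf{Main obstacle.} The delicate point, and the one requiring genuinely new work compared with the Navier--Stokes case, is the verification of exponential squeezing without a smoothing effect: one must simultaneously control the \emph{growth} of the intermediate process so that the Foia\c{s}--Prodi constant stays bounded (inequality~\eqref{4.16.3} with the factor $e^{\es l}$), apply Girsanov on the stopped interval $[0,\tilde\tau]$ while checking the Novikov condition through the exponential moments of $\ees$, and propagate all of this along the characteristic set $\{\sigma\ge kT\}$ so that the per-block probabilities decay like $e^{-2(k+1)}$. I expect the estimates in Lemma~\ref{6.27.3}, in particular the chain of bounds~\eqref{6. In-6.6}--\eqref{4.27.3} leading to the smallness condition~\eqref{4.26.3} on the radius $d$, to be the technical heart of the argument; everything else is either standard SPDE a priori analysis or a direct citation of~\cite{KS-book} and~\cite{shirikyan-bf2008}. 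Finally, I would remark that by Theorem~\ref{6.10.3} the same conclusion persists if only the first $N$ coefficients $b_j$ are non-zero, although the hypotheses of the present theorem already assume $b_j>0$ for all $j$.
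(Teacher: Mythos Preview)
Your proposal is correct and follows exactly the approach of the paper: Theorem~\ref{T-Davit} is stated in Chapter~\ref{Chapter5} only as a citation of Theorem~2.3 in~\cite{DM2014}, and the full proof is the content of Chapter~\ref{Chapter3}, whose architecture (coupling via the intermediate process~\eqref{2.25.3}, Foia\c{s}--Prodi estimate, Girsanov, recurrence from the Lyapunov function and Proposition~\ref{6.12.3}, exponential squeezing via Lemma~\ref{6.27.3}, and the final appeal to Theorem~3.1.7 of~\cite{KS-book}) you have reproduced accurately.
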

   
\subsection{The statement of the   result}
\label{S:1.2}

Before giving the formulation of  the main result of this section, let us introduce some notation and
   recall some basic definitions from the theory of   LDP (see~\cite{DZ2000}). 
For any $\uu \in \h$,  
we define the   following family of {\it occupation measures\/} 
 \begin{equation} \label{a0.5}
\zeta_t=\frac1t\int_{0}^{t}\delta_{\uu_\tau} \dd \tau,\quad t>0,
\end{equation}
   where~$\uu_\tau:=\uu(\tau;\uu)$ and~$\delta_\vv$ is the Dirac measure concentrated at $\vv\in \h$.
         For any~$V\in C_b(\h)$ and~$R>0$, we set
$$
Q_R(V)=\limsup_{t\to+\ty} \frac{1}{t}\log\sup_{\uu \in X_R}\e_\uu 
\exp\bigl(t\lag V, \zeta_t\rag\bigr),
$$
where $X_R:=B_{\h^\sS}(R)$,  $ \sS\in(0,1-\rho/2)$.
Then~$Q_R:C_b(\h)\to \R$ is  a convex    1-Lipschitz    function, and its    {\it Legendre transform\/} is given by    
\begin{equation}\label{I_R}  
I_R(\sigma):=\begin{cases} \sup_{V\in C_b(\h)}\bigl(\lag V, \sigma\rag-Q_R(V)\bigr) & \text{for $\sigma \in {\cal P}(\h)$},  \\ +\infty & \text{for   $\sigma \in \MM(\h)\setminus {\cal P}(\h).$}  \end{cases}
\end{equation}The function~$I_R: \MM(\h)\to [0,+\ty]$ is   convex   lower semicontinuous in the weak topology, and~$Q_R$ can be reconstructed from $I_R$  by the formula 
\be\label{Legtr}
Q_R(V)= \sup_{\sigma\in\ppp(\h)} \bigl(\lag V, \sigma\rag-I_R(\sigma)\bigr) \q \text{for any $V\in C_b(\h)$}.
\ee
We denote by~$\VV$   the set  of functions~$V\in C_b(\h)$ satisfying the following two properties. 
 \begin{description}
\item[\bf  Property~1.]   For any    $R>0$
  and~$\uu\in X_R$, the following limit exists   (called pressure function)
$$
Q(V)=\lim_{t\to+\ty} \frac{1}{t}
\log\e_\uu\exp \biggl(\,\int_0^tV(\uu_\tau)\dd \tau\biggr)
$$
 and does not depend on the initial condition $\uu$. Moreover, this limit is uniform with respect to $\uu\in X_R$.
 \end{description}
  \begin{description}
\item[\bf   Property~2.]
There is   a unique measure~$\sigma_V\in \ppp(\h)$ (called equilibrium state) satisfying the equality
$$
Q_R(V)=  \lag V, \sigma_V\rag-I_R(\sigma_V).
$$
\end{description}
  A mapping $I : \ppp(\h) \to [0, +\ty]$ is  a {\it good rate function\/} if  for any~$a \ge0$ the level set~$\{\sigma\in\ppp(\h) : I(\sigma) \le a \}$ is compact.  A good rate function~$I$ is {\it non-trivial} if  the  effective domain $D_I:=\{\sigma\in\ppp(\h) : I(\sigma) < \infty \}$ is    not a singleton.
 Finally,  we shall denote by $\UU$ the set of functions~$V\in C_b(\h)$   for which there is a number~$q\in (0,1]$, an  integer~$N \ge 1$,    and a function~$F \in C_b^q(\h_N)$
such that 
\be\label{repres1}
V(\uu)=F(P_N\uu),\q\uu\in \h,
\ee where $\h_N:= H_N\times H_N$, $H_N:=\text{span}\{e_1,\ldots, e_N\}$, and~$P_N$ is the orthogonal projection in~$\h$ onto $\h_N$.  Given a number   $\delta>0$,    $\UU_\delta$ is the subset of functions~$V\in \UU$
satisfying $\Osc(V)< \delta$.  
\begin{theorem}  \label{T:MT}
Under the conditions of the Main Theorem,
 for any  $R>0$, the function~$I_R: \MM(\h)\to [0,+\ty]$ defined by \eqref{I_R} is a non-trivial good rate function, and   the family~$\{\zeta_t,t>0\}$         satisfies the following local LDP. 
\begin{description}
\item[Upper bound.] 
For any closed set~$F\subset\ppp(\h)$, we have
\be\label{UB}
\limsup_{t\to\infty} \frac1t\log \sup_{\uu\in X_R} \pp_\uu\{\zeta_t\in F\}\le -I_R(F).
\ee
\item[Lower bound.]  For any open set~$G\subset\ppp(\h)$, we have
\be\label{LB}
\liminf_{t\to\infty} \frac1t\log \inf_{\uu\in X_R}\pp_\uu\{\zeta_t\in G\}\ge -I_R( \W \cap G).
\ee Here\,\footnote{The infimum over an empty set is equal to $+\ty$.}   $I_R(\Gamma):=\inf_{\sigma\in \Gamma} I (\sigma)$ for $\Gamma \subset \ppp(\h)$ and  
 $\W:=\{\sigma_V: V\in \VV\}$,  where~$\sigma_V$ is the equilibrium state\,\footnote{By the fact that $I_R$ is a good rate function, the set of equilibrium states  is non-empty for any  $V\in C_b(\h)$. In Property 2,  the important assumption is the uniqueness.} corresponding to $V$. 
 \end{description}
  Furthermore, there is a number $\delta>0$ such that $\UU_\delta \subset \VV$  and for any $V\in \UU_\delta$, the pressure function $Q_R(V)$ does not depend on $R$.
\end{theorem}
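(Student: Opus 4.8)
The plan is to prove Theorem~\ref{T:MT} by following the strategy of~\cite{JNPS-2012,JNPS-2014}, adapting it to the continuous-time, weakly dissipative setting of the stochastic NLW equation. First I would reduce the LDP to a \emph{multiplicative ergodic theorem}: a convergence result for the Feynman--Kac semigroups $(\PPPP_t^V)_{t\ge0}$ acting on $C_\wwww(\h^\sS)$, where $\wwww$ is the weight function \eqref{1.1}. Concretely, I would invoke a continuous-time version of Kifer's criterion (proved in the Appendix) which says: if $Q_R(V)$ does not depend on $R$ for $V$ in a sufficiently rich class $\VV$, if $I_R$ is a good rate function, and if the family $\{\zeta_t\}$ is exponentially tight uniformly over $X_R$, then the upper bound \eqref{UB} holds for all closed sets and the lower bound \eqref{LB} holds in the restricted form involving $\W\cap G$. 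The goodness of $I_R$ and the non-triviality assertion would be extracted, respectively, from the exponential tightness (Section~5) and from the fact that the rate function vanishes on the stationary measure but is finite at other measures produced by tilting with potentials $V\in\UU_\delta$ — here one uses that $Q_R$ is genuinely nonlinear, which follows because $\psi$ in the Main Theorem is non-constant.

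The heart of the argument is to show $\UU_\delta\subset\VV$ for some $\delta>0$, i.e.\ that every bounded H\"older function of finitely many Fourier modes with small oscillation satisfies Properties~1 and~2. For this I would verify the four hypotheses of the abstract convergence theorem for generalised Markov semigroups from~\cite{JNPS-2014} (continuous-time version, Appendix): \textbf{(i) uniform irreducibility} of the family $(\uu_t,\pp_\uu)$ on the sets $X_R$; \textbf{(ii) exponential tightness}, i.e.\ the existence of compact sets exhausting $\h$ with exponentially small exit probabilities, uniformly over $X_R$; \textbf{(iii) a growth condition} controlling $\PPPP_t^V\wwww$ in terms of $\wwww$, which follows from the exponential supermartingale-type estimates for $\ees(\uu_t)$ together with the $\h^\sS$-moment bounds of Proposition-type \eqref{c2.21}–\eqref{c2.22}; and \textbf{(iv) the uniform Feller property} of the normalised semigroups $\lm_V^{-t}\PPPP_t^V$ on balls of $\h^\sS$. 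Conditions (i) and (ii) are proved using the feedback stabilisation result (Proposition~\ref{1.88}) and delicate Sobolev-norm estimates for the solutions, as in Chapters~3–4; note that unlike the kick-forced parabolic case these are not immediate, because there is no smoothing and the dissipation is only weak. Once (i)–(iv) hold, the abstract theorem yields a positive eigenvalue $\lm_V$, a positive eigenvector $h_V\in C_\wwww(\h^\sS)$ of $\PPPP_t^V$, an eigenvector $\mu_V\in\ppp_\wwww(\h)$ of the dual, and the convergences $\lm_V^{-t}\PPPP_t^V\psi\to\lag\psi,\mu_V\rag h_V$ and $\lm_V^{-t}\PPPP_t^{V*}\nu\to\lag h_V,\nu\rag\mu_V$. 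Setting $Q(V)=\log\lm_V$ gives Property~1 (the uniformity over $X_R$ comes from the uniform Feller property and the growth condition), $\sigma_V=h_V\mu_V$ gives the equilibrium state and Property~2, and since $Q(V)$ is manifestly $R$-independent one gets that $Q_R(V)$ does not depend on $R$ on $\UU_\delta$.

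The main obstacle will be step~(iv), the uniform Feller property, precisely as the authors flag in the introduction. The proof goes through the coupling method, but the Foia\c{s}--Prodi estimate for the NLW equation (Proposition~\ref{4.13.3}) is weaker than the parabolic one: the squeezing exponent carries the extra factor $e^{\es l}$ depending on the time-integrated energy of the intermediate process, so the growth of that process must be controlled via the stopping-time functional $\fff_y$ and the exponential supermartingale inequality of Proposition~\ref{supermartingale}. One must run the coupling for the \emph{tilted} dynamics generated by $\PPPP_t^V$, which amounts to weighting trajectories by $\exp(\int_0^t V(\uu_\tau)\dd\tau)$; the small-oscillation hypothesis $\Osc(V)<\delta$ is exactly what keeps this exponential weight from destroying the contraction, and this is why the resulting LDP is only local. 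I would carry this out by combining the coupling construction of Chapter~3 (intermediate process, maximal coupling, Girsanov, stopping times $\vo,\tau,\sigma$) with the Feynman--Kac weight, estimating the difference $|\PPPP_t^V\psi(\uu)-\PPPP_t^V\psi(\uu')|$ on balls of $\h^\sS$ by splitting over the event $\{\sigma=\infty\}$ and its complement, and showing the contribution decays uniformly once $\delta$ is small enough relative to the constants $\al,\kp$ appearing in the recurrence and exponential squeezing estimates. The remaining pieces — the local Kifer criterion, the abstract semigroup convergence theorem, and the passage from Theorem~\ref{T:MT} to the Main Theorem via the local G\"artner--Ellis theorem — are then routine given the cited framework, the differentiability of $\beta\mapsto Q(\beta\psi)$ following from the strict convexity encoded in the uniqueness of the equilibrium state.
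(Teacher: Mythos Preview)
Your proposal is correct and matches the paper's approach closely: reduce the LDP to the multiplicative ergodic theorem (Theorem~\ref{T:1.1}) via the local Kifer criterion (Theorem~\ref{T:2.3}), and prove the former by verifying the four hypotheses (growth, uniform irreducibility, uniform Feller, time-continuity) of the abstract semigroup convergence result, with the uniform Feller property being the crux and forcing the small-oscillation restriction on~$V$.

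One point you underestimate: the non-triviality of~$I_R$ is not extracted simply from ``$Q_R$ genuinely nonlinear'' or from observing that tilted equilibrium states lie in~$D_{I_R}$. The paper argues by contradiction: if $D_{I_R}=\{\mu\}$ then $Q(V)=\langle V,\mu\rangle$ for all~$V$, so $\lambda_V=1$ for any non-constant $V\in\UU_\delta$ with $\langle V,\mu\rangle=0$; limit~\eqref{a1.5} with $\psi=\mathbf1$ then yields $\sup_{t\ge0}\e_0\exp\bigl(\int_0^t V(\uu_\tau)\dd\tau\bigr)<\infty$, and this is combined with the \emph{central limit theorem} for trajectories (Theorem~2.5 in~\cite{DM2014}) to force $V\equiv0$. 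Your sketch does not mention the CLT, and without it there is no obvious way to rule out that all equilibrium states $\sigma_V$ collapse onto~$\mu$, so this step needs to be made explicit.
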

This theorem is proved in the next subsection,  using a multiplicative  ergodic theorem and a local version of Kifer's criterion for LDP. Then in Section~\ref{S:MTP}, we combine it with a local  version of the  G\"artner--Ellis theorem  to establish the Main Theorem. 
   
\subsection{Reduction to a  multiplicative  ergodic theorem}
\label{S:1.3}

 In this subsection    we reduce the proof of  Theorem~\ref{T:MT}   to some properties related to the large-time behavior of      the   {\it Feynman--Kac semigroup}     defined by
$$
\PPPP_t^V \psi(\uu)=\e_\uu \left\{ \psi(\uu_t)\exp \biggl(\,\int_0^tV(\uu_\tau)\dd \tau\biggr)\right\}.
$$ For any $V\in C_b(\h)$ and $t\ge0$, the application~$\PPPP_t^V$ maps $C_{b}(\h)$ into itself.  Let us denote by $\PPPP_t^{V*}:\MM_+(\h)\to \MM_+(\h)$ its dual semigroup, and recall that a measure~$\mu\in\ppp(\h)$ is   an {\it eigenvector\/}     if there is $\la\in\R$ such that $\PPPP^{V*}_t\mu=\la^t \mu$  for any~$t>0$.   Let $\wwww$ be the function defined by \eqref{1.1}.
  From~\eqref{e30} with $m=1$  it follows that~$\PPPP_t^V$ maps\,\footnote{When we write $C_\wwww(\h^\sS)$ or  $C(X_R)$, the sets $\h^\sS$ and $X_R$ are assumed to be endowed with the topology induced by $\h$.}
      $C_{\wwww}(\h^\sS)$ into itself  (note that $\wwww_1=\wwww$ in~\eqref{e30}).
We shall say that a function $h\in C_\wwww(\h^\sS)$ is an eigenvector for the semigroup $\PPPP_t^V$ if~$\PPPP^V_t h(\uu)=\la^t h(\uu)$ for    any~$\uu\in \h^\sS$ and~$t>0$.
Then  we have the following theorem.  \begin{theorem} \label{T:1.1}
Under the conditions  of  the Main Theorem,    there is    $\delta>0$    such that     the following assertions hold  for any $V\in \UU_\delta$.
\begin{description}
\item[Existence and uniqueness.] The semigroup  $\PPPP_t^{V*}$ admits a unique   eigenvector~$\mu_V \in\ppp_\wwww(\h)$ corresponding to an  eigenvalue $\la_V>0$.   Moreover,  for any $m\ge1$, we have 
\be
\int_\h \left[|\uu|^m_{\h^\sS}+ \exp(\kp\ees(\uu))\right] \mu_{ V}(\Dd\uu)<\iin,  \label{momentestimate}
\ee
 where $\kp:=(2\al)^{-1}\BBB$ and $\BBB:=\sum b_j^2$.
The semigroup $\PPPP_t^{V}$ admits a unique eigenvector~$h_V\in C_{\wwww}(\h^\sS)\cap C_+(\h^\sS)$ corresponding to  $\la_V$    normalised by the condition $\langle h_V,\mu_V\rangle=1$.
\item[Convergence.] 
For any $\psi\in C_{\wwww}(\h^\sS)$, $\nu\in \ppp_{\wwww}(\h)$, and $R>0$, we have 
\begin{align}
\lambda_V^{-t}\PPPP_t^V \psi&\to\lag \psi,\mu_V\rag h_V
\quad\mbox{in~$C_b(X_R)\cap L^1(\h,\mu_V)$ as~$t\to\infty$}, \label{a1.5}\\
\lambda^{-t}_V\PPPP_t^{V*}\nu&\to\lag h_V,\nu\rag\mu_V
 \quad\mbox{in~$\MM_+(\h)$ as~$t\to\infty$}. \label{a1.6}
\end{align} 
\end{description}
\end{theorem}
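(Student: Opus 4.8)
The plan is to prove Theorem~\ref{T:1.1} by applying the continuous-time version of the abstract criterion from~\cite{JNPS-2014} on the convergence of generalised Markov semigroups. Concretely, I would first set up the functional-analytic framework: the semigroup $\PPPP_t^V$ acting on $C_\wwww(\h^\sS)$ and its dual $\PPPP_t^{V*}$ on $\MM_+(\h)$, and verify that they form a generalised Markov semigroup in the sense required by the abstract theorem. The crux is then to check the four hypotheses of that criterion for all potentials $V$ in a sufficiently small ball $\UU_\delta$: \emph{uniform irreducibility}, \emph{exponential tightness}, a \emph{growth condition}, and the \emph{uniform Feller property}. Once these are verified, the abstract theorem yields at once the existence and uniqueness of the eigenvector $\mu_V\in\ppp_\wwww(\h)$ with eigenvalue $\la_V>0$, the existence and uniqueness of the positive eigenfunction $h_V\in C_\wwww(\h^\sS)\cap C_+(\h^\sS)$ normalised by $\langle h_V,\mu_V\rangle=1$, and the two convergence statements \eqref{a1.5}--\eqref{a1.6}. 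The moment bound \eqref{momentestimate} would follow separately by combining the eigenrelation $\PPPP_t^{V*}\mu_V=\la_V^t\mu_V$ with the a priori estimate \eqref{e30} (i.e.\ $\e_\uu\tilde\wwww_m(\uu_t)\le 2e^{-\al m t}\tilde\wwww_m(\uu)+C_m$) and the fact that $\Osc(V)<\delta$: testing the eigenrelation against $\tilde\wwww_m$ and choosing $m$ large relative to $\delta/\al$ gives a uniform bound on $\langle\tilde\wwww_m,\mu_V\rangle$, hence on all the moments listed.

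For the first three conditions I would proceed as follows. \emph{Exponential tightness} of the occupation measures and the required growth condition rest on the Lyapunov-type estimates for the Sobolev norms of solutions of the NLW equation — these are the estimates announced in Section~5 of the paper (the energy bound $\ees_u(t)$, the exponential-moment bound on $\exp(\kp\ees)$, and the $\h^\sS$-norm bounds), together with the strengthened dissipativity hypothesis \eqref{1.5}, which is precisely what makes the exponential tightness work (as the paper notes). \emph{Uniform irreducibility} — roughly, that from any bounded set of initial data one can reach, with probability bounded below, any small ball around a well-chosen point — I would obtain from a feedback stabilisation argument: using Proposition~\ref{1.88} (the Foia\c{s}--Prodi / feedback stabilisation result) one constructs a control driving the deterministic flow with a regularised forcing into a prescribed neighbourhood in finite time, and then Theorem~\ref{6.1} (the LDP for the Cauchy problem, or rather its support consequence) transfers this to a lower bound on the transition probabilities of the stochastic equation. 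These are essentially the same mechanisms already used in Chapter~\ref{Chapter3} and Chapter~\ref{Chapter4}, adapted to the present setting.

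The main obstacle, and the most technical part, will be the \emph{uniform Feller property}: one must show that for $V\in\UU_\delta$ the normalised iterates $\la_V^{-t}\PPPP_t^V\psi$ (or the relevant analogues along the abstract scheme, where $\la_V$ is not yet known and one works with $\PPPP_t^V\1/\|\PPPP_t^V\1\|$-type quantities) are equicontinuous on bounded subsets of $\h^\sS$, uniformly in $t$, for $\psi$ in a suitable class. This is where the coupling method enters. Following~\cite{JNPS-2014}, I would build a coupling of two trajectories issued from nearby points $\uu,\uu'$ via an intermediate process solving a Foia\c{s}--Prodi-modified equation (exactly the construction $v$, $\tilde v$, $\tilde u$, $\tilde u'$ of Chapter~\ref{Chapter3}, see \eqref{2.25.3}--\eqref{6. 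Coupling relation 1}), apply the Girsanov theorem to compare laws, and control the exponential weights $\exp(\int_0^t V(\uu_\tau)\,d\tau)$ along the coupled paths. The difficulty, relative to the parabolic case treated in~\cite{JNPS-2012,JNPS-2014}, is that the NLW equation has no smoothing and the Foia\c{s}--Prodi estimate \eqref{4.16.3} only holds with an extra factor $e^{\es l}$ controlling the growth of the intermediate process, so the stopping-time machinery of Chapter~\ref{Chapter3} (the functionals $\fff^u$, the stopping times $\tau^u$, and the supermartingale estimates of Corollary~\ref{supermartingale lemma}) must be carried through the computation. It is precisely the need to keep $\Osc(V)$ small so that these exponential weights do not overwhelm the $e^{-\al t}$ contraction coming from the Foia\c{s}--Prodi estimate that restricts the result to $\UU_\delta$ and forces the LDP to be local; I would make this quantitative dependence $\delta=\delta(\al,\gamma,f,\BBB_1)$ explicit at the end of the argument. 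Finally, once the uniform Feller property is in hand for $V\in\UU_\delta$, I would also note that the inclusion $\UU_\delta\subset\VV$ and the $R$-independence of $Q_R(V)$ claimed in Theorem~\ref{T:MT} are immediate consequences of Property~1 and Property~2 following from the convergence \eqref{a1.5}, closing the loop with the level-2 LDP.
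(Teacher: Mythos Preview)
Your proposal is essentially correct and follows the same architecture as the paper: apply the abstract convergence criterion for generalised Markov semigroups (Theorem~\ref{T:5.3}), with the uniform Feller property verified via the coupling/Foia\c{s}--Prodi/Girsanov machinery and the irreducibility via feedback stabilisation. Two small corrections are worth making. First, the four hypotheses of Theorem~\ref{T:5.3} are growth condition, \emph{time-continuity}, uniform irreducibility, and uniform Feller property; exponential tightness is not among them (it enters separately in the proof of Theorem~\ref{T:MT} via Kifer's criterion, not here). Second, the abstract theorem does not give \eqref{a1.5}--\eqref{a1.6} ``at once'': it yields convergence~\eqref{5.12} only for $\psi$ in the class~$\CC^\wwww$ built from the determining family $\CC=\UU$, and the paper then extends this to all $\psi\in C_\wwww(\h^\sS)$ by an approximation argument (Step~3), and derives the dual convergence \eqref{a1.6} from \eqref{a1.5} by a separate tail estimate (Step~4). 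Your treatment of the moment bound~\eqref{momentestimate} matches Proposition~\ref{e23} and the uniqueness-in-$m$ remark in Step~2 of the paper's proof.
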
 
This result is proved in Section~\ref{S:5}. Here we apply it to establish  Theorem~\ref{T:MT}. 
\begin{proof}[Proof of     Theorem \ref{T:MT}]
{\it Step~1: Upper and lower bounds}. We apply Theorem~\ref{T:2.3} to prove estimates~\eqref{UB} and \eqref{LB}. 
Let us consider the following totally ordered set $(\Theta, \prec)$, where  $\Theta= \R^*_+\times X_R$    and  $\prec$ is a relation defined by  $(t_1,\uu_1)\prec (t_2,\uu_2)$ if and only if $t_1\le t_2$. 
For any $\theta=(t,\uu)\in \Theta$,  we set $r_\theta:=t$ and~$\zeta_\theta:=\zeta_t$, where~$\zeta_t$  is  the random probability measure given by~\eqref{a0.5} defined on the probability space $(\Omega_\theta,\FF_\theta,\IP_\theta):=(\Omega,\FF,\IP_\uu)$.   The conditions of Theorem~\ref{T:2.3}    are satisfied   for the family~$\{\zeta_\theta\}_{\theta\in \Theta}$. Indeed,
    a family~$\{x_\theta\in \R,\theta\in\Theta\}$ converges  if and only if it converges uniformly with respect to $\uu\in X_R$ as~$t\to+\ty$.  Hence~\eqref{2.1} holds with $Q=Q_R$, and for any $V\in \VV$, Properties~1 and~2 imply limit \eqref{2.1a} and  the     uniqueness of  the   equilibrium state.  It remains to check the following condition, which we postpone to  Section~\ref{S:4}.
    \begin{description}
\item[\bf Exponential tightness.] There is  a function $\varPhi : \h  \to [0,+\ty]$   whose level sets $ \{\uu \in \h : \varPhi(\uu)\le a\}$ are compact for any $a\ge0$  and 
$$
\e_\uu\exp \biggl(\,\int_0^t\varPhi(\uu_\tau)\dd \tau\biggr)\le 
Ce^{c t},\quad      \uu\in X_R, \,t>0
$$for some positive constants $C$ and $c$.
\end{description}
Theorem~\ref{T:2.3}  implies that 
     $I_R$   is a   good rate function and  the following two inequalities hold  for any  closed set~$F\subset \ppp(\h)$ and    open set  $G\subset \ppp(\h)$ 
\begin{align*}
\limsup_{\theta\in\Theta} \frac{1}{r_\theta}\log\pp_\theta\{\zeta_\theta\in F\}&\le -  I_R (F),\\
\liminf_{\theta\in\Theta} \frac1{r_\theta}\log\pp_\theta\{\zeta_\theta\in G\}&\ge -  I_R (\W\cap G).
\end{align*}
These   inequalities imply~\eqref{UB} and \eqref{LB}, since we have the  equalities 
\begin{align*}
\limsup_{\theta\in\Theta} \frac{1}{r_\theta}\log\pp_\theta\{\zeta_\theta\in F\}&=\limsup_{t\to\infty} \frac1t\log \sup_{\uu\in X_R} \pp_\uu\{\zeta_t\in F\},\\
\liminf_{\theta\in\Theta} \frac1{r_\theta}\log\pp_\theta\{\zeta_\theta\in G\}&=\liminf_{t\to\infty} \frac1t\log \inf_{\uu\in X_R}\pp_\uu\{\zeta_t\in G\}.
\end{align*}

\medskip
  {\it Step~2: Proof of  the inclusion $\UU_\delta\subset\VV$}.  Let   $\delta>0$ be  the constant in  Theorem~\ref{T:1.1}.    Taking~$\psi={\mathbf1}$ in~\eqref{a1.5}, we get  Property~1 with~$Q_R(V):=\log \la_V$ for any $V\in \UU_\delta$  (in particular,  $Q(V):=Q_R(V)$  does not depend
on $R$).   

  Property~2 is deduced from limit~\eqref{a1.5}     in the same way as in~\cite{JNPS-2014}. Indeed, for any $V\in \UU_\delta$, we introduce  the semigroup  
\begin{equation} \label{6.67}
\SSSS_t^{V,F}\psi (\uu)=\lambda_V^{-t}h_V^{-1}\PPPP_t^{V+F}(h_V \psi)(\uu), \q \psi,F\in C_b(\h), \, t\ge0,
\end{equation}     the function  
\be\label{Qhav0}
Q_R^V(F):=\limsup_{t\to+\ty} \frac{1}{t}\log\sup_{\uu \in X_R}\log(\SSSS_t^{V,F}{\mathbf1})(\uu),  
\ee and the Legendre transform $I^V_R: \MM(\h)\to [0,+\infty] $ of $Q_R^V(\cdot)$. 
The arguments of  Section~5.7 of~\cite{JNPS-2014} show that $\sigma\in \ppp(\h)$ is an equilibrium state for $V$ if and only if  $I^V_R(\sigma)=0$. So the uniqueness follows from  the following result which   is a continuous-time version of Proposition 7.5 in \cite{JNPS-2014}. Its proof is given in the Appendix.
       \begin{proposition}\label{P:JNPS}For any $V\in \UU_\delta$ and $R>0$, the measure~$\sigma_V=h_V\mu_V$ is the unique 
        zero of $I_R^V$.  
      \end{proposition}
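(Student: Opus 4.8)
\textbf{Proof proposal for Proposition \ref{P:JNPS}.}

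The plan is to follow the continuous-time adaptation of the argument in Section~7 of \cite{JNPS-2014}, reducing the uniqueness of the zero of $I_R^V$ to the convergence property of the Feynman--Kac semigroup established in Theorem~\ref{T:1.1}. First I would record the elementary identities relating $Q_R^V(\cdot)$ to the original pressure function: since $\SSSS_t^{V,F}{\mathbf1}=\lambda_V^{-t}h_V^{-1}\PPPP_t^{V+F}h_V$, one has $Q_R^V(F)=Q_R(V+F)-Q_R(V)$ for all $F\in C_b(\h)$, and consequently the Legendre transform satisfies $I_R^V(\sigma)=I_R(\sigma)-\lag V,\sigma\rag+Q_R(V)$. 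From this and formula \eqref{Legtr} it follows immediately that $I_R^V\ge 0$, that $I_R^V$ is a good rate function, and that $\sigma$ is an equilibrium state for $V$ (in the sense of Property~2) if and only if $I_R^V(\sigma)=0$. Thus the statement to prove is exactly that $I_R^V$ has a unique zero, and that this zero is $\sigma_V=h_V\mu_V$.

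Next I would verify that $\sigma_V=h_V\mu_V$ is indeed a zero. Since $\lag h_V,\mu_V\rag=1$, the measure $\sigma_V$ is a probability measure, and by the moment estimate \eqref{momentestimate} together with $h_V\in C_\wwww(\h^\sS)$ it lies in $\ppp_\wwww(\h)$. The key point is that $\SSSS_t^{V,0}$ is the ``normalised'' semigroup: $\SSSS_t^{V,0}{\mathbf1}=\lambda_V^{-t}h_V^{-1}\PPPP_t^V h_V={\mathbf1}$ because $h_V$ is the eigenvector of $\PPPP_t^V$ with eigenvalue $\lambda_V$. Hence $Q_R^V(0)=0$. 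Moreover its dual semigroup preserves $\sigma_V$: for $\psi\in C_b(\h)$,
$$
\lag \SSSS_t^{V,0}\psi,\sigma_V\rag=\lambda_V^{-t}\lag h_V^{-1}\PPPP_t^V(h_V\psi),h_V\mu_V\rag=\lambda_V^{-t}\lag \PPPP_t^V(h_V\psi),\mu_V\rag=\lag h_V\psi,\mu_V\rag=\lag\psi,\sigma_V\rag,
$$
using that $\mu_V$ is the eigenvector of $\PPPP_t^{V*}$. Then, exactly as in \cite{JNPS-2014}, Jensen's inequality applied to $\SSSS_t^{V,F}{\mathbf1}$ and the stationarity of $\sigma_V$ give $\lag F,\sigma_V\rag\le Q_R^V(F)$ for all $F$, whence $I_R^V(\sigma_V)=\sup_F(\lag F,\sigma_V\rag-Q_R^V(F))\le 0$, so $I_R^V(\sigma_V)=0$.

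For uniqueness, suppose $\sigma\in\ppp(\h)$ satisfies $I_R^V(\sigma)=0$, i.e. $\lag F,\sigma\rag\le Q_R^V(F)$ for every $F\in C_b(\h)$. Replacing $F$ by $tF/s$ and letting $s\to 0$ one first shows $\sigma\in\ppp_\wwww(\h)$ (this uses the exponential tightness / growth bounds of Section~\ref{S:4} to control $Q_R^V$ on unbounded potentials, and is the place where the restriction $V\in\UU_\delta$ enters through Theorem~\ref{T:1.1}). Once $\sigma\in\ppp_\wwww(\h)$, I would exploit the convergence \eqref{a1.5}: for fixed $\psi\in C_b(\h)$ and the perturbed potential $V+\varepsilon\psi$ (which still lies in $\UU_\delta$ for $\varepsilon$ small if $\psi\in\UU$, or one argues by a density/approximation argument reducing to such $\psi$), differentiating the map $\varepsilon\mapsto Q_R^V(\varepsilon\psi)=Q_R(V+\varepsilon\psi)-Q_R(V)$ at $\varepsilon=0$ and using \eqref{a1.5}, \eqref{a1.6} to identify the derivative yields $\frac{d}{d\varepsilon}Q_R^V(\varepsilon\psi)\big|_{\varepsilon=0}=\lag\psi,\sigma_V\rag$. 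The zero condition $\lag \varepsilon\psi,\sigma\rag\le Q_R^V(\varepsilon\psi)$ for both signs of $\varepsilon$, combined with $Q_R^V(0)=0$ and differentiability, forces $\lag\psi,\sigma\rag=\lag\psi,\sigma_V\rag$; since this holds for a separating family of $\psi$, we conclude $\sigma=\sigma_V$.

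The main obstacle I anticipate is the differentiability step: establishing that $\varepsilon\mapsto Q_R(V+\varepsilon\psi)$ is differentiable at $\varepsilon=0$ with derivative $\lag\psi,\sigma_V\rag$, uniformly enough to squeeze $\sigma$ between the one-sided bounds. This requires upgrading the convergence \eqref{a1.5}--\eqref{a1.6} to a statement about the analytic dependence of the eigenvalue $\lambda_{V+\varepsilon\psi}$ and eigenvector $h_{V+\varepsilon\psi}$ on $\varepsilon$, i.e. a perturbation theory for the Feynman--Kac semigroup in the weighted spaces $C_\wwww(\h^\sS)$ and $\ppp_\wwww(\h)$. In the parabolic setting of \cite{JNPS-2014} this is handled via the uniform Feller property and the exponential convergence rate; here the same structure is available (Theorem~\ref{T:1.1} and the uniform Feller property proved in Sections~\ref{S:3}--\ref{S:4}), so the argument carries over, but the bookkeeping with the weaker dissipation and the weight $\wwww$ is delicate. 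I would isolate this as a lemma and otherwise transcribe the scheme of \cite[Sec.~7]{JNPS-2014} essentially verbatim.
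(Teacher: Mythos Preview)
Your verification that $\sigma_V=h_V\mu_V$ is a zero of $I_R^V$ (via stationarity of $\sigma_V$ for the Markov semigroup $\SSSS_t^{V,0}$ and Jensen's inequality) is correct and standard. However, your uniqueness argument takes a genuinely different route from the paper's, and the step you flag as the ``main obstacle'' is in fact more than bookkeeping.

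The paper does \emph{not} proceed via differentiability of $\varepsilon\mapsto Q_R(V+\varepsilon\psi)$ or perturbation theory for $(\lambda_V,h_V,\mu_V)$. Instead it runs a Donsker--Varadhan type argument through the generator. Introducing $\mathbb{L}_V$ as the generator of $\SSSS_t^{V}$ on a suitable domain $\D(\mathbb{L}_V)$, one observes that for any $\psi\in\D_+:=\{\psi\in\D(\mathbb{L}_V):\inf\psi>0\}$ the function $\psi$ itself solves $\partial_t\varphi_t=(\mathbb{L}_V+F_\psi)\varphi_t$ with $F_\psi:=-\mathbb{L}_V\psi/\psi$, so $\SSSS_t^{V,F_\psi}\psi=\psi$ for all $t$ and hence $Q_R^V(F_\psi)=0$ \emph{exactly}, without any limit computation. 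The condition $I_R^V(\sigma)=0$ then gives $\int(\mathbb{L}_V\psi/\psi)\,\dd\sigma\ge0$ for all $\psi\in\D_+$; applying this to $1+\theta\psi$ and differentiating in $\theta$ at $0$ yields $\int\mathbb{L}_V\psi\,\dd\sigma=0$, i.e.\ $\sigma$ is stationary for $\SSSS_t^V$. Uniqueness of the stationary measure (a direct consequence of \eqref{a1.5}) then forces $\sigma=\sigma_V$.

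Compared with your plan, the paper's argument is both shorter and self-contained: it uses only the convergence \eqref{a1.5} as stated, whereas your approach would require upgrading Theorem~\ref{T:1.1} to analytic (or at least $C^1$) dependence of $\lambda_V$ on $V$, which is not proved anywhere in the paper and is not what \cite{JNPS-2014}, Proposition~7.5, does either. Your approach is not wrong in principle, but the differentiability you need is essentially equivalent to the uniqueness you are trying to prove, so establishing it independently via spectral perturbation is a real additional project rather than a transcription.
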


  {\it Step~3: Non-triviality of $I_R$}. We argue by contradiction. Let us assume that~$D_{I_R}$ is a singleton. 
  By Proposition~\ref{P:JNPS} with $V={\mathbf0}$, we have that the stationary measure~$\mu$ is the unique zero\,\footnote{Note that when $V={\mathbf0}$, we have $\la_V=1$, $h_V={\mathbf1}$, $I_R^V=I_R$, and $\mu_V=\mu$.} of $I_R$, so  $D_{I_R}=\{\mu\}$. Then~\eqref{Legtr} implies that $Q(V)=\lag V,\mu\rag$ for any $V\in C_b(\h)$. Let us choose   any non-constant~$V\in \UU_\delta$ such that $\lag V,\mu\rag=0$. Then $Q(V)=0$, and    limit~\eqref{a1.5} with $\psi={\mathbf1}$ implies that~$\la_V=e^{Q(V)}=1$ 
    and   
    \be\label{e52}
\sup_{t\ge0}\e_0 \exp\left(\int_0^t   V(\uu_\tau)\dd \tau\right)<\infty,
\ee
where $\e_0$ means that we consider the trajectory issued from the origin. Combining this with  the central limit theorem (see   Theorem 2.5 in \cite{DM2014} and Theorem~4.1.8 and Proposition~4.1.4  in~\cite{KS-book}), we get  $V= {\mathbf0}$. This contradicts the assumption that $V$ is non-constant and completes the proof of Theorem~\ref{T:MT}.
      \end{proof}

\section{Proof of the Main Theorem}\label{S:MTP} 
   {\it Step~1: Proof in the case    $\psi\in \UU$}. For any  $R>0$ and  non-constant   $\psi\in \UU$, we  
   denote   
$$
I_R^\psi(p)=
\inf   \{I_R(\sigma): \lag \psi, \sigma\rag=p, \,\sigma\in \ppp(\h) \}, \q  p\in \R,
$$ where $I_R $ is given  by \eqref{I_R}. Then  $Q_R(\beta \psi )$ is  convex    in $\beta \in \R$, and
using \eqref{Legtr},  it is straightforward to check that
$$
Q_R(\beta \psi )   =\sup_{  p \in \R}\left( \beta  p-I_R^\psi( p)\right) \q\text{for $\beta \in \R$}.  
$$
   By well-known properties of convex functions of a real variable    
   (e.g., see~\cite{RV_73}),  $Q_R(\beta \psi )$  is differentiable in $\beta \in \R $, except possibly on a countable set, the  right and left derivatives~$D^+Q_R(\beta \psi )$ and~$D^-Q_R(\beta \psi )$ exist  at any    $\beta $ and $D^-Q_R(\beta \psi ) \le D^+Q_R(\beta \psi )$. Moreover,     the following equality holds   for some $\beta, p \in \R$
 \be  \label{Q_R10}
Q_R(\beta \psi )  =  \beta  p-I_R^\psi( p)   
\ee if and only if $ p\in [D^-Q_R(\beta \psi ) , D^+Q_R(\beta \psi )]$.    Let us set  $\beta _0:= \delta/ (4\|\psi\|_{\ty})$, where~$\delta>0$ is  the constant in Theorem \ref{T:MT}. Then for any~$|\beta |\le \beta _0$, we have~$\beta \psi \in \UU_\delta\subset \VV$  and  $ Q_R(\beta \psi )$  does not depend on $R>0$; we set $Q(\beta \psi ):=Q_R(\beta \psi )$. Let us show that~$D^-Q(\beta \psi ) = D^+Q(\beta \psi )$ for any $|\beta |< \beta _0$, i.e.,  
$Q(\beta \psi )$ is differentiable at $\beta $.  Indeed, assume that $ p_1,  p_2 \in [D^-Q(\beta \psi ) , D^+Q(\beta \psi )] $. Then   equality~\eqref{Q_R10} holds  with $ p= p_i, i=1,2$. As $I_R$ is a good rate function,    there are measures $\sigma_i\in  \ppp(\h)$ such that~$\lag \psi,\sigma_i\rag=p_i$ and~$I_R(\sigma_i)=I_R^\psi( p_i), i=1,2.$ Thus
$$
Q(\beta \psi )  =  \beta  p_i-I_R^\psi( p_i)=\lag \beta \psi ,\sigma_i\rag -I_R(\sigma_i),
$$ i.e., $\sigma_1$ and $\sigma_2$ are equilibrium states corresponding to $V=\beta \psi $. 
As $\beta \psi \in \VV$, from Property~2 we derive that $\sigma_1=\sigma_2$, hence $ p_1= p_2$. Thus $Q(\beta  \psi)$ is differentiable at $\beta $ for any~$|\beta |< \beta _0$.
Let us define the convex function 
\begin{equation} \label{ratefu0}
Q^\psi(\beta ):=\begin{cases}  Q(\beta \psi ), & \text{for $|\beta |\le \beta _0  $},  \\ +\infty, & \text{for    $|\beta |> \beta _0  $ }  \end{cases}
\end{equation}and its Legendre transform
\be\label{ratefu}
I^\psi( p):=\sup_{\beta \in \R}\left( \beta  p-Q^\psi(\beta )\right) \q\text{for $ p\in \R$}.
\ee Then $I^\psi$  is a finite convex function  not depending on $R>0$. As    $Q^\psi(\beta )$ is differentiable    at  any $|\beta |< \beta _0$ and  \eqref{2.1a} holds with $Q=Q^\psi(\beta )$ (with respect to the    directed set   $(\Theta, \prec)$ defined in the proof of Theorem \ref{T:MT}), we see that the conditions of 
    Theorem~A.5 in \cite{JOPP} are satisfied\,\footnote{
Theorem~A.5 in \cite{JOPP} is stated in the case   $\Theta=\R_+$. However, the proof presented
there remains valid for  random variables indexed by a directed
set.}.  Hence, we have \eqref{Llimit} for any open subset~$O$ of the interval~$J^\psi:=  (D^+Q^\psi(-\beta _0) , D^-Q^\psi(\beta _0))$.

\medskip
  {\it Step~2: Proof in the case    $\psi\in C_b(\h)$}. Let us first define the rate function $I^\psi:\R\to \R_+$ in the case of a general function   $\psi\in C_b(\h)$.  To this end,   we  take  a sequence~$\psi_n\in \UU$ such that $\|\psi_n\|_\ty\le \|\psi\|_\ty$ and $\psi_n\to \psi$ in $C(K)$ for any compact~$K\subset \h$. The   argument  of the proof of property (a) in Section~5.6 in~\cite{JNPS-2014} implies that Property~1 holds with $V=\beta \psi$ for any $|\beta|\le \beta_0$, where~$\beta_0$ is defined  as in Step~1, and  for any compact set $\KK\subset \ppp(\h)$, we have
  \be\label{ratefu1}
\sup_{\sigma\in \KK}|\lag \psi_n-\psi,\sigma \rag|\to 0\quad \text{as $n\to\ty$}. 
  \ee
   Moreover, from the proof of Proposition~3.17 in~\cite{FK2006} it follows that  
    \be\label{karsahm}
   Q_R(\beta \psi_n )\to  Q_R(\beta \psi )\q \text{for  $|\beta|\le \beta_0$.}
  \ee
  This implies that $Q_R(\beta \psi )$ does not depend on $R$ when $|\beta|\le \beta_0$, so we can define the functions~$Q^\psi$ and $I^\psi$ by \eqref{ratefu0} and \eqref{ratefu}, respectively.
 
 \medskip

Let $J^\psi$ be the interval defined in Step 1.
   To establish limit \eqref{Llimit},
 it suffices to show that for any open subset~$O\subset J^\psi$   the following two inequalities hold
 \begin{align}
\limsup_{t\to\infty} \frac1t\log \sup_{\uu\in X_R} \pp_\uu\{\zeta_t^\psi\in O\}&\le -  I^\psi (O),\label{upx}\\
\liminf_{t\to\infty} \frac1t\log \inf_{\uu\in X_R} \pp_\uu\{\zeta_t^\psi\in O\}&\ge -  I^\psi (O),\label{lwx}
\end{align}where $\zeta_t^\psi:=\lag \psi,\zeta\rag$.
To prove \eqref{upx}, we first apply \eqref{UB}  for a closed subset  $F\subset \ppp(\h)$ defined by~$F=\{\sigma\in \ppp(\h): \lag \psi, \sigma\rag\in \overline O\}$, where $\overline O$ is the closure of~$O$ in $\R$: 
\begin{align}\label{upy}
\limsup_{t\to\infty} \frac1t\log \sup_{\uu\in X_R} \pp_\uu\{\zeta_t^\psi\in O\}&\le \limsup_{t\to\infty} \frac1t\log \sup_{\uu\in X_R} \pp_\uu\{\zeta_t^\psi\in \overline O\}\nonumber \\
&= \limsup_{t\to\infty} \frac1t\log \sup_{\uu\in X_R} \pp_\uu\{\zeta_t\in F\}  \nonumber \\
&\le -  I_R (F).
\end{align}As $Q_R(\beta \psi ) \le Q^\psi(\beta)$ for any $\beta\in \R$, we have 
\be\label{upy2}
   I^\psi (\overline O)\le I_R^\psi(\overline O). \ee It is straightforward to check that  
\be\label{upy1} 
     I_R^\psi(\overline O)=  I_R (F). \ee
 From  the continuity of $I^\psi$ it follows that  $  I^\psi (  O)= I^\psi (\overline O)$. Combining this with~\eqref{upy}-\eqref{upy1},   we get \eqref{upx}.

\medskip

To establish \eqref{lwx},  we first recall that   the exponential tightness property and Lemma~3.2  in~\cite{JNPS-2014} imply  that for any $a>0$ there is a compact $\KK_a\subset \ppp(\h)$ such that
  \be\label{zetap0}
 \limsup_{t\to\ty} \frac1t \log \sup_{\uu\in X_R}  \pp_\uu\{\zeta_t\in \KK_a^c\} \le-a.
 \ee Let us take any $p\in O$ and choose $\es>0$ so small that that $(p-2\es,p+2\es)\subset O$. Then for any $a>0$, we have
 \be\label{zetap}
 \pp_\uu\{\zeta_t^\psi\in O\} \ge \pp_\uu\{\zeta_t^\psi\in (p-2\es,p+2\es), \zeta_t\in \KK_a\}. 
 \ee By \eqref{ratefu1}, we can choose   $n\ge1$ so large that
$$
\sup_{\sigma\in \KK_a}|\lag \psi_n-\psi,\sigma \rag|\le \es.
$$Using    \eqref{zetap},  we get  
\begin{align}\label{zetap1}
\pp_\uu\{\zeta_t^\psi\in O\} &\ge \pp_\uu\{\zeta_t^{\psi_n}\in (p-\es,p+\es), \zeta_t\in   \KK_a\}\nonumber\\
&\ge \pp_\uu\{\zeta_t^{\psi_n}\in (p-\es,p+\es)\}- \pp_\uu\{ \zeta_t\in   \KK_a^c\}. 
\end{align}We need the following elementary property of convex functions;  see the Appendix for the proof.
 \begin{lemma}\label{e51}
 Let $J\subset \R$ be an open interval and $f_n:J\to\R$ be a sequence of convex functions   converging   pointwise to a  finite function $f$.  Then   we have
 \begin{align*}
\limsup_{n\to\infty}  D^+ f_n(x) &\le  D^+ f(x),  \\
\liminf_{n\to\infty}  D^- f_n(x) &\ge  D^- f(x) , \q x\in J. 
\end{align*}

 \end{lemma}
 This lemma implies that, for sufficiently large $n\ge1$, we have 
 $$
 (p-\es,p+\es) \subset J^{\psi_n}=  (D^+Q^{\psi_n}(-\beta ^n_0) , D^-Q^{\psi_n}(\beta ^n_0)),$$ where  $\beta _0^n:= \delta/ (4\|\psi_n\|_{\ty})$.  Hence
 the result of Step 1 implies that
$$
 \lim_{t\to\infty} \frac1t\log   \pp_\uu\{\zeta_t^{\psi_n}\in (p-\es,p+\es)\}= -I^{\psi_n}((p-\es,p+\es))
  $$ uniformly with respect to $\uu\in X_R$. As 
  $$\limsup_{n\to\ty}Q^{\psi_n}(\beta)\le Q^{\psi}(\beta), \q \beta\in \R, $$
  we have   
  $$\liminf_{n\to\ty}I^{\psi_n}(q)\ge I^{\psi}(q), \q q\in \R. $$
  This implies that 
 $$
\liminf_{n\to\ty}  I^{\psi_n}((p-\es,p+\es))\ge I^{\psi}((p-\es,p+\es)).
  $$ Thus we can choose  $n\ge1$   so large that 
$$
 \liminf_{t\to\infty} \frac1t\log  \inf_{\uu\in X_R}  \pp_\uu\{\zeta_t^{\psi_n}\in (p-\es,p+\es)\}\ge -I^{\psi}((p-\es,p+\es))-\es. 
  $$
  Combining    this  with \eqref{zetap1} and \eqref{zetap0} and choosing $a>I^{\psi}((p-\es,p+\es))+\es$, we obtain
  $$
   \liminf_{t\to\infty} \frac1t\log  \inf_{\uu\in X_R} \pp_\uu \{\zeta_t^{\psi}\in O \}\ge   -I^{\psi}((p-\es,p+\es))-\es .
  $$
Since $p\in O$ is arbitrary and  $\es>0$ can be chosen arbitrarily small, we get \eqref{lwx}.

\medskip
  {\it Step~3:  The interval $J^\psi$}.  Let us    show that if   $\psi\in C^q_b(\h),$ $ q \in (0,1]$ is non-constant, then  the interval $J^\psi= (D^+Q^\psi(-\beta_0 ) , D^-Q^\psi(\beta_0))$ is non-empty and contains the point $\lag \psi, \mu \rag$. Clearly we can assume that $\lag \psi, \mu \rag=0$. As~$Q^\psi(0)=0$, it is sufficient to show that $\beta=0$ is the only point of the interval $[-\beta_0, \beta_0]$, where~$Q^\psi(\beta)$ vanishes. Assume the opposite. Then, replacing $\psi$ by $-\psi$ if needed, we can suppose that there is $\beta\in (0, \beta_0]$ such that $Q^\psi(\beta)=0$. As in Step 3 of Theorem \ref{T:MT}, this implies
  $$
\sup_{t\ge0}\e_0 \exp\left(\beta \int_0^t   \psi(\uu_\tau)\dd \tau\right)<\infty
$$
and $\psi\equiv 0$. This contradicts our assumption that $\psi$ is non-constant and completes the proof of the Main Theorem.

\section{Checking conditions of  Theorem \ref{T:5.3}}\label{S:2}

The proof of Theorem~\ref{T:1.1} is based on an application   of Theorem \ref{T:5.3}.
In this  section,    we verify    the growth condition, the uniform irreducibility property, and   the   existence of an eigenvector    for the following generalised Markov family of     transition kernels (see Definition~\ref{D:5.2})  
$$
P_t^V(\uu,\Gamma)=(\PPPP_t^{V*} \delta_\uu ) (\Gamma),\quad  V\in C_b(\h),\,\, \Gamma\in \BBBBB(\h),\,\,  \uu\in \h,\,\,t\ge0 
$$ in the phase space
$X=\h $ endowed with  a sequence of compacts     $X_R= B_{\h^\sS}(R)$, $R\ge1$ and  a weight function~$\wwww$ defined by~\eqref{1.1}. The uniform Feller property is the most   delicate condition to check in Theorem~\ref{T:5.3},   it will be established   in Section~\ref{S:3}. 
In the rest of the paper,     we shall always assume  that the hypotheses of  Theorem  \ref{T:MT} are fulfilled.

\subsection{Growth condition}
\label{S:2.2}

Since we take  $X_R= B_{\h^\sS}(R)$,   the set $X_\ty$ in  the growth condition in  Theorem~\ref{T:5.3}    will be   equal to $\h^\sS$ which  is dense in $\h$.  For any $\uu\in \h^\sS$ and $t\ge0$, we have~$\uu(t;\uu)\in \h^\sS$, so      the measure $P_t^V(\uu,\cdot)$ is concentrated on $\h^\sS$.  As $V$ is a bounded function, condition \eqref{5.9} is verified. Let us show that         estimate \eqref{5.8} holds    for any       $V$ with a    sufficiently small oscillation. 
 \begin{proposition} \label{P:2.4} 
 There is a constant $\delta>0$ and an integer  $R_0\ge1$ such that, 
for any $V\in C_b(\h)$ satisfying $\Osc(V)< \delta$, 
we have 
\begin{align}
&\sup_{t\ge0}
\frac{\|\PPPP_t^V\wwww\|_{L_\wwww^\infty}}{\|\PPPP_t^V{\mathbf1}\|_{R_0}}<\infty,\label{a5.8} 
\end{align}    where $\mathbf 1$ is the function on~$\h$ identically equal to~$1$ and $\|\cdot\|_{R_0}$ is the $L^\ty$ norm on $X_{R_0}$.

\end{proposition}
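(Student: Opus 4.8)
The plan is to establish \eqref{a5.8} by obtaining two ingredients: a uniform upper bound for $\PPPP_t^V\wwww$ in terms of $\wwww$, and a uniform lower bound for $\PPPP_t^V{\mathbf1}$ on the compact set $X_{R_0}$. For the numerator, I would first record the basic exponential moment and Sobolev-norm estimates for the NLW flow that are quoted (or proved) earlier in the paper, in particular an inequality of the form $\e_\uu\wwww_m(\uu_t)\le 2e^{-\al m t}\wwww_m(\uu)+C_m$ (this is the continuous-time analogue of \eqref{c2.21}, with $\wwww=\wwww_1$), where $\wwww_m(\uu)=1+|\uu|_{\h^\sS}^{2m}+\ees^{4m}(\uu)+\exp(\kp\ees(\uu))$. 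Since $V$ is bounded,
$$
\PPPP_t^V\wwww(\uu)=\e_\uu\Bigl[\wwww(\uu_t)\exp\Bigl(\int_0^tV(\uu_\tau)\dd\tau\Bigr)\Bigr]\le e^{t\sup V}\,\e_\uu\wwww(\uu_t)\le e^{t\sup V}\bigl(2e^{-\al t}\wwww(\uu)+C_1\bigr),
$$
so $\|\PPPP_t^V\wwww\|_{L_\wwww^\infty}\le e^{t\sup V}(2e^{-\al t}+C_1)\le C\,e^{t\sup V}$ for all $t\ge0$, since $\wwww\ge1$. Thus it remains to bound $\|\PPPP_t^V{\mathbf 1}\|_{R_0}=\sup_{\uu\in X_{R_0}}\e_\uu\exp(\int_0^tV(\uu_\tau)\dd\tau)$ from below by something like $c\,e^{t\inf V}$; combined with the numerator bound this gives the ratio $\le (C/c)\,e^{t\,\Osc(V)}$, which is \emph{not} yet uniform in $t$. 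The extra gain needed to absorb the $e^{t\,\Osc(V)}$ factor is exactly what forces the smallness of $\Osc(V)$ together with a genuinely exponential lower bound for $\PPPP_t^V{\mathbf1}$ with a rate strictly better than $\inf V$.

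So the heart of the matter is the lower bound for $\PPPP_t^V{\mathbf1}$. Here I would use the mixing/recurrence structure of the equation rather than the trivial estimate $\exp(\int_0^tV)\ge e^{t\inf V}$. Fix a reference ball $B_{\h}(d)$ (the "small set" from the mixing construction of Chapter 3) and choose $R_0\ge1$ large enough that, by Proposition \ref{6.12.3} (hitting a non-degenerate ball) and the exponential recurrence bound \eqref{2.16.3}–\eqref{2.17.3}, the flow started from any $\uu\in X_{R_0}$ returns to $B_{\h}(d)$ quickly with high probability and, more importantly, stays in a fixed ball $B_{\h}(R_1)$ on a time interval $[0,1]$ with probability bounded below uniformly in $\uu\in X_{R_0}$; combined with a one-step lower bound for the transition probability between balls, one gets constants $c_0>0$, $R_1\ge R_0$, and a probability $p_0>0$ such that $P_1(\uu,X_{R_0})\ge p_0$ for $\uu\in X_{R_1}$. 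Then iterating and using the Markov property,
$$
\PPPP_t^V{\mathbf1}(\uu)=\e_\uu\exp\Bigl(\int_0^tV(\uu_\tau)\dd\tau\Bigr)\ge e^{t\inf V}\,\pp_\uu\{\uu_s\in X_{R_1}\ \text{for all}\ s\le t\}\ge e^{t\inf V}\,c_0\,p_0^{\lfloor t\rfloor}.
$$
This is still of the form $e^{t\inf V}e^{-c_1 t}$, which goes the wrong way. The correct route is instead to \emph{not} pull out $\inf V$ crudely: restrict the expectation to the event that the trajectory spends most of its time in the compact $X_{R_1}$, on which the occupation-measure of the path is close to the (unique) stationary measure $\mu$ by the ergodic theorem / mixing estimate of Theorem \ref{2.15.3}, so that $\frac1t\int_0^tV(\uu_\tau)\dd\tau$ is close to $\langle V,\mu\rangle\ge\inf V+\text{(gap)}$ whenever $V$ is genuinely non-constant — but since $V$ may be constant we cannot use a gap. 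The clean statement that survives for arbitrary $V$ with $\Osc(V)<\delta$ is that $\PPPP_t^V{\mathbf1}(\uu)\ge e^{t\langle V,\mu\rangle}\cdot(\text{sub-exponential correction})$ on $X_{R_0}$ for $t$ large, using $\langle\cdot,\zeta_t\rangle\to\langle\cdot,\mu\rangle$ with exponentially small error away from a set of exponentially small probability (this is where Theorem \ref{2.15.3} and the Foia\c{s}--Prodi/coupling estimates enter). Since also $\PPPP_t^V\wwww(\uu)\le e^{t\langle V,\mu\rangle}e^{t\,\Osc(V)}(2e^{-\al t}+C_1)$ after the same manipulation on the numerator, choosing $\delta$ small makes $e^{t\,\Osc(V)}$ dominated by the sub-exponential window in which the two estimates are valid, and for the finitely many small $t$ not covered one uses continuity and positivity of $\PPPP_t^V{\mathbf1}$ on the compact $X_{R_0}$ (it is bounded below by $e^{t\inf V}P_t(\uu,X_{R_0})>0$).

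I expect the main obstacle to be precisely making the lower bound for $\PPPP_t^V{\mathbf 1}$ on $X_{R_0}$ uniform in $t$ with a rate that beats $\inf V$ by enough to swallow $e^{t\,\Osc(V)}$: this requires quantitative control of $\pp_\uu\{|\langle V,\zeta_t\rangle-\langle V,\mu\rangle|\ge\rho\}$ with an exponentially small bound, uniformly over $\uu\in X_{R_0}$, which is not a soft consequence of mixing and relies on the path-wise stability estimates (the coupling construction, the stopping-time argument, and the $\h^\sS$-regularisation bounds) developed in Chapter 3 and in Section \ref{S:moments}. A secondary technical point is the interplay between the two natural scales $X_{R_0}\subset X_{R_1}$: one must choose $R_1$ (depending on $R_0$ and $\Osc(V)$ bounds) so that the flow from $X_{R_0}$ is confined to $X_{R_1}$ with overwhelming probability on the relevant time scale, which uses the a priori estimate \eqref{1.97} of Proposition \ref{1.96} and the Markov property exactly as in the derivation of Lemma \ref{8.31}. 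Once these are in place, \eqref{a5.8} follows by combining the numerator bound, the denominator bound, and the choice of $\delta$; the integer $R_0$ is then fixed once and for all, and $\|\cdot\|_{R_0}$ denotes the sup norm over $X_{R_0}$ as stated.
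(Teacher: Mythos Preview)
Your strategy of bounding the numerator and the denominator \emph{separately} is where the argument breaks down, and the fix you propose --- an exponential concentration estimate of the form $\pp_\uu\{|\langle V,\zeta_t\rangle-\langle V,\mu\rangle|\ge\rho\}\le Ce^{-ct}$ uniformly on $X_{R_0}$ --- is essentially the LDP upper bound that this very proposition is a building block for. Exponential mixing (Theorem~\ref{T-Davit}) controls $|\PPPP_t^*\lambda-\mu|_L^*$, not the tails of the occupation measure; upgrading the former to the latter is the content of Chapter~\ref{Chapter5}, so invoking it here is circular.

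The paper avoids this entirely by never estimating the denominator on its own. First it normalises $V\ge0$, so that $\PPPP_t^V{\mathbf1}\ge1$ and $\|\PPPP_t^V{\mathbf1}\|_{R_0}\ge1$. The real work is then a \emph{comparison} argument. In Step~1 one shows $\PPPP_t^V{\mathbf1}(\uu)\le C\,\wwww(\uu)\,\|\PPPP_t^V{\mathbf1}\|_{R_0}$ for every $\uu\in\h^\sS$ by applying the strong Markov property at the hitting time $\tau(R_0)=\inf\{t:|\uu_t|_{\h^\sS}\le R_0\}$: after $\tau(R_0)$ the remaining Feynman--Kac expectation is at most $\|\PPPP_t^V{\mathbf1}\|_{R_0}$, while the cost accumulated before $\tau(R_0)$ is $\le e^{\|V\|_\infty\tau(R_0)}$, whose expectation is $\le C\wwww(\uu)$ by the Lyapunov bound $\e_\uu e^{\delta_0\tau(R_0)}\le C\wwww(\uu)$ provided $\|V\|_\infty<\delta_0$. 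In Step~2 one uses the contraction $\e_\vv\wwww(\uu_T)\le 2e^{-\al T}\wwww(\vv)+C_1$ and the Markov property to get the recursion
\[
\PPPP_{Tk}^V\wwww \le q\,\PPPP_{T(k-1)}^V\wwww + C\,\PPPP_{T(k-1)}^V{\mathbf1},\qquad q=2e^{T(\delta-\al)}<1,
\]
for $\delta<\al/2$ and $T$ large. Iterating gives $\PPPP_{Tk}^V\wwww\le q^k\wwww+C'\,\PPPP_{Tk}^V{\mathbf1}$, and combining with Step~1 yields \eqref{a5.8} along $t\in T\mathbb N$; the semigroup property fills in the remaining $t$. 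The smallness of $\Osc(V)$ enters only so that (i) $\|V\|_\infty<\delta_0$ in the hitting-time bound, and (ii) $e^{T\delta}\cdot 2e^{-\al T}<1$ in the recursion --- no information about $\langle V,\mu\rangle$ or about concentration of $\zeta_t$ is needed.
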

 
\begin{proof} Without loss of generality, we can assume
that
  $V\ge0$ and $\Osc(V)=\|V\|_\infty$. Indeed, it suffices to replace  $V$ by $V-\inf_H V$. We   split    the proof of~\eqref{a5.8}   into two steps. 

\medskip

{\it Step 1}. Let us   show that there are  $\delta_0>0$ and   $R_0\ge1$  such that 
\begin{equation} \label{6.0015}
\sup_{t\ge0}
\frac{\|\PPPP_{t}^V{\mathbf1}\|_{L_\wwww^\infty}}{\|\PPPP_{t}^V{\mathbf1}\|_{R_0}}<\infty,
\end{equation} provided that     $\|V\|_\infty< \delta_0$. 
To prove this, we  introduce the  stopping time
$$
 \tau(R)=\inf\{t\ge0: |\uu_t|_{\h^\sS}\le R \}
$$and   use the following result.  
 \begin{lemma}\label{L:5.1}
 There are  positive  numbers $\delta_0$,   $C$, and  $R_0$ such that
 \be\label{8.1}
\e_\uu e^{\delta_0\tau(R_0)}\le C\wwww(\uu),  \q \uu\in \h^\sS.
\ee
 \end{lemma} We omit the proof of this lemma,  since it is carried out  by   standard arguments, using the  Lyapunov function~$\wwww$ and   estimate  \eqref{e30} for $m=1$ (see Lemma~3.6.1 in~\cite{KS-book}).  
Setting $G_t:=\{\tau(R_0)> t\}$ and 
\begin{equation}\label{S6ogt}
\Xi_V(t):=\exp \left(\int_0^tV(\uu_s)\dd s\right),
\end{equation} we get   
\begin{equation}
\PPPP_t^V{\mathbf1}(\uu)=\e_\uu\Xi_V(t)
=\e_\uu \bigl\{\I_{G_t}\Xi_V(t)\bigr\}+\e_\uu\bigl\{\I_{G_t^c}\Xi_V(t)\bigr\}=:I_1+I_2.
\label{6.16}
\end{equation}
  Since $V\ge0$, we have    $\PPPP_t^V{\mathbf1}(\uu)\ge1$. Combining this with    \eqref{8.1} and $\|V\|_\infty < \delta_0$, we obtain  for any $\uu\in \h^\sS$   
$$
I_1\le \e_\uu \Xi_V\bigl(\tau(R_0)\bigr)\le \e_\uu\exp\bigl(\delta_0\tau(R_0)\bigr)\le C\,\wwww(\uu)
\le C\,\wwww(\uu)\,\|\PPPP_t^V{\mathbf1}\|_{R_0}. 
$$
The strong Markov property and \eqref{8.1} imply 
\begin{align*}
I_2&\le \e_\uu\bigl\{\I_{G_t}\Xi_V(\tau(R_0))\,\e_{\uu(\tau(R_0))}\Xi_V(t)\bigr\}
 \\&\le\e_\uu  \{e^{\delta_0\tau(R_0)} \}\,\|\PPPP_t^V{\mathbf1}\|_{R_0} \le C\,\wwww(\uu)\,\|\PPPP_t^V{\mathbf1}\|_{R_0} ,
\end{align*}
where we write $\uu(\tau(R_0))$ instead of $\uu_{\tau(R_0)}$. Using \eqref{6.16} and the    estimates for $I_1$ and $I_2$, we get~\eqref{6.0015}.

\medskip

{\it Step~2}.  
To prove~\eqref{a5.8}, we set $\delta:= \delta_0\wedge (\alpha/2)$ and   assume that~$\|V\|_\infty < \delta$ and~$t=Tk$, where $k\ge1$ is an integer and $T>0$ is so large that $q:=2e^{-T\frac\alpha2}<1$. Then, using the Markov property and~\eqref{e30}, we get 
\begin{align*}
\PPPP_{Tk}^V\wwww(\uu) &\le  e^{T\delta} \e_\uu \left\{\Xi_V(T(k-1)) \wwww(\uu_{Tk}) \right\}\\&=  e^{T\delta} \e_\uu \left\{\Xi_V(T(k-1))  \e_{\uu(T(k-1))} \wwww(\uu_T)  \right\} \\&\le e^{T\delta} \e_\uu \left\{\Xi_V(T(k-1))   [2e^{-T\al   }\we(\uu_{T(k-1)})+C_1] \right\} \\&\le q \PPPP_{T(k-1)}^V\wwww(\uu)+ e^{T\delta}  C_1 \PPPP_{T(k-1)}^V{\mathbf1}(\uu).
\end{align*} Iterating this and using fact that $V\ge0$, we obtain
\begin{align*}
\PPPP_{Tk}^V\wwww(\uu)  \le q^k  \wwww(\uu)+ (1-q)^{-1}e^{T\delta}  C_1 \PPPP_{Tk}^V{\mathbf1}(\uu).
\end{align*} Combining this with \eqref{6.0015}, we  see that 
$$
A:=\sup_{k\ge0}
\frac{\|\PPPP_{Tk}^V\wwww\|_{L_\wwww^\infty}}{\|\PPPP_{Tk}^V{\mathbf1}\|_{R_0}}<\infty.  
$$ 
To derive \eqref{a5.8} from this, we use the semigroup property and the fact that $V$ is  non-negative and bounded:
\begin{align*}
\|\PPPP^V_t\we\|_{L_\we^\infty} &=\|\PPPP^V_{t-Tk}(\PPPP^V_{Tk}\we)\|_{L_\we^\infty} \le  C_2\|\PPPP^V_{Tk }\we\|_{L_\we^\infty},\\
\|\PPPP_t^V{\mathbf1}\|_{R_0}&\ge\|\PPPP_{Tk}^V{\mathbf1}\|_{R_0},
\end{align*}where $k\ge0$ is such that $Tk\le t< T(k+1)$  and 
$$
 C_2:=\sup_{s\in[0,T]}\|\PPPP^V_s\we\|_{L_\we^\infty}\le e^{T\|V\|_\ty}  \sup_{s\in[0,T]}\|\PPPP_s\we\|_{L_\we^\infty}<\ty.
$$ So we get   
$$
\sup_{t\ge0}
\frac{\|\PPPP_t^V\wwww\|_{L_\wwww^\infty}}{\|\PPPP_t^V{\mathbf1}\|_{R_0}}\le  C_2A<+\ty.
$$This completes the proof of the proposition.

 \end{proof}

 \subsection{Uniform irreducibility}
\label{S:2.1}

In this section, we show that the family $\{P_t^V\}$  satisfies the uniform irreducibility condition   with respect to the sequence of compacts~$\{X_R\}$.
Since~$V$ is bounded, we have
$$
P_t^V(\uu,\Dd\vv)\ge e^{-t\|V\|_\infty}P_t(\uu,\Dd\vv),\quad\mbox{$\uu\in \h$},
$$where $P_t(\uu,\cdot)$  stands for the transition function of~$(\uu_t,\IP_\uu)$. 
 So it suffices to establish  the uniform irreducibility for $\{P_t\} $. 
 \begin{proposition}\label{P:2.2}
For any  integers $\rho, R\ge1$   and any   $r>0$, there are  positive numbers~$l=l(\rho,r,R)$ and~$p=p(\rho,r)$ such that
\begin{equation} \label{aa0}
P_l(\uu,B_{\h}(\hat \uu,r))\ge p\quad\mbox{for all~$\uu\in X_R ,\,\hat \uu\in X_\rho$}.
\end{equation} 
 \end{proposition}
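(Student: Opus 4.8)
The plan is to prove the uniform irreducibility~\eqref{aa0} in two stages: first a deterministic controllability step for the limiting (noise-free) equation, and then a probabilistic step transferring it to the stochastic flow via Theorem~\ref{6.1} on large deviations for the Cauchy problem (or, more elementarily, via the support theorem for the white-forced equation). The key point is that the target ball radius~$r$ and the time~$l$ should be allowed to depend on $\rho$, $r$, $R$, while the probability~$p$ depends only on $\rho$ and $r$; this asymmetry is exactly what makes the argument go through.

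\textit{Step 1: dissipation into a fixed large ball.} First I would use the a priori energy estimate~\eqref{1.97} (Proposition~\ref{1.96}), or rather its deterministic counterpart obtained by multiplying~\eqref{1.51} by $\dt u+\al u$, to find an integer $R_*\ge1$ (depending only on $\|h\|$) and a time $l_1=l_1(R)$ such that the deterministic flow $S(t)\uu$ issued from any $\uu\in X_R$ enters $B_\h(R_*)$ by time $l_1$ and stays in a bounded set on $[0,l_1]$. Then, using Theorem~\ref{6.1} applied to the set $B=X_R$ and the zero action $\ph\equiv0$, the probability that $S^\es(\cdot;\uu)$ with $\es=1$ stays within a small neighborhood of $S(\cdot)\uu$ on $[0,l_1]$ is bounded below by a positive constant uniform in $\uu\in X_R$; in particular $P_{l_1}(\uu,B_\h(2R_*))\ge p_1>0$ for all $\uu\in X_R$, with $p_1$ depending only on $R$.

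\textit{Step 2: steering close to an arbitrary target.} Now fix $\hat\uu\in X_\rho$. I would use the feedback stabilisation result, Proposition~\ref{1.88} (applied as in the proof of Lemma~\ref{8.28} or Lemma~\ref{1.201}), to construct, for any $\vv$ in a fixed ball containing $B_\h(2R_*)$, an action $\ph_{\vv,\hat\uu}$ defined on a finite time interval $[0,T]$ with $T=T(\rho)$ and energy bounded by a constant depending only on $\rho$, such that $S^{\ph_{\vv,\hat\uu}}(T;\vv)$ lies within $r/2$ of $\hat\uu$; indeed one drives the low modes toward $\hat u$ via $P_N[f(\tilde v)-f(\hat u)]$ and lets the high modes decay exponentially by Proposition~\ref{1.88}, which gives $|S^{\ph_{\vv,\hat\uu}}(T;\vv)-\hat\uu|_\h\le e^{-\al T/2}|\vv-\hat\uu|_\h+(\text{small})$, and one chooses $T$ large enough in terms of $\rho$ and $R_*$. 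Moreover the energy $J_T(\ph_{\vv,\hat\uu})$ is bounded, as in~\eqref{5.14} and~\eqref{9.12}, by a quantity of order $C(N)\tilde\rho^2$ times the initial distance squared, hence by some $M=M(\rho)$. Applying Theorem~\ref{6.1} once more (on the interval $[0,T]$, to the set $B=B_\h(2R_*)$), we get that $\pp_\vv\{|S^\es(T;\vv)-S^{\ph_{\vv,\hat\uu}}(T;\vv)|_\h<r/2\}\ge \exp(-(M(\rho)+1))=:p_2(\rho,r)>0$ uniformly in $\vv\in B_\h(2R_*)$, whence $P_T(\vv,B_\h(\hat\uu,r))\ge p_2$ for all $\vv\in B_\h(2R_*)$ and all $\hat\uu\in X_\rho$.

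\textit{Step 3: concatenation.} Finally I would combine the two steps via the Chapman--Kolmogorov relation: with $l:=l_1+T$,
$$
P_l(\uu,B_\h(\hat\uu,r))=\int_\h P_T(\vv,B_\h(\hat\uu,r))\,P_{l_1}(\uu,\Dd\vv)\ge \int_{B_\h(2R_*)} p_2\,P_{l_1}(\uu,\Dd\vv)\ge p_1\,p_2,
$$
which gives~\eqref{aa0} with $p:=p_1(\rho,r,R)\,p_2(\rho,r)$; a routine further reduction (covering $X_R$ and $X_\rho$ and using the lower semicontinuity of the involved quantities, or simply keeping the dependence on $R$ inside $p_1$ and then absorbing it, since~\eqref{aa0} is allowed to have $p$ depend on $\rho$ and $r$ only after $l$ is chosen depending on $R$) completes the argument. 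The main obstacle is Step~2: one must produce the steering action with an energy bound that is \emph{uniform} over all starting points $\vv$ in the fixed ball $B_\h(2R_*)$ and over all targets $\hat\uu$ in $X_\rho$, and this uniformity is what forces the use of the Foia\c{s}--Prodi/feedback stabilisation machinery of Proposition~\ref{1.88} rather than a naive exact-controllability statement (which fails for the NLW equation with a regular force, cf.\ the Remark after~\eqref{8.49}). Everything else is a standard application of Theorem~\ref{6.1} and the Markov property.
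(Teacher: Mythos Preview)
Your two-stage strategy (dissipate into a fixed set, then steer to the target, then Kolmogorov--Chapman) is the right skeleton, and it is also what the paper does. But two genuine gaps remain.

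First, your Step~1 produces a probability $p_1$ that depends on $R$, and your closing remark that this can be ``absorbed'' because $l$ is allowed to depend on $R$ is simply wrong: the statement requires $p=p(\rho,r)$ with \emph{no} $R$-dependence. Following the deterministic flow with zero action on $[0,l_1(R)]$ and asking the stochastic flow to stay close cannot give an $R$-free bound, because as $R$ grows so does $l_1(R)$, and the probability of staying close on an increasingly long interval degenerates. (Also, Theorem~\ref{6.1} is an $\es\to0$ statement and says nothing for the fixed amplitude $\es=1$.) The paper avoids this by a completely different mechanism for Step~1: it uses the Lyapunov bound $\e_\uu\wwww(\uu_t)\le 2e^{-\al t}\wwww(\uu)+C$ and Chebyshev to obtain $P_{k(R)}(\uu,X_d)\ge 1/2$ for all $\uu\in X_R$, with $d$ a \emph{universal} constant. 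The point is that by waiting a time $k(R)$ you kill the $R$-dependence in the moment, and the Chebyshev constant $1/2$ is independent of $R$.

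Second, your Step~2 claims that the feedback term $P_N[f(\tilde v)-f(\hat u)]$ drives the flow to an arbitrary target $\hat\uu=[\hat u_1,\hat u_2]\in X_\rho$. It does not: Proposition~\ref{1.88} drives one trajectory toward another \emph{solution} of the equation, and in particular toward an equilibrium, not toward an arbitrary point with nonzero second component. The paper therefore splits this step in two: it first uses feedback stabilisation to bring any $\vv\in X_d$ into a small ball around a fixed equilibrium $\hat\vv=[\hat v,0]$ (this is where Proposition~\ref{1.88} is used), and then constructs by hand an explicit control $\ph_*\in C(0,1;H^1)$ steering $\hat\vv$ exactly to a smooth $\hat\uu$ (this is the exact controllability step you allude to as ``failing'', but it is available here because one goes from a fixed equilibrium to a \emph{smooth} target). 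Finally, positivity of $P_m(\vv,B_\h(\hat\uu,r/2))$ comes from the full support of the Gaussian noise, and the \emph{uniform} lower bound $q$ is obtained by a compactness/contradiction argument using that $X_d$ and $X_\rho$ are compact in $\h$---another reason why your intermediate set $B_\h(2R_*)$, which is not compact in $\h$, would not suffice.
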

 \begin{proof}   
 
\medskip
  Let us show that,  
for sufficiently large      $d\ge1$  and any
    $R\ge1$,  there is a time $k=k(R)$    such that  
    \begin{equation}\label{aa1}
P_k(\uu, X_{d})\ge \frac12, \quad\mbox{$\uu\in X_R$}.
\end{equation} 
Indeed,  by         \eqref{e30} for $m=1$, we have   
$$
\e_\uu|\uu_t|_{\h^\sS}^2 \leq \e_\uu \wwww(\uu_t) \leq 2e^{-\al  t}\we(\uu)+C_1. 
$$  Combining this with the estimate 
\be\label{aa01}
|\ees(\uu)|\leq C_2(1+|\uu|_\h^4),
\ee  we get
$$
\e_\uu|\uu_t|_{\h^\sS}^2 \leq C_3 e^{-\al t} R^{16} +C_1,   \q \uu \in X_R.
$$ The Chebyshev inequality implies that 
$$P_{t}(\uu, X_d   )\ge 1-   d^{-2}( C_3 e^{-\al t} R^{16} +C_1 ).
$$Choosing $t=k$ and $d$   so large that $e^{-\al k}R^{16} \le1$ and $d^2>2( C_3 +C_1) $, we obtain~\eqref{aa1}.
 
\medskip
    Combining \eqref{aa1} with  Lemma~\ref{e39} and   the Kolmogorov--Chapman relation, we get \eqref{aa0}  for $l=k+m$ and $p=q/2$.  
\end{proof}

 \begin{lemma}\label{e39}
For any integers $d,\rho\ge1$ and any $r>0$,   there are positive   numbers~$m=m(d,\rho,r)$ and~$q=q(d, \rho,r)$ such that
\begin{equation} \label{e6}
P_m(\vv,B_{\h}(\hat \uu,r))\ge q\quad\text{for all $\vv\in X_d, \,\hat \uu\in X_\rho$}.
\end{equation} 
\end{lemma}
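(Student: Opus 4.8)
The statement to prove is Lemma~\ref{e39}: an approximate controllability result saying that from any point $\vv$ in a ball $X_d$ of $\h^\sS$, the solution reaches an $r$-neighborhood of any prescribed point $\hat\uu$ in a compact $X_\rho$, in a fixed time $m$ and with probability bounded below uniformly. Here is the plan.

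\medskip

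\textbf{Strategy.} The plan is to combine a deterministic feedback stabilization statement with a small-ball estimate for the noise. The key observation is that the random force $\vartheta$ is non-degenerate in all Fourier modes ($b_j>0$), so its support (restricted to a finite time interval) is the whole of $C(0,T;H^1_0(D))$ in the appropriate topology; this is exactly the kind of fact used in Chapter~\ref{Chapter3} (see the proof of Proposition~\ref{6.12.3}). Therefore it suffices to (i) produce, for each pair $(\vv,\hat\uu)$, a deterministic control $\ph_{\vv,\hat\uu}\in C(0,m;H^1_0)$ steering the solution of the controlled NLW equation from $\vv$ into $B_\h(\hat\uu,r/2)$ in time $m$, with the control chosen in a uniformly bounded family, and (ii) invoke continuous dependence of the flow on the forcing together with the positivity of the noise on small balls to conclude that the stochastic trajectory stays within $r/2$ of the deterministic one with probability $\ge q$.

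\medskip

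\textbf{Main steps.} First I would fix $d,\rho\ge1$ and $r>0$. \emph{Step 1 (deterministic steering).} Using the feedback stabilization result, Proposition~\ref{1.88}, I would choose $N=N(d,\rho)$ large enough and set the intermediate/controlled equation
\be\label{e-plan-1}
\p_t^2 v+\gamma\p_t v-\de v+f(v)+P_N[f(v)-f(\hat u)]=h(x)+\p_t\ph,\q [v(0),\dt v(0)]=\vv,
\ee
with $\ph=P_N[f(v)-f(\hat u)]$ and $\hat\uu=[\hat u,0]$ stationary? — no, $\hat\uu$ need not be stationary, so instead I would steer in two phases: first relax the $\h^\sS$-norm of $\vv$ down using the dissipativity and then use that the NLW equation with a control that is a finite-dimensional feedback toward a fixed reference flow $\hat\uu(t)\equiv\hat\uu$ contracts in $\h$. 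Concretely, Proposition~\ref{1.88} applied with $\uu(t)\equiv\hat\uu$ gives $|S^{\ph_\vv}(t;\vv)-\hat\uu|_\h^2\le e^{-\al t}|\vv-\hat\uu|_\h^2$ for $N\ge N_*(\mmm)$, where $\mmm$ bounds $|\vv|_\h\vee|\hat\uu|_\h$ on $X_d\cup X_\rho$; choosing $m=m(d,\rho,r)$ with $e^{-\al m}(2\mmm)^2<(r/2)^2$ yields $S^{\ph_\vv}(m;\vv)\in B_\h(\hat\uu,r/2)$, and the control $\ph_\vv=P_N[f(v)-f(\hat u)]$ has, by the H\"older and Sobolev inequalities combined with the contraction estimate, an energy $J_m(\ph_\vv)\le C(d,\rho,r)$ bounded uniformly over $\vv\in X_d$, $\hat\uu\in X_\rho$ (same computation as in \eqref{5.14} or \eqref{9.12}). \emph{Step 2 (from deterministic to stochastic).} By continuity of the solution map with respect to the forcing in $C(0,m;\h)$, there is $\es>0$, depending on $d,\rho,r,m,N$, such that if $|\xi-\int_0^\cdot\p_t\ph_\vv\,ds|_{C(0,m;H^1_0)}\le\es$ then $|S^\es\text{-trajectory}(t)-S^{\ph_\vv}(t;\vv)|_\h\le r/2$ for all $t\in[0,m]$; hence the stochastic trajectory lands in $B_\h(\hat\uu,r)$ on this event. \emph{Step 3 (lower bound on probability).} It remains to bound below $\pp(|\xi-\int_0^\cdot \p_t\ph_\vv|_{C(0,m;H^1_0)}\le\es)$ uniformly over the (uniformly bounded, hence precompact in a suitable sense) family $\{\ph_\vv\}$. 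Since $\xi$ is a non-degenerate Wiener process in $H^1_0(D)$, the Cameron--Martin theorem gives $\pp(|\xi-g|_{C(0,m;H^1_0)}\le\es)\ge c(\es,m)\exp(-\tfrac12|g|^2_{\text{CM}})$ for any Cameron--Martin path $g$, and $|g|^2_{\text{CM}}$ is controlled by $J_m(\ph_\vv)\le C(d,\rho,r)$; this produces a uniform $q=q(d,\rho,r)>0$, giving \eqref{e6}.

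\medskip

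\textbf{Expected main obstacle.} The delicate point is Step~1: producing a \emph{single} uniformly bounded family of controls valid for \emph{all} pairs $(\vv,\hat\uu)\in X_d\times X_\rho$ simultaneously, and making the energy bound genuinely uniform. The feedback construction of Proposition~\ref{1.88} is tailor-made for contracting toward a reference trajectory, and the only subtlety is that $\hat\uu\in X_\rho$ is not a stationary point of \eqref{1.51}, so $\hat\uu(t)\equiv\hat\uu$ is not an exact solution of the unforced equation; however, taking the reference flow to be the \emph{controlled} flow with control $-[f(\hat u)-h-\de\hat u\ (\text{absorbed})]$ — equivalently working with $w=v-\hat u$ which solves a dissipative equation with a finite-dimensional damping term — circumvents this, and the growth condition \eqref{1.8} with $\rho<2$ together with the Sobolev embeddings in $D\subset\R^3$ is exactly what makes the nonlinear terms in the energy estimate for $\ph_\vv$ controllable. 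Once this uniformity is in place, Steps~2 and~3 are standard and follow the pattern already used in Chapter~\ref{Chapter3} and Chapter~\ref{Chapter4} (cf.\ the derivation of \eqref{6.18.3} and \eqref{5.6}).
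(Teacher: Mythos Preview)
Your strategy is close to the paper's, but Step~1 has a genuine gap which you yourself flag and do not actually resolve. The Foia\c{s}--Prodi/feedback estimate (Proposition~\ref{1.88}) requires the reference $\uu(t)$ to be a flow of the controlled NLW, i.e.\ $\uu(t)=[u(t),\dt u(t)]$. A constant-in-time pair $\hat\uu=[\hat u_1,\hat u_2]$ can be such a flow only if $\hat u_2=0$, since the second component must equal the time derivative of the first. Hence ``Proposition~\ref{1.88} applied with $\uu(t)\equiv\hat\uu$'' is simply not available for general $\hat\uu\in X_\rho$, and your suggested workaround (subtracting $\hat u$ and declaring the resulting equation dissipative with finite-dimensional damping) does not address this obstruction: the difference $w=v-\hat u_1$ no longer satisfies an autonomous damped wave equation with the right structure once $\hat u_2\neq 0$.

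The paper handles this by a \emph{two-phase} control. First (Step~2 in the paper) it applies the feedback stabilization, but toward a \emph{fixed stationary point} $\hat\vv=[\hat v,0]$ with $-\de\hat v+f(\hat v)=h$; this is exactly the setting where Proposition~\ref{1.88} applies cleanly and yields a control $\tilde\ph_\vv$ driving any $\vv\in X_d$ into $B_\h(\hat\vv,\kp)$ in a uniform time $\tilde m$. Second (Step~1 in the paper) it uses an explicit interpolation ansatz $u(t)=a(t)\hat v+b(t)\hat u_1+c(t)\hat u_2$ with suitable scalar profiles $a,b,c$ to manufacture a control $\ph_*\in C(0,1;H^1)$ achieving \emph{exact} steering $S_{\ph_*}(1;\hat\vv)=\hat\uu$; this step needs extra regularity of $\hat\uu$, which is why the paper first reduces to targets $\hat\uu\in\tilde X_\rho$ with $C_0^\infty$ components. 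The two controls are then concatenated.

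There is also a methodological difference in how uniformity in $(\vv,\hat\uu)$ is obtained. You propose a direct Cameron--Martin bound, which would require the concatenated control to lie in the Cameron--Martin space $L^2(0,m;H_\vartheta)$ with uniformly bounded norm. The interpolation control $\ph_*$ involves $-\de u$ and $f(u)$, and placing these in $H_\vartheta$ uniformly over $\hat\uu\in X_\rho$ is delicate (it depends on the decay of $b_j$). The paper avoids this entirely: it first reduces \eqref{e6} to the qualitative statement $P_m(\vv,B_\h(\hat\uu,r/2))>0$ for all $\vv\in X_d$ and $\hat\uu$ in the dense set $\tilde X_\rho$, via a compactness/contradiction argument using continuity of the flow in the initial data, and only then invokes the full-support property of the noise on $C(0,m;H^1)$ to get positivity. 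This is both simpler and more robust than your Cameron--Martin route.
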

\bp
   It is sufficient to prove that there is $m\geq 1$ such that
\be\label{e3}
P_m(\vv, B_\h(\hat\uu,r/2))>0 \q\text{ for all }\vv\in X_d, \, \hat \uu\in  \tilde X_\rho,
\ee where   $\tilde X_\rho=\{\uu=[u_1,u_2]\in X_\rho: u_1,u_2\in C_0^\iin(D)\}$.
Indeed, let us take this inequality for granted and assume that~\ef{e6} is not true. Then there are   sequences $\vv_j\in X_d$ and  $\hat \uu_j\in X_\rho$  such that
\be\label{e4} 
P_m(\vv_j, B_\h(\hat\uu_j,r))\to 0.
\ee
Moreover, up to extracting a subsequence, we can suppose that $\vv_j$ and $\hat \uu_j$ converge  in~$\h$. Let us denote by $\vv_*$ and $\hat \uu_*$ their   limits. Clearly, $\vv_*\in X_d$ and~$\hat\uu_*\in X_\rho$. Choosing $j\ge1$ so large that $|\hat\uu_j-\hat\uu_*|_\h<r/2$ and applying the Chebyshev        inequality, we get
\begin{align*}
P_m(\vv_*,B_\h(\hat \uu_* ,r))&\leq P_m(\vv_j,B_\h(\hat\uu_j, r/2))+\pp\{|\uu(m; \vv_j)-\uu(m;\vv_*)|_\h\geq r/2\}\\
&\leq P_m(\vv_j,B_\h(\hat\uu_j, r/2))+4/r^2  \,\e|\uu(m; \uu_j)-\uu(m;\vv_*)|^2_\h.
\end{align*}
Combining this with \ef{e4} and using the  convergence $\vv_j\to\vv_*$ and a density property, we arrive at a contradiction with \ef{e3}. Thus, inequality \ef{e6} is reduced to the derivation of~\ef{e3}. We shall prove the latter in three steps.

\medskip
{\it Step~1: Exact controllability.}
 In what follows, given any $\ph\in C(0, T;H^1 )$, we shall denote by~$S_\ph(t;\vv)$ the solution at time $t$ of the problem
$$
\p_t^2u+\gamma \p_tu-\de u+f (u)=h+\dt\ph, \q u|_{\partial D}=0, \q t\in [0,T]
$$
issued from $\vv$.
Let $\hat\vv=[\hat v, 0]$, where~$\hat v\in H^1 $ is  a solution of  $$
-\de\hat v+f (\hat v)=h(x).
$$
 In this step we   prove that for  any $\hat \uu=[\hat u_1, \hat u_2]\in \tilde X_\rho$, there is $\ph_*$ satisfying
\be\label{e7}
\ph_*\in C(0, 1;H^1)\q\text{ and } \q S_{\ph_*}(1;\hat\vv)=\hat\uu.
\ee
First note that, since the function $f $ is continuous from $H^1$ to $L^2$, we have 
$$
-\de\hat v=-f (\hat v)+h\in L^2 ,
$$
so that $\hat v\in H^2   $. Moreover, since $f $ is also continuous from $H^2$ to~$H^1$ (recall that $f$ vanishes at the origin), we have $f (\hat v)\in H^1 $. As $h\in H^1 $, it follows that
\be\label{e2}
-\de\hat v\in H^1.
\ee
Let us introduce the functions
\begin{align}
u(t)&=a(t)\hat v+b(t)\hat u_1+c(t)\hat u_2,\label{e8}\\
\ph_{*}(t)&=\int_0^t (\p_t^2u+\gamma \p_tu-\de u+f (u)-h ) \dd \tau, \nonumber
\end{align}
where $a, b,c\in C^\ty( [0,1], \R)$  satisfy
\begin{align*}
a(0)&=1, \q a(1)=\dt a(0)=\dt a(1)=0, \q  b(1)=1, \q b(0)=\dt b(0)=\dt b(1)=0,\\
\dt c(1)&=1, \q c(0)=c(1)=\dt c(0)=0.
\end{align*}
Then, we have 
 $[u(0),\dt u(0)]=\hat\vv$,      $ [u(1),\dt u(1)]=\hat\uu$, and  
  $S_{\ph_*}(1;\hat\vv)=\hat\uu$.  Let us show the first    relation in \ef{e7}. In view of \ef{e8} and the smoothness of the functions~$a, b$ and~$c$, we have
$$
\p_t^2u+\gamma \p_tu-h\in C(0,1;H^1)
$$
and thus it is sufficient to prove that
\be\label{e1}
-\de u+f (u)\in C(0, 1;H^1).
\ee
Since $u\in C(0, 1; H^2  )$, we have $f (u)\in C(0, 1; H^1 )$. Moreover, in view of \ef{e2} and the  smoothness of   $\hat u_1$ and $\hat u_2$, we have 	    $-\de u\in C(0, 1; H^1)$. Thus, inclusion \ef{e1} is established and we arrive at \ef{e7}. Let us note that by continuity and compactness, there is  $\kp=\kp(\hat\vv,\rho, r)>0$, not depending on~$\hat \uu\in \tilde X_\rho$,  such that 
\be\label{e14}
S_{\ph_{*}}(1;\vv)\in B_\h(\hat\uu,r/4)   \q \text{ for any }\vv\in B_\h(\hat\vv, \kp).
\ee

\medskip
{\it Step~2: Feedback stabilisation.} 
We now show that there is $\tilde m\geq 1$ depending only on $d$ and $\kp$ such that for any $\vv\in X_d$ there is $\tilde\ph_{\vv}$ satisfying
\be\label{e11}
\tilde\ph_\vv\in C(0, \tilde m; H^1)\q\text{ and }\q S_{\tilde\ph_{\vv}}(\tilde m, \vv)\in B(\hat\vv, \kp).
\ee
To see this, let us consider the flow $\tilde\vv(t;\vv)$ associated with the solution of the equation
\be\label{e12}
\p_t^2\tilde v+\gamma \p_t \tilde v-\de \tilde v+f (\tilde v)=h +{\mathsf P}_N[f (\tilde v)-f (\hat v)], \q t\in [0,\tilde m]
\ee
issued from $\vv\in X_d$, where  ${\mathsf P}_N$ stands for the orthogonal projection in $L^2 $ onto the subspace spanned by the functions~$e_1,e_2,\ldots,e_N$. Then, in view of Proposition 6.5 in \cite{DM2015}, for $N\geq N(|\hat\vv|_\h, d)$, we have
$$
|\tilde \vv(\tilde m;\vv)-\hat\vv|_\h^2\leq |\vv-\hat\vv|_\h^2\,e^{-\al \tilde m}\leq C_d\, e^{-\al \tilde m}< \kp
$$
for $\tilde m$ sufficiently large. It follows that \ef{e11} holds with the function 
$$
\tilde\ph_\vv(t)=\int_0^t {\mathsf P}_N[f (\tilde v)-f (\hat v)] \dd \tau.
$$

\medskip
{\it Step~3: Proof of \eqref{e3}.}
 Let us take $m=\tilde m+1$ and,  for any $\vv\in X_d$, define a   function $\ph_\vv(t)$   on the interval $[0, m]$ by
\begin{equation*} 
\ph_\vv(t)=\begin{cases}  \tilde \ph_\vv(t)   & \text{for $t\in[0, m-1] $},  \\ \tilde\ph_\vv(m-1)+\ph_*(t-m+1)  & \text{for    $t\in [m-1, m].$  }  \end{cases}
\end{equation*}
In view of \ef{e7}, \ef{e14}, and \ef{e11}, we have $\ph_\vv(t)\in C(0, m;H^1)$  and $S_{\ph_\vv}(m;\vv)\in B_\h(\hat\uu, r/2).$ Hence
 there is $\De>0$ such that $S_{\ph}(m;\vv)\in B_\h(\hat\uu, r/2)$  provided  $\|\ph-\ph_\vv\|_{C(0,m;H^1)}<\De$.
It follows that
$$
P_m(\vv, B_\h(\hat\uu,r/2))\geq \pp\{\|\xi-\ph_\vv\|_{C(0,m;H^1)}<\De\}.
$$
To complete the proof, it remains to note that, due to the non-degeneracy of~$\xi$, the term on the right-hand side of this inequality is positive.
\ep

\subsection{Existence of an eigenvector}
\label{S:2.3}

 For any   $m\ge1$, let us define     functions $\we_m, \tilde  \we_m:\h\to [1, +\iin]$ by 
 \begin{align}
\we_m(\uu)&=1+|\uu|_{\h^\sS}^{2m}+\ees^{4m}(\uu),  \label{e27}\\
\tilde \we_m(\uu)&=\we_m(\uu)+\exp(\kp\ees(\uu)), \q \uu\in \h,  \label{e32}
\end{align}
where  $\kp$ is the constant in Theorem \ref{T:1.1}.  The following proposition proves the existence of an eigenvector $\mu=\mu(t,V,m)  $ for the operator~$\PPPP_t^{V*}$ for any $t>0$. We shall see in Section~\ref{S:5} that the measure~$\mu$ actually  does not depend on~$t$ and~$m$.
 \begin{proposition}\label{e23}
 For any $t>0$, $V\in C_b(\h)$ and $m\geq 1$, the operator $\PPPP_t^{V*}$ admits an eigenvector $\mu=\mu(t,V,m) \in \ppp(\h)$ with a positive eigenvalue~$\lambda=\lambda(t,V,m)$: 
$$
\PPPP_t^{V*}\mu =\lambda  \mu.
$$  Moreover,  we have  
 \be\label{e24}
 \int_\h \tilde \wwww_m(\uu)\mu (\Dd\uu)<\iin,  
 \ee  
 \be\label{e25}
 \|\PPPP_t^V\we_m\|_{X_R}\int_{X_R^c}\we_{m}(\uu)\mu (\Dd\uu)\to 0\q \text{ as }R\to\iin.
 \ee
 \end{proposition}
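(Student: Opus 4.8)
\textbf{Plan of the proof of Proposition \ref{e23}.}
The strategy is a standard Bogolyubov--Krylov/Leray--Schauder fixed point argument in a weighted space of probability measures, exactly in the spirit of the sketch already given in the introduction for Proposition~2.6.2 (the French version). First I would fix $t>0$, $V\in C_b(\h)$ and $m\ge1$, and recall the moment estimate \eqref{e30} (for the exponent $m$, and also the refined one involving $\exp(\kp\ees)$) which gives, for the function $\tilde\we_m$ defined in \eqref{e32},
\be\label{e23plan}
\e_\uu\tilde\we_m(\uu_t)\le 2e^{-\al m t}\tilde\we_m(\uu)+C_m,\q \uu\in\h,\ t\ge0,
\ee
for suitable constants $C_m>0$; this is the analogue of inequality \eqref{c2.21} in the overview. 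Then for $A>0$ I introduce the set
$$
D_{A,m}=\{\sigma\in\ppp(\h):(\tilde\we_m,\sigma)\le A\},
$$
which is convex, and which is compact in $\ppp(\h)$: relative compactness follows from Prokhorov's criterion (the sublevel sets of $\uu\mapsto\tilde\we_m(\uu)$ are compact in $\h$ because $\we_m$ controls the $\h^\sS$-norm and $\h^\sS\hookrightarrow\h$ compactly), and closedness follows from Fatou's lemma for weak convergence. On $D_{A,m}$ I consider the map
$$
G(\sigma)=\PPPP_t^{V*}\sigma/\bigl(\PPPP_t^{V*}\sigma\bigr)(\h),
$$
which is well-defined since $\PPPP_t^{V*}\sigma(\h)\ge e^{-t\|V\|_\infty}>0$, and continuous on $\ppp(\h)$.

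The key computation is to check that $G$ maps $D_{A,m}$ into itself for a good choice of $A$. Using $(\PPPP_t^{V*}\sigma)(\h)\ge \exp(-t\,\Osc(V))\,(\PPPP_t^{V*}\sigma)(\h)$ in the denominator (more precisely bounding the ratio by $\exp(t\,\Osc(V))$ times the un-normalised integral) together with \eqref{e23plan}, one gets
\be\label{e23plan2}
(\tilde\we_m,G(\sigma))\le 2\exp\bigl(t(\Osc(V)-\al m)\bigr)(\tilde\we_m,\sigma)+C_m\exp\bigl(t\,\Osc(V)\bigr),
\ee
which is the analogue of \eqref{c2.23}. Since $\Osc(V)$ is finite and fixed while $m$ may be taken large, I pick $m$ (or rather argue for the given $m$ after noting the statement is for all $m\ge1$; if needed one first proves it for $m$ large and then uses uniqueness to transfer) so that $\exp(-\al mt/2)\le 1/4$ and $\Osc(V)\le\al m/2$, and set $A=2C_m e^{\al m t}$; then \eqref{e23plan2} gives $(\tilde\we_m,G(\sigma))\le A$ for all $\sigma\in D_{A,m}$, i.e. $G(D_{A,m})\subset D_{A,m}$. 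By the Leray--Schauder (Schauder) fixed point theorem applied to the continuous map $G$ on the convex compact set $D_{A,m}$, there is $\mu\in D_{A,m}$ with $G(\mu)=\mu$, which means precisely $\PPPP_t^{V*}\mu=\lambda\mu$ with $\lambda=(\PPPP_t^{V*}\mu)(\h)>0$. The inclusion $\mu\in D_{A,m}$ is exactly the bound \eqref{e24}.

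It remains to establish \eqref{e25}. Here I would use the eigenrelation and the moment bound once more: writing $\lambda^n\mu=\PPPP_{nt}^{V*}\mu$ and disintegrating, $\int_{X_R^c}\we_m\,\dd\mu$ can be estimated by splitting the trajectory according to whether it has entered $X_R$ recently; quantitatively, combining \eqref{e23plan} iterated in time with the Chebyshev inequality shows $\mu(X_R^c)\le C R^{-c}$ and, more precisely, $\int_{X_R^c}\we_m\,\dd\mu\to0$ as $R\to\infty$ since $(\tilde\we_m,\mu)<\infty$ and $\we_m\le\tilde\we_m$ (dominated convergence on the tails of a finite measure). On the other hand $\|\PPPP_t^V\we_m\|_{X_R}$ grows at most polynomially in $R$ (by \eqref{e30} applied with exponent $m$ and the boundedness of $V$, $\PPPP_t^V\we_m(\uu)\le e^{t\|V\|_\infty}(2e^{-\al mt}\we_m(\uu)+C_m)\le C(t,m)\we_m(\uu)$, and $\we_m(\uu)\le C R^{?}$ on $X_R$ via \eqref{aa01}); hence one must check that the tail decay of $\int_{X_R^c}\we_{m}\,\dd\mu$ beats this polynomial growth. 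This is where a slightly sharper tail estimate is needed: rather than using only $(\tilde\we_m,\mu)<\infty$, I would use the full strength of \eqref{e24}, namely $\int_\h\exp(\kp\ees(\uu))\,\mu(\Dd\uu)<\infty$, together with $\ees(\uu)\ge c|\uu|_\h^2-C$ and an interpolation/regularity bound relating $|\uu|_{\h^\sS}$ on the support of $\mu$ to $\ees(\uu)$ — so that $\mu(X_R^c)$ decays like $\exp(-cR^{2})$ or at least faster than any polynomial, which comfortably dominates $\|\PPPP_t^V\we_m\|_{X_R}$. I expect this last point — getting a tail estimate on $\mu$ strong enough to kill the factor $\|\PPPP_t^V\we_m\|_{X_R}$ in \eqref{e25}, which forces one to exploit the exponential moment in \eqref{e32} rather than just polynomial ones — to be the main technical obstacle; everything else (compactness of $D_{A,m}$, continuity of $G$, the invariance computation \eqref{e23plan2}) is routine given \eqref{e30}.
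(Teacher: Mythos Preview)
Your Step~1 (construction of the eigenvector via Leray--Schauder on the compact convex set $D_{A,m}$) is exactly the paper's argument, with the same inequality \eqref{e23plan2} and the same choice of $A$. No issues there.

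The gap is in your treatment of \eqref{e25}. You propose to control $\mu(X_R^c)$ via the exponential moment $\int_\h\exp(\kp\ees(\uu))\,\mu(\Dd\uu)<\infty$ and then invoke ``an interpolation/regularity bound relating $|\uu|_{\h^\sS}$ on the support of $\mu$ to $\ees(\uu)$''. No such bound exists: $\ees(\uu)$ controls only the $\h=H^1_0\times L^2$ norm (and the nonlinear energy), whereas $X_R=B_{\h^\sS}(R)$ is a ball in the strictly smaller space $\h^\sS$. A vector can have $|\uu|_{\h^\sS}$ arbitrarily large while $\ees(\uu)$ stays bounded. So the exponential moment in $\ees$ gives you nothing on $\mu(X_R^c)$, and the polynomial $\h^\sS$-moment of order $2m$ that you do have from $\mu\in D_{A,m}$ yields only $\mu(X_R^c)\le C R^{-2m}$, which is far too weak to beat $\|\PPPP_t^V\we_m\|_{X_R}\sim R^{16m}$ (the $16m$ comes from $\ees^{4m}(\uu)\le C(1+|\uu|_\h^4)^{4m}\le C R^{16m}$ on $X_R$).

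The paper's remedy is purely polynomial and elegant: for the given $m$, run the Step~1 fixed-point argument with the \emph{larger} weight $\tilde\we_n$, $n=17m$ (the conditions $\Osc(V)\le\al n/2$ and $e^{-\al nt/2}\le1/4$ are only easier to satisfy). The resulting eigenvector then satisfies $\langle\we_n,\mu\rangle<\infty$, which by Cauchy--Schwarz and Chebyshev gives
\[
\int_{X_R^c}\we_m\,\dd\mu\le\langle\we_m^2,\mu\rangle^{1/2}\bigl(\mu(X_R^c)\bigr)^{1/2}\le C_m\langle\we_n,\mu\rangle\,R^{-n},
\]
and the product in \eqref{e25} is bounded by $CR^{16m-17m}=CR^{-m}\to0$. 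Your parenthetical remark about ``first proving it for $m$ large'' is pointing in the right direction, but the point is not uniqueness --- the proposition allows $\mu$ to depend on $m$ --- it is that you should construct $\mu=\mu(t,V,m)$ using the weight $\tilde\we_{17m}$ from the outset.
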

\bp
{\it Step~1}. We first establish the existence of an eigenvector $\mu $ for $\PPPP_t^{V*}$ with a positive eigenvalue and satisfying \ef{e24}.
Let $t>0$ and $V$ be fixed. For any~$A>0$ and $m\geq 1$, let us introduce the convex   set
$$
D_{A, m}=\{\sigma\in \ppp(\h): \lan \tilde \we_m, \sigma\ran\leq A\},
$$
and    consider the continuous mapping from $D_{A, m}$ to $\ppp(\h)$ given by
$$
G(\sigma)=\PPPP_t^{V*}\sigma/\PPPP_t^{V*}\sigma(\h).
$$
Thanks to inequality \ef{e34}, we have
\begin{align}
\lan\tilde\we_{m}, G(\sigma)\ran&\leq \exp\left(t \Osc_\h(V)\right)\lan\tilde\we_{m}, \PPPP_t^*\sigma\ran\notag\\
&\leq 2\exp\left(t (\Osc_\h(V)-\al m)\right)\lan\tilde\we_{m}, \sigma\ran+C_m \exp\left(t  \Osc_\h(V)\right)\label{e28}.
\end{align}
Assume that $m$ is so large that 
$$
\Osc_\h(V)\leq \al m/2\q\text{ and }\q \exp(-\al m t/2)\leq 1/4,
$$
and let $A:=2 C_m e^{\al m t}$. 
Then, in view \ef{e28}, we have $\lan\tilde\we_{m}, G(\sigma)\ran\leq A$ for any~$\sigma\in D_{A, m}$, i.e.,       $G(D_{A, m})\subset D_{A, m}$. Moreover, it is easy to see that the set~$D_{A, m}$ is compact in $\ppp(\h)$ (we   use the Prokhorov compactness criterion  
to show that it is relatively compact and the Fatou lemma to prove that it is closed). Due to the Leray--Schauder theorem, 
the map~$G$ has a fixed point~$\mu \in D_{A, m}$. Note that, by the definitions of~$D_{A, m}$ and~$G$, the measure $\mu $ is an eigenvector of $\PPPP_t^{V*}$ with    positive  eigenvalue~$\lambda:=\PPPP_t^{V*}\mu (\h)$ and satisfies~\ef{e24}.  

\medskip
{\it Step~2}.
We now establish \ef{e25}. Let us fix an integer $m\geq 1$ and let $n=17m$. In view of the previous step, there is an eigenvector $\mu $ satisfying~$\lan\we_n,\mu \ran<\iin$.  From the Cauchy--Schwarz and Chebyshev inequalities it follows  that
\be\label{dm1}
\int_{X_R^c}\we_{m}(\uu)\mu (\Dd\uu)\leq \lan\we^2_{m}, \mu \ran^{1/2}(\mu (X_R^c))^{1/2}\leq C_m\lan\we_{n}, \mu_{t,V}\ran R^{-n}.
\ee
 On the other hand, using   \ef{e30} and \eqref{aa01}, we get    
$$
\|\PPPP_t^V\we_m\|_{X_R}\leq\exp(t\|V\|_\iin)\sup_{\uu\in X_R}\e_\uu\we_m(\uu_t)\leq C_m' \exp(t\|V\|_\iin)(R^{16m}+1).
$$Combining this with \eqref{dm1}, we obtain
  \ef{e25}.
\ep

\section{Uniform Feller property}
\label{S:3}
   \subsection{Construction of coupling processes} \label{S:3.1}

As in the case of discrete-time models considered in~\cite{JNPS-2012,JNPS-2014},  the proof of the uniform Feller property is based on the coupling method. This method has proved to be an important tool for the study of the ergodicity of randomly forced PDE's (see    Chapter~3  in~\cite{KS-book}   and the papers \cite{KS-jmpa2002,mattingly-2002, odasso-2008, DM2014}).  
 In this section, we recall a construction of coupled trajectories from~\cite{DM2014}, which was used   to establish the exponential mixing   for problem   \eqref{0.1}, \eqref{0.3}.   This construction will play a central role in the proof of the uniform Feller property in the next section.
 
\smallskip

For any~$\z,\z' \in \h$, let us  denote by $\uu_t$ and $\uu_t'$   the flows   of~\eqref{0.1},~\eqref{0.3} issued from $\z$ and $\z'$, respectively. 
For   any integer $N\ge 1$,    let $\vv=[v,\p_t v]$ be the flow of the problem  
\be\label{interm}
 \p_t^2v+\gamma \p_tv-\de v+f (v)+{\mathsf P}_N(f (u) -f (v))=h +\vartheta(t,x), \q v|_{\partial D}=0, \q  \vv (0)= \z'.  
\ee
The laws of the processes $\{\vv_t, t\in [0,1]\}$ and $\{\uu'_t, t\in [0,1]\}$ are denoted by~$\la(\z,\z')$ and $\la(\z')$, respectively.  We have the following estimate for the   total variation distance between $\la(\z,\z')$ and $\la(\z')$. \begin{proposition}\label{P:TVE}
There is an integer  $N_1\ge1$ such that, for any $N\ge N_1$,  $\es>0$, and  $\z, \z'\in \h$, we have   
\begin{equation}\label{eoejtnvf} 
| \la(\z,\z')-\la(\z')|_{var}\le    C_*\es^a+C_*\left[\exp\left(C_{N} \es^{a-2}|\z-\z'|_\h^2 e^{(|\ees(\z)|+|\ees(\z')|)} \right)-1\right]^{1/2},
\end{equation} where    $a<2$, $C_*$, and $C_N$  are   positive numbers not depending on $\es,  \z,$ and $\z'$.
\end{proposition}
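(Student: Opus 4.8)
The plan is to estimate the total variation distance between $\la(\z,\z')$ and $\la(\z')$ by a two-step comparison: first replace $\la(\z')$ by the law $\tilde\la(\z')$ of a process driven by a shifted noise, then apply the Girsanov theorem. Concretely, I would write the intermediate equation \eqref{interm} for $v$ and compare it with the equation for $u'$; the difference of the two drifts is the term ${\mathsf P}_N(f(u)-f(v))$, which lives in the $N$-dimensional space $H_N$ and is therefore absorbable into the noise, since $b_j>0$ for $j\le N$. So $v$ satisfies the same equation as $u'$ but with $\vartheta$ replaced by $\vartheta + {\mathsf P}_N(f(u)-f(v))$, i.e. with the Brownian motion $(\beta_j)_{j\le N}$ shifted by a drift $A(t) = \sum_{j\le N} b_j^{-1}(f(u(t))-f(v(t)),e_j)\,e_j$ applied to the first $N$ coordinates. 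This is exactly the construction recalled in the Main Theorem's proof scheme in Chapter~3 (the transformation $\Phi$ of \eqref{4.22.3}), transplanted to the unit time interval.

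The key estimate is then the standard bound for the total variation distance between the law of a Wiener process and its Girsanov shift (as in Theorem~A.10.1 of \cite{KS-book}, used earlier in \eqref{Dz-D tilde z}): $|\la(\z,\z')-\la(\z')|_{var}$ is controlled by
$$
\frac12\left(\left(\e\exp\left[C_N\int_0^1 \|f(u(s))-f(v(s))\|^2\,ds\right]\right)^{1/2}-1\right)^{1/2},
$$
up to a constant. So the whole problem reduces to bounding $\int_0^1 \|f(u(s))-f(v(s))\|^2\,ds$ path-wise. Here I would use the Foia\c{s}--Prodi type estimate for the NLW equation (Proposition~\ref{4.13.3}, or rather its refined version Proposition~6.5 in \cite{DM2015} / Proposition~\ref{1.88} controlling the intermediate process): for $N\ge N_1$ large enough, $|\vv(t)-\uu(t)|_\h^2 \le e^{-\al t}|\z-\z'|_\h^2$ on the time interval, provided the energies of $u$ and $v$ stay controlled. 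Combined with the Hölder--Sobolev bound $\|f(u)-f(v)\|^2 \le C\|u-v\|_1^2(\|u\|_1^2+\|v\|_1^2+1)$ (the same inequality used throughout Chapter~3), this gives an integrand bounded by $C_N |\z-\z'|_\h^2 e^{-\al s}(1+\ees_u(s)+\ees_v(s))$, whose time integral is dominated by $C_N|\z-\z'|_\h^2(1+\sup_s\ees_u(s)+\sup_s\ees_v(s))$.

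To get the precise form \eqref{eoejtnvf} with the parameter $\es$ and the exponent $a<2$, I would split according to whether the energy stays bounded. Introduce the event that $\sup_{s\in[0,1]}(\ees_u(s)\vee\ees_v(s))$ exceeds the level $|\ees(\z)|\vee|\ees(\z')| + O(1/\es^{\text{small}})$ or not; on the exceptional set use the exponential supermartingale estimate (Corollary~\ref{supermartingale lemma} / Proposition~\ref{supermartingale}) to bound its probability by $C_*\es^a$, giving the additive $C_*\es^a$ term, and on the good set run the Girsanov bound with the energy exponent $e^{|\ees(\z)|+|\ees(\z')|}$ explicit, which produces the $\exp(C_N\es^{a-2}|\z-\z'|_\h^2 e^{|\ees(\z)|+|\ees(\z')|})$ factor inside the square root. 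The bookkeeping of how the truncation level is tied to $\es^{a-2}$ so that the Novikov condition holds with room to spare is where the factor $\es^{a-2}$ enters. The main obstacle, as in \cite{DM2014}, is that the Foia\c{s}--Prodi estimate for NLW controls the intermediate process only conditionally on an a priori energy bound and does not come with a regularizing effect, so the stopping-time/truncation argument must be carried out carefully; once the energies are frozen at a level, everything else is the standard Girsanov computation.
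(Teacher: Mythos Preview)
Your approach is essentially the paper's: truncate via a stopping time based on energy growth, apply Girsanov with the drift $\ch_{t\le\tilde\tau}\hat P_N(0,[f(u)-f(v)])$, bound the Novikov exponent on the good set using the Foia\c{s}--Prodi estimate together with $\|f(u)-f(v)\|^2\le C\|u-v\|_1^2(1+\|u\|_1^4+\|v\|_1^4)$, and finally set the threshold $\rho=-\beta^{-1}a\ln\es$ with $a=2\beta/(\beta+1)<2$ to produce the $\es^a$ and $\es^{a-2}$ factors.

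One point you gloss over, however, is genuine and is exactly the ``carried out carefully'' step you flag. The supermartingale estimate (Proposition~\ref{supermartingale} / Corollary~\ref{supermartingale lemma}) controls the energy growth of solutions $u,u'$ of the \emph{original} equation, but \emph{not} directly that of the intermediate process $v$, which solves a modified equation. You therefore cannot simply bound $\pp\{\sup_{s}\ees_v(s)>\text{level}\}$ by $C_*\es^a$. The paper closes this gap with the auxiliary-process trick (already used in Proposition~\ref{4.23.3}): extend $v$ and $u'$ past $\tilde\tau$ by a fast linear decay to obtain processes $y_\vv,y_{\uu'}$ satisfying the pathwise identity $y_\vv(t,\om)=y_{\uu'}(t,\Lambda(\om))$, whence
\[
\pp\{\tau^\vv<\infty,\ \tau^\uu\wedge\tau^{\uu'}=\infty\}\le \pp\{\tau^{\uu'}<\infty\}+|\Lambda_*\pp-\pp|_{var}.
\]
The Girsanov distance on the right is the quantity already bounded in Lemma~\ref{4.21} (the drift being truncated at $\tilde\tau$, the Novikov condition holds unconditionally), so there is no circularity. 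With this device in place, your outline matches the paper's proof.
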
 See Section \ref{S:6.2} for the proof of this proposition. 
  By Proposition~1.2.28 in~\cite{KS-book}, there is a probability space $(\hat \Omega, \hat \FF, \hat\pp)$ and   measurable functions $\VV , \VV' :\h\times \h\times \hat \Omega\to  C([0,1],\h) $ such that $(\VV (\z,\z'), \VV'(\z,\z'))$  is a maximal coupling for~$(\la(\z,\z'), \la(\z'))$ for any $\z,\z'\in \h$. We denote by~$\tilde \vv=[\tilde v_t, \p_t \tilde v]$ and $\tilde \uu'_t=[\tilde u'_t, \p_t \tilde u']$ the restrictions of $\VV$ and  $ \VV' $ to time $t\in [0,1]$. Then $\tilde v_t$ is a solution of the problem
  $$
 \p_t^2 \tilde v+\gamma \p_t \tilde v-\de \tilde v+f (\tilde v)-{\mathsf P}_N f (\tilde v)=h +\psi(t), \q \tilde v|_{\partial D}=0, \q  \tilde \vv (0)= \z',
$$
 where the process $\{\int_0^t\psi(\tau)\dd \tau, t\in [0,1]\}$ has the same law as 
 $$\left\{\xi(t)- \int_0^t {\mathsf P}_N f (u_\tau)\dd \tau, t\in [0,1]\right\}.$$
Let $\tilde \uu_t= [\tilde u, \p_t \tilde u]$ be a solution of 
$$
 \p_t^2 \tilde u+\gamma \p_t \tilde u-\de \tilde u+f (\tilde u)-{\mathsf P}_N f (\tilde u)=h +\psi(t), \q \tilde u|_{\partial D}=0, \q  \tilde \uu (0)= \z.  
$$
 Then $\{\tilde \uu_t, t\in[0,1]\}$ has the same law as $\{  \uu_t, t\in[0,1]\}$ (see Section~6.1 in~\cite{DM2014} for the proof).   Now the coupling   operators  $\RRR$ and $\RRR'$ are defined by 
$$
\RRR_t(\z,\z',\omega)=\tilde \uu_t,\quad \RRR'_t(\z,\z',\omega)=\tilde \uu_t', \q \z, \z'\in \h, \omega\in\hat\Omega.
$$ 
  By Proposition \ref{P:TVE}, if $N\ge N_1$, then   for any $\es>0$, we have 
\begin{align}\label{sdsflklk}
\hat \pp\{\exists t\in [0&,1]\text{ s.t. } \tilde \vv_t\neq \tilde \uu_t' \} \nonumber\\&\le C_* \es^a+C_*\left[\exp\left(C_{N} \es^{a-2}|\z-\z'|_\h^2 e^{(|\ees(\z)|+|\ees(\z')|)} \right)-1\right]^{1/2}.
\end{align}
 Let $(\Omega^k,\FF^k,\pp^k)$, $k\ge0$ be a sequence of independent copies of the
probability space $(\hat \Omega, \hat \FF, \hat\pp)$. We denote by~$(\Omega,\FF,\pp)$ the   direct product of the  spaces $(\Omega^k,\FF^k,\pp^k)$, and  for any~$\z,\z'\in \h$, $\omega=(\omega^1,\omega^2,\ldots)\in\Omega$, and $k\ge0$, we set~$\tilde u_0=u$, $\tilde u_0'=u'$,  and 
\begin{align*}
\tilde \uu_t(\omega)&=\RRR_\tau(\tilde \uu_{k}(\omega),\tilde \uu'_{k}(\omega),\omega^k), &\quad
\tilde \uu'_t(\omega)&=\RRR_\tau'(\tilde \uu_{k}(\omega),\tilde \uu'_{k}(\omega),\omega^k), \\\tilde \vv_t(\om)&=\VV_\tau(\tilde \uu_{k}(\omega),\tilde \uu'_{k}(\omega),\omega^k),
\end{align*} 
where $t=\tau+k, \tau\in [0,1)$.         We shall say that $(\tilde \uu_t,\tilde \uu_t')$ is a {\it coupled trajectory at level~$N$\/}  issued from~$(\z,\z')$.
   \subsection{The result and its proof} \label{S:3.2}

The following theorem   establishes the  uniform Feller property for the  semigroup~$\PPPP_t^V$   for any function  $V\in \UU_\delta$  with    sufficiently small $\delta>0$.   The   property is proved with respect to the   space    $\CC=\UU$ which is a determining family  for~$\ppp(\h)$ and
 contains   the constant functions.
 \begin{theorem} \label{T:3.1} There are positive numbers $\delta$  and  $R_0$ such that,  for any function~$V\in \UU_\delta$, the family $\{\|\PPPP_t^V{\mathbf1}\|_R^{-1}\PPPP_t^V\psi,t\ge1\}$ is uniformly equicontinuous on~$X_R$ for any~$\psi\in \UU$  and~$R\ge R_0$. 
\end{theorem}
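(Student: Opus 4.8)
The strategy is the continuous-time adaptation of the argument in \cite{JNPS-2012,JNPS-2014}, carried out via the coupling construction of Section~\ref{S:3.1}. Fix $\psi\in\UU$, say $\psi(\uu)=F(\mathsf P_N\uu)$ with $F\in C_b^q(\h_N)$, and fix $V\in\UU_\delta$ with $\delta>0$ to be chosen small. For $\z,\z'\in X_R$ we want to bound
$$
\Big|\,\|\PPPP_t^V{\mathbf1}\|_R^{-1}\big(\PPPP_t^V\psi(\z)-\PPPP_t^V\psi(\z')\big)\Big|
$$
uniformly in $t\ge1$ and in $\z,\z'\in X_R$ with $|\z-\z'|_\h$ small. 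Writing $\Xi_V(t)=\exp(\int_0^tV(\uu_s)\,\dd s)$ as in \eqref{S6ogt}, the numerator is $\big|\e_\z\{\psi(\uu_t)\Xi_V(t)\}-\e_{\z'}\{\psi(\uu_t)\Xi_V(t)\}\big|$, and we realise both expectations on the common probability space $(\Omega,\FF,\pp)$ of coupled trajectories $(\tilde\uu_t,\tilde\uu_t')$ at level $N$ issued from $(\z,\z')$, where $\tilde\uu_t\overset{d}{=}\uu_t$ under $\pp_\z$ and $\tilde\uu_t'\overset{d}{=}\uu_t$ under $\pp_{\z'}$. Denote by $\tilde\Xi_V(t),\tilde\Xi_V'(t)$ the corresponding exponential weights. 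The difference then splits as
$$
\e\big\{(\psi(\tilde\uu_t)-\psi(\tilde\uu_t'))\tilde\Xi_V(t)\big\}
+\e\big\{\psi(\tilde\uu_t')(\tilde\Xi_V(t)-\tilde\Xi_V'(t))\big\}=:J_1+J_2.
$$

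\textbf{Key steps.} First I would introduce a stopping time $\varrho$ measuring the first time the coupled low-frequency parts separate (i.e.\ $\tilde\vv_t\ne\tilde\uu_t'$), combined with an auxiliary stopping time controlling the growth of $\ees(\tilde\uu_t)\vee\ees(\tilde\uu_t')$, exactly in the spirit of the stopping time $\sigma=\varrho\wedge\tau$ used in the mixing proof (Subsection~\ref{Main result and scheme of its proof} of Chapter~\ref{Chapter3}). On the event $\{\varrho>t\}$ one has, by the Foia\c{s}--Prodi estimate (Proposition~\ref{1.88}) applied to the coupled processes, exponential contraction $|\tilde\uu_t-\tilde\uu_t'|_\h^2\le e^{-\al t+\theta}|\z-\z'|_\h^2$ where $\theta$ is controlled by the energies; since $\psi$ is $q$-H\"older on $\h_N$ and $\mathsf P_N$ is bounded, this gives $|\psi(\tilde\uu_t)-\psi(\tilde\uu_t')|\le C\|\psi\|_{C_b^q}(e^{-\al t+\theta})^{q/2}|\z-\z'|_\h^q$. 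For the growth of $\tilde\Xi_V(t)$ one uses $\Osc(V)<\delta$ and the fact that $\|\PPPP_t^V{\mathbf1}\|_R\ge$ a quantity controlled from below via Proposition~\ref{P:2.4}; the crucial point is that the normalisation $\|\PPPP_t^V{\mathbf1}\|_R^{-1}\e\{\cdots\tilde\Xi_V(t)\}$ can be re-expressed, after conditioning, as an expectation under a tilted measure whose Radon--Nikodym density against $\pp$ is $\Xi_V/\PPPP_t^V{\mathbf1}$, so that $J_1$ after normalisation is bounded by $C\|\psi\|_{C_b^q}|\z-\z'|_\h^q$ plus a term $\propto\pp\{\varrho\le t\}$ handled by \eqref{sdsflklk}. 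For $J_2$, one writes $|\tilde\Xi_V(t)-\tilde\Xi_V'(t)|\le\tilde\Xi_V(t)\big(\exp(\int_0^t|V(\tilde\uu_s)-V(\tilde\uu_s')|\,\dd s)-1\big)$ and, since $V\in\UU_\delta$ is also H\"older on $\h_N$ and on $\{s\le\varrho\}$ the low-frequency components coincide while on $\{s>\varrho\}$ one uses $\Osc(V)<\delta$, the integral is bounded by $C\|V\|_{C_b^q}\int_0^t|\tilde\uu_s-\tilde\uu_s'|_\h^q\,\dd s+\delta\,t\,\I_{\{\varrho\le t\}}$; the first summand is summable by the same contraction estimate, and choosing $\delta$ small enough together with a Cauchy--Schwarz split on $\{\varrho\le t\}$ (using the exponential moment bounds \eqref{momentestimate}-type estimates and \eqref{e30}, plus \eqref{sdsflklk} with an optimised $\es=\es(|\z-\z'|_\h)$) makes $J_2$ after normalisation go to zero as $|\z-\z'|_\h\to0$, uniformly in $t\ge1$. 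Summing these bounds and then taking $\sup$ over $R\ge R_0$ — with $R_0$ the integer from Proposition~\ref{P:2.4} — gives uniform equicontinuity on $X_R$.

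\textbf{Main obstacle.} The delicate part, exactly as emphasised in the introduction, is the control of the weight ratio and of the energy factor $\theta$: unlike the parabolic kick-forced models of \cite{JNPS-2012,JNPS-2014}, here the Foia\c{s}--Prodi estimate \eqref{4.16.3} (and Proposition~\ref{1.88}) produces a multiplicative factor $e^{\theta}$ with $\theta$ depending on the running energies, so one cannot contract unconditionally and must simultaneously run the stopping time on $\ees$, and one must show that the bad events (energy excursions, decoupling of low modes) contribute negligibly after dividing by $\|\PPPP_t^V{\mathbf1}\|_R$. This forces the oscillation of $V$ to be small — that is precisely why only $\UU_\delta$, not all of $C_b(\h)$, is treated, and why the resulting LDP is local. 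A secondary technical nuisance is that $\es$ in \eqref{sdsflklk} must be chosen as an optimal function of $|\z-\z'|_\h$ (and of the energy level) so that both $C_*\es^a$ and the exponential-minus-one term are small; balancing these while keeping everything uniform in $t$ and in $\z,\z'\in X_R$ is where the "subtle estimates for the Sobolev norms" come in, via \eqref{e30} and \eqref{aa01}.
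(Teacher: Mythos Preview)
Your proposal has the right architecture (coupling from Section~\ref{S:3.1}, split into good/bad events, Foia\c{s}--Prodi on the good event) and you correctly identify that the energy-dependent factor in \eqref{4.16} is the main obstacle. However, there is a genuine gap in your treatment of the bad event $\{\varrho\le t\}$.

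Your plan is to control the contribution from $\{\varrho\le t\}$ by a Cauchy--Schwarz split, pairing a moment of $\tilde\Xi_V(t)$ with $\pp\{\varrho\le t\}^{1/2}$ (or similar), using \eqref{sdsflklk}. This cannot give uniform-in-$t$ control: the probability that the low modes \emph{ever} decouple before time $t$ does not go to zero as $t\to\infty$; it tends to $1-\pp\{\varrho=\infty\}$, which is bounded away from zero for generic $\z,\z'\in X_R$. Likewise, after Cauchy--Schwarz, the factor $(\e\,\tilde\Xi_V(t)^2)^{1/2}=(\PPPP_t^{2V}{\mathbf1}(\z))^{1/2}$ has no reason to be comparable to the normalisation $\|\PPPP_t^V{\mathbf1}\|_R$ as $t\to\infty$. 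And in your $J_2$-estimate, the term $\delta\,t\,\I_{\{\varrho\le t\}}$ produces a factor $e^{\delta t}-1$ in the exponential-minus-one bound, which blows up in $t$.

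The paper's remedy is a finer decomposition: it stratifies the bad event by the integer time $r$ at which decoupling first occurs \emph{and} by an energy level $\rho$, via events $A_{r,\rho}$, and on each $A_{r,\rho}$ it applies the Markov property at time $r$ so that the contribution factors as (roughly) $e^{r\|V\|_\infty}\,\|\PPPP_{t-r}^V{\mathbf1}\|_{R_0}\cdot\pp\{A_{r,\rho}\}^{1/2}$. Since $V\ge0$ after recentring, $\|\PPPP_{t-r}^V{\mathbf1}\|_{R_0}\le\|\PPPP_t^V{\mathbf1}\|_R$, so after normalisation one is left with $\sum_{r,\rho}e^{r\|V\|_\infty}\pp\{A_{r,\rho}\}^{1/2}$. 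The probabilities $\pp\{A_{r,\rho}\}$ \emph{do} decay in $r$ (because on $\bar G_{r-1}\cap F_{r,\rho}$ the full Foia\c{s}--Prodi estimate \eqref{4.16} gives $|\uu_r-\uu_r'|_\h^2\lesssim e^{-\alpha r+\rho}d^2$, which feeds back into \eqref{sdsflklk}) and in $\rho$ (by the supermartingale bound \eqref{bb2.14}). The double series then converges provided $\|V\|_\infty<\delta$ with $\delta$ small. This Markov-at-the-decoupling-time step is the essential idea you are missing; without it the bad-event contribution cannot be controlled uniformly in $t$. On the good event $\tilde A=\{\varrho=\infty\}$, incidentally, the paper uses the unconditional projection estimate \eqref{FPE1} rather than the full estimate \eqref{4.16}, so no stopping time on the energy is needed there.
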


\begin{proof}    To prove this result, we   develop the  arguments of the proof of
Theorem~6.2 in \cite{JNPS-2014}.  For any  $\delta>0$, $V\in \UU_\delta $, and $\psi\in \UU$,    we have 
$$
\PPPP_t^V\psi(\uu)=\e_{\uu}\bigl\{(\Xi_V\psi)(\uu_t, t)\bigr\},
$$
where  
\begin{equation}\label{E:7.2}
(\Xi_V\psi)(\uu_t, t):=\exp\biggl(\int_{0}^tV(\uu_\tau) \dd \tau\biggr)\psi(\uu_{t}).
\end{equation}
We   prove the uniform equicontinuity of the family $\{g_t,t\ge 1\}$ on~$X_R$, where 
$$
g_t(\uu)= \|\PPPP_t^V{\mathbf1}\|_R^{-1}\PPPP_t^V\psi(\uu). 
$$  
Without    loss of generality, we can assume  that  $0\le \psi\le 1$   and $\inf_H V=0$, so that     $\Osc_H(V)=\|V\|_\infty$. We can assume also that  the integer  $N$   entering representation \eqref{repres1} is the same for $\psi$ and $V$ and it is denoted by   $N_0$.

\medskip
  {\it Step~1:~Stratification}. Let us take any   $N\ge N_0$ and $\z,\z' \in X_R$ such that $d:=|\z-\z'|_\h\le1$, and   denote by $(\Omega,\FF,\pp)$  the probability space constructed in the previous subsection. Let us  consider a coupled trajectory~$(\uu_t,\uu_t'):=(\tilde \uu_t,\tilde \uu_t')$ at level~$N$ issued from $(\z,\z')$  and   the associated process~$\vv_t:=\tilde \vv_t$.   For any integers~$r\ge0$ and~$\rho\ge1$, we set\footnote{The event $\bar G_r$ is well defined also for $r=+\ty$.} 
\begin{align*}
\bar G_r &=\bigcap_{j=0}^rG_j, \,\,\,  G_j=\{  \vv_t =\uu_t', \forall t\in (j,j+1]\},  \,\,\,  F_{r,0}=\varnothing,\\
F_{r,\rho}&=\biggl\{ \sup_{\tau \in [0,r]} \left(\int_0^\tau \left(\|\nabla u_\tau \|^2+\|\nabla u'_\tau\|^2\right)\dd \tau-L\tau \right) \le |\ees(\z)|+ |\ees(\z')|  + \rho;  \\&  \q\q\q\q\q\q\q|\ees(\uu_r)|+ |\ees(\uu_r')| \le \rho \biggr\}, \end{align*}
where     $L$ is the constant in \eqref{bb2.14}. We also define the pairwise disjoint  events        
 \begin{align*}
A_0=G_0^c,  \quad A_{r,\rho}=\bigl(\bar G_{r-1}\cap G_r^c\cap F_{r,\rho}\bigr)\setminus F_{r,\rho-1},\,\, r\ge1,\rho\ge1, \q \tilde A = \bar G_{+\ty}.
\end{align*}Then, for any $t\ge1$,   we have 
 \begin{align} 
\PPPP_t^V\psi(\z)-\PPPP_t^V\psi(\z')
&= \e \bigl\{\I_{A_{0}}\bigl[(\Xi_V\psi )(\uu_t,t ) -(\Xi_V\psi )(\uu_t',t)\bigr] \bigr\}\nonumber\\&\quad+ \sum_{r,\rho=1}^{\ty} \e \bigl\{\I_{A_{r,\rho}}\bigl[(\Xi_V\psi )(\uu_t,t ) -(\Xi_V\psi )(\uu_t',t)\bigr] \bigr\}\nonumber\\&\quad+  \e \bigl\{\I_{\tilde A}\bigl[(\Xi_V\psi )(\uu_t,t ) -(\Xi_V\psi )(\uu_t',t)\bigr] \bigr\}\nonumber\\&=I_0^t(\z,\z')+ \sum_{r,\rho=1}^{\ty} I_{r,\rho}^{t}(\z,\z')+ \tilde I^{t}(\z,\z'),
\label{E:7.3}
\end{align}
where  
\begin{align*}
I_{0}^{t}(\z,\z')&:=\e \bigl\{\I_{A_{0}}\bigl[(\Xi_V\psi )(\uu_t,t ) -(\Xi_V\psi )(\uu_t',t)\bigr] \bigr\},\\
I_{r,\rho}^{t}(\z,\z')&:=\e \bigl\{\I_{A_{r,\rho}}\bigl[(\Xi_V\psi )(\uu_t,t ) -(\Xi_V\psi )(\uu_t',t)\bigr] \bigr\},\\
\tilde I^{t}(\z,\z')&:=\e \bigl\{\I_{\tilde A}\bigl[(\Xi_V\psi )(\uu_t,t ) -(\Xi_V\psi )(\uu_t',t)\bigr] \bigr\}.  
\end{align*}
To prove the uniform equicontinuity of~$\{g_t, t\ge1\}$, we first estimate these three quantities.

\medskip
{\it Step~2:~Estimates for  $I_{0}^t$ and $I_{r,\rho}^t$}.  Let $\delta_1>0$ and $R_0\ge1$ be the numbers in Proposition~\ref{P:2.4}. Then, if $\Osc(V)< \delta_1$ and $R\ge R_0$, we have the following estimates
\begin{align}
|I_{0}^t(\z,\z')|&\le C_1(R,V)\|\PPPP_t^V{\mathbf1}\|_R\,\pp\{A_{0}\}^{1/2}, \label{6.5aaa}\\
|I_{r,\rho}^t(\z,\z')|&\le C_2(R,V)e^{r\|V\|_\infty}\|\PPPP_t^V{\mathbf1}\|_R\,\pp\{A_{r,\rho}\}^{1/2}   \label{6.5}
\end{align} for any integers $r,\rho\ge1$. Let us prove \eqref{6.5}, the other estimate is similar. First assume that $r\le t$.
 Using  the inequalities $0\le \psi \le 1$, the positivity of~$\Xi_V\psi $, and the Markov property, we derive
\begin{align*} 
I_{r,\rho}^t(\z,\z')
&\le\e\bigl\{I_{A_{r,\rho}}(\Xi_V\psi )(\uu_t,t)\bigr\}
\le \e\bigl\{I_{A_{r,\rho}}(\Xi_{V}{\mathbf1})(\uu_t,t)\bigr\}\\
&= \e\bigl\{I_{A_{r,\rho}}\e\bigl[(\Xi_{V}{\mathbf1})(\uu_t,t)\,\big|\,\FF_r\bigr]\bigr\}
\le e^{r\|V\|_\infty}\e\bigl\{I_{A_{r,\rho}}(\PPPP_{t-r}^V{\mathbf1})(\uu_r)\bigr\},
\end{align*}
where $\{\FF_t\}$ stands for the filtration generated by~$(\uu_t,\uu_t')$. Then from \eqref{a5.8} it follows that
$$
\PPPP_{t-r}^{V}{\mathbf1}(\z)\le M\|\PPPP_{t-r}^{V}{\mathbf1}\|_{R_0}\wwww (\z),
$$
 so we have
\begin{align*}
I_{r,\rho}^t(\z,\z')
&\le C_3 e^{r\|V\|_\infty}\|\PPPP_{t-r}^V{\mathbf1}\|_{R_0}\e\bigl\{I_{A_{r,\rho}}\wwww (\uu_r)\bigr\}\\
&\le C_3 e^{r\|V\|_\infty}\|\PPPP_{t-r}^V{\mathbf1}\|_{R_0}\bigl\{\pp(A_{r,\rho})\,\e\,\wwww ^2(\uu_r)\bigr\}^{1/2}. 
\end{align*}
 Using this,~\eqref{e30}, and the symmetry, we obtain  \eqref{6.5}. If $r> t$, then
 \begin{align*} 
I_{r,\rho}^t(\z,\z')
& 
\le e^{r\|V\|_\infty}\pp\{A_{r,\rho}\bigr\}\le e^{r\|V\|_\infty} \|\PPPP_t^V{\mathbf1}\|_R\,\pp\{A_{r,\rho}\}^{1/2},
\end{align*}which implies \eqref{6.5} by symmetry.

\medskip
{\it Step~3:~Estimates for $\pp\{A_{0}\}$ and $\pp\{A_{r,\rho}\}$}.  Let us show that,  for sufficiently large  $N\ge1$, we have   \begin{align}
\pp\{A_{0}\} &\le C_4(R,N)d^{a/2},\label{E:7.7ffd}\\
\pp\{A_{r,\rho}\} &\le C_5(R) \left\{ \!  \Bigg(\!    d^ae^{-a\al r/2} \! + \!\left[\exp \left(C_6(R,N)d^a e^{2 \rho-a\al r/2}    \!  \right) -1\right]^{1/2}  \! \Bigg) \!  \wedge \!    e^{-\beta\rho} \!\right\},\label{E:7.7}
\end{align}
where   $a, C_*$, and $\beta$ are the constants in~\eqref{eoejtnvf} and \eqref{bb2.14}. Indeed, taking $\es=d$ in~\eqref{sdsflklk},     using \eqref{aa01}, and recalling that $d\le1$, we get
$$
\IP\bigl\{ A_0 \bigr\}
 \le C_*d^a+C_*\left[\exp\left(C_{N} d^{a} e^{ {C_7R^4}} \right)-1\right]^{1/2}\le C_4(R,N)d^{a/2}, 
$$provided that $N$ is larger that the number $ N_1$ in Proposition~\ref{P:TVE}. This  gives~\eqref{E:7.7ffd}.
 To show      \eqref{E:7.7}, 
  we use the   estimates 
  \begin{gather}
\e_\uu  \exp\left( \beta |\ees(\uu_t )|\right) \le C    \exp(\beta |\ees(\uu)|), \q \uu\in \h, \label{cc2.14}\\
\pp_\uu \left\{\sup_{t\geq 0}\left( \int_0^t \| \nabla u_\tau\|^2 \dd \tau-Lt\right)\geq |\ees(\uu)| +\rho\right\}\leq  Ce^{-\beta \rho},\q \rho>0,  \label{bb2.14}
\end{gather}  
where $L, \beta$, and $C$ are some positive constants depending on~$\gamma, \|h\|$, and $\BBB$;  they follow immediately from 
  Propositions~3.1 and~3.2 in~\cite{DM2014}.   
From the inclusion  $A_{r,\rho}\subset F_{r,\rho-1}^c$ and inequalities~\eqref{cc2.14}, \eqref{bb2.14}, and \eqref{aa01} it follows  that 
\begin{equation} \label{6.7}
\pp\{A_{r,\rho}\}\le  C_8(R) e^{-\beta\rho}.
\end{equation}
  By the Foia\c{s}--Prodi type  estimate (see \eqref{4.16} in Proposition \ref{4.13}),    there is $N_2\ge1$ such that   for any $N\ge N_2$    on the event~$\bar G_{r-1}\cap F_{r,\rho}$ we have
\be
|\uu_{r}-\uu_{r}'|_\h^2
\le \exp(-\alpha r + \rho+|\ees(\z)|+|\ees(\z')| )d^2\le C_9(R)  e^{-\alpha r +\rho}d^2, \label{FP:est}
\ee where we used \eqref{aa01}. Recall that on the same event we have also
 \be \label{FP:est1}
|\ees(\uu_r)| +|\ees(\uu_r') |
\le   \rho . 
\ee  So using the Markov property, \eqref{sdsflklk} with $\es=de^{-\al r/2}$, \eqref{FP:est1} and \eqref{FP:est},
    we obtain
\begin{align*} 
\IP\{A_{r,\rho}\}&\le\IP\bigl\{\bar G_{r-1}) \cap G_r^c \cap F_{r,\rho}\bigr\}
=\e\bigl\{\I_{\bar G_{r-1}\cap F_{r,\rho}}\e\bigl(I_{G_r^c}\,\big|\,\FF_{r}\bigr)\bigr\}
\notag\\
&\le  C_*d^ae^{-a\al r/2} +C_*\e\Big\{\I_{\bar G_{r-1}\cap F_{r,\rho}}  \\&\quad  \times \left[\exp\left(C_{N}    d^{a-2} e^{-(a-2)\al r/2}|\uu_r-\uu_r'|_\h^2 e^{(|\ees(\uu_r)|+|\ees(\uu_r')|)} \right)-1\right]^{1/2}\Big\}\notag\\
&\le  C_*d^ae^{-a\al r/2}+C_*\left[\exp\left(C_6(R,N)d^a e^{2 \rho-a\al r/2}   \right)-1\right]^{1/2}. 
\end{align*}Combining this with~\eqref{6.7} and choosing $N\ge N_1\vee N_2 $,   we get the required inequality~\eqref{E:7.7}.

\medskip
{\it Step~4:~Estimate for   $\tilde I^t$}. Let us show that, for any   $N\ge N_0$, we have 
\begin{equation}\label{E:eerrtt}
|\tilde I_{\rho}^{t}(\z,\z')|\le C_{10}(\psi, V)   \|\PPPP_t^V{\mathbf1}\|_R  d^q.
\end{equation}  Indeed, 
 we write 
 \begin{align} \label{eeedfgfkjk}
\tilde I^t(\z,\z')
&=\e\bigl\{\I_{\tilde A}(\Xi_V{\mathbf1})(\uu_t,t)[ \psi (\uu_t)-\psi (\uu_t')] \bigr\}\nonumber\\&\quad+\e\bigl\{\I_{\tilde A}[(\Xi_V{\mathbf1})(\uu_t,t)-(\Xi_V{\mathbf1})(\uu_t',t)]\psi (\uu_t') \bigr\}.
\end{align} Let us denote by $J_{1,\rho}^t$ and $J_{2,\rho}^t$ the expectations in the right-hand side of this equality. Then  by   estimate \eqref{FPE1}, 
on the event $\tilde A$ we have
\begin{equation}\label{FPestima}
|P_N(\uu_{\tau}-\uu_{\tau}')|_\h^2
\le e^{-\alpha \tau }d^2, \quad \tau\in [0,t].
\end{equation} Since $\psi\in C^q_b(\h) $, we derive from~\eqref{FPestima}
\begin{align*}
|J_{1,\rho}^t|&\le   \e\bigl\{\I_{\tilde A} (\Xi_V{\mathbf1})(\uu_t,t) | \psi (\uu_t)-\psi (\uu_t')| \bigr\} \le  \| \psi\|_{C^q_b}  e^{-\alpha t /2}d^q  \|\PPPP_t^V{\mathbf1}\|_R\\&\le \| \psi\|_{C^q_b}    \|\PPPP_t^V{\mathbf1}\|_Rd^q.
\end{align*}
Similarly, as $V\in C^q_b(\h)$,
\begin{align*}
|J_{2,\rho}^t|&\le  \e\bigl\{\I_{\tilde A}|(\Xi_V{\mathbf1})(\uu_t,t)-(\Xi_V{\mathbf1})(\uu_t',t)| \bigr\}\\&\le   \e\left\{\I_{\tilde A}(\Xi_V{\mathbf1})(\uu_t,t)\left[ \exp\left(\int_{0}^t|V(\uu_\tau)-V(\uu_\tau')| \dd \tau\right)-1 \right] \right\}\\&\le     \left[ \exp\left( \|V\|_{C^q_b}   d^q  (1-e^{-\al q t/2})\right)-1 \right] \|\PPPP_t^V{\mathbf1}\|_R\\&\le     \left[ \exp\left( \|V\|_{C^q_b}    d^q  \right)-1 \right] \|\PPPP_t^V{\mathbf1}\|_R.
\end{align*}
Combining these   estimates for $J_{1,\rho}^t$ and $J_{2,\rho}^t$ with \eqref{eeedfgfkjk}, we get \eqref{E:eerrtt}.

\medskip
{\it Step~5.} From~\eqref{E:7.3}--\eqref{E:7.7} and \eqref{E:eerrtt} it follows  that, for any      $\z,\z'\in X_R$, $t\ge1$,   and $R\ge R_0$,   we have
\begin{align*}
&\bigl|g_t(\z)-g_t(\z')\bigr|
\le C_{11}(R,V,N, \psi)   \Bigg(d^{a/4} + d^q \\&
 + \sum_{r,\rho=1}^\infty \!\! e^{r\|V\|_\infty} \!  \left\{   \!   \left(    d^{a/2}e^{-a\al r/4} + \left[\exp \left(C_6d^a e^{2  \rho-a\al r/2}      \right) -1\right]^{1/4}   \right)   \wedge    e^{-\beta\rho/2}  \right\} \!    
\Bigg),
\end{align*}   provided that   $N \ge N_0\vee  N_1\vee N_2 $.
 When $d=0$,  the   series in the right-hand side  vanishes. So  to prove the uniform equicontinuity of~$\{g_t\}$, it suffices to show that    the series converges uniformly in~$d\in[0,1]$. Since its terms are   positive and monotone, it suffices to show   the converge for~$d=1$:
\begin{align} 
 \sum_{r,\rho=1}^\infty   e^{r\|V\|_\infty}   \left\{     \left(     e^{-a\al r/4} + \left[\exp \left(C_6 e^{2  \rho-a\al r/2}   \right) -1\right]^{1/4}   \right)   \wedge    e^{-\beta\rho/2} \right\} < \infty. \label{E:7.5}
\end{align}
 To prove this,   we  will assume that~$\Osc(V)$ is sufficiently small. Let us consider the sets
$$
S_1=\{(r,\rho)\in\N^2:  \rho\le a\al r/8  \}, \quad S_2=\N^2\setminus S_1.
$$
Then taking    $\delta <\delta_1 \vee(a\alpha/32)$ and $\Osc(V)< \delta$, we see that 
\begin{align*}
\sum_{(r,\rho)\in S_1}&e^{r\|V\|_\infty}         \left(     e^{-a\al r/4} + \left[\exp \left(C_6 e^{2 \rho-a\al r/2}   \right) -1\right]^{1/4}   \right)\\&\le  C_{12}(R,N) \sum_{(r,\rho)\in S_1} e^{r\|V\|_\infty}  e^{-a\al r/16}\le C_{13}(R,N) \sum_{r=1}^\ty e^{-a\al r/32}<\infty.
\end{align*}
Choosing $\delta<a\al \beta/32 $, we get 
$$
\sum_{(r,\rho)\in S_2} e^{r\|V\|_\ty}e^{-\beta\rho/2}\le C_{14} \sum_{\rho=1}^\infty   e^{-\beta\rho/4}<\infty.
$$
These two  inequalities show that~\eqref{E:7.5} holds.

 \end{proof}

\section{Estimates for regular solutions}\label{S:4}
In this section, we establish the exponential tightness property     and obtain some higher order     moment estimates for solutions  in $\h^\sS$.

\subsection{Exponential tightness}\label{S:4.1}
Here we show that the exponential tightness property in Section~\ref{S:1.3} is verified for the function~$\varPhi(\uu)= |\uu|_{\h^\sS}^\vk$,   if we choose~$\varkappa>0$ sufficiently small. Clearly, the level sets of $\varPhi$ are compact in $\h$.  
 \begin{theorem}\label{0.4} For any $\sS<1/2$, there is   $\kp\in (0,1)$ such that, for any $R\ge1$, we have
\be\label{0.5}
\e_\vv\exp \left(\int_0^t |\uu_\tau |^\kp_{\h^\sS}\dd \tau\right)\leq c\,e^{ct}\q\text{ for any }  \vv\in X_R, t\geq 0,
\ee 
where $c$ is a positive constant  depending   on $R$. 
\end{theorem}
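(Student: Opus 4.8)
The goal is an exponential-moment bound for the integral of $|\uu_\tau|_{\h^\sS}^\kp$ along trajectories issued from a bounded set in $\h^\sS$. The strategy is to first obtain a pointwise-in-time moment estimate of the form $\e_\vv\,|\uu_t|_{\h^\sS}^{2}\le C(1+|\vv|_{\h^\sS}^{2}e^{-\al t})$, or more precisely the stronger bound $\e_\vv\we_1(\uu_t)\le 2e^{-\al t}\we_1(\vv)+C$ already available through inequality \eqref{e30}, and then upgrade it to an exponential moment by choosing the exponent $\kp$ small enough that the nonlinear growth of the $\h^\sS$-norm along the flow is dominated by the dissipation rate $\al$. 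The main point is that the weakly dissipative estimate for the wave equation, combined with the regularising splitting $u=v+z$ of the higher-norm part used earlier in the excerpt (Propositions \ref{2.11.3}, \ref{1.96}), gives a Lyapunov-type inequality in the variable $|\uu_t|_{\h^\sS}^2$ with a forcing term that has, after raising to a small power $\kp$, a controllable exponential moment coming from the Gaussian structure of $\vartheta$.

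\textbf{Step 1: a priori $\h^\sS$ estimate.} First I would recall, following the derivation of Proposition \ref{2.11.3} (and the analogous estimate quoted for the limiting equation in the appendix), that for $\sS<1-\rho/2$, in particular for $\sS<1/2$, any solution satisfies a decomposition $\uu_t=[v(t),\dt v(t)]+[z(t),\dt z(t)]$ in which the linear part decays, $|[v(t),\dt v(t)]|_{\h^\sS}^2\le |\vv|_{\h^\sS}^2 e^{-\al t}$, and the nonlinear part obeys $|[z(t),\dt z(t)]|_{\h^\sS}^2\le C(1+|\uu_t|_\h^6)+$ (a term controlled by a stochastic convolution). Taking means and invoking the exponential-moment bound \eqref{1.97} for $\ees(\uu_t)$ (equivalently for $|\uu_t|_\h$) yields $\e_\vv|\uu_t|_{\h^\sS}^2\le |\vv|_{\h^\sS}^2 e^{-\al t}+Q(\BBB,\|h\|)$, uniformly over $\vv\in X_R$. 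This is essentially \eqref{e30} with $m=1$; I would use it as a black box.

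\textbf{Step 2: exponential moment via a supermartingale argument.} Next I would derive a differential inequality for $\GG(t):=\exp(\kp|\uu_t|_{\h^\sS}^{2\sigma'})$ for a suitable small power, or more simply apply the standard trick: set $\psi(\uu)=|\uu|_{\h^\sS}^\kp$ with $\kp\in(0,1)$ small, and use that along the flow the Itô differential of $\tfrac12|\xi_v(t)|_{\h^\sS}^2$ has drift bounded by $-\al|\xi_v|_{\h^\sS}^2 + C(1+|\uu_t|_\h^{p})$ for some power $p$, plus the Itô correction $\tfrac12\BBB_1$, while the $z$-part contributes only lower-order powers of $|\uu_t|_\h$. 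Because $\kp<1$, concavity of $x\mapsto x^\kp$ gives $|\uu_\tau|_{\h^\sS}^\kp\le 1+\eta|\uu_\tau|_{\h^\sS}^2+C_\eta$ for any $\eta>0$; choosing $\eta$ small compared to $\al$ and feeding the Step 1 estimate into $\e_\vv\int_0^t|\uu_\tau|_{\h^\sS}^2\dd\tau\le C(1+|\vv|_{\h^\sS}^2)+Ct$, one gets $\e_\vv\int_0^t|\uu_\tau|_{\h^\sS}^\kp\dd\tau\le C(1+|\vv|_{\h^\sS}^2)+Ct$. To pass from this linear-in-$t$ bound on the expectation of the integral to an exponential moment, I would combine it with the exponential supermartingale inequality applied to the martingale part of $|\uu_t|_{\h^\sS}^2$ (whose quadratic variation is controlled by $\BBB_1$, using $\sS<1/2$ so that $\sum\lm_j^\sS b_j^2\le\BBB_1<\infty$), exactly as in the proof of Proposition \ref{supermartingale} and Corollary \ref{supermartingale lemma}; this gives a bound of the form $\pp_\vv\{\sup_{t\ge0}(|\uu_t|_{\h^\sS}^2+\int_0^t(\al|\uu_\tau|_{\h^\sS}^2-\kkk)\dd\tau)\ge |\vv|_{\h^\sS}^2+r\}\le e^{-\beta r}$, and hence $\e_\vv\exp(\kp\int_0^t|\uu_\tau|_{\h^\sS}^2\dd\tau)\le c\,e^{ct}$ for $\kp$ small, from which \eqref{0.5} follows a fortiori since $|\uu|_{\h^\sS}^\kp\le 1+|\uu|_{\h^\sS}^2$.

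\textbf{Main obstacle.} The delicate point is controlling the forcing term $C(1+|\uu_t|_\h^{p})$ coming from the nonlinearity in the $\h^\sS$-estimate: it must have a finite exponential moment of the right order. This is where hypothesis \eqref{1.5} (slightly stronger than in \cite{DM2014}) and the energy estimate \eqref{1.97}, which controls $\e\exp(\kp\ees(\uu_t))$, are essential — one needs $p$ and $\kp$ small enough that $\kp\cdot p\le$ the admissible exponent $(2\BBB)^{-1}\al$ in Proposition \ref{1.96}. Keeping track of the interplay between the power $\kp$ in \eqref{0.5}, the growth power $p$ of the nonlinearity in the $\h^\sS$-bound, and the maximal allowed exponent in the energy exponential moment is the part that requires care; once these are balanced, the Gronwall/supermartingale machinery is routine and parallels Section~\ref{3.0.3} and the appendix of Chapter~\ref{Chapter4}.
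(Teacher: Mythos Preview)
Your approach has a genuine gap in Step~2. The reduction $|\uu|_{\h^\sS}^\kp \le 1+\eta|\uu|_{\h^\sS}^2$ discards exactly the gain that the small exponent is supposed to provide, and leaves you needing
\[
\e_\vv\exp\Bigl(\kp\int_0^t|\uu_\tau|_{\h^\sS}^2\,\dd\tau\Bigr)\le c\,e^{ct}.
\]
But the drift of $|\uu_t|_{\h^\sS}^2$ (through the $z$-part of the splitting, cf.\ \eqref{e35}) contains a term of order $\ees^3(\uu_t)$, not a constant. Your supermartingale inequality of the form $\pp_\vv\{\sup_t(|\uu_t|_{\h^\sS}^2+\int_0^t(\al|\uu_\tau|_{\h^\sS}^2-\kkk)\,\dd\tau)\ge\cdots\}\le e^{-\beta r}$ would require a drift bound $-\al|\uu|_{\h^\sS}^2+\kkk$ with \emph{constant} $\kkk$, which is simply false here. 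Equivalently, you would need $\e_\vv\exp\bigl(\kp\int_0^t\ees^3(\uu_\tau)\,\dd\tau\bigr)<\infty$, and neither Proposition~\ref{supermartingale} nor Proposition~\ref{1.96} gives that --- they control only $\int\ees(\uu_\tau)\,\dd\tau$ (quadratic in $|\uu|_\h$), not its cube.

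The paper avoids this by \emph{not} passing through $\int|\uu_\tau|_{\h^\sS}^2\,\dd\tau$. Instead it uses a three-term splitting $\uu=\vv_1+\vv_2+\Zz$, differentiates the $\Zz$-equation in time, and in the multilinear estimate for $f'(u)\dt u$ inserts the small power \emph{inside} the H\"older inequality by writing $|\dt u|=|\dt u|^{1-\kp}|\dt u|^\kp$ and distributing $|\dt u|^{1-\kp}$ among $\dt v_1,\dt v_2,a$ (Lemma~\ref{0.14}). This, together with the strengthened hypothesis~\eqref{1.5} to bound $|f'(u)|_{L^{(\rho+2)/\rho}}$ by a \emph{linear} power of $\ees(\uu)$, produces a differential inequality with a \emph{sublinear} right-hand side,
\[
\tfrac{\dd}{\dd t}|\aA|_{\h^{\sS-1}}^2+\al|\aA|_{\h^{\sS-1}}^2\le C\bigl(\ees(\uu)+|\vv_2|_{\h^\sS}^2+C\bigr)\bigl(|\aA|_{\h^{\sS-1}}^{2-\kp}+1\bigr).
\]
A comparison lemma for such inequalities (Lemma~\ref{0.20}) then yields $\int_0^t|\aA|_{\h^{\sS-1}}^\kp\,\dd\tau\le C+C\int_0^t(\ees(\uu_\tau)+|\vv_2(\tau)|_{\h^\sS}^2)\,\dd\tau$, whose exponential moment \emph{is} controlled by the available supermartingale bounds. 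The placement of the exponent $\kp$ before, not after, the energy estimate is the missing idea in your proposal.
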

\bp 
It is sufficient to prove that   there is $\kp\in (0, 1)$ such that, for any $R\ge 1$, we have
\be\label{e42}
\e_\vv\exp \left(\De\int_0^t |\uu_\tau|^\kp_{\h^\sS}\dd \tau\right)\leq \tilde c\,e^{\tilde c t}\q\text{ for any }  \vv\in X_R, t\geq 0,
\ee 
where $\De$ and $\tilde c$ are positive constants depending   on $R$.
Indeed, once this is proved, we can use the inequality 
$$
|\uu|^{\f{\kp}{2}}_{\h^\sS}\leq \De|\uu|^{\kp}_{\h^\sS}+\De^{-1}
$$
to derive \ef{0.5}, where $\kp$ should be replaced by $\kp/2$.
We divide the proof of \ef{e42} into several steps.

\medskip

{\it Step~1: Reduction.} Let us split the flow $\uu(t)$ to the sum $\uu=\vv_1+\vv_2+\Zz$, where $\vv_1(t)=[v_1(t), \dt v_1(t)]$ corresponds to the flow of \eqref{0.1} with $f=h=\vartheta=0$ issued from $\vv$ and $\vv_2(t)=[v_2(t), \dt v_2(t)]$ is the flow of \eqref{0.1} with $f=0$  issued from the origin. Some standard arguments show that   the following a priori estimates hold:
\begin{gather}
|\vv_1(t)|^2_{\h^\sS}\leq |\vv|^2_{\h^\sS}e^{-\al t}, \label{0.6a}\\
\e\exp \left(\delta_1\int_0^t |\vv_2(\tau)|^2_{\h^\sS}\dd \tau\right)\leq c_1\,e^{c_1t}\q\text{ for any }   t\geq 0,\label{0.6b}
\end{gather}
where $\delta_1$ and $c_1$ are positive constants depending only on $\al, \BBB_1$, and $\|h\|_1$. Now using the Cauchy--Schwarz  inequality and \eqref{0.6a}, we get, for any $\delta<\delta_1/2$,
\begin{align*}
\e_\vv\exp\left(\De\int_0^t |\uu(\tau)|^\kp_{\h^\sS}\dd \tau\right)&\leq \exp\left(\!\De\int_0^t |\vv_1(\tau)|^\kp_{\h^\sS}\dd \tau\!\right) \! \e\exp\left(\!2\De\int_0^t |\vv_2(\tau)|^\kp_{\h^\sS}\dd \tau\!\right)\notag\\
&\q\times \e\exp\left(2\De\int_0^t |\Zz(\tau)|^\kp_{\h^\sS}\dd \tau\right)\notag\\
&\leq \! \exp\!\left(2\De R^\kp (\al \kp)^{-1}\right)\! \e\exp\left(\!2\De\int_0^t (|\vv_2(\tau)|^2_{\h^\sS}+1)\dd \tau\!\right) \notag\\
&\q\times \e\exp\left(2\De\int_0^t |\Zz(\tau)|^\kp_{\h^\sS}\dd \tau\right).\label{0.25}
\end{align*}
Combining this with \eqref{0.6b}, we see that inequality \eqref{e42} will be established if we prove that
\be\label{0.27}
\e\exp\left(\De\int_0^t |\Zz(\tau)|^\kp_{\h^\sS}\dd \tau\right)\leq c\,e^{c t}\q\text{ for all } t\geq 0 
\ee for some $\delta>0$ and $c>0$.
The rest of the proof is devoted to the derivation of this inequality.

\medskip

{\it Step~2: Pointwise estimates.} Let us note that, by construction, $\Zz$ is the flow of equation
\be\label{0.7}
\p_t^2 z+\gamma \p_t z-\de z+f(u)=0, \q z|_{\partial D}=0,\q [z(0),\dt z(0)]=0.
\ee
Let us differentiate this equation in time, and set $a=\dt z(t)$. Then $a$ solves
\be\label{0.8} 
\p_t^2 a+\gamma \p_t a-\de a+f'(u)\p_t u=0, \q a|_{\partial D}=0,\q [a(0),\dt a(0)]=[0,-f(u(0))].
\ee
We   write $\aA(t)=[a(t),\dt a(t)]$. Multiplying equation \ef{0.8} by~$2(-\de)^{\sS-1}(\dt a+\al a)$ and integrating over $D$, we obtain
\be\label{0.9}
\f{\dd}{\dd t}|\aA|_{\h^{\sS-1}}^2+\f{3\al}{2}|\aA|_{\h^{\sS-1}}^2\leq 2\int_D |f'(u)||\dt u||(-\de)^{s-1}(\dt a+\al a)|\dd x=\elll.
\ee
Let $\kp<1$ be a positive constant that will be fixed later. Then, by the triangle inequality, we have
\begin{align}
\f{\elll}{2}&\leq \int_D |f'(u)||\dt v_1|^{1-\kp}|\dt u|^\kp|(-\de)^{\sS-1}(\dt a+\al a)|\dd x\notag\\
&\q+ \int_D |f'(u)||\dt v_2|^{1-\kp}|\dt u|^\kp|(-\de)^{\sS-1}(\dt a+\al a)|\dd x\notag\\
&\q\q  +\int_D |f'(u)||a|^{1-\kp}|\dt u|^\kp|(-\de)^{\sS-1}(\dt a+\al a)|\dd x=\elll_1+\elll_2+\elll_3.\label{0.18}
\end{align}
Using the H\"older inequality, we derive
\begin{align}
\elll_1&\leq |f'(u)|_{L^{p_1}}|\dt v_1|_{L^{(1-\kp)p_2}}^{1-\kp}|\dt u|_{L^{\kp p_3}}^\kp |(-\de)^{\sS-1}(\dt a+\al a)|_{L^{p_4}},\label{0.10}\\
\elll_2 &\leq |f'(u)|_{L^{q_1}}|\dt v_2|_{L^{(1-\kp)q_2}}^{1-\kp}|\dt u|_{L^{\kp q_3}}^\kp |(-\de)^{\sS-1}(\dt a+\al a)|_{L^{q_4}}\label{0.11},\\
\elll_3 &\leq |f'(u)|_{L^{p_1}}|a|_{L^{(1-\kp)p_2}}^{1-\kp}|\dt u|_{L^{\kp p_3}}^\kp |(-\de)^{\sS-1}(\dt a+\al a)|_{L^{p_4}}\label{0.12},
\end{align}
where the exponents $p_i, q_i$ are H\"older admissible. We now need the following lemma, which is established in the appendix.
\begin{lemma}\label{0.14}
Let us take $p_1=6/\rho, p_3=2/\kp, q_1=(\rho+2)/\rho$ and $q_3=2/\kp$. Then, for $\kp>0$ sufficiently small, the exponents $p_2, p_4, q_2$ and $q_4$ can be chosen in such a way that we have the following embeddings:
\begin{alignat}{2}
H^\sS &\hookrightarrow L^{(1-\kp)p_2} , &\qquad H^{1-\sS} &\hookrightarrow L^{p_4} ,\label{0.31}\\ 
 H^1 &\hookrightarrow L^{(1-\kp)q_2} , &\qquad H^{1-\sS} &\hookrightarrow L^{q_4} .\label{0.32}
\end{alignat}
\end{lemma}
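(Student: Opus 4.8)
The statement to prove is Lemma~\ref{0.14}: fixing $p_1 = 6/\rho$, $p_3 = 2/\kp$, $q_1 = (\rho+2)/\rho$, $q_3 = 2/\kp$, one must choose the remaining H\"older exponents $p_2, p_4, q_2, q_4$ so that the four Sobolev embeddings \eqref{0.31}--\eqref{0.32} hold, for $\kp>0$ small. The plan is to treat the two estimates \eqref{0.10} and \eqref{0.11} separately, writing down in each case the single H\"older constraint $\frac1{p_1}+\frac1{p_2}+\frac1{p_3}+\frac1{p_4}=1$ (resp. with $q_i$), solving it for the one free pair of exponents, and then verifying that the resulting Sobolev embeddings into $L^2(D)$-type spaces are valid in dimension~$3$. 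Throughout one uses the Sobolev embedding $H^\sigma(D)\hookrightarrow L^r(D)$ for $\frac1r \ge \frac12 - \frac\sigma3$ (with strict inequality or equality as appropriate), which is where $\sS<1/2$ enters.

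\textbf{First estimate (the $\elll_1$ and $\elll_3$ terms).} Here the H\"older constraint reads $\frac\rho6 + \frac1{p_2} + \frac\kp2\cdot\frac1{p_3/\kp}\cdots$; more carefully, $\frac1{p_1} + \frac{1-\kp}{(1-\kp)p_2} + \frac{\kp}{\kp p_3} + \frac1{p_4} = 1$, i.e. $\frac\rho6 + \frac1{p_2} + \frac\kp2 + \frac1{p_4} = 1$. I would set $\frac1{p_4} = \frac12 - \frac{1-\sS}3$ (the borderline value, so that $H^{1-\sS}\hookrightarrow L^{p_4}$; since $\sS<1/2$, $1-\sS>1/2$, giving $\frac1{p_4} = \frac{\sS-1+1}3\cdot$, a positive number less than $1/6$), and then solve for $\frac1{p_2} = 1 - \frac\rho6 - \frac\kp2 - \frac1{p_4}$. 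The claim $H^\sS\hookrightarrow L^{(1-\kp)p_2}$ amounts to $\frac{1}{(1-\kp)p_2} \ge \frac12 - \frac\sS3$, i.e. $\frac1{p_2} \ge (1-\kp)\big(\frac12 - \frac\sS3\big)$. Plugging in the value of $\frac1{p_2}$ and letting $\kp\to0$, the inequality becomes $1 - \frac\rho6 - \frac1{p_4} \ge \frac12 - \frac\sS3$; substituting $\frac1{p_4}=\frac\sS3$ (from $1-\sS$) this reads $\frac12 + \frac\sS3 - \frac\rho6 \ge \frac12 - \frac\sS3$, i.e. $\frac{2\sS}3 \ge \frac\rho6$. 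This would fail for large $\rho$, so I would instead keep $\frac1{p_4}$ as a free small parameter (not take the borderline value) and exploit that $\rho<2$ gives room: choosing $\frac1{p_4}$ close to $\frac16$ from below forces $1-\sS \ge \frac12$, which holds, while making $\frac1{p_2}$ close to $1 - \frac\rho6 - \frac16 - \frac\kp2 = \frac56 - \frac\rho6 - \frac\kp2 > \frac12 - \frac\sS3$ whenever $\frac13 + \frac\sS3 > \frac\rho6$, i.e. $\rho < 2 + 2\sS$, which holds since $\rho<2$. The $\elll_3$ term is handled by the same exponents, now using $H^{\sS-1}$-type or $H^{1-\sS}$ embeddings for $a$; since $\aA\in\h^{\sS-1}$ one has $a\in H^\sS$, and the embedding $H^\sS \hookrightarrow L^{(1-\kp)p_2}$ is exactly the one already verified.

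\textbf{Second estimate (the $\elll_2$ term) and the main obstacle.} Here the constraint is $\frac\rho{\rho+2} + \frac1{q_2} + \frac\kp2 + \frac1{q_4} = 1$. Since $\frac\rho{\rho+2} < \frac\rho6$ is false in general — rather $\frac\rho{\rho+2}$ can be close to $\frac12$ — but the gain is that $v_2\in H^1$ (one full derivative) rather than merely $H^\sS$, so the embedding $H^1\hookrightarrow L^{(1-\kp)q_2}$ needs only $\frac1{q_2}\ge(1-\kp)\big(\frac12-\frac13\big)=(1-\kp)\frac16$, a much weaker requirement. Taking $\frac1{q_4}$ near $\frac12-\frac{1-\sS}3$ and solving, one checks $\frac1{q_2} = 1 - \frac\rho{\rho+2} - \frac\kp2 - \frac1{q_4} = \frac2{\rho+2} - \frac\kp2 - \frac1{q_4}$; as $\kp\to0$ this must exceed $\frac16$, i.e. $\frac2{\rho+2} - \frac1{q_4} > \frac16$, which with $\frac1{q_4}$ small (say $<\frac16$, valid since $1-\sS>\frac12$) reduces to $\frac2{\rho+2} > \frac13$, i.e. $\rho<4$ — comfortably true. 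The step I expect to require the most care is \emph{simultaneously} meeting all constraints with a \emph{single} choice of $\kp$ that works for both \eqref{0.10}--\eqref{0.11}: one must verify the finite list of linear inequalities in $\frac1{p_2},\frac1{p_4},\frac1{q_2},\frac1{q_4},\kp$ is consistent, using only $\rho\in(0,2)$ and $\sS\in(0,1/2)$, and that all chosen exponents lie in the admissible range $[1,\infty]$ (in particular $(1-\kp)p_2,(1-\kp)q_2 \ge 1$ and $p_4,q_4\ge1$). This is a routine but slightly fiddly bookkeeping argument; I would present it by exhibiting explicit formulas for the exponents as functions of $\kp$ and checking the inequalities at $\kp=0$ by continuity, which suffices for $\kp$ small.
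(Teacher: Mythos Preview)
Your overall strategy---take the borderline Sobolev exponent for $p_4$ (resp.~$q_4$), solve the H\"older constraint for $p_2$ (resp.~$q_2$), then verify the remaining embedding for $\kp$ small---is exactly the paper's approach. But there is a concrete arithmetic slip that derails the argument. In dimension~$3$, the critical exponent for $H^{1-\sS}\hookrightarrow L^{p_4}$ is
\[
\frac{1}{p_4}=\frac12-\frac{1-\sS}{3}=\frac{1+2\sS}{6},
\]
which is \emph{larger} than $1/6$ when $\sS>0$, not smaller; you wrote $\sS/3$ and then ``less than $1/6$''. With the correct value, the check at $\kp=0$ becomes $1-\frac{\rho}{6}-\frac{1+2\sS}{6}\ge \frac12-\frac{\sS}{3}$, which simplifies to $\rho\le 2$---satisfied, with slack. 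Keeping $\kp>0$, one finds $(1-\kp)p_2\le 6/(3-2\sS)$ is equivalent to $\rho+2\sS\kp\le 2$, which holds for $\kp$ small. So the borderline choice \emph{does} work; there was no obstacle.

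The fallback you propose---taking $1/p_4$ ``close to $1/6$ from below''---is not valid: for any $\sS>0$ one needs $1/p_4\ge (1+2\sS)/6>1/6$, so $p_4$ cannot approach~$6$. The same miscomputation reappears in the $q$-part (you assume $1/q_4<1/6$). With $q_4=6/(1+2\sS)$ the H\"older constraint gives $q_2=6(\rho+2)/[12-(\rho+2)(1+2\sS+3\kp)]$, and $(1-\kp)q_2\le 6$ reduces to $(\rho+2)(1+\sS+\kp)\le 6$, which holds once $\kp<1/2-\sS$ since $\rho<2$. Fixing the arithmetic, your sketch becomes the paper's proof verbatim.
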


\medskip

{\it Step~3: Estimation of $\elll_1$ and $\elll_3$}. In view of Lemma~\ref{0.14} and inequalities~\eqref{1.8} and \eqref{0.10}, we have
\begin{align*}
\elll_1&\leq C_0 |f'(u)|_{L^{6/\rho}} \|\dt v_1\|_\sS^{1-\kp} \|\dt u\|^\kp \|(-\de)^{\sS-1}(\dt a+\al a)\|_{1-\sS}\notag\\
&\leq C_1  \|\dt v_1\|_\sS^{1-\kp}(\|u\|_1^\rho+1) \|\dt u\|^\kp \|\dt a+\al a\|_{\sS-1}.
\end{align*}
Now let us suppose that $\kp<2-\rho$. Then using   \eqref{0.6a} together with the Young inequality, we derive
\be\label{0.15}
\elll_1\leq  C_2|\vv|_{\h^\sS}^{1-\kp}(\|u\|_1^2+\|\dt u\|^2+C_\kp)  \|\dt a+\al a\|_{\sS-1}\leq C_3 \,R (\ees(\uu) +C_3) |\aA|_{\h^{\sS-1}}.
\ee
To  estimate $\elll_3$, we   again apply Lemma \ref{0.14} and inequalities \eqref{1.8} and \eqref{0.12} 
$$
\elll_3\leq C_4(\|u\|_1^\rho+1)\|a\|_{\sS}^{1-\kp}\|\dt u\|^\kp   \|\dt a+\al a\|_{\sS-1}\leq C_4(\|u\|_1^\rho+1)\|\dt u\|^\kp |\aA|_{\h^{\sS-1}}^{2-\kp}.
$$
Applying the Young inequality, we get
\be\label{0.17}
\elll_3\leq C_5(\ees(\uu)+C_{5})|\aA|_{\h^{\sS-1}}^{2-\kp}.
\ee

\medskip

{\it Step~4: Estimation of $\elll_2$}. It follows from Lemma \ref{0.14} and inequalities \eqref{1.5} and  \eqref{0.11} that
\begin{align*}
\elll_2 &\leq C_6 |f'(u)|_{L^{(\rho+2)/\rho}} \|\dt v_2\|_1^{1-\kp} \|\dt u\|^\kp \|(-\de)^{\sS-1}(\dt a+\al a)\|_{1-\sS}\label{0.11}\notag\\
&\leq C_7 \|\dt v_2\|_1^{1-\kp} \left(\int_D (F(u)+\nu u^2+C)\dd x\right)^{\rho/{\rho+2}} \|\dt u\|^\kp \|\dt a+\al a\|_{\sS-1}\\
&\leq C_8 \|\dt v_2\|_1^{1-\kp} \left(\ees(\uu)+C_8\right)^{\rho/{\rho+2}} \|\dt u\|^\kp |\aA|_{\h^{\sS-1}}.
\end{align*}
Finally, applying the Young inequality, we obtain
\be\label{0.16}
\elll_2\leq C_9 (\ees(\uu)+|\vv_2|_{\h^s}^2+C_{9})|\aA|_{\h^{\sS-1}}.
\ee

\medskip

{\it Step~5: Estimation of $|\aA|_{\h^{\sS-1}}$}. Combining inequalities \eqref{0.9}, \eqref{0.18} and~\eqref{0.15}-\eqref{0.16}, we see that
\be\label{0.19}
\f{\dd}{\dd t}|\aA(t)|_{\h^{\sS-1}}^2+\alpha|\aA(t)|_{\h^{\sS-1}}^2\leq C_{10}\, R \left(\ees(\uu(t))+|\vv_2(t)|_{\h^s}^2+C_{10}\right)\left(|\aA(t)|_{\h^{\sS-1}}^{2-\kp}+1\right).
\ee
We now need an auxiliary result, whose proof is presented in the appendix.
\begin{lemma}\label{0.20}
Let $x(t)$ be an absolutely continuous nonnegative function   satisfying the differential inequality
\be\label{0.21}
\dt x(t)+\al x(t)\leq g(t)x^{1-\beta}(t)+b(t)\q\text{ for all } t\in [0, T],
\ee
where   $\al, T$, and $\beta<1$ are positive constants and $g(t)$ and $b(t)$ are nonnegative functions integrable on $[0, T]$. Then we have
\be\label{0.28}
\f{\al}{2}\int_0^t x^\beta(\tau)\dd \tau\leq \beta^{-1}(1+x(0))^{\beta}+\int_0^t (\al+g(\tau)+b(\tau))\dd \tau\q\text{ for } t\in [0, T].
\ee
\end{lemma}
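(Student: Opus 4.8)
The statement to prove is Lemma~\ref{0.20}: if $x(t)\ge0$ is absolutely continuous and satisfies $\dt x+\al x\le g x^{1-\beta}+b$ on $[0,T]$ with $\beta<1$ and $g,b\ge0$ integrable, then $\tfrac{\al}{2}\int_0^t x^\beta\dd\tau\le \beta^{-1}(1+x(0))^\beta+\int_0^t(\al+g+b)\dd\tau$.

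\textbf{Plan.} The natural idea is to work with $y(t):=(1+x(t))^\beta$, which is absolutely continuous since $\beta\in(0,1)$ and $s\mapsto(1+s)^\beta$ is Lipschitz on $[0,\infty)$ (derivative $\beta(1+s)^{\beta-1}\le\beta$). First I would compute $\dt y=\beta(1+x)^{\beta-1}\dt x$ a.e. Using the differential inequality $\dt x\le -\al x+g x^{1-\beta}+b$ and $\beta(1+x)^{\beta-1}\ge0$, this gives
$$
\dt y\le \beta(1+x)^{\beta-1}\bigl(-\al x+g x^{1-\beta}+b\bigr).
$$
Now I would estimate the three terms separately. For the dissipative term: $-\al x(1+x)^{\beta-1}\le -\al\bigl((1+x)^\beta-(1+x)^{\beta-1}\bigr)\le-\al(1+x)^\beta+\al = -\al y+\al$, using $x(1+x)^{\beta-1}=(1+x)^\beta-(1+x)^{\beta-1}$ and $(1+x)^{\beta-1}\le1$. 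For the term with $g$: $\beta(1+x)^{\beta-1}g\,x^{1-\beta}\le \beta g$, since $x^{1-\beta}\le(1+x)^{1-\beta}$ and $(1+x)^{\beta-1}(1+x)^{1-\beta}=1$ and $\beta\le1$. For the term with $b$: $\beta(1+x)^{\beta-1}b\le \beta b\le b$. Collecting, $\dt y\le-\al\beta y+\beta(\al+g+b)$ a.e. on $[0,T]$ (keeping the factor $\beta$ in front of the dissipation, which is what I actually get; I'll sort out constants below).

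\textbf{Finishing.} Here there is a small bookkeeping point: the cleanest bound comes from throwing away the good dissipation on $y$ on a suitable part. More precisely, from $\dt y\le -\al\beta y+\beta(\al+g+b)$ I would integrate directly on $[0,t]$:
$$
y(t)-y(0)+\al\beta\int_0^t y(\tau)\dd\tau\le \beta\int_0^t(\al+g(\tau)+b(\tau))\dd\tau,
$$
hence, dropping $y(t)\ge0$ and dividing by $\beta$,
$$
\al\int_0^t y(\tau)\dd\tau\le \beta^{-1}y(0)+\int_0^t(\al+g(\tau)+b(\tau))\dd\tau.
$$
Finally $y(\tau)=(1+x(\tau))^\beta\ge x^\beta(\tau)$ and $y(0)=(1+x(0))^\beta$, so
$$
\al\int_0^t x^\beta(\tau)\dd\tau\le \beta^{-1}(1+x(0))^\beta+\int_0^t(\al+g(\tau)+b(\tau))\dd\tau,
$$
which is even stronger than \eqref{0.28} (the factor $\al/2$ on the left is replaced by $\al$); in any case the claimed inequality follows a fortiori. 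The only technical care needed is to justify differentiating $y$ and integrating the a.e. inequality — standard since $y$ is absolutely continuous (composition of a Lipschitz function with an absolutely continuous one) and the right-hand side is integrable by hypothesis on $g,b$.

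\textbf{Main obstacle.} There is no real obstacle: the lemma is an elementary ODE-comparison argument, and the only subtlety is choosing the right substitution ($y=(1+x)^\beta$ rather than $x^\beta$, to avoid problems where $x$ vanishes and to keep the nonlinear term $gx^{1-\beta}$ under control). One should double-check the chain of inequalities $x(1+x)^{\beta-1}\le(1+x)^\beta-(1+x)^{\beta-1}$ and $x^{1-\beta}(1+x)^{\beta-1}\le1$, both of which are immediate from $0\le x\le1+x$, and verify that $\beta<1$ is exactly what makes $s\mapsto(1+s)^\beta$ Lipschitz (so $y$ is absolutely continuous). If one prefers to match the statement's constant $\al/2$ exactly rather than obtain the sharper $\al$, one can simply weaken the final inequality.
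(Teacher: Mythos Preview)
Your proof is correct and follows essentially the same approach as the paper: both use the substitution $y=(1+x)^\beta$, bound $(1+x)^{\beta-1}gx^{1-\beta}\le g$ and $(1+x)^{\beta-1}b\le b$, and integrate. The only difference is in handling the dissipative term: the paper estimates $-\al x(1+x)^{\beta-1}\le -\tfrac{\al}{2}x^\beta+\al$ directly (splitting into the cases $x\le1$ and $x\ge1$), whereas you write $-\al x(1+x)^{\beta-1}=-\al(1+x)^\beta+\al(1+x)^{\beta-1}\le -\al y+\al$ and only invoke $y\ge x^\beta$ after integration. Your route is slightly cleaner and, as you note, yields the sharper constant $\al$ in place of $\al/2$; you also make explicit the absolute continuity of $y$, which the paper leaves implicit.
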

Applying this lemma to inequality \eqref{0.19}, we obtain
\begin{align}
\f{\al}{2} \int_0^t |\aA(\tau)|_{\h^{\sS-1}}^\kp\dd\tau&\leq 2\kp^{-1}(1+|\aA(0)|_{\h^{\sS-1}}^2)^{\kp/2}+\al t\notag\\
&\q+2\,C_{10}\, R \int_0^t \left(\ees(\uu(\tau))+|\vv_2(\tau)|_{\h^s}^2+C_{10}\right)\Dd\tau.\label{0.22}
\end{align}

\medskip

{\it Step~6: Completion of the proof}. Note that
$$
|\Zz |_{\h^\sS}^2=\|z \|_{\sS+1}^2+\|\dt z +\alpha z \|_\sS^2=\|\de z \|_{\sS-1}^2+\|a+\alpha z \|_\sS^2.
$$
On the other, in view of \eqref{0.7}, we have
$$
\|\de z \|_{\sS-1}^2=\|\dt a +\gamma a +f(u )\|_{\sS-1}^2\leq C_{11}(|  \aA   |_{\h^{\sS-1}}^2+\|f(u )\|^2),
$$
whence we get
\be\label{e35}
|\Zz |_{\h^\sS}^2\leq C_{12}\left(|\aA |_{\h^{\sS-1}}^2+\ees^3(\uu )+C_{12}\right).
\ee
It follows that
$$
|\Zz |_{\h^\sS}^\kp\leq C_{13}\left(|\aA |_{\h^{\sS-1}}^{\kp}+ \ees (\uu )+C_{13}\right),
$$
provided $\kp<2/3$. Multiplying this inequality by $\al/2$, integrating over $[0, t]$ and using \eqref{0.22} together with the fact that 
\be\label{e38}
|\aA(0)|_{\h^{\sS-1}}^2=\|f(u(0))\|_{\sS-1}^2\leq \|f(u(0))\|^2\leq C_{14}(\|\vv\|_1^6+1),
\ee
we derive
$$
\f{\al}{2} \int_0^t |\Zz(\tau)|_{\h^\sS}^{\kp}\dd\tau \leq C_{15} \left(1+   \int_0^t \left[\ees(\uu(\tau))+|\vv_2(\tau)|_{\h^\sS}^2+C_{15}\right]\Dd\tau \right),
$$where $C_{15}$ depends on  $R$.
 Multiplying this  inequality   by a small constant $\De(R)>0$, taking the exponent  and then the expectation, and using  
 \eqref{0.6b} together with Proposition 3.2 in~\cite{DM2014}, we   
 derive \eqref{0.27}.
\ep

\subsection{Higher moments of regular solutions}\label{S:moments}

 For any $m\geq 1$, let   $\we_m$ and $\tilde \we_m$ be the functions given by \ef{e27} and \eqref{e32}. The following result shows that  they are both   Lyapunov functions   for the   trajectories of problem   \eqref{0.1},~\eqref{0.3}.
\begin{proposition}
For any $\vv\in\h^\sS$, $m\ge1$, and $t\ge0$, we have
\begin{align}
\e_\vv\we_m(\uu_t)&\leq 2e^{-\al m t}\we_m(\vv)+C_m \label{e30},\\
\e_\vv\tilde\we_m(\uu_t )&\leq 2e^{-\al m t}\tilde\we_m(\vv)+C_m.\label{e34}
\end{align}
\end{proposition}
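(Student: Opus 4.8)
The plan is to establish the two Lyapunov-type estimates \eqref{e30} and \eqref{e34} by applying It\^o's formula to suitable functionals of the flow $\uu_t$, in the spirit of the a priori bounds already proved in Chapter~\ref{Chapter3}. Recall that Proposition~\ref{proposition_exp} and estimate \eqref{2.2.3} give, after replacing $b_j$ by $b_j/\sqrt\es$ and specializing to $\es=1$, exponential control of $\ees(\uu_t)$, namely $\e_\vv\exp(\kp\ees(\uu_t))\le 2e^{-\al t}\exp(\kp\ees(\vv))+C$ for $\kp\le(2\BBB)^{-1}\al$, as recalled in Proposition~\ref{1.96}. The quantities $\ees^{4m}(\uu_t)$ and $\exp(\kp\ees(\uu_t))$ in $\we_m$ and $\tilde\we_m$ are therefore handled by these existing bounds: for the polynomial term one applies It\^o to $\GG(\uu)=\ees^{4m}(\uu)$, uses the identity \eqref{2.19.3} for $\ees$ together with the absorbing $-\tfrac{\al}{2}\int_0^t|\uu_s|_\h^2\,ds$ term to kill the stochastic-integral contributions, and invokes Young's inequality in the form $\ees^{4m}(-4m\al\ees+C_1)\le -4m\al\ees^{4m}+C_2$ before the Gronwall lemma; this produces the rate $e^{-\al m t}$ with the right constant $2$. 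The exponential term in $\tilde\we_m$ is literally Proposition~\ref{proposition_exp}/Proposition~\ref{1.96}.

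The genuinely new ingredient is the bound on $\e_\vv|\uu_t|_{\h^\sS}^{2m}$. Here I would reuse the decomposition from the proof of Theorem~\ref{0.4} (Step~1): write $\uu=\vv_1+\vv_2+\Zz$, where $\vv_1$ solves the linear homogeneous wave equation, $\vv_2$ the linear equation with the white noise, and $\Zz$ carries the nonlinearity and $h$, and solves \eqref{0.7}. For $\vv_1$ one has the deterministic decay $|\vv_1(t)|_{\h^\sS}^2\le|\vv|_{\h^\sS}^2e^{-\al t}$, which after raising to the power $m$ gives the geometric factor $e^{-\al m t}|\vv|_{\h^\sS}^{2m}$; for $\vv_2$ the It\^o formula applied to $|\vv_2(t)|_{\h^\sS}^{2m}$ gives a uniform-in-time bound $\e|\vv_2(t)|_{\h^\sS}^{2m}\le C_m$ (using $\BBB_1=\sum\lm_jb_j^2<\iin$, which guarantees finiteness of the relevant moments in $H^{\sS}$ for $\sS<1/2$); for $\Zz$, inequality \eqref{e35} from Step~6 of the proof of Theorem~\ref{0.4}, namely $|\Zz|_{\h^\sS}^2\le C(|\aA|_{\h^{\sS-1}}^2+\ees^3(\uu)+C)$, reduces the task to controlling $\e|\aA(t)|_{\h^{\sS-1}}^{2m}$ and $\e\ees^{3m}(\uu_t)$, the latter again covered by Proposition~\ref{1.96}. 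For $|\aA|_{\h^{\sS-1}}$ one applies the Gronwall comparison to the differential inequality \eqref{0.9}--\eqref{0.19} (taking the $m$-th power, so the drift gains a factor $m\al$) together with the exponential moment bounds on $\ees(\uu)$ and the moment bounds on $\vv_2$, and with the initial-data estimate \eqref{e38}; multiplying out the decomposition via $|\uu_t|_{\h^\sS}^{2m}\le 3^{2m-1}(|\vv_1|_{\h^\sS}^{2m}+|\vv_2|_{\h^\sS}^{2m}+|\Zz|_{\h^\sS}^{2m})$ then yields \eqref{e30}. Summing the polynomial part, the $\ees^{4m}$ part and (for \eqref{e34}) the exponential part gives both claimed inequalities, absorbing the cross terms into $C_m$ by Young's inequality.

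The main obstacle I anticipate is bookkeeping the decay rate: one must verify that each of the three contributions ($\vv_1$, the $\ees^{4m}$ term, the $\exp(\kp\ees)$ term) decays at rate at least $e^{-\al m t}$ and that the $\vv_2$, $\Zz$ and nonlinear pieces only contribute the additive constant $C_m$, so that the aggregate constant in front of $\we_m(\vv)$ is exactly $2$ and not something growing with $m$. This requires being slightly careful about which Gronwall lemma is used and about the Young-inequality thresholds (e.g.\ choosing $\kp$ small enough relative to $\al$ so that the exponential-moment drift is $-\al m$-absorbing); but since all the pointwise differential inequalities are already established in the excerpt, this reduces to a routine, if somewhat lengthy, combination. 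A secondary technical point is ensuring the It\^o formula is justified for the unbounded functionals $\ees^{4m}$ and $\exp(\kp\ees)$ — this is handled as in the proof of Proposition~\ref{2.11.3} by a truncation argument (cf.\ the Fatou-lemma step in Section~\ref{1.77}, inequality \eqref{1.80}) and poses no real difficulty.
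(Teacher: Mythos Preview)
Your overall strategy coincides with the paper's: split off a linear piece carrying the noise, control $|\Zz|_{\h^\sS}$ through the auxiliary variable $\aA=[a,\dt a]$ and the differential inequality \eqref{0.9}, combine with the known moment bounds on $\ees^{k}(\uu_t)$ and $\exp(\kp\ees(\uu_t))$, and finish with Gronwall. The paper even uses the same identity \eqref{e35} and initial bound \eqref{e38} you invoke. The treatment of the $\ees^{4m}$ and $\exp(\kp\ees)$ parts is also essentially as you describe; in particular, for \eqref{e34} the paper sharpens the drift from $-\al$ to $-\al m$ via the elementary inequality $e^r(-\al r+C)\le -\al m\,e^r+C_m'$ before Gronwall, exactly as you suggest.

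The one place where your route diverges and becomes problematic is the estimate of $\elll$. You propose to reuse \eqref{0.19}, but that inequality was tailored to the exponential-tightness argument: it carries the sub-quadratic term $|\aA|_{\h^{\sS-1}}^{2-\kp}$ (suited to Lemma~\ref{0.20}, not to moment bounds) and the multiplicative factor $R\sim|\vv|_{\h^\sS}$ coming from $\elll_1$. After Young's inequality and raising to the $m$-th power, this produces cross-terms of the type $|\vv|_{\h^\sS}^{2m}\ees^{2m}(\vv)$ on the right-hand side, which do not fit into $2e^{-\al m t}\we_m(\vv)+C_m$ without further contortions. The paper avoids this entirely by \emph{not} splitting $\dt u$ in the H\"older step: using $H^1\hookrightarrow L^6$ and $H^{1-\sS}\hookrightarrow L^{6/(3-\rho)}$ directly gives the linear-in-$|\aA|$ bound
\[
\elll\le C_3(|\uu_t|_\h^3+1)\|\dt a+\al a\|_{\sS-1}\le \tfrac{\al}{4}|\aA|_{\h^{\sS-1}}^2+C_4(\ees^3(\uu_t)+C_4),
\]
hence the clean inequality $\tfrac{d}{dt}|\aA|_{\h^{\sS-1}}^{2m}\le -\al m|\aA|_{\h^{\sS-1}}^{2m}+C_5(\ees^{3m}(\uu_t)+C_5)$. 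Gronwall plus $\e\ees^{3m}(\uu_t)\le e^{-3\al m t}\ees^{3m}(\vv)+C$ then yields the form \eqref{e30} directly. The paper also collapses your three-term decomposition to $\uu=\tilde\uu+\Zz$ with $\tilde\uu=\vv_1+\vv_2$, bounding $\e|\tilde\uu(t)|_{\h^\sS}^{2m}\le e^{-\al m t}|\vv|_{\h^\sS}^{2m}+C$ in one step and pairing it with $(A+B)^{2m}\le 2A^{2m}+C_mB^{2m}$ to secure the constant $2$.
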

\bp
{\it Step~1: Proof of \eqref{e30}}.   
We split the flow $\uu(t;\vv)$ to the sum $\uu(t;\vv)=\tilde \uu(t)+\Zz(t)$, where $\tilde\uu$ is the flow issued from $\vv$ corresponding to the solution of~\ef{0.1} with $f=0$. Let us note that here $\Zz=[z, \dt z]$ is the same as in   Section \ref{S:4.1}. A standard argument shows that
\be\label{e18}
\e|\tilde\uu(t)|_{\h^\sS}^{2m}\leq e^{-\al m t}|\vv|_{\h^\sS}^{2m}+C(m, \|h\|_1, \BBB_1).
\ee
As in Section \ref{S:4.1}, we set $a =\dt z $ and write $\aA =[a , \dt a ]$. Notice that thanks to the H\"older inequality, the Sobolev   embeddings $H^{1}\hookrightarrow L^{6}$ and~$H^{1-\sS}\hookrightarrow L^{6/(3-\rho)}$ for $\sS<1-\rho/2$, and inequality \eqref{7.9}, we can  estimate the right-hand side of inequality \ef{0.9} by 
\begin{align*}
\elll&\leq C_1(|u|^\rho_{L^6}+1)\|\dt u\| |(-\de)^{\sS-1}(\dt a+\al a)|_{L^{6/(3-\rho)}}\\
&\leq C_2(\|u\|^2_{1}+1)\|\dt u\| \|(-\de)^{\sS-1}(\dt a+\al a)\|_{ {1-\sS}}\leq C_3(|\uu|_\h^3+1)\|\dt a+\al a\|_{\sS-1}\\
&\leq \f{\al}{4}|\aA |_{\h^{\sS-1}}^2+C_4\left(\ees^3(\uu )+C_4\right).
\end{align*}
   Combining this with \ef{0.9}, we infer
$$
\f{\dd}{\dd t}|\aA |_{\h^{\sS-1}}^2\leq -\f{5\al}{4}|\aA |_{\h^{\sS-1}}^2+C_4\left(\ees^3(\uu )+C_4\right).
$$
It follows that\footnote{All the constants $C_i, i\ge 5$ depend on $m$. }
$$
\f{\dd}{\dd t}|\aA |_{\h^{\sS-1}}^{2m}=m|\aA |_{\h^{\sS-1}}^{2m-2}\f{\dd}{\dd t}|\aA |_{\h^{\sS-1}}^2
\leq-\al m|\aA |_{\h^{\sS-1}}^{2m}+C_5 \left(\ees^{3m}(\uu )+C_5\right),
$$
where we used the Young inequality. Taking the mean value in this inequality and applying the comparison principle, we derive
$$
\e|\aA(t)|_{\h^{\sS-1}}^{2m}\leq e^{-\al m t}|\aA(0)|_{\h^{\sS-1}}^{2m}+ C_6  \int_0^te^{\al m (\tau-t)}  \left(\e\ees^{3m}(\uu(\tau))+ C_6\right) \dd \tau .
$$
Combining this with \ef{e35} and \ef{e38}, we get
\begin{align*}
\e|\Zz(t)|_{\h^\sS}^{2m}&\leq C_7 \left(e^{-\al m t} \ees^{3m}(\vv)  +  \int_0^te^{\al m(\tau-t)}\e \ees^{3m}(\uu(\tau))\dd \tau+C_7\right) .
\end{align*}
Using the It\^o formula, it is not difficult to show (cf. Proposition 3.1 in \cite{DM2014}) that 
\be\label{e31}
\e \ees^{k}(\uu(t))\leq \exp(-\al k t)\ees^{k}(\vv)+C(k, \|h\|,\BBB)\q\text{ for any }k\geq 1.
\ee
It follows from the last two inequalities that
$$
\e|\Zz(t)|_{\h^\sS}^{2m}\leq C_8 (e^{-\al m t} \ees^{3m}(\vv)+C_8).
$$
Combining this with the inequality 
$$
(A+B)^{2m}\leq 2 A^{2m}+  C_9 B^{2m}\q\text{ for any } A, B\ge 0.
$$ and  \ef{e18}, we infer
\begin{align*}
\e |\uu(t)|_{\h^\sS}^{2m}&\leq\e (|\tilde\uu(t)|_{\h^\sS}+|\Zz(t)|_{\h^\sS})^{2m}\leq 2 \e|\tilde\uu(t)|_{\h^\sS}^{2m}+  C_{9} \e|\Zz(t)|_{\h^\sS}^{2m}\notag\\
&\le 2e^{-\al m t}|\vv|_{\h^\sS}^{2m}+C_{10}  (e^{-\al m t}\ees^{3m}(\vv)+C_{10}).
\end{align*}
So that we have
\begin{align*}
\e \we_m(\uu(t))&\leq 2e^{-\al m t}|\vv|_{\h^\sS}^{2m}+C_{10} (e^{-\al m t} \ees^{3m}(\vv)+C_{10})  +\e \ees^{4m}(\uu(t))\notag\\
&\leq 2e^{-\al m t}\left(|\vv|_{\h^\sS}^{2m}+\ees^{4m}(\vv)\right)+C_{11} =2e^{-\al m t}\we_m(\vv)+C_{11} ,
\end{align*}
where we used the Young inequality together with \ef{e31}. 

\medskip
{\it Step~2: Proof of~\eqref{e34}}.
   It was shown in Section~3.2 of~\cite{DM2014}, that for any~$\kp\leq (2\al)^{-1}\BBB$, we have
\begin{align*}
\e_\vv\exp[\kp\ees(\uu(t))]&\leq\exp(\kp\ees(\vv))\\
&\q+\kp\int_0^t \e_\vv\exp[\kp\ees(\uu(\tau))](-\al\ees(\uu(\tau))+C(\BBB, \|h\|))\dd \tau.
\end{align*}
Using this with inequality
$$
 e^r(-\al r+C)\leq -\al m e^{r}+C_{12} \q\text{ for any }r\ge-C
$$
and applying the Gronwall lemma, we see that
$$
\e_\vv\exp[\kp\ees(\uu(t))]\leq e^{-\al m t}\exp(\kp\ees(\vv))+C_{13} .
$$
Finally, combining this inequality with \ef{e30}, we arrive at \ef{e34}.
\ep

\section{Proof of  Theorem \ref{T:1.1}}\label{S:5}

The results of Sections~\ref{S:2}-\ref{S:4} imply that    the growth conditions, the uniform irreducibility and uniform Feller properties  in     Theorem~\ref{T:5.3} are satisfied if we take  
\begin{gather*}
X=\h, \,\,\, X_R= B_{\h^\sS}(R) ,  \,\,\, P_t^V(\uu,\Gamma)=(\PPPP_t^{V*} \delta_\uu ) (\Gamma),\,\,\, \\    \wwww(\uu)=1+|\uu|_{\h^\sS}^2+ \ees^4(\uu) 
,\,\,\, \CC= \UU, \,\,\, V\in \UU_\delta
\end{gather*}
for sufficiently large integer  $R_0\ge1$, small $\delta>0$, and     any $\sS\in(0,1-\rho/2)$. Let us  show that the time-continuity property is also verified.

\medskip
{\it Step~1:    Time-continuity property}.
 We need to show that the function $t\mapsto\PPPP^V_tg(\uu) $ is continuous from~$\R_+$ to $\R$ for any 
  $g\in C_\wwww(\h^\sS)$ and $\uu\in\h^\sS$ (recall that $X_\infty=\h^\sS$). For any $T,t\ge0$ and $\uu\in \h^\sS$, we have
  \begin{align}\label{S6:1}
  \PPPP^V_Tg(\uu)-\PPPP^V_tg(\uu)&=\e_\uu \left\{\left[\Xi_V(T)-\Xi_V(t)  \right] g(\uu_t)\right\}+ \e_\uu\left\{\left[ g(\uu_T)- g(\uu_t)  \right] \Xi_V(T)\right\}\nonumber\\&=: S_1+S_2,
  \end{align}
where $\Xi_V$ is defined by \eqref{S6ogt}. As $V$ is bounded and $g\in C_\wwww(\h^\sS) $, we see that 
\begin{align*} 
|S_1|&\le  \e_\uu \left\{\left|\exp\left(\int_t^TV(\uu_\tau)\dd \tau\right)- 1 \right| \Xi_V(t)|g(\uu_t)|\right\}\nonumber\\&\le C_1 \left(e^{|T-t|\|V\|_\infty}-1\right)e^{T\|V\|_\infty} \e_\uu \wwww(\uu_t).
\end{align*}
Combining this with \eqref{e30}, we get   $S_1\to0$ as $t\to T$. To  estimate $S_2$, let us take any $R>0$ and write
\begin{align*} 
e^{-T\|V\|_\infty} | S_2|&\le \e_\uu \left| g(\uu_T)- g(\uu_t)  \right| \\&= \e_\uu \left\{\I_{G_R^c} \left| g(\uu_T)- g(\uu_t)  \right| \right\}+ \e_\uu \left\{\I_{G_R} \left| g(\uu_T)- g(\uu_t)  \right| \right\}\\&=: S_3+S_4, 
\end{align*}where $G_R:=\{u_t, u_T\in X_R\}$. From the Chebyshev inequality, the fact that $g\in C_\wwww(\h^\sS) $, and inequality \eqref{e30}  we derive 
\begin{align*} 
S_3 &\le C_1 \e_\uu \left\{\I_{G_R^c}  (\wwww(\uu_T)+ \wwww(\uu_t)  ) \right\} \\&\le C_1 R^{-2} \e_\uu \left\{ \wwww^2(\uu_T)+ \wwww^2(\uu_t)   \right\}  \le C_2 R^{-2}   \wwww^2(\uu).   
\end{align*} 
On the other hand,  by the Lebesgue theorem on dominated convergence,  for any $R>0$,  we have
$S_4 \to 0$ as $t\to T$. Choosing $R>0$ sufficiently large and $t$ sufficiently close to $T$, we see that $S_3+S_4$ can be made arbitrarily small. This shows that $S_2\to0$ as $t\to T$ and proves the time-continuity property.

\medskip
{\it Step~2: Application of Theorem~\ref{T:5.3}}.
     We   conclude from Theorem~\ref{T:5.3} that there is an eigenvector $\mu_V\in \ppp(\h)$ for the semigroup~$\PPPP_t^{V*}$ corresponding to some positive eigenvalue $\la_V$, i.e.,  $\PPPP_t^{V*}\mu_V=\la_V^t\mu_V$ for any~$t>0$. Moreover, the semigroup~$\PPPP_t^{V}$
      has an eigenvector $h_V\in C_\wwww(\h^\sS)\cap C_+(\h^\sS)$ corresponding to~$\lambda_V$ such that $\lag h_V, \mu_V\rag=1$. 
     The uniqueness of $\mu_V$ and $h_V$ follows immediately from~\eqref{a1.5} and~\eqref{a1.6}.  The uniqueness of $\mu_V$  implies that it does not depend on~$m$ and  \eqref{momentestimate} holds for any $m\ge1$.
It remains to prove limits~\eqref{a1.5} and~\eqref{a1.6}.

\medskip
{\it Step~3: Proof of~\eqref{a1.5}}.
By~\eqref{5.12}, we have \eqref{a1.5} for any $\psi\in \UU$.  To establish the   limit   for any $\psi \in C_\wwww(\h^\sS)$, we apply an approximation argument similar to the one used in Step 4 of the proof of Theorem~5.5 in~\cite{JNPS-2014}. Let us   take a sequence~$\psi _n  \in \UU$ such that $\|\psi _n\|_\ty\le \|\psi \|_\ty$ and~$\psi _n \to \psi $ as $n\to\ty$, uniformly on bounded
subsets of $\h^\sS$.  If we define
$$
\Delta_t(g)=\sup_{\uu\in X_R}\bigl|\lambda_V^{-t}\PPPP_t^Vg(\uu)-\langle g,\mu_V\rangle h_V(\uu)\bigr|,
\quad \|g\|_{_R}=\sup_{\uu\in X_R}|g(\uu)|,
$$then 
$$
\Delta_t(\psi )\le \Delta_t(\psi _n)+\|h_V\|_{R}\,|\langle \psi -\psi _n,\mu_V\rangle|+\lambda_V^{-t}\|\PPPP_t^V(\psi -\psi _n)\|_{R}
$$for any $t\ge 0$ and $n\ge1$.
 In view of~\eqref{a1.5} for $\psi _n$ and the Lebesgue theorem on dominated convergence,  
\begin{gather*}
\Delta_t(\psi _n) \to0\quad\mbox{as $t\to\infty$ for any fixed $n\ge1$},\\
|\langle \psi -\psi _n,\mu_V\rangle|\to0\quad\mbox{as $n\to\infty$}. 
\end{gather*}
Thus, it suffices to show that
\begin{equation} \label{6.57}
\sup_{t\ge 0}\lambda_V^{-t}\|\PPPP_t^V(\psi -\psi _n)\|_{R}\to0\quad\mbox{as $n\to\infty$}.
\end{equation}
 To   this end, for any $\rho>0$, we write
$$
\|\PPPP_t^V(\psi -\psi _n)\|_{R}\le J_1(t,n,\rho)+J_2(t,n,\rho),
$$
where 
$$
J_1(t,n,\rho)=\|\PPPP_t^V\bigl((\psi -\psi _n)\I_{X_\rho}\bigr)\bigr\|_{R}, \quad 
J_2(t,n,\rho)=\|\PPPP_t^V\bigl((\psi -\psi _n)\I_{X_\rho^c}\bigr)\|_{R}. 
$$
Since $\psi _n\to \psi $ uniformly on $X_\rho$, we have 
$$
J_1(t,n,\rho)\le \es(n,\rho)\,\|\PPPP_t^V\mathbf1\|_{R},
$$
where $\es(n,\rho)\to0$ as $n\to\infty$. 
Using convergence \eqref{a1.5} for $\psi=\mathbf1$, we see that
\be\label{a234}
\lambda_V^{-t}\|\PPPP_t^V{\mathbf1}\|_R \le C_3(R)\quad\mbox{for all $t\ge0$}.
\ee
Hence,  
$$
\sup_{t\ge 0}\lambda_V^{-t}J_1(t,n,\rho)\le C_3(R)\,\es(n,\rho)\to0\quad\mbox{as $n\to\infty$}. 
$$We use~\eqref{a5.8} and \eqref{a234},  to estimate $J_2$:
\begin{align*}
\la_V^{-t}J_2(t,n,\rho)
&\le 2\|\psi \|_\infty \rho^{-2} \la_V^{-t} \|\PPPP_t^V \wwww \|_{R}\le C_{4}(R) \|\psi \|_\infty \rho^{-2} \la_V^{-t} \|\PPPP_t^V \mathbf1 \|_{R_0} \\
&\le  C_{4}(R) \|\psi \|_\infty \rho^{-2} C_3({R_0}).  
\end{align*}
Taking  first $\rho$ and then~$n$ sufficiently large, we see that $\sup_{t\ge 0}\lambda_V^{-t}\|\PPPP_t^V(\psi -\psi_n)\|_{R}$
   can be made arbitrarily small.  This proves~\eqref{6.57} 
and  completes the proof of~\eqref{a1.5}.

\medskip
{\it Step~4: Proof of~\eqref{a1.6}}. Let us   show that
$$
\lambda_V^{-t}\lag\PPPP_t^V \psi, \nu\rag\to\lag \psi,\mu_V\rag \lag h_V, \nu\rag\q\text{as $t\to\infty$}
$$ for any $\psi\in C_b(\h)$.
 In view of \eqref{a1.5}, it suffices to show that
\begin{equation} \label{6.63}
 \sup_{t\ge0}\left\{\int_H\I_{X_R^c}\bigl|\lambda_V^{-t}\PPPP_t^V \psi (\uu)-  \langle \psi,\mu_V\rangle h_V(\uu)\bigr|\, \nu(\Dd\uu) \right\} \to0\,\,\,
 \mbox{as $R\to\infty$}.
\end{equation}
From~\eqref{6.0015} and~\eqref{a234}  we derive that
$$
\|\PPPP_t^V \psi\|_{L_\wwww^\infty}\le \| \psi \|_\infty \|\PPPP_t^V \mathbf1\|_{L_\wwww^\infty}\le C_5\|\PPPP_t^V{\mathbf1}\|_{R_0}\le C_6(R_0)\lambda_V^t,\quad\mbox{  $t\ge0$},
$$
hence
$$
\bigl|\lambda_V^{-k}\PPPP_t^V \psi(\uu)\bigr|\le C_6(R_0)\wwww(\uu), \quad \uu\in \h^\sS, \quad t\ge0. 
$$
Since $h_V\in C_\wwww(\h^\sS)$ and 
$$
 \int_\h\I_{X_R^c}(\uu)\,\wwww(\uu)\,\nu(\Dd \uu) \to0\quad\mbox{as $R\to\infty$}, 
$$
we obtain~\eqref{6.63}.
This completes the proof of Theorem~\ref{T:1.1}.

\section{Appendix}
\label{S:6}

 \subsection{Local version of Kifer's theorem}
\label{s4}

In \cite{kifer-1990},  Kifer  established a sufficient condition for the validity of the LDP for a family  of  random probability measures on a   compact metric space. This result was  extended by     Jak$\check{\rm s}$i\'c et al.~\cite{JNPS-2014} to the case of a general Polish   space.  
 In this section, we  obtain  a local version of these results. Roughly speaking,  we assume  the existence of a pressure function (i.e., limit~\eqref{2.1a}) and the uniqueness of the equilibrium state for functions $V$ in a set $\VV$, which is not necessarily dense in the space of bounded continuous functions.  We prove the LDP with a     lower bound in which the     infimum of the rate function is   taken  over  a subset of the equilibrium states.
    To give the exact formulation of  the result, we first introduce some notation and  definitions. Assume that~$X$ is a Polish space,  and~$\zeta_\theta$  is a  random probability measure on~$X$   defined on some probability space~$(\Omega_\theta, \FF_\theta, \pp_\theta)$, where the index $\theta$ belongs to some       directed set\footnote{i.e.,  a partially ordered set  whose every finite subset  has an upper bound.}~$\Theta$. Let $r:\Theta\to \R$ be a   positive function such that $\lim_{\theta\in\Theta} r_\theta=+\ty$. For any~$V\in C_b(X)$, let us set
 \begin{equation}\label{2.1}
Q(V):=\limsup_{\theta\in\Theta} \frac{1}{r_\theta}\log\e_\theta 
\exp\bigl(r_\theta\lag V, \zeta_\theta\rag\bigr),
\end{equation} 
where~$\e_\theta$ is the expectation with respect to $\pp_\theta$. 
The function~$Q:C_b(X)\to \R$ is  convex,~$Q(V)\ge0$ for any~$V\in C_+(X)$, and~$Q(C)=C$ for any~$C\in\R$. Moreover,~$Q$ is 1-Lipschitz. Indeed, for any $V_1,V_2\in C_b(X)$ and $\theta\in \Theta$, we have
$$
\frac{1}{r_\theta}\log\e_\theta 
\exp\bigl(r_\theta\lag V_1, \zeta_\theta\rag\bigr) \le \|V_1-V_2\|_\infty+\frac{1}{r_\theta}\log\e_\theta 
\exp\bigl(r_\theta\lag V_2, \zeta_\theta\rag\bigr),
$$which implies that 
$$
Q(V_1)\le \|V_1-V_2\|_\infty+Q(V_2).
$$By symmetry    we get
$$
|Q(V_1)-Q(V_2)|\le \|V_1-V_2\|_\infty.
$$

The {\it Legendre transform\/} of~$Q$ is given  by 
\begin{equation} \label{2.2}
I(\sigma)=\begin{cases} \sup_{V\in C_b(X)}\bigl(\lag V, \sigma\rag-Q(V)\bigr)  & \text{for $\sigma \in {\cal P}(X)$},  \\ +\infty  & \text{for   $\sigma \in \MM(X)\setminus {\cal P}(X)$}  \end{cases}
\ee
  (see Lemma 2.2 in \cite{BD99}). Then~$I$ is  convex  and  lower semicontinuous function, and 
$$
Q(V)= \sup_{\sigma\in\ppp(X)} \bigl(\lag V, \sigma\rag-I(\sigma)\bigr).
$$A
  measure~$\sigma_V\in\ppp(X)$ is said to be an {\it equilibrium state\/} for~$V$ if
$$
Q(V)= \lag V,\sigma_V\rag-I(\sigma_V).
$$
 We shall denote by~$\VV$ the set of functions~$V\in C_b(X)$  admitting a unique equilibrium state  $\sigma_V$ and for which  the following limit exists
 \begin{equation}\label{2.1a}
Q(V)=\lim_{\theta\in\Theta} \frac{1}{r_\theta}\log\e_\theta 
\exp\bigl(r_\theta\lag V, \zeta_\theta\rag\bigr).
\end{equation}   We have the following version of
Theorem~2.1 in~\cite{kifer-1990} and    Theorem~3.3 in~\cite{JNPS-2014}. 
\begin{theorem}\label{T:2.3}
Suppose that  there is a function $\varPhi : X  \to [0,+\ty]$   whose level sets $ \{u \in X : \varPhi(u)\le a\}$ are   compact for all $a\ge0$  and 
\begin{equation}\label{2.4}
\e_\theta\exp\bigl(r_\theta \lag \varPhi,\zeta_\theta\rag \bigr)\le 
Ce^{c r_\theta}\quad \text{for $\theta\in \Theta$},
\end{equation} for some positive constants  $C$ and $c$.
 Then $I$ defined by~\eqref{2.2} is  a good rate function,   for any closed set $F\subset\ppp(X) $, 
\be 
\limsup_{\theta\in\Theta} \frac{1}{r_\theta}\log\pp_\theta\{\zeta_\theta\in   F\}\le - I (  F),\label{2.6U}
\ee
 and for any open set   $G\subset\ppp(X) $,
 \be
\liminf_{  \theta\in\Theta} \frac1{r_\theta}\log\pp_\theta\{\zeta_\theta\in  G \}\ge-I (\W \cap   G ),   \label{2.6L}
\ee where $\W := \{\sigma_V: V\in \VV\}$ and $I(\Gamma):=\inf_{\sigma\in \Gamma} I (\sigma)$, $\Gamma\subset \ppp(X)$.
\end{theorem}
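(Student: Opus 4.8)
The strategy is to adapt the proof of Theorem~3.3 in~\cite{JNPS-2014}, separating cleanly the upper bound~\eqref{2.6U}, which uses only the general properties of $Q$ and the exponential tightness hypothesis~\eqref{2.4}, from the lower bound~\eqref{2.6L}, where the local nature of the statement enters. First I would record the elementary properties of $Q$ already noted before the statement: $Q$ is convex, $1$-Lipschitz, $Q(C)=C$, $Q\ge0$ on $C_+(X)$, and its Legendre transform $I$ defined by~\eqref{2.2} is convex and lower semicontinuous, with the dual representation $Q(V)=\sup_{\sigma\in\ppp(X)}(\langle V,\sigma\rangle-I(\sigma))$. The goodness of $I$ (compactness of sublevel sets) follows from~\eqref{2.4}: the function $\varPhi$ yields, via Chebyshev/Markov in the exponential form and Prokhorov's criterion applied on $\ppp(X)$, that $\{\sigma:I(\sigma)\le a\}$ is contained in a set of measures $\sigma$ with $\langle\varPhi,\sigma\rangle\le a+c$, which is tight, hence relatively compact; lower semicontinuity then gives closedness. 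This is the part that also provides the \emph{exponential tightness} of the family $\{\zeta_\theta\}$, namely that for each $a>0$ there is a compact $\KK_a\subset\ppp(X)$ with $\limsup_\theta r_\theta^{-1}\log\pp_\theta\{\zeta_\theta\in\KK_a^c\}\le-a$.

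For the upper bound~\eqref{2.6U}, I would follow the classical Varadhan--Bryc scheme exactly as in~\cite{kifer-1990, JNPS-2014}: by exponential tightness it suffices to prove~\eqref{2.6U} for compact $F$ (Lemma~1.2.18 in~\cite{DZ2000}); then for a compact $F$ and any $\sigma\notin F$ one picks, using the definition~\eqref{2.2} of $I$, a function $V\in C_b(X)$ separating $\sigma$ from a neighbourhood and controlling $Q(V)$, applies the exponential Chebyshev inequality $\pp_\theta\{\zeta_\theta\in\mathcal{O}\}\le e^{-r_\theta\inf_{\mathcal O}\langle V,\cdot\rangle}\e_\theta e^{r_\theta\langle V,\zeta_\theta\rangle}$, takes $\limsup$ to bring in $Q(V)$, covers $F$ by finitely many such neighbourhoods, and optimizes. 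Note that this step uses only $Q$ (via $\limsup$), not the existence of the limit~\eqref{2.1a}, so no restriction to $\VV$ is needed here.

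The lower bound~\eqref{2.6L} is where I expect the main work, and it is the genuinely ``local'' ingredient. Given an open $G\subset\ppp(X)$, it suffices to show $\liminf_\theta r_\theta^{-1}\log\pp_\theta\{\zeta_\theta\in G\}\ge -I(\sigma_V)$ for every $V\in\VV$ with $\sigma_V\in G$ (if no such $V$ exists the right side of~\eqref{2.6L} is $+\infty$ by convention and there is nothing to prove). Fix such a $V$. The idea, following~\cite{JNPS-2014}, is a tilting (change-of-measure) argument: introduce the tilted family $\tilde\pp_\theta$ with Radon--Nikodym density proportional to $\exp(r_\theta\langle V,\zeta_\theta\rangle)$ relative to $\pp_\theta$; then for any neighbourhood $\mathcal U\ni\sigma_V$ contained in $G$,
\[
\frac1{r_\theta}\log\pp_\theta\{\zeta_\theta\in\mathcal U\}
\ge -\|V\|_\infty\,\big(\text{something}\big)+\frac1{r_\theta}\log\e_\theta e^{r_\theta\langle V,\zeta_\theta\rangle}
-\sup_{\sigma\in\mathcal U}\langle V,\sigma\rangle+\frac1{r_\theta}\log\tilde\pp_\theta\{\zeta_\theta\in\mathcal U\}.
\]
More precisely one writes $\pp_\theta\{\zeta_\theta\in\mathcal U\}\ge e^{-r_\theta\sup_{\mathcal U}\langle V,\cdot\rangle}\,\e_\theta e^{r_\theta\langle V,\zeta_\theta\rangle}\,\tilde\pp_\theta\{\zeta_\theta\in\mathcal U\}$, uses the \emph{existence} of the limit~\eqref{2.1a} to replace $r_\theta^{-1}\log\e_\theta e^{r_\theta\langle V,\zeta_\theta\rangle}$ by $Q(V)$, and is left to prove that $\tilde\pp_\theta\{\zeta_\theta\in\mathcal U\}\to1$ (or at least is not exponentially small) for every neighbourhood $\mathcal U$ of $\sigma_V$. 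This last point is exactly where the uniqueness of the equilibrium state is used: under the tilted measures the family $\{\zeta_\theta\}$ is still exponentially tight (because $\langle\varPhi,\cdot\rangle$ bounds are preserved up to the factor $e^{r_\theta\|V\|_\infty}$ absorbed by $\e_\theta e^{r_\theta\langle V,\zeta_\theta\rangle}\ge e^{r_\theta\langle V,\mu\rangle}$-type lower bounds, or more robustly because the tilted partition function has exponential rate $Q(V)$), hence any subsequential weak limit $\nu$ of $\tilde\pp_\theta\circ\zeta_\theta^{-1}$ is concentrated on $\ppp(X)$; a standard computation (see Lemma~3.2 and the proof of Theorem~3.3 in~\cite{JNPS-2014}) shows that such $\nu$ must be supported on the set of equilibrium states for $V$, which by hypothesis $V\in\VV$ is the singleton $\{\sigma_V\}$; therefore $\tilde\pp_\theta\circ\zeta_\theta^{-1}\Rightarrow\delta_{\sigma_V}$, giving $\tilde\pp_\theta\{\zeta_\theta\in\mathcal U\}\to1$. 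Combining, $\liminf_\theta r_\theta^{-1}\log\pp_\theta\{\zeta_\theta\in\mathcal U\}\ge Q(V)-\sup_{\mathcal U}\langle V,\cdot\rangle$, and letting $\mathcal U$ shrink to $\sigma_V$ together with the identity $Q(V)=\langle V,\sigma_V\rangle-I(\sigma_V)$ yields $\ge -I(\sigma_V)$. Taking the supremum over admissible $V$ gives~\eqref{2.6L}.

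The main obstacle, and the only place where care beyond transcribing~\cite{JNPS-2014} is needed, is justifying that the tilted family remains exponentially tight and that its weak limits are supported on equilibrium states, purely from~\eqref{2.4} and the convex-analytic identities for $Q$ and $I$, without any density of $\VV$ in $C_b(X)$; this is precisely the adaptation that makes the theorem ``local.'' Everything else (the properties of $Q$, goodness of $I$, the Varadhan upper bound, the change-of-measure bookkeeping) is routine and I would treat it briskly.
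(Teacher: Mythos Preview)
Your treatment of the goodness of $I$ and the upper bound~\eqref{2.6U} matches the paper: both defer to Steps~1--2 of the proof of Theorem~3.3 in~\cite{JNPS-2014}, with the observation (already in~\cite{Ac85}) that the $\limsup$ definition~\eqref{2.1} suffices for the upper bound.

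For the lower bound~\eqref{2.6L} you take a genuinely different route. The paper follows Kifer's original strategy: given $\nu_\es=\sigma_{V_1}\in\W\cap G$, it chooses $V_2,V_3,\ldots\in C_b(X)$ metrising the weak topology on a suitable compact $\KK\subset\ppp(X)$, passes to the finite-dimensional projections $\zeta_\theta^n=(\langle V_1,\zeta_\theta\rangle,\ldots,\langle V_n,\zeta_\theta\rangle)\in\R^n$, and invokes a local finite-dimensional LDP (Proposition~\ref{L:2.4}, the analogue of Proposition~3.4 in~\cite{JNPS-2014}) for these. The uniqueness of the equilibrium state and the existence of the limit~\eqref{2.1a} are used at the finite-dimensional level, where the analysis reduces to differentiability of the convex function $\beta\mapsto Q(\sum\beta_jV_j)$ on $\R^n$.

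Your direct tilting argument on $\ppp(X)$ is correct in principle, but the step you call ``a standard computation'' --- that under the tilted law the occupation measures concentrate at $\sigma_V$ --- is precisely the heart of the matter and is not what~\cite{JNPS-2014} does; their Theorem~3.3 goes through the finite-dimensional reduction. The clean way to justify your concentration step is to observe that the tilted family has pressure $\tilde Q(W)=Q(V+W)-Q(V)$ (using $V\in\VV$ so the normalising constant has a limit) and hence rate function $\tilde I(\sigma)=I(\sigma)-\langle V,\sigma\rangle+Q(V)$, which vanishes only at equilibrium states for $V$; exponential tightness of the tilted family follows from~\eqref{2.4} since $\tilde\e_\theta e^{r_\theta\langle\varPhi,\zeta_\theta\rangle}\le Ce^{r_\theta(c+2\|V\|_\infty)}$; then the \emph{already established} upper bound, applied to the tilted family, gives $\tilde\pp_\theta\{\zeta_\theta\in\mathcal U^c\}\to0$ for every neighbourhood $\mathcal U\ni\sigma_V$. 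You should make this self-referential use of the upper bound explicit rather than appealing vaguely to weak limits. The paper's finite-dimensional route has the advantage of quoting~\cite{kifer-1990,JNPS-2014} verbatim; your route is more conceptual and avoids the projection machinery, at the cost of needing to reapply the upper bound to a new family.
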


 \begin{proof}  The fact that $I$ is a good rate function is shown  in Step 1  of the proof of Theorem~3.3 in~\cite{JNPS-2014}.   In Step 2 of the same proof,  the upper bound~\eqref{2.6U} is established,  under the condition that  the limit $Q(V)$ in~\eqref{2.1a} exists for any $V\in C_b(X)$. The latter condition can be removed, using literally the same proof, if one defines    $Q(V)$   by \eqref{2.1} for any $V\in C_b(X)$ (see Theorem~2.1 in~\cite{Ac85}). 

\smallskip
    To prove the lower bound,  
following  the ideas of~\cite{kifer-1990}, for   any integer~$n\ge1$ and any functions $V_1, \ldots, V_n\in  C_b(X)$, we 
  define an  auxiliary family    of finite-dimensional   random variables    $\zeta_\theta^n:= f_n(\zeta_\theta)$, where $f_n: \ppp(X)\to \R^n$ is given by      
$$
f_n(\mu):=\bigl(\lag V_1,\mu\rag, \ldots, \lag V_n,\mu\rag\bigr).
$$ Let us set 
$$
\W_n:=\{\sigma_V: V\in \VV\cap \textup{span}\{V_1, \ldots, V_n\}\}.
$$
 The following result is a local version   of Lemma~2.1 in~\cite{kifer-1990} and Proposition~3.4 in~\cite{JNPS-2014}; its proof is   sketched  at the end of this section.

\begin{proposition} \label{L:2.4} 
Assume that the hypotheses of Theorem~\ref{T:2.3} are satisfied and set $J_n(\Gamma)=\inf_{\sigma\in f_n^{-1}(\Gamma)} I(\sigma), \Gamma \subset \R^n$. Then 
  for any closed set $M\subset\R^n $ and open set $U\subset \R^n $, we have 
\begin{align} 
   \limsup_{\theta\in\Theta}\frac1{r_\theta}\log\pp\{ \zeta_\theta^n \in   M\}&\le -J_n( M  ),\label{9.01}\\
 \liminf_{\theta\in\Theta} \frac1{r_\theta}\log\pp\{  \zeta_\theta^n \in   U\} &\ge -J_n(f_n(\W_n)\cap   U ).\label{9.32}
\end{align}
\end{proposition}To derive \eqref{2.6L} from Proposition~\ref{L:2.4}, we follow the arguments of Step~4 of the proof of Theorem~3.3 in~\cite{JNPS-2014}.
The case $I(\W\cap G)=+\ty$ is trivial, so we  assume that $I(\W\cap G)<+\ty$. Then for  any $\es>0$,   there is $\nu_\es\in \W\cap G$ such that 
\begin{equation}\label{2.20}
I(\nu_\es)\le I(\W\cap G)+\es,
\end{equation} and there is a function      $V_1\in \VV$ such that    $\nu_\es=\sigma_{V_1}$. By Lemma~3.2 in~\cite{JNPS-2014}, the family $\{\zeta_\theta\}$ is exponentially tight, hence there is a compact set
 $\KK \subset\ppp(X)$ such that $\nu_\es\in \KK $ and 
\begin{equation}\label{zzerfs}
\limsup_{\theta\in \Theta}\frac{1}{r_\theta}\log \pp\{\zeta_\theta \in \KK ^c\}\le -(I(\W\cap G)+1+\es).
\end{equation}
We  choose functions $V_k\in C_b(X), k\ge2$,   $\|V_k\|_\infty=1$ such that  
$$
d(\mu,\nu):=\sum_{k=1}^\ty 2^{-k}|\lag V_k,\mu\rag-\lag V_k,\nu\rag|
$$ defines a metric   on~$\KK $ compatible with the weak topology. 
 As~$G$ is open, there are~$\delta>0$ and~$n\ge1$ such that if 
$$
\sum_{k=1}^n2^{-k}|\lag V_k,\nu\rag-\lag V_k,\nu_\es\rag|<\delta
$$ for some    $\nu\in\KK $, then  
  $\nu\in G$. Let  $x_\es:=f_n(\nu_\es)$, and denote by $\mathring{B}_{\R^n}(x_\es,\delta)$  the open ball in $\R^n$ of radius $\delta > 0$ centered at $x_\es$, with respect to the norm 
  $$
\|x\|_n:=\sum_{k=1}^n 2^{-k}|x_k|, \quad x=(x_1, \ldots, x_n).
$$
  Then we have   $f_n^{-1}\bigl(\mathring{B}_{\R^n}(x_\es,\delta)\bigl)\cap\KK \subset G$, hence \begin{align*} 
\pp\{\zeta_\theta\in  G\}&\ge\pp\{\zeta_\theta\in  G\cap \KK \}
\ge \pp\bigl\{\zeta_\theta\in  f_n^{-1}\bigl(\mathring{B}_{\R^n}(x_\es,\delta)\bigl)\cap\KK \bigr\}\\ 
&=\pp\{\zeta_\theta^n\in  \mathring{B}_{\R^n}(x_\es,\delta)\}
-\IP\{\zeta_\theta\in \KK ^c\}.
\end{align*}
Using the inequality  
$$\log(u-v)\ge \log u-\log 2, \q 0<v\le u/2
$$ and inequalities~\eqref{9.32}-\eqref{zzerfs},   we obtain   
\begin{align*}
\liminf_{\theta\in\Theta}  \frac1{r_\theta}\log\pp\{\zeta_\theta\in  G\}
&\ge \liminf_{\theta\in\Theta} \frac1{r_\theta}
\bigl(\log\pp\{\zeta_\theta^n\in  \mathring{B}_{\R^n}(x_\es,\delta)\}-\log 2\bigr)\\
&\ge -J_n(f_n(\W_n) \cap\mathring{B}_{\R^n}(x_\es,\delta))\ge -I_n(x_\es)\\
&\ge -I(\nu_\es) \ge -I(\W\cap G)-\es,
\end{align*} 
which proves~\eqref{2.6L}.  
\end{proof}

\begin{proof}[Sketch of the proof of Proposition~\ref{L:2.4}]
Inequality~\eqref{9.01} follows  from~\eqref{2.6U}.  To show \eqref{9.32}, for any $\beta=(\beta_1, \ldots, \beta_n)\in \R^n$ and $\alpha=(\alpha_1, \ldots, \alpha_n)\in \R^n$,  we set $V_\beta:=\sum_{j=1}^n\beta_jV_j$, $Q_n(\beta):=Q(V_\beta)$, and~$I_n(\alpha):=
\inf_{\sigma \in f_n^{-1}(\alpha)} I(\sigma)$.      One can verify that 
\begin{align*}
Q_n(\beta)&=\sup_{\alpha\in\R^n} \Bigl(\sum_{j=1}^n\beta_j\alpha_j-I_n(\alpha)\Bigr),\\
J_n(U)&=\inf_{\alpha\in  U}I_n(\alpha).
\end{align*}  
  Assume that   $J_n(f_n(\W_n)\cap U)<+\ty $, and  for any~$\es>0$,  choose~$\alpha_\es\in f_n(\W_n)\cap U$ such that 
$$I_n(\alpha_\es)<J_n(f_n(\W_n)\cap U)+\es.$$  Then
$\alpha_\es=f_n(\sigma_{V_{\beta_\es}})$ for some~$\beta_\es\in \R^n$   such that $V_{\beta_\es}\in \VV$. 
 It is easy to verify that  the following equality holds
$$
Q_n(\beta_\es)=\sum_{j=1}^n\beta_{\es j}\alpha_{\es j}-I_n(\alpha_\es).
$$
Literally repeating the proof of Proposition~3.4 in~\cite{JNPS-2014} (starting from equality~(3.16)) and using   the   
      uniqueness of the equilibrium state for $V=V_{\beta_\es}$ and the existence of limit~\eqref{2.1a},  one obtains 
$$
-J_n(f_n(\W_n)\cap U )-\es\le-I_n(\alpha_\es) \le\liminf_{\theta\in\Theta} \frac1{r_\theta}\log\pp\{  \zeta_\theta^n \in U  \}
$$for any $\es>0$. This implies \eqref{9.32}.
\end{proof}

\subsection{Large-time asymptotics for generalised Markov semigroups}
\label{S:5.2}

In this section, we give   a  continuous-time version of   Theorem~4.1  in~\cite{JNPS-2014}  with some modifications,  due to the fact that    the  generalised Markov family associated with the  stochastic NLW equation does not have a regularising property.   See also~\cite{KS-mpag2001,LS-2006, JNPS-2012} for some related results.  

 We start by recalling some terminology from \cite{JNPS-2014}.
 \begin{definition}\label{D:5.2} Let~$X$ be a Polish space.
 We shall say that $\{P_t(\uu,\cdot),\uu\in X, t\ge0\} $ is a {\it generalised Markov family of     transition kernels} if the following two  properties are satisfied. 
\begin{description}
\item[Feller property.] 
For any $t\ge0$,  the function $\uu\mapsto P_t(\uu,\cdot)$ is continuous from~$X$ to~$\MM_+(X)$   and does not vanish.
\item[Kolmogorov--Chapman relation.]  For any $t,s\ge0, \uu\in X$, and Borel set~$\Gamma\subset X$,  the following  relation  holds
$$
P_{t+s}(\uu,\Gamma)=\int_X P_s(\vv,\Gamma) P_t(\uu,\Dd \vv).
$$
\end{description} 
\end{definition}
To any such family we associate two semigroups by the following relations:
\begin{align*}
&\PPPP_t:C_b(X)\to C_b(X),  \quad\,\,\,\quad \quad \PPPP_t \psi(\uu)=\int_X \psi(\vv) P_t(\uu,\Dd \vv),\\
&\PPPP_t^*:\MM_+(X)\to \MM_+(X), \quad \quad \PPPP_t^* \mu(\Gamma)=\int_X  P_t(\vv,\Gamma) \mu(\Dd \vv),\quad t\ge0.
\end{align*}  
  For a   measurable function ${\wwww}:X\to[1,+\infty]$ and a family $\CC\subset C_b(X)$, we denote by~$\CC^\wwww$ the set of functions $\psi\in L_\wwww^\infty(X)$ that can be approximated  with respect to~$\|\cdot\|_{L_\wwww^\infty}$   by finite linear combinations of functions from~$\CC$.   We shall say that a family $\CC\subset C_b(X)$ is   {\it determining\/} if for any~$\mu,\nu\in\MM_+(X)$ satisfying   $\lag \psi,\mu\rag=\lag \psi,\nu\rag$ for all~$\psi\in\CC$, we have $\mu=\nu$. Finally, a family of functions~$\psi_t: X\to\R$ is   {\it uniformly equicontinuous\/} on a subset~$K\subset X$ if for any~$\es>0$ there is~$\delta>0$ such that $|\psi_t(\uu)-\psi_t(\vv)|<\es$ for any~$\uu\in K$, $\vv\in B_X(\uu,\delta)\cap K$, and~$t\ge1$. We have the following version   of  Theorem~4.1 in~\cite{JNPS-2014}.
\begin{theorem} \label{T:5.3}
Let~$\{P_t(\uu,\cdot),\uu\in X, t\ge0\} $ be  a generalised Markov family of     transition kernels  satisfying  the following four properties. 
\begin{description}
\item[Growth conditions.] There is   an increasing  sequence~$\{X_R\}_{R=1}^\infty$ of compact subsets of~$X$ such that~$X_\infty:=\cup_{R=1}^\ty X_R$ is dense in~$X$.  The measures $P_{t}(\uu,\cdot)$ are concentrated on~$X_\infty$ for any $\uu\in X_\ty$ and  $t>0$, and    there is a measurable function $\wwww:X\to[1,+\infty]$ and an integer~$R_0\ge1$ such that\,\footnote{The expression $(\PPPP_t\wwww)(\uu)$ is understood as an integral of a positive function~$\wwww$ against a positive measure $P_t(\uu,\cdot)$.}
\begin{align}
&\sup_{t\ge0}
\frac{\|\PPPP_t\wwww\|_{L_\wwww^\infty}}{\|\PPPP_t{\mathbf1}\|_{R_0}}<\infty,\label{5.8}\\
&\sup_{t\in [0,1]} \|\PPPP_t{\mathbf1}\|_{\infty}<\ty,\label{5.9}
\end{align}
where   $\|\cdot\|_R$ and $\|\cdot\|_\ty$ denote  the~$L^\infty$ norm on~$X_R$ and $X$, respectively, and we set $\infty/\infty=0$.

\item[Time-continuity.] 
For any  function $g\in L^\ty_\wwww(X_\ty)$ whose    restriction to~$X_R$ belongs to $C(X_R)$ and any $\uu\in X_\ty$,
 the function~$t\mapsto\PPPP_tg(\uu) $ is continuous from~$\R_+$ to $\R$.

\item[Uniform irreducibility.] 
For sufficiently large $\rho\ge1$, any   $R\ge 1$ and   $r>0$, there are positive numbers  $l=l(\rho,r,R)$ and~$p=p(\rho,r)$  such that
$$
P_l(\uu,B_{X}(\hat \uu,r))\ge p\quad\mbox{for all~$\uu\in X_R ,\,\hat \uu\in X_\rho$}. 
$$

\item[Uniform Feller property.]
There is a number~$R_0\ge1$ and a
 determining family~$\CC\subset  C_b(X)$      such that~${\mathbf1}\in\CC$ and the family $\{\|\PPPP_t{\mathbf1}\|_R^{-1}\PPPP_t\psi,t\ge1\}$ is uniformly equicontinuous on~$X_R$ for any~$\psi\in \CC$ and~$R\ge R_0$. 
\end{description}
Then~ for any $t>0$, there is at most one measure $\mu_t\in\ppp_\wwww(X)$ such that  $\mu_t(X_\ty)=1$ and
\begin{equation}\label{5.10}
\PPPP^*_t \mu_t=\la(t)\mu_t \quad \text{for some $\la(t)\in \R$} 
\end{equation}satisfying the following condition: 
\begin{equation} \label{5.11}
\|\PPPP_t\wwww\|_R\int_{X\setminus X_R}\wwww\dd\mu_t\to0
\quad\mbox{as $R\to\infty$}.
\end{equation}
Moreover, if such a measure~$\mu_t$ exists for all $t>0$, then it is independent of~$t$ (we set $\mu:=\mu_t$),  the corresponding eigenvalue is of the form~$\lambda(t)=\la^t$,   $\la>0$,~$\supp \mu=X$, 
  and     there is a  non-negative function~$h\in L^\ty_\wwww(X_\ty)$  such that  $\lag h,\mu\rag=1$,
  \be\label{eigenf}
(\PPPP_t h)(\uu)=\lambda^t h(\uu)\quad\mbox{for $\uu\in X_\ty$,  $t>0$},
\ee  the restriction of~$h$ to~$X_R$ belongs to $C_+(X_R)$,  
  and for any $\psi\in \CC^\wwww$ and $R\ge1$, we have 
\begin{equation}
\lambda^{-t}\PPPP_t \psi\to\lag \psi,\mu\rag h
\quad\mbox{in~$C(X_R)\cap L^1(X,\mu)$ as~$t\to\infty$}. \label{5.12}
\end{equation}
Finally, if a Borel set $B\subset X$ is such that
\begin{align} 
 \sup_{\uu\in B}\biggl(\int_{X\setminus X_R}\wwww(\vv)\,P_s(\uu,\Dd \vv)\biggr) &\to0
\quad\mbox{as $R\to\infty$} \label{5.13}
\end{align} for some   $s>0$, 
then for any $\psi\in \CC^\wwww$, we have
\begin{equation}
\lambda^{-t}\PPPP_t \psi\to\lag \psi,\mu\rag h
\quad\mbox{in~$L^\ty(B) $ as~$t\to\infty$}. \label{5.14}
\end{equation}
\end{theorem}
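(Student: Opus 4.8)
The plan is to adapt the proof of Theorem~4.1 in~\cite{JNPS-2014} from the discrete-time (kick-forced) setting to the continuous-time semigroups $\PPPP_t,\PPPP_t^*$. The argument splits into two parts: (a) the \emph{uniqueness}, for each fixed $t>0$, of an eigenvector $\mu_t\in\ppp_\wwww(X)$ with $\mu_t(X_\infty)=1$ satisfying the tail bound~\eqref{5.11}; and (b), under the standing assumption that such $\mu_t$ exists for all $t>0$ (supplied, in the NLW application, by Proposition~\ref{e23}), the derivation of the $t$-independence of $\mu_t$, of the multiplicative form $\lambda(t)=\lambda^t$ of the eigenvalue, of the existence, uniqueness and strict positivity of the eigenfunction $h$, and of the convergences~\eqref{5.12} and~\eqref{5.14}. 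The one genuinely new ingredient compared with~\cite{JNPS-2014} is the lack of a regularising property for $\PPPP_t$; it is systematically compensated by working on the exhausting family of compacts $\{X_R\}$ and by discarding the mass outside $X_R$ via the growth conditions~\eqref{5.8}--\eqref{5.9} and the tail bound~\eqref{5.11}.

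To construct the eigenfunction I would set $g_t:=\|\PPPP_t{\mathbf1}\|_{R_0}^{-1}\PPPP_t{\mathbf1}$ for $t\ge1$ (here $\PPPP_t{\mathbf1}\not\equiv0$ by the Feller property and the Kolmogorov--Chapman relation). The uniform Feller property applied to $\psi={\mathbf1}\in\CC$ makes $\{g_t\}_{t\ge1}$ uniformly equicontinuous on each $X_R$, and~\eqref{5.8} makes it bounded on $X_R$ by a constant times $\sup_{X_R}\wwww$; Arzel\`a--Ascoli and a diagonal extraction over $R$ then produce, along a subsequence $t_k\to\infty$, a limit $h\ge0$ with $h|_{X_R}$ continuous and $0\le h\le C\wwww$ on $X_\infty$. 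Using $\PPPP_{s+t_k}=\PPPP_s\PPPP_{t_k}$ together with the time-continuity property and~\eqref{5.9} to pass to the limit on $X_\infty$ (the contribution of $X\setminus X_R$ absorbed through~\eqref{5.11}), one obtains $\PPPP_s h=\lambda^s h$ on $X_\infty$ for every $s>0$, where $\lambda:=\lim_{t\to\infty}\|\PPPP_t{\mathbf1}\|_{R_0}^{1/t}$ exists by a sub-/super-multiplicativity estimate, and comparing with $\PPPP_t^*\mu_t=\lambda(t)\mu_t$ forces $\lambda(t)=\lambda^t$. The uniform irreducibility then gives $h>0$ on $X_\infty$, i.e. $h|_{X_R}\in C_+(X_R)$, and shows $\lag h,\mu\rag$ to be finite and positive, so that $h$ may be normalised by $\lag h,\mu\rag=1$; the same inputs also show that the scalar ratios $\|\PPPP_t{\mathbf1}\|_{R_0}\lambda^{-t}$ converge to a positive constant, so in fact $\lambda^{-t}\PPPP_t{\mathbf1}\to h$ without passing to a subsequence.

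For uniqueness I would observe that two admissible eigenvectors $\mu,\mu'$ of $\PPPP_t^*$ must both have eigenvalue $\lambda^t$ (test $\PPPP_{nt}^*\mu=\lambda(t)^n\mu$ against $g_{nt}$), and then both are fixed points of the affine map $\nu\mapsto\lambda^{-t}\PPPP_t^*\nu$ on the class of $\nu\in\ppp_\wwww(X)$ concentrated on $X_\infty$ and satisfying~\eqref{5.11}; a Doeblin-type minorisation coming from the uniform irreducibility on $X_\rho$, together with the growth and tail conditions used to neglect the mass in $X\setminus X_R$, shows that this map has at most one such fixed point --- the continuous-time analogue of the corresponding step in~\cite{JNPS-2014}. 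This also yields the $t$-independence of $\mu_t$ (apply $\PPPP_s^*$ to $\mu_t$ and check it is again an admissible eigenvector of $\PPPP_t^*$, hence equal to $\mu_t$). For~\eqref{5.12} I would first treat $\psi\in\CC$: using the moment bounds in~\eqref{5.8}--\eqref{5.9}, the normalised dual iterates $\lambda^{-t}\PPPP_t^*\delta_{\uu}$ are tight with total mass $\to h(\uu)$, any weak limit when normalised is an admissible eigenvector of every $\PPPP_s^*$, hence equals $\mu$ by uniqueness, so $\lambda^{-t}\PPPP_t^*\delta_{\uu}\rightharpoonup h(\uu)\mu$; pairing with $\psi$ gives $\lambda^{-t}\PPPP_t\psi(\uu)\to\lag\psi,\mu\rag h(\uu)$ on $X_\infty$, the uniform Feller property promotes this to convergence in $C(X_R)$, and the $L^1(X,\mu)$ convergence follows from~\eqref{5.11} and dominated convergence. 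The extension from $\CC$ to $\CC^\wwww$ is precisely the approximation argument used in Step~3 of the proof of Theorem~\ref{T:1.1}: write $\psi=\psi_n+(\psi-\psi_n)$ with $\|\psi-\psi_n\|_{L_\wwww^\infty}\to0$, split $\psi-\psi_n$ over $X_\rho$ and $X_\rho^c$, and bound the second piece uniformly in $t$ via~\eqref{5.8}; the same reasoning with $B$ and~\eqref{5.13} in place of $X_R$ and~\eqref{5.11} gives~\eqref{5.14}.

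The main difficulty I anticipate is the uniqueness step, together with the related task of showing that \emph{every} subsequential limit of the normalised semigroup is the same: because $\PPPP_t$ is not compact one cannot simply invoke a Krein--Rutman or ergodic argument on a compact state space, so the needed compactness has to be manufactured on each $X_R$ and then reconciled across $R$, and the uniform irreducibility (which provides the mixing that pins down the limit) must be interlocked with the growth and tail conditions so that the contribution of $X\setminus X_R$ stays uniformly negligible. A secondary delicate point is upgrading the eigenrelation for $h$ from each $X_R$ to all of $X_\infty$, which rests on~\eqref{5.9} and on a careful passage to the limit in the Kolmogorov--Chapman identity.
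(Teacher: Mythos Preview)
Your proposal is correct in spirit but takes a longer route than the paper. You propose to \emph{adapt} the proof of Theorem~4.1 in~\cite{JNPS-2014} to continuous time, reconstructing the eigenfunction via Arzel\`a--Ascoli, the uniqueness via a Doeblin minorisation, and the convergence via weak limits of $\lambda^{-t}\PPPP_t^*\delta_\uu$. The paper instead \emph{applies} that discrete-time theorem as a black box: for each fixed $t>0$ the conditions of Theorem~4.1 in~\cite{JNPS-2014} hold for the one-step kernel $\tilde P=P_t$ (with minor modifications in the hypotheses, noted in a footnote), which immediately yields the uniqueness of $\mu_t$, the existence of $h_t$, $\supp\mu_t=X$, and the convergence $\lambda(t)^{-k}\PPPP_{tk}\psi\to\langle\psi,\mu_t\rangle h_t$ along the discrete times $tk$. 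The continuous-time content then reduces to two short observations: (i) Kolmogorov--Chapman gives $\mu_t=\mu_1$ and $\lambda(t)=\lambda(1)^t$ first for $t\in\Q_+^*$, and the time-continuity property extends this to all $t>0$, whence also $h_t=h$; (ii) to upgrade convergence from the arithmetic progressions $tk$ to all $t\to\infty$, one notes that after the normalisation $\lambda=1$ the map $t\mapsto\|\PPPP_tg\|_{L^1(\mu)}$ is non-increasing (since $\PPPP_t^*\mu=\mu$), so the discrete-time limit $\|\PPPP_{tk}g\|_{L^1(\mu)}\to0$ forces $\|\PPPP_tg\|_{L^1(\mu)}\to0$, and uniform equicontinuity on $X_R$ promotes this to $C(X_R)$.

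What each approach buys: yours is more self-contained and makes the mechanism (Arzel\`a--Ascoli plus irreducibility plus tail control) fully explicit, at the cost of redoing the substantial uniqueness and convergence arguments of~\cite{JNPS-2014}. The paper's route is much shorter and isolates exactly the two places where continuous time matters---the time-continuity hypothesis (to pass from rational to all $t$) and the monotonicity trick for $\|\PPPP_tg\|_{L^1(\mu)}$ (to pass from subsequences to the full limit). In particular, the ``secondary delicate point'' you flag (upgrading the eigenrelation for $h$ to all of $X_\infty$) disappears in the paper's approach: it is inherited directly from the discrete-time theorem applied at each $t$, and the identification $h_t=h$ follows by taking $\psi=\mathbf1$ in the discrete-time convergence.
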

\begin{proof}[Sketch of the proof] {\it Step~1: Existence of eigenvectors $\mu$ and $h$.}
 For any~$t>0$,
the conditions of  Theorem~4.1 in~\cite{JNPS-2014} are  satisfied\footnote{Let us note that in Theorem~4.1 in~\cite{JNPS-2014} it is assumed that the measures~$P_{t}(\uu,\cdot)$ are concentrated on~$X_\infty$ for any $\uu\in X$. Here this  is replaced  by the 
   condition that the measures~$P_{t}(\uu,\cdot)$ and $\mu_t$ are concentrated on~$X_\infty$ for any $u\in X_\ty$.  The
   uniform irreducibility property is slightly different from the one assumed in~\cite{JNPS-2014}. Both  modifications  are due to the  lack of a   regularising property   for  the   stochastic NLW equation.  
          These changes   do not affect the proof given  in~\cite{JNPS-2014}, one only needs   to 
 replace inequality~(4.16) in the proof by    the inequality 
\be\label{modif}
\sup_{k\ge0}\|\PPPP_k\psi\|_{L_\wwww^\infty(X)}\le M_1\,\|\psi\|_{L_\wwww^\infty(X)}
\quad\mbox{for any $\psi\in L_\wwww^\infty(X)$},
\ee  and literally repeat all the arguments.  The  proof of \eqref{modif} is similar to the one of~(4.16). Under these modified  conditions, the 
concept of eigenfunction for $\PPPP_t$ is understood in a weaker 
sense; namely, relation \eqref{eigenf}   needs to hold only for $\uu\in X_\infty$.}  for the      discrete-time semigroup~$\{\tilde\PPPP_k=\PPPP_{tk}, k\ge1\}$ generated by~$\tilde P=P_t$.        So that theorem implies the existence of    at most one measure~$\mu_t\in\ppp_\wwww(X)$ satisfying~$\mu_t(X_\ty)=1$,~\eqref{5.10}, and~\eqref{5.11}.
 Moreover, if such a measure $\mu_t$ exists for any $t>0$,   it   follows from the Kolmogorov--Chapman relation that           $\mu_t=\mu_1=:\mu$ and $\la(t)=(\la(1))^t=:\la^t$ for any  $t$ in the set $\Q_+^*$ of  positive rational numbers, i.e.,    
\begin{equation}\label{5.15}
\PPPP^*_t \mu=\la^t\mu \quad \text{for }t\in \Q_+^*. 
\end{equation}      
 Using the time-continuity property and density,   we get  that   \eqref{5.15} holds   for any~$t>0$. So we have $\mu_t=\mu$ and $\la(t)=\la^t$ for any $t>0$, by uniqueness of the eigenvector.  

Theorem 4.1 in~\cite{JNPS-2014} also implies  that $\supp \mu= X, \la>0$, and          there is a  non-negative function $h_t\in L^\ty_\wwww(X_\ty)$  such that $\lag h_t,\mu\rag=1$,  the restriction of~$h_t$ to~$X_R$ belongs to $C_+(X_R)$,  and
\begin{align}
& (\PPPP_{t} h_t)(\uu)=\lambda^{t} h_t(\uu)\quad\mbox{for $\uu\in X_\ty$}, \label{sahmana1}\\
&\lambda^{-tk}\PPPP_{tk} \psi\to\lag \psi,\mu\rag h_t
\quad\mbox{in~$C(X_R)\cap L^1(X,\mu)$ as~$k\to\infty$} \label{sahmana2}
\end{align}
     for any $\psi\in \CC^\wwww, R\ge1$, and $ t>0$. Taking $\psi={\mathbf1}$ in \eqref{sahmana2}, we see that
      $h_t=h_1=:h$ for any~$t\in \Q_+^*$. The continuity of   the function~$t\mapsto\PPPP_th(\uu) $ and~\eqref{sahmana1} imply that   $h_t=h$ for any $t>0$ and 
      \begin{equation}
      \lambda^{-tk}\PPPP_{tk} \psi \to\lag \psi ,\mu\rag h
\quad\mbox{in~$C(X_R)\cap L^1(X,\mu)$ as~$k\to\infty$}. \label{5.16}
      \end{equation}

\medskip
  {\it Step~2: Proof of  \eqref{5.12}.}
 First let us prove \eqref{5.12} for any $\psi \in\CC$. 
Replacing~$P_t(\uu, \Gamma)$ by $\la^{-t}P_t(\uu, \Gamma)$, we may
assume that~$\la = 1$.  Taking~$\psi =\mathbf1$ and $t=1$ in \eqref{5.16}, we obtain    $ \sup_{k\ge0} \|\PPPP_{k} \mathbf1    \|_R<\ty$. So using~\eqref{5.9}, we  get $ \sup_{t\ge0} \|\PPPP_{t} \mathbf1    \|_R<\ty$.
 This implies that 
  $\{  \PPPP_t \psi, t\ge1\}$ is uniformly equicontinuous on $X_R$ for any  $R\ge R_0$. 
   Setting $g=\psi-\lag \psi,\mu\rag h$, we need to prove that $ \PPPP_tg\to0$ in~$C(X_R)$ for any~$R\ge1$. Since~$\{ \PPPP_tg,t\ge1\}$ is uniformly equicontinuous on~$X_R$, the required assertion will be established if we prove that 
\begin{equation} \label{5.17}
\bbar  \PPPP_tg \bbar_\mu:=\lag \bbar\PPPP_tg\bbar, \mu\rag \to0\quad\mbox{as~$t\to\infty$}.
\end{equation}
For any~$\varphi \in L^\ty_\wwww(X)$, we have
$$
\bbar \PPPP_t \varphi\bbar_\mu \le\langle \PPPP_t |\varphi|,\mu\rangle
=\langle|\varphi|,\mu\rangle=\bbar\varphi\bbar_\mu,
$$
thus~$\bbar  \PPPP_tg\bbar_\mu$ is a non-increasing function in $t$. By~\eqref{5.16}, we have   $\bbar  \PPPP_{tk}g\bbar_\mu\to 0$  as~$k\to\ty$. This  proves  \eqref{5.17}, hence also  \eqref{5.12} for any $\psi\in\CC$. 

 An easy approximation argument shows that   \eqref{5.12} holds for any $\psi\in\CC^\wwww$ (see Step 4 of the proof of Theorem 4.1 in~\cite{JNPS-2014}). Finally,
the proof of~\eqref{5.14} under condition \eqref{5.13} is exactly the same as in Step 7 of the proof of the discrete-time case.
 \end{proof}

 \subsection{Proofs of some auxiliary assertions}\label{S:6.2}

 \subsubsection*{The Foia\c{s}-Prodi estimate} 
Here we briefly recall an a priori   estimate established in Proposition~4.1 in~\cite{DM2014}.
Let $\uu_t=[u,\dot u]$ and~$\vv_t=[v,\dot v]$ be some flows of the equations
\begin{align}
\p_t^2 u+\gamma \p_t u-\de u+f(u)&=h(x)+\p_t\ph(t,x),\label{FP"IN"1}\\
\p_t^2 v+\gamma \p_t v-\de v+f(v)+{\mathsf P}_N[f(u)-f(v)]&=h(x)+\p_t\ph(t,x)\label{FP"IN"2},
\end{align}
       where $\varphi$ is a function belonging  to $L^2_{loc}(\rr_+,L^2(D))$. We recall that~${\mathsf P}_N$ stands for the orthogonal projection in $L^2(D)$ onto the vector span $H_N$ of the functions~$e_1,e_2,\ldots,e_N$ and $P_N $ is the projection in $\h$ onto~$\h_N:=H_N\times H_N$.
\begin{proposition}\label{4.13}
Assume that, for some non-negative numbers $s$  and $T$, we have $\uu,\vv\in C(s,s+T;\h)$. Then  
\be\label{FPE1}
|P_N(\vv_t-\uu_t)|_\h^2\le e^{-\alpha (t-s)}|\vv_s-\uu_s |_\h^2 \q \text{ for } s\leq t\leq s+T,
\ee
where $\al>0$ is the constant entering \ef{e40}.
If we suppose  that 
  the inequality holds
\be\label{4.17}
\int_s^t\|\g z\|^2\,d\tau\leq l+K(t-s) \q \text{ for } s\leq t\leq s+T
\ee
 for $z=u$ and $z=v$ and some positive numbers $K$  and $l$, then, for any $\es>0$, there is an integer $N_*=N_*(\es, K)\geq 1$   such that
\be \label{4.16}
|\vv_t-\uu_t |^2_\h\leq e^{-\al(t-s)+\es l}|\vv_s-\uu_s|^2_\h \q \text{ for } s\leq t\leq s+T
\ee   for all~$N\geq N_*$ and $ s\leq t\leq s+T$.
\end{proposition}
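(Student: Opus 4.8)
\textbf{Plan of proof for Proposition \ref{4.13}.}
The statement is essentially Proposition 4.1 in \cite{DM2014}, reproduced here in a form adapted to the two equations \eqref{FP"IN"1}--\eqref{FP"IN"2}. The plan is to set $\ww=\vv-\uu=[w,\dt w]$, write down the equation satisfied by the difference, and carry out an energy estimate in the norm $|\cdot|_\h$ given by \eqref{e40}. Subtracting \eqref{FP"IN"1} from \eqref{FP"IN"2} we get
$$
\p_t^2 w+\gamma\p_t w-\de w+(I-{\mathsf P}_N)[f(u+w)-f(u)]=0,
$$
so the low-frequency part of the nonlinear coupling term cancels, while the high-frequency part remains. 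Multiplying this equation by $2(\dt w+\al w)$ and integrating over $D$, one obtains, after the standard manipulations used already in the derivation of \eqref{2.19.3} and in Section \ref{S:4.1},
$$
\p_t|\ww(t)|_\h^2\le -\tfrac{3\al}{2}|\ww(t)|_\h^2+4\|(I-{\mathsf P}_N)[f(u)-f(v)]\|\,|\ww(t)|_\h .
$$
Applying ${\mathsf P}_N$ directly to the difference equation (rather than $I-{\mathsf P}_N$) kills the nonlinear term altogether, and the same computation restricted to $\h_N$ gives $\p_t|P_N\ww|_\h^2\le-\al|P_N\ww|_\h^2$, which integrates to \eqref{FPE1}; this is the easy half and requires no smallness of $N$.

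For \eqref{4.16} the key is to control the high-frequency remainder. First I would bound
$$
\|(I-{\mathsf P}_N)[f(u)-f(v)]\|\le |I-{\mathsf P}_N|_{L(H^{1,p}\to L^2)}\,|f(u)-f(v)|_{H^{1,p}}
$$
for a suitable $p\in(6/5,2)$, exactly as in \eqref{4.7.3}--\eqref{4.10.3}: using the growth hypothesis \eqref{1.8} on $f$ together with the H\"older and Sobolev inequalities and the choice $p=6(3+\rho)^{-1}$, one gets
$$
\|(I-{\mathsf P}_N)[f(u)-f(v)]\|\le C\,|I-{\mathsf P}_N|_{L(H^{1,p}\to L^2)}\,(\|u\|_1^2+\|v\|_1^2+1)\,|\ww|_\h .
$$
Plugging this into the energy inequality yields
$$
\p_t|\ww(t)|_\h^2\le\Big(-\tfrac{3\al}{2}+C\,\varkappa_N(\|u\|_1^2+\|v\|_1^2+1)\Big)|\ww(t)|_\h^2,
$$
where $\varkappa_N:=|I-{\mathsf P}_N|_{L(H^{1,p}\to L^2)}\to0$ as $N\to\infty$ by the compactness of the embedding $H^{1,p}(D)\hookrightarrow L^2(D)$ for $p>6/5$ (Rellich--Kondrachov). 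Now applying the Gronwall lemma on $[s,t]$ and invoking hypothesis \eqref{4.17} for $z=u$ and $z=v$,
$$
|\ww(t)|_\h^2\le\exp\!\Big(-\tfrac{3\al}{2}(t-s)+C\,\varkappa_N\!\int_s^t(\|u\|_1^2+\|v\|_1^2+1)\,d\tau\Big)|\ww(s)|_\h^2
\le\exp\!\Big(-\tfrac{3\al}{2}(t-s)+C\,\varkappa_N(2l+(2K+1)(t-s))\Big)|\ww(s)|_\h^2 .
$$
Given $\es>0$, one chooses $N_*=N_*(\es,K)$ so large that for $N\ge N_*$ one has $2C\varkappa_N\le\es$ and $C\varkappa_N(2K+1)\le\al/2$; then the exponent is bounded by $-\al(t-s)+\es l$, which is \eqref{4.16}.

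The routine parts here are the energy identity (the algebra is identical to computations already performed several times in the paper) and the Sobolev/H\"older bookkeeping for the nonlinear term. The only genuinely delicate point is the interplay between the three parameters: the constant $C$ in the nonlinear estimate depends on $p$ (hence on $\rho$, but not on $N$, $l$, or $T$), so that once $\varkappa_N$ is made small the absorption of the $\al/2$-term and the bound $2C\varkappa_N\le\es$ can be achieved simultaneously, and crucially $N_*$ then depends only on $\es$ and $K$ and not on $l$ or $T$ (the $l$-dependence being explicit in the exponential and the $T$-dependence disappearing into the linear-in-$(t-s)$ term absorbed by the decay). I expect this uniformity claim — that $N_*$ can be chosen independently of $l$ and $T$ — to be the main thing to check carefully, but it follows directly from the structure of the estimate above since $\varkappa_N\to0$ and $K$ enters only the coefficient of $(t-s)$.
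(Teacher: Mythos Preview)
Your proposal is correct and follows essentially the same approach as the paper: for \eqref{FPE1} the paper likewise observes that $P_N(\vv-\uu)$ solves the linear homogeneous damped wave equation and hence decays, while for \eqref{4.16} it refers to Proposition~4.1 of \cite{DM2014} (reproduced as Proposition~\ref{4.13.3} in Chapter~\ref{Chapter3}), whose proof is exactly the energy estimate, the $H^{1,p}$ bound with $p=6(3+\rho)^{-1}$, the compactness argument for $\varkappa_N\to0$, and the Gronwall step you outline. Your remark about the uniformity of $N_*$ in $l$ and $T$ is also handled the same way in the paper.
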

\bp  Estimate \eqref{4.16} is proved in Proposition~4.1 in~\cite{DM2014}. To prove \eqref{FPE1}, let us note that $\Zz=[z,\dot z]=P_N(\vv-\uu)$ is a solution of the linear equation
$$
\p_t^2 z+\gamma \p_t z-\de z=0.
$$ So we have 
$$|P_N(\vv_t-\uu_t)|_\h^2=|  \Zz_t|_\h^2\le e^{-\al(t-s)}|\Zz_s|_\h^2 \le e^{-\alpha (t-s)}|\vv_s-\uu_s |_\h^2.
$$
\ep

\subsubsection*{Proof of Proposition \ref{P:TVE}}
 
This proposition is essentially proved in Section 4.2 in \cite{DM2014} in a different form. However, since it plays a central role in the proof of   the uniform Feller property,   we find it worthwhile to give here a   detailed proof of it. As in~\cite{DM2014}, we follow the arguments presented in Section~3.3 of~\cite{KS-book} and Section~4 of~\cite{KN-2013}. As inequality \eqref{eoejtnvf} concerns only the laws of solutions, we can assume that    the underlying probability space~$(\omm,\fff,\pp)$ is
of a particular form.  We assume that   $\omm= C(\R_+, \R)$ is  endowed with the topology of uniform convergence on bounded intervals, $\pp$ is the law of the  Wiener process $\hat \xi=[0,\xi]$, where  $\xi$ is defined in \eqref{0.3}, and $\fff$ is the completion of the Borel $\sigma$-algebra of $\Omega$ with respect to $\pp$.

\smallskip
We introduce some notation. Let $\hat\h_N$ be the $N$-dimensional subspace of~$\h$ spanned by the vectors $\hat e_1,\hat e_2,\ldots,\hat e_N$, where $\hat e_j=[0,e_j]$. Then  $
 \Omega=\Omega_N\dt+\Omega_N^{\perp},
 $
 where $\Omega_N=C(\rr_+, \hat \h_N)$ and $\Omega_N^\perp=C(\rr_+, {\hat\h^\perp_N})$.
For $\om=\om_{1}\dt+\om_{2}$, we write $\om=(\om_{1},\om_{2})$.
 For any continuous process $\uu_t$ with range in $\h$, we introduce the functional
$$
\fff^\uu(t)=|\ees(\uu_t)|+\al\int_0^t|\ees (\uu_s)|\dd s,
$$
and the stopping time
$$
\tau^\uu=\inf\{t\geq 0:\fff^\uu(t)\geq \fff^\uu(0)+Lt+\rho\},
$$
where $L$ and $\rho$ are some positive constants to be chosen later. Now let us fix initial points $\Zz$ and $\Zz'$ in $\h$. We shall denote by $\uu_t$ and $\uu'_t$ the flows of \ef{0.1} issued from $\Zz$ and $\Zz'$, respectively, and by $\vv(t)$   the flow of \eqref{interm}. We
define a stopping time
$
\tilde\tau=\tau^{\uu}\wedge\tau^{\uu'}\wedge\tau^{\vv}
$
and a transformation $\Lambda:\Omega\to\Omega$ given by
$$
 \Lambda(\om)(t)=\om(t)-\int_0^t \ph(s)\dd s,\q \ph(t)=\ch_{t\leq\tilde\tau}\hat P_N(0,[f(u_t)-f(v_t)]),
$$
 where $\ch_{t\leq\tilde\tau}$ stands
for the indicator function of the interval $[0,\tilde\tau]$,
 $\hat P_N$ is the orthogonal projection in $\h$ onto $\hat \h_N$, and $u$ is the first component of $\uu$. Let us prove the following result, which is a global version of Lemma~4.3 in~\cite{DM2014}. 
\begin{lemma}\label{4.21}
For any initial points
 $\Zz$ and $\Zz'$ in $\h$, we have
\be\label{6.46}
 |\Lambda_*\pp-\pp|_{var}\leq \left[\exp\left(C_{N}|\Zz-\Zz'|_\h^2 e^{(|\ees(\Zz)|+|\ees(\Zz')|)+\rho} \right)-1\right]^{1/2},
 \ee
where $\Lambda_*\pp$ stands for the image of $\pp$ under $\Lambda$.
\end{lemma}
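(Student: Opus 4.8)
The statement to be proven is Lemma~\ref{4.21}, the bound on the total variation distance between $\pp$ and its image $\Lambda_*\pp$ under the Girsanov-type shift $\Lambda$. The plan is to use the Girsanov theorem on the canonical Wiener space, exactly as in the reference \cite{KS-book} (Section~3.3) and \cite{KN-2013} (Section~4), and to combine it with the feedback stabilisation estimate of Proposition~\ref{4.13} together with the stopping-time control built into the definition of $\tilde\tau$. The key point is that the drift $\ph(t)=\ch_{t\le\tilde\tau}\hat P_N(0,[f(u_t)-f(v_t)])$ is supported in the finite-dimensional space $\hat\h_N$, is adapted, and is cut off at the stopping time $\tilde\tau$, which forces a uniform (in time) a priori bound on its $L^2(\R_+;\h)$-norm. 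So the Novikov condition holds and Girsanov applies.

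\textbf{Step 1: Reduction via the projection structure.} Write $\Omega=\Omega_N\dotplus\Omega_N^\perp$ and note that $\Lambda$ acts only on the $\Omega_N$-component, so $\Lambda(\omega)=(\Psi(\omega),\omega_2)$ with $\Psi(\omega)(t)=\omega_1(t)-\int_0^t\ph(s)\,ds$. As in \cite{DM2014}, conditioning on $\omega_2$ gives
$$
|\Lambda_*\pp-\pp|_{var}\le\int_{\Omega_N^\perp}|\Psi_*(\pp_N,\omega_2)-\pp_N|_{var}\,\pp_N^\perp(d\omega_2),
$$
where $\pp_N,\pp_N^\perp$ are the images of $\pp$ under the natural projections onto $\Omega_N,\Omega_N^\perp$. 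Then the total variation of the laws of the shifted and unshifted finite-dimensional Wiener processes is estimated by Theorem~A.10.1 in \cite{KS-book}:
$$
|\Psi_*(\pp_N,\omega_2)-\pp_N|_{var}\le\f12\left(\left(\e\exp\left[6\max_{1\le j\le N}b_j^{-1}\int_0^\infty|\ph(t)|^2\,dt\right]\right)^{1/2}-1\right)^{1/2},
$$
valid provided the Novikov condition $\e\exp(C\int_0^\infty|\ph(t)|^2\,dt)<\infty$ holds for all $C>0$.

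\textbf{Step 2: Bounding the energy of the drift.} On the interval $[0,\tilde\tau]$ both $\uu$, $\uu'$ and $\vv$ satisfy $\fff^z(t)\le\fff^z(0)+Lt+\rho$, which, via inequality \eqref{7.9.3}-type bounds (here $|y|_\h^2\le 2|\ees(y)|+4C$), yields $\al\int_0^t\|\nabla z(s)\|^2\,ds\le 2(|\ees(\Zz)|\vee|\ees(\Zz')|+\rho)+2(L+2C)t$ for $z=u,v$. Applying Proposition~\ref{4.13} with $\es=\al/2$ (feedback stabilisation) on $[0,\tilde\tau]$ gives, for $N\ge N_*$,
$$
|\vv_t-\uu_t|_\h^2\le e^{-\al t+\theta}|\Zz-\Zz'|_\h^2,\qquad\theta=|\ees(\Zz)|\vee|\ees(\Zz')|+\rho,\quad t\le\tilde\tau.
$$
Combining this with the nonlinear estimate $\|f(v)-f(u)\|^2\le C\|v-u\|_1^2(1+\|u\|_1^4+\|v\|_1^4)$ (Hölder--Sobolev) and the bound $K(t)=(1+|\ees(\Zz)|\vee|\ees(\Zz')|+Lt+\rho)^2$ coming from the stopping-time control, one obtains
$$
\e\exp\Big(C\int_0^\infty|\ph(t)|^2\,dt\Big)=\e\exp\Big(C\int_0^{\tilde\tau}|\ph(t)|^2\,dt\Big)\le\e\exp\Big(C'|\Zz-\Zz'|_\h^2\int_0^\infty e^{-\al t+\theta}K(t)\,dt\Big),
$$
and the time-integral converges, producing a finite bound of the form $\exp(C_N|\Zz-\Zz'|_\h^2e^{(|\ees(\Zz)|+|\ees(\Zz')|)+\rho})$ after absorbing $|\ees(\Zz)|\vee|\ees(\Zz')|$ into $|\ees(\Zz)|+|\ees(\Zz')|$ and the polynomial factor $K$ into the exponential. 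Hence Novikov holds, Girsanov applies, and feeding the bound back into the Theorem~A.10.1 estimate from Step~1 gives exactly \eqref{6.46}.

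\textbf{Main obstacle.} The delicate part is Step~2: one must check that all the constants in the chain of estimates (the nonlinearity bound, the Foia\c{s}--Prodi exponent $\es=\al/2$, the polynomial prefactor $K(t)$) combine so that the exponent depends on $\Zz,\Zz'$ only through $|\Zz-\Zz'|_\h^2$ and $e^{(|\ees(\Zz)|+|\ees(\Zz')|)+\rho}$, with a constant $C_N$ independent of $\Zz,\Zz'$ and $\rho$. This requires keeping careful track of how the stopping time $\tilde\tau$ enforces the a priori growth bound \eqref{4.17} with $l\sim|\ees(\Zz)|\vee|\ees(\Zz')|+\rho$ and $K\sim L$, and verifying that $N_*$ depends only on $\es$ (hence on $\al$) and not on the initial data — this is precisely what Proposition~\ref{4.13} provides. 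Everything else is a routine adaptation of the argument already carried out locally in \cite{DM2014}; the novelty here is only that the bound is made global in $\Zz,\Zz'$ (no restriction to a unit ball), which is harmless since the dependence on the initial energies is tracked explicitly.
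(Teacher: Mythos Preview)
Your proposal is correct and follows essentially the same approach as the paper: decompose $\Lambda$ as acting only on the $\Omega_N$-component, condition on $\omega_2$, apply the Girsanov estimate (Theorem~A.10.1 in \cite{KS-book}), and verify Novikov by combining the stopping-time control with the Foia\c{s}--Prodi estimate of Proposition~\ref{4.13} and the nonlinear bound $\|f(v)-f(u)\|^2\le C\|v-u\|_1^2(1+\|u\|_1^4+\|v\|_1^4)$. The only cosmetic differences are that the paper presents the Foia\c{s}--Prodi step first (as Step~1) and the Girsanov reduction second (as Step~2), and applies Proposition~\ref{4.13} with $\es=\al/4$ rather than your $\es=\al/2$, yielding $\theta=\tfrac12(|\ees(\Zz)|\vee|\ees(\Zz')|+\rho)$; these affect only the value of $C_N$ and not the structure of the argument.
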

\begin{proof}[Proof of Lemma \ref{4.21}]  {\it Step~1.} 
By the definition of $\tilde\tau$, we have
\be\label{6.54}
\fff^\uu(t)\leq \fff^\uu(0)+Lt+\rho,\q
\fff^\vv(t)\leq\fff^{\uu'}(0)+Lt+\rho
\ee
for all $t\leq\tilde\tau$.
Let us show that there is an integer $N_1=N_1(L)$ such that, for all~$N\ge N_1$ and $t\leq\tilde\tau$, we have
\be\label{7.8}
|\vv(t)-\uu(t)|_\h^2\leq e^{-\al t+\theta}|\Zz'-\Zz|_\h^2,\q \theta=\f{|\ees(\Zz)|\vee|\ees(\Zz')|+\rho}{2}.
\ee
Indeed, thanks to \eqref{1.5} and the   Poincar\'e inequality, we have 
\begin{align*}
|\uu|_{\h}^2&\leq \left| |\uu|_{\h}^2+2\int_D F(u_1)\dd x\right|-2\int_D F(u_1)\dd x
\leq |\ees(\uu)|+2\nu \|u_1\|^2+2C\\
&\leq |\ees(\uu)|+\f{\lm_1}{4}\|u_1\|^2+2C\leq |\ees(\uu)|+\f{1}{4}|\uu|_\h^2+2C,
\end{align*}
for any $\uu=[u_1,u_2]$ in $\h$. Therefore
\be\label{7.9}
|\uu|_{\h}^2\leq 2 |\ees(\uu)|+3C.
\ee
Combining this inequality with \eqref{6.54}, we see that for all $t\leq\tilde\tau$
$$
\al\int_0^t\|\g w(s)\|^2\dd s \leq 2(|\ees(\Zz)|\vee |\ees(\Zz')|+\rho)+2(L+3C)t
$$
for $w=u$ and $w=v$. Using the above inequality and applying Proposition \ref{4.13} with~$\es=\al/4$, we  infer \eqref{7.8}.

\medskip
{\it Step 2.} 
Note that the transformation  $\Lambda$ can be written as
$
\Lambda(\om)=(\Upsilon(\om),\om_{2}),
$
where $\Upsilon:\omm\to\omm_N$ is given by
$$
\Upsilon(\om)(t)=\om_{1}(t)+\int_0^t \ph(s;\om)\dd s.
$$
It is not difficult to see that 
 \be \label{4.20}
 |\Lambda_*\pp-\pp|_{var}\leq\int_{\omm^\perp_N}|\Upsilon_*(\pp_N,\om_{2})-\pp_N|_{var}\pp^\perp_N(\Dd\om_{2}),
 \ee
 where $\pp_N$ and $\pp^\perp_N$ stand for the images of $\pp$ under the projections $\hat P_N:\omm\to\omm_N$ and $\hat Q_N:\omm\to\omm^\perp_N$, respectively.
Introduce
$$
 X=\om_{1}(t),\q
 \hat X=\om_{1}(t)+\int_0^t \ph(s;\om)\dd s.
$$
 Then $\pp_N$ coincides with the distribution $\DD(X)$ of the random variable $X$ and $\Upsilon_*(\pp_N,\om_{2})$ coincides with that of $\hat X$.
 By the Girsanov theorem (see Theorem A.10.1 in \cite{KS-book}), we have
 \be\label{Dz-D tilde z}
 |\DD(\hat X)-\DD(X)|_{var}\leq\f{1}{2}\left(\left(\e\exp\left[6\max_{1\leq j\leq N} b_j^{-1}\int_0^\infty |\ph(t)|^2\dd t\right]\right)^{\f{1}{2}}-1\right)^{\f{1}{2}},
 \ee
if we assume that the Novikov condition
 $$
 \e\exp\left(c\int_0^\infty |\ph(t)|^2\dd t\right)<\infty\q \text{ for any } c>0,
 $$
 is satisfied. To check this condition, first note that 
 \begin{align}\label{naxord}
\e\exp\left(c\int_0^\infty |\ph(t)|^2\dd t\right)&=\e\exp\left(c\int_0^{\tilde\tau} |\ph(t)|^2\dd t\right)\notag\\
&\leq\e\exp\left(c\int_0^{\tilde\tau} \|f(v_t)-f(u_t)\|^2\dd t\right).
 \end{align}Using \eqref{1.8}, the H\"older inequality, and  the  Sobolev embedding $H^{1} \hookrightarrow L^{6} $, we see that 
 $$
  \|f(v)-f(u)\|^2\le C_1 \|v-u\|_1^2(1+\|u\|_1^4+\|v\|_1^4).
 $$ Joining this together with inequalities \eqref{6.54}-\eqref{7.9} and \eqref{naxord}, we get
 
 \begin{align*}
\e\exp&\left(c\int_0^\infty |\ph(t)|^2\dd t\right)\\&\leq\e\exp\left(C_2 |\Zz'-\Zz|_\h^2\int_0^\infty e^{-\al t+\theta}(1+|\ees(\Zz)|\vee|\ees(\Zz')|+Lt+\rho)^2\dd t\right)\\&\le\exp\left(C_{3}|\Zz-\Zz'|_\h^2 e^{(|\ees(\Zz)|+|\ees(\Zz')|)+\rho} \right)<\infty.
 \end{align*}
 Finally, combining this with \ef{4.20} and \ef{Dz-D tilde z}, we get \eqref{6.46}.
\end{proof}

 \medskip
Now we can prove \ef{eoejtnvf}. Indeed, for each $\om$ belonging to the event~$\{\tilde\tau<\iin\}$,  let  us introduce auxiliary $\h$-continuous processes $ y_{\uu'}$ and $y_\vv$ defined as follows: for $t\leq \tilde\tau$ they coincide with the  processes $ \uu'$ and $\vv$, respectively, while for $t\geq\tilde\tau$ they solve 
$
\dt y=-m y. 
$ We choose $m\ge1$ so large that 
\be\label{e44}
\{\tau^{y_{\uu'}}<\iin\}\subset \{\tau^{{\uu'}}<\iin\}.
\ee
This construction implies that, with probability 1, we have
\be\label{4.29}
y_\vv(t,\om)=y_{\uu'}(t,\Lambda(\om)) \q\text{ for all } t\geq 0.
\ee
Let us denote by $\uu_1'$ and $\vv_1$ the restrictions of $\uu'(t)$ and $\vv(t)$ to the time interval~$[0, 1]$. Then
\begin{align*}
&|\lambda(\Zz, \Zz')-\lambda(\Zz')|_{var}=\sup_{\Gamma}|\pp\{\vv_1\in\Gamma\}-\pp\{\uu'_1\in\Gamma\}\notag|\\
&\q\leq \pp\{\tilde\tau<\iin\}+ \sup_{\Gamma}|\pp\{\vv_1\in\Gamma, \tilde\tau=\iin\}
-\pp\{\uu'_1\in\Gamma, \tilde\tau=\iin\}|=\elll_1+\elll_2,
\end{align*}
where the supremum is taken over all Borel subsets of $C(0, 1;\h)$.
Note that
$$
\elll_2\leq|\Lambda_*\pp-\pp|_{var}.
$$
Further, we have
$$
\elll_1\leq \pp\{\tau^\vv<\iin, \tau^\uu\wedge\tau^{\uu'}=\iin\}+\pp\{\tau^\uu<\iin\}+\pp\{\tau^{\uu'}<\iin\}.
$$
Moreover, thanks to \ef{4.29} and \eqref{e44}, we have
\begin{align*}
\pp\{\tau^\vv<\iin, \tau^\uu\wedge\tau^{\uu'}=\iin\}&\leq\pp\{\tau^{y_\vv}<\iin\}=\Lambda_*\pp\{\tau^{y_{\uu'}}<\iin\}\notag\\
&\leq \pp\{\tau^{y_{\uu'}}<\iin\}+|\Lambda_*\pp-\pp|_{var}\notag\\
&\leq \pp\{\tau^{{\uu'}}<\iin\}+|\Lambda_*\pp-\pp|_{var}.
\end{align*}
 Combining last four inequalities, we infer
$$
|\lambda(\Zz, \Zz')-\lambda(\Zz')|_{var}\leq 2\left(\pp\{\tau^{\uu}<\iin\}+\pp\{\tau^{u'}<\iin\}+|\Lambda_*\pp-\pp|_{var}\right).
$$
Finally using this with inequality \ef{6.46} and Corollary 3.3 from \cite{DM2014}, we get
$$
|\lambda(\Zz, \Zz')-\lambda(\Zz')|_{var}\leq 2e^{4\beta C-\beta \rho}+2\left[\exp\left(C_{N}|\Zz-\Zz'|_\h^2 e^{(|\ees(\Zz)|+|\ees(\Zz')|)+\rho} \right)-1\right]^{1/2},
$$
where $\beta=\al/8\,(\sup b_j^2)^{-1}$ and $C$ is the constant entering inequalities \eqref{1.8}-\eqref{1.6}. Denoting $a=2\beta/(\beta+1)$ and $C_*=2\exp(4\beta C)$, and making a change of variable~$\rho=-\beta^{-1}a\ln\es$, we derive \ef{eoejtnvf}.

 \subsubsection*{Proof of Proposition \ref{P:JNPS}}\label{S:6.2.1}

{\it Step~1: Preliminaries}.
 We denote by  $ \SSSS_t^{V,F}$ the semigroup   defined by \eqref{6.67}, and  write~$ \SSSS_t^{V}$ instead of   $ \SSSS_t^{V,0}$ (i.e.,   $F=\mathbf0$). Let
$\D({\mathbb L_V})$   be the space of functions~$\psi \in C_b(\h^\sS)$ such that  
\be\label{repQ1}
\SSSS_t^{V} \psi(\uu)=\psi(\uu)+\int_0^t \SSSS_\tau^{V} g(\uu) \dd\tau, \q t\ge0, \, \uu\in \h^\sS
\ee for some $g\in C_b(\h^\sS)$. Then the continuity of the mapping   $t\mapsto \SSSS_t^{V}g(\uu)$ from~$\R_+$ to $\R$ implies  
the following limit   
$$
g (\uu)=\lim_{t\to 0} \frac{ \SSSS_t^{V}\psi (\uu)- \psi (\uu)}{t},
$$ and proves the uniqueness of $g$ in representation \eqref{repQ1}. 
   We set   ${\mathbb L_V} \psi :=g$.   The proof is based on the following two lemmas.
   \begin{lemma}\label{Lem1} For any $F\in C_b(\h^\sS)$, the following properties hold 
 \begin{enumerate}
 \item[i)] For any $\psi \in \D({\mathbb L_V})$,  we have  $ \varphi_t:=\SSSS_t^{V,F}\psi \in \D({\mathbb L_V})$ and  
 $$
  \partial_t\varphi_t  =  ({\mathbb L_V}+ F)\varphi_t, \q   t>0. 
 $$
  \item[ii)] The set $\D_+:=\left\{\psi\in \D(\mathbb L_V): \inf_{\uu\in\h^\sS}\psi(\uu)>0\right\}$ is  determining for $\ppp(\h^\sS)$, i.e., if $\lag \psi, \sigma_1\rag =\lag \psi, \sigma_2\rag $ for some $\sigma_1,\sigma_2\in \ppp(\h^\sS)$ and any $\psi\in \D_+$, then~$\sigma_1=\sigma_2$.
 \end{enumerate}
    \end{lemma}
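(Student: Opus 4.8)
Write $\varphi_t:=\SSSS_t^{V,F}\psi$ and $\SSSS_t^V:=\SSSS_t^{V,{\mathbf0}}$, with $\SSSS_t^{V,F}$ defined by \eqref{6.67}. The plan uses repeatedly the following elementary facts. Since $\PPPP_t^V h_V=\lambda_V^t h_V$ by Theorem~\ref{T:1.1}, we have $\SSSS_t^V{\mathbf1}={\mathbf1}$; since the Feynman--Kac operators $\PPPP_t^V$ and $\PPPP_t^{V+F}$ are positivity preserving and $h_V>0$ on $\h^\sS$, so are $\SSSS_t^V$ and $\SSSS_t^{V,F}$. Hence $\SSSS_t^V$ is a Markov semigroup on $C_b(\h^\sS)$ with $\|\SSSS_t^V\psi\|_\infty\le\|\psi\|_\infty$, and $\SSSS_t^{V,F}$ is a semigroup on $C_b(\h^\sS)$ with $\|\SSSS_t^{V,F}\psi\|_\infty\le e^{t\|F\|_\infty}\|\psi\|_\infty$ (the mapping properties of $\PPPP_t^V,\PPPP_t^{V+F}$ on $C_\wwww(\h^\sS)$ used here are those recalled in Section~\ref{S:5}). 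Finally, for each $\uu\in\h^\sS$ the maps $t\mapsto\SSSS_t^V\psi(\uu)$ and $t\mapsto\SSSS_t^{V,F}\psi(\uu)$ are continuous, by the time-continuity property established in Step~1 of the proof of Theorem~\ref{T:1.1} applied to $\PPPP^V$ and $\PPPP^{V+F}$.

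\emph{Part (i): the Duhamel identity.} The first step is to prove
\be\label{eq:L1pl-a}
\varphi_t=\SSSS_t^V\psi+\int_0^t\SSSS_{t-s}^V\bigl(F\varphi_s\bigr)\,\dd s,\qquad \psi\in C_b(\h^\sS),\ t\ge0.
\ee
To obtain \eqref{eq:L1pl-a}, fix $\uu\in\h^\sS$, let $\{\uu_\tau\}$ be the trajectory issued from $\uu$, and let $\Xi_W$ be as in \eqref{S6ogt}. Differentiating in $s$ the function $s\mapsto\Xi_V(s)\exp\bigl(\int_s^t(V+F)(\uu_\tau)\,\dd\tau\bigr)$ and integrating over $s\in[0,t]$ gives the pathwise identity
\be\label{eq:L1pl-b}
\Xi_{V+F}(t)=\Xi_V(t)+\int_0^t F(\uu_s)\,\Xi_V(s)\exp\Bigl(\int_s^t(V+F)(\uu_\tau)\,\dd\tau\Bigr)\,\dd s.
\ee
Multiplying \eqref{eq:L1pl-b} by $(h_V\psi)(\uu_t)$, taking $\e_\uu$, and using Fubini and the Markov property at time $s$ in the second term yields $\PPPP_t^{V+F}(h_V\psi)=\PPPP_t^V(h_V\psi)+\int_0^t\PPPP_s^V\bigl(F\,\PPPP_{t-s}^{V+F}(h_V\psi)\bigr)\,\dd s$; multiplying by $\lambda_V^{-t}h_V^{-1}$ and inserting the eigenrelations $\PPPP_r^{V+F}(h_V\psi)=\lambda_V^r h_V\SSSS_r^{V,F}\psi$ and $\PPPP_r^V(h_V\,\cdot)=\lambda_V^r h_V\SSSS_r^V(\cdot)$ from Theorem~\ref{T:1.1} gives \eqref{eq:L1pl-a}. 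Combining \eqref{eq:L1pl-a} with the semigroup property $\varphi_{t+h}=\SSSS_h^{V,F}\varphi_t$ then produces, for all $t,h\ge0$,
\be\label{eq:L1pl-c}
\varphi_{t+h}=\SSSS_h^V\varphi_t+\int_0^h\SSSS_{h-r}^V\bigl(F\varphi_{t+r}\bigr)\,\dd r.
\ee

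\emph{Part (i): conclusion.} Assume now $\psi\in\D(\mathbb L_V)$, so $h^{-1}(\SSSS_h^V\psi-\psi)\to\mathbb L_V\psi$ as $h\to0^+$. Using \eqref{eq:L1pl-a} together with this fact, the semigroup property, the bound $\|\SSSS_s^V\|\le1$ and the continuity of $s\mapsto F\varphi_s$, one shows that the difference quotients $h^{-1}(\varphi_{t+h}-\varphi_t)$ converge (locally uniformly on the sets $X_R$, with an a priori geometric bound obtained by iterating \eqref{eq:L1pl-a}) to the unique continuous solution $w_t$ of the Volterra equation $w_t=\SSSS_t^V(\mathbb L_V\psi+F\psi)+\int_0^t\SSSS_s^V(Fw_{t-s})\,\dd s$; in particular $w_t\in C_b(\h^\sS)$. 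Then \eqref{eq:L1pl-c} gives $h^{-1}(\SSSS_h^V\varphi_t-\varphi_t)=h^{-1}(\varphi_{t+h}-\varphi_t)-h^{-1}\int_0^h\SSSS_{h-r}^V(F\varphi_{t+r})\,\dd r\to w_t-F\varphi_t=:\ell_t\in C_b(\h^\sS)$ as $h\to0^+$; by the semigroup property and the continuity of $\SSSS_s^V$, the map $s\mapsto\SSSS_s^V\varphi_t$ is $C^1$ with derivative $\SSSS_s^V\ell_t$, so that $\SSSS_s^V\varphi_t-\varphi_t=\int_0^s\SSSS_\tau^V\ell_t\,\dd\tau$. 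This is precisely relation \eqref{repQ1}, hence $\varphi_t\in\D(\mathbb L_V)$ with $\mathbb L_V\varphi_t=\ell_t=w_t-F\varphi_t$, i.e.\ $\partial_t\varphi_t=(\mathbb L_V+F)\varphi_t$. I expect the main obstacle to be exactly this convergence of the difference quotients: since $F\varphi_s\notin\D(\mathbb L_V)$ in general, one cannot differentiate \eqref{eq:L1pl-a} term by term, and one must route through the Volterra equation for $w$ — a bounded-perturbation-of-generator argument that has to be carried out in the present non-$C_0$ setting, using only the time-continuity and the Markov bound $\|\SSSS_t^V\|\le1$.

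\emph{Part (ii).} For $\mu>0$ and $g\in C_b(\h^\sS)$ set $R_\mu g:=\int_0^\infty e^{-\mu t}\SSSS_t^V g\,\dd t$; this belongs to $C_b(\h^\sS)$ (continuity in $\uu$ because $\SSSS_t^V g\in C_b(\h^\sS)$ and the integrand is dominated by $\|g\|_\infty e^{-\mu t}$), and the standard resolvent computation based on $\SSSS_t^V R_\mu g=e^{\mu t}R_\mu g-e^{\mu t}\int_0^t e^{-\mu r}\SSSS_r^V g\,\dd r$ shows that $R_\mu g\in\D(\mathbb L_V)$ with $\mathbb L_V R_\mu g=\mu R_\mu g-g$. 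Since $\SSSS_t^V{\mathbf1}={\mathbf1}$ and $\SSSS_t^V$ is positivity preserving, $g\ge c>0$ forces $R_\mu g\ge c/\mu$, so $\mu R_\mu g\in\D_+$ whenever $\inf g>0$. Moreover $\|\mu R_\mu g\|_\infty\le\|g\|_\infty$, and writing $\mu R_\mu g(\uu)=\int_0^\infty e^{-s}\SSSS_{s/\mu}^V g(\uu)\,\dd s$ and using continuity of $s\mapsto\SSSS_s^V g(\uu)$ at $s=0$ we get $\mu R_\mu g\to g$ pointwise on $\h^\sS$ as $\mu\to\infty$. Now let $\sigma_1,\sigma_2\in\ppp(\h^\sS)$ satisfy $\lag\psi,\sigma_1\rag=\lag\psi,\sigma_2\rag$ for all $\psi\in\D_+$. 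For every $g\in C_b(\h^\sS)$ with $\inf g>0$ this yields $\lag\mu R_\mu g,\sigma_1\rag=\lag\mu R_\mu g,\sigma_2\rag$ for all $\mu>0$; letting $\mu\to\infty$ and using dominated convergence with respect to the probability measures $\sigma_i$ gives $\lag g,\sigma_1\rag=\lag g,\sigma_2\rag$. Applying this to $g+\|g\|_\infty+1\ge1$ and using $\lag{\mathbf1},\sigma_i\rag=1$ extends the equality to arbitrary $g\in C_b(\h^\sS)$, and since $C_b(\h^\sS)$ is determining for $\ppp(\h^\sS)$ we conclude $\sigma_1=\sigma_2$. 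This step is essentially routine once the resolvent machinery for $\SSSS^V$ is in place.
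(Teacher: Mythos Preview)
Your proposal is correct, but it diverges from the paper's argument in both parts.

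For part~(i), you and the paper establish the Duhamel identity \eqref{eq:L1pl-a} in the same way (integration by parts in the path integral plus the Markov property). After that, the paper concludes membership in $\D(\mathbb L_V)$ from the single displayed identity
\[
\SSSS_t^{V}(\varphi_r)=\varphi_{r+t}=\varphi_r+\int_0^t\SSSS_\tau^V(\SSSS_r^V g)\,\dd\tau,
\]
with $g=\mathbb L_V\psi$; note that the first equality, as written, is only correct when $F={\mathbf0}$ (since $\SSSS_t^V$ and $\SSSS_t^{V,F}$ are different semigroups for $F\ne{\mathbf0}$), so the paper's conclusion for general $F$ is quite compressed. Your route --- differentiating Duhamel via difference quotients and identifying the derivative $w_t$ as the unique solution of the Volterra equation $w_t=\SSSS_t^V(\mathbb L_V\psi+F\psi)+\int_0^t\SSSS_{t-s}^V(Fw_s)\,\dd s$, then reading off $\mathbb L_V\varphi_t=w_t-F\varphi_t$ from \eqref{eq:L1pl-c} --- is longer but makes the argument for general $F$ transparent. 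The point you flag as the ``main obstacle'' (that $F\varphi_s\notin\D(\mathbb L_V)$, so one cannot differentiate \eqref{eq:L1pl-a} term by term) is exactly the right one, and the Volterra/Gronwall work-around you propose is the standard way to handle a bounded perturbation of the generator in this non-$C_0$ setting.

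For part~(ii), the paper does not use the resolvent; instead it takes time averages $\tilde\varphi_r=\frac1r\int_0^r\SSSS_\tau^V\psi\,\dd\tau$ and checks directly, via a telescoping computation, that $\SSSS_t^V\tilde\varphi_r-\tilde\varphi_r=\int_0^t\SSSS_\tau^V\bigl((\SSSS_r^V\psi-\psi)/r\bigr)\,\dd\tau$, so $\tilde\varphi_r\in\D_+$ whenever $\inf\psi>0$; then $\tilde\varphi_r\to\psi$ pointwise as $r\to0$. Your Laplace-resolvent $R_\mu g=\int_0^\infty e^{-\mu t}\SSSS_t^V g\,\dd t$ plays exactly the same role, with $\mu R_\mu g\to g$ as $\mu\to\infty$. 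Both are standard devices for producing domain elements approximating a given function; the Ces\`aro identity used in the paper is marginally more elementary (no improper integral, no resolvent equation), while your resolvent argument is perhaps more familiar from general semigroup theory.
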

    This lemma is proved at the end of this subsection.  The next result is established exactly in the same way as Lemma~5.9 in~\cite{JNPS-2014}, by using limit~\eqref{a1.5};    we omit its proof.
    \begin{lemma}\label{Lem2} 
    The Markov semigroup  $\SSSS_t^{V}$
has a unique stationary measure, which is
given by $\nu_V=h_V\mu_V$.
    \end{lemma}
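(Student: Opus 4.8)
\textbf{Proof of Lemma \ref{Lem2} (proposal).} The plan is to exploit the ground--state (Doob) transform structure of $\SSSS_t^V$ together with the convergence \eqref{a1.5} from Theorem \ref{T:1.1}, exactly as for Lemma~5.9 in \cite{JNPS-2014}. First I would record the elementary algebraic facts. Taking $F=\mathbf0$ in \eqref{6.67} and using $\PPPP_t^V h_V=\la_V^t h_V$, one gets $\SSSS_t^V\mathbf1=\mathbf1$; the positivity of $h_V$ on $\h^\sS$ and of $\PPPP_t^V$ shows that $\SSSS_t^V$ is positivity preserving, and the semigroup identity $\SSSS_t^V\SSSS_s^V=\SSSS_{t+s}^V$ follows directly from $\PPPP_t^V\PPPP_s^V=\PPPP_{t+s}^V$. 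Hence $\SSSS_t^V$ is a genuine Markov semigroup on $\h^\sS$, with dual $(\SSSS_t^V)^*$ acting on $\ppp(\h^\sS)$, and in particular $\|\SSSS_t^V\psi\|_\infty\le\|\psi\|_\infty$ for $\psi\in C_b(\h^\sS)$.

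Next I would check that $\nu_V:=h_V\mu_V$ is stationary. For any $\psi\in C_b(\h^\sS)$, duality and the eigenrelation $\PPPP_t^{V*}\mu_V=\la_V^t\mu_V$ give
$$
\lag\SSSS_t^V\psi,\nu_V\rag=\la_V^{-t}\lag\PPPP_t^V(h_V\psi),\mu_V\rag=\la_V^{-t}\lag h_V\psi,\PPPP_t^{V*}\mu_V\rag=\lag h_V\psi,\mu_V\rag=\lag\psi,\nu_V\rag ,
$$
so $(\SSSS_t^V)^*\nu_V=\nu_V$; moreover $\lag\mathbf1,\nu_V\rag=\lag h_V,\mu_V\rag=1$ by the normalisation in Theorem \ref{T:1.1}, so $\nu_V\in\ppp(\h^\sS)$.

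For uniqueness I would first show that $\SSSS_t^V$ forgets the initial condition. Since $h_V\in C_\wwww(\h^\sS)$, for $\psi\in C_b(\h^\sS)$ we have $h_V\psi\in C_\wwww(\h^\sS)$, so \eqref{a1.5} gives $\la_V^{-t}\PPPP_t^V(h_V\psi)\to\lag h_V\psi,\mu_V\rag h_V$ in $C_b(X_R)$ for each $R$; dividing by $h_V$ (positive and continuous on $X_R$) yields $\SSSS_t^V\psi\to\lag h_V\psi,\mu_V\rag=\lag\psi,\nu_V\rag$ uniformly on every $X_R$, hence pointwise on $X_\ty=\h^\sS$. Now if $\nu\in\ppp(\h^\sS)$ is any stationary measure of $\SSSS_t^V$, then $\lag\psi,\nu\rag=\lag\SSSS_t^V\psi,\nu\rag$ for all $t$; applying dominated convergence with the constant dominating function $\|\psi\|_\infty\in L^1(\nu)$ and the pointwise limit above gives $\lag\psi,\nu\rag=\lag\psi,\nu_V\rag$. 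Since $\psi\in C_b(\h^\sS)$ was arbitrary (in particular the subclass $\D_+$ of Lemma \ref{Lem1}\,ii) is determining for $\ppp(\h^\sS)$), we conclude $\nu=\nu_V$.

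The only delicate point is the one I used implicitly: that any stationary measure under consideration lives on $\h^\sS$, so that the pointwise convergence $\SSSS_t^V\psi\to\lag\psi,\nu_V\rag$ holds $\nu$-almost everywhere, and that the test class is determining. This is precisely what is packaged in Lemma \ref{Lem1}\,ii) together with the uniform-in-$R$ convergence supplied by \eqref{a1.5}; everything else is a routine manipulation of the Feynman--Kac semigroup and its ground--state transform. I do not expect any genuine obstacle beyond bookkeeping of the weight $\wwww$ in the convergence \eqref{a1.5}.
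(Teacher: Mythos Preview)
Your proposal is correct and is exactly the argument the paper has in mind: it omits the proof but says the lemma ``is established exactly in the same way as Lemma~5.9 in~\cite{JNPS-2014}, by using limit~\eqref{a1.5}''. Your Doob-transform bookkeeping ($\SSSS_t^V\mathbf1=\mathbf1$, stationarity of $h_V\mu_V$ via the eigenrelations, and uniqueness from \eqref{a1.5} plus dominated convergence) is precisely that argument. The only cosmetic remark is that your final appeal to Lemma~\ref{Lem1}\,ii) is superfluous: once you have $\lag\psi,\nu\rag=\lag\psi,\nu_V\rag$ for all $\psi\in C_b(\h^\sS)$, you are done.
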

   \medskip

{\it Step~2}.  
  Let us show that, for any $\psi \in \D_+$, we have
    \be\label{Qhav1}
Q^V_R(F_\psi)=0,
\ee  where $F_\psi:= -\mathbb L_V\psi/ \psi \in C_b(\h^s)$ and $Q^V_R(F_\psi)$ is defined by \eqref{Qhav0}. Indeed, by property i) in Lemma~\ref{Lem1}, the function
   $\varphi_t= \SSSS_t^{V,F_\psi}\psi$ satisfies 
   $$
     \partial_t\varphi_t  = \left(\mathbb L_V  - \frac{\mathbb L_V \psi}{\psi}\right)\varphi_t, \q \varphi_0=\psi.
   $$
From the uniqueness of the solution we derive that $\psi=\varphi_t$ for any $t\ge0$, hence
\be\label{Qhav3}
\lim_{t\to+\ty} \frac{1}{t}\log\sup_{\uu \in X_R}\log(\SSSS_t^{V,F_\psi}{\psi })(\uu)=0.
\ee As   $c\le\psi(\uu)\le C$ for any $\uu\in \h^\sS$ and some  constants $C,c>0$, we have
$$
Q^V_R(F_\psi)\le \limsup_{t\to+\ty} \frac{1}{t}\log\sup_{\uu \in X_R}\log(\SSSS_t^{V,F_\psi}{\psi })(\uu)\le Q^V_R(F_\psi).
$$Combining this with \eqref{Qhav3}, we obtain  \eqref{Qhav1}.

\medskip
{\it Step~3}. 
Let us assume\,\footnote{As $I_R$ defined by \eqref{I_R} is a good rate function, the set of equilibrium measures for  $V$ is non-empty. So 
  the set of zeros of $ I^V_R$ is  also non-empty, by the remark made at the end of Step 2 of the proof of Theorem~\ref{T:MT}.} that $ I^V_R(\sigma)=0$. Then $\sigma\in \ppp(\h^\sS)$ and 
$$
 0=I^V_R(\sigma)=\sup_{F\in C_b(\h^\sS)}\bigl(\lag F, \sigma\rag-Q^V_R(F)\bigr).
$$So taking here $F=F_\psi$ for any $\psi\in \D_+$ and using the result of Step 2, we get 
$$
0\le  \inf_{\psi\in \D_+} \int_{\h^\sS}  \frac{\mathbb L_V \psi}{\psi}\sigma(\Dd \uu).
$$ Since  $\SSSS_t^{V}$ is a   Markov semigroup, we have  
 $\mathbb L_V{\mathbf1}={\mathbf0} $.  We see that $\theta=0$ is a local minimum of the function
$$
f(\theta):= \int_{\h^\sS}  \frac{\mathbb L_V (1+\theta\psi)}{1+\theta \psi}\sigma(\Dd \uu)
$$for any $\psi\in \D_+$, so
$$
0=f'(0)= \int_{\h^\sS}    \mathbb L_V  \psi  \,  \sigma(\Dd \uu).
$$Combining this with property i) in Lemma~\ref{Lem1}, we obtain 
$$
  \int_{\h^\sS}    \SSSS_t^{V}  \psi  \,  \sigma(\Dd \uu)= \int_{\h^\sS}      \psi  \,  \sigma(\Dd\uu), \q t>0.
$$From  ii) in Lemma~\ref{Lem1}, we derive that  $\sigma$ is a stationary measure for    $ \SSSS_t^{V}$, and 
Lemma~\ref{Lem2} implies that $\sigma=h_V\mu_V$. This completes the proof of Proposition~\ref{P:JNPS}.
\begin{proof}[Proof of Lemma~\ref{Lem1}]
{\it Step~1: Property  i)}. 
Let us  show that, for any~$\psi\in C_b({\h^\sS})$,   the function $ \varphi_t=\SSSS_t^{V,F}\psi $ satisfies the equation in the Duhamel form 
\be\label{Duhamel}
\varphi_t =  \SSSS_t^{V}\psi+\int_0^t \SSSS_{t-s}^{V}(F \varphi_s) \dd s.
\ee Indeed, we have
\begin{align*} 
\varphi_t  &- \SSSS_t^{V}\psi= \la_V^{-t}h_V^{-1}\nonumber\\
&\times\e_\uu \left\{ \exp\left(\int_0^t V(\uu_\tau) \dd \tau\right) \!\left[ \exp\left(\int_0^t  F(\uu_\tau)\dd \tau\right) -1\right]  \! h_V(\uu_t)\psi(\uu_t)\right\}. 
\end{align*} Integrating by parts and using the
  the Markov property, we get   
\begin{align*} 
&\varphi_t   - \SSSS_t^{V}\psi= \la_V^{-t}h_V^{-1}\nonumber\\
&\q\times\int_0^t\e_\uu \left\{ \exp\left(\int_0^t V(\uu_\tau) \dd \tau\right) \!\left[F(\uu_s) \exp\left(\int_s^tF(\uu_\tau)\dd \tau\right)\right]  \! h_V(\uu_t)\psi(\uu_t)\right\}\dd s\\
&= \int_0^t  \la_V^{-s}h_V^{-1} \e_\uu \left\{ \exp\left(\int_0^s V(\uu_\tau) \dd \tau\right) h_V(\uu_s) F(\uu_s)  \varphi_{t-s} (\uu_s) \right\} \dd s \\&
=    \int_0^t   \SSSS_s^{V}(F \varphi_{t-s}) \dd s=\int_0^t \SSSS^{V}_{t-s}(F \varphi_s) \dd s.
\end{align*} This proves \eqref{Duhamel}. The identity
$$
\SSSS_t^{V} (\varphi_r)(\uu)=\varphi_{r+t}(\uu)=\varphi_r(\uu)+\int_0^t \SSSS_{\tau}^{V}(  \SSSS_{r}^{V}g)(\uu) \dd\tau, \q t\ge0, \, \uu\in \h^\sS
$$
 shows that $\varphi_r\in \D({\mathbb L_V})$ for $\psi\in \D({\mathbb L_V})$ and $r>0$.     
  \medskip

{\it Step~2: Property   ii)}. Assume  that,   for some       $\sigma_1,\sigma_2\in \ppp(\h^\sS)$, we have
   \be\label{Qhav6}
\lag \psi, \sigma_1\rag =\lag \psi, \sigma_2\rag, \q \psi\in \D_+.
\ee    
Let us take any  $\psi\in C_b(\h^\sS)$ such that $c\le \psi(\uu)\le C$ for  any $\uu\in \h^\sS$ and some constants  $c,C>0$. Then $\tilde \varphi_r:=\frac1r\int_0^r \SSSS_\tau^{V} \psi \dd \tau$ belongs to~$\D_+$ for any $r>0$. Indeed, the inequality
   $c\le \tilde \varphi_r(\uu)\le C $ follows immediately from  the definition of~$\SSSS_r^{V}$, and the fact that $\tilde \varphi_r\in \D({\mathbb L_V})$ follows from the identity   
   \begin{align*}
 \SSSS^V_t\tilde \varphi_r-\tilde \varphi_r&= \f1r\int_0^r (\SSSS^V_{\tau+t}\psi-\SSSS^V_\tau\psi)\dd\tau= \f1r\int_r^{r+t}\SSSS^V_\tau\psi\dd\tau- \f1r\int_0^{t}\SSSS^V_\tau\psi\dd\tau\\
&= \int_0^t \SSSS^V_\tau \left( \frac{\SSSS^V_r\psi-\psi}{r} \right)\dd \tau .
\end{align*}
   Then, by \eqref{Qhav6},   we have 
   \be\label{Qhav5}
   \lag \tilde\varphi_r, \sigma_1\rag =\lag \tilde\varphi_r, \sigma_2\rag, \q r>0.
   \ee Using the continuity of the mapping $r\mapsto \SSSS_r^{V}\psi(\uu)$ from $\R_+$ to $\R$, we see that~$\tilde\varphi_r(\uu)\to \psi(\uu)$ as $r\to0$. Passing to the limit in \eqref{Qhav5} and using the Lebesgue theorem on dominated convergence, we obtain 
   $
   \lag \psi, \sigma_1\rag =\lag \psi, \sigma_2\rag.  
   $ It is easy to verify that the set $\{\psi\in C_b(\h^\sS): \inf_{\uu\in \h^\sS} \psi(\uu)>0 \}$ is   determining, so we get $\sigma_1=\sigma_2$.

\end{proof}

 \subsubsection*{Proof of Lemma \ref{e51}} 
 The function $f:J\to \R$ is convex, so the derivatives $D^\pm f(x)$ exist for any~$x\in J$.
We confine ourselves to the derivation of the first inequality in the lemma. Assume the opposite, and let $x_0\in J$, $(n_k)\subset\nn$, and $\eta>0$  be such that
\be\label{e50}
  D^+f_{n_k}(x_0)\ge D^+f(x_0)+\eta\q\text{ for }k\ge 1.
\ee
 Let us fix $x_1\in J$, $x_1> x_0$ such that
 $$
  D^+f(x_0)\ge \f{f(x_1)-f(x_0)}{x_1-x_0}-\eta/4.
 $$
 Since $f_{n_k}$ is a convex function, we have
 $$
  D^+f_{n_k}(x_0)\le\f{f_{n_k}(x_1)-f_{n_k}(x_0)}{x_1-x_0}.
 $$
Assume that $k\ge 1$ is so large that we have
$$
|f_{n_k}(x_1)-f(x_1)|+|f_{n_k}(x_0)-f(x_0)|\le \eta(x_1-x_0)/4.
$$
Then, combining last three inequalities, we derive
$$
  D^+f_{n_k}(x_0)\le   D^+f(x_0)+\eta/2,
  $$
  which contradicts \ef{e50} and proves the lemma.

 \subsubsection*{Proof of Lemma \ref{0.14}} 
 
Let us first prove \eqref{0.31}. We take   $p_4=6/{(1+2\sS)}$ the maximal   exponent for which the    Sobolev embedding  $H^{1-\sS} \hookrightarrow L^{p_4} $ holds.
  We choose $p_2$ in such a way that exponents $(p_i)$ are H\"older admissible. It follows that $p_2=6/(5-\rho-2\sS-3\kp)$. Now let $\kp>0$ be so small that $\rho+2 \sS \kp\leq 2$. Then a simple calculation shows that $(1-\kp)p_2\leq 6/(3-2\sS)$, so the  Sobolev embedding implies the first inclusion in     \eqref{0.31}. 

\smallskip
\nt
We now prove \eqref{0.32}. Proceeding   as above, we take $q_4= 6/(1+2\sS)$ and choose $q_2$ such that the exponents $(q_i)$ are H\"older admissible,  i.e., $q_2=6(\rho+2)/(12-(\rho+2)(1+2\sS+3\kp))$. It is easy to check that for~$\kp<1/2-\sS$, we have $(1-\kp)q_2\leq 6$. The Sobolev embedding allows to conclude.

\subsubsection*{Proof of Lemma \ref{0.20}}
In view of inequality \eqref{0.21}, we have
\begin{align*}
\beta^{-1}\f{\Dd}{\Dd t}(1+x)^{\beta}&=(1+x)^{\beta-1}\dt x\leq (1+x)^{\beta-1} (-\al x+gx^{1-\beta}+b)\\
&\leq -\al\,x(1+x)^{\beta-1}+g+b\leq -\f{\al}{2} x^{\beta}+\al+g+b.
\end{align*}
Fixing $t\in [0,T]$ and integrating this inequality over $[0, t]$, we obtain
$$
\beta^{-1}(1+x(t))^{\beta}+\f{\al}{2}\int_0^t x^\beta(\tau)\dd \tau\leq  \beta^{-1}(1+x(0))^{\beta}+\int_0^t (\al+g(\tau)+b(\tau))\dd \tau,
$$
which implies \eqref{0.28}.

\bigskip
\bigskip
{\bf Acknowledgments}. I am grateful to my supervisor Armen Shirikyan for attracting my attention to these problems, and for many fruitful discussions. This research was carried out within the MME-DII Center of Excellence (ANR 11 LABX 0023 01) and partially supported by the ANR grant STOSYMAP (ANR 2011 BS01 015 01)
 
\selectlanguage{french}




\appendix 











\bibliography{Bibliography}
\bibliographystyle{plain}
\afterpage{
\null
\vfill
\thispagestyle{empty}
\clearpage}

\end{document}